\newcommand{\GProj}{\mathcal{P}}
\author{Lashi Bandara}
\author{Magnus Goffeng}
\author{Hemanth Saratchandran}
\title[Realisations of elliptic operators on compact manifolds with boundary]
{Realisations of elliptic operators on compact manifolds with boundary}
\date{\today}
\address{Lashi Bandara, 
Institut für Mathematik,
Universität Potsdam, 
D-14476, Potsdam OT Golm, Germany
}
\urladdr{\href{http://www.math.uni-potsdam.de/~bandara}{http://www.math.uni-potsdam.de/~bandara}}
\email{\href{mailto:lashi.bandara@uni-potsdam.de}{lashi.bandara@uni-potsdam.de}}
\address{Magnus Goffeng,
Centre for Mathematical Sciences,
University of Lund,
Box 118, 221 00 LUND, Sweden
}
\urladdr{\href{https://www.lunduniversity.lu.se/lucat/user/e604494124f99d9bb6048e890306f7a4}{https://www.lunduniversity.lu.se/lucat/user/e604494124f99d9bb6048e890306f7a4}}
\email{\href{mailto:magnus.goffeng@math.lth.se}{magnus.goffeng@math.lth.se}}
\address{Hemanth Saratchandran 
The University of Adelaide
Adelaide, South Australia,
5005, Australia
}
\urladdr{\href{https://researchers.adelaide.edu.au/profile/hemanth.saratchandran}{https://researchers.adelaide.edu.au/profile/hemanth.saratchandran}}
\email{\href{mailto:hemanth.saratchandran@adelaide.edu.au}{hemanth.saratchandran@adelaide.edu.au}}
\keywords{Elliptic differential operator, Fredholm boundary conditions, boundary regularity, Calderón projector}
\subjclass[2010]{35J58, 35J56, 58J05, 58J32}
\def\colour{\color}
\newcommand{\Hinfty}{\ensuremath{\mathrm{H}^\infty}}
\newcommand{\mathhil}[1]{\mathscr{\uppercase{#1}}}
\def\colour{\colour}
\def\colour{\color}
\newtheorem{theorem}{Theorem}[section]
\newtheorem{corollary}[theorem]{Corollary}
\newtheorem{lemma}[theorem]{Lemma}
\newtheorem{proposition}[theorem]{Proposition}
\newtheorem{definition}[theorem]{Definition}
\newtheorem{remark}[theorem]{Remark}
\newtheorem{example}[theorem]{Example}
\newcommand{\cbrac}[1]{\left(#1\right)}
\newcommand{\dbrac}[1]{\left\{#1\right\}}
\newcommand{\modulus}[1]{|#1|}
\newcommand{\set}[1]{\dbrac{#1}}
\newcommand{\ran}{\mathrm{ran}}
\newcommand{\dom}{\mathrm{dom}}
\newcommand{\ad}{\ast} 
\newcommand{\comp}{\, \circ\, }
\newcommand{\R}{\mathbb{R}}
\newcommand{\C}{\mathbb{C}}
\renewcommand{\emptyset}{\varnothing}
\newcommand{\intersect}{\cap}
\newcommand{\rest}[1]{{{\lvert_{}}_{}}_{#1}}
\newcommand{\close}[1]{\overline{#1}}		
\renewcommand{\epsilon}{\varepsilon}
\renewcommand{\phi}{\varphi}
\newcommand{\embed}{\hookrightarrow}		
\newcommand{\isomorphic}{\cong}			
\newcommand{\isom}{\cong}			
\newcommand{\tensor}{\otimes}
\newcommand{\End}{\mathrm{End}}
\newcommand{\norm}[1]{\| #1 \|}			
\newcommand{\spt}[1]{{\rm spt} {\text{ }}#1}	
\newcommand{\interior}[1]{\mathring{#1}}	
\DeclareFontFamily{OT1}{restrictfont}{}
\DeclareFontShape{OT1}{restrictfont}{m}{n}{<-> fmvr8x}{}
\newcommand{\adj}[1]{{#1}^\ast}			
\newcommand{\inprod}[1]{\left\langle #1 \right\rangle}	
\newcommand{\sym}{\upsigma}
\newcommand{\spec}{\mathrm{spec}}				
\newcommand{\specpt}{\mathrm{spec}_{\mathrm{pt}}}		
\newcommand{\specres}{\mathrm{spec}_{\mathrm{res}}}		
\newcommand{\res}{\mathrm{res}}		
\newcommand{\Eig}{\mathrm{Eig}}		
\newcommand{\conj}{\mathrm{conj}}
\newcommand{\Lp}[2][{}]{{\rm L}^{#2}_{\rm #1}}		
\newcommand{\Ck}[2][{}]{{\rm C}^{#2}_{\rm #1}}		
\newcommand{\Hard}[2][{}]{{\rm H}^{#2}_{\rm #1}}		
\newcommand{\SobH}[2][{}]{\Hard[#1]{\rm #2}}
\newcommand{\cB}{\mathcal{B}}
\renewcommand{\epsilon}{\varepsilon} 
\DeclareMathOperator{\indx}{ind}
\newcommand{\Hil}{\mathscr{H}}
\newcommand{\checkH}{\check{\mathrm{H}}}
\newcommand{\hatH}{\hat{\mathrm{H}}}
\newcommand{\checkh}{\check{\mathhil{h}}}
\newcommand{\coker}{\mathrm{coker}}
\newcommand{\Proj}[1]{\mathrm{P}_{#1}}
\newcommand{\Ca}{\mathcal{C}}
\newcommand{\Cac}{\mathcal{C}^\mathrm{c}}
\newcommand{\op}{\mathrm{op}} 
\newcommand{\SobHH}[2][{}]{{\mathbb{H}}^{#2}_{\rm #1}}
\def\Ddots{\mathinner{\mkern1mu\raise\p@
\vbox{\kern7\p@\hbox{.}}\mkern2mu
\raise4\p@\hbox{.}\mkern2mu\raise7\p@\hbox{.}\mkern1mu}}
\newtheorem{thm*}{Theorem}
\newtheorem{remark*}[thm*]{Remark}
\newtheoremstyle{QuestionStyle}
{3pt}
{3pt}
{}
{}
{\bf}
{.}
{.5em}
{}
\theoremstyle{QuestionStyle} 
\newtheorem*{question*}{Question}
\renewcommand{\Re}{\mathrm{Re}\ }
\begin{document}

\maketitle
\begin{abstract}
This paper investigates realisations of elliptic differential operators of general order on manifolds with boundary following the approach of Bär-Ballmann to first order elliptic operators. The space of possible boundary values of elements in the maximal domain is described as a Hilbert space densely sandwiched between two mixed order Sobolev spaces. The description uses Calderón projectors which, in the first order case, is equivalent to results of Bär-Bandara using spectral projectors of an adapted boundary operator. Boundary conditions that induce Fredholm as well as regular realisations, and those that admit higher order regularity, are characterised. In addition, results concerning spectral theory, homotopy invariance of the Fredholm index, and well-posedness for higher order elliptic boundary value problems are proven. 
\end{abstract} 
\tableofcontents

\parindent0cm
\setlength{\parskip}{\baselineskip}

\section{Introduction}

Elliptic boundary value problems have a long history emerging from classical problems in physics and engineering. Their mathematical description dates back to classical works such as \cite{grisvard, horIII, lionsmagenes,grubb68,grubb74, grubb77, schechter59,vishik}. Elliptic operators -- their index theory and spectral theory -- is broadly used in mathematics connecting seemingly disjoint areas of mathematics, for instance through noncommutative geometry and geometric analysis. The spectral theory of elliptic boundary value problems has primarily been focused on Laplace type operators (for an overview see \cite{ivrii16}) or Dirac type operators (see for instance \cite{bruninglesch99,bruninglesch01,bosswojc,Grubb03}), but has also been studied in larger generality (e.g. in \cite{geygru74,gerdsgreenbook,grubb74,grubb77a, grubb84}). The index theory of elliptic boundary value problems is also a well studied area. For instance, the index theory of Dirac operators with the spectral APS boundary condition was computed by Atiyah-Patodi-Singer \cite{APS}, and has since been well studied, see for instance \cite{G99,grubb92,melroseAPS,bruninglesch99,bosswojc}. The index theory for boundary value problems in the Boutet de Monvel calculus was computed by \cite{boutetdemonvel} relating to ideas of Agranovich-Dynin. This formula covers the case of local elliptic boundary conditions. Boutet de Monvel's results have been extensively studied, see \cite{elmaretal,fedosovindex,fedoind,rempelschulze}, and were generalised to an extended Boutet de Monvel calculus by Schulze-Seiler \cite{schulzeseiler}. 

In this paper we take a new approach to higher order elliptic operators on manifolds with boundary. The approach is based on work of Bär-Ballmann \cite{BB}. The paper \cite{BB} consists of a coherent overview of first order boundary value problems (admitting a self-adjoint adapted boundary operator) based on an analytic description of the Cauchy data space as well as a novel graphical decomposition of regular realisations of first order operators. The results were later extended to general first order problems by the first listed author and Bär in \cite{BBan}. Although this approach was first fully utilised and made explicit in \cite{BB}, the ideas have been around since the '60s and was implicitly contained in a paper by Seeley \cite[top of page 782]{seeley65}. They were visible already in Agranovich-Dynin's classical works on index theory for elliptic boundary value problems of general order. Similar approaches predating Bär-Ballmann's have been studied in the abstract in \cite{bossfury}, been applied to Dirac-Schrödinger operators in \cite{ballbruncarr} as well as having been used in the study of Maxwell's equations on Lipschitz domains \cite{HR2019,BuCoSh}. Related ideas can also be seen in for instance  \cite{APS,bruninglesch01,grubb68, grubb77,grubb92,G99}. Since its appearance, the coherence of  Bär-Ballmann's work \cite{BB} has paved the way for numerous applications \cite{MR4011805,MR4000837,MR3981455,MR3908762,MR3850258}. 

The aim of this paper is to present a perspective similar to \cite{BB}, providing an overview of the theory for general order elliptic boundary value problems from the vantage point of the Cauchy data space. As such, the main novelty lies in the framework and the methods. Indeed, several of the results in this paper are, as stand alone results, at best mild generalisations of known results and are unlikely to surprise experts in the field of boundary value problems. A key feature in several of the results is that they can treat all (or at least the regular extensions with the occasional restriction to the pseudo-local case) closed extensions of an elliptic differential operator on an equal footing with the special classes of boundary conditions studied classically. We have attempted to the best of our abilities to indicate where the results we are generalising can be found in the abundance of literature on boundary value problems.

The approach we take lies close in spirit to the ideas of noncommutative geometry \cite{connesbook} whose methods have proven their worth in index theory, relating to recent work on boundaries in noncommutative geometry \cite{FGMR}. 
It takes a more abstract perspective than the semiclassical methods ordinarily deployed to study boundary value problems, e.g. in \cite{grisvard, horIII, lionsmagenes,grubb77, schechter59,vishik}, and lies closer to methods for order one elliptic operators seen in \cite{G99,grubb92,bruninglesch99,bosswojc,bossfury,BB,BBan} rather than the abstract methods of, for instance, \cite{grubb68, grubb74, malamud,behrndtetal}. {\bf The level of abstraction makes the method feasible for further development of index theory and applications to a larger class of problems than elliptic differential operators on manifolds with boundary}.
We anticipate this includes elliptic operators on singular manifolds, hypoelliptic operators (cf. the subelliptic boundary conditions in \cite{epsteinsub}) or geometric operators in Lorentzian geometry (cf. \cite{MR4011805}). These facts raises optimism for demystifying the general index formulas appearing in the (extended) Boutet de Monvel calculus relying on abstract homotopies \cite{boutetdemonvel,rempelschulze}, non-explicit inverses to isomorphisms in $K$-theory \cite{elmaretal} or Chern characters of operator valued symbols \cite{fedoind,fedosovindex}.

The method of Bär-Ballmann for first order operators can be summarised as describing boundary value problems by means of the Cauchy data space, i.e. the range of the corresponding trace map onto a mixed Sobolev space on the boundary. For higher order operators we apply the same ideas to the full trace map (taking into account all boundary traces up to the order of the operator) into a mixed Sobolev space on the boundary. The range of the full trace map -- the Cauchy data space -- characterises the realisation defining the boundary value problem when topologising it so that the full trace map is a quotient. The analytic details involved in this procedure produces {\bf precise information about regularity and Fredholm properties of the realisations} in terms of properties of the boundary condition relative to the range of the full trace mapping on the maximal domain of the elliptic operator. The general theme in this framework is to reduce properties of realisations, and their proofs, to neat functional analysis arguments and Fredholm theory.

The description of the possible realisations of an elliptic operator as subspaces of a Hilbert space of functions on the boundary relies on a precise description of the Cauchy data space. Abstractly, the Cauchy data space is isomorphic to the quotient of the maximal domain by the minimal domain. By classical results of Lions-Magenes \cite{lionsmagenes63,lionsmagenes} the Cauchy data space is a concrete subspace of a mixed order Sobolev space on the boundary. For an elliptic first order differential operator this is described in detail in \cite{BBan} using an adapted boundary operator. To describe the Cauchy data space of a general order elliptic differential operator,  we employ the Calderón projection of Seeley \cite{seeley65}.  Although our setup is based on the work of Bär-Ballmann, similar ideas date back much further as discussed above.

We emphasise that this paper, despite allowing for elliptic differential operators of general order, by no means supersedes \cite{BB,BBan}.
A significant novelty distinguishing \cite{BBan} from this paper is that \cite{BBan} links analysis on the Cauchy data space to the \Hinfty-functional calculus, which in \cite{BBan} forms the key to obtaining the estimates required to topologise the Cauchy data space. In contrast, in this paper
we use Seeley's work on Calderón projectors to analyse the Cauchy data space.

\subsection{Main results and overview of paper}

Let $M$ be a  compact manifold with boundary $\Sigma:=\partial M$. We fix a smooth measure $\mu$ on $M$. 
In our convention, $\Sigma \subset M$ and the interior of $M$ is denoted by $\interior{M} = M \setminus \Sigma$. We also choose a smooth interior pointing vectorfield $\vec{T}$ transversal to $\Sigma=\partial M$. The smooth measure on $\Sigma$ induced by $\mu$ and $\vec{T}$ will be denoted by $\nu$.

In this paper, we are  mainly concerned with elliptic differential operators $D:\Ck{\infty}(M;E)\to \Ck{\infty}(M;F)$ acting between hermitian vector bundles $(E,h^E) \to M$ and $(F,h^F) \to M$. The hermitian metric will be implicit in the notations. We let $m$ denote the order of $D$. The  maximal realisation $D_{\rm max}$ of $D$ on $\Lp{2}$ is given by $\dom(D_{\max}):=\{f\in \Lp{2}(M;E): Df\in \Lp{2}(M;F)\}$ where $D$ acts in a distributional sense on $\Lp{2}(M;E)$  as a consequence of the presence of a unique formal adjoint $D^\dagger: \Ck{\infty}(M;F) \to \Ck{\infty}(M;E)$.  The minimal realisation $D_{\rm min}$ is defined from the domain obtained from closing $\Ck[c]{\infty}(\interior{M};E)$ in the graph norm of $D$; it is readily seen that $\dom(D_{\rm min})=\SobH[0]{m}(M;E)$ (cf. Proposition \ref{geneprop}, on page \pageref{geneprop}). Many of the general results we prove only rely on abstract properties of the minimal and maximal realisations using the machinery of Appendix \ref{bärballappsection} (see page \pageref{bärballappsection}). For $s\in \R$, we use the notation 
$$\SobHH{ s}(\Sigma;E\otimes \C^m):=\bigoplus_{j=0}^{m-1}\SobH{s-j}(\Sigma;E).$$
As described in Subsection \ref{subsec:calderonproj}, the space $\SobHH{ s}(\Sigma;E\otimes \C^m)$ is easiest thought of as a graded Sobolev space with respect to a certain grading on $E\otimes \C^m$.

A (closed) realisation of $D$ is a (closed) extension $D_e$ of $D_{\rm min}$ such that $D_e\subseteq D_{\rm max}$. In other words, a realisation of $D$ is an extension of $D$ acting on $\SobH[0]{m}(M;E)$ to a domain on $\Lp{2}(M;E)$ where it acts in the ordinary distributional sense. 

The following theorem summarises how the Bär-Ballmann machinery applies in the setting of higher order elliptic differential operators on manifolds with boundary.

\begin{thm*}
\label{firstofmsmsdmds}
Let $D$ be an elliptic differential operator of order $m>0$ as in the preceding paragraph. 
\begin{enumerate}[(i), itemsep=1em]
\item \label{firstof:1} The full trace mapping $\gamma:\SobH{m}(M;E)\to \SobHH{{m-{\frac12}}}(\Sigma;E\otimes \C^m)$, $u\mapsto (\partial_{x_n}^ju|_\Sigma)_{j=0}^{m-1}$ extends to a continuous trace mapping $\gamma:\dom(D_{\rm max})\to \SobHH{{-{\frac12}}}(\Sigma;E\otimes \C^m)$ with kernel $\dom(D_{\rm min})=\SobH[0]{m}(M;E)$ and range being the \emph{Cauchy data space} 
$$\checkH(D)=P_\mathcal{C}\SobHH{{-{\frac12}}}(\Sigma;E\otimes \C^m)\bigoplus (1-P_\mathcal{C}) \SobHH{{m-{\frac12}}}(\Sigma;E\otimes \C^m),$$
where $P_\mathcal{C}$ is a Calderón projection (for more details on $P_\mathcal{C}$, see Section \ref{sec:calderon}). Equipping $\checkH(D)$ with the Hilbert space topology making the projectors onto $\SobHH{{-{\frac12}}}(\Sigma;E\otimes \C^m)$ and $\SobHH{{m-{\frac12}}}(\Sigma;E\otimes \C^m)$, respectively, into partial isometries, the canonical isomorphism $\checkH(D)\cong \dom(D_{\rm max})/\dom(D_{\rm min})$ is a Banach space isomorphism. 
\item \label{firstof:2} There is a one-to-one correspondence between (closed) realisations of $D$ and (closed) subspaces $B\subseteq \checkH(D)$ as follows. A (closed) subspace $B\subseteq \checkH(D)$ uniquely determines a (closed) realisation $D_{\rm B}$ of $D$ defined by 
$$\dom(D_{\rm B})=\{f\in \dom(D_{\rm max}): \gamma(f)\in B\},$$
and any (closed) realisation $\hat{D}$ uniquely determines a (closed) subspace $B:=\gamma\dom(\hat{D})$ with $D_{\rm B}=\hat{D}$. Moreover, 
$$(D_{\rm B})^*=D^\dagger_{{\rm B}^*},$$
where $B^*$ is the adjoint boundary condition (for more details see Subsection \ref{subsec:adjointbcs}, starting on page \pageref{subsec:adjointbcs}, and Appendix \ref{app:bspaceandbc}, starting on page \pageref{app:bspaceandbc}).
\item \label{firstof:3} A realisation $D_{\rm B}$ of $D$ is semi-regular, that is we have the domain inclusion 
$$\dom(D_{\rm B})\subseteq \SobH{m}(M;E),$$ 
if and only if $B\subseteq \SobHH{{m-{\frac12}}}(\Sigma;E\otimes \C^m)$. A realisation $D_{\rm B}$ is regular (i.e. $D_{\rm B}$ and $(D_{\rm B})^*$ are semi-regular) if and only if $B\subseteq \SobHH{{m-{\frac12}}}(\Sigma;E\otimes \C^m)$ and $B^*\subseteq \SobHH{{m-{\frac12}}}(\Sigma;E\otimes \C^m)$ where $B^*$ is the adjoint boundary condition.
\end{enumerate}
\end{thm*}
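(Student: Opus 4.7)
For part (i), the strategy is to build on Seeley's construction of the Calderón projection as a zero-order pseudodifferential operator on $\Sigma$ whose range on $\SobHH{s}(\Sigma;E\otimes \C^m)$, for every real $s$, coincides with the Cauchy data of (distributional) solutions of $Du=0$ in the corresponding Sobolev regularity. Equipped with this, I would fix a right parametrix $G:\Lp{2}(M;F)\to \SobH{m}(M;E)$ of $D$, constructed from an invertible extension of $D$ on the double of $M$, so that $DG=\id$ modulo smoothing. For any $u\in \dom(D_{\max})$, the splitting $u=G(Du)+(u-G(Du))$ decomposes $u$ into a piece in $\SobH{m}$, whose classical boundary trace lies in $\SobHH{m-1/2}$, and a remainder $u_0:=u-G(Du)$ satisfying $Du_0\in \Ck{\infty}$ whose trace is captured by $P_\mathcal{C}$ acting on $\SobHH{-1/2}$. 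Continuity of $\gamma$ on $\dom(D_{\max})$ with the graph norm into $\checkH(D)$ then follows from the continuity of $G$, $P_\mathcal{C}$, and the standard trace map. The reverse inclusion in the range characterisation uses Poisson-operator-type lifts: elements of $P_\mathcal{C}\SobHH{-1/2}$ lift to $\Lp{2}$-solutions via the inverse of $P_\mathcal{C}\gamma$ on $\ker D_{\max}$, while elements of $(1-P_\mathcal{C})\SobHH{m-1/2}$ lift through a classical right inverse to the trace on $\SobH{m}$. The identification $\ker(\gamma|_{\dom(D_{\max})})=\SobH[0]{m}(M;E)$ combines the zero-trace characterisation of $\SobH[0]{m}$ with Proposition \ref{geneprop}, after which the open mapping theorem upgrades the continuous bijection $\dom(D_{\max})/\dom(D_{\min})\to \checkH(D)$ to a Banach space isomorphism in the prescribed Hilbert topology.

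For part (ii), the correspondence is a direct consequence of (i) together with the general framework assembled in Appendix \ref{bärballappsection}. Given a closed $B\subseteq \checkH(D)$, the preimage $\dom(D_B):=\gamma^{-1}(B)$ is closed in $\dom(D_{\max})$ because $\gamma$ is continuous for the graph norm; conversely, any closed realisation $\hat D$ satisfies $\dom(D_{\min})\subseteq \dom(\hat D)\subseteq \dom(D_{\max})$, and since $\ker\gamma=\dom(D_{\min})$ the subspace $B:=\gamma\dom(\hat D)$ is closed by the Banach isomorphism of (i) and recovers $\hat D$ by construction. For the adjoint identification, I would invoke the abstract Green's formula for $D$ and $D^\dagger$, which descends to a non-degenerate sesquilinear boundary pairing on $\checkH(D)\times \checkH(D^\dagger)$; defining $B^*$ as the annihilator of $B$ with respect to this pairing and applying the abstract adjoint/annihilator duality of Appendix \ref{bärballappsection} yields $(D_B)^*=D^\dagger_{B^*}$.

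For part (iii), if $B\subseteq \SobHH{m-1/2}(\Sigma;E\otimes\C^m)$ and $u\in \dom(D_B)$, I would decompose $u=G(Du)+u_0$ as in (i); then $u_0$ lies (modulo smoothing) in $\ker D_{\max}$ with $\gamma u_0\in P_\mathcal{C}\SobHH{-1/2}\cap \SobHH{m-1/2}=P_\mathcal{C}\SobHH{m-1/2}$, whence the mapping properties of the Poisson operator give $u_0\in \SobH{m}$ and hence $u\in \SobH{m}(M;E)$. Conversely, $\dom(D_B)\subseteq \SobH{m}$ forces $B=\gamma\dom(D_B)\subseteq \SobHH{m-1/2}$ by the classical trace theorem. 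Regularity is then the symmetric statement obtained by applying semi-regularity both to $D_B$ and to its adjoint $D^\dagger_{B^*}$ via (ii). The main obstacle throughout is the precise Sobolev control over the Calderón projection down to the $\SobHH{-1/2}$ endpoint and the compatibility of the induced norm on $\checkH(D)$ with the quotient graph norm on $\dom(D_{\max})/\dom(D_{\min})$; this is the technical heart of the construction and is the reason the bulk of the work is deferred to Section \ref{sec:calderon}.
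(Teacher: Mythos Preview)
Your plan for part (ii) matches the paper's, which reduces both the realisation/subspace correspondence and the adjoint formula to the abstract FAP framework of Appendix~\ref{bärballappsection} (Propositions~\ref{charbodundalda} and~\ref{adjointofafap}).

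For part (iii) your argument is both more complicated than needed and contains a gap. You claim that $u_0:=u-G(Du)$ ``lies (modulo smoothing) in $\ker D_{\max}$'' and hence $\gamma u_0\in P_\mathcal{C}\SobHH{-1/2}$; but $Du_0\in\Ck{\infty}$ does not by itself give $u_0\in\ker D_{\max}+\Ck{\infty}(M;E)$ --- one needs a smooth preimage of $Du_0$ under $D$, which already requires a doubling or surjectivity argument --- and in any case $\gamma u_0$ need not lie in $P_\mathcal{C}\SobHH{-1/2}$ exactly. The paper's proof (Proposition~\ref{charsemiellintermsofboundary}) avoids all of this machinery: given $u\in\dom(D_{\rm B})$ with $\gamma u\in B\subseteq\SobHH{m-1/2}$, use the surjectivity in the classical trace theorem to lift $\gamma u$ to some $x_{\rm B}\in\SobH{m}(M;E)$; then $u-x_{\rm B}\in\ker\gamma=\SobH[0]{m}(M;E)$, so $u\in\SobH{m}(M;E)$. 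No Calder\'on projection, Poisson operator, or parametrix is needed for this direction.

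For part (i) your parametrix splitting differs from the paper's route for the hard inclusion $(1-P_\mathcal{C})\gamma\dom(D_{\max})\subseteq\SobHH{m-1/2}$. The paper (Theorem~\ref{refinedseeley}) proves this by a gluing trick: since $1-P_\mathcal{C}$ is (up to finite rank) a Calder\'on projection for the complementary domain $M^c\subset\hat{M}$, any $g\in(1-P_\mathcal{C})\gamma\dom(D_{\max})$ is matched on $M^c$ by the trace of an element of $\ker(D'_{\max})$; the glued section then lies in $\dom(\hat{D}_{\max})=\SobH{m}(\hat{M})$ by closed-manifold elliptic regularity, forcing $g\in\SobHH{m-1/2}$. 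Your scheme instead reduces to the same unjustified step as in (iii): passing from $Du_0\in\Ck{\infty}$ to a decomposition $u_0\in\ker D_{\max}+\Ck{\infty}$. This can be completed, but only by invoking the doubling construction that the paper uses directly; the parametrix alone does not close the argument. You rightly flag that the technical core is deferred to Section~\ref{sec:calderon}, but the specific mechanism you outline does not itself bridge the gap.
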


The reader can find this theorem spread out in the bulk of the paper as follows. Part \ref{firstof:1} can be found in Theorem \ref{refinedseeley} (on page \pageref{refinedseeley}). Part \ref{firstof:2} is proven in much larger generality in Appendix \ref{bärballappsection}, more precisely in Proposition \ref{charbodundalda} on page \pageref{charbodundalda}. Part \ref{firstof:3} is proven in Proposition \ref{charsemiellintermsofboundary}, see page \pageref{charsemiellintermsofboundary}. The precise definition of regularity can be found in Definition \ref{def:elliptiiclcld} (page \pageref{def:elliptiiclcld}). In \cite{BB,BBan}, regular boundary conditions are called elliptic, we discuss our choice of terminology further in Remark \ref{ontheremineendl} (page \pageref{ontheremineendl}). We remark that Theorem \ref{firstofmsmsdmds} concerns local statements at the boundary and therefore\footnote{Using for instance Lemma \ref{approximcladldlad} on page \pageref{approximcladldlad} or Proposition \ref{localizingcheckspace} on page \pageref{localizingcheckspace}} extends to the case that $M$ is noncompact with compact boundary when $D$ is complete.

Based on Theorem \ref{firstofmsmsdmds}, {\bf a boundary condition for $D$ is defined to be a closed subspace $B\subseteq \checkH(D)$ and we write $D_{\rm B}$ for the associated realisation.} We say that a boundary condition $B$ is (semi-) regular if $D_{\rm B}$ is (semi-) regular.

Theorem \ref{firstofmsmsdmds} has direct consequences to the  well-posedness of PDEs. 
If $D$ is an elliptic differential operator of order $m>0$ with a semi-regular boundary condition $B$ such that $\ker(D_{\rm B})=0$, then the partial differential equation 
$$
Du=f,\qquad \gamma(u)\in B$$
is well-posed for $f\in \ran(D_{\rm B})=\ker(D_{{\rm B}^*}^\dagger)^\perp$. For more details, see Proposition \ref{Cor:WellPosed} on page \pageref{Cor:WellPosed}. In particular, if $D$ is a Dirac type operator (so $m=1$), the partial differential equation 
$$Du=f,\qquad \gamma(u)=0$$
is well-posed for $f\in \ran(D_{\rm min})$. See more in Corollary \ref{firstordodereowl} on page \pageref{firstordodereowl}.

The previous theorem has the following consequence on the spectral theory of the maximal and minimal realisations of an elliptic operator. We include this result because we have noticed some confusion surrounding it in the community. The result is known to experts in the field (cf. the discussion on \cite[page 60-61]{grubbdistop}).

\begin{thm*}
Let $D:\Ck{\infty}(M;E)\to \Ck{\infty}(M;E)$ be an elliptic differential operator of order $m>0$ acting between sections on a Hermitian vector bundle $E\to M$ over a compact manifold with boundary with $\dim(M) > 1$. The spectrum of $D_{\rm max}$ on $\Lp{2}(M;E)$ is
$$\spec(D_{\max}) = \specpt(D_{\max}) = \C.$$
That is, the spectrum of $D_{\max}$ is purely discrete and  each generalised eigenspace is of infinite dimension. 
Moreover, the spectrum of $D_{\rm min}$ on $\Lp{2}(M;E)$ is
$$\spec(D_{\min}) = \C.$$
If $m=1$ and $D$ is a Dirac type operator then $\spec(D_{\min})= \specres(D_{\min}) = \C$; i.e., it  consists solely of residual spectrum.
\end{thm*}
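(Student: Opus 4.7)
The plan is to fix an arbitrary $\lambda \in \C$ and apply Theorem~\ref{firstofmsmsdmds} to the elliptic operator $D - \lambda$, which has order $m$ and satisfies $\dom((D-\lambda)_{\max}) = \dom(D_{\max})$ and $\dom((D-\lambda)_{\min}) = \dom(D_{\min})$. By part \ref{firstof:1} applied to $D-\lambda$, the trace map $\gamma$ identifies $\ker((D-\lambda)_{\max})$ modulo $\ker((D-\lambda)_{\min})$ with the range of the Calderón projector $P_{\Ca}^{D-\lambda}$ acting on $\checkH(D-\lambda)$. The Calderón projector is a zeroth order pseudodifferential projection on $\Sigma$ whose principal symbol at $(x', \xi') \in T^*\Sigma \setminus 0$ is the nontrivial spectral projection onto the roots of $\sigma_m(D)(x',\xi',\cdot)$ with positive imaginary part along the conormal, and since $\dim M > 1$ forces $\Sigma$ to be a nonempty closed manifold of positive dimension, the range of $P_{\Ca}^{D-\lambda}$ inside $\SobHH{-\frac12}(\Sigma; E\otimes \C^m)$ is infinite dimensional. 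Thus $\ker((D-\lambda)_{\max})$ is infinite dimensional for every $\lambda$, which gives $\spec(D_{\max}) = \specpt(D_{\max}) = \C$ with all (generalised) eigenspaces infinite dimensional.

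For $D_{\min}$ I would use the duality $(D_{\min})^* = (D^\dagger)_{\max}$. For any $\lambda \in \C$, applying the previous paragraph to the elliptic operator $D^\dagger$ shows that $\ker((D_{\min}-\lambda)^*) = \ker((D^\dagger)_{\max} - \bar\lambda)$ is infinite dimensional. Consequently $\overline{\ran(D_{\min}-\lambda)} = \ker((D_{\min}-\lambda)^*)^\perp$ has infinite codimension and in particular is not all of $\Lp{2}(M;E)$, so $D_{\min} - \lambda$ fails to be surjective and $\lambda \in \spec(D_{\min})$.

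For the last claim, suppose $m=1$ and $D$ is of Dirac type. It remains to show that $D_{\min}-\lambda$ is injective for every $\lambda$, since combined with the previous paragraph this upgrades $\spec(D_{\min})$ to $\specres(D_{\min})$. If $u\in \dom(D_{\min})=\SobH[0]{1}(M;E)$ satisfies $(D-\lambda)u = 0$, then $u|_\Sigma = 0$. Writing $D$ in normal form in a collar of $\Sigma$ as $D = \sigma(\partial_{x_n} + A)$ with $\sigma$ pointwise invertible (using that $\sigma_1(D)$ is invertible on the conormal, a consequence of Dirac type), Cauchy uniqueness for the resulting first order system with zero initial data forces $u \equiv 0$ on a collar of $\Sigma$. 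The Dirac type condition also implies that $(D-\lambda)^\dagger(D-\lambda)$ is a second order elliptic operator with scalar principal symbol $|\xi|^2 I$, so Aronszajn's strong unique continuation propagates the vanishing of $u$ from this collar across each connected component of $M$, yielding $u \equiv 0$. Hence $\ker(D_{\min}-\lambda) = 0$ for every $\lambda$.

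The main obstacle lies in the unique continuation argument of the third step. The identification of eigenvectors with Cauchy data in the first step is symbolic and uniform in $\lambda$, and the duality argument of the second step is purely Fredholm theoretic, so both are clean consequences of Theorem~\ref{firstofmsmsdmds}. By contrast, boundary Cauchy uniqueness together with Aronszajn's strong unique continuation are genuinely PDE-theoretic inputs that are tailored to the Dirac type structure; they can fail for generic higher order elliptic operators, which is precisely why the residual spectrum characterisation is restricted to first order Dirac type operators in the statement.
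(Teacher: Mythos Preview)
Your first two steps are correct and coincide with the paper's argument. The paper packages step one as Theorem~\ref{infdimeker} (used in Corollary~\ref{Prop:SpecoHO}): applied to $D-\lambda$, the Hardy space $\Ca_{D-\lambda}=\gamma\ker((D-\lambda)_{\max})$ is infinite dimensional because the (approximate) Calder\'on projector has nontrivial principal symbol $p_+(D)$ and hence fails to be compact on $\SobHH{-\frac12}$ (Corollary~\ref{compactnesscor}). Your assertion that this projection is ``nontrivial'' is where the hypothesis $\dim M>1$ is actually used; the paper spells this out in Proposition~\ref{nontrivialityofeplussd} via the parity $\sigma_{\rm cn}(D)(x',\xi',z)=(-1)^m\sigma_{\rm cn}(D)(x',-\xi',-z)$, and you should make that explicit. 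Step two is the same as the paper's: $D_{\min}=(D^\dagger_{\max})^*$ together with $\spec(T^*)=\overline{\spec(T)}$.

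Your third step has a gap. The sentence ``Cauchy uniqueness for the resulting first order system with zero initial data forces $u\equiv 0$ on a collar of $\Sigma$'' is not justified as written: in $\partial_{x_n}u=-Au+\sigma^{-1}\lambda u$ the operator $A$ is a tangential \emph{differential} operator, so this is not an ODE in $x_n$ and Cauchy--Lipschitz does not apply. Local uniqueness for the Cauchy problem across a non-characteristic hypersurface for an elliptic system is itself a unique continuation theorem (Calder\'on type), so invoking it here without citation is essentially assuming what you want. The clean repair is to drop the collar step: from $u|_\Sigma=0$ and the equation one gets inductively $\partial_{x_n}^k u|_\Sigma=0$ for all $k$, hence the zero extension $\tilde u$ to a closed double $\hat M$ is smooth, solves $(\hat D-\lambda)\tilde u=0$, and vanishes on the open set $\hat M\setminus M$; your Aronszajn argument for $(\hat D-\lambda)^\dagger(\hat D-\lambda)$ on $\hat M$ then gives $\tilde u\equiv 0$. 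This is exactly the mechanism behind ``weak UCP $\Rightarrow$ weak inner UCP'' for first order elliptic operators, which the paper simply quotes from \cite{BBL2009} (see Remark~\ref{Rem:Residue}, Corollary~\ref{firstordodereowl}, and Corollary~\ref{Cor:SpecoFOres}) rather than reproving.
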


The reader can find the first part of this theorem on general order operators as Proposition \ref{Prop:SpecoHO} (see page \pageref{Prop:SpecoHO}) in the bulk of the text. The statement concerning $\spec(D_{\min})$ for Dirac-type operators can be found in Corollary \ref{Cor:SpecoFOres} (see page \pageref{Cor:SpecoFOres}). The property distinguishing Dirac type operators from general elliptic operators is the  so-called \emph{unique continuation property} which undermines the existence of eigenvalues for $D_{\rm min}$. See the discussion in Remark \ref{Rem:Residue} on page \pageref{Rem:Residue}.

We now turn to some results on the index theory of realisations of elliptic differential operators. We say that a boundary condition $B\subseteq \checkH(D)$ is Fredholm if the associated realisation $D_{\rm B}$ is Fredholm. We also introduce the notation $\mathcal{C}_D\subseteq \checkH(D)$ for the Hardy space -- the image of $\ker(D_{\rm max})$ under the full trace mapping. Note that $\mathcal{C}_D= P_\mathcal{C}\checkH(D)=P_\mathcal{C}\SobHH{{-{\frac12}}}(\Sigma;E\otimes \C^m)$ by Theorem \ref{firstofmsmsdmds}.

\begin{thm*}
\label{fredhoclcldmda}
Let $D$ be an elliptic differential operator of order $m>0$ as above. A boundary condition $B\subseteq \checkH(D)$ is Fredholm if and only if $(B,\mathcal{C}_D)$ is a Fredholm pair in $\checkH(D)$. In this case, it holds that 
$$\indx(D_{\rm B}) = \indx(B, \Ca_D) + \dim \ker (D_{\min}) - \dim \ker (D_{\min}^\dagger).$$
Moreover, $\indx(D_{\rm B})$ is a homotopy invariant of $(D,B)$ in the sense of Theorem \ref{rigidityformaula} on page \pageref{rigidityformaula} (see also Theorem \ref{cortorigidityformaula} on page \pageref{cortorigidityformaula}).
\end{thm*}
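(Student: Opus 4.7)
The plan is to reduce the Fredholm properties of $D_{\rm B}$ to those of the pair $(B,\mathcal{C}_D)$ inside $\checkH(D)$ by exploiting the short exact sequence induced by the full trace map $\gamma$, and then to identify the cokernel contribution via the adjoint boundary condition $B^{*}$ and the Hardy space of $D^{\dagger}$.

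First I would record the fundamental identity for the kernel. Since $\dom(D_{\rm B})=\gamma^{-1}(B)$ and $\ker(\gamma)=\dom(D_{\min})$, restricting $\gamma$ to $\ker(D_{\rm B})\subseteq \ker(D_{\max})$ gives a short exact sequence
\[
0\longrightarrow \ker(D_{\min})\longrightarrow \ker(D_{\rm B})\xrightarrow{\;\gamma\;} B\cap \mathcal{C}_D\longrightarrow 0.
\]
Hence $\dim\ker(D_{\rm B})=\dim\ker(D_{\min})+\dim(B\cap \mathcal{C}_D)$, and finite-dimensionality of one side is equivalent to that of the other (recall $\dim\ker(D_{\min})<\infty$ by interior elliptic regularity). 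Applying the same argument to $D^{\dagger}_{B^{*}}=(D_{\rm B})^{*}$ gives
\[
\dim\coker(D_{\rm B})=\dim\ker(D_{\min}^{\dagger})+\dim(B^{*}\cap \mathcal{C}_{D^{\dagger}}).
\]

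Next I would establish the duality identity
\[
\dim\bigl(\checkH(D)/(B+\mathcal{C}_D)\bigr)=\dim(B^{*}\cap \mathcal{C}_{D^{\dagger}}).
\]
The Green's formula defines a non-degenerate continuous pairing $\checkH(D)\times \checkH(D^{\dagger})\to \C$ under which, by the construction of the adjoint boundary condition in Appendix~\ref{app:bspaceandbc}, $B^{*}$ is precisely the annihilator of $B$; the content of Theorem~\ref{firstofmsmsdmds}\ref{firstof:1} together with Green's formula also forces $\mathcal{C}_{D^{\dagger}}$ to be the annihilator of $\mathcal{C}_D$ (this is the classical Lagrangian-type property of the Calderón projection, which can be read off from the complementary projection $1-P_{\mathcal{C}}$ and its identification with a Calderón projector for $D^{\dagger}$). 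Since the pairing is continuous and non-degenerate, the annihilator of a closed subspace $V\subseteq \checkH(D)$ is naturally isomorphic to the topological dual of $\checkH(D)/V$; applying this to $V=B+\mathcal{C}_D$ yields the claimed equality.

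With these two inputs, the Fredholm equivalence and the index formula fall out. The closedness of $\ran(D_{\rm B})$, which together with the finite dimension of kernel and cokernel characterises the Fredholm property, corresponds under $\gamma$ to closedness of $B+\mathcal{C}_D$ in $\checkH(D)$; the equivalence of these two closedness statements is a functional analytic argument using that $\gamma$ is an open surjection $\dom(D_{\max})\twoheadrightarrow \checkH(D)$ and the elliptic a priori estimates packaged into Theorem~\ref{firstofmsmsdmds}. Subtracting the kernel and cokernel formulas then gives exactly
\[
\indx(D_{\rm B})=\dim(B\cap \mathcal{C}_D)-\dim\bigl(\checkH(D)/(B+\mathcal{C}_D)\bigr)+\dim\ker(D_{\min})-\dim\ker(D_{\min}^{\dagger}),
\]
which is the asserted formula. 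Homotopy invariance of $\indx(D_{\rm B})$ then reduces to two standard facts: the Fredholm pair index $\indx(B,\mathcal{C}_D)$ is stable under continuous deformation of the pair in the gap topology on $\checkH(D)$, and the correction term $\dim\ker(D_{\min})-\dim\ker(D_{\min}^{\dagger})=\indx(D_{\min})$ is locally constant in $D$; combining these with the continuity statements of Theorems~\ref{rigidityformaula} and~\ref{cortorigidityformaula} gives the required invariance.

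The main obstacle, and the step I would expect to require the most care, is the duality identification $\dim(\checkH(D)/(B+\mathcal{C}_D))=\dim(B^{*}\cap \mathcal{C}_{D^{\dagger}})$. It hinges on showing that the complementary Calderón projection $1-P_{\mathcal{C}}$ encodes the Hardy space of $D^{\dagger}$ under the Green pairing, and that the mixed-order Hilbert topology on $\checkH(D)$ defined via $P_{\mathcal{C}}$ is compatible with this pairing; both of these require careful use of the Seeley-type analysis of the Calderón projector from Section~\ref{sec:calderon} rather than abstract functional analysis alone.
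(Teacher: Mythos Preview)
Your kernel computation via the short exact sequence is exactly what the paper does (Proposition~\ref{sesforkernelslsa}), and applying it to $D^\dagger_{B^*}$ for the cokernel is also the paper's move. The genuine divergence is in how you bridge $\dim(B^*\cap\mathcal{C}_{D^\dagger})$ and $\dim\bigl(\checkH(D)/(B+\mathcal{C}_D)\bigr)$. The paper does \emph{not} invoke perfectness of the boundary pairing for this; instead it proves directly, by abstract range considerations (Lemma~\ref{Lem:L2RangeABS}), that
\[
\faktor{\Hil_2}{\ran(D_{\rm B})}\;\cong\;\ker(D_{\min}^\dagger)\ \oplus\ \faktor{\checkH(D)}{(B+\mathcal{C}_D)},
\]
and then equates this with $\ker(D_{\rm B}^\ast)\cong \ker(D_{\min}^\dagger)\oplus (B^*\cap\mathcal{C}_{D^\dagger})$. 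This route lives entirely in the abstract FAP framework of Appendix~\ref{bärballappsection} and needs no Calder\'on input. Your approach through perfectness of $\omega_D$ works too, but it imports the Seeley analysis of Section~\ref{sec:calderon} (Theorem~\ref{boundarypairing}) into a statement that the paper deliberately keeps abstract. So you have correctly identified the hard step, but it is only hard on the path you chose; there is a softer bypass. Incidentally, your heuristic that ``$1-P_{\mathcal{C}}$ is a Calder\'on projector for $D^\dagger$'' is not quite right: the correct relation is $P_{\mathcal{C}^\dagger}=\scalebox{1.5}{a}_\dagger^{-1}(1-P_{\mathcal{C}}^*)\scalebox{1.5}{a}_\dagger$ up to smoothing (Theorem~\ref{calderonforadjoint}), and the annihilator identity $\mathcal{C}_{D^\dagger}=(\mathcal{C}_D)^*$ follows already from the abstract closed-range argument without any Calder\'on theory.

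Your treatment of homotopy invariance has a real gap. You propose to split $\indx(D_{\rm B})=\indx(B,\mathcal{C}_D)+\bigl(\dim\ker D_{\min}-\dim\ker D_{\min}^\dagger\bigr)$ and argue each term is locally constant. But $D_{\min}$ is not Fredholm, so the second term is not a Fredholm index and its local constancy is not automatic; and for the first term you would need $t\mapsto \mathcal{C}_{D_t}$ to vary continuously in the gap topology on $\checkH(D)$, which the paper does not establish and which is not obvious from the hypotheses of Theorem~\ref{rigidityformaula}. The paper avoids both issues by working with the augmented operator $\check{L}u=(D_{\max}u,\,P\gamma u)$ of Proposition~\ref{Prop:CompExistsABS}, assembling it into a Fredholm family over $C[0,1]$-Hilbert modules, and using homotopy invariance of $K^0([0,1])$; this packages kernel and cokernel together and sidesteps the splitting entirely.
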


The characterisation of Fredholm boundary conditions and the index formula is proven by abstract principles and can be found in Theorem \ref{Thm:FredChar} (see page \pageref{Thm:FredChar}) in the body of the text. Results similar to Theorem \ref{fredhoclcldmda} for operators of order $m=1$ can be found in \cite{bossfury,bosswojc}.

\begin{thm*}
\label{elliptidcocoddn}
Let $D$ be an elliptic differential operator of order $m>0$. Assume that $P$ is a projection on $\SobHH{{m-{\frac12}}}(\Sigma;E\otimes \C^m)$ and set $A:=P_\mathcal{C}-(1-P)$. Consider the following statements:
\begin{enumerate}
\item \label{ell:1} The operator $A$ is Fredholm on $\SobHH{{m-{\frac12}}}(\Sigma;E\otimes \C^m)$.
\item \label{ell:2} The operator $P$ extends by continuity to $\checkH(D)$ and $\SobHH{{-{\frac12}}}(\Sigma;E\otimes \C^m)$, and $A$ defines a Fredholm operator on $\checkH(D)$.
\item \label{ell:3} The operator $P$ extends by continuity to $\SobHH{{-{\frac12}}}(\Sigma;E\otimes \C^m)$ and $A$ defines a Fredholm operator on $\SobHH{{-{\frac12}}}(\Sigma;E\otimes \C^m)$.
\end{enumerate}
The following holds:
\begin{enumerate}[(i)]
\item If \ref{ell:1} and \ref{ell:2}  holds, then $P$ is \emph{boundary decomposing} (see Definition \ref{boundafefidodo} on page \pageref{boundafefidodo}) and in particular
$$\norm{u}_{\checkH(D)} \simeq \norm{(1-P) u}_{\SobHH{{m-\frac12}}(\Sigma;E\otimes \C^m)} + \norm{P u}_{\SobHH{-{\frac12}}(\Sigma;E\otimes \C^m)}.$$ 
\item If \ref{ell:1} and \ref{ell:3} holds, then the space 
$$B_P:=(1-P)\SobHH{{m-{\frac12}}}(\Sigma;E\otimes \C^m),$$ 
defines a regular boundary condition for $D$. 
\end{enumerate}

Moreover, if $P$ is a zeroth order pseudodifferential projection in the Douglis-Nirenberg calculus (see more details in Subsection \ref{subsec:calderonproj}), then the following are equivalent:
\begin{enumerate}[a)]
\item $P$ is Shapiro-Lopatinskii elliptic with respect to $D$ (in the sense of Definition \ref{def:lsell}).
\item The operator $P_\mathcal{C}-(1-P)\in \Psi^{\pmb 0}_{\rm cl}(\Sigma; E\otimes \C^m)$ is elliptic in the Douglis-Nirenberg calculus. 
\item The space 
$$B_P:=(1-P)\SobHH{{m-\frac12}}(\Sigma;E\otimes \C^m),$$
is a regular boundary condition for $D$, so in particular, the realisation $D_{\rm B}$ defined from 
$$\dom(D_{\rm B}):=\{u\in \dom(D_{\rm max}): P\gamma u=0\}=\{u\in \SobH{m}(M;E): P\gamma u=0\},$$
is regular.
\item The space 
$$B_P:=(1-P)\SobHH{{m-\frac12}}(\Sigma;E\otimes \C^m),$$
is a Fredholm boundary condition for $D$, so in particular, the realisation $D_{\rm B}$ defined from 
$$\dom(D_{\rm B}):=\{u\in \dom(D_{\rm max}): P\gamma u=0\},$$
is a Fredholm operator.
\end{enumerate}
If any of the equivalent conditions a)-d) holds, and moreover $[P,P_\mathcal{C}]$ is order $-m$ in the Douglis-Nirenberg calculus, then $P$ is also boundary decomposing.
\end{thm*}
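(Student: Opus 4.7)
The plan is to exploit three inputs: the description of $\checkH(D)$ from Theorem \ref{firstofmsmsdmds}\ref{firstof:1} as the mixed-order Hilbert space glued from $P_\mathcal{C}\SobHH{-\frac12}(\Sigma;E\otimes \C^m)$ and $(1-P_\mathcal{C})\SobHH{m-\frac12}(\Sigma;E\otimes \C^m)$; the Fredholm pair criterion for realisations in Theorem \ref{fredhoclcldmda}; and Seeley's theorem identifying $P_\mathcal{C}$ as a classical zeroth-order pseudodifferential projection in the Douglis--Nirenberg calculus. Throughout, write $A := P_\mathcal{C} - (1-P)$; it is useful to view $A$ as measuring the angle between $\ran P_\mathcal{C} = \mathcal{C}_D$ and $\ker P$, so that Fredholmness of $A$ on a space $X$ is equivalent to $(\mathcal{C}_D, \ker P)$ being a Fredholm pair in $X$.

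For part (i), assuming \ref{ell:1} and \ref{ell:2}, the Fredholmness of $A$ on both $\SobHH{m-\frac12}$ and $\checkH(D)$ yields, via the open mapping theorem, bounded parametrices of $A$ on both spaces simultaneously. Applied to $u \in \checkH(D)$, these parametrices decompose $u$ into $(1-P)u$ and $Pu$, with the former in $\SobHH{m-\frac12}$ controlled by $\norm{(1-P_\mathcal{C})u}_{\SobHH{m-\frac12}}$ and the latter in $\SobHH{-\frac12}$ controlled by $\norm{P_\mathcal{C}u}_{\SobHH{-\frac12}}$, modulo finite-dimensional Fredholm obstructions that are absorbed by equivalence of norms on a finite-codimensional subspace. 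This yields precisely the norm equivalence of Definition \ref{boundafefidodo}. For part (ii), semi-regularity of $D_{B_P}$ is immediate from Theorem \ref{firstofmsmsdmds}\ref{firstof:3} since $B_P \subseteq \SobHH{m-\frac12}$. Regularity then amounts to showing $B_P^* \subseteq \SobHH{m-\frac12}$; computing the adjoint boundary condition via the Green pairing (Subsection \ref{subsec:adjointbcs}) and using hypothesis \ref{ell:3} — Fredholmness of $A$ on $\SobHH{-\frac12}$ — supplies the dual splitting that forces $B_P^*$ into $\SobHH{m-\frac12}$.

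For the pseudodifferential equivalences a)--d), the bridge is Seeley's theorem, according to which the principal symbol of $P_\mathcal{C}$ projects onto the Cauchy data of the interior ODE along inward normals. The Shapiro--Lopatinskii condition of Definition \ref{def:lsell} then amounts literally to the invertibility of the principal symbol of $A$ on the relevant subspace, yielding a) $\Leftrightarrow$ b). Douglis--Nirenberg ellipticity produces a classical parametrix, so $A$ is Fredholm on every DN Sobolev scale simultaneously, and parts (i)--(ii) give b) $\Rightarrow$ c). The implication c) $\Rightarrow$ d) is trivial, while d) $\Rightarrow$ a) runs Theorem \ref{fredhoclcldmda} in reverse: Fredholmness of $D_{B_P}$ forces $(B_P, \mathcal{C}_D)$ to be a Fredholm pair in $\checkH(D)$, and pseudolocality of the classical operator $P$ promotes this functional-analytic condition to symbol-level invertibility. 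Finally, if $[P,P_\mathcal{C}]$ has order $-m$ in the DN calculus, this commutator estimate provides exactly the continuous intertwiner $\SobHH{-\frac12} \to \SobHH{m-\frac12}$ needed to match the $P$- and $P_\mathcal{C}$-splittings of $\checkH(D)$, verifying the hypotheses of part (i) and delivering boundary decomposability.

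The main obstacles I anticipate are twofold. First, in part (i), precisely matching the parametrix-based splitting to Definition \ref{boundafefidodo} requires verifying that the summands actually land in the correct subspaces of $\checkH(D)$, not merely satisfy the estimates --- this involves carefully chasing the finite-dimensional Fredholm complements and checking that the parametrices on the two scales can be chosen compatibly. Second, in the pseudodifferential equivalences, the step d) $\Rightarrow$ a) is delicate: moving from a Fredholm pair statement in $\checkH(D)$ back to invertibility of the principal symbol requires pseudolocality together with a careful use of the Douglis--Nirenberg calculus to exclude symbol-level degeneracies that are invisible at the purely functional-analytic level.
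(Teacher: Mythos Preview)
Your plan matches the paper's route and is correct in outline, but two concrete steps deserve to be named. For part (i) (the paper's Theorem~\ref{thm:bodunarodoad}), the crux is not a general parametrix decomposition but the algebraic identities $A(1-P)=-(1-P_\mathcal{C})(1-P)$ and $AP=P_\mathcal{C}P$: applying the Fredholm estimate $\|v\|\lesssim\|Av\|+\|Fv\|$ on $\SobHH{m-\frac12}$ to $v=(1-P)u$ gives $\|(1-P)u\|_{\SobHH{m-\frac12}}\lesssim\|(1-P_\mathcal{C})(1-P)u\|_{\SobHH{m-\frac12}}+\|F(1-P)u\|$, and the first term equals $\|(1-P_\mathcal{C})(1-P)u\|_{\checkH(D)}\lesssim\|(1-P)u\|_{\checkH(D)}$ directly; the same trick on $\checkH(D)$ with $v=Pu$ handles the other half. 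This dissolves the ``compatible parametrices on two scales'' obstacle you anticipate. For the final clause (Corollary~\ref{pseudostahtdecompose}), the hypothesis $[P,P_\mathcal{C}]\in\Psi^{-\pmb m}_{\rm cl}$ gives boundedness of $P$ on $\checkH(D)$ via Lemma~\ref{boundednessofpseudodosnnchck}, but to invoke part (i) you still need $A$ Fredholm on $\checkH(D)$; this requires showing the \emph{parametrix} $R$ of $A$ is also bounded there, which the paper obtains from the formal-symbol identity $[P_\mathcal{C},R]\equiv -R[P_\mathcal{C},P]R$, so that $(1-P_\mathcal{C})RP_\mathcal{C}\in\Psi^{-\pmb m}_{\rm cl}$ as well.
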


The reader can find item i) stated as Theorem \ref{thm:bodunarodoad} (see page \pageref{thm:bodunarodoad}) and item ii) stated as Theorem \ref{thm:sufficienfeidntofell} (see page \pageref{thm:sufficienfeidntofell}) in the body of the text. The equivalence of item a) and b) is found in Proposition \ref{lsellipticgiveselliptic} (see page \pageref{lsellipticgiveselliptic}), and the equivalence of item b), c) and d) is found in Theorem \ref{equivocndndforpseudolocal} (see page \pageref{equivocndndforpseudolocal}). The final conclusion of the theorem appears as Corollary \ref{pseudostahtdecompose} (see page \pageref{pseudostahtdecompose}). 

\begin{remark*}
The reader should be wary of the fact that the structure of the Cauchy data space $\checkH(D)$ is quite different from that of a Sobolev space. We give three instances of how this manifests:
\begin{itemize}
\item If $P$ is a zeroth order pseudodifferential projection in the Douglis-Nirenberg calculus and $P_\mathcal{C}-(1-P)$ is elliptic then $B_P=(1-P)\SobHH{{m-\frac12}}(\Sigma;E\otimes \C^m)$ is a regular boundary condition but if $(1-P_\mathcal{C})PP_\mathcal{C}$ is not of order $-m$ (in the Douglis-Nirenberg calculus) then $P$ fails to be boundary decomposing and even fails to act boundedly on $\checkH(D)$. This follows from Lemma \ref{boundednessofpseudodosnnchck}.
\item The obvious projectors on the Cauchy data space defining Dirichlet or Neumann conditions for a Laplacian are not bounded operators on the Cauchy data space by Example \ref{ex:dirichletandneumann}, but a more refined projection (projecting along the Hardy space) produces a bounded projector as in Subsection \ref{highregofdiricl}. The pseudodifferential operators in the Douglis-Nirenberg calculus that act boundedly on the Cauchy data space are characterised in Lemma \ref{boundednessofpseudodosnnchck}.
\item We give an example in Section \ref{subsec:symbcominorderone} (see page \pageref{subsec:symbcominorderone}) of a formally self-adjoint first order elliptic operator on the unit disc, with associated adapted boundary operator $A$ such that the classical pseudodifferential operator $P_\mathcal{C}-\chi^+(A)$ on the boundary (i.e. the unit circle) is of order $-1$ but does not act compactly on $\checkH(D)$. Therefore, the contrast between the approach in this paper to that in \cite{BBan} (where spectral projectors $\chi^\pm(A)$ topologise the Cauchy data space) goes beyond compact perturbations even in the first order case. The image of $\chi^+(A)$ differs substantially from the image of $P_\mathcal{C}$ -- the Hardy space -- as seen in Proposition \ref{compactnessfailure} containing an example where the two images' intersection is a finite-dimensional space of smooth functions.
\end{itemize}
\end{remark*}

Much of the work in \cite{BBan} concerning regular boundary conditions relied on graphical decompositions. The following theorem extends \cite[Theorem 2.9]{BBan} to elliptic differential operators of any order $>0$. The theorem can be found as Theorem \ref{Thm:Ell} (see page \pageref{Thm:Ell}) in the body of the text.

\begin{thm*}
Let $D$ be an elliptic differential operator of order $m>0$, $\GProj_+$ a boundary decomposing projection (see Definition \ref{boundafefidodo} on page \pageref{boundafefidodo}), and $B$ a boundary condition for $D$. The following are equivalent: 
\begin{enumerate}[(i)] 
\item \label{Thm:Ell:1}
	$B \subset \checkH(D)$ is an regular boundary condition,
\item \label{Thm:Ell:2} 
	$B$ is $\GProj_+$ graphically decomposable (see Definition \ref{def:elldecomspsHO} on page \pageref{def:elldecomspsHO}),
\item \label{Thm:Ell:3} 
	$B$ is $\GProj_+$ Fredholm decomposable in $\SobHH{{m-\frac12}}$ (see Definition \ref{Def:FP} on page \pageref{Def:FP}), 
\item \label{Thm:Ell:4} 
	$B$ is $\GProj_+$ Fredholm decomposable in $\checkH$ (see Definition \ref{Def:FPC} on page \pageref{Def:FPC}).
\end{enumerate}
\end{thm*}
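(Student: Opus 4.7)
The strategy is to prove the cyclic chain $(i) \Rightarrow (ii) \Rightarrow (iii) \Rightarrow (iv) \Rightarrow (i)$, with the boundary decomposition norm equivalence
\[
\norm{u}_{\checkH(D)} \simeq \norm{(1-\GProj_+)u}_{\SobHH{{m-\frac12}}} + \norm{\GProj_+ u}_{\SobHH{{-\frac12}}}
\]
serving as the central bridge between analysis in $\checkH(D)$ and in $\SobHH{{m-\frac12}}$. The two easier links are $(ii) \Rightarrow (iii)$, which is essentially tautological since a graphical decomposition of $B$ already exhibits it as a finite-codimensional modification of the graph of a bounded operator between closed subspaces of $\SobHH{{m-\frac12}}$; and $(iii) \Rightarrow (iv)$, which follows from the continuous embedding $\SobHH{{m-\frac12}} \hookrightarrow \checkH(D)$ together with the observation that the splitting of $\checkH(D)$ induced by $\GProj_+$ restricts to the $\SobHH{{m-\frac12}}$-decomposition, so the Fredholm pair condition is preserved in the larger ambient space.

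For $(i) \Rightarrow (ii)$, regularity forces $B \subseteq \SobHH{{m-\frac12}}$ and $B^* \subseteq \SobHH{{m-\frac12}}$ by Theorem \ref{firstofmsmsdmds}\ref{firstof:3}. I would then analyse the restriction $(1-\GProj_+)|_B : B \to (1-\GProj_+)\SobHH{{m-\frac12}}$: the regularity of $B^*$, combined with the adjoint boundary condition machinery of Appendix \ref{app:bspaceandbc}, controls both the dimension of the kernel and the codimension of the range and promotes this map to a Fredholm operator. Choosing a finite-dimensional complement $W_+$ absorbing the kernel, the remainder of $B$ is identified as the graph of a bounded operator from a closed subspace of $(1-\GProj_+)\checkH(D)$ into $\GProj_+\checkH(D)$, the boundedness of the graph map being a consequence of the closed graph theorem applied inside $\SobHH{{m-\frac12}}$.

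The main obstacle I anticipate is $(iv) \Rightarrow (i)$. The delicate point is extracting the higher regularity of both $B$ and $B^*$ from the single assumption of $\GProj_+$-Fredholm decomposability in $\checkH(D)$. I would first argue that a Fredholm decomposition $B = W_+ \oplus V$ in $\checkH(D)$ exhibits $V$ as a graph whose $(1-\GProj_+)$-component lies in $\SobHH{{m-\frac12}}$ and whose $\GProj_+$-component is bounded by it, forcing $V \subseteq \SobHH{{m-\frac12}}$; together with the finite-dimensional piece $W_+$, which one may arrange to lie in $\GProj_+\SobHH{{m-\frac12}}$, this yields $B \subseteq \SobHH{{m-\frac12}}$. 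The harder step is the adjoint: identifying $B^*$ via the Green's pairing of Appendix \ref{app:bspaceandbc}, one shows that $B^*$ is itself $\GProj_+$-Fredholm decomposable in $\checkH(D^\dagger)$ with respect to a dual boundary decomposing projection; applying the same argument to $B^*$ then closes the loop, and regularity of $B$ follows from Theorem \ref{firstofmsmsdmds}\ref{firstof:3}.
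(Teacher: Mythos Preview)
Your cycle matches the paper's, and the sketches for $(i)\Rightarrow(ii)$ and $(ii)\Rightarrow(iii)$ are in the right direction. Two of the links have genuine gaps, however.

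For $(iv)\Rightarrow(i)$: you have misread Definition~\ref{Def:FPC}. It does not hand you a decomposition $B = W_+\oplus V$; it asserts Fredholm \emph{pair} conditions on $(\GProj_+\checkH(D),B)$ and on $(\GProj_-^\ast\hatH(D),\,B^\perp\cap\SobHH{{\frac12}})$, together with the index equality
\[
\indx(\GProj_+\checkH(D),B)=-\indx\bigl(\GProj_-^\ast\hatH(D),\,B^\perp\cap\SobHH{{\frac12}}\bigr).
\]
Moreover, $B\subset\SobHH{{m-\frac12}}$ is a standing hypothesis of that definition, so arguing for it is vacuous; what must be shown is $B^\perp\subset\SobHH{{\frac12}}$. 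The paper's argument here is essentially a two-liner using exactly the index equality that your sketch never invokes: dualising the Fredholm pair for $B$ shows that $(\GProj_-^\ast\hatH(D),B^\perp)$ is also a Fredholm pair with index $-\indx(\GProj_+\checkH(D),B)$, so the hypothesis forces the indices of the nested pairs $B^\perp\cap\SobHH{{\frac12}}\subset B^\perp$ to agree, and \cite[Lemma~A.7]{BBan} then yields $B^\perp=B^\perp\cap\SobHH{{\frac12}}$. Without the index condition there is no mechanism to upgrade $B^\perp$, and your proposed route via a graphical description of $B^\ast$ effectively reproves $(iv)\Rightarrow(ii)$ with no tools left over.

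For $(iii)\Rightarrow(iv)$: the claim that the Fredholm pair property is ``preserved in the larger ambient space'' is too quick. In particular, $B$ is only assumed closed in $\SobHH{{m-\frac12}}$, and closedness does not pass along the continuous dense inclusion $\SobHH{{m-\frac12}}\hookrightarrow\checkH(D)$. The paper's proof of this implication is in fact the longest of the four: closedness of $B$ in $\checkH(D)$ is obtained by dualising the Fredholm-pair hypothesis on $B^\perp\cap\SobHH{{\frac12}}$ to manufacture an auxiliary Fredholm pair $(\GProj_+\checkH(D),\check B)$ in $\checkH(D)$ with $\check B\supset B$, and then a dimension count shows $\check B\cap\GProj_+\checkH(D)=B\cap\GProj_+\checkH(D)$, which forces $B$ to sit as a closed summand of a closed subspace. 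The index comparison in \eqref{Eqn:Ell:FP} is then established separately using \cite[Lemma~A.4]{BBan}.
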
 

The ideas of Theorem \ref{firstofmsmsdmds} can also be applied to study higher boundary regularity. Inspired by \cite{BBan}, we say that a boundary condition $B\subseteq \checkH(D)$ is $s$-semiregular, for $s\geq 0$, if whenever $\xi\in B$ satisfies that $(1-P_\mathcal{C})\xi\in \bigoplus_{j=0}^{m-1} \SobHH{{s+m-{\frac12}}}(\Sigma;E\otimes \C^m)$ then $\xi\in \SobHH{{s+m-{\frac12}}}(\Sigma;E\otimes \C^m)$. For $s=0$, $0$-semiregularity is equivalent to semi-regularity. In \cite{BBan}, a notion of $s$-semiregular boundary conditions were introduced to prove higher boundary regularity for operators of order $m=1$ that implies our notion of $s-{\frac12}$-semiregularity for $s>{\frac12}$.  The following theorem can be found as Theorem \ref{Thm:BdyRegHigh} (see page \pageref{Thm:BdyRegHigh}) in the body of the text.

\begin{thm*}
\label{HObredogody}
Let $D$ be an elliptic operator of order $m>0$, $s\geq 0$ and $B$ a boundary condition. Then $B$ is $s$-semiregular if and only if $D_B$ is $s$-semiregular, i.e. that whenever $u\in \dom(D_{\rm B})$ satisfies $D_{\rm B}u\in \SobH{s}(M;F)$ it holds that $u\in \SobH{s+m}(M;E)$.
\end{thm*}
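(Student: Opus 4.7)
The strategy is to translate between the realisation $D_B$ and the boundary condition $B$ using the splitting $\xi=P_\mathcal{C}\xi+(1-P_\mathcal{C})\xi$ supplied by Theorem~\ref{firstofmsmsdmds}\ref{firstof:1}. Two analytic ingredients from Seeley's construction (Section~\ref{sec:calderon}) will be invoked: a Poisson operator $K$ with $DK\xi=0$ and $\gamma K\xi=P_\mathcal{C}\xi$, satisfying the sharp mapping property $K:\mathcal{C}_D\cap\SobHH{s+m-\frac12}(\Sigma;E\otimes\C^m)\to\SobH{s+m}(M;E)$; and a right inverse $Q:\SobH{s}(M;F)\to\SobH{s+m}(M;E)$ for $D$ modulo smoothing, obtainable by extending $D$ to an invertible elliptic operator on the double of $M$.

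\emph{If $D_B$ is $s$-semiregular, then so is $B$.} Given $\xi\in B$ with $(1-P_\mathcal{C})\xi\in\SobHH{s+m-\frac12}(\Sigma;E\otimes\C^m)$, set $u_1:=K(P_\mathcal{C}\xi)\in\ker(D_{\max})$ so that $\gamma u_1=P_\mathcal{C}\xi$ and $u_1\in\Lp{2}(M;E)$. Using surjectivity of $\gamma:\SobH{s+m}(M;E)\twoheadrightarrow\SobHH{s+m-\frac12}(\Sigma;E\otimes\C^m)$, pick $u_2\in\SobH{s+m}(M;E)$ with $\gamma u_2=(1-P_\mathcal{C})\xi$. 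Then $u:=u_1+u_2\in\dom(D_{\max})$ satisfies $\gamma u=\xi\in B$ and $Du=Du_2\in\SobH{s}(M;F)$, so $u\in\dom(D_B)$ with $D_Bu\in\SobH{s}$. The hypothesis forces $u\in\SobH{s+m}(M;E)$, hence $\xi=\gamma u\in\SobHH{s+m-\frac12}$.

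\emph{If $B$ is $s$-semiregular, then so is $D_B$.} For $u\in\dom(D_B)$ with $f:=D_Bu\in\SobH{s}(M;F)$, produce $v\in\SobH{s+m}(M;E)$ with $Dv=f$ via $Q$ (the smoothing remainder is absorbed into $\ker(D_{\min})\subseteq\Ck{\infty}$). Set $w:=u-v\in\ker(D_{\max})$; then $\gamma w\in\mathcal{C}_D$ and $(1-P_\mathcal{C})\gamma w=0$. Since $\gamma v\in\SobHH{s+m-\frac12}$, we obtain $(1-P_\mathcal{C})\gamma u=(1-P_\mathcal{C})\gamma v\in\SobHH{s+m-\frac12}$, and the $s$-semiregularity of $B$ applied to $\gamma u\in B$ yields $\gamma u\in\SobHH{s+m-\frac12}$. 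Consequently $\gamma w\in\mathcal{C}_D\cap\SobHH{s+m-\frac12}$, and $K(\gamma w)\in\SobH{s+m}(M;E)\cap\ker(D_{\max})$ has the same trace as $w$, so $w-K(\gamma w)\in\ker(D_{\min})\subseteq\Ck{\infty}(M;E)$, whence $w\in\SobH{s+m}(M;E)$ and $u=w+v\in\SobH{s+m}(M;E)$.

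The main obstacle is establishing the sharp mapping property $K:\mathcal{C}_D\cap\SobHH{s+m-\frac12}\to\SobH{s+m}$, which is what converts interior regularity into boundary regularity and vice versa. Once this Sobolev-scale statement is available as an outcome of the Calderón-Seeley analysis in Section~\ref{sec:calderon}, the rest is elementary bookkeeping on the $P_\mathcal{C}$-decomposition, together with the smoothness of $\ker(D_{\min})$ and the surjectivity of the full trace on $\SobH{s+m}$.
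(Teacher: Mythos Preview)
Your argument is correct and follows essentially the same route as the paper's proof of Theorem~\ref{Thm:BdyRegHigh}: both directions hinge on the $P_\mathcal{C}$-splitting of $\gamma u$, the sharp Sobolev mapping property of the Poisson operator $\mathcal{K}$ (Theorem~\ref{refinedseeleysobolev}), and exact solvability of $Dv=f$ in $\SobH{s+m}$ for $f\in\ran(D_{\max})\cap\SobH{s}$. The only quibble is your parenthetical ``the smoothing remainder is absorbed into $\ker(D_{\min})$'': as stated this does not parse (the remainder lives in $F$, not $E$), and an arbitrary elliptic extension to the double need not be invertible; the paper instead obtains exact solvability via the closed-range identity $\ran(D_{\Ca^c_s,s})=\ran(D_{\Ca^c})\cap\SobH{s}$ in Corollary~\ref{cor:highfhdowo}, which you could cite in place of your $Q$.
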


Theorem \ref{HObredogody} generalises results from \cite{BB,BBan} from first order to general order. The method of proof can be localised to the boundary, so in the case that $M$ is noncompact with compact boundary and $D$ is complete, it holds that $B$ is $s$-semiregular if and only if whenever $u\in \dom(D_{\rm B})$ satisfies that $D_{\rm B}u\in \SobH[\rm loc]{s}(M;F)$ it holds that $u\in \SobH[\rm loc]{s+m}(M;E)$ (where we use the notation $\SobH[\rm loc]{s}$ from \cite{BB,BBan}).

\begin{thm*}
\label{weylforgen}
Let $D$ be a formally self-adjoint elliptic operator of order $m>0$ acting on a vector bundle $E\to M$. Assume that $D$ has positive interior principal symbol $\sigma_D$ and define 
$$c_D:=\left(\frac{1}{n(2\pi)^n}\int_{S^*M} \mathrm{Tr}_E(\sigma_D(x,\xi)^{-\frac{n}{m}})\mathrm{d}x\mathrm{d}\xi\right)^{-\frac{m}{n}},$$ 
where $n$ is the dimension of $M$. Then any lower semi-bounded self-adjoint realisation $D_{\rm B}$ with $\dom(D_{\rm B})\subseteq \SobH{m}(M;E)$ is bounded from below with discrete real spectrum $\lambda_0(D_{\rm B})\leq\lambda_1(D_{\rm B})\leq\lambda_2(D_{\rm B})\leq\cdots$ satisfying that 
$$\lambda_k(D_{\rm B})=c_D k^{\frac{m}{n}}+o(k^{\frac{m}{n}}),\quad\mbox{as $k\to +\infty$}.$$
\end{thm*}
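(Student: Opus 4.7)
The plan is to (i) establish discreteness of the spectrum of $D_{\rm B}$ via compactness of the resolvent, and then (ii) reduce the Weyl asymptotic to the known case of a reference self-adjoint realisation via a min-max comparison argument.

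For (i), note first that self-adjointness of $D_{\rm B}$ together with $\dom(D_{\rm B})\subseteq \SobH{m}(M;E)$ forces $D_{\rm B}$ to be regular by part \ref{firstof:3} of Theorem \ref{firstofmsmsdmds}. The closed graph theorem applied to the continuous inclusion $\dom(D_{\rm B})\embed \SobH{m}(M;E)$, composed with the compact Rellich embedding $\SobH{m}(M;E)\embed \Lp{2}(M;E)$, shows that the embedding of $\dom(D_{\rm B})$ with its graph norm into $\Lp{2}(M;E)$ is compact. Hence any resolvent $(D_{\rm B}-\lambda)^{-1}$ is compact, and since $D_{\rm B}$ is lower semi-bounded and self-adjoint, its spectrum is purely discrete and real, ordered as $\lambda_0(D_{\rm B})\leq \lambda_1(D_{\rm B})\leq \cdots\to +\infty$.

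For (ii), I would introduce the Friedrichs extension $D_F$ of $D_{\min}$, which exists since $D_{\min}$ is symmetric and bounded below by Gårding's inequality applied to $\Ck[c]{\infty}(\intr{M};E)$ using positivity of $\sigma_D$. Elliptic regularity gives $\dom(D_F)\subseteq \SobH{m}(M;E)$, and the classical Weyl asymptotic for lower semi-bounded self-adjoint positive elliptic operators on compact manifolds with boundary (Hörmander, Seeley, Grubb, Ivrii) delivers
\begin{equation*}
\lambda_k(D_F)=c_D\,k^{m/n}+o(k^{m/n}),
\end{equation*}
with $c_D$ as in the statement, the constant depending only on the interior principal symbol. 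The forms of $D_{\rm B}$ and $D_F$ agree on $\Ck[c]{\infty}(\intr{M};E)\subseteq \dom(D_{\rm B})\cap \dom(D_F)$, which is a form core for $D_F$; the min-max principle then yields the upper bound $\lambda_k(D_{\rm B})\leq \lambda_k(D_F)$.

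The matching lower bound is the main obstacle. Without pseudo-locality of $B$, a direct Ky Fan-type comparison of the resolvents $(D_{\rm B}-\lambda)^{-1}$ and $(D_F-\lambda)^{-1}$ in a small Schatten ideal is not automatic. The natural route is Dirichlet–Neumann bracketing applied to a fine partition of $M$ into coordinate patches: interior patches contribute Friedrichs-type spectra with the same Weyl constant $c_D$, while boundary patches produce a correction controlled by the $(n-1)$-dimensional trace map $\gamma:\SobH{m}(M;E)\to \SobHH{m-\frac12}(\Sigma;E\otimes \C^m)$. The crucial input is that the regularity $\dom(D_{\rm B})\subseteq \SobH{m}(M;E)$ combined with compactness of $\Sigma$ confines the deficiency between any two such self-adjoint realisations to a space whose eigenvalue contribution grows like $\lambda^{(n-1)/m}=o(\lambda^{n/m})$ in the counting function. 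Combining the upper and lower bounds and passing from the counting function to the eigenvalue asymptotic via the standard inversion yields $\lambda_k(D_{\rm B})=c_D\,k^{m/n}+o(k^{m/n})$ as claimed.
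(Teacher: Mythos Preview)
Your part (i) and the upper bound via min--max comparison with the Friedrichs extension are correct; the paper likewise uses the Dirichlet realisation (which coincides with the Friedrichs extension by Lemma~\ref{dirrealzlem}) as its reference. The gap is the lower bound. Your Dirichlet--Neumann bracketing sketch does not furnish a proof: you would need to produce a reference realisation on each patch lying below $D_{\rm B}$ in the form sense with a known Weyl law, control the interface terms that integration by parts generates for an order-$m$ form, and then bound the contribution of patches meeting $\Sigma$, where the non-pseudolocal condition $B$ lives. Your final sentence states the correct heuristic --- the boundary contribution to the counting function is $O(\lambda^{(n-1)/m})$ --- but the bracketing mechanism you outline does not deliver it; in particular nothing in your argument explains why an \emph{arbitrary} semi-regular self-adjoint $B$ should be comparable from below to a fixed Neumann-type problem with the same leading asymptotic.

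The paper (Theorem~\ref{generalwlelele}) bypasses min--max entirely and works with the resolvent. Grubb's description of general realisations (\cite{grubb68}, recalled in Theorem~\ref{resolventchar}) gives $(D_{\rm B}+\lambda)^{-1}=(D_{\rm Dir}+\lambda)^{-1}+\mathfrak{G}_{\rm B}$, where the correction $\mathfrak{G}_{\rm B}$ factors through the compact inclusion $\SobHH{m-\frac12}(\Sigma;E\otimes\C^m)\hookrightarrow\SobHH{-\frac12}(\Sigma;E\otimes\C^m)$; the regularity hypothesis $\dom(D_{\rm B})\subseteq\SobH{m}(M;E)$ is precisely what forces the boundary data of $\mathfrak{G}_{\rm B}$ into the $\SobHH{m-\frac12}$ side. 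The singular values of that inclusion on the $(n-1)$-dimensional boundary $\Sigma$ are $O(k^{-m/(n-1)})=o(k^{-m/n})$ (Lemma~\ref{lemmamamsoso}), so a single application of the Weyl--Fan inequality to $(D_{\rm B}+\lambda)^{-1}$, together with the known Dirichlet asymptotic, yields upper and lower bounds simultaneously. This is the rigorous version of your $(n-1)$-dimensional deficiency heuristic, obtained without any partition of $M$.
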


Theorem \ref{weylforgen} appears as Theorem \ref{generalwlelele} (see page \pageref{generalwlelele}) in the body of the text. The result might not be surprising as Weyl laws with error estimates are known under mild assumptions, for a brief historical overview see Section \ref{weylalallalla}. Our assumptions are even milder, and we include the result as it showcases yet another feature of regular realisations that only depend on abstract principles. The proof of Theorem \ref{weylforgen} was suggested to us by Gerd Grubb and relies on a precise description of the resolvent of $D_{\rm B}$, see Theorem \ref{resolventchar} on page \pageref{resolventchar}, and previous work of Grubb \cite{grubb68,grubb77a}. For historical note, we mention that for Laplace operators on manifolds with boundary (see for instance \cite{ivrii16} for a historical overview) and for elliptic pseudodifferential operators on closed manifolds (see for instance \cite[Chapter XXIX]{horIV} or \cite[Chapter III]{shubinsbook}), the remainder terms in the Weyl law are known more precisely.

\subsection{Overview of contents} 

Let us briefly describe the contents of the paper. 

In section \ref{section2onsetup} we set up the theory for elliptic differential operators, building heavily on an abstract viewpoint presented in Appendix \ref{bärballappsection}. The abstract theory allows us to set up boundary conditions (Subsection \ref{subsec:bcsearly}) as in \cite{BB,BBan} and characterise when they define Fredholm realisations. It allow us to make some straightforward applications to wellposedness in Subsection \ref{subsec:wellposed}. We also consider a number of examples: in Subsection \ref{examoaldldbc} we describe some classes of boundary conditions, in Subsection \ref{subsec:ellfirstorderfirst} we consider first order elliptic operators reconciling with results obtained in \cite{BBan}.
In Subsection \ref{subsec:scalprododo}, we consider scalar properly elliptic operators reconciling with more classical theory, e.g. \cite{grubb68,schechter59,lionsmagenes}.

In Section \ref{sec:calderon} we recall Seeley's work on Calderón projectors and study its implications for the Cauchy data space and its analytic structure. The required Douglis-Nirenberg calculus and Hörmander's description of the symbolic structure of a Calderón projection is described in Subsection \ref{subsec:calderonproj}. The Calderón projection from \cite{seeley65} and further consequences for boundary conditions are given in Subsection \ref{subsec:seeleyoncalderon}. In Subsection \ref{bounddecoddsp} we introduce the notion of boundary decomposing projections, the projections that characterise the analytic structure of the Cauchy data space.

In Section \ref{sec:cuchsosa} we further analyse the Cauchy data space. The results of Subsection \ref{subsec:cuchsosalocal} shows that the Cauchy data space can be locally defined and that the  Calderón projection could equally well be replaced with a projection constructed from a finite number of terms in Hörmander's construction in Subsection \ref{subsec:calderonproj}. In Subsection \ref{bpandcp} we give a precise description of the boundary pairing between the Cauchy data space of an elliptic operator and the Cauchy data space of its adjoint, allowing us to characterise adjoint boundary conditions and regularity of pseudolocal boundary conditions in Subsection \ref{subsec:adjointbcs}.

In Section \ref{subsec:charofellfirstorder} we characterise regular boundary conditions via graphical decompositions, in Section \ref{higherorderregsubsec} we characterise boundary conditions admitting higher regularity, in Section \ref{weylalallalla} we prove the Weyl law for elliptic differential operators with regular boundary conditions, and in Section \ref{sec:rigiddldlda} we prove a rigidity result for the index of Fredholm realisations of elliptic differential operators. Following this, Section \ref{subsec:symbcominorderone} consists of a computational exercise comparing a Calderón projection to the spectral projectors used in \cite{BBan}, quantifying their qualitative differences.

\subsection{Notation}
Throughout the paper, we use the analyst's inequality $a \lesssim b$ to mean that $a \leq C b$ for some constant $C$ where the dependence is apparent from context or explicitly specified.
By $a \simeq b$, we mean that $a \lesssim b$ and $b \lesssim a$. 

 The notation $+$ is used for internal sums of vector subspaces and $\oplus$ is used for direct sums. 
 For two subspaces $V_1,V_2\subseteq V$ we use the notation $V_1\oplus_WV_2=V_1+V_2$ to indicate $W:=V_1\cap V_2$, note that $V_1\oplus_WV_2\cong (V_1\oplus V_2)/d(W)$ for the diagonal embedding $d$.
In general, $\oplus$ does not imply that the sum is orthogonal. We use the notation $\oplus^\perp$ for orthogonal direct sums.
The term projection will be used for an idempotent operator on a Hilbert space; in general,  they will not be orthogonal.

All manifolds and their boundaries are assumed to be smooth. For a manifold $X$ without boundary, we write $\mathcal{D}'(X)$ for the Fréchet space of distributions, i.e. the topological dual of $\Ck[c]{\infty}(X)$. For a manifold with boundary $M$ we use the convention that the boundary is included in $M$.
We write $\Sigma:=\partial M$ and $\interior{M}:=M\setminus \Sigma$.

For an operator $T: \Hil \to \Hil$ over a Hilbert space $\Hil$, potentially unbounded, we denote its \emph{domain} by $\dom(T)$.
The \emph{kernel} and \emph{range} are then given by $\ker(T)$ and $\ran(T)$ respectively. 
The \emph{graph norm} of $T$ is $\norm{\cdot}_T = \sqrt{\norm{\cdot}^2+ \norm{T\cdot}^2}$, and the operator $T$ is said to be \emph{closed} if $(\dom(T), \norm{\cdot}_T)$ is a Banach space. Since the graph norm polarises, $(\dom(T), \norm{\cdot}_T)$ is a Hilbert space if $T$ is closed.
If $S, T$ are two operators, then we write $S \subset T$ if $\dom(S) \subset \dom(T)$ and $S = T$ on $\dom(S)$. If $S\subseteq T$ are two closed operators then $\dom(S) \subset \dom(T)$ is a closed subspace  with respect to $\norm{\cdot}_T$. 

An operator $S$ is said to be \emph{adjoint} to $T$ if $\inprod{Tu,v} = \inprod{u, Sv}$ for $u \in \dom(T)$ and $v \in \dom(S)$.
A densely-defined $T$ has a \emph{unique} adjoint $T^\ad$ with domain 
$$\dom(T^\ad) = \set{u \in \Hil: \exists C_{u, T}\quad \modulus{\inprod{u, Tv}} \leq C_{u,T} \norm{v}, \, \forall v\in \dom(T)}.$$
A closed operator $T$ is \emph{Fredholm} if it has closed range with finite dimensional $\ker(T)$ and $\ker(T^\ast)$.

The operator $T$ is said to be \emph{invertible} if it is injective with dense range and $T^{-1}: \ran(T) \to \Hil$ is a bounded map.
An invertible $T$ has a unique extension $T^{-1}: \Hil \to \Hil$.
The \emph{spectrum} of the operator $T$ are the points $\lambda \in \C$ for which $(\lambda - T)$ is not invertible.
The set of all such points are denoted by $\spec(T)$.
There are a number of non-equivalent definitions of \emph{essential spectrum}, but for us, this means the points $\lambda \in \spec(T)$ for which $(\lambda - T)$ fails to be Fredholm.
The \emph{resolvent set} is the complement of $\spec(T)$ in $\C$ and it is denoted by $\res(T)$.

A signification notion throughout this paper is that of a \emph{Formally Adjointed Pair (FAP)} (cf. Appendix \ref{bärballappsection}).
This notion was used, without being named, in \cite[Chapter II]{grubb68}. Given Hilbert spaces $\Hil_1$ and $\Hil_2$, these are a pair of densely-defined and closed  operators $(T_{\min}, T_{\min}^\dagger)$ with $T_{\min}: \Hil_1 \to \Hil_2$ and $T_{\min}^\dagger: \Hil_2 \to \Hil_1$ adjoint to each other (i.e. $T_{\min}\subseteq (T_{\min}^\dagger)^*$ and $T_{\min}^\dagger\subseteq T_{\min}^*$).
These yield the following important maximal extensions
$$T_{\max} := (T_{\min}^\dagger)^\ast\quad\mbox{and}\quad T_{\max}^\dagger := T_{\min}^\ast.$$
A realisation of a FAP is an extension $T_{\min}\subseteq \hat{T}\subseteq T_{\max}$. We will see FAPs arising from geometry and elliptic differential operators. 

It is of fundamental importance is to understand extensions, closed or otherwise, of $T_{\min}$ (and respectively $T_{\min}^\dagger$).
For this, the space 
$$\checkh_T := \faktor{\dom(T_{\max})}{\dom(T_{\min})}$$
is of fundamental importance. 
A \emph{Cauchy data space} is a pair $(\gamma, \checkH(T))$, where $\gamma: \dom(T_{\max}) \to \checkH(T)$ is a bounded surjection with $\ker \gamma = \dom(T_{\min})$.
It is clear from the open mapping theorem that $\gamma$ induces an isomorphism $\checkh(T) \cong \checkH(T)$.

\subsection{Acknowledgements}

LB was supported by SPP2026 from the German Research Foundation (DFG).
MG was supported by the Swedish Research Council Grant VR 2018-0350.
HS was supported by the Australian Research Council, through the Australian Laureate Fellowship FL170100020 held by V. Mathai.
The authors would also like to thank Christian Bär and Andreas Rosén for useful discussions.
The authors are most grateful to Gerd Grubb, who motivated and inspired us through a multitude of comments and helpful suggestions on  earlier versions of the paper and providing us with the method of proof used in Section \ref{weylalallalla}. 

\section{Elliptic differential operators and boundary conditions}
\label{section2onsetup}

\subsection{Setup}
\label{setupforellde}
Let $M$ be a compact manifold with boundary $\partial M = \Sigma$ carrying a smooth measure $\mu$. 
In our convention, $\Sigma \subset M$ and the interior of $M$ is denoted by $\interior{M} = M \setminus \Sigma$.

Let $(E,h^E) \to M$ be a hermitian vector bundle and let $\Ck{\infty}(M;E)$ denote the smooth sections over $E$. 
In particular, the support of such sections are allowed to touch the boundary.
The subspace $\Ck[c]{\infty}(\interior{M}; E)$ consists of $u \in \Ck{\infty}(M; E)$ such that $\spt u \intersect \Sigma = \emptyset$. By an abuse of notation, we write $\Ck{\infty}(\Sigma;E)$ for the space of smooth sections of $E|_\Sigma$, and similarly for other function spaces of sections.
We shall also fix a smooth interior vectorfield $\vec{T}$ transversal to $\partial M=\Sigma$ and let $\nu$ denote the smooth volume measure on $\Sigma$, induced by $\mu$ and $\vec{T}$.
In what is to follow, the coordinate and derivative, respectively, in this transversal direction will be denoted $x_n$ and $\partial_{x_n}$.
For $s \in \R$, by $\SobH{s}(M;E)$ and $\SobH{s}(\Sigma;E)$, we denote the $\Lp{2}$ Sobolev spaces of $s$ derivatives on $M$ and $\Sigma$, respectively. We let $\SobH[0]{s}(M;E)$ denote the closure of $\Ck[c]{\infty}(\interior{M};E)$ in $\SobH{s}(\hat{M};E)$ for some closed manifold $\hat{M}$ containing $M$ as an open subset with smooth boundary.
Note that $\SobH{s}(M;E)$ can be identified with the quotient $\faktor{\SobH{s}(\hat{M};E)}{\SobH[0]{s}(\hat{M}\setminus M;E)}$.

For $(F,h^F) \to M$  another hermitian vector bundle, we will be concerned with elliptic differential operators $D: \Ck{\infty}(M;E) \to \Ck{\infty}(M;F)$  of order $m>0$. 
Further, by  $D^\dagger: \Ck{\infty}(M;F) \to \Ck{\infty}(M;E)$, let us denote the formal adjoint, obtained via integration by parts (cf. \cite{BB}).
By $D_{c}$, we denote $D$ with domain $\dom(D_{c}) = \Ck[c]{\infty}(\interior{M};E)$, and analogously $D^\dagger_c$ is defined from $\dom(D_{c}^\dagger) = \Ck[c]{\infty}(\interior{M};F)$. 
The  maximal and minimal domains of $D$ are then obtained as follows: 
$$D_{\max} = (D^\dagger_{\mathrm{c}})^\ad\quad\text{and}\quad D_{\min} = \close{D_{c}}.$$
It is clear that both these operators are closed and that $D_{\min} \subset D_{\max}$.
Moreover, it holds that $D_{\rm min}^*=D_{\rm max}^\dagger$ and $D_{\rm max}^*=D_{\rm min}^\dagger$. In the language of Appendix \ref{bärballappsection}, $(D_{\rm min},D_{\rm min}^\dagger)$ is a formally adjointed pair (FAP), see Definition \ref{efonedoenfap}. The next result shows that the FAP associated with an elliptic operator of order $m>0$ is a Fredholm FAP (see Definition \ref{efonedoenfapfredholm}).

\begin{proposition}
\label{geneprop}
If $D:\Ck{\infty}(M;E)\to \Ck{\infty}(M;F)$ is an elliptic differential operator of order $m>0$ then the following holds:
\begin{enumerate}
\item \label{geneprop:1} $\dom(D_{\rm min})=\SobH[0]{m}(M,E)$;
\item \label{geneprop:2} $D_{\rm min}$ has ``compact resolvent'' in the sense that $(1+D_{\rm min}^*D_{\rm min})^{-{\frac12}}$ is a compact operator on $\Lp{2}(M,E)$;
\item \label{geneprop:3} $D_{\rm min}$ has finite-dimensional kernel, closed range and $D_{\rm max}$ has closed range. Moreover, 
$$\Lp{2}(M;E) = \ker(D_{\min}) \oplus^\perp \ran(D_{\max}^\dagger) \quad\mbox{and}\quad \Lp{2}(M;F)= \ker(D_{\min}^\dagger) \oplus^\perp \ran(D_{\max}).$$
\end{enumerate}
\end{proposition}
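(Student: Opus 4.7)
\textbf{Plan for Proposition \ref{geneprop}.} My approach is to prove (i) first by a standard cut-off/approximation argument together with the interior elliptic a priori estimate, to deduce (ii) from (i) via Rellich's compactness theorem, and to extract (iii) from (ii) using the spectral theory of self-adjoint operators with compact resolvent combined with the closed range theorem.

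For part \ref{geneprop:1}, the inclusion $\SobH[0]{m}(M;E)\subseteq \dom(D_{\min})$ is immediate: any $u\in \SobH[0]{m}(M;E)$ is by definition an $\SobH{m}$-limit of compactly supported smooth sections $u_k$, and since $D$ has order $m$ the sections $Du_k$ converge in $\Lp{2}$, so $u\in \dom(D_{\min})$. For the reverse inclusion, the key ingredient is the elliptic a priori estimate
\[
\norm{v}_{\SobH{m}} \lesssim \norm{Dv}_{\Lp{2}} + \norm{v}_{\Lp{2}}, \qquad v\in \Ck[c]{\infty}(\interior{M};E).
\]
This follows by extending $v$ by zero to a closed manifold $\hat{M}$ containing $M$ (as in the setup of Subsection \ref{setupforellde}) and extending the principal symbol to an elliptic symbol on $\hat{M}$, where the Gårding-type inequality is standard. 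Given $u\in \dom(D_{\min})$ with approximating sequence $u_k\in \Ck[c]{\infty}(\interior{M};E)$ converging in the graph norm, the estimate applied to $u_j-u_k$ shows that $(u_k)$ is Cauchy in $\SobH{m}$, so the limit lies in $\SobH[0]{m}(M;E)$.

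For part \ref{geneprop:2}, once (i) is established the graph norm of $D_{\min}$ is equivalent to the $\SobH{m}$-norm on $\SobH[0]{m}(M;E)$, by the same a priori estimate above (and the trivial bound $\norm{Du}_{\Lp{2}}\lesssim \norm{u}_{\SobH{m}}$). The operator $(1+D_{\min}^*D_{\min})^{-1/2}$ factors as the inverse of the isometry $\dom(D_{\min})_{\mathrm{graph}}\to \Lp{2}$ composed with the inclusion $\dom(D_{\min})\hookrightarrow \Lp{2}$. Under the norm equivalence, this inclusion is precisely $\SobH[0]{m}(M;E)\hookrightarrow \Lp{2}(M;E)$, which is compact by Rellich.

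For part \ref{geneprop:3}, the self-adjoint operator $D_{\min}^*D_{\min}$ has compact resolvent by (ii), hence discrete spectrum. In particular $\ker(D_{\min})=\ker(D_{\min}^*D_{\min})$ is finite-dimensional and $0$ is either absent from the spectrum or an isolated eigenvalue; consequently $D_{\min}^*D_{\min}$ is bounded below by a positive constant on $\ker(D_{\min})^\perp\cap \dom(D_{\min}^*D_{\min})$. This gives $\norm{D_{\min}u}\gtrsim \norm{u}$ on $\ker(D_{\min})^\perp\cap \dom(D_{\min})$, so $D_{\min}$ has closed range. The closed range theorem then yields closed range for $D_{\max}^\dagger=D_{\min}^*$ and, by the symmetric argument applied to the FAP $(D_{\min}^\dagger, D_{\min})$, closed range for $D_{\max}=(D_{\min}^\dagger)^*$. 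The two orthogonal decompositions follow from the general identities $\Lp{2}=\ker(D_{\min})\oplus^\perp \overline{\ran(D_{\min}^*)}$ and $\Lp{2}=\ker(D_{\min}^\dagger)\oplus^\perp \overline{\ran((D_{\min}^\dagger)^*)}$, combined with $D_{\min}^*=D_{\max}^\dagger$ and $(D_{\min}^\dagger)^*=D_{\max}$ together with the closed range just proven.

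The main obstacle is the a priori estimate on compactly supported sections needed in part (i); once this is available, parts (ii) and (iii) are essentially abstract consequences. The a priori estimate is standard but does rely on a genuine construction: extending $D$ to an elliptic operator on the closed double (or on any closed manifold containing $M$ with $E$ extended), which in turn requires extending the principal symbol to remain invertible outside the zero section.
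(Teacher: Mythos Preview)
Your proof is correct and follows essentially the same route as the paper. The paper's argument is terser: for (i) it simply invokes the G\aa rding inequality to identify the graph norm with the $\SobH{m}$-norm on $\Ck[c]{\infty}(\interior{M};E)$; for (ii) it observes the same factorisation through the compact domain inclusion; and for (iii) it defers to an abstract appendix result (Proposition~\ref{closedragneandofiffni}) which deduces finite-dimensional kernel and closed range directly from compactness of $\dom(T_{\min})\hookrightarrow \Hil_1$, whereas you reach the same conclusion by passing through the discrete spectrum of $D_{\min}^*D_{\min}$---a standard and equivalent variant.
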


\begin{proof}
Item \ref{geneprop:1}  follows from the Gårding inequality showing that the graph norm of $D$ is equivalent to the $\SobH{m}(M;E)$-norm on  $\Ck[c]{\infty}(\interior{M};E)$. 
Item \ref{geneprop:2} follows from the fact that $(1+D_{\rm min}^*D_{\rm min})^{-{\frac12}}$ factors over the domain inclusion $\dom(D_{\rm min})\hookrightarrow \Lp{2}(M,E)$ which is compact by item \ref{geneprop:1} as $m > 0$. Item \ref{geneprop:3}  follows from item \ref{geneprop:2} by Proposition \ref{closedragneandofiffni}. 
\end{proof}

Using the transversal vectorfield $\vec{T}$ at the boundary, for $j=0,1,2,\ldots$, we can define a trace mapping 
$$\gamma_j:\Ck{\infty}(M,E)\to \Ck{\infty}(\Sigma,E), \quad u\mapsto \partial_{x_n}^j u|_\Sigma,$$
where $x_n$ denotes the variable transversal to the boundary defined from the vector field $\vec{T}$. 

\begin{theorem}
Let $D:\Ck{\infty}(M;E)\to \Ck{\infty}(M;F)$ be an elliptic differential operator of order $m>0$. The trace mapping 
$$\gamma:\Ck{\infty}(M;E)\to \Ck{\infty}(\Sigma;E\otimes \C^m), \quad u\mapsto (\gamma_j u)_{j=0}^{m-1},$$ 
extends to a continuous mapping 
$$\gamma:\dom(D_{\rm max})\to \bigoplus_{j=0}^{m-1} \SobH {-{\frac12}-j}(\Sigma; E),$$
with dense range and 
$$\ker\gamma= \SobH[0]{m}(M;E).$$
\end{theorem}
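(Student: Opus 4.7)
The plan is to extend $\gamma$ to $\dom(D_{\max})$ by duality against a Green's formula, and then identify its kernel via extension by zero across the boundary to a closed double. All the constructions are local near $\Sigma$.

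\textbf{Step 1 (Green's formula).} In a collar $\Sigma\times[0,\varepsilon)$ of $\Sigma$ with transversal coordinate $x_n$, write $D = \sum_{j=0}^m A_j(x_n)\,\partial_{x_n}^j$ where $A_j(x_n)$ is a differential operator on $\Sigma$ of order $m-j$ and $A_m(0):E\to F$ is a bundle isomorphism by ellipticity. Iterated integration by parts in $x_n$ produces, for smooth sections $u,v$,
$$\langle Du,v\rangle_M - \langle u, D^\dagger v\rangle_M = \sum_{j+k\leq m-1}\langle \mathcal{B}_{jk}\,\gamma_j u,\gamma_k v\rangle_\Sigma,$$
where $\mathcal{B}_{jk}$ is a differential operator on $\Sigma$ of order $m-1-j-k$ whose leading antidiagonal entries $\mathcal{B}_{j,m-1-j}$ are nonzero multiples of the isomorphism $A_m(0)$.

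\textbf{Step 2 (Definition and continuity by duality).} For $u\in\dom(D_{\max})$, the linear functional
$$\Phi_u:\SobH{m}(M;F)\to\C,\qquad v\mapsto \langle D_{\max}u,v\rangle - \langle u,D^\dagger v\rangle$$
vanishes on $\SobH[0]{m}(M;F)=\dom(D_{\min}^\dagger)$ by the very definition $D_{\max}=(D_{\min}^\dagger)^*$, and is bounded of norm at most $\|u\|_{D_{\max}}$. Since the $\SobH{m}$-trace map $\SobH{m}(M;F)\twoheadrightarrow \bigoplus_{k=0}^{m-1}\SobH{m-1/2-k}(\Sigma;F)$ is a surjection with kernel $\SobH[0]{m}(M;F)$ admitting a continuous right inverse, $\Phi_u$ descends to a continuous functional on this trace space. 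By Step 1 this functional is represented as the Green bilinear form evaluated at a unique $\gamma u=(\gamma_j u)_{j=0}^{m-1}\in\bigoplus_j \SobH{-1/2-j}(\Sigma;E)$, once one inverts the operator matrix $(\mathcal{B}_{jk}^*)$, which by a direct order count maps $\bigoplus_k\SobH{m-1/2-k}(\Sigma;F)$ into $\bigoplus_j\SobH{1/2+j}(\Sigma;E)$ and is invertible on this graded scale via its leading antidiagonal together with a triangularisation for the lower-order corrections. The resulting map $\gamma:\dom(D_{\max})\to \bigoplus_j\SobH{-1/2-j}(\Sigma;E)$ is bounded and agrees with the classical trace on $\Ck{\infty}(M;E)$.

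\textbf{Step 3 (Dense range and kernel).} Density of the range is immediate: $\gamma(\Ck{\infty}(M;E))=\Ck{\infty}(\Sigma;E)^m$ is dense in each summand. For the kernel, the inclusion $\SobH[0]{m}(M;E)\subseteq\ker\gamma$ is clear by continuity and the classical trace. Conversely, suppose $u\in\dom(D_{\max})$ with $\gamma u=0$, equivalently $\Phi_u\equiv 0$. Embed $M$ as an open submanifold of a closed manifold $\hat M$ carrying an elliptic extension of $D$, and let $\tilde u$, $\widetilde{Du}$ denote the extensions by zero to $\hat M$. For any $\phi\in\Ck[c]{\infty}(\hat M;F)$, $\Phi_u(\phi|_M)=0$ gives $\langle \tilde u,D^\dagger\phi\rangle_{\hat M}=\langle \widetilde{Du},\phi\rangle_{\hat M}$, so $D\tilde u=\widetilde{Du}$ distributionally on $\hat M$; the standard boundary delta-function corrections from extension by zero are precisely linear in $\gamma_0u,\ldots,\gamma_{m-1}u$ and hence vanish. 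Interior elliptic regularity on $\hat M$ then forces $\tilde u\in \SobH{m}(\hat M;E)$, and as $\tilde u\equiv 0$ off $M$ we conclude $u\in\SobH[0]{m}(M;E)=\dom(D_{\min})$ by Proposition \ref{geneprop}.

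\textbf{Main obstacle.} The crux is Step 2: the verification that the operator matrix $(\mathcal{B}_{jk}^*)$ is a topological isomorphism between the specified mixed-order Sobolev spaces, so that $\gamma u$ is well-defined, lies in precisely $\bigoplus_j\SobH{-1/2-j}(\Sigma;E)$, and satisfies the claimed continuity estimate. Invertibility of the leading antidiagonal is immediate from ellipticity, but the careful triangularisation of the lower-order entries on the graded Sobolev scale — and making sure no orders leak across summands — is where the real linear algebra of the proof lives.
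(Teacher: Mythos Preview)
The paper does not give its own proof of this theorem; it simply cites Lions--Magenes and Seeley as classical. Your argument is essentially the classical route and is correct: duality against Green's formula for continuity, and extension by zero plus closed-manifold elliptic regularity for the kernel.

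Your ``main obstacle'' is handled cleanly by machinery the paper develops a few pages later, in Proposition~\ref{proponboundarypairingforell} and Lemma~\ref{somecontinuuisuforata}. There the boundary matrix is written as $\scalebox{1.5}{a} = -i\tau\widetilde{\scalebox{1.5}{a}}$, where $\tau$ is the antidiagonal permutation matrix and $\widetilde{\scalebox{1.5}{a}}$ is \emph{exactly} lower triangular with the bundle isomorphism $A_0$ (your $A_m(0)$) on the diagonal. Gaussian elimination then produces an honest inverse $\widetilde{\scalebox{1.5}{a}}^{-1}$ whose $(j,k)$-entry is a differential operator of order $j-k$; combined with $\tau$, this gives the isomorphism $\bigoplus_j \SobH{s+j}(\Sigma;E)\to\bigoplus_j \SobH{m-1+s-j}(\Sigma;F)$ for every $s$. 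So the ``triangularisation'' you allude to is not a perturbative argument but an exact algebraic inversion of a unipotent matrix, and no orders leak across summands. With that in hand, your Steps~2 and~3 go through exactly as written.
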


This result is classical and can be found in \cite{lionsmagenes63,lionsmagenes}. The result is also proven in \cite{seeley65}. 

Following the notation of Appendix \ref{bärballappsection} we use the notation 
$$\check{\mathhil{h}}_D:=\check{\mathhil{h}}_{(D_{\rm min},D_{\rm min}^\dagger)}\equiv \faktor{\dom(D_{\rm max})}{\dom(D_{\rm min})}.$$ 
We note that since $\dom(D_{\rm min})=\SobH[0]{m}(M;E)=\ker\gamma$, setting 
$$\checkH(D) := \gamma \dom(D_{\max}),$$ 
the map $\gamma$ induces a bounded inclusion
$$\check{\mathhil{h}}_D \hookrightarrow \bigoplus_{j=0}^{m-1} \SobH {-{\frac12}-j}(\Sigma, E),$$
with range $\checkH(D)$. The reader should be aware that this inclusion is not a Banach space isomorphism onto its image. 
However, by giving $\checkH(D)$ the induced topology from this inclusion, we obtain that 
$$\checkh_D \cong \checkH(D) \subseteq \bigoplus_{j=0}^{m-1} \SobH {-{\frac12}-j}(\Sigma, E).$$
Since $\checkH(D)$ is isomorphic to the Hilbert space quotient $\faktor{\dom(D_{\rm max})}{\dom(D_{\rm min})}$, the Banach space $\checkH(D)$ is in fact a Hilbert space (although we shall not make use of a specific choice of inner product). We also note that since $\SobH{m}(M;E)\subseteq \dom(D_{\max})$ we have continuous inclusions with dense ranges
$$\bigoplus_{j=0}^{m-1} \SobH {m-{\frac12}-j}(\Sigma, E)\subseteq \checkH(D)\subseteq \bigoplus_{j=0}^{m-1} \SobH {-{\frac12}-j}(\Sigma, E).$$
Similarly, we obtain $\checkh_{D^\dagger}$ and $\checkH(D^\dagger)$ and continuous inclusions with dense ranges
$$\bigoplus_{j=0}^{m-1} \SobH {m-{\frac12}-j}(\Sigma, F)\subseteq \checkH(D^\dagger )\subseteq \bigoplus_{j=0}^{m-1} \SobH {-{\frac12}-j}(\Sigma, F).$$

\subsection{Boundary conditions}
\label{subsec:bcsearly}

Any elliptic operator $D: \Ck{\infty}(M;E) \to \Ck{\infty}(M;F)$ as above defines the FAP $(D_{\rm min},D_{\rm min}^\dagger)$ in the terminology of Appendix \ref{bärballappsection}.
Therefore, realisations of elliptic operators are readily described from abstract principles as subspaces of the Cauchy data space $\checkH(D)$. 
We reiterate that realisations and boundary conditions of elliptic operators form, to say the least, a well understood topic. 
Our ambition is solely to provide a new perspective consistent with the perspective due to Bär-Ballmann in the first order case.

A \emph{generalised boundary condition} $B$ for $D$ is a subspace of $\checkH(D)$. 
It is simply called a \emph{boundary condition} if $B$ is further required to be closed.
We note that by Proposition \ref{charbodundalda}, there is a one-to-one correspondence between realisations of an elliptic operator and its generalised boundary conditions. This correspondence is set up by associating the realisation $D_{\rm B}$ defined from 
$$\dom(D_{\rm B}):=\{f\in \dom(D_{\rm max}): \gamma(f)\in B\},$$ 
with $B$, and conversely to associate the subspace $\gamma(\dom(D_e))\subseteq \checkH(D)$ with a realisation $D_e$ of $D$. 
We also note that by Proposition \ref{adjointofafap}, for any generalised boundary condition $B$ of $D$, it holds that 
$$(D_{\rm B})^*=D_{{\rm B}^*}^\dagger,$$
where $B^\ast \subset \checkH(D^\dagger)$ is the \emph{adjoint boundary condition} defined by 
$$B^*:=\{\eta\in \checkH(D^\dagger): \langle D^\dagger f,g\rangle=\langle f,Dg\rangle \;\quad \forall f\in \gamma^{-1}(\{\eta\}), \, g\in \dom(D_{\rm B})\}.$$ 
In Subsection \ref{subsec:adjointbcs} (see page \pageref{subsec:adjointbcs}) we further describe the adjoint boundary condition as an annihilator of $B$ with respect to an explicit continuous sesquilinear form $\checkH(D^\dagger)\times \checkH(D)\to \C$, compare also to Appendix \ref{app:bspaceandbc}, on page \pageref{app:bspaceandbc}.
Note, $B^\ast$ is necessarily closed.
For convenience, we shall simply refer to a realisation or boundary condition for $D$ when speaking of a realisation or boundary condition for $(D_{\rm min},D_{\rm min}^\dagger)$. 
This terminology is consistent with the terminology for boundary conditions for first order elliptic differential operators from \cite{BB,BBan}.

\begin{definition}
\label{def:elliptiiclcld}
Let $D$ be an elliptic differential operator of order $m>0$ and $D_{\rm B}$ a closed realisation.  We say that that $D_{\rm B}$ is: 
\begin{itemize}
\item \emph{semi-regular} if $\dom(D_{\rm B})\subseteq \SobH m(M;E)$, 
\item \emph{regular} if $D_{\rm B}$ and $(D_{\rm B})^*$ are semi-regular.
\end{itemize} 
In light of Proposition \ref{charbodundalda}, we say that $B$ is (semi-)~regular if $D_{\rm B}$ is (semi-)~regular.
\end{definition}

\begin{remark}
\label{ontheremineendl}
The literature on boundary value problems contains a broad range of refined notions for ellipticity and regularity. Historically, the term ellipticity has been used for a local condition at the leading symbol level (including possible boundary symbols) that implies regularity properties.  For the boundary conditions consider in this paper and in \cite{BB,BBan}, there is in general no notion of boundary symbols. In \cite{BB,BBan}, the regularity property $\dom(D_{\rm B})\subseteq \SobH m(M;E)$ was therefore termed semi-ellipticity. To better reflect the terminology in bulk of the literature and to capture the fact that $\dom(D_{\rm B})\subseteq \SobH m(M;E)$  is a regularity property, we use the term semi-regular boundary condition in this paper.
\end{remark}

\begin{proposition}
\label{charsemiellintermsofboundary}
Let $D$ be an elliptic differential operator of order $m>0$ and $B$ a boundary condition. Then $D_{\rm B}$ is semi-regular if and only if $B\subseteq \bigoplus_{j=0}^{m-1} \SobH {m-{\frac12}-j}(\Sigma; E)$.
\end{proposition}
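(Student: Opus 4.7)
The plan is to prove both directions separately. The forward direction is essentially immediate from the standard trace theorem; the reverse direction uses surjectivity of the classical trace map combined with the characterisation $\dom(D_{\rm min})=\ker\gamma$ stated in the theorem preceding this proposition.

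For the forward direction, suppose $D_{\rm B}$ is semi-regular, so that $\dom(D_{\rm B})\subseteq \SobH{m}(M;E)$. The classical trace theorem ensures that the map $\gamma:\Ck{\infty}(M;E)\to \Ck{\infty}(\Sigma;E\otimes \C^m)$ extends continuously to $\gamma:\SobH{m}(M;E)\to \bigoplus_{j=0}^{m-1}\SobH{m-\frac12-j}(\Sigma;E)$. Therefore $B=\gamma(\dom(D_{\rm B}))\subseteq \gamma(\SobH{m}(M;E))\subseteq \bigoplus_{j=0}^{m-1}\SobH{m-\frac12-j}(\Sigma;E)$, which is what we want.

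For the reverse direction, assume $B\subseteq \bigoplus_{j=0}^{m-1}\SobH{m-\frac12-j}(\Sigma;E)$ and let $u\in \dom(D_{\rm B})$. Then $\gamma(u)\in B$ lies in $\bigoplus_{j=0}^{m-1}\SobH{m-\frac12-j}(\Sigma;E)$. The surjectivity statement in the classical trace theorem (of Lions-Magenes type) provides some $v\in \SobH{m}(M;E)$ with $\gamma(v)=\gamma(u)$. By item \ref{geneprop:1} of Proposition \ref{geneprop} and the theorem preceding this proposition, the kernel of $\gamma$ on $\dom(D_{\rm max})$ equals $\dom(D_{\rm min})=\SobH[0]{m}(M;E)\subseteq \SobH{m}(M;E)$. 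Since $u-v\in \dom(D_{\rm max})$ and $\gamma(u-v)=0$, we conclude $u-v\in \SobH[0]{m}(M;E)$, so $u=v+(u-v)\in \SobH{m}(M;E)$. As $u\in \dom(D_{\rm B})$ was arbitrary, $\dom(D_{\rm B})\subseteq \SobH{m}(M;E)$.

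The only substantive ingredients are the surjectivity of the full trace map $\SobH{m}(M;E)\twoheadrightarrow \bigoplus_{j=0}^{m-1}\SobH{m-\frac12-j}(\Sigma;E)$ and the identification $\ker\gamma|_{\dom(D_{\rm max})}=\SobH[0]{m}(M;E)$; both are available from the material already established. There is no real obstacle here, and the argument is a clean application of the abstract correspondence between boundary conditions and realisations combined with classical trace theory.
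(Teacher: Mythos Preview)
Your proof is correct and follows essentially the same approach as the paper's own argument: both directions use the trace theorem, and for the reverse implication both write an element of $\dom(D_{\rm B})$ as the sum of an $\SobH{m}$-lift of its trace (via surjectivity of $\gamma:\SobH{m}(M;E)\to\bigoplus_{j=0}^{m-1}\SobH{m-\frac12-j}(\Sigma;E)$) and an element of $\ker\gamma=\SobH[0]{m}(M;E)$. The only cosmetic difference is that the paper phrases the decomposition as $x=x_0+x_{\rm B}$ with $x_0\in\dom(D_{\rm min})$ chosen first, whereas you pick the $\SobH{m}$-lift $v$ first and then observe $u-v\in\ker\gamma$.
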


\begin{proof}
If $D_{\rm B}$ is semi-regular, then $\gamma(\dom(D_{\rm B}))\subseteq \bigoplus_{j=0}^{m-1} \SobH {m-{\frac12}-j}(\Sigma; E)$ by the trace theorem. We conclude that $B=\gamma(\dom(D_{\rm B}))\subseteq \bigoplus_{j=0}^{m-1} \SobH {m-{\frac12}-j}(\Sigma; E)$. 

If $B\subseteq \bigoplus_{j=0}^{m-1} \SobH {m-{\frac12}-j}(\Sigma; E)$, then any $x\in \dom(D_{\rm B})$ can be written as $x=x_0+x_{\rm B}$ where $x_0\in \dom(D_{\rm min})=\SobH[0]{m}(M;E)$ and $x_{\rm B}\in \dom(D_{\rm max})$ is a pre-image of $\gamma(x)\in B$ under $\gamma$. By the trace theorem, we can take $x_{\rm B}\in \SobH m(M;E)$ since $\gamma(x)\in \bigoplus_{j=0}^{m-1} \SobH {m-{\frac12}-j}(\Sigma; E)$. We conclude that $\dom(D_{\rm B})\subseteq \dom(D_{\rm min})+\SobH m(M;E)=\SobH m(M;E)$.
\end{proof}

\begin{proposition}
\label{Prop:MaxClosed}
Let $D$ be an elliptic differential operator of order $m>0$ and $B$ a semi-regular boundary condition.
Then, $\ker(D_{\rm B})$ is finite dimensional and $\ran(D_{\rm B})$ and  $\ran(D_{\rm B}^\ast)$ are closed. 
Moreover, 
$$\Lp{2}(M;E) = \ker(D_{{\rm B}}) \oplus^\perp \ran(D_{{\rm B}}^\ast)\quad\mbox{and}\quad \Lp{2}(M;F) = \ker(D_{{\rm B}}^\ast) \oplus^\perp \ran(D_{{\rm B}}).$$
In particular, if $B$ is regular then $D_{\rm B}$ is Fredholm.
\end{proposition}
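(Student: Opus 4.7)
The plan is to mirror the argument used for the minimal realisation in Proposition \ref{geneprop}, reducing everything to the compactness of the graph-norm inclusion into $\Lp{2}$. The central observation is that semi-regularity, $\dom(D_{\rm B}) \subseteq \SobH m(M;E)$, combined with the closed graph theorem applied to the graph-norm Hilbert space $(\dom(D_{\rm B}),\norm{\cdot}_{D_{\rm B}})$ and the Banach space $\SobH m(M;E)$, yields a continuous embedding $\dom(D_{\rm B}) \hookrightarrow \SobH m(M;E)$; the inclusion has closed graph because both source and target topologies dominate convergence in $\Lp{2}(M;E)$. Composing with the Rellich-Kondrachov compact embedding $\SobH m(M;E) \hookrightarrow \Lp{2}(M;E)$ (available since $m>0$ and $M$ is compact) produces a compact inclusion $\dom(D_{\rm B}) \hookrightarrow \Lp{2}(M;E)$, equivalently, $(1+D_{\rm B}^\ast D_{\rm B})^{-1/2}$ is compact on $\Lp{2}(M;E)$.

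Next, I would invoke the abstract Fredholm-type criterion of Proposition \ref{closedragneandofiffni} (the same tool used in Proposition \ref{geneprop} for $D_{\rm min}$) applied to $D_{\rm B}$, viewed as the densely defined closed operator on $\Lp{2}(M;E)$ obtained from extending $D_{\rm min}$. Density of the domain is automatic from $\Ck[c]{\infty}(\interior{M};E)\subseteq \dom(D_{\rm min})\subseteq \dom(D_{\rm B})$. The compactness just established delivers finite-dimensionality of $\ker(D_{\rm B})$ and closedness of $\ran(D_{\rm B})$, as well as the orthogonal decomposition
\[
\Lp{2}(M;E) = \ker(D_{\rm B}) \oplus^\perp \ran(D_{\rm B}^\ast).
\]
The companion statements for $D_{\rm B}^\ast = D^\dagger_{{\rm B}^\ast}$, namely closedness of $\ran(D_{\rm B}^\ast)$ and the decomposition $\Lp{2}(M;F)=\ker(D_{\rm B}^\ast)\oplus^\perp \ran(D_{\rm B})$, then follow from the standard closed range theorem together with the identities $\ker(D_{\rm B})^\perp=\overline{\ran(D_{\rm B}^\ast)}$ and $\ker(D_{\rm B}^\ast)^\perp=\overline{\ran(D_{\rm B})}$.

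For the Fredholm conclusion under the regularity hypothesis, I would simply invoke the definition: $B$ regular means that both $B$ and $B^\ast$ are semi-regular. Running the argument above with $(D^\dagger, B^\ast)$ in place of $(D, B)$ shows that $\ker(D_{\rm B}^\ast)=\ker(D^\dagger_{{\rm B}^\ast})$ is also finite-dimensional, which combined with the already-established closed range of $D_{\rm B}$ is precisely the Fredholm property.

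I do not anticipate any genuine obstacle; the proof is essentially a packaging of the closed graph theorem, Rellich-Kondrachov, and the already established abstract Fredholm criterion. The only point requiring a little care is the verification that $(\dom(D_{\rm B}),\norm{\cdot}_{D_{\rm B}})$ continuously embeds into $\SobH m(M;E)$, but this is immediate from the closed graph theorem once semi-regularity is used.
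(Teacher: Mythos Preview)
Your proposal is correct and follows essentially the same route as the paper: establish compactness of the domain inclusion $\dom(D_{\rm B})\hookrightarrow \Lp{2}(M;E)$ via semi-regularity and Rellich--Kondrachov, then invoke Proposition~\ref{closedragneandofiffni}. The paper's proof is more terse---it packages things by explicitly forming the FAP $\mathcal{T}_{\rm B}=(D_{\rm B},D^\dagger_{\rm min})$ so that Proposition~\ref{closedragneandofiffni} immediately yields all four conclusions (finite kernel, both closed ranges, both orthogonal decompositions) in one stroke, whereas you unpack the adjoint side via the closed range theorem; but the underlying argument is the same.
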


\begin{proof}
If $B$ is semi-regular, the pair $\mathcal{T}_{\rm B}:=(D_{\rm B},D^\dagger_{\rm min})$ is a FAP (indeed $D_{\rm B}\subseteq D_{\rm max}=(D^\dagger_{\rm min})^*$) and $D^\dagger_{\rm min}\subseteq D_{{\rm B}^*}^\dagger=(D_{\rm B})^*$) with compact domain inclusion $\dom(D_{\rm B})\hookrightarrow \Lp{2}(M;E)$. The claimed result now follows from Proposition \ref{closedragneandofiffni}.
\end{proof}

\begin{remark}
We return to the study of regular boundary conditions below in Subsection \ref{subsec:adjointbcs} and Subsection \ref{subsec:charofellfirstorder}.
\end{remark}

For an elliptic operator $D$, we use the notation 
$$\mathcal{C}_D:=\gamma\ker D_{\rm max}\subseteq \checkH(D)\subseteq \bigoplus_{j=0}^{m-1} \SobH {-{\frac12}-j}(\Sigma, E).$$
The space $\mathcal{C}_D$ is called the Hardy space of $D$. We shall later see that $\mathcal{C}_D\subseteq \bigoplus_{j=0}^{m-1} \SobH {-{\frac12}-j}(\Sigma, E)$ is,  in fact, closed in $\checkH(D)$. In the setting of Appendix \ref{bärballappsection}, more precisely Definition \ref{def:cauchyspacefap}, the space $\mathcal{C}_D$ is denoted by $\mathcal{C}_{(D_{\rm min},D_{\rm min}^\dagger)}$.

\begin{proposition}
\label{domainandkernelHO}
Let $D$ be an elliptic differential operator of order $m>0$ and $B\subseteq B'$ be boundary conditions. 
\begin{enumerate}[(i)]
\item It holds that $D_{{\rm B}}\subseteq D_{{\rm B}'}$ and there is a short exact sequence of Hilbert spaces
$$0\to \dom(D_{{\rm B}})\to \dom(D_{{\rm B}'})\to \faktor{B'}{B}\to 0.$$
In particular,
\begin{align*}
&\faktor{\dom(D_{\rm B})}{\dom(D_{\rm min})}\cong B,\quad\text{and}\quad \\
&\faktor{\dom(D_{\rm max})}{\dom(D_{\rm B})}\cong \faktor{\checkH(D)}{B}
\end{align*}
hold for a boundary condition $B$.
\item The inclusion and restriction mappings fit into a short exact sequence of Hilbert spaces
$$0\to \ker(D_{\rm min})\to \ker(D_{{\rm B}})\to \mathcal{C}_D\cap B\to 0.$$
In particular, we have the short exact sequence
$$0\to \ker(D_{\rm min})\to \ker(D_{\rm max})\xrightarrow{\gamma} \mathcal{C}_D\to 0.$$
\end{enumerate}
\end{proposition}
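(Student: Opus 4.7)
The strategy for both parts is the same: restrict the trace map $\gamma$ to the appropriate domain and identify the kernel and the image. For (i), the inclusion $D_{\rm B}\subseteq D_{{\rm B}'}$ is immediate from $B\subseteq B'$ and the definition $\dom(D_{\rm B})=\gamma^{-1}(B)\cap \dom(D_{\max})$. The plan is to consider the bounded linear map $\pi : \dom(D_{{\rm B}'})\to B'/B$ obtained by composing $\gamma|_{\dom(D_{{\rm B}'})}:\dom(D_{{\rm B}'})\to B'$ with the quotient $B'\twoheadrightarrow B'/B$. Its kernel is $\dom(D_{\rm B})$ by construction, and surjectivity of $\pi$ reduces to the surjectivity of $\gamma:\dom(D_{\max})\to \checkH(D)$ together with the observation that any $f\in \dom(D_{\max})$ with $\gamma(f)\in B'$ automatically lies in $\dom(D_{{\rm B}'})$. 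The two ``in particular'' statements then follow by specialising to $B=\{0\}$ (giving $D_{\rm B}=D_{\min}$) and to $B'=\checkH(D)$ (giving $D_{{\rm B}'}=D_{\max}$).

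For (ii), the plan is to restrict $\gamma$ to $\ker(D_{\rm B})$. The image is automatically contained in $\mathcal{C}_D\cap B$: for $f\in \ker(D_{\rm B})$ one has $f\in \ker(D_{\max})$ so $\gamma(f)\in \mathcal{C}_D$, and $f\in \dom(D_{\rm B})$ so $\gamma(f)\in B$. The kernel of $\gamma|_{\ker(D_{\rm B})}$ is $\ker(D_{\rm B})\cap \dom(D_{\min})$, which coincides with $\ker(D_{\min})$ since $D_{\rm B}$ agrees with $D_{\min}$ on $\dom(D_{\min})$. For surjectivity, given any $\xi\in \mathcal{C}_D\cap B$, write $\xi=\gamma(f)$ for some $f\in \ker(D_{\max})$; since $\xi\in B$, one has $f\in \dom(D_{\rm B})$ by definition, and $D_{\rm B}f=D_{\max}f=0$, so $f\in \ker(D_{\rm B})$. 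The second displayed short exact sequence is the special case $B=\checkH(D)$.

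The remaining point is to verify that these are short exact sequences of Hilbert spaces rather than merely vector spaces. All spaces involved carry natural Hilbert space structures (graph norms on the domains, induced norms on closed subspaces, quotient norms on quotients), and all maps are bounded by construction. The only subtlety is closedness of the right-most terms: closedness of $B'/B$ is automatic since $B\subseteq B'$ are closed, while closedness of $\mathcal{C}_D\cap B$ inside $\checkH(D)$ follows from $B$ being closed by assumption together with the later-established fact that $\mathcal{C}_D\subseteq \checkH(D)$ is closed. Given closedness, the identifications of the right-hand terms with the corresponding quotients of the middle terms by the left-hand terms are Banach space (hence Hilbert space) isomorphisms by the open mapping theorem applied to the bounded surjections constructed above. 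The main conceptual obstacle is simply the bookkeeping needed to see that every piece of the diagram is induced by $\gamma$ and fits together consistently; once the surjectivity of $\gamma:\dom(D_{\max})\to \checkH(D)$ with kernel $\dom(D_{\min})$ is in hand, the argument is purely formal.
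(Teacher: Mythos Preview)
Your proof is correct and follows essentially the same approach as the paper: the paper defers to the abstract Propositions~\ref{Prop:DomCompABS} and~\ref{sesforkernelslsa} in Appendix~\ref{bärballappsection}, whose proofs amount precisely to the trace-map argument you spell out (identify kernel and image of $\gamma$ restricted to the relevant domain, then invoke the open mapping theorem). Your treatment is in fact more careful than the paper's on one point: you explicitly flag that the Hilbert-space structure on $\mathcal{C}_D\cap B$ in part~(ii) relies on closedness of $\mathcal{C}_D$, which is only established later; the paper's appendix version (Proposition~\ref{sesforkernelslsa}) actually states this only as a short exact sequence of vector spaces.
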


\begin{proof}
The result follows from the Propositions \ref{Prop:DomCompABS} and \ref{sesforkernelslsa}.
\end{proof}

\begin{theorem}
\label{Thm:ClosedRangeCharHO} 
Let $D$ be an elliptic differential operator of order $m>0$ and $B$ be a generalised boundary condition. Then the following holds:
\begin{enumerate}[(i)]
\item  \label{Thm:ClosedRangeCharHO1} $\ran(D_{\rm B}) = \ran(D_{{\rm B} + \Ca_D})$  and it is closed if and only if $B + \Ca_D$ is a boundary condition (i.e. closed in $\checkH(D)$). 
\item \label{Thm:ClosedRangeCharHO2} $\ker(D_{\rm B})$ is finite-dimensional if and only if $B\cap \Ca_D$ is finite-dimensional.
\item \label{Thm:ClosedRangeCharHO3} $\ran(D_{\rm B})$ has finite algebraic codimension if and only if $B + \Ca_D$ has finite algebraic codimension in $\checkH(D)$ if and only if $\ran(D_{\rm B})$ is closed and $\ran(D_{\rm B})^\perp$ is finite-dimensional.
\end{enumerate}
\end{theorem}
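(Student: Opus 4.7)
The plan is to deduce all three parts from the abstract functional-analytic structure of Proposition \ref{domainandkernelHO}, combined with the identification $\mathcal{C}_D = \gamma\ker(D_{\max})$ and the Fredholm-type properties of the minimal and maximal realisations supplied by Proposition \ref{geneprop}. Part \ref{Thm:ClosedRangeCharHO2} is immediate: the short exact sequence $0 \to \ker(D_{\min}) \to \ker(D_{\rm B}) \to B \cap \mathcal{C}_D \to 0$ of Proposition \ref{domainandkernelHO}(ii), together with $\dim \ker(D_{\min}) < \infty$, makes finite-dimensionality of $\ker(D_{\rm B})$ equivalent to that of $B \cap \mathcal{C}_D$.

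For part \ref{Thm:ClosedRangeCharHO1}, I would first establish the range identity $\ran(D_{\rm B}) = \ran(D_{{\rm B}+\mathcal{C}_D})$ by a direct lifting argument: given $f \in \dom(D_{{\rm B}+\mathcal{C}_D})$, decompose $\gamma(f) = b + c$ with $b \in B$ and $c \in \mathcal{C}_D$, pick $h \in \ker(D_{\max})$ with $\gamma(h) = c$, and observe that $f - h \in \dom(D_{\rm B})$ with $D(f-h) = Df$; the reverse inclusion is automatic. For the closedness characterisation, the strategy is a chain of equivalences. Since $\gamma : \dom(D_{\max}) \to \checkH(D)$ is by construction a topological quotient map, closedness of $B + \mathcal{C}_D$ in $\checkH(D)$ is equivalent to closedness of $\dom(D_{{\rm B}+\mathcal{C}_D}) = \gamma^{-1}(B + \mathcal{C}_D)$ in $\dom(D_{\max})$. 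Moreover $\ker(D_{\max}) \subseteq \dom(D_{{\rm B}+\mathcal{C}_D})$ automatically, and $D_{\max}$ descends to a topological isomorphism $\dom(D_{\max})/\ker(D_{\max}) \cong \ran(D_{\max})$ by the open mapping theorem, using closedness of $\ran(D_{\max})$ from Proposition \ref{geneprop}. Under this isomorphism the subspace $\dom(D_{{\rm B}+\mathcal{C}_D})/\ker(D_{\max})$ corresponds to $\ran(D_{{\rm B}+\mathcal{C}_D}) = \ran(D_{\rm B})$, so closedness transfers across the entire chain.

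For part \ref{Thm:ClosedRangeCharHO3}, combining part \ref{Thm:ClosedRangeCharHO1} with the orthogonal splitting $\Lp{2}(M;F) = \ker(D_{\min}^\dagger) \oplus^\perp \ran(D_{\max})$ of Proposition \ref{geneprop} and the algebraic isomorphism $\dom(D_{\max})/\dom(D_{{\rm B}+\mathcal{C}_D}) \cong \checkH(D)/(B+\mathcal{C}_D)$ induced by $\gamma$ yields the vector-space identification
\[
\Lp{2}(M;F)/\ran(D_{\rm B}) \cong \ker(D_{\min}^\dagger) \oplus \checkH(D)/(B + \mathcal{C}_D).
\]
Since $\dim \ker(D_{\min}^\dagger) < \infty$, this gives the equivalence of the first two conditions. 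The remaining equivalence with closed range plus finite-dimensional orthogonal complement follows from the classical Fredholm fact that a closed operator between Hilbert spaces whose range has finite algebraic codimension automatically has closed range; concretely, lift a finite-dimensional algebraic complement $N$ of $B + \mathcal{C}_D$ in $\checkH(D)$ via $\gamma$ to $\tilde N \subseteq \dom(D_{\max})$ and apply the open mapping theorem to the bounded surjection $\dom(D_{\rm B}) \oplus \tilde N \to \ran(D_{\max})$, $(f, n) \mapsto Df + D_{\max} n$. Once $\ran(D_{\rm B})$ is closed, its orthogonal complement inherits finite-dimensionality from the Hilbert-space quotient above.

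The main technical friction lies in the last step of part \ref{Thm:ClosedRangeCharHO3}, where one must upgrade a purely algebraic statement about codimensions to the topological assertion of closedness of the range. Everything else reduces to tracking subspaces through the topological quotient $\gamma$ and the open-mapping isomorphism $\dom(D_{\max})/\ker(D_{\max}) \cong \ran(D_{\max})$ provided by Proposition \ref{geneprop}.
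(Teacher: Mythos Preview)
Your argument essentially unpacks the abstract appendix results (Lemma~\ref{Lem:RanABS}, Theorem~\ref{Thm:ClosedRangeCharABS}, Proposition~\ref{Prop:FinCharABS}) that the paper simply cites, so the approaches coincide. Parts~\ref{Thm:ClosedRangeCharHO1} and~\ref{Thm:ClosedRangeCharHO2} are handled correctly, and your vector-space identification $\Lp{2}(M;F)/\ran(D_{\rm B}) \cong \ker(D_{\min}^\dagger) \oplus \checkH(D)/(B+\Ca_D)$ gives the equivalence of the first two conditions in~\ref{Thm:ClosedRangeCharHO3} cleanly.

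There is, however, a genuine gap in the final step of~\ref{Thm:ClosedRangeCharHO3}, namely the passage from finite algebraic codimension to closed range, when $B$ is merely a \emph{generalised} boundary condition. Your open-mapping argument on $\dom(D_{\rm B}) \oplus \tilde N \to \ran(D_{\max})$ requires $\dom(D_{\rm B})$ to be complete, which fails precisely when $B$ is not closed; and the gap is not repairable in that generality. Indeed, take $B = \ker\phi$ for a discontinuous linear functional $\phi$ on $\checkH(D)$ with $\phi\rest{\Ca_D}=0$ (such $\phi$ exist since $\checkH(D)/\Ca_D$ is infinite-dimensional): then $B+\Ca_D=B$ has codimension one but is dense, and tracing through the isomorphism $\dom(D_{\max})/\ker(D_{\max})\cong\ran(D_{\max})$ shows $\ran(D_{\rm B})$ is a proper dense subspace of $\ran(D_{\max})$, hence has finite codimension in $\Lp{2}(M;F)$ without being closed. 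The paper's own proof, which defers to Proposition~\ref{Prop:FinCharABS} where this equivalence is declared ``standard'', carries the same issue. For genuine boundary conditions (closed $B$) your open-mapping argument does go through, and that is the case of interest in the rest of the paper.
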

\begin{proof}
Item \ref{Thm:ClosedRangeCharHO1} follows from Theorem \ref{Thm:ClosedRangeCharABS},  Proposition \ref{geneprop}, and Lemma \ref{Lem:RanABS}.
Item \ref{Thm:ClosedRangeCharHO2} follows from Proposition \ref{domainandkernelHO}. 
Item \ref{Thm:ClosedRangeCharHO3} follows from Proposition \ref{Prop:FinCharABS}.
\end{proof}

\begin{corollary}
Let $D$ be an elliptic differential operator of order $m>0$ and $B\subseteq \bigoplus_{j=0}^{m-1} \SobH {m-{\frac12}-j}(\Sigma, E)$ be closed in $\checkH(D)$. Then $B+\Ca_D\subseteq \checkH(D)$ is closed.
\end{corollary}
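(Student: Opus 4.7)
The plan is to chain together three results already established in the paper: the characterisation of semi-regularity in terms of the regularity of $B$, the closed range property for semi-regular realisations, and the characterisation of closed range in terms of closedness of $B+\mathcal{C}_D$.

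First, I would invoke Proposition \ref{charsemiellintermsofboundary}. Since $B$ is a boundary condition (closed in $\checkH(D)$) and by hypothesis $B \subseteq \bigoplus_{j=0}^{m-1} \SobH{m-\frac{1}{2}-j}(\Sigma, E)$, that proposition gives that the realisation $D_{\rm B}$ is semi-regular, i.e. $\dom(D_{\rm B}) \subseteq \SobH{m}(M;E)$.

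Next, I would apply Proposition \ref{Prop:MaxClosed}. Semi-regularity of $D_{\rm B}$ together with the compactness of the domain inclusion $\SobH{m}(M;E) \hookrightarrow \Lp{2}(M;E)$ implies that $\ran(D_{\rm B})$ is closed in $\Lp{2}(M;F)$ (this is the content of that proposition, via the abstract Proposition \ref{closedragneandofiffni} in Appendix \ref{bärballappsection}).

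Finally, I would invoke Theorem \ref{Thm:ClosedRangeCharHO}\ref{Thm:ClosedRangeCharHO1}. It states that $\ran(D_{\rm B})$ is closed if and only if $B + \mathcal{C}_D$ is closed in $\checkH(D)$. Combining with the previous step yields that $B + \mathcal{C}_D$ is closed, as desired. There is no substantial obstacle here since the corollary is a direct concatenation of previously established results; the only point one has to be careful about is that $B$ itself is assumed closed in $\checkH(D)$ (the natural topology on the Cauchy data space) and not merely in the larger Sobolev space, which is what is required to apply Proposition \ref{charsemiellintermsofboundary} to conclude semi-regularity.
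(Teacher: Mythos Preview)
Your proposal is correct and follows exactly the same argument as the paper's proof: apply Proposition \ref{charsemiellintermsofboundary} to get semi-regularity, then Proposition \ref{Prop:MaxClosed} to get closed range of $D_{\rm B}$, and finally Theorem \ref{Thm:ClosedRangeCharHO}\ref{Thm:ClosedRangeCharHO1} to translate this into closedness of $B+\Ca_D$ in $\checkH(D)$.
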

\begin{proof}
The space $B$ defines a semi-regular boundary condition by Proposition \ref{charsemiellintermsofboundary}. By Proposition \ref{Prop:MaxClosed}, $D_{\rm B}$ has closed range so $B+\Ca_D\subseteq \checkH(D)$ is closed by Theorem \ref{Thm:ClosedRangeCharHO}.
\end{proof}

A significant consequence of Theorem \ref{Thm:ClosedRangeCharHO} is that it allows us to relate the closedness of the range of a realisation of a boundary condition to the closedness of an associated boundary condition. 
It may seem that this implicitly asserts that an operator needs to be closed in order to have closed range.
This is not the case, as highlighted by the following quintessential example.

\begin{example}
\label{funnyboudnarydad}
Consider the generalised boundary condition 
$$B_{\rm Sob} = \bigoplus_{j=0}^{m-1} \SobH {m-{\frac12}-j}(\Sigma; E),$$ 
for an elliptic operator $D$ of order $m>0$. The space $B_{\rm Sob}$ is not a closed subspace of $\checkH(D)$, so the associated operator $D_{{\rm B}_{\rm Sob}}$ is not a closed operator. One can in fact show that $\overline{D_{{\rm B}_{\rm Sob}}}=D_{{\rm B}_{\rm Sob}+\mathcal{C}}=D_{\max}$. We can prove that $D_{{\rm B}_{\rm Sob}}$ has closed range and as such, $B_{\rm Sob}+\mathcal{C}_D$ is closed in $\checkH(D)$. We return to this example below in in Example \ref{funnyboudnarydadreturn} below and  prove, amongst other things,  that $B_{\rm Sob}+\mathcal{C}_D=\checkH(D)$.

Let us prove that $D_{{\rm B}_{\rm Sob}}$ has closed range. By a similar argument as in Proposition \ref{charsemiellintermsofboundary}, we see that 
$$\dom(D_{{\rm B}_{\rm Sob}})=\SobH m(M;E).$$
We can assume that $M$ is a domain with smooth boundary in a compact manifold $\hat{M}$, that $E,F$ extend to Hermitian vector bundles $\hat{E},\hat{F}\to \hat{M}$ and that there exists an elliptic differential operator $\hat{D}$ of order $m$ extending $D$ to $\hat{M}$. Let $\hat{T}\in \Psi^{-m}_{\rm cl}(\hat{M};\hat{F},\hat{E})$ denote a parametrix of $\hat{D}$ such that $\hat{D}\hat{T}-1$ is a projection onto the finite-dimensional space $\ker(\hat{D}^\dagger)$ and $\hat{T}\hat{D}-1$ is a projection onto the finite-dimensional space $\ker(\hat{D})$. We define the continuous operator $T:=\hat{T}|:\Lp{2}(M;F)\to \SobH m(M;E)$ which by construction inverts $D: \SobH m(M;E)\to \Lp{2}(M;F)$ (viewed as a continuous operator) from the right up to smoothing operators. From the compactness of $(1-DT)$ and Fredholm's theorem, we conclude that 
$$\ran(D_{{\rm B}_{\rm Sob}})=\ran(D: \SobH m(M;E)\to \Lp{2}(M;F))\supseteq \ran(DT: \Lp{2}(M;F)\to \Lp{2}(M;F)),$$ 
has finite algebraic codimension and is closed in $\Lp{2}(M;F)$.
\end{example} 

By understanding closedness of the ranges of an operator, we are able to understand which boundary conditions yield Fredholm operators.
In applications to index theory, it is essential to understand which boundary conditions yield such realisations.
This is described in the following theorem.

\begin{theorem}
\label{Thm:FredChar}
Let $D$ be an elliptic operator of order $m>0$ and $B$ be a boundary condition. Then the following are equivalent:
\begin{enumerate}[(i)]
\item \label{Thm:FredChar1} 
$(B, \Ca_D)$ is a Fredholm pair in $\checkH(D)$; 
\item \label{Thm:FredChar2}  
$D_{\rm B}$ is a Fredholm operator.  
\end{enumerate}
If either of these equivalent conditions hold, we have that
$$ B^\ast \cap \Ca_{D^\dagger} \cong \faktor{\checkH(D)}{(B + \Ca_D)}.$$
Moreover, 
$$\indx(D_{\rm B}) = \indx(B, \Ca_D) + \dim \ker (D_{\min}) - \dim \ker (D_{\min}^\dagger).$$
\end{theorem}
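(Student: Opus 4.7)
The strategy is to reduce everything to the two structural results already in hand: Proposition \ref{domainandkernelHO}, which identifies $\ker(D_{\rm B})$ modulo the always finite-dimensional $\ker(D_{\min})$ with $B \cap \mathcal{C}_D$, together with Theorem \ref{Thm:ClosedRangeCharHO}, which relates closedness and codimension of $\ran(D_{\rm B})$ to the corresponding properties of $B + \mathcal{C}_D$ inside $\checkH(D)$. The equivalence \ref{Thm:FredChar1} $\Leftrightarrow$ \ref{Thm:FredChar2} then falls out by matching definitions: $D_{\rm B}$ is Fredholm exactly when $\ker(D_{\rm B})$ is finite-dimensional, $\ran(D_{\rm B})$ is closed, and $\coker(D_{\rm B})$ is finite-dimensional. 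By Theorem \ref{Thm:ClosedRangeCharHO}\ref{Thm:ClosedRangeCharHO2} and Proposition \ref{geneprop}, the first condition is equivalent to $B \cap \mathcal{C}_D$ being finite-dimensional, while items \ref{Thm:ClosedRangeCharHO1} and \ref{Thm:ClosedRangeCharHO3} of Theorem \ref{Thm:ClosedRangeCharHO} convert the last two conditions into closedness and finite codimension of $B + \mathcal{C}_D$ in $\checkH(D)$, which is precisely the definition of $(B, \mathcal{C}_D)$ being a Fredholm pair.

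For the isomorphism $B^* \cap \mathcal{C}_{D^\dagger} \cong \checkH(D)/(B + \mathcal{C}_D)$ in the Fredholm case, I would invoke the duality developed in Appendix \ref{app:bspaceandbc}: the boundary pairing $\omega : \checkH(D^\dagger) \times \checkH(D) \to \C$ coming from Green's formula is non-degenerate, and by definition $B^*$ is the $\omega$-annihilator of $B$. Green's formula applied to $f \in \ker(D_{\max})$ and $g \in \ker(D^\dagger_{\max})$ shows that $\mathcal{C}_{D^\dagger}$ is contained in (and in fact equals) the $\omega$-annihilator of $\mathcal{C}_D$. Hence $B^* \cap \mathcal{C}_{D^\dagger}$ is the joint annihilator of $B + \mathcal{C}_D$, and since $B + \mathcal{C}_D$ is closed of finite codimension under the Fredholm hypothesis, duality yields an isomorphism with $\checkH(D)/(B + \mathcal{C}_D)$ (of finite-dimensional spaces, so there is no topological ambiguity).

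For the index formula, the plan is to combine the short exact sequence of Proposition \ref{domainandkernelHO}\ref{geneprop:2} applied to $D_{\rm B}$ with the same sequence applied to the adjoint realisation $(D_{\rm B})^* = D^\dagger_{B^*}$, which produces
\begin{align*}
\dim \ker(D_{\rm B}) &= \dim \ker(D_{\min}) + \dim(B \cap \mathcal{C}_D),\\
\dim \coker(D_{\rm B}) &= \dim \ker(D^\dagger_{B^*}) = \dim \ker(D_{\min}^\dagger) + \dim(B^* \cap \mathcal{C}_{D^\dagger}).
\end{align*}
Substituting the isomorphism just established in the second line, and recalling that $\indx(B, \mathcal{C}_D) = \dim(B \cap \mathcal{C}_D) - \dim \checkH(D)/(B + \mathcal{C}_D)$, subtraction yields the stated identity.

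The main obstacle is the annihilator duality underlying the second paragraph: one must verify that the boundary pairing descends to a genuinely non-degenerate pairing on the quotient Hilbert spaces $\checkH(D) \times \checkH(D^\dagger) \to \C$, and that equality (not merely inclusion) holds in $\mathcal{C}_{D^\dagger} = \mathcal{C}_D^{\perp_\omega}$. The easy direction is Green's formula, but the reverse inclusion requires that any $\eta \in \checkH(D^\dagger)$ annihilating all of $\mathcal{C}_D$ actually arises as $\gamma(g)$ for some $g \in \ker(D^\dagger_{\max})$. This rests on the structural theory of $D_{\max}$ and is where the existence of a Calderón-type projection (or equivalent solvability result) does the real work.
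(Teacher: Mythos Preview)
Your argument for the equivalence \ref{Thm:FredChar1}$\Leftrightarrow$\ref{Thm:FredChar2} and your derivation of the index formula match the paper's proof (which simply invokes the abstract Theorem~\ref{Thm:FredCharABS}, itself built on exactly the ingredients you cite). The difference is in how you obtain the isomorphism $B^\ast\cap\Ca_{D^\dagger}\cong\checkH(D)/(B+\Ca_D)$.

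The paper does \emph{not} argue via annihilators in the boundary pairing. Instead it computes $\ker((D_{\rm B})^\ast)=\ker(D^\dagger_{B^\ast})$ in two ways: once by the kernel short exact sequence applied to $D^\dagger_{B^\ast}$, giving $\ker(D_{\min}^\dagger)\oplus(B^\ast\cap\Ca_{D^\dagger})$, and once by the range formula $\ker((D_{\rm B})^\ast)\cong\Lp{2}(M;F)/\ran(D_{\rm B})\cong\ker(D_{\min}^\dagger)\oplus\checkH(D)/(B+\Ca_D)$ (this is Proposition~\ref{Prop:FinCharABS}, resting on Lemma~\ref{Lem:L2RangeABS}). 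Comparing dimensions gives the isomorphism immediately, using only the closed-range property of the FAP and nothing about the pairing.

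Your duality route works, but not with the tools you name. Non-degeneracy of $\omega$ gives only an injection $(B+\Ca_D)^{\perp_\omega}\hookrightarrow\bigl(\checkH(D)/(B+\Ca_D)\bigr)^\ast$, hence $\dim(B^\ast\cap\Ca_{D^\dagger})\leq\dim\checkH(D)/(B+\Ca_D)$; to get equality you need the pairing to be \emph{perfect}, which in this paper is Theorem~\ref{boundarypairing} and is proved only later using the Calder\'on projector. So your approach either forward-references or leaves a genuine gap. You also slightly misdiagnose the obstacle: the identity $\Ca_{D^\dagger}=(\Ca_D)^{\perp_\omega}$ that you flag as hard follows from closed range alone. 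Indeed, if $\eta=\gamma v$ annihilates $\Ca_D$ then $D^\dagger_{\max}v\perp\ker(D_{\max})$, so $D^\dagger_{\max}v\in\ran(D^\dagger_{\min})$ (closed by Proposition~\ref{geneprop}), and subtracting a preimage in $\dom(D^\dagger_{\min})$ puts $v$ into $\ker(D^\dagger_{\max})$ without changing $\gamma v$. The real place Calder\'on enters your argument is the perfectness step, not this one.
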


\begin{proof}
Follows from Theorem \ref{Thm:FredCharABS}.
\end{proof}

\begin{remark}
\label{hominvariancremakd}
A direct corollary of Theorem \ref{Thm:FredChar} is that whenever $B$ and $B'$ are two Fredholm boundary condition that are norm-continuously homotopic via Fredholm boundary conditions\footnote{Norm-continuously homotopic in the sense that $B$ and $B'$ are images of norm-continuously homotopic projectors where all projections along the homotopy define Fredholm boundary conditions.} then it holds that 
$$\indx(D_{\rm B})=\indx(D_{{\rm B}'}).$$
However, Theorem \ref{rigidityformaula} below provides a more general result concerning homotopy invariance so we omit the details of this argument.
\end{remark}

\begin{corollary}
\label{ellipticimpliesfredholmatbc}
Let $D$ be an elliptic operator of order $m>0$. Assume that  $B\subseteq \bigoplus_{j=0}^{m-1} \SobH {m-{\frac12}-j}(\Sigma; E)$ is closed in $\checkH(D)$ and satisfies that $B^*\subseteq  \bigoplus_{j=0}^{m-1} \SobH {m-{\frac12}-j}(\Sigma; F)$. Then $(B, \Ca_D)$ is a Fredholm pair in $\checkH(D)$ and 
$$ B^\ast \cap \Ca_{D^\dagger} \cong \faktor{\checkH(D)}{(B + \Ca_D)}.$$
\end{corollary}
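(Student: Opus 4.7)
The plan is to reduce this corollary to an immediate consequence of previously established results, by observing that the hypothesis simply reformulates the regularity of $B$ in the sense of Definition \ref{def:elliptiiclcld}, which via Proposition \ref{Prop:MaxClosed} guarantees the Fredholm property, so that Theorem \ref{Thm:FredChar} yields the conclusion.

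First, I would apply Proposition \ref{charsemiellintermsofboundary} to the two inclusion hypotheses: the hypothesis $B\subseteq \bigoplus_{j=0}^{m-1} \SobH{m-\frac12-j}(\Sigma;E)$ says that $D_{\rm B}$ is semi-regular, while the hypothesis $B^*\subseteq \bigoplus_{j=0}^{m-1} \SobH{m-\frac12-j}(\Sigma;F)$, applied to the adjoint problem for $D^\dagger$, says that $D^\dagger_{{\rm B}^*}$ is semi-regular. Since $(D_{\rm B})^*=D^\dagger_{{\rm B}^*}$ by the general adjointness property of boundary conditions recalled in Subsection \ref{subsec:bcsearly}, we conclude that both $D_{\rm B}$ and its adjoint are semi-regular, i.e.\ $B$ is a regular boundary condition in the sense of Definition \ref{def:elliptiiclcld}.

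Next, I would invoke Proposition \ref{Prop:MaxClosed}: since $B$ is regular, $D_{\rm B}$ is a Fredholm operator. With the Fredholm property in hand, Theorem \ref{Thm:FredChar} applies and gives directly that $(B,\Ca_D)$ is a Fredholm pair in $\checkH(D)$ together with the claimed isomorphism
\[
B^\ast \cap \Ca_{D^\dagger} \cong \faktor{\checkH(D)}{(B + \Ca_D)}.
\]

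There is no genuine obstacle here; the corollary is essentially a bookkeeping statement that links the regularity criterion of Proposition \ref{charsemiellintermsofboundary} with the Fredholm criterion of Theorem \ref{Thm:FredChar} via the Fredholm conclusion of Proposition \ref{Prop:MaxClosed}. The only point requiring mild care is the correct identification of the adjoint realisation, $(D_{\rm B})^*=D^\dagger_{{\rm B}^*}$, so that the assumed Sobolev regularity of $B^*$ can be translated into semi-regularity of the Hilbert space adjoint of $D_{\rm B}$.
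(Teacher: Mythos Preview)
Your proposal is correct and follows essentially the same approach as the paper: the paper's proof simply states that the assumptions are equivalent to $B$ being regular (which you unpack via Proposition~\ref{charsemiellintermsofboundary} and the identity $(D_{\rm B})^*=D^\dagger_{{\rm B}^*}$), that regularity implies Fredholmness by Proposition~\ref{Prop:MaxClosed}, and then Theorem~\ref{Thm:FredChar} gives the conclusion. Your write-up just makes the intermediate steps explicit.
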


\begin{proof}
Follows from Theorem \ref{Thm:FredChar} by noting that our assumptions are equivalent to $B$ being regular which implies that it is Fredholm by Proposition \ref{Prop:MaxClosed}.
\end{proof}

\subsection{The boundary pairing}
\label{subsec:preliminaryboundary}

Consider $\checkh_D$ with the quotient map $\dom(D_{\max}) \to \faktor{\dom(D_{\max})}{\dom(D_{\min})} = \checkh_D$ as a Cauchy data space.
Then, by abstract nonsense, the boundary pairing
$$\omega_{D}:\check{\mathhil{h}}_{D^\dagger}\times \check{\mathhil{h}}_{D}\to \C,$$
defined for $f\in \dom (D^\dagger_{\rm max})$, $g\in \dom (D_{\rm max})$ as
$$\quad \omega_D([f],[g]):=\langle D^\dagger f,g\rangle_{\Lp{2}(M;E)}-\langle f,Dg\rangle_{\Lp{2}(M;F)},$$
is well-defined and continuous. By an abuse of notation, we consider the boundary pairing as a sesquilinear mapping $\omega_D:\checkH(D^\dagger)\times \checkH(D)\to \C$. 

To describe $\omega_D$ on $\checkH(D)$ and $\checkH(D^\dagger)$ in greater detail, we introduce the following $m\times m$-matrix
\begin{equation}
\label{taudeffed}
\tau=
\begin{pmatrix} 
0&0&\cdots &0& 0&1\\
0&0&\cdots &0& 1&0\\
0&0&\cdots &1& 0&\vdots \\
\vdots &\vdots&\Ddots &0&\vdots & \\
0&1&\cdots &0& 0&0\\
1&0&\cdots &0& 0&0
\end{pmatrix}
\end{equation}
We introduce the notation $A_l=(A_l(x_n))_{x_n\in [0,1]}$ for the family of differential operators $\Ck{\infty}(\Sigma,E)\to \Ck{\infty}(\Sigma,F)$ of order $l$ such that 
$$D=\sum_{l=0}^m A_lD_{x_n}^{m-l},$$
near $\Sigma$. 
The following proposition follows from the computations on \cite[page 794]{seeley65}. See also \cite[Proposition 1.3.2]{gerdsgreenbook}.

\begin{proposition}
\label{proponboundarypairingforell}
Let $D$ be an elliptic differential operator of order $m>0$. There exists an $m\times m$ matrix of differential operators 
$$\widetilde{\scalebox{1.5}{a}}=\begin{pmatrix} 
A_0&0&\cdots &0& 0&0\\
A_{1,m-1}&A_0&\cdots &0& 0&0\\
A_{2,m-2}&A_{1,m-2}&\ddots &0& \vdots&\vdots \\
\vdots &\vdots&\ddots &A_0&0 &0 \\
A_{m-2,2}&A_{m-3,2}&\cdots &A_{1,2}& A_0&0\\
A_{m-1,1}&A_{m-2,1}&\cdots &A_{2,1}& A_{1,1}&A_0
\end{pmatrix},$$
where $A_{j,k}$ is a differential operator of order $j$ with the same principal symbol as $A_j|_{x_n=0}$, such that
$$\scalebox{1.5}{a}:=-i\tau \widetilde{\scalebox{1.5}{a}}=-i\begin{pmatrix} 
A_{m-1,1}&A_{m-2,1}&\cdots &A_{2,1}& A_{1,1}&A_0\\
A_{m-2,2}&A_{m-3,2}&\cdots &A_{1,2}& A_0&0\\
A_{m-3,3}&\Ddots& &A_0& 0&\vdots \\
\vdots &&\Ddots &0&\vdots & \\
A_{1,m-2}&A_0&\Ddots && &\\
A_0&0&\cdots &0& 0&0
\end{pmatrix},$$ 
satisfies
\begin{equation*}
\langle D^\dagger u,v\rangle_{\Lp{2}(M;E)}-\langle u, Dv\rangle_{\Lp{2}(M,F)}=\langle \gamma u,\scalebox{1.5}{a} \gamma v\rangle_{\Lp{2}(\Sigma;F\otimes \C^m)},
\end{equation*}
for all $u\in \Ck{\infty}(M;F)$ and $v\in \Ck{\infty}(M;E)$. 
\end{proposition}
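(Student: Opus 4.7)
The strategy is a direct integration-by-parts calculation, localised to a collar neighbourhood of $\Sigma$. The first observation is that since $D^\dagger$ is the formal adjoint, the sesquilinear defect $\omega_D(u,v)=\langle D^\dagger u,v\rangle-\langle u,Dv\rangle$ vanishes whenever $u$ or $v$ is compactly supported in $\interior{M}$. By a partition of unity argument we may therefore assume $u$ and $v$ are supported in a collar $U\cong[0,\varepsilon)\times\Sigma$. In $U$ we have the product expansion $D=\sum_{l=0}^{m}A_l(x_n)D_{x_n}^{m-l}$, and correspondingly $D^\dagger=\sum_{l=0}^{m}D_{x_n}^{m-l}A_l^\dagger$ in the interior (acting distributionally). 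Using $\langle u,A_l w\rangle_{\Lp{2}(\Sigma)}=\langle A_l^\dagger u,w\rangle_{\Lp{2}(\Sigma)}$ at each fixed $x_n$ (no boundary contribution, as $\Sigma$ is closed), the computation of $\omega_D(u,v)$ reduces to the one-dimensional integration-by-parts identity
$$\int_0^\varepsilon\!\langle f,D_{x_n}^j g\rangle_{\Lp{2}(\Sigma)}\,dx_n=\int_0^\varepsilon\!\langle D_{x_n}^j f,g\rangle_{\Lp{2}(\Sigma)}\,dx_n\pm i\sum_{k=0}^{j-1}\langle (D_{x_n}^k f)|_\Sigma,(D_{x_n}^{j-1-k}g)|_\Sigma\rangle_{\Lp{2}(\Sigma)},$$
which is obtained by induction on $j$ from the $j=1$ case and the convention $D_{x_n}=-i\partial_{x_n}$ (the overall sign being dictated by whichever slot of $\langle\cdot,\cdot\rangle$ is conjugated).

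Applied with $f=A_l^\dagger u$, $g=v$, $j=m-l$ and summed over $l$, the bulk contributions reassemble into $\langle D^\dagger u,v\rangle$, cancelling the corresponding term in $\omega_D(u,v)$. What survives is exactly the sum of boundary contributions
$$\omega_D(u,v)=-i\sum_{l=0}^{m-1}\sum_{k=0}^{m-l-1}\langle (D_{x_n}^k A_l^\dagger u)|_\Sigma,(D_{x_n}^{m-l-1-k}v)|_\Sigma\rangle_{\Lp{2}(\Sigma)}.$$
The next step is to re-express this as $\langle \gamma u,\scalebox{1.5}{a}\gamma v\rangle$. Expanding $D_{x_n}^k(A_l^\dagger u)=\sum_{p=0}^{k}\binom{k}{p}(D_{x_n}^{k-p}A_l^\dagger)(D_{x_n}^p u)$ by the Leibniz rule and using a further $\Sigma$-adjoint (which produces no boundary terms) to move the operator-valued coefficients from the $u$-slot back to the $v$-slot, each boundary term becomes a pairing of some $\gamma_p u$ with some $\gamma_q v$ mediated by an $x_n{=}0$ trace of a differential operator built from $A_l$ and its $x_n$-derivatives. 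Converting via $D_{x_n}^j=(-i)^j\partial_{x_n}^j$ produces the factors of $i$ that combine with the $-i$ above to yield the prefactor $-i$ in the statement.

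It remains to recognise the matrix structure. The contributions in which no $x_n$-derivative falls on $A_l^\dagger$ involve the full order-$l$ operator $A_l|_{x_n=0}$ and occur only at positions $(r,s)$ with $r+s=m-1$; they populate the anti-diagonal and, after pulling out the reversal matrix $\tau$ defined in \eqref{taudeffed}, appear as an $A_0$-diagonal of the remaining factor $\widetilde{\scalebox{1.5}{a}}$. All other contributions carry at least one $x_n$-derivative of $A_l^\dagger$, hence are of strictly lower $\Sigma$-order; when collected in the appropriate off-anti-diagonal entry of $\tau\widetilde{\scalebox{1.5}{a}}$ they build an operator $A_{j,k}$ of order $j$ whose principal symbol coincides with that of $A_j|_{x_n=0}$ (the top-order piece coming from the single $A_j^\dagger$-term, all corrections being of lower order on $\Sigma$). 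The lower-triangular pattern with $A_0$ on the diagonal of $\widetilde{\scalebox{1.5}{a}}$ follows by tracking which pairs $(r,s)$ with $r+s\le m-1$ receive contributions.

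The main obstacle is the bookkeeping required in the last two paragraphs: faithfully tracking the factors of $i$, the Leibniz-rule expansion of $D_{x_n}^k(A_l^\dagger u)$, and the re-adjunction on $\Sigma$, while simultaneously verifying that every lower-order $\Sigma$-correction lands in the off-anti-diagonal entry for which its order is consistent with $A_{j,k}$ sharing its principal symbol with $A_j|_{x_n=0}$. This is a purely algebraic reduction but delicate; the explicit computation is carried out in \cite[p.~794]{seeley65} and \cite[Proposition~1.3.2]{gerdsgreenbook}, to which we ultimately refer.
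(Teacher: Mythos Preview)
Your approach is correct and is essentially the same as the paper's: the paper does not give its own proof but simply states that the proposition ``follows from the computations on \cite[page 794]{seeley65}'' and refers to \cite[Proposition~1.3.2]{gerdsgreenbook}, which are exactly the references you invoke at the end. If anything, you have provided considerably more detail than the paper does, correctly outlining the collar localisation, the one-dimensional integration-by-parts, and the Leibniz/re-adjunction bookkeeping that produces the lower-triangular structure of $\widetilde{\scalebox{1.5}{a}}$.
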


The fact that the order of the differential operators appearing in $\widetilde{\scalebox{1.5}{a}}$ remains constant along diagonals in the matrix shows that, in fact, $\widetilde{\scalebox{1.5}{a}}\in \Psi^{\pmb 0}_{\rm cl}(\Sigma;E\otimes \C^m,F\otimes \C^m)$ via the Douglis-Nirenberg calculus we review in Subsection \ref{subsec:calderonproj}. 
The reader should note that we abuse notation and identify $A_0$ with $A_0|_{x_m=0}$ which is of order $0$, i.e. $A_0\in \Ck{\infty}(\Sigma; \mathrm{Hom}(E,F))$. 
Since $A_0$ is the restriction of the principal symbol to $x_n=0$, $\xi'=0$ and $\xi_n=1$, ellipticity implies that $A_0:E|_\Sigma\to F|_\Sigma$ is a vector bundle isomorphism.

\begin{lemma}
\label{someexresiosnaoadwa}
Let $D$ be an elliptic differential operator of order $m>0$. Let $\widetilde{\scalebox{1.5}{a}}$ and $\widetilde{\scalebox{1.5}{a}}_\dagger$ denote the matrices of differential operators constructed from $D$ and $D^\dagger$, respectively, as in Proposition \ref{proponboundarypairingforell}.  Recalling that $\scalebox{1.5}{a} = -i \tau \widetilde{\scalebox{1.5}{a}}$, it holds that
$$\scalebox{1.5}{a}^*+\scalebox{1.5}{a}_\dagger=0.$$
\end{lemma}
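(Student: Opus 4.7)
The identity $\scalebox{1.5}{a}^* + \scalebox{1.5}{a}_\dagger = 0$ is really a restatement of the antisymmetry of the boundary pairing, and the plan is to obtain it by applying Proposition \ref{proponboundarypairingforell} twice, once to $D$ and once to $D^\dagger$, and then combining the two identities.

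First, apply Proposition \ref{proponboundarypairingforell} to $D$: for any $u \in \Ck{\infty}(M;F)$ and $v \in \Ck{\infty}(M;E)$,
\begin{equation*}
\langle D^\dagger u, v\rangle_{\Lp{2}(M;E)} - \langle u, Dv\rangle_{\Lp{2}(M;F)} = \langle \gamma u, \scalebox{1.5}{a}\, \gamma v\rangle_{\Lp{2}(\Sigma;F\otimes \C^m)}.
\end{equation*}
Next, apply the same Proposition with $D$ replaced by $D^\dagger$, noting that $(D^\dagger)^\dagger = D$ on smooth sections; this produces, for $v \in \Ck{\infty}(M;E)$ and $u \in \Ck{\infty}(M;F)$,
\begin{equation*}
\langle Dv, u\rangle_{\Lp{2}(M;F)} - \langle v, D^\dagger u\rangle_{\Lp{2}(M;E)} = \langle \gamma v, \scalebox{1.5}{a}_\dagger\, \gamma u\rangle_{\Lp{2}(\Sigma;E\otimes \C^m)}.
\end{equation*}

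Now take the complex conjugate of the second identity, using sesquilinearity of the $\Lp{2}$ inner products to swap entries. The left hand side becomes $\langle u, Dv\rangle - \langle D^\dagger u, v\rangle$, i.e. the negative of the left hand side of the first identity, while the right hand side becomes $\langle \scalebox{1.5}{a}_\dagger \gamma u, \gamma v\rangle$. Combining the two displays yields
\begin{equation*}
-\langle \gamma u, \scalebox{1.5}{a}\, \gamma v\rangle = \langle \scalebox{1.5}{a}_\dagger\, \gamma u, \gamma v\rangle,
\end{equation*}
and using the definition of the (formal) Hilbert space adjoint of $\scalebox{1.5}{a}$ on $\Lp{2}(\Sigma;F\otimes \C^m)$ (which is well defined at the level of smooth sections since $\scalebox{1.5}{a}$ and $\scalebox{1.5}{a}_\dagger$ belong to the Douglis-Nirenberg calculus), this rewrites as $\langle (\scalebox{1.5}{a}^* + \scalebox{1.5}{a}_\dagger)\gamma u, \gamma v\rangle = 0$.

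The last step is to remove $\gamma u$ and $\gamma v$. Using collar coordinates and a cut-off one readily verifies that $\gamma:\Ck{\infty}(M;E) \to \Ck{\infty}(\Sigma;E\otimes \C^m)$ is surjective, and similarly for $F$. Therefore $\gamma u$ and $\gamma v$ sweep out all of $\Ck{\infty}(\Sigma;F\otimes\C^m)$ and $\Ck{\infty}(\Sigma;E\otimes\C^m)$, respectively, and the bilinear identity above forces $\scalebox{1.5}{a}^* + \scalebox{1.5}{a}_\dagger = 0$ as operators on smooth sections, hence as elements of $\Psi^{\pmb 0}_{\rm cl}(\Sigma;E\otimes\C^m,F\otimes\C^m)$. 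The only mild subtlety is the bookkeeping with $\tau$ and the conjugation that turns a lower-triangular block of differential operators into an upper-triangular one; this is automatic once one recognises that conjugating the matrix $-i\tau \widetilde{\scalebox{1.5}{a}}$ and using $\tau = \tau^* = \tau^{-1}$ interchanges the two triangular shapes built from $D$ and $D^\dagger$.
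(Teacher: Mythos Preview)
Your proof is correct and follows essentially the same approach as the paper: both derive the identity from the fact that $\scalebox{1.5}{a}$ represents the boundary pairing (Proposition~\ref{proponboundarypairingforell}) together with the conjugate-antisymmetry $\omega_\mathcal{T}(\xi_1,\xi_2)=-\overline{\omega_{\mathcal{T}^\dagger}(\xi_2,\xi_1)}$. The paper simply invokes this antisymmetry abstractly via Proposition~\ref{boundarypairingABS}, whereas you unpack it explicitly by applying Proposition~\ref{proponboundarypairingforell} to both $D$ and $D^\dagger$ and conjugating; the surjectivity of $\gamma$ onto smooth sections then finishes in the same way.
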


\begin{proof}
The lemma follows from that $ \scalebox{1.5}{a}$ defines the boundary pairing as in Proposition \ref{proponboundarypairingforell} and the symmetry condition on the boundary pairing in Proposition \ref{boundarypairingABS}.
\end{proof}

\begin{lemma}
\label{somecontinuuisuforata}
The operators $\widetilde{\scalebox{1.5}{a}}$ (from Proposition \ref{proponboundarypairingforell}) and $\tau$ satisfy the following.
\begin{enumerate}[(i)]
\item \label{somectslem1} 
	For any $s\in \R$, $\tau$ defines a unitary isomorphism 
	$$\bigoplus_{j=0}^{m-1}\SobH {s+j}(\Sigma;F)\to \bigoplus_{j=0}^{m-1}\SobH {m-1+s-j}(\Sigma;F)$$
\item \label{somectslem2}  
	The matrix of differential operators $\widetilde{\scalebox{1.5}{a}}$ is invertible as a matrix of differential operators and 
	$$\widetilde{\scalebox{1.5}{a}}^{-1}=
	\begin{pmatrix} 
	A_0^{-1}&0&\cdots &0& 0&0\\
	B_{1,m-1}&A_0^{-1}&\cdots &0& 0&0\\
	B_{2,m-2}&B_{1,m-2}&\ddots &0& \vdots&\vdots \\
	\vdots &\vdots&\ddots &A_0^{-1}&0 &0 \\
	B_{m-2,2}&B_{m-3,2}&\cdots &B_{1,2}& A_0^{-1}&0\\
	B_{m-1,1}&B_{m-2,1}&\cdots &B_{2,1}& B_{1,1}&A_0^{-1}
	\end{pmatrix},$$
	where $B_{j,k}$ is a differential operator of order $j$.
\item \label{somectslem3} 
	For any $s\in \R$, $\widetilde{\scalebox{1.5}{a}}$ defines a Banach space isomorphism 
	$$\bigoplus_{j=0}^{m-1}\SobH {s-j}(\Sigma;E)\to \bigoplus_{j=0}^{m-1}\SobH {s-j}(\Sigma;F),$$
	and $\scalebox{1.5}{a} = -i \tau \widetilde{\scalebox{1.5}{a}} $ defines a Banach space isomorphism 
	$$\bigoplus_{j=0}^{m-1}\SobH {s+j}(\Sigma;E)\to \bigoplus_{j=0}^{m-1}\SobH {m-1+s-j}(\Sigma;F),$$
\end{enumerate}
\end{lemma}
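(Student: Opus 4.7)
My plan for proving Lemma \ref{somecontinuuisuforata} is to treat the three parts in order, with part (ii) carrying the only real content and parts (i) and (iii) reducing to careful bookkeeping of orders.

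For part \ref{somectslem1}, I observe that $\tau$ is literally the permutation matrix that reverses the indexing of the direct sum: the $j$th factor is sent identically to the $(m-1-j)$th slot. The Sobolev order of the $j$th summand of the source is $s+j$, while the order of the $(m-1-j)$th summand of the target is $m-1+s-(m-1-j)=s+j$, so $\tau$ restricts to the identity between matching components. This makes $\tau$ unitary with inverse $\tau$ itself.

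For part \ref{somectslem2}, I write $\widetilde{\scalebox{1.5}{a}}=D_0+N$ where $D_0=\mathrm{diag}(A_0,\ldots,A_0)$ and $N$ is strictly lower triangular. Ellipticity of $D$ together with the identification of $A_0$ as the principal symbol evaluated at the conormal covector forces $A_0:E|_\Sigma\to F|_\Sigma$ to be a vector bundle isomorphism, hence $D_0$ is invertible with $D_0^{-1}=\mathrm{diag}(A_0^{-1})$. Since $N$ is nilpotent of index $m$, the Neumann-type series
\begin{equation*}
\widetilde{\scalebox{1.5}{a}}^{-1} = \sum_{p=0}^{m-1}(-1)^p (D_0^{-1}N)^p D_0^{-1}
\end{equation*}
terminates and is a matrix of differential operators. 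A direct inspection of entries $(N)_{jk}$ shows they are differential operators of order $j-k$ for $j>k$, so each product $(D_0^{-1}N)^p D_0^{-1}$ contributes to the $(j,k)$ entry of the inverse only through compositions of operators whose total order equals $j-k$. This yields the claimed lower triangular form with entries $B_{j,k}$ of order $j$ (in the same diagonal convention as for $\widetilde{\scalebox{1.5}{a}}$).

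For part \ref{somectslem3}, the key observation is that $\widetilde{\scalebox{1.5}{a}}$ belongs to the Douglis-Nirenberg class with weights $(s,s-1,\ldots,s-(m-1))$ on both source and target: its $(j,k)$ entry is a differential operator of order $j-k$, and therefore sends $\SobH{s-k}$ into $\SobH{s-k-(j-k)}=\SobH{s-j}$. This yields continuity of $\widetilde{\scalebox{1.5}{a}}:\bigoplus_{j=0}^{m-1}\SobH{s-j}(\Sigma;E)\to \bigoplus_{j=0}^{m-1}\SobH{s-j}(\Sigma;F)$. Applying the exact same argument to $\widetilde{\scalebox{1.5}{a}}^{-1}$ from part \ref{somectslem2} gives continuity of the inverse, so $\widetilde{\scalebox{1.5}{a}}$ is a Banach space isomorphism. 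For the statement about $\scalebox{1.5}{a}=-i\tau\widetilde{\scalebox{1.5}{a}}$, I reindex the input $\bigoplus_{j=0}^{m-1}\SobH{s+j}(\Sigma;E)$ via $k=m-1-j$ to rewrite it as $\bigoplus_{k=0}^{m-1}\SobH{s'-k}(\Sigma;E)$ with $s'=s+m-1$. Applying the isomorphism for $\widetilde{\scalebox{1.5}{a}}$ at this shifted parameter $s'$ lands me in $\bigoplus_{k=0}^{m-1}\SobH{s'-k}(\Sigma;F)$, and then part \ref{somectslem1} lets me apply $\tau$ to obtain an isomorphism onto $\bigoplus_{j=0}^{m-1}\SobH{m-1+s-j}(\Sigma;F)$, matching the stated codomain. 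The main (and only slightly subtle) obstacle is precisely this index reshuffling in part \ref{somectslem3}: one must be careful that the Douglis-Nirenberg weights on the input side of $\widetilde{\scalebox{1.5}{a}}$ are the reversed version of the grading appearing in the definition of $\scalebox{1.5}{a}$, so that composing with $\tau$ produces the asserted mapping property.
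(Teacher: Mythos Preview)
Your treatment of items \ref{somectslem1}, \ref{somectslem2}, and the first assertion of \ref{somectslem3} (the mapping property of $\widetilde{\scalebox{1.5}{a}}$) is correct and follows the same line as the paper: the paper inverts $\widetilde{\scalebox{1.5}{a}}$ by Gaussian elimination on the unipotent lower-triangular matrix $A_0^{-1}\widetilde{\scalebox{1.5}{a}}$, which is exactly your terminating Neumann series $\sum_{p=0}^{m-1}(-1)^p(D_0^{-1}N)^pD_0^{-1}$, and both arguments then read off boundedness of $\widetilde{\scalebox{1.5}{a}}$ and $\widetilde{\scalebox{1.5}{a}}^{-1}$ on $\bigoplus_j\SobH{s-j}$ from the fact that the $(j,k)$ entry has order $j-k$.

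There is a genuine gap in your argument for the second assertion of \ref{somectslem3}. Your ``reindexing'' of $\bigoplus_j\SobH{s+j}(\Sigma;E)$ via $k=m-1-j$ is not a harmless relabeling: $\widetilde{\scalebox{1.5}{a}}$ acts on a vector through its component index, so applying $\widetilde{\scalebox{1.5}{a}}$ after this relabeling computes $\widetilde{\scalebox{1.5}{a}}\tau$ rather than $\widetilde{\scalebox{1.5}{a}}$, and the full composite you describe is $\tau\widetilde{\scalebox{1.5}{a}}\tau$, not $\tau\widetilde{\scalebox{1.5}{a}}=i\scalebox{1.5}{a}$. A direct check exposes the problem: the $(0,0)$ entry of $\scalebox{1.5}{a}$ is $-iA_{m-1,1}$, of order $m-1$, and maps the $0$th input component $\SobH{s}(\Sigma;E)$ only into $\SobH{s-m+1}(\Sigma;F)$, which for $m\ge 2$ does not sit inside the claimed $0$th output component $\SobH{m-1+s}(\Sigma;F)$. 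What the honest composition $\scalebox{1.5}{a}=-i\tau\widetilde{\scalebox{1.5}{a}}$ actually produces is an isomorphism
\[
\bigoplus_{j=0}^{m-1}\SobH{s-j}(\Sigma;E)\longrightarrow \bigoplus_{j=0}^{m-1}\SobH{s-m+1+j}(\Sigma;F),
\]
which is exactly the form appearing later in Proposition \ref{returnofsomecontinuuisuforata} and the form needed in the proof of Proposition \ref{boundarypairingforsoblov}. The displayed source and target for $\scalebox{1.5}{a}$ in the lemma appear to be transposed relative to this, and your reindexing manoeuvre is effectively an attempt to force the stated codomain at the cost of silently inserting an extra $\tau$.
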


\begin{proof}
Item \ref{somectslem1}  is immediate. 
Item \ref{somectslem2}  follows from that $A_0^{-1}\widetilde{\scalebox{1.5}{a}}$ is lower triangular with the identity operator on the diagonal, and as such, the inverse of $A_0^{-1}\widetilde{\scalebox{1.5}{a}}$ is computed from polynomial operations on its entries through Gaussian elimination. 
The fact that the differential operator $B_{j,k}$ is of order $j$ follows from a short inspection of the Gaussian elimination. 
To prove item \ref{somectslem3}, we note that any matrix $(C_{j,k})_{j,k=0}^{m-1}$ of differential operators with the order of $C_{j,k}$ being $j-k$, acts continuously on $\bigoplus_{j=0}^{m-1}\SobH {s-j}(\Sigma;F)$. 
Now item \ref{somectslem3} follows from item \ref{somectslem2}.
\end{proof}

Following Appendix \ref{bärballappsection}, we use the notation 
$$\omega_D:\checkH(D^\dagger)\times\checkH(D)\to \C,$$
for the sesquilinear boundary pairing
$$\omega_D(\eta,\xi):=\langle D^\dagger u,v\rangle_{\Lp{2}(M;E)}-\langle u, Dv\rangle_{\Lp{2}(M,F)},$$
for $u\in \gamma^{-1}(\{\eta\})$ and $v\in \gamma^{-1}(\{\xi\})$. This pairing is well defined and non-degenerate by Proposition \ref{boundarypairingABS}. Below in Theorem \ref{boundarypairing}, we shall see that it is perfect. For now we settle with the pairing being perfect on the Sobolev spaces.

\begin{proposition}
\label{boundarypairingforsoblov}
The boundary pairing $\omega_D:\checkH(D^\dagger)\times\checkH(D)\to \C$
extends by continuity to a perfect pairing 
$$\omega_{D,s} :\bigoplus_{j=0}^{m-1} \SobH {-s-j}(\Sigma; F)\times \bigoplus_{j=0}^{m-1} \SobH {m-1+s-j}(\Sigma; E)\to \C,$$
for any $s\in \R$ with 
$$\omega_{D,s}(\eta,\xi)=\langle \eta,\scalebox{1.5}{a} \xi\rangle_{\Lp{2}(\Sigma;F\otimes \C^m)},$$
for $\eta\in \bigoplus_{j=0}^{m-1} \SobH {-s-j}(\Sigma; F)$ and $\xi\in \bigoplus_{j=0}^{m-1} \SobH {m-1+s-j}(\Sigma; E)$.
\end{proposition}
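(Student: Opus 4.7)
The plan is to define $\omega_{D,s}$ directly by the right-hand side formula in the statement, deduce its continuity and perfectness from the structural properties of $\scalebox{1.5}{a}$ combined with standard Sobolev duality, and then reconcile it with $\omega_D$ through Proposition \ref{proponboundarypairingforell} and a density argument.

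First, I would verify the continuity of $\scalebox{1.5}{a}$ as a map between the relevant graded Sobolev spaces. The matrix of $\scalebox{1.5}{a}$ displayed in Proposition \ref{proponboundarypairingforell} has a nonzero entry at position $(i,j)$ only when $i+j\leq m-1$, and that entry is then a differential operator of order $m-1-i-j$. A direct inspection shows that input smoothness $m-1+s-j$ on the $j$-th column drops by exactly $m-1-i-j$ under the $(i,j)$ entry, producing output of smoothness $s+i$ independently of $j$. Hence
$$\scalebox{1.5}{a}:\bigoplus_{j=0}^{m-1}\SobH{m-1+s-j}(\Sigma;E)\longrightarrow\bigoplus_{i=0}^{m-1}\SobH{s+i}(\Sigma;F)$$
is continuous. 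Since $A_0$ is a vector bundle isomorphism sitting on the anti-diagonal of $\scalebox{1.5}{a}$, the Gaussian elimination argument underlying Lemma \ref{somecontinuuisuforata}\ref{somectslem2} applies to $\tau\widetilde{\scalebox{1.5}{a}}$ and produces an inverse of the same structural type, so $\scalebox{1.5}{a}$ is in fact a Banach space isomorphism between these spaces.

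Next, I would combine this isomorphism with the perfect componentwise $\Lp{2}$-based Sobolev duality $\SobH{-s-j}(\Sigma;F)\times\SobH{s+j}(\Sigma;F)\to\C$ to define
$$\omega_{D,s}(\eta,\xi):=\langle\eta,\scalebox{1.5}{a}\xi\rangle.$$
Continuity is then immediate. Perfectness follows because any continuous linear functional on either factor pulls back, via the isomorphism $\scalebox{1.5}{a}$ and the Sobolev duality above, to a unique element of the other factor. Identifying $\omega_{D,s}$ with the continuous extension of $\omega_D$ finally comes from Proposition \ref{proponboundarypairingforell}, which already establishes the formula $\omega_D(\gamma u,\gamma v)=\langle\gamma u,\scalebox{1.5}{a}\gamma v\rangle$ for $u\in\Ck{\infty}(M;F)$ and $v\in\Ck{\infty}(M;E)$, combined with the density of smooth boundary data in each ambient Sobolev space and the joint continuity of both pairings on the respective spaces.

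The hardest part I anticipate is careful bookkeeping. One must confirm that the entrywise order computation for $\scalebox{1.5}{a}$ really produces precisely the pair of graded Sobolev spaces appearing in the Proposition (rather than a shifted or reindexed variant), and that the diagonal Sobolev duality in $j$ genuinely extends the $\Lp{2}(\Sigma;F\otimes\C^m)$ inner product after the index reversal caused by $\tau$. Once these indexing details are checked, the result is an immediate assembly of Proposition \ref{proponboundarypairingforell}, Lemma \ref{somecontinuuisuforata} and the standard Sobolev duality on $\Sigma$.
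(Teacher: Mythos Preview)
Your proposal is correct and follows essentially the same approach as the paper: the paper's proof simply cites Proposition \ref{proponboundarypairingforell} for the formula on smooth sections and then invokes Lemma \ref{somecontinuuisuforata} (specifically the Banach space isomorphism in item \ref{somectslem3}) together with standard Sobolev duality, which is precisely what you have unpacked in detail. Your explicit bookkeeping of the entrywise orders of $\scalebox{1.5}{a}$ and the resulting index match is exactly the content that the paper compresses into the reference to Lemma \ref{somecontinuuisuforata}.
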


\begin{proof}
By Proposition \ref{proponboundarypairingforell}, $\omega_D([\xi],[\xi'])=\langle \xi,\scalebox{1.5}{a} \xi'\rangle_{\Lp{2}(\Sigma;F\otimes \C^m)}$ for $\xi\in \Ck{\infty}(\Sigma;F)$ and $\xi'\in \Ck{\infty}(\Sigma;E)$. 
The statement now follows from Lemma \ref{somecontinuuisuforata}.
\end{proof}

\subsection{Well-posedness in the absence of kernels}
\label{subsec:wellposed}

In the situation of boundary conditions without kernels, we are able to understand some properties of associated PDEs.

\begin{proposition}
Let $D$ be an elliptic operator of order $m>0$. Suppose that $B$ is a semi-regular boundary condition such that $\ker(D_{{\rm B}}) = 0$.
Then, the spectrum of $D_{\rm B}^\ast D_{{\rm B}}$ consists solely of discrete spectrum in $(0,\infty)$ and letting $\lambda_1$ denote its smallest nonzero eigenvalue, we have the Poincaré inequality
$$\sqrt{\lambda_1}  \norm{u}_{\Lp{2}(M;E)} \leq  \norm{D_{{\rm B}} u}_{\Lp{2}(M;F)}$$ 
for all $u \in \dom(D_{{\rm B}})$. 
\end{proposition}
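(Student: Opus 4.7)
The plan is to reduce both assertions to the fact that $T := D_{\rm B}^\ast D_{\rm B}$ is a non-negative self-adjoint operator on $\Lp{2}(M;E)$ with compact resolvent; then the discreteness of $\spec(T)$ and the Poincar\'e inequality both follow from standard spectral theory of self-adjoint operators. Since $D_{\rm B}$ is closed and densely defined (by the correspondence from Part \ref{firstof:2} of Theorem \ref{firstofmsmsdmds}), von Neumann's theorem produces $T$ as the positive self-adjoint operator associated with the closed non-negative quadratic form $q(u) := \norm{D_{\rm B} u}_{\Lp{2}(M;F)}^2$ whose form domain is $\dom(D_{\rm B})$.

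The key analytic input would be compactness of the inclusion $\dom(D_{\rm B}) \hookrightarrow \Lp{2}(M;E)$, where $\dom(D_{\rm B})$ carries its graph norm. Semi-regularity, via Proposition \ref{charsemiellintermsofboundary}, yields the set-theoretic inclusion $\dom(D_{\rm B}) \subseteq \SobH{m}(M;E)$, and a closed graph argument upgrades this to a continuous embedding. Since $M$ is compact, Rellich--Kondrachov makes $\SobH{m}(M;E) \hookrightarrow \Lp{2}(M;E)$ compact, so $\dom(D_{\rm B}) \hookrightarrow \Lp{2}(M;E)$ is compact. The resolvent $(1+T)^{-1} \colon \Lp{2}(M;E) \to \dom(T)$ is bounded in the graph norm of $T$; since the graph norm of $T$ dominates the graph norm of $D_{\rm B}$ on $\dom(T)$, the resolvent factors through the compact embedding $\dom(D_{\rm B}) \hookrightarrow \Lp{2}(M;E)$ and is therefore compact on $\Lp{2}(M;E)$.

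With compact self-adjoint resolvent in hand, the spectral theorem gives that $\spec(T)$ is purely discrete with finite-multiplicity eigenvalues accumulating only at $+\infty$. The hypothesis $\ker(D_{\rm B}) = 0$ forces $\ker(T) = 0$, so zero is excluded and $\spec(T) \subset (0,\infty)$ admits a smallest element $\lambda_1 > 0$. The min-max principle applied to the closed form $q$ then gives
$$\lambda_1 \;=\; \inf_{u \in \dom(D_{\rm B}) \setminus \{0\}} \frac{\norm{D_{\rm B} u}_{\Lp{2}(M;F)}^2}{\norm{u}_{\Lp{2}(M;E)}^2},$$
and taking square roots delivers the Poincar\'e inequality. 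The only real conceptual step is recognising that semi-regularity is exactly what turns $\dom(D_{\rm B})$ into a compactly embedded subspace of $\Lp{2}(M;E)$; the remainder is routine von Neumann / min-max machinery, and no genuine obstacle is anticipated.
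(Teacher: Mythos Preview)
Your proposal is correct and follows essentially the same approach as the paper: both establish that $D_{\rm B}^\ast D_{\rm B}$ is non-negative self-adjoint with compact resolvent via the semi-regularity inclusion $\dom(D_{\rm B})\subseteq \SobH{m}(M;E)$ and Rellich, then deduce discreteness and the Poincar\'e inequality by elementary spectral theory. The only cosmetic difference is that the paper obtains the inequality first on $\dom(D_{\rm B}^\ast D_{\rm B})$ via the numerical range and then extends by density to the form domain $\dom(D_{\rm B})$, whereas you invoke the min-max characterisation directly on the form domain; both are standard and equivalent.
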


Under an additional hypothesis on $B$, we give a precise description of the spectral asymptotics of $D_{\rm B}^\ast D_{{\rm B}}$ in Corollary \ref{weylformodulelososo} below.

\begin{proof}
The operator $D_{\rm B}^\ast D_{{\rm B}}$ satisfies $\dom(D_{\rm B}^\ast D_{{\rm B}}) \subset \SobH{m}(M;E)$, and moreover, it is easy to see that it is non-negative self-adjoint.
By the Rellich embedding theorem, we obtain that $\spec(D_{\rm B}^\ast D_{{\rm B}})$ is discrete, real, non-negative.
Also, $\ker(D_{\rm B}^\ast D_{{\rm B}}) =  \ker(D_{{\rm B}}) = 0$ and therefore, $\spec(D_{\rm B}^\ast D_{{\rm B}}) = \set{0 < \lambda_1 \leq \lambda_2 \leq \dots}$.
Then, via numerical range considerations,
\begin{equation*}
\label{Eq:Poin1}
\inprod{D_{\rm B}^\ast D_{{\rm B}} u,u} \geq \lambda_1 \norm{u}^2.
\end{equation*}
for $u \in \dom(D_{\rm B}^\ast D_{{\rm B}})$.
However, $\dom(D_{\rm B}^\ast D_{\rm B})$ is dense in $\dom(\sqrt{ D_{\rm B}^\ast D_{\rm B}}) = \dom(D_{\rm B})$ and therefore, we obtain the desired inequality.
\end{proof}

\begin{proposition}
\label{Cor:WellPosed} 
Let $D$ be an elliptic operator of order $m>0$. Suppose that $B$ is a semi-regular boundary condition  and that $\ker(D_{\rm B}) = 0$.
Then,
\begin{equation*}
\begin{cases}
D_{\rm B} u = f\\ 
\gamma(u) \in B
\end{cases}
\end{equation*}
for $f \in \ran(D_{{\rm B}})$ is well-posed.
\end{proposition}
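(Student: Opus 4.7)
The plan is to verify the three Hadamard conditions — existence, uniqueness, and continuous dependence on the data — using only the structural results already established for semi-regular realisations.

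Existence is immediate from the hypothesis: if $f \in \ran(D_{\rm B})$ then by definition there is some $u \in \dom(D_{\rm B})$ with $D_{\rm B} u = f$, and $u$ automatically satisfies $\gamma(u) \in B$ because this condition is built into $\dom(D_{\rm B})$. Uniqueness is equally direct: any two solutions $u_1, u_2$ yield $u_1 - u_2 \in \ker(D_{\rm B}) = 0$, so they agree.

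The substantive point is continuous dependence, which reduces to the open mapping theorem. First I would invoke Proposition \ref{Prop:MaxClosed} — applicable precisely because $B$ is semi-regular — to conclude that $\ran(D_{\rm B})$ is closed in $\Lp{2}(M;F)$. Since $D_{\rm B}$ is closed with $\ker(D_{\rm B}) = 0$ and closed range, the restriction
$$D_{\rm B}: (\dom(D_{\rm B}), \norm{\cdot}_{D_{\rm B}}) \to \ran(D_{\rm B})$$
is a continuous bijection between Banach spaces, so the open mapping theorem supplies a bounded inverse. This already gives continuous dependence in the graph topology.

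To record dependence in the natural $\SobH{m}(M;E)$-topology (which is where semi-regularity places the solutions), I would argue by the closed graph theorem. By Proposition \ref{charsemiellintermsofboundary}, semi-regularity of $B$ gives $\dom(D_{\rm B}) \subseteq \SobH{m}(M;E)$. The inclusion $(\dom(D_{\rm B}), \norm{\cdot}_{D_{\rm B}}) \hookrightarrow \SobH{m}(M;E)$ has closed graph, since if $u_n \to u$ in graph norm and $u_n \to v$ in $\SobH{m}(M;E)$, both limits agree in $\Lp{2}(M;E)$. Consequently the inclusion is continuous, and composing with the bounded inverse yields
$$\norm{u}_{\SobH{m}(M;E)} \lesssim \norm{f}_{\Lp{2}(M;F)}$$
for the unique solution, completing continuous dependence. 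There is no genuine obstacle in this proof — the heavy analytic lifting has all been absorbed into the semi-regularity hypothesis and Proposition \ref{Prop:MaxClosed}; the remaining work is elementary Banach space bookkeeping.
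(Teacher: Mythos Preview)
Your proof is correct and follows essentially the same line as the paper: invoke Proposition \ref{Prop:MaxClosed} to get closed range, then use that a closed injective operator with closed range has bounded inverse (the paper cites Yosida's closed range theorem, you use the open mapping theorem directly --- same content). Your additional closed-graph argument upgrading continuous dependence from the graph norm to the $\SobH{m}$-norm is a nice observation that the paper does not make explicit, but it is an enhancement rather than a different route.
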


\begin{proof}
By Proposition \ref{Prop:MaxClosed}, we have that $D_{\rm B}$ and $D_{\rm B}^\ast$ has closed range.
But $\Lp{2}(M;E) = \ker(D_{\rm B}) \oplus \ran(D_{\rm B}^\ast) = \ran(D_{\rm B}^\ast)$, and so on applying   \cite[Corollary 1, Chapter VII, Section 5]{Yosida}, we obtain that 
$$D_{{\rm B}}: \dom(D_{\rm B}) \subset \Lp{2}(\Sigma;E) \to \ran(D_{\rm B})$$
has a bounded inverse.
Therefore, 
$$ \norm{u}_{D_{\rm B}} \simeq \norm{u} + \norm{Du} = \norm{{D_{{\rm B}}^{-1} f}} + \norm{f} \lesssim \norm{f}.$$
This is exactly that the problem in the statement of the corollary is well-posed.
\end{proof}

We apply this to understand the well-posedness of the Dirichlet problem.
An important and well known notion is  \emph{weak UCP (unique continuation property)} for an operator $D$. 
This means that for smooth $u$, when $Du = 0$ and $u = 0$ on a nonempty open subset $\Omega \subset M$, then $u = 0$. 
Similarly, \emph{weak inner UCP} is satisfied by $D$ if for smooth $u$, when  $Du = 0$ and $u\rest{\Sigma} = 0$, this implies $u = 0$.

\begin{corollary}
Let $D$ be an elliptic differential operator satisfying weak inner UCP.
Then,
$$\begin{cases}
D u = f\\ 
\gamma(u) =0
\end{cases}$$
is well-posed on $\ran(D_{\Ca_D})=\ran(D_{\min})$.
\end{corollary}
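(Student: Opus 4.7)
The plan is to recognise this as an instance of Proposition \ref{Cor:WellPosed} applied to the trivial boundary condition $B=\{0\}\subseteq \checkH(D)$, which by Proposition \ref{charbodundalda} corresponds exactly to the realisation $D_{\rm B}=D_{\min}$ (since $\dom(D_{\min})=\ker\gamma$). This boundary condition is trivially semi-regular, as $\dom(D_{\min})=\SobH[0]{m}(M;E)\subseteq \SobH{m}(M;E)$ by Proposition \ref{geneprop}\ref{geneprop:1}. Moreover, the identity $\ran(D_{\Ca_D})=\ran(D_{\min})$ is immediate from Theorem \ref{Thm:ClosedRangeCharHO}\ref{Thm:ClosedRangeCharHO1} applied to $B=\{0\}$, since then $B+\Ca_D=\Ca_D$.

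With these identifications, the only remaining hypothesis to verify before Proposition \ref{Cor:WellPosed} applies is that $\ker(D_{\min})=0$. This is where the weak inner UCP hypothesis enters, and this is the main (and essentially only) technical obstacle. Let $u\in \ker(D_{\min})$, so $u\in \SobH[0]{m}(M;E)$ with $Du=0$. To invoke weak inner UCP, I need smoothness of $u$ up to the boundary. I would proceed by embedding $M$ as a domain with smooth boundary in a closed manifold $\hat{M}$ together with an extension of the bundle $E$ and the operator $D$ to an elliptic operator $\hat{D}$ on $\hat{M}$, in the same manner as in Example \ref{funnyboudnarydad}. Since $u\in \SobH[0]{m}(M;E)$, its extension $\tilde{u}$ by zero lies in $\SobH{m}(\hat{M};\hat{E})$, and $\hat{D}\tilde{u}=0$ distributionally on $\hat{M}$ because $Du=0$ on $M$ and $\tilde{u}$ vanishes identically outside $M$. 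Interior elliptic regularity for $\hat{D}$ on the closed manifold $\hat{M}$ yields $\tilde{u}\in \Ck{\infty}(\hat{M};\hat{E})$, and hence $u\in \Ck{\infty}(M;E)$.

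Now $u$ is a smooth solution of $Du=0$ on $M$ whose boundary trace $u|_\Sigma$ vanishes (indeed, all traces $\gamma_j u$, $0\leq j\leq m-1$, vanish since $u\in \SobH[0]{m}(M;E)$). Weak inner UCP for $D$ then forces $u=0$, so $\ker(D_{\min})=0$.

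With $\ker(D_{\min})=\ker(D_{\rm B})=0$ established and semi-regularity in hand, Proposition \ref{Cor:WellPosed} delivers well-posedness of the Dirichlet problem on $\ran(D_{\min})=\ran(D_{\Ca_D})$, which is the desired conclusion. The only genuinely nontrivial step is the boundary regularity argument to promote $u\in \SobH[0]{m}(M;E)$ to a smooth section so that the UCP hypothesis can be applied; everything else is a direct appeal to already-established results.
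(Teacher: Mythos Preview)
Your proof is correct and follows essentially the same approach as the paper: apply Proposition~\ref{Cor:WellPosed} with $B=0$, use Theorem~\ref{Thm:ClosedRangeCharHO}\ref{Thm:ClosedRangeCharHO1} for the identity $\ran(D_{\min})=\ran(D_{\Ca_D})$, and show $\ker(D_{\min})=0$ via elliptic regularity followed by weak inner UCP. The paper compresses the regularity step to a single clause (``By elliptic regularity, $\ker(D_{\min})\subset\Ck{\infty}(M;E)$''), while you spell it out via extension by zero to a closed manifold; this is a valid and standard justification of that claim.
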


\begin{proof}
We obtain well-posedness on $\ran(D_{\min})$ from Proposition \ref{Cor:WellPosed} with the semi-regular boundary condition $B = 0$ upon showing that $\ker(D_{\min}) = 0$.
By elliptic regularity, $\ker(D_{\min}) \subset \Ck{\infty}(M;E)$ so the weak inner UCP property  yields that $\ker(D_{\min}) = 0$.
Lastly, we have that $\ran(D_{\min}) = \ran(D_{0 + \Ca_D}) = \ran(D_{\Ca_D})$ from Theorem \ref{Thm:ClosedRangeCharHO} item \ref{Thm:ClosedRangeCharHO1}. 
\end{proof} 

Now, let us focus on the first order case. In the first order case, $D_{\rm min}$ corresponds to the Dirichlet problem for $D$.
As discussed in \cite[Section 1.2]{BBL2009}, weak UCP implies weak inner UCP for elliptic first order differential operators.
However, focusing on Dirac-type operators, the following holds.

\begin{corollary}
\label{firstordodereowl}
Let $D$ be  first order and Dirac-type. 
Then,
$$\begin{cases}
D u = f\\ 
\gamma(u) =0
\end{cases}$$
is well-posed on $\ran(D_{\Ca_D})=\ran(D_{\min})$.
\end{corollary}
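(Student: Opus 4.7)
The plan is to reduce this to the previous corollary by verifying that Dirac-type operators satisfy weak inner UCP. Once weak inner UCP is established, the conclusion follows verbatim from the proof of the preceding corollary: elliptic regularity forces $\ker(D_{\min}) \subset \Ck{\infty}(M;E)$, weak inner UCP then gives $\ker(D_{\min}) = 0$, and Proposition \ref{Cor:WellPosed} applied to the semi-regular boundary condition $B = 0$ provides well-posedness on $\ran(D_{\min}) = \ran(D_{\Ca_D})$, with the last equality coming from Theorem \ref{Thm:ClosedRangeCharHO}\ref{Thm:ClosedRangeCharHO1}.

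The first step is to recall that Dirac-type operators satisfy the weak UCP. This is a classical fact going back to Aronszajn, Cordes and Kim; it can be deduced from a Carleman estimate applied to $D^\dagger D$, which is of Laplace-type and therefore falls within the scope of Aronszajn's unique continuation theorem for scalar second order operators with diagonal principal part. The second step is to upgrade weak UCP to weak inner UCP in the first-order elliptic setting. For this, I would cite the discussion in \cite[Section 1.2]{BBL2009}: if $u \in \Ck{\infty}(M;E)$ solves $Du = 0$ with $u|_\Sigma = 0$, one may extend $u$ by zero across $\Sigma$ to a section on a slightly enlarged manifold $\hat M$ in which $M$ sits as an open subset; because $D$ is first order, the extended section is still a (distributional, hence by elliptic regularity smooth) solution of an extension of $D$ across $\Sigma$, and it vanishes identically on the nonempty open set $\hat M \setminus M$. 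Applying weak UCP to this extended operator forces $u \equiv 0$.

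With both ingredients in hand, the argument from the preceding corollary applies directly: $\ker(D_{\min}) = 0$, and Proposition \ref{Cor:WellPosed} gives well-posedness on $\ran(D_{\min})$.

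The main obstacle is the extension step in verifying weak inner UCP. It requires an appropriate geometric extension of $(M, E, D)$ across $\Sigma$; since the construction is local near $\Sigma$ and $D$ is first order with smooth coefficients, such an extension always exists (for instance by using a collar neighborhood and extending the coefficients of $D$ smoothly). One must then check that zero extension of $u$ lies in the maximal domain of the extended operator, which is automatic at first order because the trace $u|_\Sigma = 0$ prevents the appearance of a boundary distribution upon differentiation. All of this is well-documented in \cite{BBL2009} and earlier sources, so in the write-up I would simply invoke those references rather than reproducing the extension argument in full.
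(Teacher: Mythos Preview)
Your proposal is correct and follows essentially the same route as the paper: cite weak UCP for Dirac-type operators, upgrade to weak inner UCP via the first-order extension argument from \cite[Section 1.2]{BBL2009}, and invoke the preceding corollary. The paper's proof is in fact terser than yours---it simply cites \cite[Remark 2.2]{BBL2009} for weak UCP and the preceding discussion for the passage to weak inner UCP---so your version supplies more of the underlying mechanism without deviating in substance.
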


\begin{proof}
As mentioned in \cite[Remark 2.2]{BBL2009}, it is well known that all first order Dirac-type operators satisfy weak UCP. 
This, in turn, implies weak inner UCP, and therefore, the conclusion follows.
\end{proof}

\begin{remark}
It is a well known fact that well-posedness fails for the Dirichlet problem for first order Dirac-type operators on $\Lp{2}$.
However, the corollary shows that on a very large space, well-posedness actually holds.
In fact, by Proposition \ref{Prop:MaxClosed}, it is easily seen that the obstruction to well-posedness is indeed large. 
It is precisely $\ker(D^\dagger_{max})=\ker(D_{\min}^*)$.
On a manifold of dimension exceeding 1, we see from Theorem \ref{infdimeker} that this is, indeed, an infinite dimensional space.
That is, in general, the obstruction to well-posedness for the Dirichlet problem is infinite dimensional.
\end{remark}

\subsection{Local, pseudo-local and bundle-like boundary conditions}
\label{examoaldldbc}

Let us give some examples of boundary conditions.

\begin{definition}
\label{somedefinidodod}
Let $D$ be an elliptic operator of order $m>0$ and $B$ a boundary condition. 
\begin{itemize}
\item We say that $B$ is pseudo-local if there exists an $m\times m$ matrix $P$ of pseudo-differential operators on $E|_\Sigma$ with $P^2=P$ and 
$$B=\checkH(D) \cap \ker(P:\mathcal{D}'(\Sigma;E\otimes \C^m)\to \mathcal{D}'(\Sigma;E\otimes \C^m)).$$ 
We say that the pseudo-local boundary condition $B$ is defined from $P$,  and to indicate the dependence on $P$ we sometimes use the notation $B_P$ for $B$.
\item We say that $B$ is local if there exists a differential operator $b$ on $E|_\Sigma\otimes \C^m\to E'$ (for some vector bundle $E'\to \Sigma$) such that 
$$B=\checkH(D) \cap \ker(b:\mathcal{D}'(\Sigma;E\otimes \C^m)\to \mathcal{D}'(\Sigma;E')).$$ 
We say that the local boundary condition $B$ is defined from $b$.
\item If $B$ is a local boundary condition defined from $b$ on $E|_\Sigma\otimes \C^m\to E'$ where $E'\subseteq E|_\Sigma\otimes \C^m$ is a subbundle and $b$ is a projection onto a complement of $E'$, we say that the associated boundary condition is bundle-like and defined from $E'$.
\end{itemize}
\end{definition}

 In \cite{BBan}, the bundle-like boundary conditions were called local. However, in the higher order case more general local boundary conditions naturally arise and are well studied, e.g. in \cite{lionsmagenes,grubb68,vishik}. 

The reader should note that a bundle-like boundary condition is both local and pseudo-local. If $B$ is a local boundary condition defined from $b$, then 
$$\dom(D_{\rm B})=\{u\in \dom(D_{\max}): b\gamma u=0\}.$$
Here $b\gamma u$ is interpreted in a distributional sense. In particular, $B$ is indeed a boundary condition because the topology on $\checkH(D)$ induced from the space of distributions is weaker than the norm topology of $\checkH(D)$. A similar argument shows that a pseudo-local boundary condition indeed is a boundary condition.

\begin{example}
\label{kindofdirhcoell}
For $0\leq k\leq m$, we consider the order zero operator $b$ that projects onto the first $k$ coordinates of $E|_\Sigma\otimes \C^m$. Let $B_k$ be the associated bundle-like boundary condition. We have that 
\begin{align*}
B_k=&\{\xi\in \checkH(D): \xi_0=\xi_1=\ldots=\xi_{k-1}=0\},\quad\mbox{and}\\
\dom(D_{{\rm B}_k})=&\{u\in \dom(D_{\rm max}): \gamma_0 u=\gamma_1u=\ldots \gamma_{k-1}u=0\}=\dom(D_{\rm max})\cap \SobH[0]{k}(M).
\end{align*}
\end{example}

\begin{proposition}
\label{describinglocalcndldldaadjoint}
Let $D$ be an elliptic operator of order $m>0$ and $B_P$ a pseudo-local boundary condition defined from $P$. Then $(B_P)^*$ is the pseudo-local boundary condition $B_{P_\dagger}$ for $D^\dagger$ defined from $P_\dagger:=(\scalebox{1.5}{a}^*)^{-1}(1-P^*)\scalebox{1.5}{a}^*$ where $\scalebox{1.5}{a}$ is the differential operator from Proposition \ref{proponboundarypairingforell}.
\end{proposition}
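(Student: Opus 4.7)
The strategy is to combine the characterisation of the adjoint boundary condition as the annihilator of $B_P$ with respect to the boundary pairing $\omega_D$ (cf.\ the discussion following Proposition \ref{charbodundalda} and Appendix \ref{app:bspaceandbc}) with the explicit formula $\omega_D(\eta,\xi)=\langle \eta,\scalebox{1.5}{a}\xi\rangle$ from Proposition \ref{proponboundarypairingforell}, extended to all of $\checkH(D^\dagger)\times \checkH(D)$ via the perfect pairings of Proposition \ref{boundarypairingforsoblov}. Once this is in place the identification $B_P^*=B_{P_\dagger}$ becomes a formal manipulation with the operator $P$ and its (distributional) adjoint $P^*$.

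First I would check that $P_\dagger$ is well defined as a matrix of pseudodifferential operators and indeed is a projection. Since Lemma \ref{somecontinuuisuforata}\ref{somectslem2} gives that $\scalebox{1.5}{a}$ is invertible as a matrix of differential operators (hence $\scalebox{1.5}{a}^*$ is invertible as a matrix of differential operators), conjugation by $\scalebox{1.5}{a}^*$ preserves pseudodifferential operators and the idempotency of $P$ immediately yields $P_\dagger^2=P_\dagger$. Therefore $B_{P_\dagger}:=\checkH(D^\dagger)\cap \ker(P_\dagger)$ is a bona fide pseudo-local boundary condition for $D^\dagger$.

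The inclusion $B_{P_\dagger}\subseteq B_P^*$ goes as follows. For $\eta\in B_{P_\dagger}$ one has $(1-P^*)\scalebox{1.5}{a}^*\eta=0$, so $\scalebox{1.5}{a}^*\eta=P^*\scalebox{1.5}{a}^*\eta$. For any $\xi\in B_P$,
\begin{equation*}
\omega_D(\eta,\xi)=\langle \eta,\scalebox{1.5}{a}\xi\rangle=\langle \scalebox{1.5}{a}^*\eta,\xi\rangle=\langle P^*\scalebox{1.5}{a}^*\eta,\xi\rangle=\langle \scalebox{1.5}{a}^*\eta,P\xi\rangle=0,
\end{equation*}
since $P\xi=0$. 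The reverse inclusion $B_P^*\subseteq B_{P_\dagger}$ uses smooth test sections to probe the distributional identity $(1-P^*)\scalebox{1.5}{a}^*\eta=0$. Given any $\phi\in \Ck{\infty}(\Sigma;E\otimes \C^m)$, the element $(1-P)\phi$ is smooth, hence lies in $\SobHH{m-\frac12}(\Sigma;E\otimes \C^m)\subseteq \checkH(D)$, and satisfies $P(1-P)\phi=0$, so $(1-P)\phi\in B_P$. For $\eta\in B_P^*$ the vanishing of $\omega_D(\eta,(1-P)\phi)=\langle (1-P^*)\scalebox{1.5}{a}^*\eta,\phi\rangle$ for every smooth $\phi$ forces $(1-P^*)\scalebox{1.5}{a}^*\eta=0$ in $\mathcal{D}'(\Sigma;F\otimes \C^m)$, equivalently $P_\dagger\eta=0$.

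The step I expect to require the most care is ensuring that the distributional pairings in the chain $\langle P^*\scalebox{1.5}{a}^*\eta,\xi\rangle=\langle \scalebox{1.5}{a}^*\eta,P\xi\rangle$ make sense on the appropriate scales. Here $\xi\in \checkH(D)$ lives a priori only in $\SobHH{-\frac12}(\Sigma;E\otimes \C^m)$, while $\eta\in \checkH(D^\dagger)\subseteq \SobHH{-\frac12}(\Sigma;F\otimes \C^m)$. Since $P$ is a zeroth order (matrix) pseudodifferential operator in the Douglis--Nirenberg calculus it acts continuously on every Sobolev scale, and by Lemma \ref{somecontinuuisuforata}\ref{somectslem3} $\scalebox{1.5}{a}^*$ is a Banach space isomorphism between the graded spaces dual to those on which $\scalebox{1.5}{a}$ acts. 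Combined with the perfect pairing statement of Proposition \ref{boundarypairingforsoblov}, this legitimises all the manipulations above, and completes the identification $B_P^*=B_{P_\dagger}$.
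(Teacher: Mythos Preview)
Your proof is correct and follows essentially the same route as the paper: both arguments verify that $P_\dagger$ is a pseudodifferential projection (via invertibility of $\scalebox{1.5}{a}$ as a matrix of differential operators), then establish the two inclusions by playing the boundary pairing against the identity $\xi=(1-P)\xi$ for $\xi\in B_P$ in one direction, and by testing against smooth sections $(1-P)\phi$ in the other. The only cosmetic difference is that the paper writes the first inclusion as $\omega_D(\eta,\xi)=\langle\eta,\scalebox{1.5}{a}(1-P)\xi\rangle=\langle(1-P^*)\scalebox{1.5}{a}^*\eta,\xi\rangle=0$ directly, whereas you route through $\scalebox{1.5}{a}^*\eta=P^*\scalebox{1.5}{a}^*\eta$; these are the same computation. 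One small caution: the paper's Definition~\ref{somedefinidodod} only requires $P$ to be a matrix of pseudodifferential operators with $P^2=P$, not necessarily of order zero in the Douglis--Nirenberg sense, so your appeal to that structure in the final paragraph is slightly stronger than needed---the transpose identity on smooth sections plus continuity of $\omega_D$ on $\checkH(D^\dagger)\times\checkH(D)$ already suffices.
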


\begin{proof}
We remark that $(\scalebox{1.5}{a}^*)^{-1}(1-P^*)\scalebox{1.5}{a}^*$ is again an idempotent matrix of pseudodifferential operators because $(\scalebox{1.5}{a}^*)^{-1}$ is a matrix of differential operators by Lemma \ref{somecontinuuisuforata}. The space $B^*$ is characterised by the property that $\eta\in B^*$ if and only if $\omega_D(\eta,\xi)=0$ for all $\xi\in B$. We need to prove that 
$$B^*=\checkH(D^\dagger) \cap \ker( (1-P^*)\scalebox{1.5}{a}^*:\mathcal{D}'(\Sigma;F\otimes \C^m)\to \mathcal{D}'(\Sigma;E\otimes \C^m)).$$

Since $B=\checkH(D) \cap \ker(P:\mathcal{D}'(\Sigma;E\otimes \C^m)\to \mathcal{D}'(\Sigma;E\otimes\C^m))$, any $\xi \in B$ satisfies $\xi=(1-P)\xi$. In particular, if $\eta\in \checkH(D^\dagger) \cap \ker( (1-P^*)\scalebox{1.5}{a}^*:\mathcal{D}'(\Sigma;F\otimes \C^m)\to \mathcal{D}'(\Sigma;E\otimes \C^m))$ and $\xi\in B$ then 
$$\omega_D(\eta,\xi)=\langle \eta,\scalebox{1.5}{a} (1-P)\xi\rangle_{\Lp{2}(\Sigma;F\otimes \C^m)}=\langle(1-P^*)\scalebox{1.5}{a}^*\eta,\xi\rangle_{\Lp{2}(\Sigma;F\otimes \C^m)}=0.$$
We conclude that $\checkH(D^\dagger) \cap \ker( (1-P^*)\scalebox{1.5}{a}^*:\mathcal{D}'(\Sigma;F\otimes \C^m)\to \mathcal{D}'(\Sigma;E\otimes \C^m))\subseteq B^*$. 

To prove the converse inclusion, if $\eta\in B^*$ then for any $\xi_0\in \Ck{\infty}(\Sigma;E\otimes \C^m)$ we have that 
$$\omega_D(\eta,(1-P)\xi_0)=\langle(1-P^*)\scalebox{1.5}{a}^*\eta,\xi_0\rangle_{\Lp{2}(\Sigma;F\otimes \C^m)}.$$
Since $\xi:=(1-P)\xi_0\in B$, we have that $\langle(1-P^*)\scalebox{1.5}{a}^*\eta,\xi_0\rangle_{\Lp{2}(\Sigma;F\otimes \C^m)}=0$ for any $\xi_0\in \Ck{\infty}(\Sigma;E\otimes \C^m)$. In particular, $(1-P^*)\scalebox{1.5}{a}^*\eta=0$ in distributional sense if $\eta\in B^*$. Therefore $\checkH(D^\dagger) \cap \ker( (1-P^*)\scalebox{1.5}{a}^*:\mathcal{D}'(\Sigma;F\otimes \C^m)\to \mathcal{D}'(\Sigma;E\otimes \C^m))\supseteq B^*$.

\end{proof}

\begin{example}
Returning to Example \ref{kindofdirhcoell}, assume for simplicity that $E=F$ and that the elliptic operator $D$ takes the form $D=D_{x_n}^m+A$ near the boundary for a differential operator $A$ on $\Sigma$ (with no dependence on $x_n$). A short computation with integration by parts show that $\scalebox{1.5}{a}=0^*\sigma(A)\otimes \tau$ where $0:\Sigma\to T^*\Sigma$ denotes the zero section and $\tau$ is as in Equation \eqref{taudeffed}. Using this it is readily verified via Proposition \ref{describinglocalcndldldaadjoint} that $B_k^*=B_{m-k-1}$ for $k=0,\ldots, m$ with the convention that $B_{-1}=0$. If the order of $D$ is even, then $B_{m/2}$ should be interpreted as a Dirichlet condition. We return to this condition below in Subsection \ref{subsec:scalprododo} for scalar properly elliptic operators.
\end{example}

\subsection{Elliptic first order operators}
\label{subsec:ellfirstorderfirst}

The situation on which we have modelled our description of boundary conditions is that of first order operators described in \cite{BB,BBan}. 
For simplicity, we continue to focus on compact manifolds with boundary, while \cite{BB,BBan} allowed for non-compact manifolds with compact boundary.
The theory of boundary value problems for first order elliptic operators is further described in \cite{G99} and the special case of Dirac operators is studied in \cite{bosswojc}.

We follow the setup of Subsection \ref{setupforellde} and, throughout this subsection, consider an elliptic differential operator $D$ order $m=1$. Associated to such an operator $D$, there are adapted boundary operators $A$ on the bundle $E|_\Sigma$ which are first order elliptic differential operators on $\Sigma$ whose principal symbols satisfy
$$ \sym(A)(x,\xi') = \sym(D)(x,\xi'=0,\xi_n=1)^{-1} \comp \sym(D)(x,\xi').$$
Alternatively, an adapted boundary operator $A$ is an operator on $\Sigma$ such that near $\Sigma$ we can write
$$D=\sigma\cdot (\partial_{x_n}+A+R_0),$$
for an $x_n$-dependent first order differential operator $R_0$ on $\Sigma$ such that $R_0|_{x_n=0}$ is of order zero. Here we have shortened the notation $\sigma=\sym(D)(x,\xi'=0,\xi_n=1)$. The reader should note that in the notation of Proposition \ref{proponboundarypairingforell}, 
$$\sigma=A_0=\scalebox{1.5}{a} \quad\mbox{and}\quad A_1= \sigma\cdot (A+R_0).$$

In \cite{BBan} by Bär and Bandara, the authors show that there exists an \emph{admissible cut}, which is an $r \in \R$ so that $A_r := A -r$ is invertible $\omega_r$-bisectorial.
This means that $\spec(A_r)$ is contained in a closed bisector $S_{\omega_r} = \set{\zeta \in \C: \pm\arg \zeta \leq \omega_r}$ of angle $\omega_r < \pi/2$ in the complex plane, and that for $\mu \in (\omega_r, \pi/2)$, there exists a constant $C_\mu > 0$ so that whenever $\zeta$ is outside of the closed bisector $S_{\mu}$ of angle $\mu$, we have that $\modulus{\zeta}\norm{(\zeta - A_r)^{-1}} \leq C_\mu$.
The framework in \cite{BBan} extends the work of Bär and Ballmann in \cite{BB} where they make the additional assumption that $A$ can be chosen self-adjoint.

Given that the spectrum of $A_r$ is contained in the open left and right half-planes, 
a consequence of the bisectoriality of $A_r$ is that we can consider spectral projectors $\chi^{\pm}(A_r)$, where $\chi^{\pm}(\zeta) = 1$ for $\pm \Re \zeta > 0$ and $0$ otherwise.
Not only do these  spectral projectors exist, they  are pseudo-differential operators of order zero (see \cite{grubbsec}).  
This means that they act boundedly across all Sobolev scales and we define the space
$$\checkH_A(D) := \chi^-(A_r)\SobH{\frac{1}{2}}(\Sigma;E) \oplus \chi^+(A_r) \SobH{-\frac{1}{2}}(\Sigma;E).$$
The following is an important property of this space that we will use later. We refer its proof to Appendix \ref{appondimensiosnsns}.

\begin{lemma}
\label{Lem:CheckNonZero} 
Let $A$ be an adapted boundary operator for a first order elliptic operator acting on a manifold with boundary of dimension $>1$. For any admissible cut $r \in \R$, the spaces $\chi^{\pm}(A_r)\Lp{2}(\Sigma;E)$ are infinite dimensional. 
Moreover, the space $\checkH_A(D) \neq \set{0}$ is an infinite-dimensional space with 
$$\SobH{{\frac12}}(\Sigma;E)\subseteq \checkH_A(D) \subseteq \SobH{-{\frac12}}(\Sigma;E),$$
and each inclusion is dense.
\end{lemma}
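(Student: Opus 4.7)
My plan is to reduce the lemma to the microlocal structure of $\chi^\pm(A_r)$ as zeroth order pseudodifferential projections, plus elementary bookkeeping for the Sobolev inclusions.

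First I would recall, using the bisectoriality of $A_r$ and the results in \cite{grubbsec}, that the spectral projectors $\chi^\pm(A_r)$ are classical zeroth order pseudodifferential operators on $\Sigma$ whose principal symbols are the pointwise matrix spectral projectors $\chi^\pm(\sigma_A(x,\xi))$ onto the right and left open half-planes (the shift by the real scalar $r$ affects only lower order terms). These matrix projectors are well-defined because the ellipticity of $D$, combined with the local factorisation $D=\sigma(\partial_{x_n}+A+R_0)$ near $\Sigma$, forces $i\xi_n+\sigma_A(x,\xi)$ to be invertible for $(\xi,\xi_n)\neq 0$, and in particular $\spec \sigma_A(x,\xi)\cap i\R=\emptyset$ for each $\xi\in T^*_x\Sigma\setminus 0$. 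The hypothesis $\dim M>1$ ensures $T^*\Sigma\setminus 0\neq\emptyset$.

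The crucial point is then to show that both $\chi^\pm(\sigma_A)$ are nonzero on nonempty open subsets of $T^*\Sigma\setminus 0$. Since $\sigma_A$ is one-homogeneous, $\sigma_A(x,-\xi)=-\sigma_A(x,\xi)$, yielding $\chi^\pm(\sigma_A(x,-\xi))=\chi^\mp(\sigma_A(x,\xi))$, so $\chi^+(\sigma_A)$ is nonzero at $(x,\xi)$ if and only if $\chi^-(\sigma_A)$ is nonzero at $(x,-\xi)$. Combined with $\chi^+(\sigma_A)+\chi^-(\sigma_A)=\id_{E_x}\neq 0$ at every point of $T^*\Sigma\setminus 0$, both symbolic projections must be nonzero somewhere. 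The main technical input is then the standard microlocal fact that a classical zeroth order pseudodifferential operator on a closed manifold is compact on $\Lp{2}$ precisely when its principal symbol vanishes; this is proved by testing against high-frequency wave packets concentrated at a point where the symbol is nonzero. Applied to our idempotents, non-vanishing of the principal symbol prevents compactness, and a non-compact idempotent must have infinite-dimensional range. I expect this step to be the main obstacle: although standard in the self-adjoint case, invoking it here requires care in the non-self-adjoint, bisectorial setting and relies on the microlocal calculus for the functional calculus of $A_r$.

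For the sandwich and density assertions, the argument is elementary. Given $u\in\SobH{{\frac12}}(\Sigma;E)$, the identity $u=\chi^-(A_r)u+\chi^+(A_r)u$ exhibits $u\in\checkH_A(D)$, while each summand defining $\checkH_A(D)$ already lies in $\SobH{-{\frac12}}(\Sigma;E)$, giving the two inclusions. Density of $\checkH_A(D)$ in $\SobH{-{\frac12}}(\Sigma;E)$ follows from density of $\SobH{{\frac12}}(\Sigma;E)$ in $\SobH{-{\frac12}}(\Sigma;E)$. For density of $\SobH{{\frac12}}(\Sigma;E)$ in $\checkH_A(D)$, I would decompose $u=u^-+u^+$ with $u^\pm=\chi^\pm(A_r)u$, approximate $u^+$ in $\SobH{-{\frac12}}(\Sigma;E)$ by smooth functions $v_n\in\Ck{\infty}(\Sigma;E)$, and check that $u^-+\chi^+(A_r)v_n\in\SobH{{\frac12}}(\Sigma;E)$ converges to $u$ in the $\checkH_A(D)$ topology by boundedness of $\chi^+(A_r)$ on $\SobH{-{\frac12}}(\Sigma;E)$ and the preservation of smoothness by the pseudodifferential operator $\chi^+(A_r)$.
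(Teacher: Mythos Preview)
Your approach is correct and matches the paper's main argument in the appendix (Lemma~\ref{Lem:CheckNonZeroalternative1}): the projectors $\chi^\pm(A_r)$ are zeroth order classical pseudodifferential operators with principal symbols $\chi^\pm(\sigma_A)\neq 0$ by the odd symmetry you identify, and a zeroth order pseudodifferential operator is compact precisely when its principal symbol vanishes, so a non-compact idempotent has infinite-dimensional range. Your worry that this compactness criterion needs special care in the non-self-adjoint bisectorial setting is misplaced---the criterion is entirely general; bisectoriality is only used (via \cite{grubbsec}) to ensure $\chi^\pm(A_r)$ are classical pseudodifferential operators in the first place. The paper also records an alternative, purely spectral proof (Lemma~\ref{Lem:CheckNonZeroalternative2}) exhibiting infinitely many eigenvalues of $A$ with real parts tending to $\pm\infty$, but this is optional rather than required. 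Your handling of the Sobolev sandwich and the two density assertions is correct and supplies details the paper leaves implicit.
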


In \cite[Theorem 2.3]{BBan}, the trace map $\gamma:u \mapsto u\rest{\Sigma}$, initially defined on $\Ck{\infty}(\Sigma;E)$, is extended to a bounded surjection
$$u \mapsto u\rest{\Sigma} : \dom(D_{\max}) \to \checkH_A(D)$$
with kernel $\ker (u \mapsto u\rest{\Sigma}) = \dom(D_{\min})$.
That is, $\checkH(D) = \checkH_A(D)$ with $\gamma$ is a Cauchy data space.
The topology of $\checkH(D)$ is given purely in terms of an associated differential operator on the boundary, namely, the adapted boundary operator $A$. We give a direct argument for the equality $\checkH(D) = \checkH_A(D)$ below in Example \ref{firstorderbounddecom}.

A pseudo-differential projection $P$ of order zero, for which $1-P - \chi^+(A_r)$ is a Fredholm operator induces a regular boundary condition via $B_P = (1-P)\SobH{\frac{1}{2}}(\Sigma;E)$.
In particular, for any admissible cut $r \in \R$, $P_r = \chi^+(A_r)$ is such a projection.
That is, $B_r = \chi^{-}(A_r)\SobH{\frac{1}{2}}(\Sigma;E)$ is a regular boundary condition. In particular, the  (generalised)  APS-realisation $D_{\rm APS}$, defined from 
$$\dom(D_{\rm APS}):=\{u\in H^1(M;E): \chi^+(A_r)\gamma(u)=0\},$$
is regular. For notational convenience, we assume $r=0$ and drop it from the notation for the remainder of the paper.
The reader can find further examples of boundary conditions for first order elliptic operators in \cite{BB,BBan,leschgor} and \cite[Section 4]{G99}.

\subsection{Scalar properly elliptic operators}
\label{subsec:scalprododo}

Let us consider an example that historically has played an important role and has long been well understood, see for instance \cite{lionsmagenes,grubb68,vishik}. We consider an elliptic differential operator $D$ of order $2m$ acting between sections of $E$ and $F$. We say that the differential operator is scalar if $E$ and $F$ are line bundles; the terminology is justified by the fact that $\mathrm{Hom}(E,F)$ is a line bundle (trivialisable at $\Sigma$ due to the existence of $A_0\in C^\infty(\Sigma;\mathrm{Aut}(E,F))$), and as such the symbol calculus is completely scalar. We recall the notion of a properly elliptic scalar operator, see \cite{schechter59}.

\begin{definition}
Let $D$ be a scalar elliptic differential operator of order $2m$. We say that $D$ is properly elliptic if for each $(x',\xi')\in S^*\Sigma$, the polynomial equation 
\begin{equation}
\label{charcowkeddl}
\sigma(D)|_\Sigma(x',\xi',z)=0,
\end{equation}
has exactly $m$ complex solutions with positive imaginary part. 
\end{definition}

\begin{remark}
Let us consider some special cases where scalar elliptic differential operators are automatically properly elliptic. If $\sigma(D)|_\Sigma$ is real-valued, then $z$ is a solution of Equation \eqref{charcowkeddl} if and only if $\bar{z}$ is, so real valued principal symbol ensures that a scalar elliptic differential operator is properly elliptic. 

If $\dim(M)>2$, the fibres of the cosphere bundle of $\Sigma$ are connected. Since $D$ is elliptic, Equation \eqref{charcowkeddl} has no real solutions and as such the number of solutions with positive imaginary part is locally constant. We note that if $z$ is a solution to Equation \eqref{charcowkeddl} at $(x',\xi')$, then $-z$ is a solution to Equation \eqref{charcowkeddl} at $(x',-\xi')$ because of the symmetry condition $\sigma(D)|_\Sigma(x',\xi',z)=\sigma(D)|_\Sigma(x',-\xi',-z)$. This proves that all scalar elliptic differential operators are properly elliptic if $\dim(M)>2$. 
\end{remark}

For a scalar properly elliptic differential operator $D$ of order $2m$, we define the bundle-like boundary condition $B_{\rm Dir}$ from the subbundle $E':=E\otimes \C^{m}$, viewed as a subbundle of $E\otimes \C^{2m}$ by embedding it in the last $m$ coordinates. Compare to Example \ref{kindofdirhcoell}. We write $D_{\rm Dir}:=D_{{\rm B}_{\rm Dir}}$ and call it the Dirichlet realisation of $D$. More explicitly, we have that 
$$\dom(D_{\rm Dir})=\{u\in \dom(D_{\rm max}): \gamma_0 u=\gamma_1 u=\cdots=\gamma_{m-1}u=0\}.$$ 
The following theorem reformulates several results from \cite{grubb68} to our setting, the reader is referred to \cite{grubb68} for proofs.

\begin{theorem}
Let $D$ be a scalar properly elliptic differential  operator of order $2m$. 
\begin{enumerate}[(i)]
\item $B_{\rm Dir}$ is a regular boundary condition, and so the realisation $D_{\rm Dir}$ of $D$ with domain
\begin{align*}
\dom(D_{\rm Dir})=&\SobH{2m}(M;E)\cap \SobH[0]{m}(M;E)=\\
=&\{u\in \SobH{2m}(M;E): \gamma_0 u=\gamma_1 u=\cdots=\gamma_{m-1}u=0\},
\end{align*}
is regular.
\item Also $D^\dagger$ is properly elliptic and 
$$D_{\rm Dir}^*=D^\dagger_{\rm Dir},$$
is the realisation of $D^\dagger$ with domain $\SobH{2m}(M;F)\cap \SobH[0]{m}(M;F)$.
\item The map 
$$\Ca_D\to \bigoplus_{j=0}^{m-1}\SobH{-j-{\frac12}}(M;E),$$
induced from the projection $ \bigoplus_{j=0}^{2m-1}\SobH{-j-{\frac12}}(M;E)\to  \bigoplus_{j=0}^{m-1}\SobH{-j-{\frac12}}(M;E)$, is a Fredholm operator. 
\item We can write 
$$\dom(D_{\rm max})=\SobH{2m}(M;E)\cap \SobH[0]{m}(M;E)\oplus_{\ker D_{\rm Dir}}\ker(D_{\rm max}),$$
and 
$$\checkH(D)=\bigoplus_{j=m}^{2m-1}\SobH{2m-j-{\frac12}}(M;E)\oplus_{ \gamma \ker(D_{\rm Dir})}\Ca_D.$$
\end{enumerate}
\end{theorem}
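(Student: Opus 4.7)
The plan is to work through the four parts in sequence, relying throughout on the Shapiro--Lopatinskii framework for Calder\'on projectors developed earlier, especially Theorem~\ref{elliptidcocoddn}.

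For (i), the first task is to verify that the bundle-like projection $P$ onto the first $m$ components of $E|_\Sigma \otimes \C^{2m}$ is Shapiro--Lopatinskii elliptic with respect to $D$. Proper ellipticity says that $\sigma(D)|_\Sigma(x', \xi', z)$ has exactly $m$ roots in $z$ with positive imaginary part at every $(x', \xi') \in S^*\Sigma$, and H\"ormander's construction of the principal symbol of $P_\mathcal{C}$ (Subsection~\ref{subsec:calderonproj}) identifies $\operatorname{range} \sigma(P_\mathcal{C})(x', \xi')$ with an $m$-dimensional space of Cauchy data vectors parameterised by those roots. The composition $\sigma(P) \circ \sigma(P_\mathcal{C})$ is then an isomorphism onto $\C^m$ iff a Vandermonde-like determinant in the roots is non-zero, which is automatic for distinct roots and handled by continuity and generalised eigenvectors when multiplicities occur. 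With SL-ellipticity of $P$ established, Theorem~\ref{elliptidcocoddn} yields regularity of $B_{\rm Dir}$. The domain description is then immediate: regularity forces $\dom(D_{\rm Dir}) \subseteq \SobH{2m}(M;E)$, and the bundle-like Dirichlet condition $\gamma_0 u = \cdots = \gamma_{m-1} u = 0$ is precisely $u \in \SobH[0]{m}(M;E)$ when $u \in \SobH{m}(M;E)$.

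For (ii), proper ellipticity of $D^\dagger$ follows because in the scalar case $\sigma(D^\dagger)|_\Sigma(x', \xi', z)$ has roots in $z$ that are complex conjugates of those for $\sigma(D)|_\Sigma$, which preserves the count with positive imaginary part. To show $D_{\rm Dir}^* = D^\dagger_{\rm Dir}$, apply Proposition~\ref{describinglocalcndldldaadjoint} with $P = P^*$ the orthogonal projection onto the first $m$ coordinates, yielding $P_\dagger = (\scalebox{1.5}{a}^*)^{-1}(1 - P) \scalebox{1.5}{a}^*$. The key structural feature is that $\scalebox{1.5}{a}$ is upper anti-triangular with $-iA_0$ on the anti-diagonal, so the $k$-th component of $\scalebox{1.5}{a}^* \eta$ for $k \geq m$ depends only on $\eta_0, \ldots, \eta_{2m-1-k}$, with leading term a nonzero multiple of the invertible operator $A_0^*$. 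A downward induction starting at $k = 2m-1$ shows $\ker P_\dagger \cap \checkH(D^\dagger) = \{\eta : \eta_0 = \cdots = \eta_{m-1} = 0\} \cap \checkH(D^\dagger)$, which is the Dirichlet condition for $D^\dagger$.

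Parts (iii) and (iv) then follow from Theorem~\ref{Thm:FredChar} applied with $B = B_{\rm Dir}$: regularity of $B_{\rm Dir}$ from (i) gives that $(B_{\rm Dir}, \Ca_D)$ is a Fredholm pair in $\checkH(D)$. The projection $\pi_{\rm first}: \checkH(D) \to \bigoplus_{j=0}^{m-1} \SobH{-j-\frac12}(\Sigma; E)$ is continuous with kernel $B_{\rm Dir}$; its restriction to $\Ca_D$ has finite-dimensional kernel $\Ca_D \cap B_{\rm Dir}$ and image of finite codimension in $\pi_{\rm first}(\checkH(D))$, and combined with the symbolic surjectivity of $\pi_{\rm first} \circ P_\mathcal{C}$ coming from the SL-ellipticity in (i) this yields Fredholmness as stated in (iii). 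For (iv), any $u \in \dom(D_{\max})$ admits a decomposition $u = u_1 + u_2$ with $u_1 \in \dom(D_{\rm Dir})$ solving the Dirichlet problem $D u_1 = Du$ and $u_2 := u - u_1 \in \ker(D_{\max})$, provided $\ran(D_{\rm Dir}) = \ran(D_{\max})$; by Theorem~\ref{Thm:ClosedRangeCharHO} this is equivalent to $B_{\rm Dir} + \Ca_D = \checkH(D)$. The intersection $\dom(D_{\rm Dir}) \cap \ker(D_{\max}) = \ker(D_{\rm Dir})$ is immediate, and the corresponding decomposition of $\checkH(D)$ follows by applying $\gamma$ together with the regular-case identification $B_{\rm Dir} = \bigoplus_{j=m}^{2m-1} \SobH{2m-j-\frac12}(\Sigma; E)$. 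The main obstacle is precisely this surjectivity $B_{\rm Dir} + \Ca_D = \checkH(D)$: by Theorem~\ref{Thm:FredChar} the potential cokernel is identified with $B_{\rm Dir}^* \cap \Ca_{D^\dagger} = \gamma(\ker D^\dagger_{\rm Dir})$, and ruling it out (upgrading the Fredholm conclusion of (iii) to actual surjectivity) requires the classical analysis of the Dirichlet problem for properly elliptic operators found in \cite{grubb68}.
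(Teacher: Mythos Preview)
The paper does not give its own proof of this theorem: the sentence immediately preceding it reads ``the reader is referred to \cite{grubb68} for proofs,'' and no argument is supplied. Your proposal therefore goes considerably further than the paper does, deriving (i)--(iii) from the Calder\'on/Shapiro--Lopatinskii machinery developed in Sections~\ref{sec:calderon}--\ref{sec:cuchsosa} rather than outsourcing everything to the reference.

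Your arguments for (i)--(iii) are sound. The Vandermonde verification of Shapiro--Lopatinskii ellipticity for the Dirichlet projector is the standard one and feeds correctly into Theorem~\ref{elliptidcocoddn}/Proposition~\ref{lsellipticgiveselliptic}. The anti-triangular structure of $\scalebox{1.5}{a}^*$ with invertible $A_0^*$ on the anti-diagonal does give the downward induction you describe, so Proposition~\ref{describinglocalcndldldaadjoint} yields $(B_{\rm Dir})^* = B_{\rm Dir}^{D^\dagger}$ as claimed. For (iii), the cleanest phrasing is that $\pi_{\rm first}\circ P_{\mathcal C}\in \Psi^{\pmb 0}_{\rm cl}(\Sigma; E\otimes\C^{2m}, E\otimes\C^m)$ has surjective principal symbol $\pi_{\rm first}\circ p_+(D)$ (this is exactly the SL condition), hence closed range of finite codimension in the full target Sobolev space; combined with the finite-dimensional kernel $\Ca_D\cap B_{\rm Dir}$ this gives Fredholmness of $\pi_{\rm first}|_{\Ca_D}$.

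For (iv) you correctly isolate the one step that does not follow from the Fredholm-pair machinery alone: the equality $B_{\rm Dir}+\Ca_D=\checkH(D)$ (equivalently $\ran(D_{\rm Dir})=\ran(D_{\max})$, equivalently $\ker(D^\dagger_{\rm Dir})=\ker(D^\dagger_{\min})$). Theorem~\ref{Thm:FredChar} only gives finite codimension, and Proposition~\ref{Prop:FredEll} shows that in general one must adjust a regular boundary condition by finite-dimensional pieces to obtain an exact complement of $\Ca_D$. Your deferral of this surjectivity to \cite{grubb68} is honest and matches precisely what the paper itself does for the entire theorem.
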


The reader should note that the theorem above holds for more general local (regular) boundary conditions than Dirichlet conditions, and are studied using different methods in \cite{grubb68}. We remark that from these type of results, several Fredholm type results follow readily for properly elliptic boundary value problems with regular boundary conditions.

\section{Calderón projectors and the Douglis-Nirenberg calculus}
\label{sec:calderon}

In this section we will review results of Hörmander and Seeley concerning Calderón projectors. See more in \cite[Chapter XX]{horIII}, \cite{seeley65} and also in \cite[Chapter 11]{grubbdistop}. Calderón projections  are projections on function spaces on the boundary $\Sigma$ onto the space of boundary values of homogeneous solutions $Du=0$ in the interior, i.e. the Hardy space $\mathcal{C}_D$. For higher order operators, it is necessary to keep track of  different order traces, so even though the Calderón projection is a matrix of pseudodifferential operators, its entries will have different order. However, the orders remain constant along the diagonals in the Calderón projection which makes it amenable for study via the calculus of Douglis-Nirenberg.

\subsection{Approximate Calderón projectors}
\label{subsec:calderonproj}

First, we will in this subsection recall some constructions found in Hörmander's book \cite{horIII}. It will provide a way of constructing the full symbol of the Calderón projection.

As above, we write $x_n$ for the coordinate near the boundary defined from the transversal vector field $\vec{T}$. The coordinate $x_n$ identifies a neighbourhood of the boundary with a collar $\Sigma\times[0,1]$ with the boundary defined by the equation $x_n=0$. Consider the differential operator $D_{x_n}=-i\partial_{x_n}$ defined in the collar neighbourhood of the boundary. Near the boundary, we can write 
$$D=\sum_{j=0}^m A_jD_{x_n}^{m-j}.$$
For each $j$, $A_j=(A_j(x_n))_{x_n\in [0,1]}$ is a family of differential operators $\Ck{\infty}(\Sigma,E)\to \Ck{\infty}(\Sigma,F)$ of order $j$. Let $a$ denote the principal symbol of $D$ and $a_j$ the principal symbol of $A_j$. In coordinates $x=(x',x_n)$ on $\Sigma\times[0,1]$ with associated cotangent coordinates $\xi=(\xi',\xi_n)$ on $T^*M|_{\Sigma}$, near $\Sigma$ we can  write 
$$a(x',x_n,\xi',\xi_n)=\sum_{j=0}^m a_j(x',x_n,\xi')\xi_n^{m-j}.$$
We define the boundary symbol $\sigma_\partial(A)$ as the ordinary differential operator valued function on $T^*\Sigma$ given by
\begin{equation}
\label{boundarysymbol}
\sigma_\partial(D)(x',\xi'):=\sum_{j=0}^m a_j(x',0,\xi')D_{t}^{m-j}.
\end{equation}
We consider $\sigma_\partial(D)$ as an element of $\Ck{\infty}(T^*\Sigma,\mathrm{Hom}(E|_{\Sigma},F|_{\Sigma})\otimes \mathcal{L}(\Ck{\infty}[0,\infty)))$. The conormal symbol of $D$ is the symbol of the boundary symbol 
\begin{equation}
\label{conormalsymbol}
\sigma_{\rm cn}(D)(x',\xi',\xi_n):=\sum_{j=0}^m a_j(x',0,\xi')\xi_n^{m-j},
\end{equation}
so that $\sigma_\partial(D)(x',\xi')=\sigma_{\rm cn}(D)(x',\xi',D_t)$ and $\sigma_{\rm cn}(D)=\sigma(D)|_\Sigma$. For more details on the yoga of boundary symbols, see \cite{gerdsgreenbook,rempelschulze,elmarbdmover} or \cite{melroseAPS} for related notions.

The following result follows from \cite[Theorem XX.1.3]{horIII}, which we state below as Theorem \ref{horsats}.

\begin{proposition}
\label{structureofeplus}
Let $D$ be elliptic of order $m>0$. The following construction produces a well-defined vector bundle 
$$E_+(D)\to S^*\Sigma.$$ 
Define $E_+(D)$ as the subset $E_+(D)\subseteq \pi^*(E)|_{S^*\Sigma} \otimes \C^m$ whose fibre over $(x',\xi')\in S^*\Sigma$ consists of $(v_0,v_1, \ldots, v_{m-1})\in E_{x}\otimes \C^m$ such that the solution $v\in \Ck{\infty}([0,\infty),E_x)$ of the ordinary differential equation 
\begin{equation}
\label{boundaryeq}
\begin{cases}
\sigma_\partial(D)(x',\xi')v(t)=0,\; t>0,\\
D^j_tv(0)=v_j, \; j=0,\ldots, m-1,\end{cases}
\end{equation}
is exponentially decaying as $t\to +\infty$.

The subset $E_-(D)\subseteq \pi^*(E)|_{S^*\Sigma}$ given as the set of vectors whose solution to \eqref{boundaryeq} is exponentially decaying as $t\to -\infty$ is a well-defined vector bundle defining a complement to $E_+(D)$ in $\pi^*(E)|_{S^*\Sigma} \otimes \C^m$.
\end{proposition}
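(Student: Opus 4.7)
The plan is to recast the boundary ODE as a first order system with constant coefficients in $t$, and then exhibit $E_\pm(D)$ as the images of smoothly varying Riesz spectral projectors constructed from that system.

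First I would rewrite the boundary symbol in companion form. Since $D$ is elliptic, the leading coefficient $a_0(x',0,\xi')\in\mathrm{Hom}(E_x,F_x)$ is pointwise invertible, so the equation $\sigma_\partial(D)(x',\xi')v=0$ of Proposition is equivalent to
\begin{equation*}
D_t^m v = -a_0^{-1}\sum_{j=1}^m a_j(x',0,\xi')D_t^{m-j}v.
\end{equation*}
Setting $\mathbf{v}(t)=(v(t),D_tv(t),\ldots,D_t^{m-1}v(t))^{\mathrm T}\in E_x\otimes\mathbb C^m$ converts this into a first order system $D_t\mathbf{v}=\mathcal{M}(x',\xi')\mathbf{v}$ with the usual companion matrix $\mathcal{M}$, so that
\begin{equation*}
\mathbf{v}(t)=e^{it\mathcal{M}(x',\xi')}\mathbf{v}(0),\qquad \mathbf{v}(0)=(v_0,\ldots,v_{m-1}).
\end{equation*}
In particular, the initial data map $v\mapsto\mathbf{v}(0)$ is an isomorphism from the solution space of the ODE onto $E_x\otimes\mathbb C^m$, and exponential decay as $t\to+\infty$ (resp.\ $t\to-\infty$) of $v$ is equivalent to $\mathbf{v}(0)$ lying in the spectral subspace $V_+(x',\xi')$ (resp.\ $V_-(x',\xi')$) of $\mathcal{M}(x',\xi')$ corresponding to eigenvalues with strictly positive (resp.\ negative) imaginary part.

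Next I would use ellipticity to control the spectrum of $\mathcal{M}$. By construction, $\lambda\in\mathbb C$ is an eigenvalue of $\mathcal{M}(x',\xi')$ exactly when $\det\sigma_{\mathrm{cn}}(D)(x',\xi',\lambda)=0$, and ellipticity of $D$ forces $\det\sigma_{\mathrm{cn}}(D)(x',\xi',\xi_n)\ne0$ for all $(x',\xi')\in S^*\Sigma$ and $\xi_n\in\mathbb R$. Hence $\mathcal{M}(x',\xi')$ has no real eigenvalues anywhere on $S^*\Sigma$. I can then choose, locally in $(x',\xi')$, two disjoint smooth contours $\gamma_\pm$ in $\mathbb C$ enclosing the upper and lower half plane parts of $\spec\mathcal{M}$ respectively, and define Riesz projectors
\begin{equation*}
\pi_\pm(x',\xi'):=\frac{1}{2\pi i}\oint_{\gamma_\pm}(z-\mathcal{M}(x',\xi'))^{-1}\,dz.
\end{equation*}
Because $\mathcal{M}$ depends smoothly on $(x',\xi')$ and the contours can be chosen locally constant, $\pi_\pm$ are smooth idempotents on $E\otimes\mathbb C^m$ over $S^*\Sigma$, with $\pi_++\pi_-=\mathrm{id}$ since $\spec\mathcal{M}$ is disjoint from $\mathbb R$. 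Their images are the spectral subspaces $V_\pm$, which by the previous paragraph coincide with $E_\pm(D)$.

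The bundle structure then follows: $E_\pm(D)=\ran\pi_\pm$ is the image of a smooth idempotent endomorphism of the vector bundle $\pi^*(E)|_{S^*\Sigma}\otimes\mathbb C^m$, hence is a smooth subbundle (the local rank being constant because eigenvalues of $\mathcal{M}(x',\xi')$ depend continuously on $(x',\xi')$ and cannot cross the real axis). Complementarity $E_+(D)\oplus E_-(D)=\pi^*(E)|_{S^*\Sigma}\otimes\mathbb C^m$ is exactly $\pi_++\pi_-=\mathrm{id}$. The only technical point worth care is the smoothness and local-constancy of rank of $\pi_\pm$; this is handled by the contour-integral formula together with the fact that, on each connected component of $S^*\Sigma$, the number of eigenvalues (with multiplicity) of $\mathcal{M}(x',\xi')$ in the open upper half plane is locally constant, since ellipticity prevents them from crossing $\mathbb R$.
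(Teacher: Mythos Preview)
Your argument is correct and self-contained. The paper does not actually prove this proposition; it simply states that the result follows from H\"ormander's \cite[Theorem~XX.1.3]{horIII}, restated in the paper as Theorem~\ref{horsats}. Your companion-matrix reformulation followed by the Riesz-projector construction is precisely the standard mechanism underlying that result: the contour integral you write for $\pi_+$ is, after unwinding the companion structure, the residue formula \eqref{sigmaoss} that the paper quotes in Remark~\ref{remarkonredisuydd} for the principal symbol $p_+(D)$ of the approximate Calder\'on projection. So your route and the paper's citation are really the same argument, only yours is explicit while the paper outsources it to H\"ormander. What your version buys is transparency about why $E_\pm(D)$ are genuine subbundles (smooth idempotents with locally constant rank), which the paper leaves implicit in the reference.
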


\begin{definition}\label{Def:p+}
Let $p_+(D)\in \Ck{\infty}(S^*\Sigma, \mathrm{Hom}(\pi^*(E)|_{S^*\Sigma} \otimes \C^m))$ denote the projection onto $E_+(D)$ along $E_-(D)$ and $p_-(D):=1-p_+(D)$ its complementary projection.
\end{definition}

\begin{proposition}
\label{pplussandpminussdagger}
Let $D$ be elliptic of order $m>0$. It holds that 
$$p_+(D)=\sigma_{\pmb 0}(\scalebox{1.5}{a}^{-1})p_-(D^\dagger)^*\sigma_{\pmb 0}(\scalebox{1.5}{a}),$$
where $\scalebox{1.5}{a}$ is the matrix of differential operators from Proposition \ref{proponboundarypairingforell} and $\sigma_{\pmb 0}(\scalebox{1.5}{a})$ is its principal symbol (in the Douglis-Nirenberg calculus we review below)
$$\sigma_{\pmb 0}(\scalebox{1.5}{a})=-i\begin{pmatrix} 
\sigma_{m-1}(A_{m-1})&\sigma_{m-2}(A_{m-2})&\cdots &\sigma_2(A_{2})& \sigma_1(A_{1})&A_0\\
\sigma_{m-2}(A_{m-2})&\sigma_{m-3}(A_{m-3})&\cdots &\sigma_1(A_{1})& A_0&0\\
\sigma_{m-3}(A_{m-3})&\Ddots& &A_0& 0&\vdots \\
\vdots &&\Ddots &0&\vdots & \\
\sigma_1(A_{1})&A_0&\Ddots && &\\
A_0&0&\cdots &0& 0&0
\end{pmatrix}.$$ 
In particular, $\sigma_{\pmb 0}(\scalebox{1.5}{a})$ induces isomorphisms  
$$E_\pm(D)\cong E_\mp(D^\dagger).$$
\end{proposition}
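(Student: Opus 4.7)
The plan is to deduce the identity from a symbol-level version of the boundary pairing established in Proposition \ref{proponboundarypairingforell}. First, I freeze coefficients at a point $(x',\xi')\in T^*\Sigma\setminus 0$ and repeat the integration-by-parts computation defining $\scalebox{1.5}{a}$ in the half-line setting. The output is the symbolic Green's formula
\begin{equation*}
\int_0^\infty \bigl\langle \sigma_\partial(D^\dagger)(x',\xi') u,\,v\bigr\rangle\,\mathrm{d}t - \int_0^\infty \bigl\langle u,\,\sigma_\partial(D)(x',\xi') v\bigr\rangle\,\mathrm{d}t = \bigl\langle \gamma_\partial u,\, \sigma_{\pmb 0}(\scalebox{1.5}{a})(x',\xi')\,\gamma_\partial v\bigr\rangle,
\end{equation*}
for smooth $u, v$ rapidly decaying at $+\infty$, where $\gamma_\partial w:=(D_t^j w(0))_{j=0}^{m-1}$. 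Here $\sigma_{\pmb 0}(\scalebox{1.5}{a})(x',\xi')$ appears because each off-diagonal entry $A_{j,k}$ of $\widetilde{\scalebox{1.5}{a}}$ has the same principal symbol as $A_j|_{x_n=0}$, which is exactly what iterated integration by parts of $\sum_j a_j(x',0,\xi')D_t^{m-j}$ produces.

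Next, I specialise to boundary-symbol solutions. If $\gamma_\partial v\in E_+(D)$, by definition $v$ extends to a solution of $\sigma_\partial(D)v=0$ on $[0,\infty)$ that decays as $t\to+\infty$; similarly for $\gamma_\partial u\in E_+(D^\dagger)$. Both integrals in the displayed identity then vanish, so
\begin{equation*}
\bigl\langle \eta,\,\sigma_{\pmb 0}(\scalebox{1.5}{a})\xi\bigr\rangle = 0 \qquad \text{for all } \xi\in E_+(D),\; \eta\in E_+(D^\dagger),
\end{equation*}
that is, $\sigma_{\pmb 0}(\scalebox{1.5}{a})(E_+(D))\subseteq E_+(D^\dagger)^\perp$. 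The inclusion is in fact an equality: the characteristic roots of the conormal symbol of $D^\dagger$ are the complex conjugates of those of $D$, hence $\dim E_+(D)=\dim E_-(D^\dagger)=\dim E_+(D^\dagger)^\perp$, and $\sigma_{\pmb 0}(\scalebox{1.5}{a})$ is pointwise invertible. A parallel argument with solutions decaying as $t\to-\infty$ yields $\sigma_{\pmb 0}(\scalebox{1.5}{a})(E_-(D))=E_-(D^\dagger)^\perp$.

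The identity then follows by bookkeeping. For any projection $p$ with range $R$ and kernel $K$, the adjoint $p^*$ is a projection onto $K^\perp$ along $R^\perp$. Hence $p_-(D^\dagger)^*$ projects onto $E_+(D^\dagger)^\perp$ along $E_-(D^\dagger)^\perp$, and $\sigma_{\pmb 0}(\scalebox{1.5}{a}^{-1})\,p_-(D^\dagger)^*\,\sigma_{\pmb 0}(\scalebox{1.5}{a})$ is a projection with range $\sigma_{\pmb 0}(\scalebox{1.5}{a})^{-1}(E_+(D^\dagger)^\perp)=E_+(D)$ and kernel $\sigma_{\pmb 0}(\scalebox{1.5}{a})^{-1}(E_-(D^\dagger)^\perp)=E_-(D)$, which is exactly $p_+(D)$. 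The induced isomorphisms $E_\pm(D)\cong E_\mp(D^\dagger)$ then follow by composing the isomorphism $\sigma_{\pmb 0}(\scalebox{1.5}{a}):E_\pm(D)\to E_\pm(D^\dagger)^\perp$ with the restriction of $p_\mp(D^\dagger)$ to $E_\pm(D^\dagger)^\perp$, which maps bijectively onto $E_\mp(D^\dagger)$ since $E_\pm(D^\dagger)\cap E_\pm(D^\dagger)^\perp=\{0\}$ and the dimensions agree.

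The main obstacle is verifying the symbolic Green's formula in the first paragraph with $\sigma_{\pmb 0}(\scalebox{1.5}{a})$—and not some other matrix built from the $a_j$'s—appearing on the right-hand side. This amounts to identifying, at the level of principal symbols, the boundary terms arising from iterating integration by parts in $\int_0^\infty \bigl\langle u,\, a_j(x',0,\xi')D_t^{m-j}v\bigr\rangle\, \mathrm{d}t$ with the entries of the matrix $\sigma_{\pmb 0}(\scalebox{1.5}{a})$ as computed in Proposition \ref{proponboundarypairingforell}. Once this symbol-level bookkeeping is in hand, the remainder of the argument is purely linear algebra.
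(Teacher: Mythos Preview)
Your proof is correct and takes a genuinely different route from the paper's. The paper postpones its proof until after Theorem~\ref{horsats} because it relies on the explicit residue formula for $p_+(D)$ from Remark~\ref{remarkonredisuydd}: it writes $p_-(D^\dagger)^*$ as a sum of residues, manipulates the conjugation to swap upper and lower half-planes, invokes Lemma~\ref{someexresiosnaoadwa} ($\scalebox{1.5}{a}^*+\scalebox{1.5}{a}_\dagger=0$), and then recognises the resulting expression as the residue formula for $p_+(D)$. Your argument instead freezes coefficients and runs the integration-by-parts of Proposition~\ref{proponboundarypairingforell} at the boundary-symbol level to obtain the orthogonality relations $\sigma_{\pmb 0}(\scalebox{1.5}{a})(E_\pm(D))\subseteq E_\pm(D^\dagger)^\perp$, then upgrades these to equalities by the dimension count (roots of $\sigma_{\rm cn}(D^\dagger)$ are conjugates of those of $\sigma_{\rm cn}(D)$) and finishes with the linear-algebra fact about adjoints of projections. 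Your approach is more geometric, avoids the residue machinery entirely, and could in principle be placed immediately after the definitions of $E_\pm$ and Proposition~\ref{proponboundarypairingforell}; the paper's approach, by contrast, exhibits the identity as a direct computational consequence of the residue description of the Calder\'on symbol, which ties it more closely to the analytic construction of $P_\mathcal{C}$ used later. One small point: the proposition says $\sigma_{\pmb 0}(\scalebox{1.5}{a})$ \emph{induces} isomorphisms $E_\pm(D)\cong E_\mp(D^\dagger)$, and what you have shown is $\sigma_{\pmb 0}(\scalebox{1.5}{a})\colon E_\pm(D)\xrightarrow{\sim} E_\pm(D^\dagger)^\perp$; composing with $p_\mp(D^\dagger)$ as you do is a perfectly reasonable reading of ``induces'', and the paper's proof does not spell this step out either.
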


We postpone the proof of Proposition \ref{pplussandpminussdagger} until after the statement of Theorem \ref{horsats} below.

\begin{proposition}
\label{nontrivialityofeplussd}
Let $D$ be an elliptic differential operator of order $m>0$ acting on a manifold of $\dim(M)>1$. The vector bundles $E_+(D),E_-(D)\to S^*\Sigma$ are non-trivial over each component of $S^\ast \Sigma$.
\end{proposition}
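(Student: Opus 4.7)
The plan is to show that both $E_+(D)$ and $E_-(D)$ have positive fibre dimension on each connected component of $S^*\Sigma$. Since $E_\pm(D)\to S^*\Sigma$ are vector bundles by Proposition \ref{structureofeplus}, the fibre dimensions $r_\pm$ are integer-valued and locally constant, hence constant on each connected component. Ellipticity of $D$ immediately gives $r_+ + r_- = m\cdot\mathrm{rank}(E)$: the characteristic polynomial $\det\sigma_{\rm cn}(D)(x',\xi',\xi_n)$ is of degree $m\cdot\mathrm{rank}(E)$ in $\xi_n$ and, for $\xi'\neq 0$, has no real roots, so all roots are distributed between the upper and lower half planes.

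The key input I would use is the homogeneity identity
\[
\sigma_{\rm cn}(D)(x',-\xi',-\xi_n) = (-1)^m \sigma_{\rm cn}(D)(x',\xi',\xi_n),
\]
which is immediate from the degree-$m$ homogeneity of the principal symbol. It shows that $\xi_n$ is a root at $(x',\xi')$ if and only if $-\xi_n$ is a root at $(x',-\xi')$, giving the pointwise identity $r_+(x',-\xi') = r_-(x',\xi')$.

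The main step is then a short connectedness argument. When $\dim M \geq 3$ (so $\dim \Sigma \geq 2$), the cosphere fibres of $S^*\Sigma \to \Sigma$ are connected spheres of positive dimension, so $(x',\xi')$ and $(x',-\xi')$ always lie in the same connected component of $S^*\Sigma$. Local constancy of $r_\pm$ together with the symmetry identity forces $r_+(x',\xi') = r_+(x',-\xi') = r_-(x',\xi')$, and combined with $r_+ + r_- = m\cdot\mathrm{rank}(E)$ each equals $m\cdot\mathrm{rank}(E)/2 > 0$ on every component. This proves the proposition in this case.

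The main obstacle is the borderline case $\dim M = 2$, where cosphere fibres are $S^0$ and the antipodal points $\pm\xi'$ lie in different orientation components of $S^*\Sigma$, so the symmetry identity alone cannot transfer positivity of $r_\pm$ between components. Here I would handle each component of $S^*\Sigma$ individually and reduce to a normal form by a deformation through elliptic operators of order $m$, exploiting the local constancy of $r_\pm$ along any such homotopy to transport positivity from a convenient model operator (for instance a positive Laplace-type or formally self-adjoint choice) back to $D$. The delicate point, which will require care, is producing a deformation that remains within the elliptic class throughout and is compatible with the bundle structure on $E$ and $F$.
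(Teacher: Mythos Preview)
Your argument for $\dim M \geq 3$ is correct and in fact yields a bit more than the paper records: you obtain $r_+ = r_- = \tfrac{1}{2} m\cdot\mathrm{rank}(E)$ on every component, while the paper's proof only draws the weaker conclusion that for each antipodal pair $\{(x',\pm\xi')\}$ at least one of the two fibres $E_+(x',\pm\xi')$ is non-zero (since $\dim E_+(x',-\xi')=r_-(x',\xi')$ and $r_+(x',\xi')+r_-(x',\xi')=m\cdot\mathrm{rank}(E)>0$). Both arguments rest on the same homogeneity identity.

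Your proposed treatment of $\dim M = 2$, however, cannot be completed: the claim you are trying to establish there---that $r_+ > 0$ on every connected component of $S^*\Sigma$---is false in general. The paper's own remark immediately following the proposition supplies a counterexample: the odd-to-even part of a twisted spin$^c$ Dirac operator on a surface has $m\cdot\mathrm{rank}(E) = 1$, and $E_+$ is supported on one of the two orientation components of $S^*\Sigma \cong \Sigma \times S^0$, vanishing identically on the other. Since $r_\pm$ are locally constant along any homotopy through elliptic symbols, no deformation to a Laplace-type or formally self-adjoint model can force $r_+ > 0$ on a component where the target operator already has $r_+=0$. The paper avoids this obstacle by not splitting into dimensional cases and by proving only the antipodal statement above, which is uniform in the dimension and suffices for the downstream application (showing $p_\pm(D)\neq 0$ in Corollary~\ref{compactnesscor}); the phrase ``over each component of $S^*\Sigma$'' in the proposition is a little loose in the two-dimensional case, as the remark makes clear.
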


\begin{proof}
We note that $\sigma_{\rm cn}(D)(x',\xi',z)=(-1)^{m}\sigma_{\rm cn}(D)(x',-\xi',-z)$, so $z\in \C$ solves $\det \sigma_{\rm cn}(D)(x',\xi',z)=0$ if and only if $\det \sigma_{\rm cn}(D)(x',-\xi',-z)=0$. Since $E\otimes \C^m=E_+(D)\oplus E_-(D)$ by Proposition \ref{structureofeplus}, we conclude that given $(x',\xi')\in S^*\Sigma$, either $E_+(D)_{(x',\xi')}\neq 0$ or $E_-(D)_{(x',\xi')}\neq 0$. Therefore, given $(x',\xi')\in S^*\Sigma$ then at least one of the vector spaces $E_+(D)_{(x',\xi')}$ and $E_+(D)_{(x',-\xi')}$ is non-zero and at least one of the vector spaces $E_-(D)_{(x',\xi')}$ and $E_-(D)_{(x',-\xi')}$ is non-zero. 
\end{proof}

\begin{remark}
We remark that if $\dim(M)>2$, then $\pi_0(S^*\Sigma)=\pi_0(\Sigma)$ and in this case $E_+(D),E_-(D)\to S^*\Sigma$ are non-trivial over each component of $S^*\Sigma$. If $\dim(M)=2$, the ``odd-to-even'' part of a spin$^c$ Dirac operator twisted by a line bundle provides an example where $E_+(D)$ and $E_-(D)$ satisfy $\mathrm{rk}(E_+(D))+\mathrm{rk}(E_-(D))=1$ but are non-vanishing over each connected component of the boundary; their fibrewise supports are on complementary components of $S^*\Sigma=S^1\times \Sigma\dot{\cup} S^1\times \Sigma$.
\end{remark}

To construct the Calderón projection, we use the Douglis-Nirenberg calculus. The reader is referred to \cite{agmon, grubb77}, see also \cite[Chapter XIX.5]{horIII}, for details. We introduce some notation. If $\pmb{E}\to \Sigma$ is a Hermitian vector bundle orthogonally graded by $\R$ (viewed as a discrete group),   for $s\in \R$ we define the graded Sobolev space
$$\SobHH{{s}}(\Sigma,\pmb{E}):=\oplus_{s'+t=s} \SobH{s'}(\Sigma,\pmb{E}[t]),$$
where $\pmb{E}=\oplus_t \pmb{E}[t]$ is the grading of $\pmb{E}$. If $\pmb{E},\pmb{F}\to \Sigma$ are two Hermitian vector bundles orthogonally graded by $\R$ (viewed as a discrete group), we can define the space $\Psi^{\pmb m}_{\rm cl}(\Sigma;\pmb{E},\pmb{F})$ of Douglis-Nirenberg operators from $\pmb{E}$ to $\pmb{F}$ of order $m\in\R$ as the subspace of classical pseudodifferential operators $\Psi_{\rm cl}^*(\Sigma;\pmb{E},\pmb{F})$ that act homogeneously jointly in the Sobolev degree and the gradings of the vector bundles. More precisely, writing $\pmb{E}=\oplus_{k=1}^{n_E} E[s_k]$ and $\pmb{F}=\oplus_{j=1}^{n_F}F[r_j]$ for some $s_1,\ldots, s_{n_E},r_1,\ldots, r_{n_F}\in \R$, a matrix $(A_{jk})_{j=1,\ldots, n_E,k=1,\ldots, n_F}\in \Psi_{\rm cl}^*(\Sigma;\pmb{E},\pmb{F})$ belongs to $\Psi^{\pmb \alpha}_{\rm cl}(\Sigma;\pmb{E},\pmb{F})$ if and only if the order $\alpha_{j,k}$ of $A_{j,k}$ satisfies $\alpha_{j,k}=\alpha-r_k+s_j$ for all $j$ and $k$. By construction, $\Psi^{-\pmb \infty}(\Sigma;\pmb{E},\pmb{F}):=\cap_{s\in \R}\Psi^{-\pmb s}(\Sigma;\pmb{E},\pmb{F}) =\Psi^{-\infty}(\Sigma;\pmb{E},\pmb{F})$ -- the ordinary smoothing operators. We write $\Psi^{\pmb \alpha}_{\rm cl}(\Sigma;\pmb{E}):=\Psi^{\pmb \alpha}_{\rm cl}(\Sigma;\pmb{E},\pmb{E})$.

We define the principal symbol $\sigma_{\pmb{\alpha}}(A)$ to be the matrix of principal symbols $(\sigma_{\alpha_{j,k}}(A_{j,k}))_{j,k}$ which defines an element of $\Ck{\infty}(S^*\Sigma,\mathrm{Hom}(\pi^*\pmb{E},\pi^*\pmb{F}))$. We say that $A\in \Psi^{\pmb \alpha}_{\rm cl}(\Sigma;\pmb{E},\pmb{F})$ is elliptic if $\sigma_{\pmb{\alpha}}(A)$ is invertible in all points, i.e. that $\sigma_{\pmb{\alpha}}(A)\in \Ck{\infty}(S^*\Sigma,\mathrm{Iso}(\pi^*\pmb{E},\pi^*\pmb{F}))$. By the standard construction, $A\in \Psi^{\pmb \alpha}_{\rm cl}(\Sigma;\pmb{E},\pmb{F})$ is elliptic if and only if there exists a parametrix $B\in \Psi^{-\pmb \alpha}_{\rm cl}(\Sigma;\pmb{F},\pmb{E})$.

By construction, any $A\in \Psi^{\pmb \alpha}_{\rm cl}(\Sigma;\pmb{E},\pmb{F})$ defines a continuous operator 
$$A:\SobHH{{s}}(\Sigma;\pmb{E})\to \SobHH{{s'}}(\Sigma;\pmb{F}),$$
for any $s,s'\in \R$ with $s'\leq s-\alpha$. If $\sigma_{\pmb{\alpha}}(A)\neq 0$, this operator is compact if and only if $s'< s-\alpha$. 
For $\sigma_{\pmb{\alpha}}(A)=0$, this operator is compact whenever $s'\leq s-\alpha$. This operator is Fredholm if and only if $A$ is elliptic (cf. \cite[Theorem XIX.5.1 and XIX.5.2]{horIII}). 

The graded vector bundles we shall be concerned with take the form $E\otimes \C^m$ for a trivially graded Hermitian vector bundle $E$. We shall tacitly grade this vector bundle as follows. We view $\C^m$ as a graded bundle by 
$$\C^m[s]=
\begin{cases}
\C, \; &\mbox{if $s=0,1,\ldots, m-1$},\\
0, \; &\mbox{otherwise}.\end{cases}.$$
We grade $E\otimes \C^m$ as a tensor product, i.e.
$$(E\otimes \C^m)[s]=
\begin{cases}
E, \; &\mbox{if $s=0,1,\ldots, m-1$},\\
0, \; &\mbox{otherwise}.\end{cases}$$
In particular, 
$$\SobHH{{s}}(\Sigma; E\otimes \C^m)=\bigoplus_{j=0}^{m-1}\SobH{s-j}(\Sigma;E).$$
We also introduce the graded bundle $\C^m_{\rm op}$ which is the trivial vector bundle of rank $m$, graded by 
$$\C^m_{\rm op}[s]=\C^m[-s]=
\begin{cases}
\C, \; &\mbox{if $s=0,-1,\ldots, -m+1$},\\
0, \; &\mbox{otherwise}.\end{cases}.$$
The reader should note that $\C^m_{\rm op}$ can be identified with the graded dual bundle of $\C^m$, and the Hermitian structure of $E$ allow us to identify $E\otimes \C^m_{\rm op}$ with the graded dual bundle of $E\otimes \C^m$. In particular, 
$$\SobHH{{s}}(\Sigma; E\otimes \C^m_{\rm op})=\bigoplus_{j=0}^{m-1}\SobH{s+j}(\Sigma; E).$$
We can conclude the following proposition.

\begin{proposition}
\label{ell2pairingsofgradedsob}
The $\Lp{2}$-pairing induces a perfect pairing
$$\SobHH{{s}}(\Sigma; E\otimes \C^m)\times \SobHH{-{s}}(\Sigma; E\otimes \C^m_{\rm op})\to \C,$$
for any $s\in \R$.
\end{proposition}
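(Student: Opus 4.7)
The plan is to reduce the claim to the classical Sobolev duality on a closed manifold and then assemble the direct sum. Concretely, I would unfold the gradings using the definitions immediately preceding the proposition:
\begin{align*}
\SobHH{s}(\Sigma;E\otimes\C^m) &= \bigoplus_{j=0}^{m-1}\SobH{s-j}(\Sigma;E),\\
\SobHH{-s}(\Sigma;E\otimes\C^m_{\rm op}) &= \bigoplus_{j=0}^{m-1}\SobH{-s+j}(\Sigma;E),
\end{align*}
so that the claimed pairing decomposes as a direct sum, indexed by $j=0,\ldots,m-1$, of pairings
$$\SobH{s-j}(\Sigma;E)\times \SobH{-(s-j)}(\Sigma;E)\to \C,\quad (u,v)\mapsto \int_\Sigma \langle u,v\rangle_{h^E}\,\dx{\nu}.$$

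For the second step, I would invoke the standard Sobolev duality on the closed manifold $\Sigma$: for any $t\in\R$, the $\Lp{2}(\Sigma;E)$ inner product extends by continuity from $\Ck{\infty}(\Sigma;E)\times\Ck{\infty}(\Sigma;E)$ to a perfect pairing $\SobH{t}(\Sigma;E)\times \SobH{-t}(\Sigma;E)\to \C$, i.e.\ a non-degenerate continuous sesquilinear map that induces a topological anti-linear isomorphism $\SobH{-t}(\Sigma;E)\cong \SobH{t}(\Sigma;E)^*$. This is classical; it can be proved using a parametrix of $(1+\Delta_\Sigma)^{t/2}$ for some Laplace-type operator on $E$ to reduce to the case $t=0$, where it is just the self-duality of $\Lp{2}$ via the Riesz representation theorem.

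For the third step, given the direct sum decomposition above, continuity and non-degeneracy of the total pairing follow componentwise from the factorwise pairings, and the induced map
$$\SobHH{-s}(\Sigma;E\otimes\C^m_{\rm op})\to \SobHH{s}(\Sigma;E\otimes\C^m)^*$$
is the direct sum of the factorwise topological isomorphisms, hence itself a topological anti-linear isomorphism. This gives perfectness of the pairing.

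The only mild subtlety is making sure the grading convention used to define $\C^m_{\rm op}$ matches up so that paired indices actually have opposite Sobolev orders; but this is forced by the prescription $\C^m_{\rm op}[s]=\C^m[-s]$, which is precisely arranged so that the $j$-th summand pairs with the $j$-th summand. No real obstacle arises beyond this bookkeeping.
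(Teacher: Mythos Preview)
Your proposal is correct and matches the paper's approach: the paper does not write out an explicit proof but simply states the proposition as an immediate consequence of the definitions of $\SobHH{s}(\Sigma;E\otimes\C^m)$ and $\SobHH{-s}(\Sigma;E\otimes\C^m_{\rm op})$ together with the standard Sobolev duality $\SobH{t}(\Sigma;E)\times\SobH{-t}(\Sigma;E)\to\C$, which is exactly what you spell out.
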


\begin{definition}
Let $D$ be an elliptic differential operator of order $m>0$ on a manifold $M$ with boundary $\Sigma$ acting between sections of the Hermitian vector bundles $E$ and $F$. An approximate Calder\'on projection for $D$ is a classical pseudo-differential operator $Q\in \Psi^{\pmb 0}_{\rm cl}(\Sigma;E\otimes \C^m)$ which is order zero in the Douglis-Nirenberg sense and satisfies the following
\begin{itemize}
\item $Q^2-Q$ is smoothing.
\item $Q$ is an approximate projection onto the Hardy space $\Ca_D$ in the following sense: 
\begin{enumerate}
\item If $u\in \ker(D_{\rm max})$ then $v:=\gamma u\in \SobHH{-{{\frac12}}}(\Sigma;E\otimes \C^m)$ satisfies that $v-Qv\in \Ck{\infty}(\Sigma;E\otimes \C^m)$.
\item For any $v\in \SobHH{-{{\frac12}}}(\Sigma;E\otimes \C^m)$, there is a $u\in \ker(D_{\rm max})+\Ck{\infty}(M,E)$ (smooth up to the boundary) such that $Qv=\gamma u$. 
\end{enumerate}
\end{itemize}
\end{definition}

\begin{theorem}[\cite{horIII}, Theorem XX.1.3]
\label{horsats}
Let $D$ be an elliptic differential operator of order $m>0$. There exists an approximate Calderón projection $Q\in  \Psi^{\pmb 0}_{\rm cl}(\Sigma;E\otimes \C^m)$ for $D$ with principal symbol given by 
$$\sigma_{\pmb 0}(Q)=p_+(D),$$
(as defined in Defintion~\ref{Def:p+}) 
and the full symbol of $Q=(Q_{j,k})_{j,k=0}^{m-1}$ computed from the formula \cite[Equation XX.1.7]{horIII}:
\begin{equation}
\label{formulaforaprprood}
Q_{j,k}f=\sum_{l=0}^{m-1-k}(-i)^{j+l+1}\gamma_j\circ T[A_{m-l-k-1}f\otimes \delta_{t=0}^{(l)}],
\end{equation}
where $T$ is a pseudodifferential parametrix to $D$ computed in a neighbourhood of $\Sigma$. 
\end{theorem}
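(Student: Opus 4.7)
The plan is to construct $Q$ explicitly via Green's formula together with an interior parametrix for $D$, then to verify three properties: the Douglis--Nirenberg order, the projection property modulo smoothing, and the principal symbol identity $\sigma_{\pmb 0}(Q)=p_+(D)$. First I would embed $M$ as a domain with smooth boundary in a closed manifold $\hat{M}$, extend the bundles to $\hat{E},\hat{F}\to\hat{M}$, and choose an elliptic extension $\hat{D}\in \Psi^m_{\rm cl}(\hat{M};\hat{E},\hat{F})$. Since $\hat{D}$ is elliptic on a closed manifold, it admits a parametrix $T\in \Psi^{-m}_{\rm cl}(\hat{M};\hat{F},\hat{E})$ with $T\hat{D}-1$ and $\hat{D}T-1$ smoothing; the defining formula in the theorem uses this $T$, localised to a collar neighbourhood of $\Sigma$ (where it is only well-defined modulo smoothing anyway).

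Next I would justify the defining formula by a distributional Green identity. For $u\in \Ck{\infty}(M;E)$ with $Du=0$, the extension by zero $u_0:=u\cdot\chi_M$ lies in $\mathcal{D}'(\hat{M};\hat{E})$, and using the collar decomposition $D=\sum_{j=0}^m A_j D_{x_n}^{m-j}$ one computes
\[
\hat{D}u_0=\sum_{k=0}^{m-1}\sum_{l=0}^{m-1-k}(-i)^{l+1}A_{m-l-k-1}(\gamma_k u)\otimes \delta^{(l)}_{x_n=0},
\]
the factors of $-i$ coming from repeatedly integrating $D_{x_n}=-i\partial_{x_n}$ by parts against a test function and recording the jump at $x_n=0$. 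Applying $T$, using $T\hat{D}u_0=u_0$ modulo a smooth function, and then taking the $j$-th trace, one reads off exactly the formula in the theorem statement for the entries $Q_{j,k}$ acting on $\gamma_k u$. Extending by density to $\SobHH{{-\frac12}}(\Sigma;E\otimes\C^m)$ and checking that the resulting $Q$ reproduces $\gamma u$ modulo smoothing whenever $u\in\ker D_{\max}$ establishes both approximate-projection conditions required by the definition.

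For the Douglis--Nirenberg order count, I would inspect each term in the defining formula: $A_{m-l-k-1}$ is a differential operator of order $m-l-k-1$ on $\Sigma$, while the composition $f\mapsto \gamma_j\circ T[f\otimes\delta^{(l)}_{t=0}]$ is a boundary-to-boundary operator whose order in the standard boundary-pseudodifferential calculus is $l+1-m+j$ (viewing $T$ as a Poisson-type operator applied to a transverse delta of order $l+\tfrac12$, and $\gamma_j$ as costing order $j+\tfrac12$). Summing gives $\mathrm{ord}(Q_{j,k})=j-k$, i.e. order zero in Douglis--Nirenberg sense with respect to the grading on $E\otimes\C^m$. The identity $Q^2\equiv Q$ modulo smoothing then follows by combining the symbolic identity $p_+(D)^2=p_+(D)$ from Proposition \ref{structureofeplus} with the standard stepwise symbol correction: if $Q^2-Q$ has order $-N$, one can absorb its symbol into a lower-order correction of $Q$ that leaves the principal symbol unchanged, and then asymptotically sum.

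The hard part will be the symbol identification $\sigma_{\pmb 0}(Q)=p_+(D)$. To carry this out, one freezes a point $(x',\xi')\in T^*\Sigma\setminus 0$ and observes that the boundary symbol of $T$ at $(x',\xi')$ is the Green's function on $\R$ of the ordinary differential operator $\sigma_\partial(D)(x',\xi')=\sigma_{\rm cn}(D)(x',\xi',D_t)$. Symbolically the construction then reads: apply $\sigma_{\rm cn}(D)$ distributionally to the extension-by-zero of prescribed boundary jets, invert $\sigma_\partial(D)$, and take boundary jets of the resulting function on $[0,\infty)$. Evaluating this as a contour integral of $\sigma_{\rm cn}(D)(x',\xi',\xi_n)^{-1}$ around the roots of $\det\sigma_{\rm cn}(D)(x',\xi',\cdot)$ with positive imaginary part produces precisely the boundary jets of the decaying solution of the boundary ODE, which by definition is the action of $p_+(D)$. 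The combinatorics of this residue computation and the matching with the Lagrange interpolation structure implicit in the decomposition $\pi^*E\otimes\C^m=E_+(D)\oplus E_-(D)$ is where the real technical work lies.
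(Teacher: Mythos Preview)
The paper does not prove this theorem; it is quoted verbatim from H\"ormander \cite[Theorem XX.1.3]{horIII}, and the only additional content in the paper is Remark~\ref{remarkonredisuydd}, which records the residue formula \eqref{sigmaoss} for the principal symbol. Your sketch is essentially an outline of H\"ormander's argument: Green's formula applied to the zero-extension of a solution, boundedness via the Douglis--Nirenberg order count, and the symbol identification via contour integration over the roots of $\det\sigma_{\rm cn}(D)(x',\xi',\cdot)$ in the upper half-plane. On that level the approach matches the cited reference.

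There is, however, a genuine gap in your argument for $Q^2-Q\in\Psi^{-\infty}$. You propose a stepwise symbol correction: from $p_+(D)^2=p_+(D)$ you get $Q^2-Q$ of order $-1$, and then you want to absorb lower-order symbols into corrections of $Q$. But that procedure \emph{modifies} $Q$ at each step, and the theorem asserts that the specific $Q$ given by formula~\eqref{formulaforaprprood} is already an approximate projection. Your correction scheme would produce some $Q'$ with $(Q')^2-Q'$ smoothing, but $Q'$ would no longer have full symbol given by~\eqref{formulaforaprprood}. The correct route is the one you already set up but did not use here: once you know that (a) $Q\gamma u-\gamma u\in\Ck{\infty}$ for $u\in\ker D_{\max}$ and (b) every $Qv$ equals $\gamma u$ for some $u\in\ker D_{\max}+\Ck{\infty}(M;E)$, then for arbitrary $v$ one writes $Qv=\gamma u_0+\gamma u_1$ with $u_0\in\ker D_{\max}$, $u_1$ smooth, and computes $Q^2v-Qv=(Q\gamma u_0-\gamma u_0)+(Q\gamma u_1-\gamma u_1)$; the first bracket is smooth by (a), the second because $Q$ preserves $\Ck{\infty}$. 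Uniformity in $v$ (i.e.\ that $Q^2-Q$ is smoothing as an operator, not just pointwise) follows because $Q^2-Q$ is a classical pseudodifferential operator in the Douglis--Nirenberg calculus and maps $\SobHH{{-\frac12}}$ into $\Ck{\infty}$; alternatively one verifies the full-symbol projection identity to all orders directly from the residue representation, which is how H\"ormander organises it.
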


\begin{remark}
\label{remarkonredisuydd}
We remark that the formula for $Q_{j,k}$ might seem difficult to parse at first. But the fact that the trace mappings are from inside of $M$ makes it possible to compute the full symbol by means of residue calculus and the Paley-Wiener theorem. For instance, \cite[Equation XX.1.8]{horIII} computes the principal symbol of $Q$ by residue calculus as
\begin{align}
\label{sigmaoss}
\sigma_{\pmb 0}(Q)_{j,k}(x',\xi')\equiv& \sigma_{k-j}(Q_{j,k})(x',\xi')=\\
\nonumber
=&\sum_{\mathrm{Im}(\xi_n)>0}\mathrm{Res}_{\xi_n}\left[\sum_{l=0}^{m-k-1}\xi_n^{j+l}a(x',0,\xi,\xi_n)^{-1}a_{m-k-l-1}(x',0,\xi')\right]
\end{align}
where $(x',\xi')$ are coordinates on $T^*\Sigma$ and $((x',t),(\xi',\xi_n))$ are coordinates on $T^*M$ near $\Sigma$. The proof of \cite[Theorem XX.1.3]{horIII} identifies this expression with $p_+(D)$. The lower order terms in $Q$ are, in general, hard to compute. We shall give some examples of such computations below for order 1 operators. We can condense the formula \eqref{sigmaoss} into 
\begin{align}
\sigma_{\pmb 0}(Q)(x',\xi')=\sum_{\mathrm{Im}(\xi_n)>0}\mathrm{Res}_{\xi_n}\left[a(x',0,\xi',\xi_n)^{-1}e(\xi_n)\sigma_{\pmb 0}(\scalebox{1.5}{a})(x',\xi')\right],
\end{align}
where $e(\xi_n):=v(\xi_n)\otimes v(\xi_n)^*$ and $v(\xi_n)=(\xi_n^j)_{j=0}^{m-1}$. Here, we abuse notation and view $a^{-1}|_{T^*M|_\Sigma\setminus\Sigma}$ and $e$ as endomorphisms of $E|_{\Sigma}\otimes \C^m$. Note that $a^{-1}|_{T^*M|_\Sigma\setminus\Sigma}$ and $e$ commute because they act on different factors in the tensor product $E|_{\Sigma}\otimes \C^m$.
\end{remark}

\begin{proof}[Proof of Proposition \ref{pplussandpminussdagger}]
In light of Theorem \ref{horsats} and Remark \ref{remarkonredisuydd}, the proof consists of an exercise with boundary symbols and residue calculus. We compute that 
\begin{align*}
p_-(D^\dagger)^*(x',\xi')=&\left(-\sum_{\mathrm{Im}(\xi_n)<0}\mathrm{Res}_{\xi_n}\left[a^*(x',0,\xi',\xi_n)^{-1}e(\xi_n)\sigma_{\pmb 0}(\scalebox{1.5}{a}_\dagger)(x',\xi')\right]\right)^*=\\
=&-\sum_{\mathrm{Im}(\xi_n)>0}\mathrm{Res}_{\xi_n}\left[\left(a^*(x',0,\xi',\overline{\xi_n})^{-1}e(\overline{\xi_n})\sigma_{\pmb 0}(\scalebox{1.5}{a}_\dagger)(x',\xi')\right)^*\right]=\\
=&-\sum_{\mathrm{Im}(\xi_n)>0}\mathrm{Res}_{\xi_n}\left[\sigma_{\pmb 0}(\scalebox{1.5}{a}^*_\dagger)(x',\xi')a(x',0,\xi',\xi_n)^{-1}e(\xi_n)\right]=\\
=&\sum_{\mathrm{Im}(\xi_n)>0}\mathrm{Res}_{\xi_n}\left[\sigma_{\pmb 0}(\scalebox{1.5}{a})(x',\xi')a(x',0,\xi',\xi_n)^{-1}e(\xi_n)\right].
\end{align*}
In the first equality we used that $p_-(D^\dagger)$ is the complementary projection to $p_+(D^\dagger)$ which is the sum of all residues in the lower half plane, with a minus sign due to the orientation. In the second equality we used that the adjoint is antilinear, and interchanges the upper and lower half plane. In the last equality we used Lemma \ref{someexresiosnaoadwa}. We arrive at 
\begin{align*}
\sigma_{\pmb 0}(\scalebox{1.5}{a})&(x',\xi')^{-1}p_-(D^\dagger)^*(x',\xi')\sigma_{\pmb 0}(\scalebox{1.5}{a})(x',\xi')=\\
=&\sigma_{\pmb 0}(\scalebox{1.5}{a})(x',\xi')^{-1}\sum_{\mathrm{Im}(\xi_n)>0}\mathrm{Res}_{\xi_n}\left[\sigma_{\pmb 0}(\scalebox{1.5}{a})(x',\xi')a(x',0,\xi',\xi_n)^{-1}e(\xi_n)\right]\sigma_{\pmb 0}(\scalebox{1.5}{a})(x',\xi')=\\
=&\sum_{\mathrm{Im}(\xi_n)>0}\mathrm{Res}_{\xi_n}\left[a(x',0,\xi',\xi_n)^{-1}e(\xi_n)\sigma_{\pmb 0}(\scalebox{1.5}{a})(x',\xi')\right]=p_+(D).
\qedhere
\end{align*}
\end{proof}

\begin{corollary}
\label{compactnesscor}
Let $D$ be an elliptic differential operator of order $m>0$ acting on a manifold of $\dim(M)>1$ and let $Q$ be the approximate Calder\'on projection from Theorem \ref{horsats}. For any $s\in \R$, the operator 
\begin{equation}
\label{qdeffofod}
Q:\SobHH{ s}(\Sigma;E\otimes \C^m)\to \SobHH{ s}(\Sigma;E\otimes \C^m),
\end{equation}
is continuous but neither $Q$ nor $1-Q$ is compact.
\end{corollary}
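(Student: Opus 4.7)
The plan is to derive both continuity and non-compactness directly from what has already been established about the Douglis-Nirenberg calculus in Subsection \ref{subsec:calderonproj}, together with Theorem \ref{horsats} and Proposition \ref{nontrivialityofeplussd}.

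For continuity, I would invoke the mapping properties of Douglis-Nirenberg operators recalled just before the statement: since $Q \in \Psi^{\pmb 0}_{\rm cl}(\Sigma; E\otimes \C^m)$, taking $\alpha = 0$ and $s' = s$ gives the required bounded action on $\SobHH{s}(\Sigma;E\otimes\C^m)$.

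For non-compactness, the key tool is the criterion quoted in the same subsection: if $A \in \Psi^{\pmb \alpha}_{\rm cl}$ has nonvanishing principal symbol $\sigma_{\pmb \alpha}(A)$, then $A:\SobHH{s}\to \SobHH{s'}$ is compact if and only if $s' < s - \alpha$. In particular, for $\alpha = 0$ and $s' = s$, a zero-order Douglis-Nirenberg operator with nonzero principal symbol is \emph{not} compact on $\SobHH{s}$. So it suffices to check that the principal symbols of both $Q$ and $1-Q$ are nonvanishing as sections of $\mathrm{End}(\pi^*(E)|_{S^*\Sigma}\otimes \C^m)$ over $S^*\Sigma$.

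By Theorem \ref{horsats}, $\sigma_{\pmb 0}(Q) = p_+(D)$, and hence $\sigma_{\pmb 0}(1-Q) = p_-(D)$. The hypothesis $\dim M > 1$ then lets me apply Proposition \ref{nontrivialityofeplussd}, which guarantees that both $E_+(D)$ and $E_-(D)$ are nontrivial vector bundles over each component of $S^*\Sigma$. Consequently the projections $p_+(D)$ and $p_-(D)$ onto these subbundles are nonvanishing, ruling out compactness of both $Q$ and $1-Q$. No step in this argument is a real obstacle; the essential input is the already-established non-triviality of $E_\pm(D)$, and the rest is a direct appeal to the sharp mapping/compactness dichotomy for the Douglis-Nirenberg calculus.
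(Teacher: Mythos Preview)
Your proposal is correct and follows essentially the same approach as the paper: continuity from $Q\in\Psi^{\pmb 0}_{\rm cl}$, and non-compactness from the principal-symbol criterion in the Douglis--Nirenberg calculus combined with $\sigma_{\pmb 0}(Q)=p_+(D)$, $\sigma_{\pmb 0}(1-Q)=p_-(D)$ and the non-triviality of $E_\pm(D)$ from Proposition~\ref{nontrivialityofeplussd}. The paper's proof is virtually identical, only phrased slightly more tersely.
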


This result should be compared to Lemma \ref{Lem:CheckNonZeroalternative1} in the appendix.

\begin{proof}
Continuity of \eqref{qdeffofod} follows from that $Q$ is order zero in the Douglis-Nirenberg sense. By standard arguments in pseudodifferential calculus,  \eqref{qdeffofod} is compact for some $s$ if and only if it is compact for all $s$ if and only if the principal symbol vanishes. The principal symbol of $Q$ is $p_+(D)$ which is non-trivial by Proposition \ref{nontrivialityofeplussd}, so $Q$ is never compact on $\SobHH{ s}(\Sigma;E\otimes \C^m)$. By the same argument, the principal symbol of $1-Q$ is $p_-(D)$ which is non-trivial by Proposition \ref{nontrivialityofeplussd}, so $1-Q$ is never compact on $\SobHH{ s}(\Sigma;E\otimes \C^m)$. 
\end{proof}

\begin{theorem}
\label{infdimeker}
Let $D$ be an elliptic differential operator of order $m>0$ on a manifold of dimension $>1$. Then $\ker(D_{\rm max})$ is infinite-dimensional. 
\end{theorem}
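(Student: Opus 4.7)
My plan is to reduce the problem to showing that the Hardy space $\mathcal{C}_D$ is infinite-dimensional. Indeed, Proposition \ref{domainandkernelHO} gives the short exact sequence
$$0 \to \ker(D_{\min}) \to \ker(D_{\max}) \xrightarrow{\gamma} \mathcal{C}_D \to 0,$$
and since $\ker(D_{\min})$ is finite-dimensional by Proposition \ref{geneprop}, it is enough to prove that $\mathcal{C}_D$ is infinite-dimensional.

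To this end I would use the approximate Calderón projection $Q \in \Psi^{\pmb 0}_{\rm cl}(\Sigma; E \otimes \C^m)$ produced by Theorem \ref{horsats}. The second defining property of an approximate Calderón projection says that for every $v \in \SobHH{-{\frac12}}(\Sigma; E\otimes \C^m)$ there exists $u \in \ker(D_{\max}) + \Ck{\infty}(M;E)$ with $\gamma u = Qv$. Decomposing $u = u_0 + w$ with $u_0 \in \ker(D_{\max})$ and $w \in \Ck{\infty}(M;E)$, and noting that $\gamma w \in \Ck{\infty}(\Sigma; E\otimes \C^m)$ while $\gamma u_0 \in \mathcal{C}_D$, we obtain the key inclusion
$$Q\, \SobHH{-{\frac12}}(\Sigma; E\otimes \C^m) \;\subseteq\; \mathcal{C}_D + \Ck{\infty}(\Sigma; E\otimes \C^m).$$

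Now I would argue by contradiction: suppose $\mathcal{C}_D$ is finite-dimensional, and fix any $s > -{\frac12}$. Since $\Ck{\infty}(\Sigma; E\otimes \C^m) \subseteq \SobHH{s}(\Sigma; E\otimes \C^m)$, the displayed inclusion gives $Q \SobHH{-{\frac12}} \subseteq \mathcal{C}_D + \SobHH{s}$. Endow $\mathcal{C}_D + \SobHH{s}$ with the Banach space topology coming from the continuous addition map $\mathcal{C}_D \oplus \SobHH{s} \to \SobHH{-{\frac12}}$, whose kernel $\{(\xi, -\xi) : \xi \in \mathcal{C}_D \cap \SobHH{s}\}$ is a finite-dimensional (hence closed) subspace of $\mathcal{C}_D \oplus \SobHH{s}$. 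With this topology, the tautological inclusion $\mathcal{C}_D + \SobHH{s} \hookrightarrow \SobHH{-{\frac12}}$ is a compact operator between Banach spaces: bounded sequences lift, up to subsequences, to bounded sequences in $\mathcal{C}_D \oplus \SobHH{s}$, and convergence in $\SobHH{-{\frac12}}$ then follows from finite-dimensionality of $\mathcal{C}_D$ together with the Rellich embedding $\SobHH{s} \hookrightarrow \SobHH{-{\frac12}}$.

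Finally, the closed graph theorem (applied to $Q$ viewed as a map $\SobHH{-{\frac12}} \to \mathcal{C}_D + \SobHH{s}$, using that both targets inject continuously into $\SobHH{-{\frac12}}$) forces this map to be continuous; composing with the compact inclusion of the previous paragraph would make $Q: \SobHH{-{\frac12}} \to \SobHH{-{\frac12}}$ a compact operator, contradicting Corollary \ref{compactnesscor}. Note that the hypothesis $\dim(M) > 1$ enters precisely at this final step, via Proposition \ref{nontrivialityofeplussd}, which secures non-triviality of the principal symbol $p_+(D)$ of $Q$ and hence the non-compactness of $Q$. The main hurdle is the careful Banach-space bookkeeping needed to legitimately apply the closed graph theorem; once this is in place, the contradiction is immediate.
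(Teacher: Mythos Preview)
Your proof is correct and follows essentially the same route as the paper: reduce to infinite-dimensionality of $\mathcal{C}_D$, use the range inclusion $Q\SobHH{-\frac12}\subseteq \mathcal{C}_D + \Ck{\infty}$ furnished by the approximate Calder\'on projection, and derive a contradiction with Corollary~\ref{compactnesscor} by forcing $Q$ to be compact on $\SobHH{-\frac12}$. The only difference is one of detail: the paper argues directly that the inclusion $\mathcal{C}_D + \Ck{\infty}(\Sigma;E\otimes\C^m)\hookrightarrow \SobHH{-\frac12}$ is compact when $\mathcal{C}_D$ is finite-dimensional, whereas you first pass to the Banach space $\mathcal{C}_D + \SobHH{s}$ for some $s>-\tfrac12$ and invoke the closed graph theorem to justify the factorisation; your version makes the Banach-space bookkeeping (which the paper leaves implicit, $\Ck{\infty}$ being only Fr\'echet) more explicit, but the underlying argument is the same.
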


Despite this result being well known and an immediate consequence of results of Seeley that we recall in the next subsection, let us provide a short proof that remains at a symbolic level for an approximate Calderón projection.

\begin{proof}
By Proposition \ref{geneprop}, $\ker(D_{\rm max})$  is infinite-dimensional if and only if the Hardy space $\Ca_D$ is infinite-dimensional. We will argue by contradiction.

Assume that $\Ca_D$ is finite-dimensional. Since the space 
$$\Ca_D\subseteq \SobHH{-{{\frac12}}}(\Sigma;E\otimes \C^m),$$
is finite-dimensional it is closed. Consider the operator $Q:\SobHH{-{{\frac12}}}(\Sigma;E\otimes \C^m)\to \SobHH{-{{\frac12}}}(\Sigma;E\otimes \C^m)$ as in Equation \eqref{qdeffofod}. By Theorem \ref{horsats}, for any $v\in \SobHH{-{{\frac12}}}(\Sigma;E\otimes \C^m)$ it holds that 
$$Qv\in \Ca_D+\Ck{\infty}(\Sigma;E\otimes \C^m).$$ 
We conclude that $Q:\SobHH{-{{\frac12}}}(\Sigma;E\otimes \C^m)\to \SobHH{-{{\frac12}}}(\Sigma;E\otimes \C^m)$ factors over the inclusion 
$$\Ca_D+\Ck{\infty}(\Sigma;E\otimes \C^m)\hookrightarrow \SobHH{-{{\frac12}}}(\Sigma;E\otimes \C^m),$$ 
which is compact if $\mathcal{C}_D$ is finite-dimensional. Therefore, if $\mathcal{C}_D$ is finite-dimensional then $Q$ is compact on $\SobHH{-{{\frac12}}}(\Sigma;E\otimes \C^m)$ which contradicts Corollary \ref{compactnesscor}.
\end{proof}

\begin{corollary} 
\label{Prop:SpecoHO}
Assume that $(F,h^F) = (E, h^E)$ and let $D:\Ck{\infty}(M;E)\to \Ck{\infty}(M;E)$ be an elliptic differential operator of order $m>0$ acting on a manifold of $\dim(M)>1$. We then have that $\spec(D_{\max}) = \specpt(D_{\max}) = \C$. 
Consequently, $\spec(D_{\min}) = \C$. 
\end{corollary}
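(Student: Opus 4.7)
The plan is to reduce the entire statement to Theorem \ref{infdimeker} by a simple translation argument, and then pass from $D_{\max}$ to $D_{\min}$ by Hilbert space duality. The key observation is that for any $\lambda\in\C$, the operator $\lambda\cdot\mathrm{id}-D$ is again an elliptic differential operator of order $m>0$ acting between sections of the same bundle $E$: the principal symbol is unchanged since $\lambda$ contributes only an order zero term, and ellipticity of order $m>0$ is preserved. I would therefore start by verifying the identification $(\lambda\cdot\mathrm{id}-D)_{\max}=\lambda\cdot\mathrm{id}-D_{\max}$ on the common domain $\{u\in\Lp{2}(M;E):Du\in\Lp{2}(M;E)\}$, which is immediate from the definition of the maximal domain.

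With this in hand, applying Theorem \ref{infdimeker} to the elliptic operator $\lambda\cdot\mathrm{id}-D$ shows that $\ker(\lambda\cdot\mathrm{id}-D_{\max})$ is infinite-dimensional for every $\lambda\in\C$. Hence every $\lambda\in\C$ is a point eigenvalue of $D_{\max}$, yielding $\specpt(D_{\max})=\C$, and \textit{a fortiori} $\spec(D_{\max})=\C$. This is the only place where the dimensional hypothesis $\dim(M)>1$ enters the argument, since it is needed to invoke Theorem \ref{infdimeker}.

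For the statement about $D_{\min}$, I would invoke the relation $(D_{\min})^{\ast}=D^\dagger_{\max}$ recorded in Subsection \ref{setupforellde}. Since $D^\dagger$ is itself an elliptic differential operator on $E$ of order $m>0$, the first part of the argument applied to $D^\dagger$ gives $\specpt(D^\dagger_{\max})=\C$. Fixing an arbitrary $\lambda\in\C$, I would then note that $\bar\lambda\cdot\mathrm{id}-(D_{\min})^{\ast}=\bar\lambda\cdot\mathrm{id}-D^\dagger_{\max}$ has nontrivial (indeed infinite-dimensional) kernel, so that $\ran(\lambda\cdot\mathrm{id}-D_{\min})^{\perp}\neq 0$ via the standard identity $\ker(T^{\ast})=\ran(T)^{\perp}$ for densely defined closed $T$. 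In particular $\lambda\cdot\mathrm{id}-D_{\min}$ fails to have dense range, so it is not invertible in the sense of the paper's conventions, and $\lambda\in\spec(D_{\min})$.

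I do not foresee a genuine obstacle: everything is a formal consequence of Theorem \ref{infdimeker} once translation invariance and the standard FAP duality $(D_{\min})^{\ast}=D^\dagger_{\max}$ are in place. The only minor point demanding care is to confirm that the conclusion of Theorem \ref{infdimeker} is robust under adding a zero-order term to $D$, which follows because that theorem depends only on the principal symbol of $D$ (through the vector bundles $E_{\pm}(D)$ of Proposition \ref{structureofeplus}) and on $\dim(M)>1$, both of which are unchanged when passing from $D$ to $\lambda\cdot\mathrm{id}-D$.
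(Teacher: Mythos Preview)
Your proposal is correct and follows essentially the same approach as the paper: reduce to Theorem~\ref{infdimeker} by translating $D$ by $\lambda$, and then handle $D_{\min}$ via duality with $D^\dagger_{\max}$. The only cosmetic difference is that the paper invokes the general identity $\spec(T^\ast)=\overline{\spec(T)}$ directly, whereas you spell out the non-dense-range argument via $\ker(T^\ast)=\ran(T)^\perp$; both are the same reasoning.
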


\begin{proof}
Fix $\lambda \in \C$, and note that $D - \lambda$ is again an elliptic differential operator of order $m>0$.
Now, Theorem \ref{infdimeker} implies that $\dim \ker(D_{\max} - \lambda) = \infty$. 
But that is exactly that $\lambda \in \specpt(D_{\max})$ and so we conclude that $\spec(D_{\max}) = \specpt(D_{\max}) = \C$.
For the statement concerning $D_{\min}$, we note that by mirroring this argument, we obtain that $\spec(D_{\max}^\dagger) = \C$ and since $D_{\min} = (D_{\max}^\dagger)^\ad$,
we obtain that $\spec(D_{\min}) = \spec(D_{\max}^\dagger)^\conj = \C$. 
\end{proof}

\begin{remark}
\label{Rem:Residue}
For $D$ with $\ker(D_{\min}) = 0$ (i.e., first order Dirac-type operators), we have that $0 \not \in \specpt(D_{\min})$. 
However, from Corollary \ref{Prop:SpecoHO}, we know that $0 \in \spec(D_{\min})$.
In fact, $0 \in \specres(D_{\min})$, i.e., it is in the residue spectrum.
This is because we have that $\Lp{2}(M;E) = \ker(D_{\max}^\dagger) \oplus \ran(D_{\min})$ from Proposition \ref{Prop:MaxClosed} which shows that $D_{\min}: \dom(D_{\min}) \to \Lp{2}(M;E)$ does not have dense range. 
In fact, the range fails to be dense on the infinite dimensional subspace $\ker(D_{\max}^\dagger)$. 
However, despite these shortcomings, and a complete lack of spectral theory, Proposition \ref{Cor:WellPosed} shows that the Dirichlet problem is indeed well-posed when restricted to $\ran(D_{\min}) = \ran(D_{\Ca_D})$ (see more in Subsection \ref{subsec:wellposed}).
\end{remark}

\begin{corollary}
\label{Cor:SpecoFOres}
Assume that $(F, h^F) = (E, h^E)$ and that $D: \Ck{\infty}(M;E)\to \Ck{\infty}(M;E)$ is first order elliptic and Dirac-type.
Then, $\spec(D_{\min})= \specres(D_{\min}) = \C$. 
\end{corollary}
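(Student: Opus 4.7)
The plan is to extend the argument sketched in Remark \ref{Rem:Residue} (for $\lambda=0$) to an arbitrary $\lambda\in\C$. Fix such a $\lambda$. The operator $D-\lambda$ shares its principal symbol with $D$, so it is again a first order elliptic Dirac-type operator (the Clifford relation on the principal symbol is untouched by the zeroth order perturbation $-\lambda\cdot\mathrm{id}$). Hence it suffices to prove that $(D-\lambda)_{\min}$ is injective and has non-dense range; combining this with Corollary \ref{Prop:SpecoHO} then gives $\lambda\in\specres(D_{\min})$ for every $\lambda$, which is the desired conclusion.

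For injectivity, I would take $u\in\ker((D-\lambda)_{\min})$. By Proposition \ref{geneprop} item \ref{geneprop:1}, $u\in \SobH[0]{1}(M;E)$, and elliptic regularity applied to the equation $(D-\lambda)u=0$ yields that $u$ is smooth up to the boundary with $u|_\Sigma=0$. Since $D-\lambda$ is Dirac-type it satisfies weak UCP and hence weak inner UCP (as invoked in the proof of Corollary \ref{firstordodereowl}), and therefore $u\equiv 0$.

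For the failure of density, I would invoke the standard Hilbert space identity
$$\overline{\ran((D-\lambda)_{\min})}^{\perp}=\ker\bigl((D-\lambda)_{\min}^{*}\bigr)=\ker\bigl((D^{\dagger}-\bar\lambda)_{\max}\bigr),$$
the last equality being the relation $D_{\min}^{*}=D_{\max}^{\dagger}$ applied to $D-\lambda$. Since $D^{\dagger}-\bar\lambda$ is itself a first order elliptic operator on a manifold of dimension $>1$, Theorem \ref{infdimeker} guarantees that this kernel is infinite-dimensional; in particular it is non-zero, so $\ran((D-\lambda)_{\min})$ is not dense in $\Lp{2}(M;E)$. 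Combined with injectivity, this places $\lambda$ in $\specres(D_{\min})$.

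The only mild subtlety is verifying that weak UCP persists under the perturbation $D\mapsto D-\lambda$. This should be automatic, since weak UCP for first order Dirac-type operators is established by Carleman estimates that depend only on the (unchanged) principal symbol, the constant term $-\lambda$ being a harmless zeroth order contribution that is absorbed into lower order terms. Once this is acknowledged, the proof is simply the combination of the two bullet points above with Corollary \ref{Prop:SpecoHO}.
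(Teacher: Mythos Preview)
Your proof is correct and follows essentially the same route as the paper: reduce to the case $\lambda=0$ by noting that $D-\lambda$ is again Dirac-type, then invoke weak UCP for injectivity and the infinite-dimensionality of $\ker((D-\lambda)_{\min}^{*})=\ker((D^{\dagger}-\bar\lambda)_{\max})$ for non-density. The paper simply compresses these two steps into a single citation of Remark~\ref{Rem:Residue} applied to $D-\lambda$; your version unpacks that remark explicitly. One small redundancy: once you have shown injectivity and non-dense range of $D_{\min}-\lambda$, this \emph{is} the statement that $\lambda\in\specres(D_{\min})$, so the appeal to Corollary~\ref{Prop:SpecoHO} is not actually needed.
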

\begin{proof}
For $\lambda \in \C$, the operator  $D_{\lambda} = D - \lambda$ is again first order Dirac-type.
By Remark \ref{Rem:Residue}, we conclude that $0 \in  \specres((D_{\lambda})_{\min})$.
But $(D_{\lambda})_{\min} = D_{\min} - \lambda$ and so the conclusion holds.
\end{proof}

\subsection{Seeley's results on Calderón projectors}
\label{subsec:seeleyoncalderon}

\begin{definition}
Let $D$ be an elliptic differential operator of order $m>0$ on a manifold $M$ with boundary $\Sigma$ acting between sections of the Hermitian vector bundles $E$ and $F$. A Calder\'on projection for $D$ is an approximate Calderón projection $P_\mathcal{C}\in \Psi^{\pmb 0}_{\rm cl}(\Sigma;E\otimes \C^m)$ such that it is a projection onto 
$$\mathcal{C}_D\subseteq \SobHH{-{{\frac12}}}(\Sigma;E\otimes \C^m).$$
\end{definition}

\begin{remark}
Calderón projectors are not uniquely determined. For instance, if $P_\mathcal{C}$ is a Calderón projection, then for any self-adjoint smoothing operator $R$ which is ``small''\footnote{How small can be made precise by pointwise estimates on the kernel, but it is irrelevant for the sake of this argument.} and $R|_{\mathcal{C}_D}=0$ we have that $P_\mathcal{C}(1+R)^{-1}$ also satisfies the conditions of the theorem. \emph{We shall tacitly use only the Calderón projection constructed in the next theorem of Seeley.}
\end{remark}

Let us recall the following theorem of Seeley. We remark that Seeley proved this result in the larger generality of higher regularity (cf. Theorem \ref{refinedseeleysobolev} below) on $\Lp{p}$-spaces, $1<p<\infty$. For $p\neq 2$, the spaces on the boundary are Besov spaces and not Sobolev spaces.

\begin{theorem}
\label{Thm:CaldProj}[Seeley \cite{seeley65}]
Let $D$ be an elliptic differential operator of order $m>0$ on a manifold $M$ with boundary $\Sigma$ acting between sections of the Hermitian vector bundles $E$ and $F$. Then there exists a Calderón projection $P_\mathcal{C}\in \Psi^{\pmb 0}_{\rm cl}(\Sigma;E\otimes \C^m)$ whose full symbol is uniquely determined as that of the approximate Calderón projection in Theorem \ref{horsats}. In particular, 
$$\sigma_{\pmb 0}(P_\mathcal{C})=p_+(D).$$ 
\end{theorem}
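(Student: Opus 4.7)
The plan is to upgrade the approximate Calderón projection $Q$ from Theorem \ref{horsats} to a true projection $P_\mathcal{C}$ onto $\mathcal{C}_D$ by adding a smoothing correction. Since smoothing operators are trivial in the Douglis-Nirenberg symbol calculus, the full symbol of $P_\mathcal{C}$ will automatically coincide with that of $Q$, which in turn is determined by the recursive formula \eqref{formulaforaprprood} (with leading part $p_+(D)$). The uniqueness statement then reduces to showing that any two Calderón projections differ by a smoothing operator, which follows once both are shown to have full symbol equal to that of $Q$.

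The construction I would carry out follows Seeley. Embed $M$ as a domain with smooth boundary in a closed manifold $\tilde M$, extend $D$ to an elliptic operator $\tilde D$ on $\tilde M$ acting between extensions $\tilde E, \tilde F$, and let $T \in \Psi^{-m}_{\rm cl}(\tilde M; \tilde F, \tilde E)$ be a parametrix, adjusted by a finite rank term so that $\tilde D T = 1$ on $\tilde M$. For $\xi = (\xi_j)_{j=0}^{m-1}$ on $\Sigma$, form the distribution $\scalebox{1.5}{a}\xi \cdot \delta_\Sigma := \sum_j (\scalebox{1.5}{a}\xi)_j \otimes \delta_\Sigma^{(j)}$ on $\tilde M$, supported on $\Sigma$, using the boundary pairing operator $\scalebox{1.5}{a}$ of Proposition \ref{proponboundarypairingforell}. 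Define the Poisson-type operator $\mathcal{K}(\xi) := T(\scalebox{1.5}{a}\xi\cdot\delta_\Sigma)\big|_{\interior M}$, and set $P_\mathcal{C}^{0}(\xi) := \gamma_+ \mathcal{K}(\xi)$, the interior trace. By the Green's identity
$$\tilde D(\chi_M u) = \chi_M Du + \scalebox{1.5}{a}\gamma u \cdot \delta_\Sigma$$
valid for $u$ smooth up to $\Sigma$, applying $T$ and taking traces gives $P_\mathcal{C}^{0}(\gamma u) = \gamma u$ modulo smoothing for every $u \in \ker D_{\max}$, and $\mathcal{K}(\xi) \in \ker D_{\max}$ modulo smoothing for every $\xi$, so $(P_\mathcal{C}^{0})^2 - P_\mathcal{C}^{0} \in \Psi^{-\pmb\infty}$ and $P_\mathcal{C}^{0} - Q \in \Psi^{-\pmb\infty}$ (matching full symbols can be verified by comparing the residue computation of Hörmander, summarised in Remark \ref{remarkonredisuydd}, with the symbolic expansion of $\gamma_+ T(\scalebox{1.5}{a}(\cdot)\delta_\Sigma)$).

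To pass from the idempotent-modulo-smoothing $P_\mathcal{C}^{0}$ to a genuine projection onto $\mathcal{C}_D$, I would use a Riesz functional calculus argument: since $(P_\mathcal{C}^{0})^2 - P_\mathcal{C}^{0}$ is smoothing, $P_\mathcal{C}^{0}$ acts on each Sobolev space with spectrum accumulating only at $\{0,1\}$, and
$$P_\mathcal{C} := \frac{1}{2\pi i}\oint_{|z-1|=\frac12}(z - P_\mathcal{C}^{0})^{-1}\,\mathrm{d} z$$
is a true projection differing from $P_\mathcal{C}^{0}$ by a smoothing operator (its symbol expansion agrees with that of $P_\mathcal{C}^{0}$ term-by-term because the parametrix of $(z-P_\mathcal{C}^{0})$ lies in $\Psi^{\pmb 0}_{\rm cl}$). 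It remains to identify $\ran(P_\mathcal{C}) = \mathcal{C}_D$. Using that $\gamma : \ker D_{\max} \to \mathcal{C}_D$ is a continuous bijection and hence a Banach-space isomorphism by the open mapping theorem (Proposition \ref{geneprop} gives the Fredholm input required), the approximate identities $P_\mathcal{C}^{0}|_{\mathcal{C}_D} \equiv \mathrm{id}$ and $\ran(P_\mathcal{C}^{0}) \subseteq \mathcal{C}_D$ modulo smoothing upgrade to exact identities on the Riesz projection.

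The main obstacle will be the last step, namely ensuring that the Riesz projection has range \emph{exactly} $\mathcal{C}_D$ rather than a subspace differing by a finite-dimensional piece coming from the smoothing corrections. I expect to handle this by noting that $\mathcal{C}_D \subseteq \SobHH{-{\frac12}}(\Sigma; E\otimes \C^m)$ is closed (a consequence of Proposition \ref{geneprop} and the open mapping theorem applied to $\gamma|_{\ker D_{\max}}$), so that $\ran(P_\mathcal{C})$ and $\mathcal{C}_D$ are two closed subspaces agreeing modulo a space of smooth sections; a final smoothing correction absorbed into $P_\mathcal{C}$ (possible since $\mathcal{C}_D$ contains its own smoothing translates) pins the ranges down to coincide. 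The uniqueness of the full symbol then follows because any two Calderón projections $P_\mathcal{C}, P'_\mathcal{C}$ have the same range $\mathcal{C}_D$, so $(P'_\mathcal{C} - P_\mathcal{C})$ maps into $\mathcal{C}_D$ and vanishes on $\mathcal{C}_D$, while also being an element of $\Psi^{\pmb 0}_{\rm cl}$ whose difference is smoothing by the preceding construction.
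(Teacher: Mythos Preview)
The paper does not supply its own proof of this theorem; it is quoted from Seeley~\cite{seeley65} and used as input for Theorem~\ref{refinedseeley}. Your outline is a reconstruction of Seeley's argument, and the second paragraph is essentially correct.

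There is, however, a detour and a genuine gap in your third and fourth paragraphs. With $\tilde D T = 1$ arranged, your Poisson operator satisfies $D\mathcal{K}(\xi) = \tilde D T(\scalebox{1.5}{a}\xi\cdot\delta_\Sigma)\big|_{\interior M} = 0$ \emph{exactly}, so $\ran P^0_\mathcal{C} \subseteq \mathcal{C}_D$ on the nose, not merely modulo smoothing as you write. The only defect is that $P^0_\mathcal{C}|_{\mathcal{C}_D} = 1 + K$, where $K$ comes from the finite-rank error $T\tilde D - 1$ (the projector onto $\ker\tilde D$). Seeley corrects this by a finite-rank modification directly on $\mathcal{C}_D$; no Riesz calculus is needed. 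Your Riesz projection, by contrast, has block-upper-triangular form relative to the decomposition $\mathcal{C}_D \oplus \mathcal{C}_D^\perp$ (since $P^0_\mathcal{C}$ maps into $\mathcal{C}_D$), and its restriction to $\mathcal{C}_D$ is the spectral projection of $1+K$ near $1$, which is a \emph{finite-codimension} subspace of $\mathcal{C}_D$, not all of it. Your proposed fix, ``$\mathcal{C}_D$ contains its own smoothing translates'', is not a meaningful statement and does not supply the missing mechanism. Indeed, Lemma~\ref{approximcladldlad} in the paper carries out exactly your Riesz construction and deliberately does \emph{not} claim the resulting projection has range $\mathcal{C}_D$; it only concludes that the projection is boundary decomposing, which is all one obtains from that route without further work.

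Your final uniqueness argument is also incomplete: two projections sharing the range $\mathcal{C}_D$ have difference that is nilpotent of order two, but nilpotency in $\Psi^{\pmb 0}_{\rm cl}$ does not imply smoothing. The uniqueness of the full symbol is rather the observation that the expansion~\eqref{formulaforaprprood} is independent of the choice of parametrix (two parametrices differ by a smoothing operator), together with the fact that Seeley's $P_\mathcal{C}$ is built from that formula plus a smoothing correction.
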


In fact, Seeley obtained even stronger results in  \cite{seeley65} that we partly summarise in the following theorem.

\begin{theorem}
\label{refinedseeley}
Let $D$ be an elliptic differential operator of order $m>0$ on a manifold $M$ with boundary $\Sigma$ acting between sections of the Hermitian vector bundles $E$ and $F$. There exists a Calderón projection $P_\mathcal{C}$ such that 
\begin{enumerate}[(i)]
\item \label{refinedseeley0:1}  
There exists a continuous Poisson operator 
$$\mathcal{K}:\SobHH{-{{\frac12}}}(\Sigma;E\otimes \C^m)\to \ker(D_{\rm max}),$$
with $P_\mathcal{C}=\gamma\circ \mathcal{K}$ (and in particular, $\mathcal{K}=\mathcal{K}\circ P_\mathcal{C}$).
\item \label{refinedseeley0:2} 
The image of 
$$\gamma:\dom(D_{\rm max})\to \SobHH{-{{\frac12}}}(\Sigma;E\otimes \C^m),$$
is precisely the subspace 
$$P_\mathcal{C}\SobHH{-{{\frac12}}}(\Sigma;E\otimes \C^m)\oplus(1-P_\mathcal{C})\SobHH{ m-{{\frac12}}}(\Sigma;E\otimes \C^m).$$
\end{enumerate} 
In particular, the identity map induces an isomorphism of Banach spaces 
$$\checkH(D) \cong   P_\mathcal{C}\SobHH{-{{\frac12}}}(\Sigma;E\otimes \C^m)\oplus (1-P_\mathcal{C})\SobHH{ m-{{\frac12}}}(\Sigma;E\otimes \C^m).$$
\end{theorem}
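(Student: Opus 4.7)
The plan is to follow Seeley's approach from \cite{seeley65}: construct an explicit Poisson operator via a parametrix of an elliptic extension of $D$, then derive the Sobolev decomposition from a boundary jump formula.

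\textbf{Construction of $\mathcal{K}$ and part (i).} I would embed $M \hookrightarrow \hat M$ as a smooth domain in a closed manifold, extend $E$, $F$, $D$ correspondingly, and fix a parametrix $T \in \Psi^{-m}_{\rm cl}(\hat M; \hat F, \hat E)$ of an elliptic extension $\hat D$. Motivated by the formula \eqref{formulaforaprprood} for the approximate Calderón projection $Q$, I define a pre-Poisson operator $\mathcal{K}_0 : \SobHH{-{\frac12}}(\Sigma; E \otimes \C^m) \to \mathcal{D}'(M^\circ; E)$ by applying $T$ to a suitable boundary layer distribution built from $v$ using the matrices $A_j$ and derivatives of $\delta_{x_n=0}$, then restricting to $M^\circ$. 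The order $-m$ of $T$ combined with standard estimates for its action on distributions of the form $w \otimes \delta^{(l)}_{x_n=0}$ gives continuity $\mathcal{K}_0 : \SobHH{-{\frac12}} \to L^2(M; E)$. Since $DT$ is the identity modulo smoothing on $\hat M$ and the layer distribution is supported on $\Sigma$, $D\mathcal{K}_0 v \in \Ck{\infty}(M; F)$; hence $\mathcal{K}_0 v \in \dom(D_{\rm max})$, and $\gamma \mathcal{K}_0 = Q$ by comparison with \eqref{formulaforaprprood}. Since the true Calderón projection $P_\mathcal{C}$ from Theorem \ref{Thm:CaldProj} differs from $Q$ only by a smoothing operator, correcting $\mathcal{K}_0$ by a smoothing Poisson-type term that simultaneously adjusts the trace to $P_\mathcal{C}$ and absorbs $D\mathcal{K}_0 v$ into $\ker(D_{\rm max})$ yields $\mathcal{K}: \SobHH{-{\frac12}} \to \ker(D_{\rm max})$ continuously with $\gamma \mathcal{K} = P_\mathcal{C}$, proving (i).

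\textbf{Part (ii) and the isomorphism.} The inclusion $\supseteq$ in (ii) is immediate: for $\xi_- \in P_\mathcal{C}\SobHH{-{\frac12}}$ and $\xi_+ \in (1-P_\mathcal{C})\SobHH{m-{\frac12}}$, lift $\xi_+$ to some $w \in \SobH{m}(M; E)$ by the standard trace theorem, and put $u = \mathcal{K}\xi_- + w \in \dom(D_{\rm max})$; then $\gamma u = P_\mathcal{C}\xi_- + \xi_+ = \xi_- + \xi_+$ since $\xi_-$ satisfies $P_\mathcal{C}\xi_- = \xi_-$. For $\subseteq$, the key is a jump formula: given $u \in \dom(D_{\rm max})$ with $f := Du \in L^2(M; F)$, extending $u$ and $f$ by zero to $\hat u, \tilde f$ on $\hat M$ yields an identity $\hat D \hat u = \tilde f + J\gamma u$, where $J$ is the boundary layer operator chosen so that $T J|_{M^\circ}$ agrees with $\mathcal{K}_0$ modulo smoothing. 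Applying $T$ and restricting to $M^\circ$ gives the representation $u = T\tilde f|_M + \mathcal{K}_0 \gamma u$ modulo smoothing, and taking boundary traces produces
\begin{equation*}
(I - P_\mathcal{C})\gamma u \equiv \gamma T\tilde f|_M \pmod{\Ck{\infty}(\Sigma; E\otimes \C^m)}.
\end{equation*}
The right-hand side lies in $\SobHH{m-{\frac12}}$ because $T : L^2(\hat M; \hat F) \to \SobH{m}(\hat M; \hat E)$ is continuous and the trace theorem applies, so $(1-P_\mathcal{C})\gamma u \in \SobHH{m-{\frac12}}$, completing (ii). The Banach space isomorphism in the last assertion then follows from the open mapping theorem applied to the continuous linear bijection induced by the identity from $\checkH(D)$ (with quotient topology inherited from $\dom(D_{\rm max})$) onto the direct sum equipped with its natural norm.

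\textbf{Main obstacle.} The principal technical difficulty is establishing the jump/representation formula $u = T\tilde f|_M + \mathcal{K}_0 \gamma u$ modulo smoothing for general $u \in \dom(D_{\rm max})$. The distributional identity $\hat D \hat u = \tilde f + J\gamma u$ is essentially a distributional Leibniz rule for the derivative jumps of $\hat u$ across $\Sigma$ (whose combinatorics must match \eqref{formulaforaprprood} exactly to produce the correct $\mathcal{K}_0$), but $\gamma u$ lives in the low-regularity space $\SobHH{-{\frac12}}$ so the identity must be interpreted distributionally. Proving it requires first treating smooth $u$ (where both sides have classical meaning) and then passing to the limit using continuity of $\gamma$ on $\dom(D_{\rm max})$ and of $T$ on appropriate Sobolev scales; this bookkeeping constitutes the technical heart of Seeley's \cite{seeley65}.
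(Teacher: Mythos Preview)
Your proposal is essentially correct, and for item (i) it follows the same line as the paper: both simply invoke/sketch Seeley's construction in \cite{seeley65}. (One small caveat: your ``smoothing correction'' from $\mathcal{K}_0$ to an exact $\mathcal{K}$ mapping into $\ker(D_{\max})$ with $\gamma\mathcal{K}=P_\mathcal{C}$ is not quite as automatic as you suggest --- it amounts to solving an inhomogeneous boundary value problem with smooth data --- but this is precisely what Seeley's machinery provides, and since the paper itself only cites \cite{seeley65} for (i), your level of detail is appropriate.)

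For item (ii), however, your route genuinely differs from the paper's. You prove $(1-P_\mathcal{C})\gamma u\in\SobHH{m-\frac12}$ via a \emph{jump formula/parametrix representation}: extend $u$ by zero to $\hat u$, write $\hat D\hat u=\tilde f+J\gamma u$ distributionally, apply the parametrix $T$, and read off $(1-Q)\gamma u\equiv\gamma(T\tilde f)|_M$ modulo smooth terms. The paper instead uses a \emph{gluing/transmission} argument: it exploits that $(1-P_\mathcal{C})$ is (up to finite rank) the Calder\'on projection for the complementary domain $M^c=\hat M\setminus M$, so any $g\in(1-P_\mathcal{C})\gamma\dom(D_{\max})$ is the trace of some $f'\in\ker(D'_{\max})$ on $M^c$; gluing $f$ on $M$ with $f'$ on $M^c$ produces $F\in\dom(\hat D_{\max})=\SobH{m}(\hat M)$ by interior elliptic regularity on the closed manifold, whence $f\in\SobH{m}(M)$ and $g\in\SobHH{m-\frac12}$. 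Your approach is more computational and requires tracking the layer-potential combinatorics and the density argument you flagged; the paper's approach is shorter and more conceptual, bypassing those combinatorics entirely by reducing to elliptic regularity on a closed manifold, but it relies on the additional structural fact (from \cite[Lemma~5]{seeley65}) that $1-P_\mathcal{C}$ is a Calder\'on-type projection for the complement.
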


\begin{proof}
The first item is proven in \cite{seeley65}. The second item is a reformulation of a statement on \cite[top of page 782]{seeley65} which was stated but not proven there. We include its proof for the convenience of the reader. 

To prove the second item, it suffices to prove that 
$$(1-P_\mathcal{C})\gamma\dom(D_{\rm max})\subseteq \SobHH{ m-{{\frac12}}}(\Sigma;E\otimes \C^m).$$ 
We can assume that $M$ is a domain with smooth boundary in a compact manifold $\hat{M}$, that $E$ and $F$ extend to Hermitian vector bundles $\hat{E},\hat{F}\to \hat{M}$ and that there exists an elliptic differential operator $\hat{D}$ of order $m$ extending $D$ to $\hat{M}$. Write $D'$ for the elliptic differential operator of order $m$ defined from restricting $\hat{D}$ to $M^c$. Take a $g\in (1-P_\mathcal{C})\gamma\dom(D_{\rm max})\subseteq \gamma \dom(D_{\rm max})$ and pick $f\in \dom(D_{\rm max})$ lifting $g$. Since $g\in (1-P_\mathcal{C})\gamma\dom(D_{\rm max})$, \cite[Lemma 5]{seeley65} implies that there exists an $f'\in \ker(D')\subseteq \Lp{2}(M^c,\hat{E}|_{M^c})$ lifting $g$. We define $F\in \Lp{2}(\hat{M},E)$ as $f$ on $M$ and as $f'$ on $M^c$. Since $\gamma(f)=g=\gamma(f')$, we have that $F\in \dom(\hat{D}_{\rm max})$. By elliptic regularity on the closed manifold $\hat{M}$, it holds that $\dom(\hat{D}_{\rm max})=\SobH{m}(\hat{M},E)$. We conclude that $f\in \SobH{m}(M,E)$ and therefore $g=\gamma(f)\in \SobHH{ m-{{\frac12}}}(\Sigma;E\otimes \C^m)$. 
\end{proof}

\begin{remark}
It is also proven in \cite{seeley65} that the complementary projection $(1-P_{\mathcal{C}})$ is (up to a finite rank projection) a projection onto the Hardy space of the complementary manifold with boundary $\hat{M}\setminus \interior{M}$. Note that Seeley's construction depends on the choice of the closed manifold $\hat{M}$ and the extension of $D$ to an elliptic operator $\hat{D}$ on $\hat{M}$. In other words, the Calderon projection is a projection onto the Hardy space along the Hardy space of the complement. For this reason, the Calderon projection was denoted by $P^+$ in \cite{seeley65} and $C^+$ in \cite{grubbdistop}. Our notation $P_\mathcal{C}$ is chosen to avoid confusion with the notation for spectral projectors from \cite{BB,BBan}. 

We remark that a Calderon projection $P_{\mathcal{C}}$ such that  $(1-P_{\mathcal{C}})$ is (up to a finite rank projection) a projection onto the Hardy space of the complementary manifold is uniquely determined modulo smoothing operator. {\bf We therefore implicitly will use the Calderon projection $P_\mathcal{C}$ from Theorem \ref{refinedseeley} throughout the paper.}
\end{remark}

\begin{corollary}
Let $D$ be an elliptic differential operator of order $m>0$. The short exact sequence of Hilbert spaces 
$$0\to \dom(D_{\rm min})\to \dom(D_{\rm max})\xrightarrow{\gamma} \checkH(D)\to 0,$$
is split by the continuous mapping 
$$E:\checkH(D)\to \dom(D_{\rm max}),\quad E=\mathcal{K}+E_0\circ (1-P_\Ca),$$
where $E_0:\SobHH{{m-{\frac12}}}(\Sigma;E\otimes \C^m)\to \SobH{m}(M;E)$ is any continuous splitting of the trace mapping $\SobH{m}(M;E)\to \SobHH{{m-{\frac12}}}(\Sigma;E\otimes \C^m)$.
\end{corollary}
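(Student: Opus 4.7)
The plan is to assemble the splitting directly from the two structural pieces supplied by Theorem \ref{refinedseeley}, without doing any further analysis. Recall that Theorem \ref{refinedseeley}\ref{refinedseeley0:1} gives a continuous Poisson operator $\mathcal{K}:\SobHH{-{\frac12}}(\Sigma;E\otimes\C^m)\to \ker(D_{\max})$ with $\gamma\circ\mathcal{K}=P_\mathcal{C}$, and Theorem \ref{refinedseeley}\ref{refinedseeley0:2} identifies $\checkH(D)$ topologically with $P_\mathcal{C}\SobHH{-{\frac12}}\oplus(1-P_\mathcal{C})\SobHH{m-{\frac12}}$, so that the projectors $P_\mathcal{C}$ and $1-P_\mathcal{C}$ are bounded from $\checkH(D)$ into $\SobHH{-{\frac12}}$ and $\SobHH{m-{\frac12}}$ respectively. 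Any $\xi\in\checkH(D)$ therefore decomposes uniquely as $\xi=P_\mathcal{C}\xi+(1-P_\mathcal{C})\xi$, and the formula $E\xi=\mathcal{K}(\xi)+E_0((1-P_\mathcal{C})\xi)$ is unambiguous.

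First I would verify continuity of $E:\checkH(D)\to\dom(D_{\max})$. The map $\mathcal{K}:\SobHH{-{\frac12}}\to \ker(D_{\max})\subseteq\dom(D_{\max})$ is continuous (in the graph norm, using that $D\mathcal{K}=0$ so only the $\Lp{2}$ estimate is needed), and by Theorem \ref{refinedseeley}\ref{refinedseeley0:2} the inclusion $\checkH(D)\hookrightarrow\SobHH{-{\frac12}}$ is continuous, so $\mathcal{K}$ is continuous as a map $\checkH(D)\to\dom(D_{\max})$. For the second summand, $1-P_\mathcal{C}$ is bounded from $\checkH(D)$ to $\SobHH{m-{\frac12}}(\Sigma;E\otimes\C^m)$, $E_0$ is continuous from $\SobHH{m-{\frac12}}$ to $\SobH{m}(M;E)$ by assumption, and the inclusion $\SobH{m}(M;E)\hookrightarrow\dom(D_{\max})$ is continuous since $D$ is a differential operator of order $m$ (so $\|Du\|_{\Lp 2}\lesssim \|u\|_{\SobH{m}}$).

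Next I would check that $\gamma\circ E=\mathrm{id}_{\checkH(D)}$. Using $\gamma\circ\mathcal{K}=P_\mathcal{C}$ and that $E_0$ splits the trace map on $\SobH{m}(M;E)$, for $\xi\in\checkH(D)$
\begin{equation*}
\gamma(E\xi)=\gamma(\mathcal{K}\xi)+\gamma(E_0((1-P_\mathcal{C})\xi))=P_\mathcal{C}\xi+(1-P_\mathcal{C})\xi=\xi.
\end{equation*}
Here the second equality uses that $P_\mathcal{C}$ restricted to $\SobHH{-{\frac12}}$ is an honest projection and that, since $(1-P_\mathcal{C})\xi\in\SobHH{m-{\frac12}}$, the composition $\gamma\circ E_0$ applied to it is the identity on that space.

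The proof is essentially bookkeeping: all the analytic substance has already been packaged into the existence of $\mathcal{K}$ and $E_0$ and the topological identification of $\checkH(D)$ in Theorem \ref{refinedseeley}. The only point requiring any care is checking that the graph-norm continuity of $E$ on the $\mathcal{K}$-summand reduces purely to $\Lp{2}$-continuity because $\mathcal{K}$ lands in $\ker(D_{\max})$; I do not anticipate any genuine obstacle.
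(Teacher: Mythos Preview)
Your proof is correct and follows essentially the same approach as the paper: decompose $E$ along $P_\Ca$ and $1-P_\Ca$, check continuity of each piece using Theorem~\ref{refinedseeley}, and verify $\gamma\circ E=\mathrm{id}$ via $\gamma\circ\mathcal{K}=P_\Ca$ and $\gamma\circ E_0=\mathrm{id}$. Your added remark that graph-norm continuity of $\mathcal{K}$ reduces to $\Lp{2}$-continuity because $\mathcal{K}$ lands in $\ker(D_{\max})$ is a nice clarification the paper omits.
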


\begin{proof}
We have that $E=EP_\Ca+E(1-P_\Ca)$. The operators $EP_\Ca=\mathcal{K}$ and $E(1-P_\Ca)=E_0(1-P_\Ca)$ are continuous by Theorem \ref{Thm:CaldProj}, so $E$ is continuous. Moreover, we have that $\gamma E=\gamma \mathcal{K}+\gamma E_0(1-P_\Ca)=P_\Ca+(1-P_\Ca)=1$ so $E$ is a splitting.
\end{proof}

\begin{example}
\label{funnyboudnarydadreturn}
Let us provide some further context for Example \ref{funnyboudnarydad}.
We again consider the generalised boundary condition $B_{\rm Sob} = \bigoplus_{j=0}^{m-1} \SobH{m-{\frac12}-j}(\Sigma; E)$ for an elliptic differential operator $D$ of order $m>0$. Letting $\Proj{\Cac} = (I - \Proj{\Ca})$, the complementary projection to $\Proj{\Ca}$, 
$$B_{\rm Sob} + \Ca_D = \Proj{\Cac} \SobHH{{m-\frac12}}(\Sigma;E\otimes \C^m) \oplus \Ca_D = \Cac \oplus \Ca = \checkH(D),$$
by Theorem \ref{refinedseeley}.
We can now use Theorem \ref{Thm:ClosedRangeCharABS} to give an alternate proof of the fact that $\ran(D_{{\rm B}_{\rm Sob}})$ is closed. We can also use Lemma \ref{Lem:RanABS} to conclude $\ran(D_{{\rm B}_{\rm Sob}})= \ran(D_{\max})$ which we formulate in the following corollary. 
\end{example} 

\begin{corollary}
\label{rangeofmax}
Let $D$ be an elliptic differential operator of order $m>0$ on a manifold $M$ with boundary $\Sigma$ acting between sections of the Hermitian vector bundles $E$ and $F$. Then it holds that 
$$\ran(D_{\max})=D\SobH{m}(M;E),$$
as closed subspaces of $\Lp{2}(M;F)$ whose codimension is $\dim\ker(D^\dagger_{\rm min})$.
\end{corollary}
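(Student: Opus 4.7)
The plan is to combine the immediately preceding Example \ref{funnyboudnarydadreturn} with Theorem \ref{Thm:ClosedRangeCharHO}. First, I would invoke the identity $B_{\rm Sob} + \mathcal{C}_D = \checkH(D)$ established in Example \ref{funnyboudnarydadreturn} via the Seeley decomposition
$$\checkH(D) = P_\mathcal{C}\SobHH{-\frac12}(\Sigma;E\otimes \C^m) \oplus (1-P_\mathcal{C})\SobHH{m-\frac12}(\Sigma;E\otimes \C^m)$$
from Theorem \ref{refinedseeley}. Since $\checkH(D)$ is certainly closed in itself, $B_{\rm Sob} + \mathcal{C}_D$ is a (generalised) boundary condition, and hence Theorem \ref{Thm:ClosedRangeCharHO}\ref{Thm:ClosedRangeCharHO1} yields that $\ran(D_{B_{\rm Sob}})$ is closed in $\Lp{2}(M;F)$ with
$$\ran(D_{B_{\rm Sob}}) = \ran(D_{B_{\rm Sob} + \mathcal{C}_D}) = \ran(D_{\checkH(D)}).$$

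Next I would observe that the boundary condition $B = \checkH(D)$ imposes no restriction on elements of $\dom(D_{\max})$, so $D_{\checkH(D)} = D_{\max}$; combined with the fact (see Example \ref{funnyboudnarydad}) that $\dom(D_{B_{\rm Sob}}) = \SobH{m}(M;E)$, the chain of equalities above gives precisely
$$\ran(D_{\max}) = D\SobH{m}(M;E),$$
and this subspace is closed in $\Lp{2}(M;F)$.

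Finally, for the codimension statement I would appeal to Proposition \ref{geneprop}\ref{geneprop:3}, which provides the orthogonal decomposition $\Lp{2}(M;F) = \ker(D_{\min}^\dagger) \oplus^\perp \ran(D_{\max})$ (equivalently, $\ran(D_{\max})^\perp = \ker(D_{\max}^*) = \ker(D_{\min}^\dagger)$, using $D_{\max}^* = D_{\min}^\dagger$). Since $\ker(D_{\min}^\dagger)$ is finite-dimensional by the same proposition, the codimension of $\ran(D_{\max})$ in $\Lp{2}(M;F)$ equals $\dim\ker(D_{\min}^\dagger)$, as claimed.

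There is no genuine obstacle here: the entire content is already packaged in Example \ref{funnyboudnarydadreturn} together with Theorem \ref{Thm:ClosedRangeCharHO} and Proposition \ref{geneprop}. The only mild point to be careful about is recognising that the generalised boundary condition $B = \checkH(D)$ corresponds to the maximal realisation, so that the abstract statement about $B_{\rm Sob} + \mathcal{C}_D$ indeed collapses to a statement about $D_{\max}$.
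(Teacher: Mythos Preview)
Your proposal is correct and follows essentially the same route as the paper: the corollary is stated immediately after Example \ref{funnyboudnarydadreturn}, which already records that $B_{\rm Sob}+\Ca_D=\checkH(D)$ and points to Lemma \ref{Lem:RanABS} (equivalently Theorem \ref{Thm:ClosedRangeCharHO}\ref{Thm:ClosedRangeCharHO1}) to obtain $\ran(D_{B_{\rm Sob}})=\ran(D_{\max})$. Your use of Proposition \ref{geneprop}\ref{geneprop:3} for the codimension statement is exactly what is intended.
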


The next result is another corollary of Theorem \ref{refinedseeley}.

\begin{corollary}
\label{decomposingcheckspacewithcalderon}
Let $D$ be an elliptic differential operator of order $m>0$. The boundary condition $\Ca^c:=(1-P_\Ca)\SobHH{{m-{\frac12}}}(\Sigma; E\otimes \C^m)$ is regular and satisfies: 
\begin{itemize}
\item $\Ca^c$ is closed in $\SobHH{{m-{\frac12}}}(\Sigma; E\otimes \C^m)$ and in $\checkH(D)$ and for $x\in \Ca^c$, 
$$\|x\|_{\SobHH{{m-{\frac12}}}(\Sigma; E\otimes \C^m)}\simeq \|x\|_{\checkH(D)}.$$
\item $\Ca^c\cap \Ca_D=0$ and $\Ca^c+\Ca_D=\checkH(D)$.
\item The inclusion maps defines a Banach space isomorphism
$$\Ca^c\oplus \Ca_D\cong \checkH(D),$$
where $\Ca_D$ is equipped with either the norm from $\checkH(D)$ or the norm from $\SobHH{{-{\frac12}}}(\Sigma; E\otimes \C^m)$ (the two are equivalent on $\Ca_D$).
\end{itemize}
\end{corollary}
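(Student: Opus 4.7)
The plan is to leverage Theorem \ref{refinedseeley} and its Banach space identification $\checkH(D)\cong P_\Ca\SobHH{-{\frac12}}(\Sigma;E\otimes \C^m)\oplus (1-P_\Ca)\SobHH{m-{\frac12}}(\Sigma;E\otimes \C^m)$ to derive almost everything at once. First I would observe that $\Ca_D=P_\Ca\SobHH{-{\frac12}}$: for $u\in \ker D_{\max}$, the Poisson operator from Theorem \ref{refinedseeley}\ref{refinedseeley0:1} satisfies $\mathcal{K}(\gamma u)=u$, hence $P_\Ca\gamma u=\gamma\mathcal{K}\gamma u=\gamma u$, while conversely $P_\Ca w=\gamma\mathcal{K}w\in \gamma\ker D_{\max}$ for any $w\in \SobHH{-{\frac12}}$. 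Combined with the definition $\Ca^c=(1-P_\Ca)\SobHH{m-{\frac12}}$, Seeley's decomposition thus reads $\checkH(D)=\Ca_D\oplus \Ca^c$ with complementary continuous projections $P_\Ca$ and $1-P_\Ca$. From this I read off directly that $\Ca^c\cap \Ca_D=0$ and $\Ca^c+\Ca_D=\checkH(D)$, that the inclusion yields the Banach isomorphism $\Ca^c\oplus \Ca_D\cong \checkH(D)$, that $\Ca^c$ is closed in $\checkH(D)$ (as the range of a continuous projector), that $\Ca^c$ is closed in $\SobHH{m-{\frac12}}$ (as the kernel of the bounded DN order-zero projector $P_\Ca$ on $\SobHH{m-{\frac12}}$), and that on $\Ca^c$ the norm from $\checkH(D)$ is equivalent to the norm from $\SobHH{m-{\frac12}}$ since on that summand both norms coincide with $\|\cdot\|_{\SobHH{m-{\frac12}}}$.

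It remains to prove regularity of $\Ca^c$. Semi-regularity of $D_{\Ca^c}$ follows immediately from Proposition \ref{charsemiellintermsofboundary} because $\Ca^c\subseteq \SobHH{m-{\frac12}}$ by construction. For the adjoint, I plan to regard $\Ca^c=\ker P_\Ca\cap \checkH(D)$ as a pseudo-local boundary condition in the sense of Definition \ref{somedefinidodod} and invoke Proposition \ref{describinglocalcndldldaadjoint} to identify $(\Ca^c)^*=B_{P_\dagger}$ with $P_\dagger:=(\scalebox{1.5}{a}^*)^{-1}(1-P_\Ca^*)\scalebox{1.5}{a}^*$. The explicit symbol computation of Proposition \ref{pplussandpminussdagger} then yields $\sigma_{\pmb 0}(P_\dagger)=p_+(D^\dagger)=\sigma_{\pmb 0}(P_\Ca^\dagger)$, so that $R:=P_\Ca^\dagger-P_\dagger$ is a pseudodifferential operator of DN order at most $-1$, gaining one Sobolev degree on all Sobolev scales.

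I would finish with a bootstrap. Any $\eta\in (\Ca^c)^*\subseteq \SobHH{-{\frac12}}(\Sigma;F\otimes \C^m)$ satisfies $P_\dagger\eta=0$, which rearranges to $P_\Ca^\dagger\eta=R\eta$. Seeley's decomposition for $D^\dagger$ presents $\eta=P_\Ca^\dagger\eta+(1-P_\Ca^\dagger)\eta$, where the second summand already lies in $(1-P_\Ca^\dagger)\SobHH{m-{\frac12}}\subseteq \SobHH{m-{\frac12}}$, so the regularity of $\eta$ is governed by that of $R\eta$. Starting from $\eta\in \SobHH{-{\frac12}}$, the gain of $R$ gives $P_\Ca^\dagger\eta\in \SobHH{{\frac12}}$, so $\eta\in \SobHH{\min({\frac12},\,m-{\frac12})}$; iterating this step at most $\lceil m-1\rceil$ times upgrades $\eta$ to $\SobHH{m-{\frac12}}$. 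This establishes $(\Ca^c)^*\subseteq \SobHH{m-{\frac12}}$, hence semi-regularity of $D^\dagger_{(\Ca^c)^*}$ and thus regularity of $\Ca^c$. I expect the regularity step to be the main obstacle: its content rests entirely on the clean identification of $\sigma_{\pmb 0}(P_\dagger)$ with $\sigma_{\pmb 0}(P_\Ca^\dagger)$ from Proposition \ref{pplussandpminussdagger}, after which the bootstrap closes routinely.
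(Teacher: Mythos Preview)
Your first paragraph is exactly the paper's intended argument: Corollary \ref{decomposingcheckspacewithcalderon} is stated without proof as an immediate consequence of Theorem \ref{refinedseeley}, and all of the closedness, norm equivalence, and direct-sum statements drop out of Seeley's decomposition precisely as you describe. One small redundancy: you need not argue $\mathcal K(\gamma u)=u$ to get $\Ca_D=P_\Ca\SobHH{-\frac12}$, since by definition a Calder\'on projection is a projection onto $\Ca_D$.

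For regularity you take a genuinely different route from the paper. The paper does not supply a self-contained argument here; the assertion ``$\Ca^c$ is regular'' is later reproved in Corollary \ref{Cor:CalCheck} and ultimately rests on Theorem \ref{thm:sufficienfeidntofell} (taking $P=P_\Ca$, so that $P_\Ca-(1-P_\Ca)=2P_\Ca-1$ has invertible principal symbol $p_+(D)-p_-(D)$ and is Fredholm on every $\SobHH{\alpha}$), or equivalently on Theorem \ref{calderonforadjoint}, which gives the much stronger statement $P_{\Ca^\dagger}-P_\dagger\in\Psi^{-\infty}$. Your bootstrap is a correct stand-alone alternative: Proposition \ref{pplussandpminussdagger} indeed yields $\sigma_{\pmb 0}(P_\dagger)=p_+(D^\dagger)=\sigma_{\pmb 0}(P_{\Ca^\dagger})$, so $R:=P_{\Ca^\dagger}-P_\dagger$ has DN order $\leq -1$ (one should note, via $\scalebox{1.5}{a}^*=-\scalebox{1.5}{a}_\dagger$ and the conjugation isomorphism of Proposition \ref{returnofsomecontinuuisuforata}, that $P_\dagger$ does lie in $\Psi^{\pmb 0}_{\rm cl}(\Sigma;F\otimes\C^m)$, so this comparison is legitimate). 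The iteration then closes in $m$ steps rather than $\lceil m-1\rceil$, but that is cosmetic. Your approach has the virtue of being fully justified at this point in the paper, using only results already stated; the paper's approach is cleaner but forward-references the machinery of Sections \ref{bounddecoddsp}--\ref{subsec:adjointbcs}.
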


\begin{remark}
It follows from the proof Theorem \ref{Thm:ClosedRangeCharHO} and Corollary \ref{decomposingcheckspacewithcalderon} that an elliptic differential operator $D$ admits an invertible realisation if and only if $\ker(D_{\min})=0$ and $\ker(D_{\min}^\dagger)=0$. Indeed, for any realisation $D_{\rm B}$ we have the inclusions $\ker(D_{\min})\subseteq \ker(D_{\rm B})$ and $\ker(D_{\min}^\dagger)=\ker(D_{\rm B}^*)$ so $\ker(D_{\min})=0$ and $\ker(D_{\min}^\dagger)=0$ is a necessary condition. The converse follows from that $\ker(D_{\min})=0$ and $\ker(D_{\min}^\dagger)=0$ if and only if $D_{\Ca^c}$ is invertible by the proof Theorem \ref{Thm:ClosedRangeCharHO} and Corollary \ref{decomposingcheckspacewithcalderon}.
\end{remark}

For general regular boundary conditions, we have a statement similar to Corollary \ref{decomposingcheckspacewithcalderon}:

\begin{proposition}
\label{Prop:FredEll} 
Let $D$ be an elliptic differential operator of order $m>0$ and $B$ be a regular boundary condition. Then the following holds:\begin{enumerate}[(i)]
\item \label{Prop:FredEll1} 
	$(B, \Ca_D)$ is a Fredholm pair in $\checkH(D)$ and 
	$$ B^\ast \cap \Ca_{D^\dagger} \cong \faktor{\checkH(D)}{(B + \Ca_D)}.$$
\item \label{Prop:FredEll2} 
	There is a finite dimensional space $F\subseteq \SobHH{{m-{\frac12}}}(\Sigma; E\otimes \C^m)$  
	and a subspace $B_0\subseteq B+F$ of finite codimension defining a regular boundary condition with $\ker D_{\rm B_0}=\ker(D_{\min})$ for which we can write 
	$$\dom(D_{\rm max})=\dom(D_{{\rm B}_0})\oplus_{\ker D_{\min}}\ker D_{\rm max},$$
	and 
	$$\checkH(D)=B_0\oplus\Ca_D.$$
\end{enumerate}
\end{proposition}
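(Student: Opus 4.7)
Part (i) is an immediate application of the abstract theory: regularity of $B$ forces $D_{\rm B}$ to be Fredholm by Proposition \ref{Prop:MaxClosed}, and Theorem \ref{Thm:FredChar} then delivers both that $(B, \Ca_D)$ is a Fredholm pair in $\checkH(D)$ and the isomorphism $B^\ast \cap \Ca_{D^\dagger} \cong \checkH(D)/(B + \Ca_D)$.

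For part (ii), the plan is to realise $B_0$ as a finite-dimensional modification of $B$ that simultaneously removes the kernel $K := B \cap \Ca_D$ and fills in the cokernel $\checkH(D)/(B+\Ca_D)$, both of which are finite-dimensional by part (i). Using that $\SobHH{{m-{\frac12}}}$ lies densely in both $\checkH(D)$ and $\checkH(D^\dagger)$, and that being a topological complement of a closed finite-codimensional subspace is an open condition on finite-dimensional subspaces, I first choose a finite-dimensional $F \subseteq \SobHH{{m-{\frac12}}}(\Sigma; E \otimes \C^m)$ with $\checkH(D) = F \oplus (B + \Ca_D)$. For the kernel removal I also choose a finite-dimensional $K^\vee \subseteq \SobHH{{m-{\frac12}}}(\Sigma; F \otimes \C^m)$ for which the boundary pairing $\omega_D$ restricts to a perfect pairing $K^\vee \times K \to \C$; existence follows from the surjectivity of $\checkH(D^\dagger) \to K^\ast$, $\eta \mapsto \omega_D(\eta, \cdot)|_K$, which is a consequence of non-degeneracy of $\omega_D$, combined with density of $\SobHH{{m-{\frac12}}}$. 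I then set
$$B_0 := (B + F) \cap (K^\vee)^\perp,$$
where $(K^\vee)^\perp \subseteq \checkH(D)$ is the annihilator with respect to $\omega_D$.

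The structural checks run as follows. The perfect pairing yields a bounded projection $\pi_K : \checkH(D) \to K$ with kernel $(K^\vee)^\perp$; since $\pi_K$ restricts to the identity on $K \subseteq B+F$, we get $B+F = B_0 \oplus K$, so $B_0$ is closed of codimension $\dim K$ in $B+F$. The identity $(B+F) \cap \Ca_D = K$ follows from $F \cap (B+\Ca_D) = 0$, and combined with $B_0 \cap K = 0$ gives $B_0 \cap \Ca_D = 0$; meanwhile $B_0 + \Ca_D \supseteq (B+F) + \Ca_D = \checkH(D)$, establishing $\checkH(D) = B_0 \oplus \Ca_D$. From this decomposition, Proposition \ref{domainandkernelHO} immediately delivers $\ker(D_{\rm B_0}) = \ker(D_{\min})$ and the domain decomposition $\dom(D_{\max}) = \dom(D_{\rm B_0}) \oplus_{\ker D_{\min}} \ker(D_{\max})$ by lifting along the trace map $\gamma$.

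The step I anticipate as the main obstacle is regularity of $B_0$, and this is exactly where the choice of $K^\vee$ inside $\SobHH{{m-{\frac12}}}$ pays off. Semi-regularity is automatic from $B_0 \subseteq B+F \subseteq \SobHH{{m-{\frac12}}}$ via Proposition \ref{charsemiellintermsofboundary}. For the adjoint boundary condition, note that $(B+F) + (K^\vee)^\perp$ contains the closed finite-codimensional subspace $(K^\vee)^\perp$ and is therefore itself closed, which justifies the annihilator-of-intersection formula
$$B_0^\ast = \bigl((B+F) \cap (K^\vee)^\perp\bigr)^\perp = (B+F)^\ast + K^\vee = (B^\ast \cap F^\ast) + K^\vee.$$
Since $B^\ast \subseteq \SobHH{{m-{\frac12}}}$ by regularity of $B$, and $K^\vee \subseteq \SobHH{{m-{\frac12}}}$ by construction, it follows that $B_0^\ast \subseteq \SobHH{{m-{\frac12}}}$, proving that $B_0$ is regular.
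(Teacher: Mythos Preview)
Your proof is correct and follows essentially the same two-step strategy as the paper: enlarge $B$ by a finite-dimensional $F\subseteq\SobHH{{m-{\frac12}}}$ to fill the cokernel $\checkH(D)/(B+\Ca_D)$, then excise the finite-dimensional kernel $K=B\cap\Ca_D$ in a way that preserves regularity. Your construction $B_0=(B+F)\cap(K^\vee)^\perp$ with $K^\vee\subseteq\SobHH{{m-{\frac12}}}(\Sigma;F\otimes\C^m)$ is a concrete realisation of what the paper phrases more abstractly as ``we can choose $B_0$ such that $B_0^*/B^*\subseteq\SobHH{{m-{\frac12}}}/B^*$''; the explicit annihilator computation $B_0^*=(B^*\cap F^*)+K^\vee$ makes the regularity check cleaner than in the paper. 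One small bookkeeping point: the domain decomposition $\dom(D_{\max})=\dom(D_{{\rm B}_0})\oplus_{\ker D_{\min}}\ker D_{\max}$ is not literally contained in Proposition~\ref{domainandkernelHO} (which handles the kernel statement), but as you indicate it follows by lifting $\checkH(D)=B_0\oplus\Ca_D$ along $\gamma$ --- this is the content of Corollary~\ref{charcomplements}, which the paper invokes at the same point.
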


The contents of item ii) in Theorem \ref{Prop:FredEll} can be interpreted as the statement that any regular boundary condition is up to finite dimensional perturbations a complement to the Hardy space in the Cauchy data space. In light of Theorem \ref{Thm:FredChar}, or rather Corollary \ref{Cor:FredInd}, there are constraints on the dimension of $F$ and the codimension of $B_0\subseteq B+F$ given by the identity: 
$$\dim(F)-\dim\left(\faktor{B+F}{B_0}\right)=\dim \ker (D_{\min}) - \dim \ker (D_{\min}^\dagger)-\indx(D_{\rm B}).$$

\begin{proof}
Item \ref{Prop:FredEll1} follows from Theorem \ref{Thm:FredChar}. 
If $B$ satisfies that $\ker(D_{\rm B}^*)=\ker(D_{\rm min}^\dagger)$ and $\ker D_{\rm B}=\ker(D_{\min})$, then with $B_0=B$ and $F=0$, item \ref{Prop:FredEll2} follows from Corollary \ref{charcomplements} applied to the FAP with closed range $\mathcal{T}_{\rm B}:=(D_{\rm B},D^\dagger_{\rm min})$. 

To prove item \ref{Prop:FredEll2}, we first prove that for any regular boundary condition $B$ there is a finite dimensional space $F\subseteq \SobHH{{m-{\frac12}}}(\Sigma; E\otimes \C^m)$ with $(B+F)\cap \Ca_D=B\cap \Ca_D$ and $B+F+\Ca_D= \checkH(D)$. 
Indeed, $B+F$ is clearly semi-regular and will be regular since $(B+F)^*\subseteq B^*$.
To construct such an $F$, we note that item \ref{Prop:FredEll1} implies that the inclusion $\faktor{B}{B\cap \Ca_D}\to \faktor{\checkH(D)}{\Ca_D}$ has finite codimension. Indeed, we have that 
$$\faktor{(\checkH(D)/\Ca_D)}{(B/B\cap \Ca_D)}=\faktor{\checkH(D)}{(B + \Ca_D)}.$$
As such, there are plenty of finite-dimensional subspaces $F\subseteq  \checkH(D)$ with $(B+F)\cap \Ca_D=B\cap \Ca_D$ and $B+F+\Ca_D= \checkH(D)$. 
That we can take $F\subseteq \checkH(D)\cap \SobHH{{m-{\frac12}}}(\Sigma; E\otimes \C^m)$ follows from Example \ref{funnyboudnarydadreturn} which implies that the inclusion $\SobHH{{m-{\frac12}}}(\Sigma; E\otimes \C^m)\hookrightarrow \faktor{\checkH(D)}{\Ca_D}$ induces an isomorphism
\begin{equation*}
\faktor{\SobHH{{m-{\frac12}}}(\Sigma; E\otimes \C^m)}{\left(\SobHH{{m-{\frac12}}}(\Sigma; E\otimes \C^m)\cap \Ca_D\right)}\cong \faktor{\checkH(D)}{\Ca_D}.
\end{equation*}

To finish the proof, we need to construct $B_0$. Upon replacing $B$ with $B+F$, the argument in the preceding paragraph allow us to assume that the regular boundary condition $B$ satisfies $\ker(D_{\rm B}^*)=\ker(D_{\rm min}^\dagger)$. We need to construct a regular boundary condition $B_0\subseteq B$ of finite codimension such that $B_0 + \Ca_D=B + \Ca_D$ and $B_0\cap \Ca_D=0$. Consider any $B_0\subseteq \SobHH{{m-{\frac12}}}(\Sigma; E\otimes \C^m)$ which forms a complement of $B\cap \Ca_D$ in $B$. Since $B$ is regular, $B\cap \Ca_D\subseteq \SobHH{{m-{\frac12}}}(\Sigma; E\otimes \C^m)$ is finite-dimensional so $B_0\subseteq B$ has finite codimension. Moreover, $B_0$ is closed in $\checkH(D)$ and by construction $B_0 + \Ca_D=B + \Ca_D$ and $B_0\cap \Ca_D=0$. It remains to prove that we can take the complement $B_0$ to satisfy that $B_0^*\subseteq \SobHH{{m-{\frac12}}}(\Sigma; F\otimes \C^m)$. We have that $B^*\subseteq B_0^*$ has finite codimension and using Proposition \ref{boundarypairingforsoblov} we deduce that we can choose $B_0$ such that $\faktor{B_0^*}{B^*}\subseteq \faktor{\SobHH{{m-{\frac12}}}(\Sigma; F\otimes \C^m)}{B^*}$ which proves the theorem.
\end{proof}

\subsection{Boundary decomposing projectors}
\label{bounddecoddsp}

With Theorem \ref{Thm:CaldProj}, we are able to prove results of use when studying graphical decompositions of regular boundary conditions, see more in \cite{BBan} and Subsection \ref{subsec:charofellfirstorder} below. We make the following definition that we shall make use of in the first order case below in Subsection \ref{subsec:charofellfirstorder}

\begin{definition}
\label{boundafefidodo}
Let $D$ be an elliptic differential operator of order $m>0$. We say that a projection $\GProj_+$ is boundary decomposing for $D$ if the following conditions are satisfied 
\begin{enumerate}[({P}1)]
\item \label{Def:EStartHO} \label{Def:E1HO} 
	$\GProj_+: \SobHH{ \alpha}(\Sigma; E\otimes \C^m) \to \SobHH{ \alpha}(\Sigma;E\otimes \C^m)$ is a bounded projection for $\alpha \in \left\{-\frac12, m-\frac12\right\}$, 
\item \label{Def:E2HO} 
	$\GProj_+: \checkH(D) \to \checkH(D)$, and $\GProj_- := (I - \GProj_+):  \checkH(D) \to \SobHH{{m-\frac12}}(\Sigma;E\otimes \C^m)$,  and
\item \label{Def:EEndHO} \label{Def:E3HO} 
	$\norm{u}_{\checkH(D)} \simeq \norm{\GProj_- u}_{\SobHH{{m-\frac12}}(\Sigma;E\otimes \C^m)} + \norm{\GProj_+ u}_{\SobHH{-{\frac12}}(\Sigma;E\otimes \C^m)}.$
\end{enumerate} 
\end{definition}

The next proposition follows from the open mapping theorem.

\begin{proposition}
\label{prop:bcimpliessppsls}
Let $D$ be an elliptic differential operator of order $m>0$ and $\GProj_+$ a boundary decomposing projection for $D$. Then 
$$\GProj_+ \checkH(D) = \GProj_+ \SobHH{{-\frac12}}(\Sigma;E)\quad\mbox{and}\quad \GProj_- \checkH(D) = \GProj_- \SobHH{{m-\frac12}}(\Sigma;E),$$
with equivalent norms.
\end{proposition}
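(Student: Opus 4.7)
The plan is to establish each equality by a closed-plus-dense sandwich argument, with the norm equivalence coming directly from (P\ref{Def:EEndHO}) and the open mapping input entering through the completeness of the image of a bounded projection.

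First I would treat the $\GProj_+$ identity. The continuous embedding $\checkH(D)\hookrightarrow\SobHH{-{\frac12}}(\Sigma;E\otimes \C^m)$, combined with boundedness of $\GProj_+$ on the target from (P\ref{Def:E1HO}), gives immediately
$$\GProj_+\checkH(D)\subseteq\GProj_+\SobHH{-{\frac12}}(\Sigma;E\otimes \C^m).$$
Applying (P\ref{Def:E3HO}) to an element of the form $u=\GProj_+ v$ with $v\in\checkH(D)$, and using $\GProj_-\GProj_+=0$, the right-hand side collapses to
$$\norm{\GProj_+ v}_{\checkH(D)}\simeq\norm{\GProj_+ v}_{\SobHH{-{\frac12}}(\Sigma;E\otimes \C^m)}.$$
Since $\GProj_+\checkH(D)$ is complete in the $\checkH(D)$-norm, being the image of a bounded projection on the Banach space $\checkH(D)$ by (P\ref{Def:E2HO}), this norm equivalence makes $\GProj_+\checkH(D)$ into a closed subspace of $\GProj_+\SobHH{-{\frac12}}(\Sigma;E\otimes \C^m)$ equipped with the ambient norm.

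For density, I would use that $\SobHH{m-{\frac12}}(\Sigma;E\otimes \C^m)$ is dense in $\SobHH{-{\frac12}}(\Sigma;E\otimes \C^m)$, so that boundedness of $\GProj_+$ on $\SobHH{-{\frac12}}(\Sigma;E\otimes \C^m)$ from (P\ref{Def:E1HO}) yields density of $\GProj_+\SobHH{m-{\frac12}}(\Sigma;E\otimes \C^m)$ in $\GProj_+\SobHH{-{\frac12}}(\Sigma;E\otimes \C^m)$. The continuous inclusion $\SobHH{m-{\frac12}}(\Sigma;E\otimes \C^m)\subseteq\checkH(D)$ places this dense subset inside $\GProj_+\checkH(D)$. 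Closed plus dense then forces equality, and the norm equivalence is precisely the estimate already displayed.

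The $\GProj_-$ case is easier. The inclusion $\GProj_-\SobHH{m-{\frac12}}(\Sigma;E\otimes \C^m)\subseteq\GProj_-\checkH(D)$ is trivial via $\SobHH{m-{\frac12}}(\Sigma;E\otimes \C^m)\subseteq\checkH(D)$. For the reverse, property (P\ref{Def:E2HO}) says $\GProj_- u\in\SobHH{m-{\frac12}}(\Sigma;E\otimes \C^m)$ for every $u\in\checkH(D)$, and idempotency $\GProj_- u=\GProj_-(\GProj_- u)$ identifies this as an element of $\GProj_-\SobHH{m-{\frac12}}(\Sigma;E\otimes \C^m)$. The norm equivalence follows from (P\ref{Def:E3HO}) applied with $\GProj_+ u=0$. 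The main obstacle lies in the $\GProj_+$ case: it is not automatic from (P\ref{Def:E1HO})--(P\ref{Def:E3HO}) that every element of $\GProj_+\SobHH{-{\frac12}}(\Sigma;E\otimes \C^m)$ already lives in $\checkH(D)$, and it is exactly the combination of norm equivalence (from (P\ref{Def:E3HO})) with completeness (from (P\ref{Def:E2HO})) that bridges this gap.
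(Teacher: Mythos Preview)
Your argument is correct and is precisely the unpacking of the paper's one-line proof, which reads in full: ``The next proposition follows from the open mapping theorem.'' Your closed-plus-dense argument for the $\GProj_+$ identity, with completeness of $\GProj_+\checkH(D)$ furnishing the closedness and $\GProj_+\SobHH{m-\frac12}\subseteq\GProj_+\checkH(D)$ furnishing the density, is exactly the content of that hint; the $\GProj_-$ case is, as you note, immediate from (P\ref{Def:E2HO}).
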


Let us state yet another corollary of Theorem \ref{Thm:CaldProj}.

\begin{corollary}
\label{Cor:CalCheck}
The Calderón projection $P_\mathcal{C}$ of an elliptic differential operator $D$ is boundary decomposing for $D$. Moreover, the complementary subspace $\Cac := (I - \Proj{\Ca})\checkH(D)=(1-P_\mathcal{C})\SobHH{ m-{{\frac12}}}(\Sigma;E\otimes \C^m)$ defines a regular boundary condition with $\ker(D_{\Cac}) = \ker(D_{\min})$ and characterises $\dom(D_{\rm max})$ via the following continuously split short exact sequence of Hilbert spaces
$$0\to \dom(D_{\Cac})\to \dom(D_{\rm max})\xrightarrow{P_\mathcal{C}\circ \gamma} \mathcal{C}\to 0.$$
\end{corollary}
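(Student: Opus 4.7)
The plan is to verify each of the three conditions \ref{Def:E1HO}--\ref{Def:E3HO} of Definition \ref{boundafefidodo} directly from Seeley's Theorem \ref{refinedseeley} together with the general pseudo-differential properties of $P_\mathcal{C}$, and then to read off the remaining assertions from earlier results in the paper. First, \ref{Def:E1HO} follows immediately: by Theorem \ref{Thm:CaldProj}, $P_\mathcal{C}\in \Psi^{\pmb 0}_{\rm cl}(\Sigma;E\otimes\C^m)$ in the Douglis--Nirenberg calculus, hence acts boundedly on every graded Sobolev space $\SobHH{\alpha}(\Sigma;E\otimes\C^m)$, in particular for $\alpha\in\{-\tfrac12,m-\tfrac12\}$, and $P_\mathcal{C}^2=P_\mathcal{C}$ (since $P_\mathcal{C}$ is a true projection onto $\mathcal{C}_D$ by Theorem \ref{Thm:CaldProj}).

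For \ref{Def:E2HO} and \ref{Def:E3HO}, the key input is the Banach space identification
\[
\checkH(D)\;\cong\;P_\mathcal{C}\SobHH{-{\frac12}}(\Sigma;E\otimes\C^m)\oplus(1-P_\mathcal{C})\SobHH{m-{\frac12}}(\Sigma;E\otimes\C^m)
\]
from Theorem \ref{refinedseeley}\ref{refinedseeley0:2}. This says precisely that $P_\mathcal{C}$ maps $\checkH(D)$ boundedly into the first summand (which is contained in $\checkH(D)$), and $1-P_\mathcal{C}$ maps $\checkH(D)$ boundedly into $\SobHH{m-{\frac12}}$, giving \ref{Def:E2HO}. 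The norm equivalence in \ref{Def:E3HO} is then simply a restatement of the fact that the above decomposition is a Banach isomorphism. In particular, one deduces the claimed equality $\Cac=(1-P_\mathcal{C})\checkH(D)=(1-P_\mathcal{C})\SobHH{m-{\frac12}}$ from the same decomposition.

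For the remaining assertions, regularity of $\Cac$ as a boundary condition, together with the algebraic decomposition $\checkH(D)=\Cac\oplus\Ca_D$, has already been established in Corollary \ref{decomposingcheckspacewithcalderon}. The identity $\ker(D_{\Cac})=\ker(D_{\min})$ then follows from Proposition \ref{domainandkernelHO}(ii): the short exact sequence $0\to\ker(D_{\min})\to\ker(D_{\Cac})\to\Ca_D\cap\Cac\to 0$ collapses because $\Ca_D\cap\Cac=0$.

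Finally, for the short exact sequence, observe that $\dom(D_{\Cac})=\gamma^{-1}(\Cac)=\gamma^{-1}(\ker P_\mathcal{C}|_{\checkH(D)})=\ker(P_\mathcal{C}\circ\gamma)$, so exactness at $\dom(D_{\max})$ is immediate; surjectivity onto $\Ca_D$ follows because $\gamma(\ker D_{\max})=\Ca_D$ by definition, and $P_\mathcal{C}$ restricts to the identity on $\Ca_D$. Continuity of $P_\mathcal{C}\circ\gamma:\dom(D_{\max})\to\Ca_D$ is the composition of the trace $\gamma$ (continuous into $\checkH(D)$) with the bounded projection $P_\mathcal{C}$. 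A continuous splitting is furnished by the Poisson operator $\mathcal{K}:\SobHH{-{\frac12}}\to\ker(D_{\max})\subseteq\dom(D_{\max})$ of Theorem \ref{refinedseeley}\ref{refinedseeley0:1}, restricted to $\Ca_D$: the relation $P_\mathcal{C}\circ\gamma\circ\mathcal{K}=P_\mathcal{C}\circ P_\mathcal{C}=P_\mathcal{C}$, which equals the identity on $\Ca_D$, confirms that $\mathcal{K}|_{\Ca_D}$ splits the sequence. No single step should be a serious obstacle here, as the work is really concentrated in Seeley's theorem which we may invoke; the main task is merely the careful bookkeeping of which claim follows from which previously established result.
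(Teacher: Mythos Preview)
Your proof is correct and follows essentially the same approach as the paper's own proof: both deduce \ref{Def:E1HO} from $P_\mathcal{C}\in\Psi^{\pmb 0}_{\rm cl}$, both deduce \ref{Def:E2HO}--\ref{Def:E3HO} directly from the Banach isomorphism of Theorem \ref{refinedseeley}, and both obtain $\ker(D_{\Cac})=\ker(D_{\min})$ from $\Cac\cap\Ca_D=0$ via the kernel short exact sequence. You supply more detail than the paper does on the final split short exact sequence (exactness, surjectivity, and the explicit splitting via $\mathcal{K}$), and you cite Corollary \ref{decomposingcheckspacewithcalderon} explicitly for regularity where the paper simply writes ``It follows'', but these are expository differences rather than a change of method.
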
 

\begin{proof}
By the fact that $\Proj{\Ca}$ is a pseudo-differential operator of order zero in the Douglis-Nirenberg calculus, it automatically satisfies \ref{Def:E1HO}. The properties \ref{Def:E2HO} and \ref{Def:E3HO} follow from Theorem \ref{refinedseeley}. It follows that $\Cac$ is a regular boundary condition. Since the restricted trace map is a surjection $\ker(D_{\Cac}) \to \Cac\cap \Ca=0$ with kernel $\ker(D_{\min})$, we conclude that $\ker(D_{\Cac}) = \ker(D_{\min})$. 
\end{proof}

\begin{example}
The Hardy space itself is a boundary condition, so we may consider the closed operator $D_{\Ca_D}$. The realisation $D_{\Ca_D}$ is called the soft extension, or the Krein extension, and is self-adjoint if $D$ is formally self-adjoint. Its properties were further studied in \cite{grubbsoft}.
It is clear that $\ker(D_{\Ca_D}) = \ker(D_{\max})$, which we have noted is infinite dimensional if $\dim(M)>1$.
By Proposition \ref{Prop:DomCompABS}, setting $B_2 = \checkH(D)$ and $B_1 = \Ca_D$, we obtain that 
$$ \faktor{\dom(D_{\max})}{\dom(D_{\Ca_D})} \cong \faktor{\checkH(D)}{\Ca_D} \cong \Cac,$$
where $\Cac$ is the kernel of $\Proj{\Ca}$.
By Corollary \ref{Cor:CalCheck}, we have that $\Cac$ is infinite dimensional.
That is, $D_{\Ca}$ is an operator which differs from $D_{\max}$ on an infinite dimensional subspace but still has infinite dimensional kernel.

This can be repeated on replacing $\Ca$ by any infinite dimensional closed subspace of $\Ca$.
Therefore, there's an abundance of boundary conditions with infinite dimensional kernel which is different from $D_{\max}$ on an infinite dimensional subspace.
\end{example} 

We now state a sufficient condition for a projection to be boundary decomposing. 

\begin{theorem}
\label{thm:bodunarodoad}
Let $D$ be an elliptic differential operator of order $m>0$ and $P$ a continuous projection on $\SobHH{{m-\frac12}}(\Sigma;E\otimes \C^m)$. Assume that $P$ satisfies the following: 
\begin{itemize} 
\item $P$ is continuous also in the $\checkH(D)$-norm and the $\SobHH{{-\frac12}}(\Sigma;E\otimes \C^m)$-norm.
\item The operator $A:=P_\mathcal{C}-(1-P)$ defines a Fredholm operator on $\SobHH{{m-\frac12}}(\Sigma;E\otimes \C^m)$ and by continuity extends to a Fredholm operator on $\checkH(D)$. 
\end{itemize} 
Then $P$ is boundary decomposing, and in particular 
$$\norm{u}_{\checkH(D)} \simeq \norm{(1-P) u}_{\SobHH{{m-\frac12}}(\Sigma;E\otimes \C^m)} + \norm{P u}_{\SobHH{-{\frac12}}(\Sigma;E\otimes \C^m)}.$$
\end{theorem}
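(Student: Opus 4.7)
My plan is to verify, in order, the three properties of Definition~\ref{boundafefidodo}. Property (P\ref{Def:E1HO}) is immediate from the boundedness of $P$ on $\SobHH{{-\frac12}}$ and $\SobHH{{m-\frac12}}$. For property (P\ref{Def:E3HO}), once (P\ref{Def:E2HO}) has been obtained, I will apply the closed graph theorem to $u\mapsto ((1-P)u, Pu)$ from $\checkH(D)$ into $\SobHH{{m-\frac12}}\oplus \SobHH{{-\frac12}}$, together with the continuous inclusion $\SobHH{{m-\frac12}}\hookrightarrow \checkH(D)$ and the splitting $u=(1-P)u+Pu$, to deduce the norm equivalence. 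The heart of the theorem is then (P\ref{Def:E2HO}): that $(1-P)$ maps $\checkH(D)$ continuously into $\SobHH{{m-\frac12}}$.

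To establish (P\ref{Def:E2HO}), I invoke the Calderón decomposition from Corollary~\ref{Cor:CalCheck} to write $\checkH(D)=\mathcal{C}_D\oplus \Cac$ with $\Cac\subseteq \SobHH{{m-\frac12}}$. Writing a general $u\in\checkH(D)$ as $u=u_++u_-$ with $u_+=P_\mathcal{C} u\in\mathcal{C}_D$ and $u_-\in\Cac$, boundedness of $P$ on $\SobHH{{m-\frac12}}$ gives $(1-P)u_-\in \SobHH{{m-\frac12}}$ directly, so it suffices to show $(1-P)u_+\in \SobHH{{m-\frac12}}$ for $u_+\in\mathcal{C}_D$. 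Splitting $(1-P)u_+$ in the Calderón decomposition a second time, the $(1-P_\mathcal{C})$-summand lies automatically in $\Cac\subseteq \SobHH{{m-\frac12}}$, so the task reduces to proving that $P_\mathcal{C}(1-P)u_+ = u_+ - P_\mathcal{C} Pu_+$ lies in $\mathcal{C}_D\cap \SobHH{{m-\frac12}}=P_\mathcal{C}\SobHH{{m-\frac12}}$.

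The crucial observation is that for $u_+\in\mathcal{C}_D$ one has $Au_+=P_\mathcal{C} u_+-(1-P)u_+=u_+-(1-P)u_+=Pu_+$, so that $A|_{\mathcal{C}_D}=P|_{\mathcal{C}_D}$. I will then couple the two Fredholm hypotheses on $A$: the Fredholm property on $\SobHH{{m-\frac12}}$ together with the compact embedding $\SobHH{{m-\frac12}}\hookrightarrow \SobHH{{-\frac12}}$ yields the Peetre-type semi-Fredholm estimate
$$\|w\|_{\SobHH{{m-\frac12}}}\lesssim \|Aw\|_{\SobHH{{m-\frac12}}}+\|w\|_{\SobHH{{-\frac12}}},\qquad w\in\SobHH{{m-\frac12}},$$
while the Fredholm property on $\checkH(D)$ provides a bounded parametrix $B:\checkH(D)\to\checkH(D)$ with $BA=I-K$ for a finite-rank $K$. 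Applying $B$ to the identity $Au_+=Pu_+$ expresses $u_+$ modulo a finite-rank term as $BPu_+$, whose $\SobHH{{m-\frac12}}$-regularity can be controlled via the Peetre estimate; combining with the boundedness of $P_\mathcal{C}$ as a zeroth-order Douglis--Nirenberg operator yields the desired membership $u_+-P_\mathcal{C} Pu_+\in P_\mathcal{C}\SobHH{{m-\frac12}}$.

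The main obstacle is to choose the parametrix $B$ on $\checkH(D)$ compatibly with the Fredholm structure of $A$ on $\SobHH{{m-\frac12}}$: the finite-rank defects arising from the two Fredholm decompositions are a priori unrelated, and one must exploit the density and continuity of the inclusion $\SobHH{{m-\frac12}}\hookrightarrow\checkH(D)$, together with the fact that $A$ extends by continuity, to arrange that the cokernel complement of $B$ lies in $\SobHH{{m-\frac12}}$ and matches the cokernel of $A$ on $\SobHH{{m-\frac12}}$ modulo finite rank. Once this technical compatibility is secured, the regularity transfer of the previous paragraph goes through, yielding (P\ref{Def:E2HO}) and hence the theorem.
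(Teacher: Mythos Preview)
Your proposal has the right ingredients --- the algebraic identity $A(1-P)=-(1-P_\mathcal{C})(1-P)$ and the Fredholm/Peetre estimate for $A$ --- but you deploy them in a way that creates an avoidable obstacle and leaves a second gap.

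\textbf{The regularity-transfer obstacle is real and unnecessary.} You try to show directly that $(1-P)u_+\in\SobHH{{m-\frac12}}$ for every $u_+\in\mathcal{C}_D$, which reduces to: if $v\in\checkH(D)$ and $Av\in\SobHH{{m-\frac12}}$, then $v\in\SobHH{{m-\frac12}}$. This does \emph{not} follow from $A$ being Fredholm on both spaces without further input; a kernel element of $A$ on $\checkH(D)$ lying outside $\SobHH{{m-\frac12}}$ would be an immediate counterexample, and nothing in your hypotheses excludes this a priori. Your ``compatibility of parametrices'' step is exactly where this bites, and you have not resolved it. The paper bypasses the issue entirely: instead of proving range membership on all of $\checkH(D)$, it proves the \emph{norm estimate} $\|(1-P)u\|_{\SobHH{{m-\frac12}}}\lesssim\|(1-P)u\|_{\checkH(D)}$ on the dense subspace $\SobHH{{m-\frac12}}$ (where $(1-P)u$ is already known to lie in $\SobHH{{m-\frac12}}$), using the very same identity $A(1-P)=-(1-P_\mathcal{C})(1-P)$ together with the Fredholm estimate for $A$ on $\SobHH{{m-\frac12}}$ and the fact that $\|(1-P_\mathcal{C})w\|_{\SobHH{{m-\frac12}}}\simeq\|(1-P_\mathcal{C})w\|_{\checkH(D)}$. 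Density then upgrades the estimate to a bounded map $\checkH(D)\to\SobHH{{m-\frac12}}$, giving (P\ref{Def:E2HO}) for free.

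\textbf{Your (P\ref{Def:E3HO})-from-(P\ref{Def:E2HO}) argument is incomplete.} Closed graph, applied to $u\mapsto((1-P)u,Pu)$, gives only $\|(1-P)u\|_{\SobHH{{m-\frac12}}}+\|Pu\|_{\SobHH{{-\frac12}}}\lesssim\|u\|_{\checkH(D)}$. For the reverse you need $\|Pu\|_{\checkH(D)}\lesssim\|Pu\|_{\SobHH{{-\frac12}}}$, and closed graph does not supply this: you would need $P\checkH(D)$ to be closed in $\SobHH{{-\frac12}}$, which is part of the conclusion, not a hypothesis. The paper obtains this reverse inequality by the mirror argument: using $AP=P_\mathcal{C}P$ and the Fredholm estimate for $A$ on $\checkH(D)$ (this is precisely where the second Fredholm hypothesis is spent), one gets $\|Pu\|_{\checkH(D)}\lesssim\|P_\mathcal{C}Pu\|_{\checkH(D)}+\text{(finite rank)}\simeq\|P_\mathcal{C}Pu\|_{\SobHH{{-\frac12}}}+\text{(finite rank)}\lesssim\|Pu\|_{\SobHH{{-\frac12}}}$.

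In short: keep your algebraic observations, but run the argument as a priori norm estimates on the dense subspace $\SobHH{{m-\frac12}}$ rather than as a direct regularity statement on $\checkH(D)$; this simultaneously dissolves your ``main obstacle'' and closes the (P\ref{Def:E3HO}) gap.
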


\begin{proof}
By the open mapping theorem and density it suffices to prove that the assumptions on $P$ ensure that 
$$\norm{(1-P)u}_{\checkH(D)} \simeq \norm{(1-P) u}_{\SobHH{{m-\frac12}}(\Sigma;E\otimes \C^m)} \quad\mbox{and}\quad \norm{Pu}_{\checkH(D)} \simeq \norm{P u}_{\SobHH{-{\frac12}}(\Sigma;E\otimes \C^m)},$$
for $u\in \SobHH{{m-\frac12}}(\Sigma;E\otimes \C^m)$.

Since we have a continuous embedding $\SobHH{{m-\frac12}}(\Sigma;E\otimes \C^m)\hookrightarrow \checkH(D)$, we conclude that there is an estimate   
$\norm{(1-P)u}_{\checkH(D)} \lesssim \norm{(1-P) u}_{\SobHH{{m-\frac12}}(\Sigma;E\otimes \C^m)}$ for $u\in \SobHH{{m-\frac12}}(\Sigma;E\otimes \C^m)$. We now prove the converse estimate. Since $A$ is Fredholm on $\SobHH{{m-\frac12}}(\Sigma;E\otimes \C^m)$, we conclude that there is a finite rank operator $F$ on $\SobHH{{m-\frac12}}(\Sigma;E\otimes \C^m)$ such that 
$$\|u\|_{\SobHH{{m-\frac12}}(\Sigma;E\otimes \C^m)} \simeq \|Au\|_{\SobHH{{m-\frac12}}(\Sigma;E\otimes \C^m)}+\|Fu\|_{\SobHH{{m-\frac12}}(\Sigma;E\otimes \C^m)}.$$
By density, we can assume that $F$ extends to a finite rank operator on $\checkH(D)$. We then have that 
\begin{equation}
\label{firsteeksksdldl}
\norm{(1-P) u}_{\SobHH{{m-\frac12}}(\Sigma;E\otimes \C^m)}\lesssim \|A(1-P)u\|_{\SobHH{{m-\frac12}}(\Sigma;E\otimes \C^m)}+\|F(1-P)u\|_{\SobHH{{m-\frac12}}(\Sigma;E\otimes \C^m)}.
\end{equation}
But $A(1-P)=(P_\mathcal{C}-(1-P))(1-P)=(P_\mathcal{C}-1)(1-P)$ so 
\begin{align}
\label{secondeeksksdldl}
\|A(1-P)u\|&_{\SobHH{{m-\frac12}}(\Sigma;E\otimes \C^m)}+\|F(1-P)u\|_{\SobHH{{m-\frac12}}(\Sigma;E\otimes \C^m)}=\\
\nonumber 
=&\|(1-P_\mathcal{C})(1-P)u\|_{\SobHH{{m-\frac12}}(\Sigma;E\otimes \C^m)}+\|F(1-P)u\|_{\SobHH{{m-\frac12}}(\Sigma;E\otimes \C^m)}=\\
\nonumber 
=&\|(1-P_\mathcal{C})(1-P)u\|_{\checkH(D)}+\|F(1-P)u\|_{\SobHH{{m-\frac12}}(\Sigma;E\otimes \C^m)}\lesssim \\
\nonumber 
\lesssim &\|(1-P)u\|_{\checkH(D)}+\|F(1-P)u\|_{\checkH(D)}\lesssim \|(1-P)u\|_{\checkH(D)}.
\end{align}
In the second last estimate we used that all norms on finite-dimensional spaces are equivalent and in the third estimate we used that $F$ extends to a finite rank operator on $\checkH(D)$. Combining the estimates \eqref{firsteeksksdldl} and \eqref{secondeeksksdldl} we arrive at the estimate $\norm{(1-P) u}_{\SobHH{{m-\frac12}}(\Sigma;E\otimes \C^m)}\lesssim \norm{(1-P)u}_{\checkH(D)}$.

The proof that $\norm{Pu}_{\checkH(D)} \simeq \norm{P u}_{\SobHH{-{\frac12}}(\Sigma;E\otimes \C^m)}$ goes ad verbatim with $\checkH(D)$ instead of $\SobHH{{m-\frac12}}(\Sigma;E\otimes \C^m)$  and $\SobHH{{-\frac12}}(\Sigma;E\otimes \C^m)$ instead of $\checkH(D)$ and using that $A$ is Fredholm on $\checkH(D)$. 
\end{proof}

\begin{lemma}
\label{boundednessofpseudodosnnchck}
Let $D$ be an elliptic differential operator of order $m>0$ and $T\in \Psi^{\pmb 0}_{\rm cl}(\Sigma; E\otimes \C^m)$. Then $T$ defines a bounded operator on $\checkH(D)$ if and only if $(1-P_\mathcal{C})TP_\mathcal{C}\in \Psi^{\pmb{-m}}_{\rm cl}(\Sigma; E\otimes \C^m)$.
\end{lemma}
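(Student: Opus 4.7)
The strategy is to pass through the direct sum decomposition provided by the Calderón projection: by Theorem \ref{refinedseeley}, the identity induces a Banach space isomorphism
\[
\checkH(D) \;\cong\; P_\mathcal{C}\SobHH{-{\frac12}}(\Sigma;E\otimes\C^m) \,\oplus\, (1-P_\mathcal{C})\SobHH{m-{\frac12}}(\Sigma;E\otimes\C^m),
\]
where each summand carries the norm inherited from the ambient Sobolev space. I would then view $T$ as a $2\times 2$ block matrix with respect to this splitting, with entries $P^{\epsilon}TP^{\epsilon'}$ for $\epsilon,\epsilon'\in\{+,-\}$ where $P^+=P_\mathcal{C}$ and $P^-=1-P_\mathcal{C}$, and observe that all but one block is automatically bounded.

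Since $T\in \Psi^{\pmb 0}_{\rm cl}$ acts boundedly on every graded Sobolev space, and $(1-P_\mathcal{C})\SobHH{m-{\frac12}}\hookrightarrow \SobHH{-{\frac12}}$ continuously because $m>0$, the three blocks $P_\mathcal{C} TP_\mathcal{C}$, $P_\mathcal{C} T(1-P_\mathcal{C})$, and $(1-P_\mathcal{C})T(1-P_\mathcal{C})$ are automatically bounded between their respective summands. Therefore boundedness of $T$ on $\checkH(D)$ reduces to boundedness of the single block
\[
(1-P_\mathcal{C})TP_\mathcal{C}\colon P_\mathcal{C}\SobHH{-{\frac12}}\;\longrightarrow\;(1-P_\mathcal{C})\SobHH{m-{\frac12}},
\]
and since $P_\mathcal{C}$ is a bounded idempotent on $\SobHH{-{\frac12}}$ and $(1-P_\mathcal{C})$ is a bounded idempotent on $\SobHH{m-{\frac12}}$, this is in turn equivalent to the classical operator $S:=(1-P_\mathcal{C})TP_\mathcal{C}\in \Psi^{\pmb 0}_{\rm cl}$ being bounded as a map from $\SobHH{-{\frac12}}(\Sigma;E\otimes\C^m)$ into $\SobHH{m-{\frac12}}(\Sigma;E\otimes\C^m)$.

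With this reduction in place, sufficiency is immediate from the Sobolev mapping properties of the Douglis--Nirenberg calculus recalled in Subsection \ref{subsec:calderonproj}: if $S\in\Psi^{\pmb{-m}}_{\rm cl}$ then $S$ maps $\SobHH{-{\frac12}}$ continuously into $\SobHH{m-{\frac12}}$. The main obstacle will be the necessity direction, which requires the converse principle for classical operators: if $S\in\Psi^{\pmb{-k}}_{\rm cl}$ for some integer $0\le k<m$ and $S$ is bounded $\SobHH{-{\frac12}}\to\SobHH{m-{\frac12}}$, then its homogeneous principal symbol $\sigma_{\pmb{-k}}(S)$ must vanish identically on $S^*\Sigma$, so that $S\in\Psi^{\pmb{-(k+1)}}_{\rm cl}$. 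I would prove this via a standard wave-packet test: at any $(x_0,\xi_0)\in S^*\Sigma$ one constructs a sequence of $\Lp{2}$-normalised coherent states $u_\lambda$ microlocalised at $(x_0,\lambda\xi_0)$ as $\lambda\to\infty$, with the graded components in $E\otimes\C^m$ arranged to probe the matrix-valued principal symbol, and derives the asymptotics $\|Su_\lambda\|_{\SobHH{m-{\frac12}}}\simeq \lambda^{m-{\frac12}-k}|\sigma_{\pmb{-k}}(S)(x_0,\xi_0)|$ against $\|u_\lambda\|_{\SobHH{-{\frac12}}}\simeq\lambda^{-{\frac12}}$. The assumed boundedness then forces $\lambda^{m-k}|\sigma_{\pmb{-k}}(S)(x_0,\xi_0)|\lesssim 1$ uniformly in $\lambda$, which is impossible unless the symbol vanishes at $(x_0,\xi_0)$. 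Iterating this reduction $m$ times starting from $k=0$ produces $S\in\Psi^{\pmb{-m}}_{\rm cl}$, which completes the proof of the equivalence.
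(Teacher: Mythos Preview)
Your proof is correct and follows the same approach as the paper: decompose $T$ into four blocks via $P_\mathcal{C}$ and $1-P_\mathcal{C}$, note that three of them are automatically bounded on $\checkH(D)$, and reduce both directions to whether $S=(1-P_\mathcal{C})TP_\mathcal{C}$ is bounded $\SobHH{-\frac12}\to\SobHH{m-\frac12}$. The only difference is that the paper simply asserts that such boundedness forces $S\in\Psi^{\pmb{-m}}_{\rm cl}$, whereas you supply the wave-packet/iteration argument for this standard fact.
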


\begin{proof}
Assume first that $(1-P_\mathcal{C})TP_\mathcal{C}\in \Psi^{\pmb{-m}}_{\rm cl}(\Sigma; E\otimes \C^m)$ holds. The property $(1-P_\mathcal{C})TP_\mathcal{C}\in \Psi^{\pmb{-m}}_{\rm cl}(\Sigma; E\otimes \C^m)$ implies that 
\begin{align*}
T=&P_\mathcal{C}TP_\mathcal{C}+(1-P_\mathcal{C})T(1-P_\mathcal{C})+P_\mathcal{C}T(1-P_\mathcal{C})+(1-P_\mathcal{C})TP_\mathcal{C}\\
=&P_\mathcal{C}TP_\mathcal{C}+(1-P_\mathcal{C})T(1-P_\mathcal{C})+P_\mathcal{C}T(1-P_\mathcal{C})+\Psi^{\pmb{-m}}_{\rm cl}(\Sigma; E\otimes \C^m).
\end{align*}
It now follows that $T$ acts boundedly on $\checkH(D)$ from the facts that $\Psi^{\pmb{-m}}_{\rm cl}(\Sigma; E\otimes \C^m)$ acts boundedly on $\checkH(D)$ and $\Psi^{\pmb 0}_{\rm cl}(\Sigma; E\otimes \C^m)$ acts boundedly $\SobHH{{s}}(\Sigma;E\otimes \C^m)\to \SobHH{{t}}(\Sigma;E\otimes \C^m)$ for all $s,t\in \R$ with $t\leq s$.

Conversely, assume that $T$ acts boundedly on $\checkH(D)$. Since $T\in \Psi^{\pmb 0}_{\rm cl}(\Sigma; E\otimes \C^m)$, the argument above implies that the pseudodifferential operator $(1-P_\mathcal{C})TP_\mathcal{C}$ act boundedly on $\checkH(D)$. In particular, $(1-P_\mathcal{C})TP_\mathcal{C}$ will have to extend to a continuous operator $\SobHH{{-\frac12}}(\Sigma;E\otimes \C^m)\to \SobHH{{m-\frac12}}(\Sigma;E\otimes \C^m)$ and so it must be an operator of order $-m$, i.e. $(1-P_\mathcal{C})TP_\mathcal{C}\in \Psi^{\pmb{-m}}_{\rm cl}(\Sigma; E\otimes \C^m)$.
\end{proof}

\begin{corollary}
\label{pseudostahtdecompose}
Let $D$ be an elliptic differential operator of order $m>0$ and $P\in \Psi^{\pmb 0}_{\rm cl}(\Sigma; E\otimes \C^m)$ a projection. Assume that $P$ satisfies the following: 
\begin{itemize} 
\item $[P_\mathcal{C},P]\in \Psi^{\pmb{-m}}_{\rm cl}(\Sigma; E\otimes \C^m)$.
\item The operator $A:=P_\mathcal{C}-(1-P)\in \Psi^{\pmb 0}_{\rm cl}(\Sigma; E\otimes \C^m)$ is elliptic. 
\end{itemize} 
Then $P$ is boundary decomposing, and in particular 
$$\norm{u}_{\checkH(D)} \simeq \norm{(1-P) u}_{\SobHH{{m-\frac12}}(\Sigma;E\otimes \C^m)} + \norm{P u}_{\SobHH{-{\frac12}}(\Sigma;E\otimes \C^m)}.$$
\end{corollary}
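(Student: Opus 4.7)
The plan is to reduce the statement to Theorem~\ref{thm:bodunarodoad} by verifying its two hypotheses for $P$. Continuity of $P$ on $\SobHH{{m-\frac12}}(\Sigma;E\otimes \C^m)$ and $\SobHH{{-\frac12}}(\Sigma;E\otimes \C^m)$ is automatic since $P\in\Psi^{\pmb 0}_{\rm cl}$ acts continuously on every graded Sobolev space, and ellipticity of $A=P_\mathcal{C}-(1-P)$ in the Douglis-Nirenberg calculus directly gives Fredholmness of $A$ on each $\SobHH{{s}}(\Sigma;E\otimes \C^m)$. The substantive work is therefore entirely on $\checkH(D)$.

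For continuity of $P$ on $\checkH(D)$, I would invoke Lemma~\ref{boundednessofpseudodosnnchck}. Using $P_\mathcal{C}^2=P_\mathcal{C}$, so that $(1-P_\mathcal{C})P_\mathcal{C}=0$, a direct computation gives
\[
(1-P_\mathcal{C})P P_\mathcal{C}=(1-P_\mathcal{C})[P,P_\mathcal{C}],
\]
which lies in $\Psi^{\pmb{-m}}_{\rm cl}$ by the commutator hypothesis. Lemma~\ref{boundednessofpseudodosnnchck} therefore yields continuity of $P$ on $\checkH(D)$; the same argument applied to $1-P$ shows continuity of $1-P$, and hence of $A$, on $\checkH(D)$.

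For Fredholmness of $A$ on $\checkH(D)$, let $B\in\Psi^{\pmb 0}_{\rm cl}$ be a Douglis-Nirenberg parametrix of $A$, so that $R_1:=BA-I$ and $R_2:=AB-I$ are smoothing. Since $[A,P_\mathcal{C}]=[P,P_\mathcal{C}]\in\Psi^{\pmb{-m}}_{\rm cl}$, the identity
\[
[B,P_\mathcal{C}]A=[BA,P_\mathcal{C}]-B[A,P_\mathcal{C}]=[R_1,P_\mathcal{C}]-B[A,P_\mathcal{C}]
\]
shows that $[B,P_\mathcal{C}]A\in\Psi^{\pmb{-m}}_{\rm cl}$; multiplying on the right by $B$ and using $AB=I+R_2$, then absorbing the smoothing remainder, gives $[B,P_\mathcal{C}]\in\Psi^{\pmb{-m}}_{\rm cl}$. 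The same identity as for $P$ then yields $(1-P_\mathcal{C})B P_\mathcal{C}=(1-P_\mathcal{C})[B,P_\mathcal{C}]\in\Psi^{\pmb{-m}}_{\rm cl}$, so $B$ is bounded on $\checkH(D)$ by Lemma~\ref{boundednessofpseudodosnnchck}. The remainders $R_1,R_2$ are compact on $\checkH(D)$ because any smoothing operator factors as the continuous inclusion $\checkH(D)\hookrightarrow\SobHH{{-\frac12}}(\Sigma;E\otimes \C^m)$ followed by a continuous map into $\SobHH{{m+\frac12}}(\Sigma;E\otimes \C^m)$, which embeds compactly into $\SobHH{{m-\frac12}}(\Sigma;E\otimes \C^m)\hookrightarrow\checkH(D)$ by Rellich. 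Hence $A$ is Fredholm on $\checkH(D)$.

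With both hypotheses of Theorem~\ref{thm:bodunarodoad} verified, that theorem gives boundary decomposability of $P$ together with the asserted norm equivalence. The principal obstacle is propagating the commutator condition $[P_\mathcal{C},P]\in\Psi^{\pmb{-m}}_{\rm cl}$ from $P$ to the parametrix $B$ of $A$; without it, Lemma~\ref{boundednessofpseudodosnnchck} cannot be used to deduce boundedness of $B$ on $\checkH(D)$ and hence Fredholmness of $A$ on $\checkH(D)$. Once that step is in place, the rest of the argument is a mechanical application of the main theorem.
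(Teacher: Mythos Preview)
Your proposal is correct and follows essentially the same approach as the paper: you verify the two hypotheses of Theorem~\ref{thm:bodunarodoad} by using Lemma~\ref{boundednessofpseudodosnnchck} for boundedness on $\checkH(D)$ and by showing the parametrix of $A$ also has commutator with $P_\mathcal{C}$ in $\Psi^{\pmb{-m}}_{\rm cl}$. The only cosmetic difference is that the paper computes $[P_\mathcal{C},R]=-R[P_\mathcal{C},P]R+\Psi^{-\infty}$ in the formal symbol algebra $\Psi^{\pmb 0}_{\rm cl}/\Psi^{-\infty}$, whereas you obtain the same conclusion via the operator identity $[B,P_\mathcal{C}]A=[BA,P_\mathcal{C}]-B[A,P_\mathcal{C}]$ followed by right multiplication by $B$.
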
 

\begin{proof}
Since $(1-P_\mathcal{C})PP_\mathcal{C}=-[P_\mathcal{C},P]P_\mathcal{C}\in \Psi^{\pmb{-m}}_{\rm cl}(\Sigma; E\otimes \C^m)$, the projection $P$ extends to a continuous projection on $\checkH(D)$ by Lemma \ref{boundednessofpseudodosnnchck}. Therefore $P$ defines a continuous projection on $\SobHH{{m-{\frac12}}}(\Sigma;E\otimes \C^m)$, $\SobHH{{-{\frac12}}}(\Sigma;E\otimes \C^m)$ and $\checkH(D)$. Since $A:=P_\mathcal{C}-(1-P)$ is elliptic, it defines a Fredholm operator on $\SobHH{{m-{\frac12}}}(\Sigma;E\otimes \C^m)$. 

Let us now prove that $A$ extends to a Fredholm operator on $\checkH(D)$. Let $R$ denote a parametrix of $A$. We proceed by computing in the formal symbol algebra $\Psi^{\pmb 0}_{\rm cl}(\Sigma; E\otimes \C^m)/\Psi^{-\infty}(\Sigma; E\otimes \C^m)$. For a zeroth order pseudodifferential operator $T$ we write $\lfloor T\rfloor$ for its associated full symbol. In this notation, $\lfloor R\rfloor=\lfloor A\rfloor^{-1}$. We compute that 
\begin{align*}
\lfloor [P_\mathcal{C},R]\rfloor=&[\lfloor P_\mathcal{C}\rfloor,\lfloor A\rfloor^{-1}]=-\lfloor A\rfloor^{-1}[\lfloor P_\mathcal{C}\rfloor,\lfloor A\rfloor]\lfloor A\rfloor^{-1}=\\
=&-\lfloor A\rfloor^{-1}[\lfloor P_\mathcal{C}\rfloor,\lfloor P\rfloor]\lfloor A\rfloor^{-1}=-\lfloor R[P_\mathcal{C},P]R\rfloor.
\end{align*}
Therefore it holds that $[P_\mathcal{C},R]=-R[P_\mathcal{C},P]R+\Psi^{-\infty}(\Sigma;E\otimes \C^m)$. Since $[P_\mathcal{C},P]\in \Psi^{\pmb{-m}}_{\rm cl}(\Sigma; E\otimes \C^m)$, we conclude that $(1-P_\mathcal{C})RP_\mathcal{C}=-[P_\mathcal{C},R]P_\mathcal{C}\in \Psi^{\pmb{-m}}_{\rm cl}(\Sigma; E\otimes \C^m)$ so $R$ acts boundedly on $\checkH(D)$. Therefore, density and the fact that smoothing operators act compactly on $\checkH(D)$ implies that $A$ extends to a Fredholm operator on $\checkH(D)$.

The corollary now follows from Theorem \ref{thm:bodunarodoad}.
\end{proof}

Below in Theorem \ref{equivocndndforpseudolocal} we shall see that under assumptions on $P$ similar to those in Corollary \ref{pseudostahtdecompose}, $B_P:=(1-P)\SobHH{{m-{\frac12}}}(\Sigma;E\otimes \C^m)$ is a regular boundary condition. For the precise relations between the assumptions in Theorem \ref{equivocndndforpseudolocal} and Corollary \ref{pseudostahtdecompose}, see the formulation of Theorem \ref{elliptidcocoddn}.

\begin{example}
\label{firstorderbounddecom}
Let us briefly comment on the case of a first order elliptic differential operator $D$. In this case, Corollary \ref{pseudostahtdecompose} shows that any $P\in \Psi^0(\Sigma;E)$ with $P$ and $P_\mathcal{C}$ commuting at principal symbol level (as elements of $C^\infty(S^*M;\End(\pi^*E))$) and with $p_+(D)-(1-\sigma_0(P))\in C^\infty(S^*M;\mathrm{Aut}(\pi^*E))$, defines a boundary decomposing projection. Below in Lemma \ref{specprojvscalderon}, we prove that $p_+(D)$ coincides with the principal symbol of the positive spectral projection $\chi^+(A)$ for an adapted boundary operator. In particular, $P:=\chi^+(A)$ defines a boundary decomposing projection by Corollary \ref{pseudostahtdecompose}. This proves the claimed equality $\checkH(D)=\checkH_A(D)$ from Subsection \ref{subsec:ellfirstorderfirst}, thus reconciling the topological considerations of $\checkH(D)$ from Subsection \ref{subsec:seeleyoncalderon} with that of \cite{BBan} in the first order case.
\end{example}

\begin{example}
\label{ex:dirichletandneumann}
Consider the case of a second order elliptic differential operator $D$ acting on a vector bundle $E$ with scalar positive principal symbol (e.g. a Laplace type operator on a vector bundle). In this case, it is readily verified from solving Equation \eqref{boundaryeq} that 
$$\sigma_{\pmb 0}(P_\mathcal{C})(x',\xi')=
\frac{1}{2}
\begin{pmatrix}
1_E& |\xi'|^{-1}1_E\\
|\xi'|1_E& 1_E\end{pmatrix},$$
for the Riemannian metric on $\Sigma$ induced from the Riemannian metric that $\sigma_2(D)$ defines on $M$. We can consider the projectors defining Dirichlet or Neumann conditions (from the rule $B_P:=(1-P)\SobHH{m-{\frac12}}(\Sigma;E\otimes \C^2)$ or the explicit condition $P\gamma u=0$) are given by 
$$P_D:=\begin{pmatrix}1&0\\0&0\end{pmatrix}\quad\mbox{and}\quad P_N:=\begin{pmatrix}0&0\\0&1\end{pmatrix}.$$
We then have that 
$$[P_D,P_\mathcal{C}]=\frac{1}{2}
\begin{pmatrix}
0& |\xi'|^{-1}1_E\\
-|\xi'|1_E& 0\end{pmatrix} 
\quad\mbox{and}\quad 
[P_N,P_\mathcal{C}]=\frac{1}{2}
\begin{pmatrix}
0& -|\xi'|^{-1}1_E\\
|\xi'|1_E&0 \end{pmatrix}.$$
Therefore, $(1-P_\mathcal{C})P_NP_\mathcal{C}, (1-P_\mathcal{C})P_DP_\mathcal{C}\in \Psi^{\pmb{0}}_{\rm cl}(\Sigma; E\otimes \C^2)$ with non-vanishing principal symbol. By Lemma \ref{boundednessofpseudodosnnchck}, we conclude that neither $P_D$ nor $P_N$ act boundedly on $\checkH(D)$. In particular, these projectors are not boundary decomposing. A more appropriate projector is considered in Subsection \ref{highregofdiricl}.
\end{example}

\section{The Cauchy data space of an elliptic differential operator}
\label{sec:cuchsosa}

In this section we will refine our analysis of the Cauchy data space and boundary conditions further. 

\subsection{Constructing regular pseudodifferential boundary conditions}
\label{subsec:cuchsosalocal}

Theorem \ref{refinedseeley} implies that we can describe the Cauchy data space $\checkH(D)$ in terms of the Calderón projection. Unfortunately, the construction of the Calderón projection requires knowledge from the interior of $M$ and not just symbolic information of $D$ at the boundary. In this subsection, we give a construction of projectors from the Douglis-Nirenberg calculus that rely only on the germ of $D$ at the boundary yet still describes the Cauchy data space as in Corollary \ref{decomposingcheckspacewithcalderon}. The considerations of this subsection implies that many structural properties of $\checkH(D)$ extends to the case of noncompact $M$ with compact boundary. Recall the definition of a boundary decomposing projection from Definition \ref{boundafefidodo}.

\begin{lemma}
\label{approximcladldlad}
Let $D$ be an elliptic differential operator of order $m>0$. Then the following scheme produces a boundary decomposing pseudodifferential projection $P\in \Psi^{\pmb 0}_{\rm cl}(\Sigma; E\otimes \C^m)$ with $P-P_\mathcal{C}\in \Psi^{-\pmb m}_{\rm cl}(\Sigma; E\otimes \C^m)$.
\begin{enumerate}
\item Choose $P_1\in \Psi^{\pmb 0}_{\rm cl}(\Sigma; E\otimes \C^m)$ with $\sigma_{\pmb 0}(P_1)=p_+(D)$.
\item Construct $P_2\in \Psi^{\pmb 0}_{\rm cl}(\Sigma; E\otimes \C^m)$ by choosing a $T_2\in \Psi^{-\pmb 1}_{\rm cl}(\Sigma; E\otimes \C^m)$ with $\sigma_{-\pmb 1}(T_2)=\sigma_{-\pmb 1}(P_1-P_\mathcal{C})$ and set $P_2:=P_1-T_2$.
\item Proceed iteratively and define $P_{k+1}\in \Psi^{\pmb 0}_{\rm cl}(\Sigma; E\otimes \C^m)$ from $P_k$ by choosing a $T_{k+1}\in \Psi^{-\pmb {k}}_{\rm cl}(\Sigma; E\otimes \C^m)$ with $\sigma_{-\pmb{k}}(T_{k+1})=\sigma_{-\pmb{k}}(P_k-P_\mathcal{C})$ and set $P_{k+1}:=P_k-T_{k+1}$. Stop at $k=m$.
\item Choose a compact complex contour $\Gamma$ in the open right half-plane containing $1$ in its interior domain and not intersecting the spectrum of $P_m:\SobHH{{-\frac12}}(\Sigma;E\otimes \C^m)\to \SobHH{{-\frac12}}(\Sigma;E\otimes \C^m)$, and define $P$ as the Riesz projection
$$P:=\frac{1}{2\pi i}\int_\Gamma (\lambda-P_m)^{-1}\mathrm{d}\lambda.$$
\end{enumerate}
Moreover, $B_P:=(1-P)\SobHH{{m-\frac12}}(\Sigma;E\otimes \C^m)$ is a regular boundary condition for $D$ that characterises $\dom(D_{\rm max})$ via the following continuously split short exact sequence of Hilbert spaces
$$0\to \dom(D_{{\rm B}})\to \dom(D_{\rm max})\xrightarrow{P\circ \gamma} P\SobHH{{-\frac12}}(\Sigma;E\otimes \C^m)\to 0.$$

\end{lemma}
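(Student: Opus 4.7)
The plan is threefold: (i) verify by induction that $P_k - P_\mathcal{C} \in \Psi^{-\pmb k}_{\rm cl}$, hence $P_m - P_\mathcal{C} \in \Psi^{-\pmb m}_{\rm cl}$; (ii) pass from the approximate projection $P_m$ to a genuine pseudodifferential projection $P$ via the Riesz contour, while controlling the Douglis--Nirenberg order of $P-P_\mathcal{C}$; and (iii) deduce that $P$ is boundary decomposing and that $B_P$ is regular by appealing to Corollary \ref{pseudostahtdecompose} and Theorem \ref{elliptidcocoddn}, from which the split short exact sequence is essentially immediate.

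For (i), the base case $k=1$ uses $\sigma_{\pmb 0}(P_1) = p_+(D) = \sigma_{\pmb 0}(P_\mathcal{C})$ from Theorem \ref{Thm:CaldProj}, so $P_1 - P_\mathcal{C} \in \Psi^{-\pmb 1}_{\rm cl}$. Inductively, since $T_{k+1}$ is chosen with the same principal symbol as $P_k - P_\mathcal{C}$, the difference $P_{k+1} - P_\mathcal{C} = (P_k - P_\mathcal{C}) - T_{k+1}$ is of Douglis--Nirenberg order at most $-(k+1)$. Thus $P_m - P_\mathcal{C} \in \Psi^{-\pmb m}_{\rm cl}$, and in particular it is compact on $\SobHH{-\frac12}(\Sigma;E\otimes \C^m)$. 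Since $P_\mathcal{C}$ is a projection with spectrum $\{0,1\}$, the spectrum of $P_m$ on this space equals $\{0,1\}$ together with isolated eigenvalues of finite multiplicity that can accumulate only at $0$ and $1$; a compact contour $\Gamma$ around $1$ and disjoint from $\{0\}\cup\spec(P_m)$ therefore exists.

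For (ii), because $P_\mathcal{C}$ is itself a projection it coincides with its own Riesz projector about $1$, and the resolvent identity gives
\begin{equation*}
P - P_\mathcal{C} = \frac{1}{2\pi i}\int_\Gamma (\lambda-P_m)^{-1}(P_m-P_\mathcal{C})(\lambda-P_\mathcal{C})^{-1}\,\mathrm{d}\lambda.
\end{equation*}
The resolvent $(\lambda-P_\mathcal{C})^{-1} = \lambda^{-1}(I-P_\mathcal{C}) + (\lambda-1)^{-1}P_\mathcal{C}$ lies in $\Psi^{\pmb 0}_{\rm cl}$, and an asymptotic Neumann expansion of $(\lambda-P_m)^{-1}$ in powers of $(\lambda-P_\mathcal{C})^{-1}(P_m-P_\mathcal{C})$ (successive terms of order $-m,-2m,\ldots$) exhibits $(\lambda-P_m)^{-1}$ as an element of $\Psi^{\pmb 0}_{\rm cl}$ depending smoothly on $\lambda\in\Gamma$. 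The integrand is then a classical Douglis--Nirenberg operator of order $-\pmb m$ depending smoothly on $\lambda$, and integration along the compact contour preserves the order, yielding $P - P_\mathcal{C} \in \Psi^{-\pmb m}_{\rm cl}$ and $P \in \Psi^{\pmb 0}_{\rm cl}$ with $\sigma_{\pmb 0}(P) = p_+(D)$.

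For (iii), the hypotheses of Corollary \ref{pseudostahtdecompose} hold: $[P_\mathcal{C},P] = [P_\mathcal{C}, P-P_\mathcal{C}] \in \Psi^{-\pmb m}_{\rm cl}$, and $A := P_\mathcal{C}-(1-P) = P+P_\mathcal{C}-I$ has principal symbol $2p_+(D)-I$ with eigenvalues $\pm 1$, hence is elliptic in the Douglis--Nirenberg sense. Corollary \ref{pseudostahtdecompose} then gives that $P$ is boundary decomposing, and regularity of $B_P$ follows from the equivalence of items (b) and (c) in Theorem \ref{elliptidcocoddn}. For the short exact sequence, the identification $\ker(P\comp\gamma)\cap\dom(D_{\max}) = \gamma^{-1}((1-P)\SobHH{m-\frac12}(\Sigma;E\otimes \C^m)) = \dom(D_{B_P})$ uses Proposition \ref{prop:bcimpliessppsls} to rewrite $(1-P)\checkH(D) = (1-P)\SobHH{m-\frac12}(\Sigma;E\otimes \C^m)$; surjectivity onto $P\SobHH{-\frac12}(\Sigma;E\otimes \C^m) = P\checkH(D)$ uses the same proposition together with surjectivity of $\gamma:\dom(D_{\max})\to\checkH(D)$; and a continuous splitting is obtained by composing the inclusion $P\SobHH{-\frac12}\hookrightarrow\checkH(D)$ (continuous by the boundary decomposing estimate) with any continuous section of $\gamma$, for example the Poisson operator of Theorem \ref{refinedseeley}. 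The main obstacle is step (ii): one must justify that the asymptotic symbol expansion and the contour integral commute in the appropriate sense, so that $P-P_\mathcal{C}$ is genuinely of the claimed Douglis--Nirenberg order rather than merely a bounded operator between Sobolev scales.
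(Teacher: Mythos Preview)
Your proof is correct and follows essentially the same approach as the paper: the inductive step giving $P_m-P_\mathcal{C}\in\Psi^{-\pmb m}_{\rm cl}$, the resolvent identity yielding $P-P_\mathcal{C}\in\Psi^{-\pmb m}_{\rm cl}$, and the appeal to Corollary~\ref{pseudostahtdecompose} for the boundary decomposing property are all identical to the paper's argument, which likewise postpones regularity of $B_P$ to Theorem~\ref{thm:sufficienfeidntofell} (the content behind the equivalence you cite from Theorem~\ref{elliptidcocoddn}). Your flagged ``obstacle'' in step (ii) dissolves once you observe directly that $\lambda-P_m$ is Douglis--Nirenberg elliptic (its principal symbol $\lambda-p_+(D)$ has eigenvalues $\lambda,\lambda-1\neq 0$ for $\lambda\in\Gamma$) and invertible, hence $(\lambda-P_m)^{-1}\in\Psi^{\pmb 0}_{\rm cl}$ without recourse to a Neumann expansion; the integrand is then a smooth compact family in $\Psi^{-\pmb m}_{\rm cl}$ and the integral manifestly stays in that class.
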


\begin{proof}
We need to verify that the stated scheme is possible to carry out and that it produces a boundary decomposing pseudodifferential projection $P\in \Psi^{\pmb 0}_{\rm cl}(\Sigma; E\otimes \C^m)$ that defines a regular boundary condition. For the proof, it is beneficial to bear in mind that if $e$ is an idempotent in a unital algebra, then the spectrum of $e$ is $\{0,1\}$ and for $\lambda\neq 0,1$ we have that 
$$(\lambda-e)^{-1}=\lambda^{-1}(1-e)+(\lambda-1)^{-1}e.$$
In particular, if $e$ is an idempotent in a unital algebra its Riesz projectors are well defined and that onto the component $\{1\}$ coincides with $e$.

Surjectivity of the symbol mappings ensures that the construction of $P_m\in \Psi^{\pmb 0}_{\rm cl}(\Sigma; E\otimes \C^m)$ is possible. Indeed, $P_m$ will in local charts have a full symbol that up to degree $-m$ is determined by the formula \eqref{formulaforaprprood}. We note that the spectrum of $P_m:\SobHH{{s}}(\Sigma;E\otimes \C^m)\to \SobHH{{s}}(\Sigma;E\otimes \C^m)$ is by elliptic regularity independent of $s\in \R$. Since $\sigma_{\pmb 0}(P_m)$ by construction is a projection, the essential spectrum of $P_m:\SobHH{{s}}(\Sigma;E\otimes \C^m)\to \SobHH{{s}}(\Sigma;E\otimes \C^m)$ is precisely the set $\{0,1\}\subseteq \C$. This ensures the existence of the complex contour $\Gamma$. By construction, $P$ will be a projection and belong to $\Psi^{\pmb 0}_{\rm cl}(\Sigma; E\otimes \C^m)$ (since $\Gamma$ is compact). 

To prove that $P-P_\mathcal{C}\in \Psi^{-\pmb m}_{\rm cl}(\Sigma; E\otimes \C^m)$ it suffices to prove that $(\lambda-P_m)^{-1}-(\lambda-P_\mathcal{C})^{-1}\in \Psi^{-\pmb m}_{\rm cl}(\Sigma; E\otimes \C^m)$ for $\lambda\in \Gamma$. By construction, $\sigma_{-\pmb{m-1}}(P_m-P_\mathcal{C})=0$, so $P_m-P_\mathcal{C}\in \Psi^{-\pmb m}_{\rm cl}(\Sigma; E\otimes \C^m)$ and $(\lambda-P_m)^{-1}-(\lambda-P_\mathcal{C})^{-1}\in \Psi^{-\pmb m}_{\rm cl}(\Sigma; E\otimes \C^m)$ follows from the identity 
$$(\lambda-P_m)^{-1}-(\lambda-P_\mathcal{C})^{-1}=(\lambda-P_m)^{-1}(P_\mathcal{C}-P_m)(\lambda-P_\mathcal{C})^{-1}.$$
Since $P-P_\mathcal{C}\in \Psi^{-\pmb m}_{\rm cl}(\Sigma; E\otimes \C^m)$, $P$ is boundary decomposing because of Corollary \ref{pseudostahtdecompose}. The proof that $P$ defines a regular boundary condition is postponed until Theorem \ref{thm:sufficienfeidntofell} below. 
\end{proof}

\begin{remark}
The reader should be aware that the condition $P-P_\mathcal{C}\in \Psi^{-\pmb m}_{\rm cl}(\Sigma; E\otimes \C^m)$ is far from sufficient to guarantee that $P-P_\Ca$ is compact on $\checkH(D)$. This happens despite $P-P_\Ca$ being compact on $\SobHH{ s}(\Sigma;E\otimes \C^m)$ for all $s\in \R$ (it even has singular values behaving like $O(k^{-m/\dim(M)})$ as $k\to \infty$). We give an example thereof in Proposition \ref{compactnessfailure} below in order $m=1$.

We also note that the condition $P-P_\mathcal{C}\in \Psi^{-\pmb m}_{\rm cl}(\Sigma; E\otimes \C^m)$ is far from necessary for $P$ to be boundary decomposing (cf. Corollary \ref{pseudostahtdecompose}) or even for $B_P:=(1-P)\SobHH{{m-\frac12}}(\Sigma;E\otimes \C^m)$ to be a regular boundary condition for $D$ (cf. Theorem \ref{thm:sufficienfeidntofell} below). 
\end{remark}

\begin{proposition}[Localizability of $\checkH(D)$]
\label{localizingcheckspace}
Let $D$ be an elliptic differential operator of order $m>0$ on a manifold with boundary $M$. Pick a point $x_0\in \Sigma$ and coordinates $\kappa:U\to U_0\subseteq M$ around $x_0$. Let $D_0$ be an elliptic differential operator of order $m$ on $U$ such that $D_0$ and $\kappa^*D$ coincide on $\kappa^{-1}(U_0\cap\Sigma)=U\cap \partial \R^n_+$ to order $m$. Let $Q_U\in \Psi^{\pmb 0}_{\rm cl}(U;\kappa^*E)$ differ from an approximate Calderón projection for $D_0$ by a term in $ \Psi^{\pmb{-m}}_{\rm cl}(U;\kappa^*E)$. 

Then for any $u\in \mathcal{D}'(\Sigma;E)$, it holds that $\chi_0 u\in \checkH(D)$ for all $\chi_0\in \Ck[c]{\infty}(U_0)$ if and only if $(1-Q_U)[\chi \kappa^*u]\in \SobHH[\rm loc]{{m-{\frac12}}}(U;\kappa^*E\otimes \C^m)$ and $Q_U[\chi \kappa^*u]\in \SobHH[\rm loc]{{-{\frac12}}}(U;\kappa^*E\otimes \C^m)$ for all $\chi\in \Ck[c]{\infty}(U)$.
\end{proposition}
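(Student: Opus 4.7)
The plan is to show that $Q_U$ can stand in locally for the Calderón projection $P_\mathcal{C}$ of $D$ (from Theorem~\ref{refinedseeley}) when testing membership in $\checkH(D)$. I would first prove a local symbolic comparison between $Q_U$ and $\kappa^* P_\mathcal{C}$, and then use it to transfer regularity statements between the global and local pictures.

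\textbf{Symbolic comparison.} First I would establish that, after identification via $\kappa$, for any $\phi_1,\phi_2\in\Ck[c]{\infty}(U)$ one has
\begin{equation*}
\phi_1\bigl(Q_U-\kappa^* P_\mathcal{C}\bigr)\phi_2\in \Psi^{-\pmb m}_{\rm cl}(U\cap\partial \R^n_+;\kappa^* E\otimes \C^m),
\end{equation*}
where $\kappa^* P_\mathcal{C}$ denotes the Calderón projection pulled back into the chart. By hypothesis $Q_U-P_0\in \Psi^{-\pmb m}_{\rm cl}$ for some approximate Calderón projection $P_0$ of $D_0$; and since $D_0$ and $\kappa^* D$ coincide on $U\cap \partial \R^n_+$ to order $m$, inspection of formula \eqref{formulaforaprprood} in Theorem~\ref{horsats} (together with the residue computation in Remark~\ref{remarkonredisuydd}) shows that the full symbols of approximate Calderón projections for $D_0$ and $\kappa^* D$ agree modulo $\Psi^{-\pmb m}_{\rm cl}$, so that $\phi_1(P_0-\kappa^* P_\mathcal{C})\phi_2 \in \Psi^{-\pmb m}_{\rm cl}$ as well. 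In particular, this difference maps $\SobHH{-{\frac12}}$-distributions with compact support in $U$ into $\SobHH[\rm loc]{m-{\frac12}}(U;\kappa^* E\otimes \C^m)$.

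\textbf{Forward direction.} Assume $\chi_0 u\in \checkH(D)$ for all $\chi_0\in \Ck[c]{\infty}(U_0)$ and fix $\chi\in \Ck[c]{\infty}(U)$. Choose $\chi_0\in \Ck[c]{\infty}(U_0)$ with $\chi_0\circ\kappa\equiv 1$ on $\mathrm{supp}\,\chi$ and $v\in \dom(D_{\max})$ with $\gamma v=\chi_0 u$. Extend $\chi$ to $\tilde\chi\in\Ck{\infty}(M)$ so that $\tilde\chi$ depends only on the tangential variables in the collar, i.e.\ $\partial_{x_n}^k \tilde\chi|_\Sigma=0$ for all $k\geq 1$. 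By the standard invariance of $\dom(D_{\max})$ under multiplication by such boundary-flat smooth multipliers, $\tilde\chi v\in \dom(D_{\max})$, and the Leibniz rule gives $\gamma(\tilde\chi v)=\chi\gamma v=\chi u$, so that $\chi u\in \checkH(D)$. Theorem~\ref{refinedseeley} then yields $\chi u\in \SobHH{-{\frac12}}(\Sigma;E\otimes\C^m)$ and $(1-P_\mathcal{C})(\chi u)\in \SobHH{m-{\frac12}}(\Sigma;E\otimes\C^m)$. Choosing $\phi_2\in \Ck[c]{\infty}(U)$ equal to $1$ on $\mathrm{supp}\,\chi$ and applying the symbolic comparison, one deduces $(1-Q_U)(\chi\kappa^* u)\in \SobHH[\rm loc]{m-{\frac12}}(U;\kappa^* E\otimes \C^m)$; the membership $Q_U(\chi\kappa^* u)\in \SobHH[\rm loc]{-{\frac12}}(U;\kappa^* E\otimes \C^m)$ follows directly since $Q_U$ is of order $\pmb 0$ in the Douglis-Nirenberg calculus.

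\textbf{Converse.} Fix $\chi_0\in \Ck[c]{\infty}(U_0)$ and apply the local hypothesis with $\chi:=\kappa^* \chi_0$, so $\chi\kappa^* u=\kappa^*(\chi_0 u)$. Summing the two regularity statements yields $\kappa^*(\chi_0 u)\in \SobHH[\rm loc]{-{\frac12}}(U;\kappa^* E\otimes \C^m)$, and compact support of $\chi_0 u$ then gives $\chi_0 u\in \SobHH{-{\frac12}}(\Sigma;E\otimes\C^m)$. For the $(1-P_\mathcal{C})$ statement, write on $U$
\begin{equation*}
(1-P_\mathcal{C})(\chi_0 u)=(1-Q_U)(\chi_0 u)+(Q_U-\kappa^* P_\mathcal{C})(\chi_0 u);
\end{equation*}
the first term lies in $\SobHH[\rm loc]{m-{\frac12}}(U;\kappa^* E\otimes \C^m)$ by hypothesis and the second by the symbolic comparison. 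Away from $\mathrm{supp}\,\chi_0$, pseudolocality of $P_\mathcal{C}$ makes $(1-P_\mathcal{C})(\chi_0 u)=-P_\mathcal{C}(\chi_0 u)$ smooth. A finite partition of unity on compact $\Sigma$ combines these local statements into $(1-P_\mathcal{C})(\chi_0 u)\in \SobHH{m-{\frac12}}(\Sigma;E\otimes\C^m)$, and Theorem~\ref{refinedseeley} gives $\chi_0 u\in \checkH(D)$.

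\textbf{Main obstacle.} The most delicate step is the symbolic comparison: converting the hypothesis that $D_0$ and $\kappa^* D$ coincide on $U\cap\partial\R^n_+$ to order $m$ into a $\Psi^{-\pmb m}_{\rm cl}$-closeness of associated approximate Calderón projections. This is a careful bookkeeping exercise with the full-symbol formula \eqref{formulaforaprprood}, tracking how normal derivatives of the coefficients of $D$ propagate through the Poisson-type construction into the Douglis-Nirenberg symbol hierarchy.
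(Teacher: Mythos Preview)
Your overall strategy---the symbolic comparison $\phi_1(Q_U-\kappa^*P_\mathcal{C})\phi_2\in\Psi^{-\pmb m}_{\rm cl}$ via Theorem~\ref{horsats}, followed by the characterisation of $\checkH(D)$ from Theorem~\ref{refinedseeley}---is exactly the paper's, and the symbolic comparison and the converse direction are handled correctly.

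There is, however, a genuine error in the forward direction. The assertion that $\dom(D_{\max})$ is preserved under multiplication by ``boundary-flat'' smooth functions is \emph{false} for $m\geq 2$. Writing $D(\tilde\chi v)=\tilde\chi Dv+[D,\tilde\chi]v$, the commutator $[D,\tilde\chi]$ is a differential operator of order $m-1$; even when $\tilde\chi$ is constant in $x_n$ near $\Sigma$, it still involves tangential derivatives of $v$ up to order $m-1$ near the boundary, and elements of $\dom(D_{\max})$ need not carry any tangential Sobolev regularity there. Concretely, for $D=\Delta$ on the unit disc and $v=\sum_{n\geq1}n^{-1}r^n e^{in\theta}\in\ker\Delta_{\max}$ one has $\partial_\theta v\notin\Lp{2}$ near $r=1$, so $\Delta(\tilde\chi v)\notin\Lp{2}$ for any nonconstant $\tilde\chi=\tilde\chi(\theta)$. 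The detour through $\dom(D_{\max})$ is in any case unnecessary: given $\chi\in\Ck[c]{\infty}(U)$, choose $\chi_0'\in\Ck[c]{\infty}(U_0)$ whose restriction to $\Sigma$ equals $(\chi|_{U\cap\partial\R^n_+})\circ\kappa^{-1}$; the hypothesis applied to $\chi_0'$ then directly gives that the pushforward of $\chi\kappa^*u$ lies in $\checkH(D)$, after which your use of Theorem~\ref{refinedseeley} and the symbolic comparison goes through exactly as you wrote. The paper's own proof stays entirely on the boundary in this way and never passes through $\dom(D_{\max})$.
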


\begin{proof}
It follows from Theorem \ref{horsats} that for any $\chi,\chi'\in \Ck[c]{\infty}(U)$, $\chi(\kappa^*P_\Ca-Q_U)\chi'\in \Psi^{-\pmb m}_{\rm cl}(U;\kappa^*E)$. Therefore, $(1-P_\Ca) \chi_0 u\in \SobHH{{m-\frac12}}(\Sigma;E\otimes \C^m)$ for all $\chi_0\in \Ck[c]{\infty}(U_0)$ if and only if $(1-Q_U)[\chi \kappa^*u]\in \SobHH[\rm loc]{{m-{\frac12}}}(U;\kappa^*E\otimes \C^m)$ for all $\chi\in \Ck[c]{\infty}(U)$ and $P_\Ca \chi_0 u\in \SobHH{{-\frac12}}(\Sigma;E\otimes \C^m)$ for all $\chi_0\in \Ck[c]{\infty}(U_0)$ if and only if $Q_U[\chi \kappa^*u]\in \SobHH[\rm loc]{{-{\frac12}}}(U;\kappa^*E\otimes \C^m)$ for all $\chi\in \Ck[c]{\infty}(U)$. The proposition follows.
\end{proof}

\subsection{The boundary pairing and Calderón projectors of the formal adjoint}
\label{bpandcp}

 In this subsection, we shall study the relation between a Calderón projection of $D$ and a Calderón projection of its formal adjoint $D^\dagger$. 
We apply this to the boundary pairing.  First, we shall revisit the operators $\tau$ and $\scalebox{1.5}{a}$ from Subsection \ref{subsec:preliminaryboundary}. The next proposition follows from Lemma \ref{somecontinuuisuforata} and the following observations.

For a matrix $T=(T_{i,j})_{i,j=0}^{m-1}$ of pseudo-differential operators on a closed manifold, we write $T^*=(T_{j,i}^*)_{i,j}$ for its $\Lp{2}$-adjoint and 
$$T^\#:=\tau T^*\tau=(T_{m-i-1,m-j-1}^*)_{i,j},$$
where $\tau$ is defined in \eqref{taudeffed}. 
A short computation shows that if $T\in \Psi^{\pmb \alpha}_{\rm cl}(\Sigma; E\otimes \C^m,F\otimes \C^m)$ then $T^{\#}\in \Psi^{\pmb \alpha}_{\rm cl}(\Sigma; F\otimes \C^m,E\otimes \C^m)$ and $T^{*}\in \Psi^{\pmb \alpha}_{\rm cl}(\Sigma; F\otimes \C^m_{\rm op},E\otimes \C^m_{\rm op})$.

\begin{proposition}
\label{returnofsomecontinuuisuforata}
The operators $\scalebox{1.5}{a}$, $\widetilde{\scalebox{1.5}{a}}$, and $\tau$ from Subsection \ref{subsec:preliminaryboundary} satisfy the following.
\begin{enumerate}[(i)] 
\item For any $s\in \R$, $\tau$ defines a unitary isomorphism 
$$\SobHH{ s}(\Sigma;F\otimes \C^m_{\rm op})\to \SobHH{{s+m-1}}(\Sigma;F\otimes \C^m),$$
and for any $\alpha\in \R$, conjugation by $\tau$ defines a product preserving isomorphism 
$$\Psi^{\pmb \alpha}_{\rm cl}(\Sigma; E\otimes \C^m,F\otimes \C^m)\to  \Psi^{\pmb \alpha}_{\rm cl}(\Sigma; E\otimes \C^m_{\rm op},F\otimes \C^m_{\rm op}).$$
\item For any $s\in \R$, $\widetilde{\scalebox{1.5}{a}}$ defines a Banach space isomorphism 
$$\SobHH{ s}(\Sigma;E\otimes \C^m)\to \SobHH{ s}(\Sigma;F\otimes \C^m),$$
and $\scalebox{1.5}{a}$ defines a Banach space isomorphism 
$$\SobHH{ s}(\Sigma;E\otimes \C^m)\to \SobHH{{s-m+1}}(\Sigma;F\otimes \C^m_{\rm op}),$$
and for any $\alpha\in \R$, conjugation by $\scalebox{1.5}{a}$ (i.e. the mapping $T\mapsto \scalebox{1.5}{a}T\scalebox{1.5}{a}^{-1}$) defines a product preserving isomorphism 
$$\Psi^{\pmb \alpha}_{\rm cl}(\Sigma; E\otimes \C^m)\to  \Psi^{\pmb \alpha}_{\rm cl}(\Sigma;F\otimes \C^m_{\rm op}).$$
\end{enumerate}
\end{proposition}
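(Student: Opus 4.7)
The plan is to separate the analysis of $\tau$ from that of $\widetilde{\scalebox{1.5}{a}}$: first establish (i) as a purely linear-algebraic fact about the index-reversing permutation $\tau$ and the sign-flipped gradings on $\C^m$ and $\C^m_{\rm op}$, and then obtain (ii) by composing the result for $\tau$ with Lemma \ref{somecontinuuisuforata}. No new analytic input is needed beyond that lemma; the remainder is bookkeeping in the Douglis-Nirenberg calculus.

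For (i), I would first check the Sobolev statement by a direct component count. Since $\tau$ reverses the order of the $m$ components and satisfies $\tau^2=I$, if $\xi\in\SobHH{s}(\Sigma;F\otimes\C^m_{\rm op})$ has $j$-th slot in $\SobH{s+j}(\Sigma;F)$, then the $i$-th slot of $\tau\xi$ equals $\xi_{m-1-i}$ and lies in $\SobH{s+m-1-i}(\Sigma;F)$, which is precisely the $i$-th slot of $\SobHH{s+m-1}(\Sigma;F\otimes\C^m)$; so $\tau$ is an isometric bijection. For the Douglis-Nirenberg assertion, a matrix $T=(T_{ik})$ lies in $\Psi^{\pmb\alpha}_{\rm cl}(\Sigma;E\otimes\C^m,F\otimes\C^m)$ exactly when $T_{ik}$ has classical order $\alpha+i-k$; the conjugate $\tau T\tau^{-1}$ has $(i,k)$-entry $T_{m-1-i,\,m-1-k}$ of order $\alpha+(m-1-i)-(m-1-k)=\alpha-i+k$, which is precisely the order required for membership in $\Psi^{\pmb\alpha}_{\rm cl}(\Sigma;E\otimes\C^m_{\rm op},F\otimes\C^m_{\rm op})$ under the negated gradings $s_j=-j$. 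Product preservation of $T\mapsto\tau T\tau^{-1}$ is automatic.

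For (ii), Lemma \ref{somecontinuuisuforata} displays $\widetilde{\scalebox{1.5}{a}}$ and $\widetilde{\scalebox{1.5}{a}}^{-1}$ as lower triangular matrices of differential operators with the invertible operator $A_0$ (resp. $A_0^{-1}$) on the diagonal and $(r,c)$-entry of order $r-c$ for $r\geq c$. This is exactly the data placing $\widetilde{\scalebox{1.5}{a}}\in\Psi^{\pmb 0}_{\rm cl}(\Sigma;E\otimes\C^m,F\otimes\C^m)$ and $\widetilde{\scalebox{1.5}{a}}^{-1}\in\Psi^{\pmb 0}_{\rm cl}(\Sigma;F\otimes\C^m,E\otimes\C^m)$, so $\widetilde{\scalebox{1.5}{a}}$ is a Banach isomorphism between the corresponding $\SobHH{s}$-spaces and conjugation by $\widetilde{\scalebox{1.5}{a}}$ yields a product-preserving algebra isomorphism $\Psi^{\pmb\alpha}_{\rm cl}(\Sigma;E\otimes\C^m)\to\Psi^{\pmb\alpha}_{\rm cl}(\Sigma;F\otimes\C^m)$. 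Writing $\scalebox{1.5}{a}=-i\tau\widetilde{\scalebox{1.5}{a}}$, the Sobolev statement for $\scalebox{1.5}{a}$ then follows by composing with the $\tau$-isomorphism $\SobHH{s}(\Sigma;F\otimes\C^m)\to\SobHH{s-m+1}(\Sigma;F\otimes\C^m_{\rm op})$ obtained in (i), and similarly conjugation by $\scalebox{1.5}{a}$ factors as conjugation by $\widetilde{\scalebox{1.5}{a}}$ followed by conjugation by $\tau$.

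The only real obstacle is combinatorial: reconciling the index reversal produced by $\tau$ with the sign flip between the gradings on $\C^m$ and $\C^m_{\rm op}$, and confirming that the order formula in the Douglis-Nirenberg calculus remains consistent under both operations. Once this bookkeeping is laid down cleanly, every assertion in the proposition falls out from Lemma \ref{somecontinuuisuforata} together with the standard functoriality of conjugation by invertible elements.
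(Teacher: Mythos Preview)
Your proposal is correct and follows essentially the same approach as the paper: the paper simply remarks that the proposition follows from Lemma~\ref{somecontinuuisuforata} together with the observation that $T\mapsto \tau T\tau$ interchanges the gradings on $\C^m$ and $\C^m_{\rm op}$, while you spell out explicitly the component/order bookkeeping underlying that remark. There is nothing missing.
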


\begin{proposition}
\label{someexresiosnaoadwatwo}
Let $D$ be an elliptic differential operator of order $m>0$. Let $\widetilde{\scalebox{1.5}{a}}$ and $\widetilde{\scalebox{1.5}{a}}_\dagger$ denote the matrices of differential operators constructed from $D$ and $D^\dagger$, respectively, as in Proposition \ref{proponboundarypairingforell}. It holds that $\widetilde{\scalebox{1.5}{a}}\in \Psi^{\pmb 0}_{\rm cl}(\Sigma; E\otimes \C^m,F\otimes \C^m)$ and $\widetilde{\scalebox{1.5}{a}}_\dagger\in \Psi^{\pmb 0}_{\rm cl}(\Sigma; F\otimes \C^m,E\otimes \C^m)$ are elliptic and invertible in the Douglis-Nirenberg calculus and satisfy 
$$\widetilde{\scalebox{1.5}{a}}^\#=\widetilde{\scalebox{1.5}{a}}_\dagger.$$
\end{proposition}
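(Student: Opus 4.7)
The plan is to dispatch the three assertions—membership in the Douglis-Nirenberg class $\Psi^{\pmb 0}_{\rm cl}$, ellipticity/invertibility in that calculus, and the identity $\widetilde{\scalebox{1.5}{a}}^{\#}=\widetilde{\scalebox{1.5}{a}}_\dagger$—in that order. The first two are essentially bookkeeping on the order of entries in the explicit triangular matrix from Proposition \ref{proponboundarypairingforell}, and the third will be a short manipulation leveraging Lemma \ref{someexresiosnaoadwa}.

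For the Douglis-Nirenberg membership, I would first recall that with the grading $(E\otimes \C^m)[s]=E$ for $s=0,\dots,m-1$ chosen in Subsection \ref{subsec:calderonproj}, an operator $(T_{i,j})_{i,j=0}^{m-1}$ lies in $\Psi^{\pmb 0}_{\rm cl}(\Sigma;E\otimes \C^m,F\otimes \C^m)$ precisely when the entry $T_{i,j}$ has classical order $i-j$. Reading off Proposition \ref{proponboundarypairingforell}, the $(i,j)$ entry of $\widetilde{\scalebox{1.5}{a}}$ equals $A_{i-j,\,m-j-1}$ when $i\geq j$, which is a differential operator of order $i-j$, and vanishes when $i<j$; hence $\widetilde{\scalebox{1.5}{a}}\in \Psi^{\pmb 0}_{\rm cl}(\Sigma;E\otimes \C^m,F\otimes \C^m)$. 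The same reading applied to $D^\dagger$ gives $\widetilde{\scalebox{1.5}{a}}_\dagger\in \Psi^{\pmb 0}_{\rm cl}(\Sigma;F\otimes \C^m,E\otimes \C^m)$.

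For ellipticity and invertibility, note that $\widetilde{\scalebox{1.5}{a}}$ is lower triangular with every diagonal entry equal to $A_0$, which by ellipticity of $D$ is a bundle isomorphism $E|_\Sigma\to F|_\Sigma$. Consequently its Douglis-Nirenberg principal symbol at every $(x',\xi')\in S^*\Sigma$ is lower triangular with $A_0(x')$ on the diagonal, hence invertible, so $\widetilde{\scalebox{1.5}{a}}$ is elliptic. Lemma \ref{somecontinuuisuforata}(ii) already provides the explicit inverse $\widetilde{\scalebox{1.5}{a}}^{-1}$ as a lower-triangular matrix of differential operators whose $(i,j)$ entry ($i\geq j$) is of order $i-j$; by the same criterion this inverse lies in $\Psi^{\pmb 0}_{\rm cl}(\Sigma;F\otimes \C^m,E\otimes \C^m)$, so $\widetilde{\scalebox{1.5}{a}}$ is invertible in the Douglis-Nirenberg calculus. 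The same reasoning handles $\widetilde{\scalebox{1.5}{a}}_\dagger$.

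For the final identity, the key observation is that $\tau$ from \eqref{taudeffed} is self-adjoint and satisfies $\tau^2=1$, so $\tau^*=\tau=\tau^{-1}$. Taking $\Lp{2}$-adjoints of $\scalebox{1.5}{a}=-i\tau\widetilde{\scalebox{1.5}{a}}$ yields
\begin{equation*}
\scalebox{1.5}{a}^*=i\,\widetilde{\scalebox{1.5}{a}}^*\tau,
\end{equation*}
while $\scalebox{1.5}{a}_\dagger=-i\tau\widetilde{\scalebox{1.5}{a}}_\dagger$ by definition. Substituting these into the identity $\scalebox{1.5}{a}^*+\scalebox{1.5}{a}_\dagger=0$ of Lemma \ref{someexresiosnaoadwa} gives $\widetilde{\scalebox{1.5}{a}}^*\tau=\tau\widetilde{\scalebox{1.5}{a}}_\dagger$, whence
\begin{equation*}
\widetilde{\scalebox{1.5}{a}}_\dagger=\tau\widetilde{\scalebox{1.5}{a}}^*\tau=\widetilde{\scalebox{1.5}{a}}^{\#}.
\end{equation*}
There is no real obstacle here; the only mild care needed is to keep straight the two adjoint conventions ($*$ versus $\#$) and to recognise that the relation $\scalebox{1.5}{a}^*+\scalebox{1.5}{a}_\dagger=0$ from Lemma \ref{someexresiosnaoadwa} already absorbs the full symmetry of the boundary pairing, so no further symbolic computation is required.
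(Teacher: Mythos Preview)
Your proof is correct and follows exactly the approach the paper intends: the paper's proof is the one-line statement that the proposition ``is a consequence of Proposition \ref{proponboundarypairingforell} and Lemma \ref{someexresiosnaoadwa},'' and you have simply unpacked those two references in the obvious way (with Lemma \ref{somecontinuuisuforata} invoked for the explicit inverse, which is itself a direct consequence of the triangular structure in Proposition \ref{proponboundarypairingforell}). One inconsequential slip: the $(i,j)$ entry of $\widetilde{\scalebox{1.5}{a}}$ is $A_{i-j,\,m-i}$ rather than $A_{i-j,\,m-j-1}$, but only the first subscript (the order) matters for your argument, so nothing is affected.
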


\begin{proof}
The proposition is a consequence of Proposition \ref{proponboundarypairingforell} and Lemma \ref{someexresiosnaoadwa}.
\end{proof}

The following theorem refines Proposition \ref{pplussandpminussdagger} to the operator level.

\begin{theorem}
\label{calderonforadjoint}
Let $D$ be an elliptic differential operator of order $m>0$. We assume that $P_\Ca\in \Psi^{\pmb 0}_{\rm cl}(\Sigma;E\otimes \C^m)$ is a Calderón projection for $D$ and $P_{\Ca^\dagger}\in \Psi^{\pmb 0}_{\rm cl}(\Sigma;F\otimes \C^m)$ is a Calderón projection for $D^\dagger$. Then it holds that 
\begin{align*}
P_{\Ca^\dagger}=&\scalebox{1.5}{a}_\dagger^{-1}(1-P_\Ca^*)\scalebox{1.5}{a}_\dagger+\Psi^{-\infty}_{\rm cl}(\Sigma;F\otimes \C^m)=\\
=&[\scalebox{1.5}{a}(1-P_\Ca)\scalebox{1.5}{a}^{-1}]^*+\Psi^{-\infty}_{\rm cl}(\Sigma;F\otimes \C^m).
\end{align*}
\end{theorem}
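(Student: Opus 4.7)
The plan is to establish that the two right-hand side expressions define a common operator $Q$, and then to show the full symbol of $Q$ matches that of the approximate Calderón projection from Theorem \ref{horsats} applied to $D^\dagger$. Since the Calderón projection $P_{\Ca^\dagger}$ shares this same full symbol by Theorem \ref{Thm:CaldProj}, this would force $Q \equiv P_{\Ca^\dagger} \pmod{\Psi^{-\infty}}$.

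The equality of the two stated forms is immediate from Lemma \ref{someexresiosnaoadwa}, which gives $\scalebox{1.5}{a}_\dagger = -\scalebox{1.5}{a}^*$, so
$$\scalebox{1.5}{a}_\dagger^{-1}(1-P_\Ca^*)\scalebox{1.5}{a}_\dagger = (\scalebox{1.5}{a}^*)^{-1}(1-P_\Ca^*)\scalebox{1.5}{a}^* = \bigl[\scalebox{1.5}{a}(1-P_\Ca)\scalebox{1.5}{a}^{-1}\bigr]^* =: Q.$$
That $Q \in \Psi^{\pmb 0}_{\rm cl}(\Sigma; F\otimes \C^m)$ follows from Proposition \ref{returnofsomecontinuuisuforata}, and $Q^2 = Q$ since $P_\Ca^2 = P_\Ca$ implies $\scalebox{1.5}{a}(1-P_\Ca)\scalebox{1.5}{a}^{-1}$ is a genuine projection. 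The principal symbol is
$$\sigma_{\pmb 0}(Q) = \bigl[\sigma_{\pmb 0}(\scalebox{1.5}{a})\,p_-(D)\,\sigma_{\pmb 0}(\scalebox{1.5}{a})^{-1}\bigr]^* = p_+(D^\dagger)$$
by Proposition \ref{pplussandpminussdagger}, matching the principal symbol of $P_{\Ca^\dagger}$.

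For the first of the two defining properties of an approximate Calderón projection, the boundary pairing provides a clean argument. Take $v = \gamma u$ with $u \in \ker(D^\dagger_{\max})$. For any smooth test section $\xi \in \Ck{\infty}(\Sigma; E\otimes \C^m)$, the section $P_\Ca \xi$ is a smooth boundary value of an element of $\ker(D_{\max})$ by Theorem \ref{refinedseeley}, so the defining integration-by-parts identity yields $\omega_D(v, P_\Ca \xi) = 0$. Expanding via Proposition \ref{proponboundarypairingforell} gives $\langle v, \scalebox{1.5}{a} P_\Ca \xi\rangle = 0$ for every such $\xi$; this forces $P_\Ca^* \scalebox{1.5}{a}^* v = 0$ as a distribution, so that $(1-Q)v = (\scalebox{1.5}{a}^*)^{-1} P_\Ca^* \scalebox{1.5}{a}^* v = 0$, and in particular $v - Qv \in \Ck{\infty}(\Sigma; F\otimes \C^m)$.

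The main obstacle is the second defining property: that every $Q\eta$ arises as $\gamma u$ for some $u \in \ker(D^\dagger_{\max}) + \Ck{\infty}(M; F)$. One viable approach is to exploit the symmetry in Seeley's construction. By the remark following Theorem \ref{refinedseeley}, modulo finite rank $1-P_\Ca$ is the Calderón projection for $D$ on the complement manifold $\hat M \setminus \interior M$; repeating the boundary pairing argument of the previous paragraph in that setting yields that $\ker Q$ contains the Hardy space of $D^\dagger$ on the complement. Combined with the near-decomposition $\Ca_{D^\dagger} + \Ca^c_{D^\dagger} = \checkH(D^\dagger)$ (the analog of Corollary \ref{decomposingcheckspacewithcalderon} applied to $D^\dagger$), this pins down the range of $Q$ modulo smoothing. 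An alternative is a direct residue-calculus verification that the full symbol of $Q$ coincides termwise with Hörmander's formula \eqref{formulaforaprprood} applied to $D^\dagger$, extending the principal-symbol identity of Proposition \ref{pplussandpminussdagger} to all orders. Either way, the technical heart of the argument lies in rigorously matching the full symbol, or equivalently the range, of $Q$ with that of $P_{\Ca^\dagger}$.
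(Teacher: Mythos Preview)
Your approach differs substantially from the paper's. The paper does not try to verify the approximate Calder\'on axioms for $Q$; instead it invokes a boundary-pairing identity due to Grubb \cite[Chapter II, Lemma 1.2]{grubb68}: up to a smoothing operator,
\[
\omega_D(\xi,\eta)=\omega_D(P_{\Ca^\dagger}\xi,\eta)-\omega_D(\xi,P_\Ca \eta)
\]
for all $\xi\in\checkH(D^\dagger)$, $\eta\in\checkH(D)$. Rewriting the left side via $\omega_{D^\dagger}(\eta,\xi)=\langle\eta,\scalebox{1.5}{a}_\dagger\xi\rangle$ and rearranging yields $\langle\scalebox{1.5}{a}_\dagger\xi,(1-P_\Ca)\eta\rangle=-\langle P_{\Ca^\dagger}\xi,\scalebox{1.5}{a}\eta\rangle$ up to smoothing, whence $(1-P_\Ca^*)\scalebox{1.5}{a}_\dagger=\scalebox{1.5}{a}_\dagger P_{\Ca^\dagger}$ modulo $\Psi^{-\infty}$ by Lemma \ref{someexresiosnaoadwa}. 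This is a one-step algebraic deduction once the Grubb identity is available, and it sidesteps entirely the issue you flag as the main obstacle.

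Your boundary-pairing computation for the first property is in fact one half of what underlies Grubb's identity (the exact vanishing of $\omega_D$ on $\Ca_{D^\dagger}\times\Ca_D$); the other half, needed for the range inclusion, is the near-vanishing on the complementary Hardy spaces, and this is exactly what the cited lemma packages. As written, your proposal is incomplete: the two routes you sketch for the second property are both plausible, but neither is carried out. The complement-manifold argument would require handling the finite-rank discrepancy between $1-P_\Ca$ and the complementary Calder\'on projection carefully, and the full-symbol residue computation would be a nontrivial extension of Proposition \ref{pplussandpminussdagger} to all orders. Either could be completed, but the paper's route via the external lemma is far more economical.
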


\begin{proof}
Before proving the main identity of the theorem, we remark that $\scalebox{1.5}{a}_\dagger^{-1}(1-P_\Ca^*)\scalebox{1.5}{a}_\dagger=[\scalebox{1.5}{a}(1-P_\Ca)\scalebox{1.5}{a}^{-1}]^*$ by Lemma \ref{someexresiosnaoadwa} and $\scalebox{1.5}{a}(1-P_\Ca)\scalebox{1.5}{a}^{-1}\in \Psi^{\pmb 0}_{\rm cl}(\Sigma;F\otimes \C^m_{\rm op})$ by Proposition \ref{returnofsomecontinuuisuforata}.

Let us proceed with the proof. It follows from \cite[Chapter II, Lemma 1.2]{grubb68} that, up to a smoothing operator, we have  
$$\omega_D(\xi,\eta)=\omega_D(P_{\Ca^\dagger}\xi,\eta)-\omega_D(\xi,P_\Ca \eta),$$
for $\xi\in \checkH(D^\dagger)$ and $\eta\in \checkH(D)$. By Proposition \ref{proponboundarypairingforell} and Lemma \ref{someexresiosnaoadwa}, this is equivalent to the equality (valid up to a smoothing operator)
$$\langle \scalebox{1.5}{a}_\dagger \xi,\eta\rangle_{\Lp{2}(\Sigma;E\otimes \C^m)}=\langle \scalebox{1.5}{a}_\dagger \xi,P_\Ca\eta\rangle_{\Lp{2}(\Sigma;E\otimes \C^m)}-\langle P_{\Ca^\dagger} \xi,\scalebox{1.5}{a}\eta\rangle_{\Lp{2}(\Sigma;F\otimes \C^m)}.$$
In particular, we have that 
$$\langle \scalebox{1.5}{a}_\dagger \xi,(1-P_\Ca)\eta\rangle_{\Lp{2}(\Sigma;E\otimes \C^m)}=-\langle P_{\Ca^\dagger} \xi,\scalebox{1.5}{a}\eta\rangle_{\Lp{2}(\Sigma;F\otimes \C^m)},$$
up to a smoothing operator. We deduce from Lemma \ref{someexresiosnaoadwa} that $(1-P_\Ca^*)\scalebox{1.5}{a}_\dagger=\scalebox{1.5}{a}_\dagger P_{\Ca^\dagger}$ up to a smoothing operator.
\end{proof}

\begin{remark}
The reader should compare Theorem \ref{calderonforadjoint} to Proposition \ref{describinglocalcndldldaadjoint} describing adjoints of boundary conditions defined from projectors.
\end{remark}

\begin{corollary}
\label{structureofa}
Let $D$ be an elliptic differential operator of order $m>0$. We assume that $P_\Ca\in \Psi^{\pmb 0}_{\rm cl}(\Sigma;E\otimes \C^m)$ is a Calderón projection for $D$ and $P_{\Ca^\dagger}\in \Psi^{\pmb 0}_{\rm cl}(\Sigma;F\otimes \C^m)$ is a Calderón projection for $D^\dagger$. Then $\scalebox{1.5}{a}$ extends to a continuous and invertible operator 
$$\scalebox{1.5}{a}:
\begin{matrix}
(1-P_\Ca)\SobHH{{m-{\frac12}}}(\Sigma; E\otimes \C^m)\\
\bigoplus\\
P_\Ca \SobHH{{-{\frac12}}}(\Sigma; E\otimes \C^m)
\end{matrix}
\longrightarrow
\begin{matrix}
(1-P_{\Ca^\dagger}^*)\SobHH{{{\frac12}-m}}(\Sigma; F\otimes \C^m_{\rm op})\\
\bigoplus\\
P_{\Ca^\dagger}^* \SobHH{{{\frac12}}}(\Sigma; F\otimes \C^m_{\rm op})
\end{matrix}.$$
Moreover, in this decomposition
$$\scalebox{1.5}{a}=\begin{pmatrix}
0& \scalebox{1.5}{a}^+\\
\scalebox{1.5}{a}^-& 0\end{pmatrix} +\Psi^{-\infty}(\Sigma; E\otimes \C^m,F\otimes \C^m),$$
where 
$$\scalebox{1.5}{a}^+=(1-P_{\Ca^\dagger}^*)\scalebox{1.5}{a}P_\Ca\quad \mbox{and}\quad \scalebox{1.5}{a}^-=P_{\Ca^\dagger}^*\scalebox{1.5}{a}(1-P_\Ca).$$
\end{corollary}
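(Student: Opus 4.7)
The plan is to combine three ingredients: the fact that $\scalebox{1.5}{a}$ is a Douglis--Nirenberg isomorphism between the relevant Sobolev scales (Proposition \ref{returnofsomecontinuuisuforata}), the Calderón identification of the Cauchy data space (Corollary \ref{decomposingcheckspacewithcalderon}), and the symbolic identity relating $P_\Ca$ and $P_{\Ca^\dagger}$ provided by Theorem \ref{calderonforadjoint}.

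\textbf{Step 1 (Continuity).} By Proposition \ref{returnofsomecontinuuisuforata}, $\scalebox{1.5}{a}$ defines Banach space isomorphisms
\[
\scalebox{1.5}{a}:\SobHH{m-{\frac12}}(\Sigma; E\otimes \C^m)\xrightarrow{\,\cong\,}\SobHH{{\frac12}}(\Sigma; F\otimes \C^m_{\rm op})
\]
and $\scalebox{1.5}{a}:\SobHH{{-\frac12}}(\Sigma; E\otimes \C^m)\xrightarrow{\,\cong\,}\SobHH{{\frac12}-m}(\Sigma; F\otimes \C^m_{\rm op})$. Coupled with Corollary \ref{decomposingcheckspacewithcalderon} which identifies the source as $\checkH(D)$ topologically, we already obtain continuity once we check that the image has the stated decomposition. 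The analogous statement for $\scalebox{1.5}{a}_\dagger$ (applied to $D^\dagger$) identifies, via Lemma \ref{someexresiosnaoadwa}, the target direct sum as a Banach space of the same kind.

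\textbf{Step 2 (Off-diagonal form modulo smoothing).} Theorem \ref{calderonforadjoint} gives $P_{\Ca^\dagger}^*=\scalebox{1.5}{a}(1-P_\Ca)\scalebox{1.5}{a}^{-1}+R$ for some $R\in \Psi^{-\infty}(\Sigma;F\otimes \C^m_{\rm op})$, equivalently
\[
P_{\Ca^\dagger}^*\scalebox{1.5}{a}=\scalebox{1.5}{a}(1-P_\Ca)+R\scalebox{1.5}{a},\qquad (1-P_{\Ca^\dagger}^*)\scalebox{1.5}{a}=\scalebox{1.5}{a}P_\Ca-R\scalebox{1.5}{a},
\]
with $R\scalebox{1.5}{a}\in \Psi^{-\infty}$. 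Thus on the summand $(1-P_\Ca)\SobHH{{m-\frac12}}$, the image under $\scalebox{1.5}{a}$ lies in $P_{\Ca^\dagger}^*\SobHH{{\frac12}}$ up to a smoothing error, and on $P_\Ca\SobHH{{-\frac12}}$ the image lies in $(1-P_{\Ca^\dagger}^*)\SobHH{{\frac12}-m}$ up to a smoothing error. Writing the decomposition of $\scalebox{1.5}{a}$ in matrix form with respect to the two direct sums, the diagonal entries are precisely these smoothing errors. This yields the claimed presentation $\scalebox{1.5}{a}=\begin{pmatrix}0&\scalebox{1.5}{a}^+\\\scalebox{1.5}{a}^-&0\end{pmatrix}+\Psi^{-\infty}$ with $\scalebox{1.5}{a}^+=(1-P_{\Ca^\dagger}^*)\scalebox{1.5}{a}P_\Ca$ and $\scalebox{1.5}{a}^-=P_{\Ca^\dagger}^*\scalebox{1.5}{a}(1-P_\Ca)$.

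\textbf{Step 3 (Invertibility).} To invert $\scalebox{1.5}{a}$ on the decomposed space, I will use the identity $\scalebox{1.5}{a}_\dagger=-\scalebox{1.5}{a}^*$ from Lemma \ref{someexresiosnaoadwa} and apply Theorem \ref{calderonforadjoint} with the roles of $D$ and $D^\dagger$ swapped. This yields the mirror statement that $\scalebox{1.5}{a}^{-1}$ (which exists as a Douglis--Nirenberg isomorphism by Proposition \ref{returnofsomecontinuuisuforata}) intertwines $P_{\Ca^\dagger}^*$ with $(1-P_\Ca)$ up to smoothing, i.e. $(1-P_\Ca)\scalebox{1.5}{a}^{-1}=\scalebox{1.5}{a}^{-1}P_{\Ca^\dagger}^*+R'$ with $R'$ smoothing. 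Given $\eta$ in the target decomposition, writing $\eta=\eta_0+\eta_1$ with $\eta_0\in(1-P_{\Ca^\dagger}^*)\SobHH{{\frac12}-m}$ and $\eta_1\in P_{\Ca^\dagger}^*\SobHH{{\frac12}}$, the candidate preimage $\xi=\scalebox{1.5}{a}^{-1}\eta$ satisfies $(1-P_\Ca)\xi=\scalebox{1.5}{a}^{-1}\eta_1+R'\eta\in\SobHH{{m-\frac12}}$ (since $\scalebox{1.5}{a}^{-1}$ raises the Sobolev scale by $m-1$ on $\eta_1\in\SobHH{{\frac12}}$ and smoothing errors are always smooth). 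Hence $\xi\in\checkH(D)$, which together with Step 1 gives a continuous two-sided inverse by the open mapping theorem.

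The main obstacle in this plan is keeping careful track of the Sobolev scales together with the graded bundles $\C^m$ versus $\C^m_{\rm op}$, and verifying that the smoothing remainders produced by Theorem \ref{calderonforadjoint} really can be absorbed into the diagonal blocks in Step 2 (they can, since $\Psi^{-\infty}$ maps any of the relevant Sobolev spaces into $\Ck{\infty}(\Sigma;\,\cdot\,)$, which sits inside both direct summands on the target).
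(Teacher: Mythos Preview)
Your proposal is correct and follows essentially the same route as the paper, which proves the corollary in one line by citing Proposition \ref{returnofsomecontinuuisuforata} and Theorem \ref{calderonforadjoint}. Your Steps 1--3 simply unpack that citation: the Douglis--Nirenberg isomorphism gives continuity and invertibility on the ambient Sobolev scales, and the intertwining identity $P_{\Ca^\dagger}^*\scalebox{1.5}{a}=\scalebox{1.5}{a}(1-P_\Ca)+\Psi^{-\infty}$ from Theorem \ref{calderonforadjoint} forces the diagonal blocks $(1-P_{\Ca^\dagger}^*)\scalebox{1.5}{a}(1-P_\Ca)$ and $P_{\Ca^\dagger}^*\scalebox{1.5}{a}P_\Ca$ to be smoothing, which is exactly the off-diagonal form and simultaneously shows that $\scalebox{1.5}{a}$ and $\scalebox{1.5}{a}^{-1}$ respect the two decompositions.
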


\begin{proof}
This follows directly from Proposition \ref{returnofsomecontinuuisuforata} and the relation between $P_\Ca$ and $P_{\Ca^\dagger}$ from Theorem \ref{calderonforadjoint}.
\end{proof}

\begin{definition}
Let $D$ be an elliptic differential operator of order $m>0$. We define the space
$$\hatH(D)=\begin{matrix}
(1-P_\Ca^*)\SobHH{{{\frac12}-m}}(\Sigma; E\otimes \C^m_{\rm op})\\
\bigoplus\\
P_\Ca^* \SobHH{{{\frac12}}}(\Sigma; E\otimes \C^m_{\rm op})
\end{matrix}$$
\end{definition}

\begin{lemma}
\label{relationtohatspace}
The $\Lp{2}$-pairing induces a perfect pairing $\inprod{\cdot, \cdot}: \hatH(D) \times \checkH(D)^\ast$ and hence a Banach space isomorphism 
$$\hatH(D)\cong \checkH(D)^*.$$
Moreover, the operator $\scalebox{1.5}{a}$ defines an isomorphism 
$$\scalebox{1.5}{a}:\checkH(D)\to \hatH(D^\dagger).$$
\end{lemma}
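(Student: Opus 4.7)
The plan is to prove the two assertions separately. The first assertion unpacks the direct sum decompositions defining $\checkH(D)$ and $\hatH(D)$ and reduces the pairing to a sum of two standard Sobolev dualities; the second is an essentially formal rewriting of Corollary \ref{structureofa}.

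For the first assertion, I would take $\xi=(1-P_\Ca^*)\xi_1+P_\Ca^*\xi_2\in\hatH(D)$ with $\xi_1\in \SobHH{{\frac12-m}}(\Sigma; E\otimes \C^m_{\rm op})$ and $\xi_2\in \SobHH{{\frac12}}(\Sigma; E\otimes \C^m_{\rm op})$, and $\eta=P_\Ca\eta_1+(1-P_\Ca)\eta_2\in\checkH(D)$ with $\eta_1\in \SobHH{{-\frac12}}(\Sigma; E\otimes \C^m)$ and $\eta_2\in \SobHH{{m-\frac12}}(\Sigma; E\otimes \C^m)$. Using that the $\Lp{2}$-pairing extends by Proposition \ref{ell2pairingsofgradedsob} to a continuous perfect pairing $\SobHH{-s}(\Sigma;E\otimes\C^m_{\rm op})\times\SobHH{s}(\Sigma;E\otimes\C^m)\to\C$ for any $s$, the four cross-terms of $\langle \xi,\eta\rangle$ are each well-defined, and the two ``mismatched'' terms vanish since $\langle (1-P_\Ca^*)\xi_1,P_\Ca\eta_1\rangle=\langle \xi_1,(1-P_\Ca)P_\Ca\eta_1\rangle=0$ and similarly for the other. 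Hence
\[
\langle\xi,\eta\rangle=\langle (1-P_\Ca^*)\xi_1,(1-P_\Ca)\eta_2\rangle+\langle P_\Ca^*\xi_2,P_\Ca\eta_1\rangle,
\]
which is continuous on $\hatH(D)\times\checkH(D)$. To see that the induced map $\hatH(D)\to\checkH(D)^*$ is an isomorphism, I would argue that it preserves the direct sum decompositions: a functional $\phi\in\checkH(D)^*$ splits as $\phi_1+\phi_2$ along $\checkH(D)=(1-P_\Ca)\SobHH{{m-\frac12}}\oplus P_\Ca\SobHH{{-\frac12}}$, and by Hahn–Banach each $\phi_j$ is represented by an element of the appropriate ambient dual Sobolev space; applying $(1-P_\Ca^*)$ resp.\ $P_\Ca^*$ lands these representatives in the corresponding summand of $\hatH(D)$ without changing the values on $\checkH(D)$. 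Non-degeneracy on each summand follows from the perfect pairings of Proposition \ref{ell2pairingsofgradedsob} together with the fact that $(1-P_\Ca^*)\SobHH{{\frac12-m}}$ is the annihilator of $P_\Ca\SobHH{{-\frac12}}$, and dually. This gives the Banach space isomorphism $\hatH(D)\cong\checkH(D)^*$.

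For the second assertion, Corollary \ref{structureofa} applied to $D$ yields that $\scalebox{1.5}{a}$ is a bounded invertible map from
\[
(1-P_\Ca)\SobHH{{m-\frac12}}(\Sigma; E\otimes \C^m)\ \oplus\ P_\Ca \SobHH{{-\frac12}}(\Sigma; E\otimes \C^m)
\]
to
\[
(1-P_{\Ca^\dagger}^*)\SobHH{{\frac12-m}}(\Sigma; F\otimes \C^m_{\rm op})\ \oplus\ P_{\Ca^\dagger}^* \SobHH{{\frac12}}(\Sigma; F\otimes \C^m_{\rm op}).
\]
By Theorem \ref{refinedseeley} the domain is $\checkH(D)$, and by the definition of $\hatH$ the target is precisely $\hatH(D^\dagger)$. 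So $\scalebox{1.5}{a}:\checkH(D)\to\hatH(D^\dagger)$ is an isomorphism.

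The main obstacle is the first part: while the formal structure of the pairing is clear, one must be careful to verify that the two surviving terms pair complementary Sobolev orders (hence extend the $\Lp{2}$ pairing continuously) and that the projectors $P_\Ca$ and $P_\Ca^*$ pass across the pairing in the correct way so that the ``cross'' contributions vanish identically. Once the decomposition $\langle\xi,\eta\rangle=\langle(1-P_\Ca^*)\xi_1,(1-P_\Ca)\eta_2\rangle+\langle P_\Ca^*\xi_2,P_\Ca\eta_1\rangle$ is in hand, everything reduces to two standard applications of Sobolev duality on a closed manifold.
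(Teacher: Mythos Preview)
Your proof is correct and follows essentially the same approach as the paper: decompose $\checkH(D)$ via Theorem \ref{refinedseeley}, identify the dual using the Sobolev duality of Proposition \ref{ell2pairingsofgradedsob} on each summand, and deduce the second assertion from Corollary \ref{structureofa}. The paper's proof is terse and simply cites these ingredients, whereas you have explicitly unpacked the pairing, verified the cross-terms vanish, and argued the isomorphism on summands---this is exactly the content behind the paper's one-line appeal to Proposition \ref{ell2pairingsofgradedsob} (and is essentially the mechanism of Lemma \ref{Lem:SubPair}).
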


\begin{proof}
By Theorem \ref{refinedseeley} we have that 
$$\checkH(D)=\begin{matrix}
(1-P_\Ca)\SobHH{{m-{\frac12}}}(\Sigma; E\otimes \C^m)\\
\bigoplus\\
P_\Ca \SobHH{{-{\frac12}}}(\Sigma; E\otimes \C^m)
\end{matrix}.$$
Proposition \ref{ell2pairingsofgradedsob} implies that the $\Lp{2}$-pairing induces a Banach space isomorphism $\hatH(D)\cong \checkH(D)^*$. The fact that $\scalebox{1.5}{a}:\checkH(D)\to \hatH(D^\dagger)$ is an isomorphism follows from Corollary \ref{structureofa}.
\end{proof}

\begin{remark}
If $D$ is an elliptic differential operator of order $m=1$ with adapted boundary operator $A$, the results of \cite{BBan} shows that  \
$$\hatH(D)=\chi^-(A^\ast)\SobH{-{\frac12}}(\Sigma;E)\oplus \chi^+(A^\ast)\SobH{{\frac12}}(\Sigma;E).$$ 
We use the notation $\hatH_A(D)$ for the right hand side of this equation. The reader should be aware that in \cite{BBan}, this space was denoted by $\hatH(A^\ast)$. 
\end{remark}

\begin{theorem}
\label{boundarypairing}
Let $D$ be an elliptic differential operator of order $m>0$. The boundary pairing
$$\omega_{D}:\checkH(D^\dagger)\times \checkH(D)\to \C.$$
takes the form 
$$\omega_{D}(\xi,\xi')=\langle \xi,\scalebox{1.5}{a} \xi'\rangle_{\Lp{2}(\Sigma;F\otimes \C^m)},\quad \xi\in \checkH(D^\dagger),\; \xi'\in \checkH(D),$$
and is a perfect pairing. 
\end{theorem}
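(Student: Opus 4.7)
The strategy is to read off both the explicit formula and the perfectness from the factorisation $\omega_D = \mathrm{ev} \circ (1 \times \scalebox{1.5}{a})$, where $\mathrm{ev}$ is the $\Lp{2}$-pairing between $\checkH(D^\dagger)$ and $\hatH(D^\dagger)$. All of the analytical ingredients have already been assembled in Proposition \ref{proponboundarypairingforell}, Corollary \ref{structureofa}, and Lemma \ref{relationtohatspace}; what remains is to stitch them together and invoke density.

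First I would establish the formula. On smooth inputs $\xi \in \Ck{\infty}(\Sigma;F\otimes \C^m)$ and $\xi' \in \Ck{\infty}(\Sigma;E\otimes \C^m)$, lifted to smooth sections of $E$ and $F$ on $M$ via a collar and extension of the trace, Proposition \ref{proponboundarypairingforell} gives precisely $\omega_D(\xi,\xi') = \langle \xi, \scalebox{1.5}{a}\xi'\rangle_{\Lp{2}(\Sigma;F\otimes \C^m)}$. Both sides are continuous sesquilinear forms on $\checkH(D^\dagger)\times \checkH(D)$: the left side by the abstract boundary pairing construction (Proposition \ref{boundarypairingABS}), and the right side because $\scalebox{1.5}{a}\colon \checkH(D) \to \hatH(D^\dagger)$ is bounded by Corollary \ref{structureofa} and Lemma \ref{relationtohatspace}, while the $\Lp{2}$-pairing $\checkH(D^\dagger)\times \hatH(D^\dagger)\to \C$ is bounded by Proposition \ref{ell2pairingsofgradedsob} applied summand-wise in the Calderón decomposition of Theorem \ref{refinedseeley}.

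Next I would extend the identity by density. Smooth sections on $\Sigma$ are dense in each summand $P_\mathcal{C}\SobHH{-{\frac12}}(\Sigma;E\otimes \C^m)$ and $(1-P_\mathcal{C})\SobHH{m-{\frac12}}(\Sigma;E\otimes \C^m)$ since $P_\mathcal{C}$ and $1-P_\mathcal{C}$ are pseudodifferential projections preserving $\Ck{\infty}(\Sigma;E\otimes \C^m)$ and smooth sections are dense in any Sobolev scale. Therefore $\Ck{\infty}(\Sigma;E\otimes \C^m)$ is dense in $\checkH(D)$, and analogously $\Ck{\infty}(\Sigma;F\otimes \C^m)$ is dense in $\checkH(D^\dagger)$. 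The two continuous sesquilinear forms agreeing on a dense subspace must coincide on $\checkH(D^\dagger)\times \checkH(D)$, yielding the stated formula.

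Finally, for perfectness, the formula displays $\omega_D$ as the composition
\begin{equation*}
\checkH(D^\dagger)\times \checkH(D) \xrightarrow{\;1\times \scalebox{1.5}{a}\;} \checkH(D^\dagger)\times \hatH(D^\dagger) \xrightarrow{\;\langle\cdot,\cdot\rangle_{\Lp{2}}\;} \C.
\end{equation*}
By Lemma \ref{relationtohatspace}, $\scalebox{1.5}{a}\colon \checkH(D)\to \hatH(D^\dagger)$ is a Banach space isomorphism, and the same lemma (applied to $D^\dagger$) shows the $\Lp{2}$-pairing realises $\hatH(D^\dagger)\cong \checkH(D^\dagger)^*$, i.e.\ it is perfect. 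Composing a perfect pairing with a topological isomorphism in one argument gives a perfect pairing, so $\omega_D$ is perfect. The only non-routine step is verifying density of smooth sections in $\checkH(D)$ through the Calderón splitting — everything else is bookkeeping with results already in hand.
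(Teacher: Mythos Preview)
Your proposal is correct and follows essentially the same approach as the paper: the paper's proof invokes Proposition \ref{proponboundarypairingforell} for the formula on smooth sections and Lemma \ref{relationtohatspace} for writing the pairing as the stated $\Lp{2}$-pairing (hence for perfectness). You have simply made explicit the density-and-continuity argument that the paper leaves implicit, and your verification that smooth sections are dense in $\checkH(D)$ via the Calder\'on splitting is a reasonable way to fill that gap.
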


\begin{proof}
The formula for the boundary pairing for smooth sections follows from Proposition \ref{proponboundarypairingforell} and that it can be written as the stated $\Lp{2}$-pairing  follows from Lemma \ref{relationtohatspace}.
\end{proof}

\subsection{Adjoint boundary conditions and a sufficient condition for regularity}
\label{subsec:adjointbcs}

In light of the results of last subsection, we can further describe adjoint boundary conditions. 
In particular, we can derive a sufficient condition for certain semi-regular boundary conditions to be regular.

\begin{proposition}
\label{cordescofadjointbc}
Let $D$ be an elliptic differential operator of order $m>0$ and $B\subseteq \checkH(D)$ a generalised boundary condition. The adjoint boundary condition $B^*\subseteq \checkH(D^\dagger)$ takes the form 
$$B^*=(\scalebox{1.5}{a}B)^\perp=\scalebox{1.5}{a}_\dagger^{-1}B^\perp,$$
where $\perp$ denotes the induced  annihilators with respect to  $\Lp{2}$-pairing.
\end{proposition}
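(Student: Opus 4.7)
The proof essentially packages together the explicit form of the boundary pairing (Theorem \ref{boundarypairing}) and the adjoint relation $\scalebox{1.5}{a}^*+\scalebox{1.5}{a}_\dagger=0$ from Lemma \ref{someexresiosnaoadwa}. My plan is as follows.

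First I would recall from the discussion preceding Definition \ref{def:elliptiiclcld} (and Appendix \ref{app:bspaceandbc}) that the adjoint boundary condition is precisely the annihilator of $B$ with respect to the boundary pairing:
\begin{equation*}
B^*=\{\eta\in\checkH(D^\dagger):\omega_D(\eta,\xi)=0\text{ for all }\xi\in B\}.
\end{equation*}
Theorem \ref{boundarypairing} then rewrites $\omega_D(\eta,\xi)=\langle\eta,\scalebox{1.5}{a}\xi\rangle_{\Lp{2}(\Sigma;F\otimes\C^m)}$. Since $\scalebox{1.5}{a}\xi$ ranges over $\scalebox{1.5}{a} B\subseteq \hatH(D^\dagger)$ as $\xi$ ranges over $B\subseteq \checkH(D)$ (by Lemma \ref{relationtohatspace}), and the $\Lp{2}$-pairing gives a perfect pairing $\checkH(D^\dagger)\times\hatH(D^\dagger)\to\C$, the defining equation of $B^*$ becomes exactly the annihilator condition, yielding $B^*=(\scalebox{1.5}{a} B)^\perp$.

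For the second equality I would use Lemma \ref{someexresiosnaoadwa} to transfer $\scalebox{1.5}{a}$ off of $\xi$ and onto $\eta$: for $\eta\in\checkH(D^\dagger)$ and $\xi\in B$,
\begin{equation*}
\langle\eta,\scalebox{1.5}{a}\xi\rangle_{\Lp{2}(\Sigma;F\otimes\C^m)}=\langle\scalebox{1.5}{a}^*\eta,\xi\rangle_{\Lp{2}(\Sigma;E\otimes\C^m)}=-\langle\scalebox{1.5}{a}_\dagger\eta,\xi\rangle_{\Lp{2}(\Sigma;E\otimes\C^m)}.
\end{equation*}
Thus $\eta\in B^*$ if and only if $\scalebox{1.5}{a}_\dagger\eta$ annihilates $B$ under the $\Lp{2}$-pairing $\checkH(D)\times\hatH(D)\to\C$, i.e.\ $\scalebox{1.5}{a}_\dagger\eta\in B^\perp\subseteq\hatH(D)$. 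Since $\scalebox{1.5}{a}_\dagger:\checkH(D^\dagger)\to\hatH(D)$ is an isomorphism (the $D^\dagger$-analogue of Lemma \ref{relationtohatspace}, using Proposition \ref{returnofsomecontinuuisuforata}), this rearranges to $B^*=\scalebox{1.5}{a}_\dagger^{-1}B^\perp$.

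The only genuine care required is bookkeeping of the annihilators: the first annihilator sits inside $\checkH(D^\dagger)$ (paired with $\hatH(D^\dagger)$), while the second sits inside $\hatH(D)$ (paired with $\checkH(D)$). The perfect pairings from Lemma \ref{relationtohatspace} make the annihilators well-defined and closed, and the isomorphisms $\scalebox{1.5}{a}:\checkH(D)\xrightarrow{\sim}\hatH(D^\dagger)$ and $\scalebox{1.5}{a}_\dagger:\checkH(D^\dagger)\xrightarrow{\sim}\hatH(D)$ guarantee that both expressions describe the same closed subspace of $\checkH(D^\dagger)$. No further work beyond this formal manipulation is needed.
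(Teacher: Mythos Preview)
Your proposal is correct and follows essentially the same approach as the paper: both identify $B^*$ as the $\omega_D$-annihilator of $B$, invoke Theorem \ref{boundarypairing} to rewrite $\omega_D(\eta,\xi)=\langle\eta,\scalebox{1.5}{a}\xi\rangle$ and read off $B^*=(\scalebox{1.5}{a}B)^\perp$, and then deduce the second equality. The only cosmetic difference is that for $B^*=\scalebox{1.5}{a}_\dagger^{-1}B^\perp$ you explicitly move $\scalebox{1.5}{a}$ across the pairing via the operator identity $\scalebox{1.5}{a}^*=-\scalebox{1.5}{a}_\dagger$ from Lemma \ref{someexresiosnaoadwa}, whereas the paper phrases the same step as ``a similar argument'' (implicitly using the symmetry $\omega_D(\eta,\xi)=-\overline{\omega_{D^\dagger}(\xi,\eta)}$ together with Theorem \ref{boundarypairing} for $D^\dagger$); these are equivalent formulations.
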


\begin{proof}
Using the description in Lemma \ref{relationtohatspace}, the space $(\scalebox{1.5}{a}B)^\perp\subseteq \checkH(D^\dagger)$ is the annihilator of $\scalebox{1.5}{a}B\subseteq \hatH(D^\dagger)$ in the perfect $\Lp{2}$-pairing $\hatH(D^\dagger) \times \checkH(D^\dagger)\to \C$. By Theorem \ref{boundarypairing}, $(\scalebox{1.5}{a}B)^\perp$ is the annihilator of $B$ in the boundary pairing $\omega_D$, so $B^*=(\scalebox{1.5}{a}B)^\perp$ follows. By a similar argument, $\scalebox{1.5}{a}_\dagger B^*\subseteq \hatH(D)$ is the annihilator of $B\subseteq \checkH(D)$ in the $\Lp{2}$-pairing, so $\scalebox{1.5}{a}_\dagger B^*=B^\perp$ which implies $B^*=\scalebox{1.5}{a}_\dagger^{-1}B^\perp$.
\end{proof}

The reader should compare Proposition \ref{cordescofadjointbc} to Proposition \ref{describinglocalcndldldaadjoint} describing the adjoint boundary condition of a pseudo-local boundary condition. We shall now state a theorem providing a rather general sufficient condition for regularity.

\begin{theorem}
\label{thm:sufficienfeidntofell}
Let $D$ be an elliptic differential operator of order $m>0$ and $P$ a continuous projection on $\SobHH{{m-\frac12}}(\Sigma;E\otimes \C^m)$ that extends by continuity to a projection on $\SobHH{{-\frac12}}(\Sigma;E\otimes \C^m)$. Assume that the operator $A:=P_\mathcal{C}-(1-P)$ defines a Fredholm operator on $\SobHH{{m-\frac12}}(\Sigma;E\otimes \C^m)$ that extends by continuity to a Fredholm operator on $\SobHH{{-\frac12}}(\Sigma;E\otimes \C^m)$. Then $P$ defines an regular boundary condition 
$$B_P:=(1-P)\SobHH{{m-\frac12}}(\Sigma;E\otimes \C^m),$$
for $D$. In other words, we have that 
$$\{u\in \dom(D_{\rm max}): P\gamma u=0\}=\{u\in \SobH{m}(M;E): P\gamma u=0\},$$
and this space defines the domain of an regular realisation of $D$.
\end{theorem}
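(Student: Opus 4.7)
My plan is to reduce the theorem to a boundary regularity statement and then prove that statement via Fredholm theory on the two Sobolev scales. The whole theorem will follow from the following regularity claim: any $\xi\in\SobHH{{-\frac12}}(\Sigma;E\otimes\C^m)$ with $A\xi\in\SobHH{{m-\frac12}}(\Sigma;E\otimes\C^m)$ already lies in $\SobHH{{m-\frac12}}(\Sigma;E\otimes\C^m)$. Granting this, I take $\xi\in\checkH(D)$ with $P\xi=0$; then $(1-P)\xi=\xi$ gives $A\xi=P_\mathcal{C}\xi-\xi=-(1-P_\mathcal{C})\xi$, which lies in $\SobHH{{m-\frac12}}$ by \thmref{refinedseeley}. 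The regularity claim forces $\xi\in\SobHH{{m-\frac12}}$, whence $\xi=(1-P)\xi\in B_P$. Closedness of $B_P$ in $\checkH(D)$ is then immediate, since $P$ is continuous on $\SobHH{{-\frac12}}$ and $\checkH(D)\hookrightarrow\SobHH{{-\frac12}}$ is continuous. The domain equality asserted in the theorem follows from the Poisson-type operator of \thmref{refinedseeley}, which maps $\mathcal{C}_D\cap\SobHH{{m-\frac12}}$ into $\SobH{m}(M;E)\cap\ker D_{\rm max}$, combined with the splitting $\dom(D_{\rm max})=\dom(D_{\Cac})+\ker(D_{\rm max})$ of \corref{Cor:CalCheck}.

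To prove the regularity claim, I would use Fredholm theory on the two scales. Write $A_1$ for $A$ acting on $\SobHH{{-\frac12}}$ and $A_2$ for $A$ acting on $\SobHH{{m-\frac12}}$; both are Fredholm by hypothesis. The dense continuous inclusion $\SobHH{{m-\frac12}}\hookrightarrow\SobHH{{-\frac12}}$ gives $\ker A_2\subseteq\ker A_1$, and the finite-dimensionality of $\coker A_1$ combined with density forces the induced map $\coker A_2\to\coker A_1$ to be surjective. Establishing the index equality $\indx A_1=\indx A_2$ then forces both inclusions to be equalities: $\ker A_1=\ker A_2\subseteq\SobHH{{m-\frac12}}$ and $\ran A_1\cap\SobHH{{m-\frac12}}=\ran A_2$. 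Given $\xi\in\SobHH{{-\frac12}}$ with $A\xi\in\SobHH{{m-\frac12}}$, one has $A\xi\in\ran A_1\cap\SobHH{{m-\frac12}}=\ran A_2$, so $A\xi=A\xi'$ for some $\xi'\in\SobHH{{m-\frac12}}$; then $\xi-\xi'\in\ker A_1=\ker A_2\subseteq\SobHH{{m-\frac12}}$, giving $\xi\in\SobHH{{m-\frac12}}$.

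Semi-regularity of the adjoint $B_P^*$ follows by applying the same scheme to the formally adjoint setup. Using \thmref{calderonforadjoint} and Proposition~\ref{cordescofadjointbc}, I identify $B_P^*$ with a subspace of the form $(1-P_\dagger)\SobHH{{m-\frac12}}(F\otimes\C^m)$, where $P_\dagger:=(\scalebox{1.5}{a}^*)^{-1}(1-P^*)\scalebox{1.5}{a}^*$ is a continuous projection on the corresponding $F$-valued Sobolev scales by Proposition~\ref{returnofsomecontinuuisuforata}. The Fredholm hypothesis on $A=P_\mathcal{C}-(1-P)$ transfers to the analogous Fredholm hypothesis on $P_{\mathcal{C}^\dagger}-(1-P_\dagger)$, and the same argument applied to $(D^\dagger,P_\dagger)$ then yields $B_P^*\subseteq\SobHH{{m-\frac12}}(F\otimes\C^m)$, completing regularity. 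The main obstacle I anticipate is the index equality $\indx A_1=\indx A_2$; the cleanest route I see is complex interpolation on the scale $\{\SobHH{s}\}_{s\in[-\frac12,m-\frac12]}$ on which $A$ acts boundedly and is Fredholm at both endpoints, invoking invariance of the Fredholm index under interpolation.
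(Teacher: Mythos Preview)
Your reduction to the adjoint side via $P_\dagger=\scalebox{1.5}{a}_\dagger^{-1}(1-P^*)\scalebox{1.5}{a}_\dagger$ and Theorem~\ref{calderonforadjoint} is exactly what the paper does, and you are right that once semi-regularity of $B_P$ is established, the Fredholm hypothesis on $A$ over $\SobHH{-\frac12}$ dualises (via $\scalebox{1.5}{a}_\dagger$) to the Fredholm hypothesis on $A_\dagger:=P_{\mathcal{C}^\dagger}-(1-P_\dagger)$ over $\SobHH{{m-\frac12}}(\Sigma;F\otimes\C^m)$, so the adjoint argument is a repetition.

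The genuine gap is precisely where you flagged it: the index equality $\indx A_1=\indx A_2$. Boundedness on an interpolation scale together with Fredholmness at the two endpoints does \emph{not} imply Fredholmness on the intermediate spaces, and without that there is no homotopy of Fredholm operators joining $A_1$ to $A_2$ along the scale. Sneiberg-type results only give Fredholmness in a \emph{neighbourhood} of a point at which $A$ is already Fredholm; they do not propagate across a whole interval. Your kernel/cokernel comparison only yields $\indx A_2\le\indx A_1$, and dualising gives the same inequality again, so no squeeze is available. Since your regularity claim (``$A\xi\in\SobHH{m-\frac12}\Rightarrow\xi\in\SobHH{m-\frac12}$'') is equivalent to $\ker A_1\subseteq\SobHH{m-\frac12}$ together with $\ran A_1\cap\SobHH{m-\frac12}=\ran A_2$, and both of these rest on the index equality, the argument does not close.

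The paper bypasses this entirely by an algebraic identity rather than an index comparison. From $A(1-P)=(P_\mathcal{C}-(1-P))(1-P)=-(1-P_\mathcal{C})(1-P)$ one gets, for $u\in\SobHH{m-\frac12}$,
\[
\|(1-P)u\|_{\SobHH{m-\frac12}}\ \lesssim\ \|A(1-P)u\|_{\SobHH{m-\frac12}}+\|F(1-P)u\|_{\SobHH{m-\frac12}}
= \|(1-P_\mathcal{C})(1-P)u\|_{\checkH(D)}+\|F(1-P)u\|_{\SobHH{m-\frac12}},
\]
where $F$ is a finite-rank operator coming from the Fredholm estimate for $A$ on $\SobHH{m-\frac12}$; the first term is $\lesssim\|(1-P)u\|_{\checkH(D)}$ by \thmref{refinedseeley}, and the second is handled by taking $F$ with range in $\SobHH{m-\frac12}$ and factoring through $\checkH(D)$. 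This gives $\|(1-P)u\|_{\SobHH{m-\frac12}}\simeq\|(1-P)u\|_{\checkH(D)}$ directly, hence $B_P$ is closed in $\checkH(D)$ and semi-regular, using \emph{only} the Fredholm hypothesis on $\SobHH{m-\frac12}$. The hypothesis on $\SobHH{-\frac12}$ is reserved for the adjoint step. If you want to keep your route, you would need an independent proof that $\ker A_1\subseteq\SobHH{m-\frac12}$ under the stated hypotheses; absent additional structure on $P$ (e.g.\ $P\in\Psi^{\pmb 0}_{\rm cl}$, where elliptic regularity supplies this), I do not see how to obtain it.
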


\begin{proof}
By using the same argument as in the proof of Theorem \ref{thm:bodunarodoad}, we see that if $A:=P_\mathcal{C}-(1-P)$ is Fredholm on $\SobHH{{m-\frac12}}(\Sigma;E\otimes \C^m)$, then 
$$\norm{(1-P)u}_{\checkH(D)} \simeq \norm{(1-P) u}_{\SobHH{{m-\frac12}}(\Sigma;E\otimes \C^m)},$$
for $u\in \SobHH{{m-\frac12}}(\Sigma;E\otimes \C^m)$. In particular, $B$ is closed in $\checkH(D)$. This proves that $B$ is a semi-regular boundary condition. It remains to prove that $B^*$ is a semi-regular boundary condition. 

We claim that $B^*=(1-P_\dagger)\SobHH{{m-\frac12}}(\Sigma;F\otimes \C^m)$ and that this is a semi-regular boundary condition. Here we use the notation $P_\dagger:=\scalebox{1.5}{a}_\dagger^{-1}(1-P^*)\scalebox{1.5}{a}_\dagger$, where $P^*:\SobHH{{\frac12}}(\Sigma;E\otimes \C^m_{\rm op})\to \SobHH{{\frac12}}(\Sigma;E\otimes \C^m_{\rm op})$ is the continuous projection defined from the $\Lp{2}$-pairing $\SobHH{{-\frac12}}(\Sigma;E\otimes \C^m)\times \SobHH{{\frac12}}(\Sigma;E\otimes \C^m_{\rm op})\to \C$. By the arguments in the preceding paragraph and Proposition \ref{describinglocalcndldldaadjoint}, this claim follows if $A_\dagger:=P_{\mathcal{C}^\dagger}-(1-P_\dagger)$ is Fredholm on $\SobHH{{m-\frac12}}(\Sigma;F\otimes \C^m)$. However, using Theorem \ref{calderonforadjoint} we have that 
$$A_\dagger=\scalebox{1.5}{a}_\dagger^{-1}A^*\scalebox{1.5}{a}_\dagger,$$
where we by an abuse of notation write $A$ for its Fredholm extension to $\SobHH{{-\frac12}}(\Sigma;E\otimes \C^m)$ and $A^*$ for its $\Lp{2}$-adjoint on $\SobHH{{\frac12}}(\Sigma;E\otimes \C^m_{\rm op})$. The fact that $A_\dagger$ is Fredholm now follows from that $A$ is Fredholm on $\SobHH{{-\frac12}}(\Sigma;E\otimes \C^m)$.
\end{proof}

The next result specialises Theorem \ref{thm:sufficienfeidntofell} to pseudolocal boundary conditions and characterises those that are regular. We encourage the reader to compare the assumptions on $P$ in the next corollary to those in Corollary \ref{pseudostahtdecompose}.

\begin{theorem}
\label{equivocndndforpseudolocal}
Let $D$ be an elliptic differential operator of order $m>0$ and $P\in \Psi^{\pmb 0}_{\rm cl}(\Sigma; E\otimes \C^m)$ a projection. Define the generalised boundary condition 
$$B_P:=(1-P)\SobHH{{m-\frac12}}(\Sigma;E\otimes \C^m).$$
Then the following are equivalent:
\begin{enumerate}[i)]
\item The operator $P_\mathcal{C}-(1-P)\in \Psi^{\pmb 0}_{\rm cl}(\Sigma; E\otimes \C^m)$ is elliptic. 
\item $B_P$ is an regular boundary condition for $D$.
\item $B_P$ is a Fredholm boundary condition for $D$. 
\end{enumerate}
\end{theorem}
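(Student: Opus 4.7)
I plan to establish the cycle (i) $\Rightarrow$ (ii) $\Rightarrow$ (iii) $\Rightarrow$ (i). The first two implications are essentially automatic from the machinery already assembled. Since $A := P_\mathcal{C} - (1-P)$ lies in $\Psi^{\pmb 0}_{\rm cl}(\Sigma; E\otimes \C^m)$, ellipticity in the Douglis--Nirenberg sense is equivalent to Fredholmness on any one (hence every) $\SobHH{s}$, so (i) forces $A$ Fredholm on both $\SobHH{m-\frac12}$ and $\SobHH{-\frac12}$; Theorem~\ref{thm:sufficienfeidntofell} then delivers (ii). The implication (ii) $\Rightarrow$ (iii) is Proposition~\ref{Prop:MaxClosed}.

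For (iii) $\Rightarrow$ (i), my first step is to invoke Theorem~\ref{Thm:FredChar} to obtain the Fredholm pair $(B_P, \Ca_D)$ in $\checkH(D)$. Using the decomposition $\checkH(D) = (1-P_\mathcal{C})\SobHH{m-\frac12} \oplus P_\mathcal{C}\SobHH{-\frac12}$ from Theorem~\ref{refinedseeley} (so $\Ca_D = P_\mathcal{C}\SobHH{-\frac12}$), the addition map $B_P \oplus \Ca_D \to \checkH(D)$ is lower triangular with identity in the $\Ca_D$-slot, and the Fredholm pair condition reduces to Fredholmness of the single operator
\[
(1-P_\mathcal{C})|_{B_P}:\,(1-P)\SobHH{m-\frac12} \longrightarrow (1-P_\mathcal{C})\SobHH{m-\frac12}.
\]
A direct block computation in the splittings $\SobHH{m-\frac12} = (1-P_\mathcal{C})\SobHH{m-\frac12} \oplus P_\mathcal{C}\SobHH{m-\frac12}$ (source) and $\SobHH{m-\frac12} = (1-P)\SobHH{m-\frac12} \oplus P\SobHH{m-\frac12}$ (target) shows that $A$ is block diagonal with entries $-(1-P)|_{(1-P_\mathcal{C})\SobHH{m-\frac12}}$ and $P|_{P_\mathcal{C}\SobHH{m-\frac12}}$. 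The first entry is the reverse direction of $(1-P_\mathcal{C})|_{B_P}$; the second will be controlled by running the same reduction for the Fredholm boundary condition $B_P^* = B_{P_\dagger}$ of $D^\dagger$ (supplied by Proposition~\ref{describinglocalcndldldaadjoint} and duality of Fredholm realisations) and transferring the resulting information via the identity $P_{\Ca^\dagger} \equiv (\scalebox{1.5}{a}^*)^{-1}(1-P_\mathcal{C}^*)\scalebox{1.5}{a}^* \bmod \Psi^{-\infty}$ of Theorem~\ref{calderonforadjoint}.

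The main obstacle will be the step upgrading the ``one-directional'' Fredholmness of $(1-P_\mathcal{C})|_{B_P}$ to Fredholmness of the reverse-direction diagonal block of $A$. These two maps are not adjoints of one another, and without a commutator hypothesis on $P$ their compositions differ from the identity only by operators of order zero (not smoothing), so no naive parametrix trick closes the argument. I expect the cleanest resolution to be a principal symbol argument by contrapositive: a failure of ellipticity of $A$ at some $(x_0, \xi_0) \in S^*\Sigma$ amounts to $\sigma_0(P)|_{E_+(D)_{(x_0,\xi_0)}} \to \mathrm{Im}\,\sigma_0(P)(x_0,\xi_0)$ failing to be an isomorphism; standard microlocalisation of the Poisson operator from Theorem~\ref{refinedseeley} then produces an infinite-dimensional family of approximate solutions of $Du = 0$ with $P\gamma u$ arbitrarily small, contradicting Fredholmness of $D_{B_P}$. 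This is precisely the reverse Shapiro--Lopatinskii implication at the boundary principal symbol level, available in our setting because $P$ is pseudodifferential.
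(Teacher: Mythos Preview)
Your implications (i) $\Rightarrow$ (ii) $\Rightarrow$ (iii) match the paper exactly, via Theorem~\ref{thm:sufficienfeidntofell} and Proposition~\ref{Prop:MaxClosed}.

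For (iii) $\Rightarrow$ (i) the paper takes a much shorter functional-analytic route: it observes that $B_P$ Fredholm forces both $(P_\mathcal{C}\checkH(D),B_P)$ and $((1-P_\mathcal{C}^*)\hatH(D),B_P^\perp)$ to be Fredholm pairs, and then cites \cite[Lemma~A.9]{BBan} together with Atkinson's theorem to conclude directly that $P_\mathcal{C}-(1-P)$ is Fredholm, hence elliptic by \cite[Theorems~XIX.5.1--2]{horIII}. No block decomposition and no quasimode construction appears. Your block diagonalisation of $A$ is correct, and you are right that the $\checkH(D)$ Fredholm pair only yields Fredholmness of $(1-P_\mathcal{C})|_{B_P}$ and not of its reverse. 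But your plan to recover the second diagonal block from $B_P^*$ runs into a scale mismatch: transporting through $\scalebox{1.5}{a}_\dagger$ and Theorem~\ref{calderonforadjoint} and then dualising gives Fredholmness of $P|_{P_\mathcal{C}\SobHH{-\frac12}}$, not of $P|_{P_\mathcal{C}\SobHH{m-\frac12}}$, so the two blocks live on different Sobolev orders and do not assemble to Fredholmness of $A$ on a single $\SobHH{s}$. This is precisely the gap that the cited lemma from \cite{BBan} is engineered to close.

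Your fallback contrapositive argument is valid and is essentially the classical necessity of Shapiro--Lopatinskii, but two remarks are in order. First, once you commit to it, the entire block analysis becomes superfluous: the quasimode construction uses only the non-invertibility of $p_+(D)+\sigma_{\pmb 0}(P)-1$ at a single point and the Poisson operator $\mathcal{K}$. Second, you should make the case split explicit: a short computation shows $\ker\bigl(p_+(D)+\sigma_{\pmb 0}(P)-1\bigr)=(E_+\cap\ker\sigma_{\pmb 0}(P))\oplus(E_-\cap\mathrm{Im}\,\sigma_{\pmb 0}(P))$, so non-ellipticity gives either a nonzero $E_+\cap\ker\sigma_{\pmb 0}(P)$ (handled by your kernel construction) or a nonzero $E_-\cap\mathrm{Im}\,\sigma_{\pmb 0}(P)$, which via Proposition~\ref{pplussandpminussdagger} becomes the first case for $(D^\dagger,P_\dagger)$ and forces $\ker D_{B_P}^*$ infinite-dimensional. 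You gesture at this dualisation but have not written it down.
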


We remark that the space $B_P$ defined in the theorem above is a priori different from the pseudo-local boundary condition defined from $P$ as in Definition \ref{somedefinidodod}. A posteriori, they produce the same boundary condition under any of the assumptions i)-iii) in Theorem \ref{equivocndndforpseudolocal}.

\begin{proof}
Item i) implies item ii) by Theorem \ref{thm:sufficienfeidntofell}. Item ii) implies item iii) by Proposition \ref{Prop:MaxClosed}. Using \cite[Theorem XIX.5.1 and XIX.5.2]{horIII}, to prove that iii) implies i) it suffices to prove that $P_\mathcal{C}-(1-P)$ is Fredholm as soon as $B_P$ is Fredholm boundary condition. We note that $B_P$ is Fredholm if and only if $B_P^*$ is Fredholm if and only if $(P_\mathcal{C}\checkH(D),B_P)$ and $((1-P_\mathcal{C}^*)\hatH(D),B_P^\perp)$ are Fredholm pairs. As such, $P_\mathcal{C}-(1-P)$ is Fredholm whenever $B_P$ is a Fredholm boundary condition by combining \cite[Lemma A.9]{BBan} with Atkinson's theorem.
\end{proof}

To add some more flavor to the regularity condition on a pseudo-local boundary condition as in Theorem \ref{equivocndndforpseudolocal}, let us compare this to the well studied notion of Shapiro-Lopatinskii ellipticity. 

\begin{definition}
\label{def:lsell}
Let $D$ be an elliptic differential operator of order $m>0$ and $B$ a pseudo-local boundary condition defined from a pseudo-differential projection $P\in \Psi^{\pmb 0}_{\rm cl}(\Sigma;E\otimes \C^m)$. We say that $P$ is Shapiro-Lopatinskii elliptic if for any $(x',\xi')\in T^*\Sigma$, when the differential equation
\begin{equation}
\label{boundaryeqlsdef}
\begin{cases}
\sigma_\partial(D)(x',\xi')v(t)=0,\quad t>0,\\
\sigma_{\pmb 0}(P)(x',\xi')\left(D^j_tv(0)\right)_{j=0}^{m-1}=h,\end{cases}
\end{equation}
has a solution it is uniquely determined and exponentially decaying.
\end{definition}

Compare the definition of Shapiro-Lopatinskii elliptic pseudo-differential projectors the the construction of $E_+(D)$ in Proposition \ref{structureofeplus}. The reader can find more on Shapiro-Lopatinskii ellipticity of pseudo-local boundary conditions in \cite{schulzeseiler}. Similarly, for more general boundary conditions, if for any $(x',\xi')\in T^*\Sigma$, the differential equation
$$
\begin{cases}
\sigma_\partial(D)(x',\xi')v(t)=0,\quad t>0,\\
\sigma(b)(x',\xi')\left(D^j_tv(0)\right)_{j=0}^{m-1}=h,\end{cases}
$$
has a unique exponentially decaying solution (for a suitably defined symbol $\sigma(b)$) for any right hand side $h$, the problem is said to be Shapiro-Lopatinskii elliptic. The reader can find more on Shapiro-Lopatinskii ellipticity of a local boundary condition in \cite{agranovichencyclopedia}. The constructions in \cite{agranovichencyclopedia} show how local considerations at the boundary prove to be a highly efficient tool for determining regularity of a local boundary condition. Indeed for the purpose of verifying regularity of local boundary conditions, local semiclassical methods are more direct than the global perspective on a boundary condition as a subspace $B\subseteq \checkH(D)$ that dominates this paper.

\begin{example}
We return to the study of Dirichlet and Neumann conditions on a second order elliptic differential operator $D$ acting on a vector bundle $E$ with scalar positive principal symbol as in Example \ref{ex:dirichletandneumann}. We have that 
\begin{align*}
\sigma_{\pmb 0}(P_\mathcal{C}-(1-P_D))(x',\xi')&=
\frac{1}{2}
\begin{pmatrix}
1_E& |\xi|'^{-1}1_E\\
|\xi'|1_E& -1_E\end{pmatrix}
\quad\mbox{and}\\
\sigma_{\pmb 0}(P_\mathcal{C}-(1-P_N))(x',\xi')&=
\frac{1}{2}
\begin{pmatrix}
-1_E& |\xi|'^{-1}1_E\\
|\xi'|1_E& 1_E\end{pmatrix}.
\end{align*}
It follows that $P_\mathcal{C}-(1-P_D)$ and $P_\mathcal{C}-(1-P_N)$ are elliptic. Therefore Theorem \ref{equivocndndforpseudolocal} reproves the well known fact that Dirichlet and Neumann conditions are regular. A short computation (or using Proposition \ref{lsellipticgiveselliptic} below) reproves the well known fact that Dirichlet and Neumann conditions are Shapiro-Lopatinskii elliptic. By Example \ref{ex:dirichletandneumann}, the projectors $P_D$ and $P_N$ are not boundary decomposing which shows that regularity of a boundary condition defined from a projection is not equivalent to the boundary decomposing property of a projector.
\end{example}

We now return to the general case of pseudo-local boundary conditions defined from a projection in the Douglis-Nirenberg calculus.

\begin{proposition}
\label{lsellipticgiveselliptic}
Let $D$ be an elliptic differential operator of order $m>0$ and $P\in \Psi^{\pmb 0}_{\rm cl}(\Sigma;E\otimes \C^m)$. Then $P$ is Shapiro-Lopatinskii elliptic if and only if $P_\mathcal{C}-(1-P)\in \Psi^{\pmb 0}_{\rm cl}(\Sigma; E\otimes \C^m)$ is elliptic. 

In particular, a Shapiro-Lopatinskii elliptic pseudo-local boundary condition is regular in the sense of Subsection \ref{subsec:bcsearly}.
\end{proposition}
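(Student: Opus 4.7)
Plan: Both conditions are symbolic and pointwise in nature, so I reduce to a fibrewise linear-algebraic equivalence. Fix $(x',\xi') \in S^*\Sigma$, set $V := (E\otimes\C^m)_{x'}$, and let $p := p_+(D)(x',\xi')$ and $q := \sigma_{\pmb 0}(P)(x',\xi')$; both are projections on $V$. By Theorem \ref{Thm:CaldProj} the Calderón projection has principal symbol $p_+(D)$, so the principal symbol of $A := P_\mathcal{C} - (1-P)$ is $\sigma_{\pmb 0}(A)(x',\xi') = p+q-1$, and Douglis--Nirenberg ellipticity of $A$ is exactly pointwise invertibility of $p+q-1 \in \mathrm{End}(V)$.

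Next, I unpack the Shapiro--Lopatinskii condition in terms of $p$ and $q$. By Proposition \ref{structureofeplus}, $\mathrm{im}(p) = E_+(D)_{(x',\xi')}$ is precisely the space of initial-value tuples $(D_t^j v(0))_{j=0}^{m-1}$ that parameterise exponentially decaying solutions of $\sigma_\partial(D)(x',\xi')v = 0$, and the initial-value assignment is a linear bijection of the full ODE solution space onto $V$. Consequently, the SL requirement of Definition \ref{def:lsell} -- that whenever the boundary symbol system $\sigma_\partial(D) v = 0,\ q\gamma(v) = h$ admits a solution, the solution is uniquely determined and exponentially decaying -- translates on $V$ into the pair of transversality statements $\mathrm{im}(p)\cap\ker(q) = 0$ and $\ker(p)\cap\mathrm{im}(q) = 0$.

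The crux of the argument is then a short lemma on pairs of projections: for projections $p,q$ on a finite-dimensional vector space $V$,
\[
\ker(p+q-1) = \bigl(\mathrm{im}(p)\cap\ker(q)\bigr) \oplus \bigl(\ker(p)\cap\mathrm{im}(q)\bigr).
\]
To see this, decompose $v = v_+ + v_-$ along $\mathrm{im}(p)\oplus\ker(p)$. Substituting into $pv+qv = v$ gives $qv_+ = (1-q)v_-$; since the left side lies in $\mathrm{im}(q)$ and the right in $\ker(q)$, both must vanish, forcing $v_+ \in \mathrm{im}(p)\cap\ker(q)$ and $v_- \in \ker(p)\cap\mathrm{im}(q)$, and the converse inclusion is immediate. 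In finite dimension, invertibility is equivalent to triviality of the kernel, so $p+q-1$ is invertible if and only if the two transversality conditions identified in the previous paragraph both hold. Combined with the symbolic translation, this establishes SL $\Leftrightarrow$ $A$ is elliptic.

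The main subtlety -- and the step that deserves the most care -- is the correct reading of the analytic SL condition as the \emph{symmetric} pair of transversality conditions on $(\mathrm{im}(p),\ker(p))$ and $(\mathrm{im}(q),\ker(q))$, rather than just the bijectivity of $q|_{\mathrm{im}(p)}$ onto $\mathrm{im}(q)$; only with both transversalities in hand does one recover invertibility of $p+q-1$. Finally, the ``moreover'' clause that SL-ellipticity of $P$ forces $B_P$ to be a regular boundary condition is a direct consequence of the equivalences in Theorem \ref{equivocndndforpseudolocal}.
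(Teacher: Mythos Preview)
Your approach is the same as the paper's in spirit---both reduce the statement to a pointwise linear-algebra fact about the principal symbols $p=p_+(D)$ and $q=\sigma_{\pmb 0}(P)$---but you supply a direct, self-contained proof of the projection identity $\ker(p+q-1)=(\mathrm{im}(p)\cap\ker(q))\oplus(\ker(p)\cap\mathrm{im}(q))$, whereas the paper simply cites \cite[Proof of Theorem 2.15]{BBan} for the passage from the symbolic SL condition to invertibility of $p+q-1$. Your ``subtlety'' paragraph is well taken: for general (non-orthogonal) projections, bijectivity of $q|_{\mathrm{im}(p)}:\mathrm{im}(p)\to\mathrm{im}(q)$---which is how the paper phrases the symbol-level SL condition---does \emph{not} by itself force $\ker(p)\cap\mathrm{im}(q)=0$ (on $\C^2$ take $\mathrm{im}(p)=\C e_1$, $\ker(p)=\mathrm{im}(q)=\C e_2$, $\ker(q)=\C(e_1+e_2)$: then $q|_{\mathrm{im}(p)}$ is an isomorphism yet $p+q-1$ is singular). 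So the symmetric pair of transversalities really is what is needed, and your lemma makes this explicit where the paper's citation leaves it implicit.

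The one place where your argument is no more detailed than the paper's is the passage from Definition~\ref{def:lsell} to the symbolic condition: you assert that the definition ``translates'' into both transversality statements, but do not derive the second one ($\ker(p)\cap\mathrm{im}(q)=0$) from the wording. The paper's own unpacking of SL as ``$\sigma_{\pmb 0}(P)$ restricts to an isomorphism $E_+(D)\to\sigma_{\pmb 0}(P)(E\otimes\C^m)$'' gives only the first transversality plus a rank equality, so neither proof fully closes this step on its own; it is the cited argument in \cite{BBan} (or, equivalently, your reading of the definition) that supplies the missing half.
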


\begin{proof}
The condition of Shapiro-Lopatinskii ellipticity is equivalent to $\sigma_0(P)$ defining an isomorphism between $E_+(D)$ and $\sigma_{\pmb 0}(P)(E\otimes \C^m)$. By \cite[Proof of Theorem 2.15]{BBan}, this is equivalent to $p_+(D)-\sigma_{\pmb 0}(1-P)$ being an isomorphism in all points, or in other words that $P_\mathcal{C}-(1-P)$ is elliptic.
\end{proof}

\section{Characterisations of regularity via graphical decompositions}
\label{subsec:charofellfirstorder}

A graphical characterisation for regular boundary conditions was first given in \cite{BB}, motivated by the desire to capture regular boundary conditions beyond those that are given by pseudodifferential projectors. 
The main application in mind was a remarkably conceptual proof of the relative index theorem, originally due to Gromov-Lawson \cite{GL}. 
In \cite{BB}, the boundary decomposing projectors were assumed to be self-adjoint spectral projectors, with the assumption that the adapted operator on the boundary was self-adjoint.
A particular luxury in this setting was that the obtained decomposition could be equated with orthogonal complements in $\Lp{2}$. 
This was subsequently generalised \cite[Theorem 2.9]{BBan} for general first order operators, where orthogonal complements were no longer available and the decomposition had to be obtained separately for the boundary condition and its adjoint.
In particular, this meant that there were double the number of spaces in the decomposition and we consider the most significant step towards our presentation here for the general order case. 

\subsection{Preliminaries}
As above, we assume that $D$ is an elliptic differential operator and $\GProj_+$ is a boundary decomposing projection for $D$ (in the sense of Definition \ref{boundafefidodo} on page \pageref{boundafefidodo}). We use the notation $\GProj_-=1-\GProj_+$ and $\GProj_+^*$ for the appropriate dual operator to $\GProj_+$ with respect to the duality induced by  $\Lp{2}$ inner product.

\begin{lemma}
\label{Lem:Pairing}
The pairing 
$$\inprod{\cdot, \cdot}: \adj{\GProj}_{\pm} \SobHH{{-\alpha}}(\Sigma;E\otimes \C^n_{\rm op}) \times \GProj_{\pm} \SobHH{{\alpha}}(\Sigma;E\otimes \C^m)\to \C,$$ 
induced from the $\Lp{2}(\Sigma;E)$ inner product is a perfect pairing for $\alpha \in \{-\frac12,m- \frac12\}$. 
\end{lemma}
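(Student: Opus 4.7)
The plan is to deduce this from the known duality between the graded Sobolev spaces and the abstract fact that a bounded projection on a Banach space induces, via its transpose on the dual, a perfect pairing between the ranges.

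First, by Proposition~\ref{ell2pairingsofgradedsob}, the $\Lp{2}$-pairing realises $\SobHH{-\alpha}(\Sigma;E\otimes \C^m_{\rm op})$ as the continuous dual of $\SobHH{\alpha}(\Sigma;E\otimes \C^m)$ for any $\alpha\in\R$. By property \ref{Def:E1HO} of Definition~\ref{boundafefidodo}, $\GProj_\pm$ is a bounded projection on $\SobHH{\alpha}(\Sigma;E\otimes \C^m)$ for $\alpha\in\{-\frac12,m-\frac12\}$, so its transpose $\GProj_\pm^*$ with respect to the $\Lp{2}$-pairing is a bounded projection on $\SobHH{-\alpha}(\Sigma;E\otimes \C^m_{\rm op})$. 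In particular, both $\GProj_\pm \SobHH{\alpha}(\Sigma;E\otimes \C^m)$ and $\GProj_\pm^* \SobHH{-\alpha}(\Sigma;E\otimes \C^m_{\rm op})$ are closed subspaces (being kernels of the complementary projections).

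Next I would verify non-degeneracy. If $\xi \in \GProj_\pm^* \SobHH{-\alpha}(\Sigma;E\otimes \C^m_{\rm op})$ pairs to zero with every $\eta \in \GProj_\pm \SobHH{\alpha}(\Sigma;E\otimes \C^m)$, then for an arbitrary $u \in \SobHH{\alpha}(\Sigma;E\otimes \C^m)$ we have $\langle \xi, \GProj_\pm u\rangle = 0$, whereas $\langle \xi, \GProj_\mp u\rangle = \langle \GProj_\mp^* \xi, u\rangle = 0$ because $\GProj_\mp^* \xi = 0$ (as $\xi$ lies in the range of $\GProj_\pm^*$, which is the kernel of $\GProj_\mp^*$). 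Summing gives $\langle \xi, u\rangle = 0$ for all $u$, whence $\xi = 0$ by the perfectness of the ambient pairing. The symmetric argument applied to $\GProj_\pm$ gives non-degeneracy in the other variable.

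Finally, surjectivity: given a continuous linear functional $\phi$ on $\GProj_\pm \SobHH{\alpha}(\Sigma;E\otimes \C^m)$, extend it by Hahn--Banach to a functional on $\SobHH{\alpha}(\Sigma;E\otimes \C^m)$, which by Proposition~\ref{ell2pairingsofgradedsob} is represented by some $\tilde\xi \in \SobHH{-\alpha}(\Sigma;E\otimes \C^m_{\rm op})$. Then $\xi := \GProj_\pm^* \tilde\xi \in \GProj_\pm^* \SobHH{-\alpha}(\Sigma;E\otimes \C^m_{\rm op})$ satisfies $\langle \xi, \eta\rangle = \langle \tilde\xi, \GProj_\pm \eta\rangle = \langle \tilde\xi, \eta\rangle = \phi(\eta)$ for every $\eta \in \GProj_\pm \SobHH{\alpha}(\Sigma;E\otimes \C^m)$, so the induced map on the $\GProj_\pm^*$-side hits every functional. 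The analogous extension argument applied on the other side completes the proof that the pairing is perfect. There is no real obstacle here; the only point to take care of is that the transpose of $\GProj_\pm$ with respect to the $\Lp{2}$-pairing lands in the correct graded Sobolev space of order $-\alpha$, which is precisely Proposition~\ref{ell2pairingsofgradedsob}.
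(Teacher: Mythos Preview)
Your argument is correct and follows the same underlying idea as the paper. The paper's proof is more modular: it invokes Proposition~\ref{ell2pairingsofgradedsob} for the ambient duality and then appeals directly to Lemma~\ref{Lem:SubPair}, an abstract statement that a perfect pairing $\cB\times\cB^*\to\C$ together with a bounded projection $P$ on $\cB$ yields a perfect pairing between $P\cB$ and $P^*\cB^*$; your non-degeneracy and Hahn--Banach steps are precisely the content of Lemma~\ref{Lem:SubPair}\ref{Lem:SubPair:2}, written out in situ rather than cited.
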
 

\begin{proof}
The $\Lp{2}$-inner product $\inprod{\cdot,\cdot}$ induces a perfect pairing between $\SobHH{{-\alpha}}(\Sigma;E\otimes \C^n_{\rm op})$ and $\SobHH{{\alpha}}(\Sigma;E\otimes \C^m)$ by Proposition \ref{ell2pairingsofgradedsob}. 
The desired conclusion follows simply on making the right identifications to the spaces $\cB$, $\cB^\ast$, $X$, $Y$ in Lemma \ref{Lem:SubPair}. 
\end{proof}

\begin{proposition}
\label{Prop:ProjSob}
For any boundary decomposing projection $\GProj_+$ for the elliptic differential operator $D$, we have that: 
\begin{enumerate}[(i)]
\item  \label{Prop:ProjSob:1} 
$\GProj_+ \checkH(D) = \GProj_+ \SobHH{{-\frac12}}(\Sigma;E\otimes \C^m)$ and $\GProj_- \checkH(D) = \GProj_- \SobHH{{m-\frac12}}(\Sigma;E\otimes \C^m)$, and 
\item  \label{Prop:ProjSob:2} 
$\hatH(D) = \GProj_+^\ast \SobHH{{\frac12}}(\Sigma;E\otimes \C^m_{\rm op}) \oplus \GProj_-^\ast \SobHH{{\frac12-m}}(\Sigma;E\otimes \C^m_{\rm op})$.
\end{enumerate}
\end{proposition}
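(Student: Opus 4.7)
For part \ref{Prop:ProjSob:1}, the equalities are already recorded in Proposition \ref{prop:bcimpliessppsls}; the strategy is simply to unpack (P\ref{Def:E1HO})--(P\ref{Def:EEndHO}) and invoke the open mapping theorem. The continuous inclusions $\SobHH{{m-\frac12}}\hookrightarrow\checkH(D)\hookrightarrow \SobHH{{-\frac12}}$ give $\GProj_+\SobHH{{m-\frac12}}\subseteq\GProj_+\checkH(D)\subseteq\GProj_+\SobHH{{-\frac12}}$, so it suffices to observe that by (P\ref{Def:E3HO}) every $u\in\GProj_+\SobHH{{-\frac12}}$ already has finite $\checkH(D)$-norm (its $\GProj_-$-part is zero), hence lies in $\GProj_+\checkH(D)$. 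The argument for $\GProj_-$ is symmetric.

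For part \ref{Prop:ProjSob:2}, the plan is to obtain the decomposition of $\hatH(D)$ as the Banach space dual of the decomposition in part \ref{Prop:ProjSob:1}, using the $\Lp{2}$-pairing throughout for consistency. First, properties (P\ref{Def:E1HO})--(P\ref{Def:EEndHO}) together with part \ref{Prop:ProjSob:1} imply that
\begin{equation*}
\checkH(D)=V_+\oplus V_-,\qquad V_+:=\GProj_+\SobHH{{-\frac12}}(\Sigma;E\otimes\C^m),\quad V_-:=\GProj_-\SobHH{{m-\frac12}}(\Sigma;E\otimes\C^m),
\end{equation*}
as a topological direct sum of Banach spaces. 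Dualising yields $\checkH(D)^{\ast}=V_+^{\ast}\oplus V_-^{\ast}$. Lemma \ref{Lem:Pairing} (applied with $\alpha=-\tfrac12$ for $V_+$ and $\alpha=m-\tfrac12$ for $V_-$) identifies $V_\pm^{\ast}$ with $\GProj_\pm^{\ast}\SobHH{{-\alpha_\pm}}(\Sigma;E\otimes\C^m_{\rm op})$ via the $\Lp{2}$-pairing, and Lemma \ref{relationtohatspace} identifies $\checkH(D)^{\ast}$ with $\hatH(D)$ via the same pairing. Chaining these identifications gives the desired decomposition of $\hatH(D)$.

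The step requiring the most care is ensuring that the abstract Banach space identifications coincide as subspaces of distributions, i.e.\ that the direct sum splitting of $\hatH(D)$ is realised by the adjoint projectors $\GProj_\pm^{\ast}$. For this, the key computation is that $\GProj_+\GProj_-=\GProj_-\GProj_+=0$, which by the pairing identity $\langle\GProj_+^{\ast}\eta,\GProj_-\xi\rangle=\langle\eta,\GProj_+\GProj_-\xi\rangle=0$ shows that $\GProj_+^{\ast}\hatH(D)$ annihilates $V_-$ and is thus the annihilator of $V_-$ in $\hatH(D)$ (equality by a dimension-count/density argument using the perfect pairing), hence isomorphic to $V_+^{\ast}$; Lemma \ref{Lem:Pairing} then pins it down as $\GProj_+^{\ast}\SobHH{{\frac12}}$. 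Symmetrically for $\GProj_-^{\ast}\hatH(D)=\GProj_-^{\ast}\SobHH{{\frac12-m}}$. The decomposition is direct because $\GProj_+^{\ast}+\GProj_-^{\ast}=I$ on $\SobHH{{\frac12-m}}$ and $\GProj_+^{\ast}\GProj_-^{\ast}=0$.
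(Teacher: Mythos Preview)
Your proof is correct and follows essentially the same approach as the paper: part \ref{Prop:ProjSob:1} is Proposition \ref{prop:bcimpliessppsls}, and part \ref{Prop:ProjSob:2} is obtained by dualising the decomposition of $\checkH(D)$ via the $\Lp{2}$-pairing, using Lemma \ref{Lem:Pairing} and Lemma \ref{relationtohatspace} (the paper phrases the duality step by citing Kato's annihilator identities instead, but the content is the same).

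One minor imprecision worth tightening in part \ref{Prop:ProjSob:1}: the sentence ``by (P\ref{Def:E3HO}) every $u\in\GProj_+\SobHH{{-\frac12}}$ already has finite $\checkH(D)$-norm'' is not quite right as stated, since (P\ref{Def:E3HO}) is a norm equivalence valid only for $u$ already in $\checkH(D)$. The open mapping argument you invoke at the outset is what actually closes this: (P\ref{Def:E3HO}) shows the inclusion $\GProj_+\checkH(D)\hookrightarrow\GProj_+\SobHH{{-\frac12}}$ is bounded below, hence has closed range; density of $\GProj_+\SobHH{{m-\frac12}}$ then gives surjectivity. Likewise, in part \ref{Prop:ProjSob:2} the phrase ``dimension-count'' should be dropped---the argument is purely the closedness-plus-density one you describe.
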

\begin{proof}
Item \ref{Prop:ProjSob:1} follows from Proposition \ref{prop:bcimpliessppsls}. The proof of \ref{Prop:ProjSob:2} follows from \ref{Prop:ProjSob:1}  since $\checkH(D) = \GProj_- \SobHH{{m-\frac12}}(\Sigma;E\otimes \C^m)\oplus \GProj_+ \SobHH{{-\frac12}}(\Sigma;E\otimes \C^m)$, Lemma \ref{Lem:Pairing}, and \cite[Theorem 4.8 (4.12) in Chapter 4, Section 4]{Kato}.
\end{proof}

%

\begin{definition}[Graphical decomposition]
\label{def:elldecomspsHO}
Let $B$ be a boundary condition and $\GProj_+$ a boundary decomposing projection for an elliptic differential operator $D$ of order $m>0$. Following \cite{BBan}, we say that a graphical decomposition of $B$ (with respect to $\GProj_+$) is a decomposition of the following form:
\begin{enumerate}[(i)]
\item
	there exist mutually complementary subspaces $W_{\pm}$ and $V_{\pm}$ of $\SobHH{{m-{\frac12}}}(\Sigma;E\otimes \C^m)$ satisfying: 
	$$W_{\pm} \oplus V_{\pm} = \GProj_\pm \SobHH{{m-{\frac12}}}(\Sigma;E\otimes \C^m),$$
\item
	it holds that $\adj{W_-} \subset \SobHH{{{\frac12}}}(\Sigma;E\otimes \C^m_{\rm op})$, and $\adj{W_\pm} \subset \SobHH{{{\frac12} - m}}(\Sigma; E\otimes \C^m_{\rm op})$ and the subspaces $W_\pm\subset \SobHH{{m-{\frac12}}}(\Sigma;E\otimes \C^m)$ are finite dimensional,
\item
	there exists a continuous map $g: V_- \to V_+$ such that  
	$$\adj{g} (\adj{V}_+\cap \SobHH{{{\frac12}}}(\Sigma;E\otimes \C^m_{\rm op})) \subset \adj{V}_-\cap \SobHH{{{\frac12}}}(\Sigma;E\otimes \C^m_{\rm op}),$$ 
	where $\adj{g}$ denotes the adjoint in the induced $\Lp{2}$-pairing between $ \SobHH{{m-{\frac12}}}(\Sigma;E\otimes \C^m)$ and $\SobHH{{\frac12-m}}(\Sigma;E\otimes \C^m_{\op})$
	$$ B = \set{v + gv: v \in V_-} \oplus W_+.$$
\end{enumerate} 
If a boundary condition admits a graphical decomposition (with respect to $\GProj_+$), we say it is $\GProj_+$-graphically decomposable. 
\end{definition}

We remark that spaces $V_{\pm}^*,W_{\pm}^\ast$, denotes subspaces of  $\SobHH{{{\frac12}-m}}(\Sigma;E\otimes \C^m_{\rm op})$ obtained via the induced pairing from the $\Lp{2}$-inner product, formed with respect to the direct sum decomposition $ \SobHH{{m-{\frac12}}}(\Sigma;E\otimes \C^m)=V_+\oplus W_+\oplus V_-\oplus W_-$.

\begin{definition}[$\GProj_+$ Fredholm pair decomposed in $\SobHH{{m-\frac12}}$]
\label{Def:FP}
Let $B \subset \SobHH{{m-\frac12}}(\Sigma;E\otimes \C^m)$. Suppose: 
\begin{enumerate}[(i)] 
\item 
\label{Def:FP1} 
$B$ is closed subspace of $\SobHH{{m-\frac12}}(\Sigma;E\otimes \C^m)$,

\item 
\label{Def:FP2}
$(\GProj_+\SobHH{{m-\frac12}}(\Sigma;E\otimes \C^m), B)$ and $(\GProj_-^*\SobHH{{\frac12}}(\Sigma;E\otimes \C^m_{\rm op})), B^\perp\cap \SobHH{{\frac12}}(\Sigma;E\otimes \C^m_{\rm op}))$ are Fredholm pairs in $\SobHH{{m-\frac12}}(\Sigma;E\otimes \C^m)$ and $\SobHH{{\frac12}}(\Sigma;E\otimes \C^m_{\rm op}))$, respectively, and 
\item 
\label{Def:FP3} $\indx(\GProj_+\SobHH{{m-\frac12}}(\Sigma;E), B) = -\indx(\GProj_-^*\SobHH{{\frac12}}(\Sigma;E\otimes \C^m_{\rm op})), B^\perp\cap \SobHH{{\frac12}}(\Sigma;E\otimes \C^m_{\rm op})).$
\end{enumerate}
Then, we say that $B$ is \emph{Fredholm-pair decomposed with respect to $\GProj_+$ in $\SobHH{{m-\frac12}}$}.
\end{definition}

\begin{definition}[$\GProj_+$ Fredholm pair decomposed in $\checkH(D)$]
\label{Def:FPC}
Let $B \subset \SobHH{{m-\frac12}}(\Sigma;E\otimes \C^m)$. Suppose: 
\begin{enumerate}[(i)] 
\item 
\label{Def:FPC1} 
$B$ is closed subspace of $\checkH(D)$ and $B^\perp\cap \SobHH{{\frac12}}(\Sigma;E\otimes \C^m_{\rm op})$ is a closed subspace of $\hatH(D)$,

\item 
\label{Def:FPC2}
$(\GProj_+\checkH(D), B)$ and $(\GProj_-^*\hatH(D), B^\perp\cap \SobHH{{\frac12}}(\Sigma;E\otimes \C^m_{\rm op}))$ are Fredholm pairs in $\checkH(D)$ and $\hatH(D)$ respectively, 
\item 
\label{Def:FPC3} $\indx (\GProj_+\checkH(D), B)= -\indx (\GProj_-^*\hatH(D), B^\perp\cap \SobHH{{\frac12}}(\Sigma;E\otimes \C^m_{\rm op})).$
\end{enumerate}
Then, we say that $B$ is \emph{Fredholm-pair decomposed with respect to $\GProj_+$ in $\checkH(D)$}.
\end{definition}

Note that the $\perp$ appearing in these definitions  taken with respect to the induced pairing from $\Lp{2}$ between $\SobHH{\frac12-m}(\Sigma;E \otimes \C^m_{\rm op})$ and $\SobHH{m - \frac12 }(\Sigma;E \otimes \C^m)$.

\subsection{Decomposability}

The main theorem we prove in this section is the following.
\begin{theorem}
\label{Thm:Ell}
Let $D$ be an elliptic differential operator of order $m>0$. For  $\GProj_+$ being a boundary decomposing projection for $D$, the following are equivalent: 
\begin{enumerate}[(i)] 
\item \label{Thm:Ell:1}
	$B \subset \checkH(D)$ is a regular boundary condition,
\item \label{Thm:Ell:2} 
	$B$ is $\GProj_+$ graphically decomposable,
\item \label{Thm:Ell:3} 
	$B$ is $\GProj_+$ Fredholm decomposable in $\SobHH{{m-\frac12}}$ 
\item \label{Thm:Ell:4} 
	$B$ is $\GProj_+$ Fredholm decomposable in $\checkH(D)$.
\end{enumerate}
If one of these equivalent conditions are satisfied, then
\begin{equation}
\label{Eqn:Ell:Adj} 
\scalebox{1.5}{a}_\dagger\adj{B} = B^{\perp}= \set{u - \adj{g}u: u \in \adj{V}_+\cap \SobHH{{{\frac12}}}(\Sigma;E\otimes \C^m_{\rm op})} \oplus \adj{W}_-.
\end{equation}
and 
\begin{equation}
\label{Eqn:Ell:FP}
\begin{aligned}
\indx(\GProj_+\SobHH{{m-\frac12}}(\Sigma;E\otimes \C^m), B) &= \indx (\GProj_+\checkH(D), B), \\
\indx(\GProj_-^*\SobHH{{\frac12}}(\Sigma;E\otimes \C^m_{\rm op})), B^\perp\cap \SobHH{{\frac12}}(\Sigma;E\otimes \C^m_{\rm op})) &= \indx (\GProj_-^*\hatH(D), B^\perp\cap \SobHH{{\frac12}}(\Sigma;E\otimes \C^m_{\rm op})).
\end{aligned}
\end{equation}
In particular, the properties of being graphically decomposable, Fredholm decomposable in $\SobHH{{m-\frac12}}$ and Fredholm decomposable in $\checkH(D)$ are independent of the choice of boundary decomposing projection.
\end{theorem}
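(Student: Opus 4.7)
The plan is to model the argument on the proof of \cite[Theorem 2.9]{BBan}, replacing the spectral projector $\chi^+(A)$ by the abstract boundary decomposing projector $\GProj_+$ and replacing the adapted boundary operator by the Douglis--Nirenberg machinery of Section~\ref{sec:calderon}. Throughout I will use three pieces of input from the paper: Proposition~\ref{charsemiellintermsofboundary} (which reduces semi-regularity to containment in the mixed Sobolev space), Proposition~\ref{cordescofadjointbc} together with Theorem~\ref{boundarypairing} (which identifies $B^{*}$ with $\scalebox{1.5}{a}_\dagger^{-1}B^\perp$ via the $\Lp{2}$-pairing), and the defining norm equivalence for boundary decomposing projectors from Definition~\ref{boundafefidodo}. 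I will prove the chain \ref{Thm:Ell:2}$\Rightarrow$\ref{Thm:Ell:1}$\Rightarrow$\ref{Thm:Ell:3}$\Rightarrow$\ref{Thm:Ell:2} and then peel off the equivalence \ref{Thm:Ell:3}$\Leftrightarrow$\ref{Thm:Ell:4} separately.

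For \ref{Thm:Ell:2}$\Rightarrow$\ref{Thm:Ell:1}, a graphical decomposition $B=\{v+gv:v\in V_-\}\oplus W_+$ manifestly sits inside $\SobHH{{m-{\frac12}}}(\Sigma;E\otimes\C^m)$, so Proposition~\ref{charsemiellintermsofboundary} gives semi-regularity of $D_{\rm B}$. To get regularity of $D_{\rm B}^{*}=D^\dagger_{B^*}$, I will compute the $\Lp{2}$-annihilator in the decomposition $\SobHH{{m-{\frac12}}}=V_+\oplus W_+\oplus V_-\oplus W_-$. An element $\eta=\eta_{V^*_+}+\eta_{W_+^*}+\eta_{V_-^*}+\eta_{W_-^*}$ of $\SobHH{{{\frac12}-m}}(\Sigma;E\otimes\C^m_{\rm op})$ lies in $B^\perp$ iff $\langle\eta,W_+\rangle=0$ (killing $W^*_+$) and $\langle\eta,v+gv\rangle=0$ for all $v\in V_-$ (giving $\eta_{V^*_-}=-g^{*}\eta_{V^*_+}$), which produces the claimed formula \eqref{Eqn:Ell:Adj}. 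The hypothesis $g^{*}(\adj{V}_+\cap\SobHH{{{\frac12}}})\subseteq \adj{V}_-\cap\SobHH{{{\frac12}}}$ together with $\adj{W}_-\subseteq\SobHH{{{\frac12}}}$ then place $B^\perp\cap\SobHH{{{\frac12}}}$ inside $\SobHH{{{\frac12}}}(\Sigma;E\otimes\C^m_{\rm op})$; applying the isomorphism $\scalebox{1.5}{a}_\dagger\colon\SobHH{{{\frac12}}}\to\SobHH{{m-{\frac12}}}$ from Lemma~\ref{somecontinuuisuforata} concludes that $B^{*}\subseteq\SobHH{{m-{\frac12}}}(\Sigma;F\otimes\C^m)$, so $D_{\rm B}^{*}$ is semi-regular.

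For \ref{Thm:Ell:1}$\Rightarrow$\ref{Thm:Ell:3}, regularity says $B\subseteq\SobHH{{m-{\frac12}}}$ and $B^\perp\cap\SobHH{{{\frac12}}}=\scalebox{1.5}{a}_\dagger B^*$ is a closed subspace of $\SobHH{{{\frac12}}}$, so the pairs of item~(ii) in Definition~\ref{Def:FP} are well-defined pairs of closed subspaces. I will identify $\ker D_{\rm B}/\ker D_{\min}$ with $B\cap\Ca_D$ via Proposition~\ref{domainandkernelHO} and, via the explicit perfect pairing from Theorem~\ref{boundarypairing}, identify $B^\perp\cap\Ca_{D^\dagger}^{\scalebox{1.5}{a}_\dagger}$ with $\checkH(D)/(B+\Ca_D)$; Theorem~\ref{Thm:FredChar} then turns the Fredholmness of $D_{\rm B}$ (Proposition~\ref{Prop:MaxClosed}) into Fredholmness of the pair $(\GProj_+\SobHH{{m-\frac12}},B)$ once one replaces $\Ca_D$ by $\GProj_+\SobHH{{m-\frac12}}$ using the boundary decomposing property. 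The index identity in Definition~\ref{Def:FP}\ref{Def:FP3} will follow from duality: the perfect $\Lp{2}$-pairing of Lemma~\ref{Lem:Pairing} between $\GProj_+^{*}\SobHH{{{\frac12}}}$ and $\GProj_+\SobHH{{m-{\frac12}}}$ swaps the roles of kernel and cokernel, producing $\indx(\GProj_+\SobHH{{m-\frac12}},B)=-\indx(\GProj_-^{*}\SobHH{{{\frac12}}},B^\perp\cap\SobHH{{{\frac12}}})$. For \ref{Thm:Ell:3}$\Rightarrow$\ref{Thm:Ell:2}, I will set $W_+:=\ker(\GProj_-|_B)$ (finite dimensional by Fredholmness) and $V_-:=\GProj_-B$; standard Fredholm pair theory gives a finite-dimensional algebraic complement $W_-\subseteq\GProj_-\SobHH{{m-\frac12}}$, and the map $g\colon V_-\to V_+:=\GProj_+\SobHH{{m-\frac12}}$ arises as the induced ``slope'' of $B\ominus W_+$ over $V_-$, continuous by the open mapping theorem. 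The dual condition on $g^{*}$ is where the Fredholm pair condition on $B^\perp\cap\SobHH{{{\frac12}}}$ and the index matching \ref{Def:FP3} enter---this is the main technical obstacle, and the place where the finite dimensionality of $W_\pm$ and the regularity $\adj{W}_\pm\subseteq\SobHH{{{\frac12}-m}}$, $\adj{W}_-\subseteq\SobHH{{{\frac12}}}$ must all be arranged compatibly by choosing $W_-$ to equal $(\scalebox{1.5}{a}_\dagger B^{*})^{\perp_{V_+^*}}$.

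Finally, the equivalence \ref{Thm:Ell:3}$\Leftrightarrow$\ref{Thm:Ell:4} together with the index identities \eqref{Eqn:Ell:FP} are a direct consequence of the boundary decomposing property: \ref{Def:EEndHO} asserts that $\GProj_+$ and $\GProj_-$ realise $\checkH(D)$ as an internal topological direct sum of $\GProj_+\SobHH{{-{\frac12}}}$ and $\GProj_-\SobHH{{m-{\frac12}}}$, with the inclusion $\GProj_-\SobHH{{m-{\frac12}}}\hookrightarrow\GProj_-\checkH(D)$ being a topological isomorphism. Hence for any closed $B\subseteq\SobHH{{m-{\frac12}}}$, the pairs $(\GProj_+\checkH(D),B)$ and $(\GProj_+\SobHH{{m-\frac12}},B)$ are simultaneously Fredholm with equal kernel and cokernel, and dually for $\GProj_-^{*}$, which simultaneously establishes \ref{Thm:Ell:3}$\Leftrightarrow$\ref{Thm:Ell:4} and \eqref{Eqn:Ell:FP}. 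Since the construction of the graphical decomposition in Step~3 uses only $\GProj_+$ intrinsically, the independence of the resulting property from the choice of $\GProj_+$ is automatic from the equivalence with \ref{Thm:Ell:1}.
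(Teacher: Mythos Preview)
Your overall chain \ref{Thm:Ell:2}$\Rightarrow$\ref{Thm:Ell:1}$\Rightarrow$\ref{Thm:Ell:3}$\Rightarrow$\ref{Thm:Ell:2} and \ref{Thm:Ell:3}$\Leftrightarrow$\ref{Thm:Ell:4} differs from the paper's cycle \ref{Thm:Ell:2}$\Rightarrow$\ref{Thm:Ell:1}$\Rightarrow$\ref{Thm:Ell:2}$\Rightarrow$\ref{Thm:Ell:3}$\Rightarrow$\ref{Thm:Ell:4}$\Rightarrow$\ref{Thm:Ell:1}, and the step \ref{Thm:Ell:2}$\Rightarrow$\ref{Thm:Ell:1} as well as the identification \eqref{Eqn:Ell:Adj} are fine and match the paper. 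However, your direct route \ref{Thm:Ell:1}$\Rightarrow$\ref{Thm:Ell:3} contains a real gap.

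The sentence ``Theorem~\ref{Thm:FredChar} then turns the Fredholmness of $D_{\rm B}$ \dots into Fredholmness of the pair $(\GProj_+\SobHH{{m-\frac12}},B)$ once one replaces $\Ca_D$ by $\GProj_+\SobHH{{m-\frac12}}$ using the boundary decomposing property'' does not work. Theorem~\ref{Thm:FredChar} gives you that $(B,\Ca_D)$ is a Fredholm pair in $\checkH(D)$, but Definition~\ref{boundafefidodo} provides no mechanism to pass from $\Ca_D$ to $\GProj_+\SobHH{{m-\frac12}}$: a boundary decomposing projector need not satisfy any Fredholm or compact relationship with $P_{\Ca}$, and even when $\GProj_+=P_{\Ca}$ one has $\GProj_+\checkH(D)=\Ca_D$ whereas $\GProj_+\SobHH{{m-\frac12}}=\Ca_D\cap\SobHH{{m-\frac12}}$, a strictly smaller space. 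What actually makes $(\GProj_+\SobHH{{m-\frac12}},B)$ a Fredholm pair is a compactness argument: since $B\subset\SobHH{{m-\frac12}}$ and the $\checkH(D)$-norm on $B$ satisfies $\|b\|_{\checkH(D)}\simeq\|\GProj_-b\|_{\SobHH{{m-\frac12}}}+\|\GProj_+b\|_{\SobHH{{-\frac12}}}$, the map $\GProj_+|_B$ factors through the compact embedding $\SobHH{{m-\frac12}}\hookrightarrow\SobHH{{-\frac12}}$, so \cite[Lemma~A.1]{BBan} gives $\dim(B\cap\GProj_+\SobHH{{m-\frac12}})<\infty$ and $\GProj_-B$ closed; the dual argument on $B^\perp\subset\SobHH{{\frac12}}$ handles the cokernel side. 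This is precisely the content of the paper's Lemma~\ref{Lem:SubProp}, which is the technical core of the paper's \ref{Thm:Ell:1}$\Rightarrow$\ref{Thm:Ell:2}; you cannot bypass it via Theorem~\ref{Thm:FredChar}.

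A second, smaller gap: your \ref{Thm:Ell:3}$\Leftrightarrow$\ref{Thm:Ell:4} asserts ``for any closed $B\subseteq\SobHH{{m-\frac12}}$, the pairs $(\GProj_+\checkH(D),B)$ and $(\GProj_+\SobHH{{m-\frac12}},B)$ are simultaneously Fredholm with equal kernel and cokernel''. The kernel equality is clear (both equal $B\cap\GProj_+\SobHH{{m-\frac12}}$ since $B\subset\SobHH{{m-\frac12}}$), and the cokernel equality does follow from \cite[Lemma~A.4]{BBan} since both quotients identify with $\GProj_-\SobHH{{m-\frac12}}/\GProj_-B$. But Definition~\ref{Def:FPC} also requires $B$ closed in $\checkH(D)$, and in \ref{Thm:Ell:3} you only assume $B$ closed in $\SobHH{{m-\frac12}}$; the $\checkH(D)$-norm is strictly weaker on $\SobHH{{m-\frac12}}$, so this is not automatic. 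The paper devotes step~(a) of its proof of \ref{Thm:Ell:3}$\Rightarrow$\ref{Thm:Ell:4} to showing, via the dual Fredholm pair hypothesis and an index comparison, that $B$ is in fact closed in $\checkH(D)$. You need to address this.
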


The proof of this theorem is aided by the structure of \cite[Theorem 2.9]{BBan} which deals with the first order case.
In a sense, the essential ingredients are contained in that proof, although the subtleties of this decomposition requires significant care.
For that purpose, we give full details of the argument for the general order case. Before entering the proof of Theorem \ref{Thm:Ell}, let us give an index theoretical consequence. The following corollary follows directly from Theorem \ref{Thm:FredChar} and Equation \eqref{Eqn:Ell:FP}

\begin{corollary}
\label{simplexindex}
Let $D$ be an elliptic differential operator of order $m>0$ and $B\subseteq \SobHH{{m-\frac12}}(\Sigma;E\otimes \C^m)$ a regular boundary condition for $D$. Write $\Ca_D^{m-\frac12}:=P_\Ca\SobHH{{m-\frac12}}(\Sigma;E\otimes \C^m)=\mathcal{C}_D\cap \SobHH{{m-\frac12}}(\Sigma;E\otimes \C^m)$. The index of $D_B$ is given by 
$$\indx(D_B)=\indx(B, \Ca_D^{m-\frac12}) + \dim \ker (D_{\min}) - \dim \ker (D_{\min}^\dagger).$$
in terms of an index of the Fredholm pair $(B, \Ca_D^{m-\frac12})$ in $\SobHH{{m-\frac12}}(\Sigma;E\otimes \C^m)$.
\end{corollary}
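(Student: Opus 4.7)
The plan is to combine the two cited results with the choice of the Calderón projection $P_\Ca$ as the boundary decomposing projection. Since $B$ is a regular boundary condition, Proposition \ref{Prop:MaxClosed} ensures that $D_B$ is Fredholm, so Theorem \ref{Thm:FredChar} applies and yields the identity
$$\indx(D_B) = \indx(B, \Ca_D) + \dim \ker(D_{\min}) - \dim \ker(D_{\min}^\dagger),$$
where the Fredholm pair $(B, \Ca_D)$ is taken in $\checkH(D)$. The issue is then purely one of identifying the ambient space in which we compute the index of the Fredholm pair.

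Next, I would invoke Corollary \ref{Cor:CalCheck}, which states that the Calderón projection $P_\Ca$ is a boundary decomposing projection for $D$. Taking $\GProj_+ := P_\Ca$ in Theorem \ref{Thm:Ell}, the images are
$$\GProj_+ \checkH(D) = P_\Ca \checkH(D) = \Ca_D \quad\text{and}\quad \GProj_+ \SobHH{m-\frac12}(\Sigma; E \otimes \C^m) = \Ca_D^{m-\frac12},$$
by the very definition of $\Ca_D^{m-\frac12}$. Since $B$ is regular, Theorem \ref{Thm:Ell} applies and Equation \eqref{Eqn:Ell:FP} gives
$$\indx\bigl(\Ca_D^{m-\frac12}, B\bigr) = \indx(\Ca_D, B),$$
where the left hand side is computed in $\SobHH{m-\frac12}(\Sigma; E \otimes \C^m)$ and the right hand side in $\checkH(D)$.

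Finally, because the index of a Fredholm pair in a Hilbert space is symmetric in its two arguments (both $\dim(V_1 \cap V_2)$ and $\dim H/(V_1 + V_2)$ are symmetric in $V_1, V_2$), we have $\indx(B, \Ca_D) = \indx(\Ca_D, B)$ and analogously at the $\SobHH{m-\frac12}$ level. Substituting $\indx(B, \Ca_D) = \indx(B, \Ca_D^{m-\frac12})$ into the formula from Theorem \ref{Thm:FredChar} yields the stated identity. There is no genuine obstacle here beyond bookkeeping; the only points requiring verification are that $P_\Ca$ satisfies the hypotheses of a boundary decomposing projection (supplied by Corollary \ref{Cor:CalCheck}) and that the regularity hypothesis on $B$ allows the use of both Theorem \ref{Thm:FredChar} (via Fredholmness) and Theorem \ref{Thm:Ell}.
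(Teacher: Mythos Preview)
Your proof is correct and follows exactly the approach indicated in the paper, which simply states that the corollary follows directly from Theorem \ref{Thm:FredChar} and Equation \eqref{Eqn:Ell:FP}. You have correctly filled in the details: choosing $\GProj_+ = P_\Ca$ (justified by Corollary \ref{Cor:CalCheck}), using the symmetry of the Fredholm pair index to reconcile the argument orders, and noting that regularity of $B$ provides both the Fredholmness needed for Theorem \ref{Thm:FredChar} and the hypothesis of Theorem \ref{Thm:Ell}.
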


\begin{proof}[Proof that \ref{Thm:Ell:2}$\implies$ \ref{Thm:Ell:1} in Theorem \ref{Thm:Ell} ]
We shall prove that graphical decomposability implies regularity. First, note that a simple calculation yields 
\begin{equation} 
\label{Eq:Char1} 
\set{v + gv: v \in V_-}^{\perp, \hatH(D)} = W_-^\ast  \oplus W_+^\ast \oplus \set{u - g^\ast u: u \in V_+^\ast \cap \SobHH{ {\frac12}}(\Sigma; E \tensor \C^m_{\op})}.
\end{equation} 
Moreover, we have that $\SobHH{ m - \frac12}(\Sigma; E \tensor \C^m) = W_- \oplus V_- \oplus W_+ \oplus V_+$, and by Lemma \ref{Lem:Pairing}, and using Lemma \ref{Lem:SubPair}, we have that 
$$W_+^{\perp, \SobHH{ {\frac12} - m}(\Sigma;E \tensor \C^m_{\op})} = V_-^\ast \oplus W_-^\ast \oplus V_+^\ast.$$
Therefore, 
\begin{equation}
\label{Eq:Char2} 
W_+^{\perp, \hatH(D)} = W_+^{\perp, \SobHH{ {\frac12} - m}(\Sigma;E \tensor \C^m_{\op})}  \cap \hatH(D) = V_-^\ast \oplus W_-^\ast \oplus (V_+^\ast  \cap \GProj_+^\ast \SobHH{ {\frac 12}}(\Sigma; E \tensor \C^m_{\op})).
\end{equation} 
By \cite[Theorem 4.8 (4.12) in Chapter 4, Section 4]{Kato}, we have that 
\begin{align*}
B^\perp &= (W_+ \oplus \set{v + gv: v \in V_-})^{\perp, \hatH(D)}  \\
	&= W_+^{\perp, \hatH(D)} \cap \set{v + gv: v \in V_-}^{\perp, \hatH(D)} \\
	&= W_-^\ast \oplus \set{u - g^\ast u: u \in V_-^\ast \cap \SobHH{ {\frac12}}(\Sigma; E \tensor \C^m_{\op})},
\end{align*}
where the ultimate equality follows from combining \eqref{Eq:Char1} and \eqref{Eq:Char2}.
Note that by Definition \ref{def:elldecomspsHO}, we have that  $B^\perp \subset \SobHH{ {\frac12}}(\Sigma; E \tensor \C^m_{\op})$.  
Proposition \ref{cordescofadjointbc} yields that
$$\scalebox{1.5}{a}_\dagger\adj{B} = B^{\perp}= \set{u - \adj{g}u: u \in \adj{V}_+\cap \SobHH{{{\frac12}}}(\Sigma;E\otimes \C^m_{\rm op})} \oplus \adj{W}_- \subset \SobHH{ {\frac12}}(\Sigma; E \tensor \C^m_{\op}), $$
and therefore, we have that $\adj{B} \subset  \SobHH{{m - {\frac12}}}(\Sigma;F\otimes \C^m)$  so $B$ is regular.
\end{proof}

The proof of \ref{Thm:Ell:1}$\implies$ \ref{Thm:Ell:2}  is considerably more involved. In order to proceed with the proof \ref{Thm:Ell:1}$\implies$ \ref{Thm:Ell:2}, the spaces in the Definition \ref{def:elldecomspsHO} need to be defined.
Taking inspiration from \cite{BBan}, we define the following spaces:
\begin{equation}
\label{Eq:Spaces} 
\begin{aligned}
&W_-^\ast := \adj{\GProj}_- \SobHH{{\frac12}}(\Sigma;E\otimes \C^m_{\rm op}) \intersect B^{\perp, \SobHH{ {\frac12} - m}(\Sigma; E\tensor \C^m_{\op})}  \\
&W_+ := \GProj_+\SobHH{{m-\frac12}}(\Sigma;E\otimes \C^m) \intersect B \\
&V_-^\ast := \adj{\GProj}_-\SobHH{{\frac12-m}}(\Sigma;E\otimes \C^m_{\rm op})\intersect (W_-^\ast)^{\perp,\SobHH{- {\frac12}}(\Sigma; E\tensor \C^m)} \\
&V_+ := \GProj_+\SobHH{{m-\frac12}}(\Sigma;E\otimes \C^m) \intersect W_+^{\perp,\SobHH{{\frac12} -m}(\Sigma;E\otimes \C^m_{\op})}
\end{aligned}
\end{equation}

Note that  by definition $W_-^\ast \subset \adj{\GProj}_- \SobHH{{\frac12}}(\Sigma;E\otimes \C^m_{\rm op})$. 
Therefore, we are able to consider its annihilator  $(W_-^\ast)^{\perp,\SobHH{- {\frac12}}(\Sigma; E\tensor \C^m)}$ in $\SobHH{- {\frac12}}(\Sigma; E\tensor \C^m)$. 
Recalling the definition of the hybrid spaces 
$$
\SobHH{- {\frac12}}(\Sigma; E\tensor \C^m) = \oplus_{j=0}^{m - 1} \SobH{-\frac12 - j}(\Sigma;E) \supset \oplus_{j=0}^{m -1} \SobH{\frac12 - m + j}(\Sigma;E) = \SobHH{ {\frac12}-m}(\Sigma; E\tensor \C^m_{\op}).$$
Therefore, $V_-^\ast$ is well-defined. 
Similarly, it is readily verified that 
$$ \SobHH{{m-\frac12}}(\Sigma;E\otimes \C^m) \subset  \SobHH{{\frac12} -m}(\Sigma;E\otimes \C^m_{\op}),$$
and therefore, we have that  $V_+$ is well defined.

This then allows us to define the remaining spaces as follows simply as the ranges of dual projectors or, as given by Lemma \ref{Lem:SubPair}, annihilators with respect to the corresponding duality.

\begin{equation}
\label{Eq:Spaces2} 
\begin{aligned}
&W_+^\ast := \adj{\GProj}_+ \SobHH{{\frac12}-m}(\Sigma;E\otimes \C^m_{\rm op}) \intersect V_+^{\perp, \SobHH{ {\frac12} - m}(\Sigma; E\tensor \C^m_{\op})}  \\
&W_- := \GProj_-\SobHH{{m-\frac12}}(\Sigma;E\otimes \C^m) \intersect (V_-^\ast)^{\perp, \SobHH{{m-\frac12}}(\Sigma;E\otimes \C^m)} \\
&V_+^\ast := \adj{\GProj}_+\SobHH{{\frac12-m}}(\Sigma;E\otimes \C^m_{\rm op})\intersect W_+^{\perp,\SobHH{ {\frac12}-m}(\Sigma; E\tensor \C^m_{\op})} \\
&V_- := \GProj_-\SobHH{{m-\frac12}}(\Sigma;E\otimes \C^m) \intersect (W_-^\ast)^{\perp, \SobHH{{m-\frac12}}(\Sigma;E\otimes \C^m)}
\end{aligned}
\end{equation}

\begin{lemma}
\label{Lem:SubProp}
Assume that $B$ is regular and that $V_+$, $V_-$, $W_+$ and $W_-$ are defined as in the paragraphs above. 
The following hold:
\begin{enumerate}[(i)]
\item \label{Lem:SubProp:3}
	$W_\pm\subseteq \SobHH{{m-\frac12}}(\Sigma;E\otimes \C^m)$ and $W_\pm^\ast \subset \SobHH{{\frac12}-m}(\Sigma;E\otimes \C^m_{\rm op})$ are finite dimensional.
	In particular, $W_-^\ast = \adj{\GProj}_- \SobHH{{\frac12}-m}(\Sigma;E\otimes \C^m_{\rm op}) \intersect B^{\perp, \SobHH{ {\frac12} - m}(\Sigma; E\tensor \C^m_{\op})}$.
\item \label{Lem:SubProp:4}
	$\GProj_- B\subseteq \SobHH{{m-\frac12}}(\Sigma;E\otimes \C^m) $ and $\GProj_+^\ast B^\perp\subseteq \SobHH{{\frac12}}(\Sigma;E\otimes \C^m_{\rm op})$ are closed subspaces.
\item \label{Lem:SubProp:1} 
	All of the subspaces $V_+$, $V_-$, $W_+$ and $W_-$ as well as $V_+^\ast$, $V_-^\ast $, $W_+^\ast$ and $W_-^\ast$ listed in \eqref{Eq:Spaces} and  \eqref{Eq:Spaces2}  are closed in $\SobHH{{m-\frac12}}(\Sigma;E\otimes \C^m)$ and $\SobHH{{\frac12}-m}(\Sigma;E\otimes \C^m_{\op})$ respectively. 
\item  \label{Lem:SubProp:2}
	$\GProj_{\pm}\SobHH{{m-\frac12}}(\Sigma;E\otimes \C^m) = V_{\pm} \oplus W_{\pm}$ and $\GProj_{\pm}^\ast\SobHH{{\frac12-m}}(\Sigma;E\otimes \C^m_{\rm op})= V_{\pm}^\ast \oplus W_{\pm}^\ast$.
\end{enumerate} 
\end{lemma}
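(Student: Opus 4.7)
The starting point is to translate regularity of $B$ into concrete Sobolev containments. By Proposition \ref{charsemiellintermsofboundary} we have $B\subseteq\SobHH{{m-\frac12}}(\Sigma;E\otimes\C^m)$ and $B^\ast\subseteq\SobHH{{m-\frac12}}(\Sigma;F\otimes\C^m)$; combining the latter with the identity $B^\perp=\scalebox{1.5}{a}_\dagger B^\ast$ from Proposition \ref{cordescofadjointbc} and the mapping properties of $\scalebox{1.5}{a}_\dagger$ in Proposition \ref{returnofsomecontinuuisuforata} will yield $B^\perp\subseteq \SobHH{{\frac12}}(\Sigma;E\otimes\C^m_{\rm op})$. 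This last inclusion immediately gives the ``in particular'' assertion in (i): inside $\GProj_-^\ast\SobHH{{\frac12-m}}$ the two annihilators $B^{\perp,\SobHH{{\frac12-m}}}$ and $B^{\perp,\SobHH{{\frac12}}}$ coincide, so the alternative description of $W_-^\ast$ reduces to the original one.

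The core of (i) is the finite dimensionality of $W_+$ and $W_-^\ast$; finite dimensionality of $W_-$ and $W_+^\ast$ will follow by running the same two arguments with $B^\ast$ in place of $B$. For $W_+$, I plan to establish that the bounded restriction
$$\GProj_-\bigl|_B:B\longrightarrow \GProj_-\SobHH{{m-\frac12}}(\Sigma;E\otimes\C^m)$$
is Fredholm. Its kernel is $B\cap\GProj_+\checkH(D)$; because $\GProj_+$ maps $\SobHH{{m-\frac12}}$ into itself and $B\subseteq\SobHH{{m-\frac12}}$, this kernel is precisely $B\cap\GProj_+\SobHH{{m-\frac12}}=W_+$. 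Regularity of $B$ makes $D_{\rm B}$ Fredholm (Proposition \ref{Prop:MaxClosed}), so Theorem \ref{Thm:FredChar} gives that $(B,\Ca_D)$ is a Fredholm pair in $\checkH(D)$. Since both $P_\Ca$ (by Corollary \ref{Cor:CalCheck}) and $\GProj_+$ are boundary decomposing, they yield equivalent splittings of $\checkH(D)$, and the Fredholm pair property of $(B,\Ca_D)$ will be transported through composition with the bounded projection $\GProj_-$ into the Fredholm property of $\GProj_-|_B$. Finite dimensionality of $W_-^\ast$ is then obtained by the dual version of the same argument, using the perfect pairing of Lemma \ref{Lem:Pairing} together with the Fredholm pair for $B^\perp$ inside $\hatH(D)$.

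Parts (ii) and (iii) will fall out of the work for (i). Closedness of $\GProj_-B$ in $\GProj_-\SobHH{{m-\frac12}}$ is the closed-range part of the Fredholm property of $\GProj_-|_B$; closedness of $\GProj_+^\ast B^\perp$ follows by the dual argument on $\hatH(D)$. Each space in \eqref{Eq:Spaces}--\eqref{Eq:Spaces2} is then visibly an intersection of the image of a bounded projector with either $B$, $B^\perp$ (now known to lie in $\SobHH{{\frac12}}$), or the annihilator of a space already shown to be closed and finite dimensional, and hence is closed in its ambient Sobolev topology. For (iv), once $W_+\subseteq\GProj_+\SobHH{{m-\frac12}}$ is known to be closed and finite dimensional, Lemma \ref{Lem:SubPair} applied to the perfect pairing of Lemma \ref{Lem:Pairing} identifies $V_+=W_+^{\perp,\SobHH{{\frac12-m}}}\cap\GProj_+\SobHH{{m-\frac12}}$ as a closed topological complement of $W_+$, giving $\GProj_+\SobHH{{m-\frac12}}=V_+\oplus W_+$. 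The three remaining decompositions come from the same duality applied in the appropriate graded Sobolev pairings.

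The main obstacle will be the Fredholm transfer in the second paragraph: deducing the Fredholm property of $\GProj_-|_B$ from the Fredholm pair $(B,\Ca_D)$ for a \emph{general} boundary decomposing $\GProj_+$. The introduction of the paper stresses that $\GProj_+-P_\Ca$ need not even be compact on $\checkH(D)$, so the transfer cannot proceed by a perturbation of $P_\Ca$; it must rely in an essential way on the defining properties of a boundary decomposing projector, and in particular on the norm equivalence furnished by the third axiom.
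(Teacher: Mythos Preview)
Your plan has a genuine gap at precisely the point you flag as the ``main obstacle,'' and the paper's proof avoids this obstacle entirely by a different and much simpler route.

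The paper never invokes the Fredholm pair $(B,\Ca_D)$ to prove this lemma. Instead it uses a direct compactness argument. Since $B\subseteq\SobHH{{m-\frac12}}$ is closed in $\checkH(D)$, the boundary-decomposing norm equivalence gives, for $b\in B$,
\[
\|b\|_{\SobHH{{m-\frac12}}}\simeq\|b\|_{\checkH(D)}\simeq\|\GProj_-b\|_{\SobHH{{m-\frac12}}}+\|\GProj_+b\|_{\SobHH{{-\frac12}}}.
\]
The map $\GProj_+|_B:B\to\SobHH{{-\frac12}}$ factors through the compact inclusion $\SobHH{{m-\frac12}}\hookrightarrow\SobHH{{-\frac12}}$, so it is compact. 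Feeding this into \cite[Lemma~A.1]{BBan} (the standard ``$\|x\|\lesssim\|Lx\|+\|Kx\|$ with $K$ compact implies $L$ is upper semi-Fredholm'' lemma) yields directly that $\GProj_-|_B$ has finite-dimensional kernel $W_+$ and closed range $\GProj_-B$. The dual argument on $B^\perp\subseteq\SobHH{{\frac12}}(\Sigma;E\otimes\C^m_{\rm op})$ with $\GProj_+^\ast|_{B^\perp}$ handles $W_-^\ast$ and $\GProj_+^\ast B^\perp$. This is where the third axiom for $\GProj_+$ enters, and it does so through Rellich compactness rather than through any Fredholm-pair transfer.

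Your proposed transfer, by contrast, would need $(B,\GProj_+\checkH(D))$ to be a Fredholm pair in $\checkH(D)$, and you offer no mechanism for deducing this from $(B,\Ca_D)$ being one. A perturbation argument would require $\GProj_+-P_\Ca$ to be compact on $\checkH(D)$, which the paper explicitly shows can fail (Proposition~\ref{compactnessfailure}). So the route you sketch is not merely harder; without a new idea it is blocked.

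Two further deviations from the paper: the finite-dimensionality of $W_-$ and $W_+^\ast$ is obtained at the end via the perfect pairings $W_\pm\times W_\pm^\ast\to\C$ of Lemma~\ref{Lem:SubPair}, not by rerunning the argument with $B^\ast$ (which lives over $F$, not $E$, and would require threading through $\scalebox{1.5}{a}_\dagger$). And the direct-sum decompositions in (iv) are proved by building an explicit finite-rank projector onto $W_-^\ast$ (respectively $W_+$) using an $\Lp{2}$-orthonormalised basis, exploiting that these finite-dimensional spaces sit inside $\oplus_j\Lp{2}(\Sigma;E)$; Lemma~\ref{Lem:SubPair} then transfers the splitting across the pairing.
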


\begin{proof}
Consider $\GProj_+^\ast\rest{B^\perp}: B^\perp \to \SobHH{{\frac12}}(\Sigma;E\otimes \C^m_{\op})$.
By definition, we have that $B^\perp$ is closed in $\hatH(D)$. 
Set $X = (B^\perp, \norm{\cdot}_{\hatH(D)})$, $Y = \SobHH{{\frac12}}(\Sigma;E\otimes \C^m_{\op})$, $Z =  \SobHH{{\frac12 - m}}(\Sigma;E\otimes \C^m_{\op})$ and the map $K = \GProj_-^\ast\rest{B^\perp}: X \to Z$.
For for $b^\perp \in B^\perp$
$$\norm{b^\perp}_{\hatH(D)} \simeq \norm{\GProj_+^\ast\rest{B^\perp}b^\perp}_{\SobHH{{\frac12}}(\Sigma;E\otimes \C^m_{\op})} + \norm{\GProj_-^\ast\rest{B^\perp}b^\perp}_{\SobHH{{\frac12 - m}}(\Sigma;E\otimes \C^m_{\op})}.$$
Note that 
$$\GProj_-^\ast\rest{B^\perp}: B^\perp \subset  \SobHH{{\frac12}}(\Sigma;E\otimes \C^m_{\op}) \to \SobHH{{\frac12}}(\Sigma;E\otimes \C^m_{\op}) \xhookrightarrow{\mathrm{compact}} \SobHH{{\frac12 -m}}(\Sigma;E\otimes \C^m_{\op})$$
and so
on invoking \cite[Lemma A.1]{BBan} with this choice of $X,Y,Z$ and $K$ yields that $\ker \GProj_+^\ast \rest{B^\perp}$ is finite dimensional and $P_+^\ast B^\perp$ is closed in $\SobHH{{\frac12}}(\Sigma;E\otimes \C^m_{\op})$.
Moreover, 
\begin{align*}
\ker \GProj_+^\ast \rest{B^\perp} 
	&= \set{b^\perp \in B^\perp: \GProj_+^\ast b^\perp = 0}  
	= \set{b^\perp \in B^\perp: b^\perp \in \GProj_-^\ast \hatH(D) = 0} \\
	&\hspace{7em}= B^\perp \cap  \GProj_-^\ast \SobHH{{\frac12 - m}}(\Sigma;E\otimes \C^m_{\op}) 
	= B^\perp \cap \GProj_-^\ast \SobHH{{\frac12}}(\Sigma;E\otimes \C^m_{\op}),
\end{align*}
where the penultimate equality follows from Proposition \ref{Prop:ProjSob} \ref{Prop:ProjSob:1} and the ultimate equality from the fact that $B^\perp \subset \SobHH{{\frac12}}(\Sigma;E\otimes \C^m_{\op})$.

Similarly, consider the map $\GProj_-\rest{B}: B \to \SobHH{{m-\frac12}}(\Sigma;E\otimes \C^m)$.
Since $B \subset \SobHH{{m-\frac12}}(\Sigma;E\otimes \C^m)$, we have that whenever $b \in B$, 
$$ \norm{b}_{\SobHH{{m-\frac12}}(\Sigma;E\otimes \C^m)} \simeq \norm{b}_{\checkH(D)} \simeq \norm{\GProj_-\rest{B}b}_{\SobHH{{m-\frac12}}(\Sigma;E\otimes \C^m)} + \norm{\GProj_+\rest{B}b}_{\SobHH{{-\frac12}}(\Sigma;E\otimes \C^m)}.$$
However, 
$$ \GProj_+\rest{B}: B \to \SobHH{{m-\frac12}}(\Sigma;E\otimes \C^m) \xhookrightarrow{\mathrm{compact}} \SobHH{{-\frac12}}(\Sigma;E\otimes \C^m),$$
and therefore,  on invoking \cite[Lemma A.1]{BBan} yields that $W_+ = \ker(\GProj_+\rest{B})$ is finite dimensional and $\GProj_-B$ is closed in $\SobHH{{m-\frac12}}(\Sigma;E\otimes \C^m)$.

Therefore, we have proven the property  \ref{Lem:SubProp:4} as well as a part of the property \ref{Lem:SubProp:3}, namely that $W_-^\ast$ and $W_+$  are finite dimensional and that the asserted equality for $W_-^\ast$ holds.
We prove that $W_-$ and $W_-^\ast$ are finite dimensional after proving \ref{Lem:SubProp:1} and \ref{Lem:SubProp:2}.

For that, we first prove that $\GProj_-^\ast \SobHH{{\frac12 - m}}(\Sigma;E\otimes \C^m_{\op}) = V_-^\ast \oplus W_-^\ast$.
From the formula for $W_-^\ast$ that we have already proven, we have that $W_-^\ast \subset \GProj_-^\ast \SobHH{{\frac12 - m}}(\Sigma;E\otimes \C^m_{\op})$ and it is closed since it is finite dimensional.
By construction $V_-^\ast \subset \GProj_-^\ast \SobHH{{\frac12 - m}}(\Sigma;E\otimes \C^m_{\op})$.
Therefore, we have that $V_-^\ast + W_-^\ast \subset  \GProj_-^\ast \SobHH{{\frac12 - m}}(\Sigma;E\otimes \C^m_{\op})$.

Next, we show that $V_-^\ast \cap  W_-^\ast = 0$.
Fix $w \in V_-^\ast \cap  W_-^\ast =\GProj_-^\ast \SobHH{{\frac12 - m}}(\Sigma;E\otimes \C^m_{\op}) \cap (W_-^\ast)^{\perp, \GProj_-^\ast \SobHH{{-\frac12}}(\Sigma;E\otimes \C^m)} \cap W_-^\ast$.
In particular, we have that $w \in (W_-^\ast)^{\perp, \GProj_-^\ast \SobHH{-{\frac12}}(\Sigma;E\otimes \C^m)} \cap W_-^\ast$.
Since $w \in W_-^\ast$ we also have that $w \in \SobHH{{\frac12}}(\Sigma;E\otimes \C^m_{\op})$ which is in a perfect pairing with $\SobHH{-{\frac12}}(\Sigma;E\otimes \C^m)$.
Therefore, we have that $\inprod{w, w} = 0$.
Moreover,  it is readily verified that $\SobHH{{\frac12}}(\Sigma;E\otimes \C^m_{\op}) \subset \oplus_{j=0}^{m-1} \Lp{2}(\Sigma;E)$ and therefore,
$$ \norm{w}_{\Lp{2}(\Sigma;E)}^2 = \inprod{w,w}_{\Lp{2}(\Sigma;E)} = \inprod{w,w}_{\SobHH{{\frac12}}(\Sigma;E\otimes \C^m_{\op}) \times \SobHH{-{\frac12}}(\Sigma;E\otimes \C^m)} = 0,$$
from which we conclude that $w = 0$.
This proves that $V_-^\perp \oplus W_-^\perp \subset \GProj_-^\ast \SobHH{{\frac12 - m}}(\Sigma;E\otimes \C^m_{\op})$. 

We now prove the reverse containment.
For that, let $k = \dim W_-^\ast$ and fix a basis $e_1, \dots,  e_k$.
By considering each $e_i \in \oplus_{j=0}^{m-1} \Lp{2}(\Sigma;E)$, we can run the Gram-Schmidt process to obtain an $\Lp{2}$-orthonormal basis $\tilde{e}_i$. 
Therefore, without the loss of generality, assume that $\set{e_i}$ are orthonormal in $\Lp{2}$.
For $u \in  \SobHH{-{\frac12}}(\Sigma;E\otimes \C^m)$, define 
$$ Pu = \sum_{i=1}^k \inprod{u, e_i} e_i.$$
It is readily checked that this is a projector and in fact, $P: \SobHH{-{\frac12}}(\Sigma;E\otimes \C^m) \to \SobHH{-{\frac12}}(\Sigma;E\otimes \C^m)$ with $P \SobHH{-{\frac12}}(\Sigma;E\otimes \C^m) =W_-^\ast$ and $ \ker P =  (W_-^\ast)^{\perp, \SobHH{-{\frac12}}(\Sigma;E\otimes \C^m)}$. 
Therefore, restricting to $u \in \GProj_-^\ast \SobHH{{\frac12 - m}}(\Sigma;E\otimes \C^m_{\op}) \subset \SobHH{-{\frac12}}(\Sigma;E\otimes \C^m)$, we obtain that $u = Pu - (1 - P)u$ and $Pu \in W_-^\ast$. 
This means that $(1 - P)u \in (W_-^\ast)^{\perp, \SobHH{-{\frac12}}(\Sigma;E\otimes \C^m)} \cap \GProj_-^\ast \SobHH{{\frac12 - m}}(\Sigma;E\otimes \C^m_{\op})$ since $W_-^\ast \subset \GProj_-^\ast \SobHH{{\frac12 - m}}(\Sigma;E\otimes \C^m_{\op})$ and therefore, $(1 - P)u \in V_-^\ast$.
Hence, we conclude that $V_-^\perp \oplus W_-^\perp = \GProj_-^\ast \SobHH{{\frac12 - m}}(\Sigma;E\otimes \C^m_{\op})$.
By exploiting the duality yielded in Lemma \ref{Lem:Pairing}, we conclude that $V_-, W_-$ are closed in $\GProj_- \SobHH{{m-\frac12}}(\Sigma;E\otimes \C^m)$. 
Via Lemma \ref{Lem:SubPair}, we obtain  $\GProj_- \SobHH{{m-\frac12}}(\Sigma;E\otimes \C^m) = V_- \oplus W_-$. 

A similar argument then holds to show that $\GProj_+ \SobHH{{m-\frac12}}(\Sigma;E\otimes \C^m) = V_+ \oplus W_+$.
Showing $V_+ + W_+ \subset \GProj_+ \SobHH{{m-\frac12}}(\Sigma;E\otimes \C^m)$ and that $V_+ \cap W_+ = 0$ is an easy calculation.
The most important point to show the reverse containment is note  that $\SobHH{{m-\frac12}}(\Sigma;E\otimes \C^m) \subset  \SobHH{{\frac12-m}}(\Sigma;E\otimes \C^m_{\op})$. 
This allows for the construction of a projector $Q:\SobHH{{\frac12-m}}(\Sigma;E\otimes \C^m_{\op}) \to \SobHH{{\frac12-m}}(\Sigma;E\otimes \C^m_{\op})$ with $Q \SobHH{{\frac12-m}}(\Sigma;E\otimes \C^m_{\op}) = W_+$ and $\ker Q = W_+^{\perp,\SobHH{{m -\frac12}}(\Sigma;E\otimes \C^m)}$ and consequently, write $\GProj_+ \SobHH{{m-\frac12}}(\Sigma;E\otimes \C^m) \subset V_+ \oplus W_+$.
As before, this yields that $V_+^\ast$ and $W_+^\ast$   are closed subspaces of $\GProj_+^\ast \SobHH{{\frac12-m}}(\Sigma;E\otimes \C^m_{\op})$ and Lemma \ref{Lem:SubPair} then yields $\GProj_+^\ast \SobHH{{\frac12-m}}(\Sigma;E\otimes \C^m_{\op})  = V_+^\ast \oplus W_+^\ast$. 

This proves \ref{Lem:SubProp:1} and \ref{Lem:SubProp:2}. 
It remains to show that $W_+^\ast$ and $W_-$ are finite dimensional.
For this, note that by using Lemma \ref{Lem:SubPair}, we have induced perfect pairings $\inprod{\cdot,\cdot}: W_+ \times W_+^\ast \to \C$ and $\inprod{\cdot,\cdot}: W_- \times W_-^\ast \to \C$.
But that means that $W_+' \cong W_+^\ast$ and $W_-' \cong W_-^\ast$, where $W_+'$ and $W_-'$ are dual spaces of $W_+$ and $W_-$ respectively.
By finite dimensionality we have that $W_+' \cong W_+$ and $W_-' \cong W_-$ and this finishes the proof. 
\end{proof}

\begin{lemma}
\label{Lem:GraphSpace} 
Under the same assumptions as in Lemma \ref{Lem:SubProp}, the following hold: 
\begin{enumerate}[(i)] 
\item \label{Lem:GraphSpaces:1}
$V_+^\ast \cap \SobHH{{\frac12}}(\Sigma;E\otimes \C^m_{\op}) = \GProj_+^\ast B^\perp$, and 

\item \label{Lem:GraphSpaces:2}
$V_- = \GProj_- B$.
\end{enumerate}
\end{lemma}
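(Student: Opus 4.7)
Both claims will follow from double annihilator arguments in the perfect pairings of Lemma \ref{Lem:Pairing}. In each case the strategy is the same: identify two closed subspaces, compute their annihilators, and show the annihilators coincide. Closedness of all subspaces involved is provided by Lemma \ref{Lem:SubProp}, and the duality of Lemma \ref{relationtohatspace} will play a decisive role in (i).

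For (ii), the plan is to pair $\GProj_- \SobHH{m-\frac12}(\Sigma; E \otimes \C^m)$ with $\adj{\GProj}_- \SobHH{\frac12-m}(\Sigma; E \otimes \C^m_{\op})$ and show that $V_-$ and $\GProj_- B$, both closed in $\GProj_- \SobHH{m-\frac12}$, have common annihilator $W_-^\ast$. For $V_-$ this is immediate from its definition as the annihilator of the finite-dimensional $W_-^\ast$ together with double annihilator. For $\GProj_- B$ the identity $\inprod{u, \GProj_- b} = \inprod{u, b}$ for $u \in \adj{\GProj}_- \SobHH{\frac12-m}$ (using $\adj{\GProj}_+ \adj{\GProj}_- = 0$) reduces the annihilator to $\adj{\GProj}_- \SobHH{\frac12-m} \cap B^\perp$; regularity forces $B^\perp \subseteq \SobHH{\frac12}$, and Lemma \ref{Lem:SubProp} \ref{Lem:SubProp:3} then identifies the resulting intersection with $W_-^\ast$.

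For (i), the inclusion $\adj{\GProj}_+ B^\perp \subseteq V_+^\ast \cap \SobHH{\frac12}$ will follow directly from regularity and $W_+ \subseteq B$. For the reverse, the plan is to work in the perfect pairing $\adj{\GProj}_+ \SobHH{\frac12} \times \GProj_+ \SobHH{-\frac12}$ in which both spaces in question are closed subspaces of $\adj{\GProj}_+ \SobHH{\frac12}$ (the latter by Lemma \ref{Lem:SubProp} \ref{Lem:SubProp:4}). The annihilator of $V_+^\ast \cap \SobHH{\frac12} = W_+^\perp \cap \adj{\GProj}_+ \SobHH{\frac12}$ in $\GProj_+ \SobHH{-\frac12}$ equals $W_+$ by double annihilator of the finite-dimensional, hence closed, subspace $W_+$. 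A routine computation, using that $B^\perp$ is closed in $\SobHH{\frac12}$ together with the perfect pairing $\SobHH{-\frac12} \times \SobHH{\frac12}$, will identify the annihilator of $\adj{\GProj}_+ B^\perp$ with $\GProj_+ \SobHH{-\frac12} \cap \overline{B}^{\SobHH{-\frac12}}$.

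The hard part will be establishing the non-trivial inclusion $\GProj_+ \SobHH{-\frac12} \cap \overline{B}^{\SobHH{-\frac12}} \subseteq W_+$, which passes from the weak $\SobHH{-\frac12}$-closure back to $B$ itself. Given $v$ in the intersection, property \ref{Def:EEndHO} of a boundary decomposing projection places $v$ in $\checkH(D)$ since $\GProj_- v = 0 \in \SobHH{m-\frac12}$. Approximating $v$ by $b_n \in B$ in $\SobHH{-\frac12}$ and testing against arbitrary $\xi \in B^\perp \subseteq \SobHH{\frac12}$ through the continuous duality $\SobHH{\frac12} \times \SobHH{-\frac12} \to \C$ yields $\inprod{\xi, v} = 0$ for every $\xi \in B^\perp$. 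Regularity places $B^\perp$ inside $\hatH(D)$, where it coincides with the annihilator of $B$ in the perfect pairing $\checkH(D) \times \hatH(D)$ from Lemma \ref{relationtohatspace}; closedness of $B$ in $\checkH(D)$ combined with the double annihilator theorem then forces $v \in B$. Since $v = \GProj_+ v$ and $B \subseteq \SobHH{m-\frac12}$, the conclusion $v \in B \cap \GProj_+ \SobHH{m-\frac12} = W_+$ follows from the definition of $W_+$, completing the plan.
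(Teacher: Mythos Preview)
Your proof is correct and follows essentially the same approach as the paper. For (ii) the arguments are nearly identical: both show that $V_-$ and $\GProj_- B$ have the common annihilator $W_-^\ast$ in $\GProj_-^\ast \SobHH{\frac12-m}$ and then appeal to double annihilator, using Lemma~\ref{Lem:SubProp}\ref{Lem:SubProp:3} for the identification $W_-^\ast = \GProj_-^\ast \SobHH{\frac12-m} \cap B^{\perp,\SobHH{\frac12-m}}$. For (i) the organisation differs slightly: the paper first establishes the identity $W_+ = (\GProj_+^\ast B^\perp)^{\perp,\SobHH{-\frac12}} \cap \GProj_+ \SobHH{-\frac12}$ and then applies Kato's formula $(X\cap Y)^\perp = X^\perp + Y^\perp$, whereas you compute both annihilators in $\GProj_+ \SobHH{-\frac12}$ separately and compare, passing through $\overline{B}^{\SobHH{-\frac12}}$ via an approximation argument. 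The analytical core is the same in both: the crucial step is that $v \in \GProj_+ \SobHH{-\frac12}$ annihilating $B^\perp$ forces $v \in \checkH(D)$ and hence $v \in B$ by double annihilator in the $\checkH(D)\times\hatH(D)$ pairing. One small point: your appeal to property~\ref{Def:E3HO} to place $v$ in $\checkH(D)$ is slightly indirect; the cleaner justification (used in the paper) is Proposition~\ref{Prop:ProjSob}\ref{Prop:ProjSob:1}, which gives $\GProj_+ \SobHH{-\frac12} = \GProj_+ \checkH(D) \subseteq \checkH(D)$ directly.
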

\begin{proof}
First, we prove that 
\begin{equation} 
\label{Eq:W+}
W_+ = (\GProj_+^\ast B^\perp)^{\perp, \SobHH{-{\frac12}}(\Sigma;E\otimes \C^m)} \cap \GProj_+^\ast \SobHH{-{\frac12}}(\Sigma;E\otimes \C^m).
\end{equation}
Then, 
\begin{align*}
x \in (\GProj_+^\ast B^\perp)^{\perp, \SobHH{-{\frac12}}(\Sigma;E\otimes \C^m)} 
	&\iff 0 = \inprod{x, \GProj_+^\ast b^\perp}\ \forall b^\perp \in B^\perp\\
	&\iff 0 = \inprod{\GProj_+ x,  b^\perp}\  \forall b^\perp \in B^\perp \\
	&\implies \GProj_+ x \in (B^\perp)^{\perp, \SobHH{-{\frac12}}(\Sigma;E\otimes \C^m)}.
\end{align*}
Therefore, $x \in (\GProj_+^\ast B^\perp)^{\perp, \SobHH{-{\frac12}}(\Sigma;E\otimes \C^m)} \cap \GProj_+^\ast \SobHH{-{\frac12}}(\Sigma;E\otimes \C^m)$ implies that $x = \GProj_+ x$.
Moreover, since  by Proposition \ref{Prop:ProjSob} $\checkH(D) \subset \SobHH{-{\frac12}}(\Sigma;E\otimes \C^m)$ and $B^\perp = B^{\perp, \hatH(D)} \subset \SobHH{{\frac12}}(\Sigma;E\otimes \C^m_{\op})$, we obtain that 
$B = (B^{\perp, \hatH(D)})^{\checkH(D)} = (B^\perp)^{\SobHH{-{\frac12}}(\Sigma;E\otimes \C^m)} \cap \checkH(D)$.
Therefore, $x \in (\GProj_+^\ast B^\perp)^{\perp, \SobHH{-{\frac12}}(\Sigma;E\otimes \C^m)} \cap \GProj_+^\ast \SobHH{-{\frac12}}(\Sigma;E\otimes \C^m)$ then yields that $x \in \checkH(D)$ and hence, $x \in B$.
This shows , 
$$(\GProj_+^\ast B^\perp)^{\perp, \SobHH{-{\frac12}}(\Sigma;E\otimes \C^m)} \cap \GProj_+^\ast \SobHH{-{\frac12}}(\Sigma;E\otimes \C^m) \subset   B \cap \GProj_+ \SobHH{-{\frac12}}(\Sigma;E\otimes \C^m) = W_+.$$

For the reverse containment, fix $w \in W_+ =   B \cap \GProj_+ \SobHH{-{\frac12}}(\Sigma;E\otimes \C^m)$. 
Therefore, for all $b^\perp \in B^\perp$, we have that
\begin{align*} 
0 = \inprod{w, b^\perp}_{ \SobHH{-{\frac12}}(\Sigma;E\otimes \C^m) \times  \SobHH{{\frac12}}(\Sigma;E\otimes \C^m_{\op})} 
	&= \inprod{\GProj_+ w, b^\perp}_{ \SobHH{-{\frac12}}(\Sigma;E\otimes \C^m) \times  \SobHH{{\frac12}}(\Sigma;E\otimes \C^m_{\op})} \\
	&= \inprod{w,\GProj_+^\ast  b^\perp}_{ \SobHH{-{\frac12}}(\Sigma;E\otimes \C^m) \times  \SobHH{{\frac12}}(\Sigma;E\otimes \C^m_{\op})}.
\end{align*}
Therefore, we have $w \in (\GProj_+^\ast B^\perp)^{\perp, \SobHH{-{\frac12}}(\Sigma;E\otimes \C^m)}$ and by assumption $w \in  \GProj_+ \SobHH{-{\frac12}}(\Sigma;E\otimes \C^m)$. 
That is precisely that $w \in (\GProj_+^\ast B^\perp)^{\perp, \SobHH{-{\frac12}}(\Sigma;E\otimes \C^m)} \cap \GProj_+^\ast \SobHH{-{\frac12}}(\Sigma;E\otimes \C^m)$.

Now, by the definition of $V_+^\ast$ in \eqref{Eq:Spaces2}, we have that
\begin{align*}
V_+^\ast \cap \SobHH{{\frac12}}(\Sigma;E\otimes \C^m_{\op}) 
	&= \GProj_+^\ast \SobHH{{\frac12-m}}(\Sigma;E\otimes \C^m_{\op}) \cap W_+^{\perp,  \SobHH{{\frac12-m}}(\Sigma;E\otimes \C^m_{\op})} \cap  \SobHH{{\frac12}}(\Sigma;E\otimes \C^m_{\op}) \\
	&= \GProj_+^\ast \SobHH{{\frac12}}(\Sigma;E\otimes \C^m_{\op}) \cap W_+^{\perp,  \SobHH{{\frac12}}(\Sigma;E\otimes \C^m_{\op})}
\end{align*}
By the description of $W_+$ in \eqref{Eq:W+} and on using \cite[Theorem 4.8 (4.12) in Chapter 4, Section 4]{Kato}, we obtain
$$ W_+^{\perp,  \SobHH{{\frac12}}(\Sigma;E\otimes \C^m_{\op})}  
	=  \GProj_+^\ast B^\perp + (\GProj_+ \SobHH{-{\frac12}}(\Sigma;E\otimes \C^m))^{\perp,\SobHH{{\frac12}}(\Sigma;E\otimes \C^m_{\op})} 
	= \GProj_+^\ast B^\perp + \GProj_-^\ast \SobHH{{\frac12}}(\Sigma;E\otimes \C^m_{\op}).$$
Therefore, 
\begin{align*}
V_+^\ast \cap \SobHH{{\frac12}}(\Sigma;E\otimes \C^m_{\op}) 
	&= \GProj_+^\ast \SobHH{{\frac12}}(\Sigma;E\otimes \C^m_{\op}) \cap W_+^{\perp,  \SobHH{{\frac12}}(\Sigma;E\otimes \C^m_{\op})} \\
	&= \GProj_+^\ast \SobHH{{\frac12}}(\Sigma;E\otimes \C^m_{\op}) \cap (\GProj_+^\ast B^\perp + \GProj_-^\ast \SobHH{{\frac12}}(\Sigma;E\otimes \C^m_{\op}) \\ 
	&= \GProj_+^\ast \SobHH{{\frac12}}(\Sigma;E\otimes \C^m_{\op}) \cap \GProj_+^\ast B^\perp \\
	&=  \GProj_+^\ast B^\perp.
\end{align*}
This proves \ref{Lem:GraphSpaces:1}. 

The proof of \ref{Lem:GraphSpaces:2} is similar. 
First, we prove that 
\begin{equation}
\label{Eq:W_-*} 
W_-^\ast = (\GProj_- B)^{\perp, \SobHH{{\frac12-m}}(\Sigma;E\otimes \C^m_{\op})} \cap \GProj_-\SobHH{{\frac12-m}}(\Sigma;E\otimes \C^m_{\op}).
\end{equation}
Recall that by Lemma \ref{Lem:SubProp} \ref{Lem:SubProp:3}, $W_-^\ast =  \adj{\GProj}_- \SobHH{{\frac12}-m}(\Sigma;E\otimes \C^m_{\rm op}) \intersect B^{\perp, \SobHH{ {\frac12} - m}(\Sigma; E\tensor \C^m_{\op})}$.
and so for $w \in W_-^\ast$, we have that  for all $b \in B$, 
\begin{align*}
0 &=  \inprod{w,b}_{ \SobHH{{\frac12-m}}(\Sigma;E\otimes \C^m_{\op}) \times \SobHH{{m-\frac12}}(\Sigma;E\otimes \C^m)} \\
	&= \inprod{\GProj_-^\ast w, b} = \inprod{w, \GProj_- b}_{ \SobHH{{\frac12-m}}(\Sigma;E\otimes \C^m_{\op}) \times \SobHH{{m-\frac12}}(\Sigma;E\otimes \C^m)}
\end{align*}
This shows the containment ``$\subset$''. 
For the reverse containment, let $x \in (\GProj_- B)^{\perp, \SobHH{{\frac12-m}}(\Sigma;E\otimes \C^m_{\op})} \cap \GProj_-\SobHH{{\frac12-m}}(\Sigma;E\otimes \C^m_{\op})$.
Arguing similarly,  for all $b \in B$, 
$$ 
0 = \inprod{x, \GProj_-b} = \inprod{\GProj_-^\ast x, b}$$
which yields that $x = \GProj_-^\ast x \in B^{\perp, \SobHH{{\frac12-m}}(\Sigma;E\otimes \C^m_{\op})}$.

With this, we now note that
\begin{align*}
V_- 	&= \GProj_- \SobHH{{m-\frac12}}(\Sigma;E\otimes \C^m) \cap (W_-^\ast)^{\perp, \SobHH{{m-\frac12}}(\Sigma;E\otimes \C^m)} \\
	&= \GProj_- \SobHH{{m-\frac12}}(\Sigma;E\otimes \C^m) \cap (\GProj_-B + \GProj_+ \SobHH{{m-\frac12}}(\Sigma;E\otimes \C^m)) \\ 
	&=  \GProj_- \SobHH{{m-\frac12}}(\Sigma;E\otimes \C^m) \cap \GProj_-B \\ 
	&= \GProj_-B.
\qedhere
\end{align*}
\end{proof}

\begin{proof}[Proof that \ref{Thm:Ell:1}$\implies$ \ref{Thm:Ell:2} in Theorem \ref{Thm:Ell} ]
We shall prove that regularity implies graphical decomposability. We use the spaces $V_+$, $V_-$, $W_+$ and $W_-$ constructed on the preceding pages. 
Since $B \subset \SobHH{{m-\frac12}}(\Sigma;E\otimes \C^m) \subset \SobHH{{\frac12-m}}(\Sigma;E\otimes \C^m_{\op})$, we have that $W_+^{\perp,\SobHH{{\frac12-m}}(\Sigma;E\otimes \C^m_{\op})}  \cap B \neq 0$.
Define 
$$
X_- := \GProj_-\rest{B \cap W_+^{\perp}}: B \cap W_+^{\perp} \to \GProj_-B.
$$

We prove that this map is a bijection and hence, by the open mapping theorem, it is a Banach space isomorphism between $(B, \norm{\cdot}_{\SobHH{{\frac12-m}}(\Sigma;E\otimes \C^m_{\op})})$ and $(\GProj_-B, \norm{\cdot}_{\SobHH{{\frac12-m}}(\Sigma;E\otimes \C^m_{\op})})$.
First, we establish surjectivity of $X_-$. 
For that,  note that 
\begin{equation}
\label{Eq:ConsProj} 
\SobHH{{\frac12-m}}(\Sigma;E\otimes \C^m_{\op}) = W_+ \oplus W_+^{\perp,\SobHH{{\frac12-m}}(\Sigma;E\otimes \C^m_{\op})},
\end{equation} 
which follows from a construction of a projector as in the proof of \ref{Lem:SubProp:1} in Lemma \ref{Lem:SubProp}.
Let $P_{W_+, W_+^{\perp}}$ denote the projection to  $W_+$ along $W_+^{\perp, \SobHH{{\frac12-m}}(\Sigma;E\otimes \C^m_{\op})}$ and the complementary projector $P_{W_+^\perp, W_+}$.
Fix $x = P_-b \in \GProj_-B$ and note
$$ x = P_- ( P_{W_+^\perp,W_+} b + P_{W_+,W_+^\perp} b) = P_- P_{W_+^\perp,W_+}b.$$
Since $W_+ \subset B$, $P_{W_+^\perp,W_+}b = b - P_{W_+^\perp,W_+} b \in B$ and hence $P_{W_+^\perp,W_+}b \in B \cap W_+^{\perp, \SobHH{{\frac12-m}}(\Sigma;E\otimes \C^m_{\op})}$.
This shows surjectivity. 
Injectivity follows easily by taking  $0 = X_-b = P_- b$ which allows us to conclude 
$$b \in \GProj_+ \SobHH{{m-\frac12}}(\Sigma;E\otimes \C^m) \cap B \cap W_+^{\perp, \SobHH{{\frac12-m}}(\Sigma;E\otimes \C^m_{\op})} = W_+ \cap W_+^{\perp, \SobHH{{\frac12-m}}(\Sigma;E\otimes \C^m_{\op})} = 0.$$

Define projectors $\SobHH{{m-\frac12}}(\Sigma;E\otimes \C^m)$ as:
$$P_+ := P_{V_+,W_+ \oplus \GProj_- \SobHH{{m-\frac12}}(\Sigma;E\otimes \C^m) }\quad\text{and}\quad Q_+ := P_{W_+,  V_+ \oplus \GProj_- \SobHH{{m-\frac12}}(\Sigma;E\otimes \C^m) }.$$
Furthermore, note that since $V_-$ has an induced Hilbert structure and $\GProj_-B \subset V_-$ is closed, we obtain a closed complementary subspace $V_-'$ such that $V_- = \GProj_-B \oplus V_-'$.
Using the fact that $X_-: (B, \norm{\cdot}_{\SobHH{{m-\frac12}}(\Sigma;E\otimes \C^m)}) \to (\GProj_-B, \norm{\cdot}_{\SobHH{{m-\frac12}}(\Sigma;E\otimes \C^m)})$ is an isomorphism, and the fact that $V_- = \GProj_- B$ from Lemma \ref{Lem:GraphSpace} \ref{Lem:GraphSpaces:2}, define  the $\SobHH{{m-\frac12}}(\Sigma;E\otimes \C^m)$-bounded map $g_0: V_- \to V_+$ by setting  
$$g_0(x)  = P_+ \comp (X_-)^{-1}.$$

First, we prove that $W_+ \oplus \set{u + g_0 u: u \in \GProj_-B} \subset B$.
By construction, we have that $W_+ \subset B$, so it remains to show $\set{u + g_0 u: u \in V_-} \subset B$. 
For that, we compute
\begin{align*} 
u + g_0 u &= u + P_+ \comp (X_-)^{-1} u \\ 
	&= X_- v + P_+v \\
	&= X_-v + P_+v + Q_+ v - Q_+ v \\ 
	&= \GProj_- v + \GProj_+ v - Q_+ v \\
	&= v - Q_+ v \in B \cap W_+^{\perp, \SobHH{{\frac12-m}}(\Sigma;E\otimes \C^m_{\op})} + W_+ \subset B,
\end{align*} 
where we have set $v := (X_-)^{-1}u$.

We now prove the reverse containment that $B \subset W_+ \oplus \set{u + g_0 u: u \in V_-}$.
Via the decomposition \eqref{Eq:ConsProj}, we have a induced decomposition 
$$
\SobHH{{\frac12-m}}(\Sigma;E\otimes \C^m_{\op}) = W_+ \oplus (W_+^{\perp,\SobHH{{\frac12-m}}(\Sigma;E\otimes \C^m_{\op})} \cap \SobHH{{\frac12-m}}(\Sigma;E\otimes \C^m_{\op})).$$
This decomposition holds  because $W_+ \subset \SobHH{{\frac12-m}}(\Sigma;E\otimes \C^m_{\op})$ and for the same reason, the projectors for this decomposition are the projectors $P_{W_+, W_+^{\perp}}$ and $P_{W_+^\perp, W_+}$ restricted to $\SobHH{{\frac12-m}}(\Sigma;E\otimes \C^m_{\op})$.
For $b \in B \subset \SobHH{{\frac12-m}}(\Sigma;E\otimes \C^m_{\op})$, let  $b^\perp = P_{W_+^\perp, W_+}b$  and $b_+ = P_{W_+, W_+^{\perp}}b$, so that $b = b^\perp + b_+$.
It is easy to see that, since $W_+ \subset B$, $b^\perp \in B \cap W_+^{\perp, \SobHH{{\frac12-m}}(\Sigma;E\otimes \C^m_{\op})}$.
Let $v := X_- b^\perp$. 
Then, 
\begin{align*} 
b^\perp = (X_-)^{-1}v  
	&= \GProj_- (X_-)^{-1} v + \GProj_+ (X_-)^{-1}v 
	= v + P_+  (X_-)^{-1}v + Q_+ (X_-)^{-1}v    \\
	&\hspace{10em}= v + g_0 v + Q_+ (X_-)^{-1}v 
	\in \set{u + g_0u: u \in \GProj_- B} \oplus W_+.
\end{align*}

Let 
$$X_+^\ast := \GProj_+^\ast\rest{B^\perp \cap (W_-^\ast)^{\perp, \SobHH{{-\frac12}}(\Sigma;E\otimes \C^m)}}:B^\perp \cap (W_-^\ast)^{\perp, \SobHH{{-\frac12}}(\Sigma;E\otimes \C^m)} \to \GProj_+^\ast B^\perp.$$
Note that, by using a similar projector construction as in the proof of Lemma \ref{Lem:SubProp} \ref{Lem:SubProp:4}, we obtain 
\begin{equation}
\label{Eq:ConsProj2}
\SobHH{{-\frac12}}(\Sigma;E\otimes \C^m) = W_-^\ast \oplus (W_-^\ast)^{\perp, \SobHH{{-\frac12}}(\Sigma;E\otimes \C^m)}.
\end{equation}
As before,  $w \in W_-^\ast \cap (W_-^\ast)^{\perp, \SobHH{{-\frac12}}(\Sigma;E\otimes \C^m)}$ yields $w = 0$ since, in particular, $w \in \oplus_{j=0}^{m -1} \Lp{2}(\Sigma;E)$ and therefore, 
$\inprod{w,w} = \norm{w}^2_{\Lp{2}}$.
For $x = \GProj_+^\ast b^\perp \in \GProj_+^\ast B^\perp$, we obtain $x = \GProj_+^\ast P_{W_-^\ast, (W_-^\ast)^{\perp}}b^\perp$. 
As before, since $W_-^\ast \subset B^\perp$, we have that $P_{W_-^\ast, (W_-^\ast)^{\perp}}b^\perp \in B^\perp \cap (W_-^\ast)^{\perp, \SobHH{{-\frac12}}(\Sigma;E\otimes \C^m)}$ which establishes the surjectivity of $X_+^\ast$.
Injectivity follows simply on noting that $X_+^\ast x = 0$ implies 
$$ x \in \GProj_-^\ast \SobHH{{-\frac12}}(\Sigma;E\otimes \C^m) \cap B^\perp \cap  (W_-^\ast)^{\perp, \SobHH{{-\frac12}}(\Sigma;E\otimes \C^m)} = W_-^\ast \cap (W_-^\ast)^{\perp, \SobHH{{-\frac12}}(\Sigma;E\otimes \C^m)} = 0.$$
This shows that $X_+^\ast$ is a surjection and by the open mapping theorem, it is an isomorphism between $(B^\perp \cap  (W_-^\ast)^{\perp, \SobHH{{-\frac12}}(\Sigma;E\otimes \C^m)}, \norm{\cdot}_{\SobHH{{\frac12}}(\Sigma;E\otimes \C^m_{\op})})$ and  $(\GProj_+^\ast B^\perp, \norm{\cdot}_{\SobHH{{\frac12}}(\Sigma;E\otimes \C^m_{\op})})$, where the former space was established as a Banach space in  Lemma \ref{Lem:SubProp} \ref{Lem:SubProp:4}.

Next, define the following projectors on $\SobHH{{\frac12-m}}(\Sigma;E\otimes \C^m_{\op})$: 
$$P_-^\ast := P_{V_-^\ast,W_-^\ast \oplus \GProj_+^\ast \SobHH{{\frac12-m}}(\Sigma;E\otimes \C^m_{\op}) }\quad\text{and}\quad Q_-^\ast := P_{W_-^\ast,  V_-^\ast \oplus \GProj_+^\ast \SobHH{{\frac12-m}}(\Sigma;E\otimes \C^m_{\op}) }.$$
We have that $P_-^\ast\rest{\SobHH{{\frac12}}(\Sigma;E\otimes \C^m_{\op})}$ is a bounded map from $\SobHH{{\frac12}}(\Sigma;E\otimes \C^m_{\op})$ to $\SobHH{{\frac12}}(\Sigma;E\otimes \C^m_{\op})$ since
$$P_-^\ast u = Q_-^\ast u - \GProj_+^\ast u$$
and $\GProj_+^\ast\rest{\SobHH{{\frac12}}(\Sigma;E\otimes \C^m_{\op})}$ is a bounded map on $\SobHH{{\frac12}}(\Sigma;E\otimes \C^m_{\op})$ and $Q_+^\ast \SobHH{{\frac12-m}}(\Sigma;E\otimes \C^m_{\op}) = W_-^\ast \subset \SobHH{{\frac12}}(\Sigma;E\otimes \C^m_{\op})$.
Therefore, using Lemma \ref{Lem:GraphSpace} \ref{Lem:GraphSpaces:1} to equate $\GProj_+^\ast B^{\perp} = V_+^\ast \cap \SobHH{{\frac12}}(\Sigma;E\otimes \C^m_{\op})$, 
$$ h_0 := P_-^\ast \comp (X_+^\ast)^{-1}: V_+^\ast \cap \SobHH{{\frac12}}(\Sigma;E\otimes \C^m_{\op})  \to V_-^\ast \cap \SobHH{{\frac12}}(\Sigma;E\otimes \C^m_{\op}).$$
Since $P_-^\ast$ is $\SobHH{{\frac12}}(\Sigma;E\otimes \C^m_{\op})$  bounded, the map $X_+^\ast$ was established to be an isomorphism of the respective Banach spaces it maps between, we have that $h_0$ is bounded in the $\SobHH{{\frac12}}(\Sigma;E\otimes \C^m_{\op})$-norm.

To show $W_-^\ast \oplus \set{u + h_0 u: u \in \GProj_+^\ast B^\perp} \subset B^{\perp}$, we note that  $W_-^\ast \subset B^{\perp}$ by construction and that, setting $w = (X_+^\ast)^{-1} u$ for $u \in P_+^\ast B^\perp$,
$$ u+ h_0 u = w - Q_-\ast w \in B^\perp + W_-^\ast \subset B^\perp.$$
For the reverse containment, using \eqref{Eq:ConsProj2}, we write $x = x^\perp + x_- \in B^{\perp} \cap (W_-^\ast)^{\perp, \SobHH{{-\frac12}}(\Sigma;E\otimes \C^m)} \oplus W_-^\ast$ and letting 
 $w = (X_+^\ast)x^\perp$, we  deduce that
$$x^\perp = w + h_0w + Q_-^\ast (X_+^\ast)^{-1}w \in \set{u + h_0 u: v \in P_+^\ast B^{\perp}} \oplus W_-^\ast.$$

Lastly, we show that $h_0 = -g_0^\ast\rest{V_+^\ast \cap \SobHH{{{\frac12}}}(\Sigma;E\otimes \C^m_{\rm op})}$. 
For that, we note that $\inprod{B, B^\perp} = 0$ in the pairing between $\checkH(D)$ and $\hatH(D)$ and so therefore, 
$$ 0 = \inprod{u + g_0 u, v + h_0v} = \inprod{u,v} + \inprod{u, h_0v} + \inprod{g_0 u, v} + \inprod{g_0u, h_0v}.$$
However $u \in V_-$ and $v \in V_+^\ast$ and therefore, 
$$ 0 = \inprod{u,v} =   \inprod{g_0u, h_0v}.$$
From this, we obtain that 
$$ \inprod{u, h_0v} = \inprod{u, -g_0^\ast v}$$
for all $u \in V_-$ and $v \in V_+^\ast \cap \SobHH{{\frac12}}(\Sigma;E\otimes \C^m_{\op})$.
This shows us that $h_0v = -g_0^\ast v$ for all $v \in V_+^\ast \cap \SobHH{{\frac12}}(\Sigma;E\otimes \C^m_{\op})$ and moreover, since $h_0 v \in V_-^\ast \cap \SobHH{{\frac12}}(\Sigma;E\otimes \C^m_{\op})$, by construction of $h_0$, we obtain that 
$$ g_0(V_+^\ast \cap \SobHH{{\frac12}}(\Sigma;E\otimes \C^m_{\op})) \subset V_-^\ast \cap \SobHH{{\frac12}}(\Sigma;E\otimes \C^m_{\op}).$$
This finishes the proof.
\end{proof}

Next, we move onto proving the assertions regarding Fredholm pairs characterisations of regular boundary conditions.

\begin{proof}[Proof that \ref{Thm:Ell:2}$\implies$ \ref{Thm:Ell:3} in Theorem \ref{Thm:Ell} ]
We shall prove that graphical decomposability implies Fredholm decomposability in $\SobHH{{m-{\frac12}}}$. 
The argument is exactly as in the proof of \cite[Theorem 2.9]{BBan}.
The idea is to use \ref{Thm:Ell:2} to write $B = \set{ v + gv: v \in V_{-}} \oplus W_+$ and 
$$\scalebox{1.5}{a}_\dagger\adj{B} = B^{\perp}= \set{u - \adj{g}u: u \in \adj{V}_+\cap \SobHH{{{\frac12}}}(\Sigma;E\otimes \C^m_{\rm op})} \oplus \adj{W}_-.$$
This, along with Proposition \ref{Prop:ProjSob} yields
\begin{align*}
&B \cap \GProj_+\SobHH{{m-\frac12}}(\Sigma;E\otimes \C^m) = W_+,\quad (B^\perp\cap \SobHH{{\frac12}}(\Sigma;E\otimes \C^m_{\rm op})) \cap \hatH(D) = W_-^\ast,\\
&\faktor{\SobHH{{m-\frac12}}(\Sigma;E\otimes \C^m)}{B + \GProj_+\SobHH{{m-\frac12}}(\Sigma;E\otimes \C^m)} \isom \faktor{\checkH(D)}{B + \GProj_+\checkH(D)} \isom W_-,\text{ and}\\ 
&\faktor{\SobHH{{\frac12}}(\Sigma;E\otimes \C^m_{\rm op})}{B^{\perp} + \GProj_-^\ast \hatH(D)} \isom  \faktor{\hatH(D)}{B^{\perp} + \GProj_-^\ast \SobHH{{\frac12}}(\Sigma;E\otimes \C^m_{\rm op})} \isom W_+^\ast. 
\end{align*}
Since $W_\pm \isom W_\pm^\ast$, the conclusion follows.
\end{proof}

Now, we prove the rest of the theorem in two parts.

\begin{proof}[Proof that \ref{Thm:Ell:3}$\implies$ \ref{Thm:Ell:4} in Theorem \ref{Thm:Ell} ]
We shall prove that Fredholm decomposability in $\SobHH{{m-\frac12}}$ implies Fredholm decomposability in $\hatH(D)$. 
The argument is divided into three steps.
\begin{enumerate}[(a)] 
\item 
\label{Pf:a}
$B$ is closed in $\checkH(D)$.

Set $W_+ := B \intersect \GProj_+ \SobHH{{m-\frac12}}(\Sigma;E\otimes \C^m)= B \intersect \GProj_+ \checkH(D)$, since $B \subset \SobHH{{m-\frac12}}(\Sigma;E\otimes \C^m)$. 
Therefore, using Proposition \ref{Prop:ProjSob}, 
$$ B = W_+ \oplus (B \cap W_+^{\perp})\quad\text{and}\quad\GProj_+\checkH(D) = W_+ \oplus (\GProj_+ \checkH(D) \intersect W_+^{\perp}).$$ 
Also, $B \intersect \GProj_+ \SobHH{{m-\frac12}}(\Sigma;E\otimes \C^m)\intersect W_+^{\perp} = W_+ \intersect W_+^{\perp} = 0$, and therefore,
$$  B + \GProj_+ \checkH(D)  = B \oplus ( B \cap W_+^{\perp}).$$

Next, we claim that this space is closed.
For that, we use \cite[Lemma A.4]{BBan} to see that $B + \GProj_+ \checkH(D) = \GProj_-B \oplus \GProj_+ \checkH(D)$ and note that $\GProj_-B \subset \GProj_-\SobHH{{m-\frac12}}(\Sigma;E\otimes \C^m) = \GProj_-\checkH(D)$ is a closed subspace in $\checkH(D)$.

Now, using that $(\GProj_-^*\SobHH{{\frac12}}(\Sigma;E\otimes \C^m_{\rm op})), B^\perp\cap \SobHH{{\frac12}}(\Sigma;E\otimes \C^m_{\rm op}))$ is a Fredholm pair in $\SobHH{{\frac12}}(\Sigma;E\otimes \C^m_{\rm op}))$, we obtain that $(\GProj_+\SobHH{{-\frac12}}(\Sigma;E\otimes \C^m_{\rm op})), (B^\perp\cap \SobHH{{\frac12}}(\Sigma;E\otimes \C^m_{\rm op}))^\perp)$ is a Fredholm pair in $\SobHH{{-\frac12}}(\Sigma;E\otimes \C^m)$ with opposite index.
Using Proposition \ref{Prop:ProjSob} and \cite[Lemma A.5]{BBan}, we obtain that $(\GProj_+\checkH(D), \check{B})$ is a Fredholm pair in $\checkH(D)$, where $\check{B} = (B^\perp\cap \SobHH{{\frac12}}(\Sigma;E\otimes \C^m_{\rm op}))^\perp$.
This yields $B \oplus (\check{B} \intersect \GProj_+ \checkH(D) \intersect W_+^\perp)$ is closed  in $\checkH(D)$. 

To prove the desired claim, we show that $\check{B} \intersect \GProj_+ \checkH(D) \intersect W_+^{\perp} = 0$.
For that, it suffices to show that $ \check{B} \intersect \GProj_+ \checkH(D) = B \intersect \GProj_+ \checkH(D)$.
Since $B \subset \check{B}$, and the dimension of $B \intersect \GProj_+ \checkH(D)$ is finite, it suffices to show that $\dim ( \check{B} \intersect \GProj_+ \checkH(D))  \leq  \dim( B \intersect \GProj_+ \checkH(D))$.
For that, 
\begin{align*}
\dim ( \check{B} \intersect \GProj_+ \checkH(D)) 
	&= \dim\cbrac{\faktor{\hatH(D)}{ B^{\perp} + \GProj_-^\ast \hatH(D)}}\\
	&= \dim \cbrac{ \faktor{\GProj_+^\ast \SobHH{{\frac12}}(\Sigma;E\otimes\C^m_{\rm op})}{\GProj_+^\ast B^{\perp}}} \\ 
	&\leq \dim \cbrac{ \faktor{\GProj_+^\ast \SobHH{{\frac12}}(\Sigma;E\otimes\C^m_{\rm op})}{\GProj_+^\ast B^\perp}} \\ 
	&= \dim(B \intersect \GProj_+ \SobHH{{m-\frac12}}(\Sigma;E\otimes\C^m))=\dim( B \intersect \GProj_+ \checkH(D)).
\end{align*}

\item $(\GProj_+\checkH(D), B)$ is a Fredholm pair in $\checkH(D)$, with  
$$\indx(\GProj_+\checkH(D), B) = \indx(\GProj_+\SobHH{{m-\frac12}}(\Sigma;E\otimes\C^m), B).$$
\label{Pf:b} 

Contained in the proof of \ref{Pf:a} was that $B + \GProj_+\checkH(D)$ is a closed subspace of $\checkH(D)$. 
It is clear that $B \intersect   \GProj_+\checkH(D)$ is a closed subspace of $\checkH(D)$.

Using \cite[Lemma A.4]{BBan}, we obtain that 
$$ \faktor{\checkH(D)}{B + \GProj_+ \checkH(D)} \isom \faktor{\GProj_- \SobHH{{m-\frac12}}(\Sigma;E\otimes\C^m)}{\GProj_- B}$$
as well as
$$ \faktor{\checkH(D)}{B + \GProj_+ \SobHH{{m-\frac12}}(\Sigma;E\otimes\C^m)} \isom \faktor{\GProj_-\SobHH{{m-\frac12}}(\Sigma;E\otimes\C^m)}{\GProj_- B}.$$
This along with  the fact that $B \intersect   \GProj_+\checkH(D) = B \intersect   \GProj_+\SobHH{{m-\frac12}}(\Sigma;E\otimes\C^m)$  then yields the conclusion.

\item $B^{\perp}\cap\SobHH{{\frac12}}(\Sigma;E\otimes\C^m_{\rm op})$ is closed in $\hatH(D)$ and $(\GProj_-^*\hatH(D), B^\perp\cap \SobHH{{\frac12}}(\Sigma;E\otimes \C^m_{\rm op}))$  is a Fredholm pair with   
$$\indx (\GProj_-^*\hatH(D), B^\perp\cap \SobHH{{\frac12}}(\Sigma;E\otimes \C^m_{\rm op}))= \indx(\GProj_-^*\SobHH{{\frac12}}(\Sigma;E\otimes \C^m_{\rm op})), B^\perp\cap \SobHH{{\frac12}}(\Sigma;E\otimes \C^m_{\rm op})).$$ 

That  $B^{\perp}\cap\SobHH{{\frac12}}(\Sigma;E\otimes\C^m_{\rm op})$ is closed in $\hatH(D)$ is obtained via a repetition of the argument in \ref{Pf:a} with the appropriate changes.
Similarly, the remaining conclusion is obtained on mirroring the argument \ref{Pf:b} with the necessary changes.
\qedhere
\end{enumerate} 
\end{proof}

The last remaining  part to establish Theorem \ref{Thm:Ell} is given below. 

\begin{proof}[Proof that \ref{Thm:Ell:4}$\implies$ \ref{Thm:Ell:1} in Theorem \ref{Thm:Ell} ]
We shall prove that Fredholm decomposability in $\hatH(D)$ implies regularity. . 
Since $B \subset \SobHH{{m-\frac12}}(\Sigma;E\otimes \C^m)$ and  closed in $\checkH(D)$, we only need to show that $B^{\perp} \subseteq \SobHH{{\frac12}}(\Sigma;E\otimes \C^m_{\rm op})$.

By hypothesis,  we have that $\indx (\GProj_+\checkH(D), B)= -\indx (\GProj_-^*\hatH(D), B^\perp\cap \SobHH{{\frac12}}(\Sigma;E\otimes \C^m_{\rm op}))$, and therefore $\indx (\GProj_-^*\hatH(D), B^\perp\cap \SobHH{{\frac12}}(\Sigma;E\otimes \C^m_{\rm op}))= \indx (\GProj_-^*\hatH(D), B^\perp)$.
By \cite[Lemma A.7]{BBan}, we obtain that $B^{\perp} =B^\perp\cap \SobHH{{\frac12}}(\Sigma;E\otimes \C^m_{\rm op})$.
\end{proof}

\subsection{The first order case}

For general boundary decomposing projectors $\GProj_{\pm}$ in the first order case, a decomposition more in the spirit of \cite{BBan} can be obtained. We remark that if $\GProj_+$ is boundary decomposing for a first order elliptic operator, then $\GProj_+$ extends by interpolation to $\SobH{s}(\Sigma;E)$ for any $s\in [-{\frac12},{\frac12}]$.

\begin{definition}[Graphical $\Lp{2}$-decomposition]
\label{def:foelldecomspsHO}
Let $B$ be a boundary condition and $\GProj_+$ a boundary decomposing projection for an elliptic differential operator $D$ of order $1$. We say that a \emph{graphical $\Lp{2}$-decomposition} (with respect to $\GProj_+$) of $B$ is the following decomposition: 
\begin{enumerate}[(i)]
\item
	there exist mutually complementary subspaces $W_{\pm}$ and $V_{\pm}$ of $\Lp{2}(\Sigma;E)$ satisfying: 
	$$W_{\pm} \oplus V_{\pm} = \GProj_\pm \Lp{2}(\Sigma;E),$$
\item
	it holds that $W_{\pm}$ are finite dimensional with  $W_{\pm}, \adj{W_\pm} \subset \SobH{\frac12}(\Sigma; E)$, and 
\item
	there exists a bounded linear map $g: V_- \to V_+$ such that  
	$$\adj{g} (\adj{V}_+\cap \SobH{\frac12}(\Sigma;E)) \subset \adj{V}_-\cap \SobH{\frac12}(\Sigma;E)\quad\mbox{and}\quad g(V_-\cap \SobH{\frac12}(\Sigma;E)) \subset V_+\cap \SobH{\frac12}(\Sigma;E),$$ 
	where $\adj{g}$ denotes the $\Lp{2}$-adjoint and 
		$$ B = \set{v + gv: v \in V_-\cap \SobH{\frac12}(\Sigma;E)} \oplus W_+.$$
\end{enumerate}
A boundary condition admitting a graphical $\Lp{2}$-decomposition is said to be graphically $\Lp{2}$-decomposable.
\end{definition}

As before, the spaces $W_{\pm}^\ast$ that appear in the definition, as before, are simply the ranges of the dual projectors.
However, our computations are always in $\Lp{2}$, and therefore, this is simply the ranges of the adjoint projectors in $\Lp{2}$ with respect to the $\Lp{2}$ inner product. The special case $\GProj_+=\chi^+(A)$, for an adapted boundary operator $A$, of Definition \eqref{def:foelldecomspsHO} was stated in \cite[Definition 2.6]{BBan}.

We emphasise that in Definition \ref{def:foelldecomspsHO}, unlike for the graphical decomposition in Definition \ref{def:elldecomspsHO}, the decomposition obtained is in $\Lp{2}$.
Indeed, the following theorem asserts that in the first order case, these decompositions are equivalent. The precise equivalence is set up as follows: a graphical $\Lp{2}$-decomposition produces a graphical decomposition as in Definition \ref{def:elldecomspsHO} by intersecting with $\SobH{{\frac12}}$ and the converse is constructed from $\Lp{2}$-closures.
We present this first order definition separately as it can be useful to compute in $\Lp{2}$ in the first order case instead of the more complicated spaces that appear in the general order decomposition.

\begin{theorem}
\label{Thm:FirstOrderDecomp}
For a first order elliptic differential operator $D$, a boundary condition $B$ is regular if and only if it is graphically $\Lp{2}$-decomposable.
In this case, we have that 
$$B^{\perp}= \set{u - \adj{g}u: u \in \adj{V}_+\cap \SobH{\frac12}(\Sigma;E)} \oplus \adj{W}_-.$$
\end{theorem}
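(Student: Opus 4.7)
The plan is to deduce the theorem from the general-order Theorem \ref{Thm:Ell} by establishing that, in the first order case, the two notions of graphical decomposition---Definition \ref{def:elldecomspsHO} restricted to $m=1$ and Definition \ref{def:foelldecomspsHO}---are equivalent. By Theorem \ref{Thm:Ell}, regularity of $B$ is equivalent to the existence of the $\SobH{{\frac12}}$-level decomposition of Definition \ref{def:elldecomspsHO}, so the task reduces to showing that this $\SobH{{\frac12}}$-level decomposition can be transferred to and from the $\Lp{2}$-level decomposition of Definition \ref{def:foelldecomspsHO}. The adjoint formula \eqref{Eqn:Ell:Adj} from Theorem \ref{Thm:Ell} then specialises to the asserted formula for $B^{\perp}$.

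\textbf{From $\Lp{2}$ to $\SobH{{\frac12}}$ (easy direction).} Given a graphical $\Lp{2}$-decomposition $(V_\pm, W_\pm, g)$, I set $\hat{V}_\pm := V_\pm \cap \SobH{{\frac12}}(\Sigma;E)$, $\hat{W}_\pm := W_\pm$, and $\hat{g} := g\rest{\hat{V}_-}$. Since $W_\pm \subset \SobH{{\frac12}}$, splitting $u \in \GProj_\pm \SobH{{\frac12}}$ via the $\Lp{2}$-decomposition $u = w + v$ forces $v = u - w \in \SobH{{\frac12}}$, hence $v \in \hat{V}_\pm$, giving $\GProj_\pm \SobH{{\frac12}} = \hat{V}_\pm \oplus \hat{W}_\pm$. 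The map $\hat{g}$ inherits $\SobH{{\frac12}}$-boundedness from the closed graph theorem, combining the $\Lp{2}$-boundedness of $g$ with the hypothesis $g(V_- \cap \SobH{{\frac12}}) \subset V_+ \cap \SobH{{\frac12}}$. Thus $B$ admits an $\SobH{{\frac12}}$-graphical decomposition, and Theorem \ref{Thm:Ell} yields regularity.

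\textbf{From $\SobH{{\frac12}}$ to $\Lp{2}$ (hard direction).} Conversely, starting with $B$ regular and the $\SobH{{\frac12}}$-graphical decomposition $(V_\pm, W_\pm, g)$ supplied by Theorem \ref{Thm:Ell}, I set $\tilde{V}_\pm := \overline{V_\pm}^{\Lp{2}}$ inside $\GProj_\pm \Lp{2}$---legitimate since Definition \ref{boundafefidodo} gives $\GProj_\pm$ bounded on both $\SobH{{\pm\frac12}}$, whence $\Lp{2}$-bounded by interpolation---and retain $W_\pm$ unchanged. The plan is then to verify that $\tilde{V}_\pm \oplus W_\pm = \GProj_\pm \Lp{2}$ as a closed direct sum, that $g$ extends to an $\Lp{2}$-bounded $\tilde{g} : \tilde{V}_- \to \tilde{V}_+$, and that the $\Lp{2}$-dual projectors onto $W_\pm$ in this refined decomposition have ranges lying in $\SobH{{\frac12}}$. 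The identity $B = \{v + \tilde{g}v : v \in V_- \cap \SobH{{\frac12}}\} \oplus W_+$ then holds tautologically, and the adjoint description \eqref{Eqn:Ell:Adj} follows from the boundary pairing (Theorem \ref{boundarypairing}) exactly as in the proof of Theorem \ref{Thm:Ell}.

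\textbf{Main obstacle.} The principal difficulty is that Definition \ref{def:elldecomspsHO} only guarantees $W_-^\ast \subset \SobH{{\frac12}}$---it does \emph{not} directly provide $W_+^\ast \subset \SobH{{\frac12}}$, nor $\Lp{2}$-boundedness of $g$. My strategy for this is to exploit the fact that $B^\ast$ is itself regular (Proposition \ref{Prop:MaxClosed}): applying Theorem \ref{Thm:Ell} to $D^\dagger$ with a compatible boundary decomposing projection for $D^\dagger$ (the one induced from $\GProj_+$ via $\scalebox{1.5}{a}_\dagger$, cf.\ Corollary \ref{structureofa}), I obtain an analogous graphical decomposition of $B^\ast$. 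The boundary pairing (Theorem \ref{boundarypairing}) and the adjoint description of Proposition \ref{cordescofadjointbc} identify the ``$W_-$'' of the $B^\ast$ decomposition precisely with $W_+^\ast$ of $B$, automatically placing $W_+^\ast$ in $\SobH{{\frac12}}$. With all four dual projectors then finite-rank with $\SobH{{\frac12}}$-valued ranges, they extend continuously to $\Lp{2}$, the $\Lp{2}$-closures $\tilde{V}_\pm$ remain topological complements to $W_\pm$ inside $\GProj_\pm \Lp{2}$, and the graph of $\tilde{g}$ coincides with an $\Lp{2}$-closed subspace of $B + \overline{\GProj_- V_-}^{\Lp{2}}$, yielding $\Lp{2}$-boundedness of $\tilde{g}$ via the closed graph theorem.
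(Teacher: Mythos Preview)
Your overall architecture---reducing to Theorem \ref{Thm:Ell} and transferring the decomposition between $\SobH{\frac12}$ and $\Lp{2}$ via intersection one way and closure the other---is sound and is indeed the correspondence the paper describes informally just before the theorem. However, the paper's actual proof does \emph{not} proceed this way: it simply reruns the argument of \cite[Theorem 2.9]{BBan} directly at the $\Lp{2}$ level, replacing $\chi^{\pm}(A_r)$ by a general boundary decomposing $\GProj_\pm$. In other words, the paper reconstructs the spaces $W_\pm, V_\pm$ from scratch in $\Lp{2}$ via the analogues of \eqref{Eq:Spaces}, rather than transporting the $\SobH{\frac12}$-level spaces already built in Theorem \ref{Thm:Ell}. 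Your route is therefore genuinely different, and has the advantage of recycling Theorem \ref{Thm:Ell} rather than duplicating its proof; the paper's route avoids having to match up two a priori different decompositions.

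Regarding your identified obstacle ($W_+^\ast \subset \SobH{\frac12}$): the detour through $B^\ast$ and a ``compatible'' boundary decomposing projection for $D^\dagger$ is unnecessary and would require justifying that such a projection exists and interlocks correctly with $\GProj_+$. There is a direct argument. Using the \emph{specific} spaces from \eqref{Eq:Spaces} (not an arbitrary decomposition), the $\Lp{2}$-closure $\tilde V_+$ equals $\GProj_+\Lp{2} \cap W_+^{\perp,\Lp{2}}$. Then $W_+^{\ast,\Lp{2}} = \GProj_+^\ast \Lp{2} \cap \tilde V_+^{\perp,\Lp{2}}$, and a short annihilator computation gives $\tilde V_+^{\perp,\Lp{2}} = \GProj_-^\ast \Lp{2} + W_+$, whence any $u \in W_+^{\ast,\Lp{2}}$ satisfies $u = \GProj_+^\ast w$ for some $w \in W_+ \subset \SobH{\frac12}$. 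Since $\GProj_+$ is bounded on $\SobH{-\frac12}$ (property \ref{Def:E1HO}), $\GProj_+^\ast$ is bounded on $\SobH{\frac12}$ by duality, so $u \in \SobH{\frac12}$. The same argument handles $W_-$. A genuine gap that remains in your sketch is the $\Lp{2}$-boundedness of $g$: your closed-graph argument needs the $\Lp{2}$-closure of the graph to still be a graph, which is exactly what requires proof; in the direct \cite{BBan} argument this comes out of the explicit construction $g_0 = P_+ \circ X_-^{-1}$ at the $\Lp{2}$ level.
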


The proof of this theorem runs as in the proof of \cite[Theorem 2.9]{BBan}, with the projectors $\chi^{\pm}(A_r)$ replaced by $\GProj_{\pm}$.

\section{Higher order boundary regularity}
\label{higherorderregsubsec}

We will now prove results on higher order boundary regularity. These results rely on the fact that Theorem \ref{refinedseeley} due to Seeley was, in fact, stated in \cite{seeley65}  also for higher order Sobolev spaces. 

\subsection{Preliminaries}
For an elliptic differential operator $D$, we let $D_{\rm min,s}$ and $D_{\rm max, s}$ denote the minimal and maximal, respectively, closures of $D$ acting densely defined on $\SobH{s}(M;E)\to \SobH{s}(M;F)$. We let $\gamma_s:\dom (D_{\rm max, s})\to \SobHH{{s-{\frac12}}}(\Sigma;E\otimes \C^m)$ denote the trace mapping.

\begin{theorem}
\label{refinedseeleysobolev}
Let $D$ be an elliptic differential operator of order $m>0$. 
The Calderón projection $P_\mathcal{C}\in \Psi^{\pmb 0}_{\rm cl}(\Sigma;E\otimes \C^m)$ from Theorem \ref{refinedseeley} satisfies the following.
\begin{enumerate}[(i)] 
\item \label{refinedseeley:1} 
The operator $\mathcal{K}$ from Theorem \ref{refinedseeley} defines a continuous Poisson operator 
$$\mathcal{K}_s:\SobHH{{s-{\frac12}}}(\Sigma;E\otimes \C^m)\to \dom(D_{\rm max,s}),$$
such that $P_\mathcal{C}=\gamma_s\circ \mathcal{K}_s$ on $\SobHH{{s-{\frac12}}}(\Sigma;E\otimes \C^m)$.
\item  \label{refinedseeley:2} 
The image of 
$$\gamma_s:\dom(D_{\rm max,s})\to \SobHH{{s-{\frac12}}}(\Sigma;E\otimes \C^m),$$
is precisely the subspace 
$$P_\mathcal{C}\SobHH{{s-{\frac12}}}(\Sigma;E\otimes \C^m)\oplus(1-P_\mathcal{C})\SobHH{{s+m-{\frac12}}}(\Sigma;E\otimes \C^m).$$
\end{enumerate} 
In particular, the trace map $\gamma_s$ induces an isomorphism of Banach spaces 
$$\faktor{\dom(D_{\rm max,s})}{\dom(D_{\rm min,s})}\cong P_\mathcal{C}\SobHH{{s-{\frac12}}}(\Sigma;E\otimes \C^m)\oplus (1-P_\mathcal{C})\SobHH{{s+m-{\frac12}}}(\Sigma;E\otimes \C^m).$$
\end{theorem}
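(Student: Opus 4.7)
The plan is to lift Seeley's proof of Theorem \ref{refinedseeley} to arbitrary Sobolev scales; essentially every step generalises provided one tracks the Douglis--Nirenberg Sobolev grading carefully.

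Part \ref{refinedseeley:1} follows once one recognises that Seeley's Poisson operator $\mathcal{K}$ has the structure of a pseudodifferential Poisson-type operator of order $-m$, and so for each $s \in \R$ it extends to a continuous map $\mathcal{K}_s : \SobHH{{s-{\frac12}}}(\Sigma; E\otimes \C^m) \to \SobH{s+m}(M;E) \subseteq \dom(D_{\max,s})$. The identities $D\mathcal{K} = 0$ and $P_\mathcal{C} = \gamma \circ \mathcal{K}$ are inherited from Theorem \ref{refinedseeley}\ref{refinedseeley0:1} by density of smooth boundary data.

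For part \ref{refinedseeley:2}, the ``$\supseteq$'' direction is immediate: given $v = P_\mathcal{C} v_1 + (1-P_\mathcal{C}) v_2$ with $v_1 \in \SobHH{{s-{\frac12}}}$ and $v_2 \in \SobHH{{s+m-{\frac12}}}$, take $u_1 := \mathcal{K}_s v_1$ and lift $(1-P_\mathcal{C}) v_2$ to some $u_2 \in \SobH{s+m}(M;E)$ via a continuous right inverse of the graded trace map $\SobH{s+m}(M;E) \to \SobHH{{s+m-{\frac12}}}(\Sigma; E \otimes \C^m)$. Then $u_1 + u_2 \in \dom(D_{\max,s})$ realises $v$. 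For the reverse inclusion I would mirror the elliptic-regularity argument used in the proof of Theorem \ref{refinedseeley}\ref{refinedseeley0:2}: embed $M$ as a smoothly bounded domain in a closed manifold $\hat{M}$, extend $D$ to an elliptic operator $\hat{D}$ of order $m$, and write $D' := \hat{D}|_{M^c}$ for $M^c := \hat{M} \setminus \interior{M}$. Given $u \in \dom(D_{\max,s})$ with $\gamma_s u = g$, one may subtract $\mathcal{K}_s P_\mathcal{C} g \in \SobH{s+m}(M;E)$ from $u$ and reduce to the case $P_\mathcal{C} g = 0$, i.e.\ $g = (1-P_\mathcal{C}) g$. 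A Sobolev-scale version of \cite[Lemma 5]{seeley65} then produces $f' \in \ker(D') \subseteq \SobH{s}(M^c; \hat{E}|_{M^c})$ with $\gamma f' = g$; glueing $u$ on $M$ and $f'$ on $M^c$ yields $F \in \dom(\hat{D}_{\max,s})$ on the closed manifold $\hat{M}$. Elliptic regularity on $\hat{M}$ at Sobolev scale $s$ (equivalently, the identity $\dom(\hat{D}_{\max,s}) = \SobH{s+m}(\hat{M}; \hat{E})$ obtained by a standard parametrix argument) gives $F \in \SobH{s+m}(\hat{M}; \hat{E})$, so $u = F|_M \in \SobH{s+m}(M; E)$ and $g \in \SobHH{{s+m-{\frac12}}}(\Sigma; E \otimes \C^m)$.

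The main obstacle is the Sobolev-scale enhancement of \cite[Lemma 5]{seeley65}, i.e.\ verifying that Seeley's identification of $(1-P_\mathcal{C}) \gamma \dom(D_{\max})$ with the trace space of $\ker(D')$ respects the grading in $s$. The necessary ingredients are already at hand: $P_\mathcal{C} \in \Psi^{\pmb 0}_{\rm cl}(\Sigma; E\otimes \C^m)$ acts continuously on every $\SobHH{{s-{\frac12}}}(\Sigma; E\otimes \C^m)$ by Subsection \ref{subsec:calderonproj}, and the analogous Calderón projection for the complementary domain $M^c$ differs from $1-P_\mathcal{C}$ by a finite-rank smoothing operator (see \cite{seeley65}). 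Once these two regularity statements are in place, the quotient identification at the end of the theorem follows from the open mapping theorem combined with the identity $\ker \gamma_s = \dom(D_{\min,s}) = \SobH[0]{s+m}(M;E)$, which is the higher-order analogue of Proposition \ref{geneprop}\ref{geneprop:1} proved by the same Gårding argument applied at Sobolev level $s+m$.
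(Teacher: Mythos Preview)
The paper does not actually prove Theorem \ref{refinedseeleysobolev}; it simply records (at the start of Section \ref{higherorderregsubsec} and in the remark after Theorem \ref{Thm:CaldProj}) that Seeley's original argument in \cite{seeley65} already covers arbitrary Sobolev scales, and states the result without further argument. Your proposal therefore fills in more than the paper does, and the strategy you outline---carrying the glueing/elliptic-regularity proof of Theorem \ref{refinedseeley} through at level $s$---is exactly the natural one and matches what is in Seeley's paper.

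One claim in your part \ref{refinedseeley:1} is wrong, however: the Poisson operator $\mathcal{K}$ does \emph{not} map $\SobHH{{s-{\frac12}}}(\Sigma; E\otimes\C^m)$ into $\SobH{s+m}(M;E)$. Taking $s=0$ this would place all of $\ker(D_{\max})$ inside $\SobH{m}(M;E)$, which fails already for harmonic functions on the disc with $\SobH{-{\frac12}}$ Dirichlet data. The correct target is $\ker(D_{\max,s})\subseteq\dom(D_{\max,s})\subseteq\SobH{s}(M;E)$; the Poisson operator gains only half a derivative relative to the top trace, not $m$ derivatives. Fortunately this overclaim is never used in your part \ref{refinedseeley:2}: in the ``$\subseteq$'' step you only need $\mathcal{K}_s P_\mathcal{C} g\in\dom(D_{\max,s})$ so that the reduced function $w:=u-\mathcal{K}_s P_\mathcal{C} g$ stays in $\dom(D_{\max,s})$ with trace $(1-P_\mathcal{C})g$, and it is the glueing argument together with elliptic regularity on $\hat{M}$ that places $w$ (not $u$) in $\SobH{s+m}(M;E)$ and hence $(1-P_\mathcal{C})g$ in $\SobHH{{s+m-{\frac12}}}$. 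With that correction your outline is sound.
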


While the Calderon projection is in general hard to compute, we shall formulate a condition on a boundary decomposing projection in order for it to implement higher semiregularity properties. For $s\geq 0$, we introduce the notation 
$$\checkH^s(D):=P_\mathcal{C}\SobHH{{s-{\frac12}}}(\Sigma;E\otimes \C^m)\oplus (1-P_\mathcal{C})\SobHH{{s+m-{\frac12}}}(\Sigma;E\otimes \C^m)
\cong 
\faktor{\dom(D_{\rm max,s})}{\dom(D_{\rm min,s})},$$
where the last isomorphism is implemented by $\gamma$ by Theorem \ref{refinedseeleysobolev}.

\begin{definition}
\label{boundafefidodohigher}
Let $D$ be an elliptic differential operator of order $m>0$ and $s \geq 0$ a positive number. We say that a projection $\GProj_+$ is $s$-boundary decomposing for $D$ if $\GProj_+$ is boundary decomposing for $D$ and additionally satisfies the following conditions 
\begin{enumerate}[({P}1)]
\item \label{Def:EStartHO} \label{Def:E1HO} 
	$\GProj_+: \SobHH{s+ \alpha}(\Sigma; E\otimes \C^m) \to \SobHH{s+ \alpha}(\Sigma;E\otimes \C^m)$ is a bounded projection for $\alpha \in \left\{-\frac12, m-\frac12\right\}$, 
\item \label{Def:E2HO} 
	$\GProj_+: \checkH^s(D) \to \checkH^s(D)$, and $\GProj_- := (I - \GProj_+):  \checkH^s(D) \to \SobHH{{s+m-\frac12}}(\Sigma;E\otimes \C^m)$,  and
\item \label{Def:EEndHO} \label{Def:E3HO} 
	$\norm{u}_{\checkH^s(D)} \simeq \norm{\GProj_- u}_{\SobHH{{s+m-\frac12}}(\Sigma;E\otimes \C^m)} + \norm{\GProj_+ u}_{\SobHH{s-{\frac12}}(\Sigma;E\otimes \C^m)}.$
\end{enumerate} 
\end{definition}

Similarly to Theorem \ref{thm:bodunarodoad} and Corollary \ref{pseudostahtdecompose}, we have the following result providing a sufficient condition for a projection being $s$-boundary decomposing. 

\begin{theorem}
\label{thm:bodunarodoadhigher}
Let $D$ be an elliptic differential operator of order $m>0$, $s \geq 0$ a positive number and $P$ a continuous projection on $\SobHH{{s+m-\frac12}}(\Sigma;E\otimes \C^m)$. Assume that $P$ satisfies the following: 
\begin{itemize} 
\item $P$ is continuous also in the norms on $\checkH(D)$, $\checkH^s(D)$, $\SobHH{{-\frac12}}(\Sigma;E\otimes \C^m)$, $\SobHH{{s-\frac12}}(\Sigma;E\otimes \C^m)$  and $\SobHH{{m-\frac12}}(\Sigma;E\otimes \C^m)$.
\item The operator $A:=P_\mathcal{C}-(1-P)$ defines a Fredholm operator on $\SobHH{{s+m-\frac12}}(\Sigma;E\otimes \C^m)$ and by continuity extends to Fredholm operators on $\checkH(D)$, $\checkH^s(D)$ and $\SobHH{{m-\frac12}}(\Sigma;E\otimes \C^m)$. 
\end{itemize} 
Then $P$ is $s$-boundary decomposing, and in particular 
$$\norm{u}_{\checkH^s(D)} \simeq \norm{(1-P) u}_{\SobHH{{s+m-\frac12}}(\Sigma;E\otimes \C^m)} + \norm{P u}_{\SobHH{s-{\frac12}}(\Sigma;E\otimes \C^m)}.$$
In the special case that $P\in \Psi^{\pmb 0}_{\rm cl}(\Sigma; E\otimes \C^m)$ is a projection such that
\begin{itemize} 
\item $[P_\mathcal{C},P]\in \Psi^{\pmb{-m}}_{\rm cl}(\Sigma; E\otimes \C^m)$; and
\item the operator $A:=P_\mathcal{C}-(1-P)\in \Psi^{\pmb 0}_{\rm cl}(\Sigma; E\otimes \C^m)$ is elliptic,
\end{itemize} 
then $P$ is $s$-boundary decomposing.
\end{theorem}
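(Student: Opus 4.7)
The plan is to reduce this to a direct generalisation of Theorem \ref{thm:bodunarodoad} and Corollary \ref{pseudostahtdecompose}, exploiting Theorem \ref{refinedseeleysobolev} to describe $\checkH^s(D)$ in exactly the same way as $\checkH(D)$. First, I observe that the hypotheses of the first part are strictly stronger than those of Theorem \ref{thm:bodunarodoad}, so that theorem immediately gives us that $P$ is boundary decomposing in the sense of Definition \ref{boundafefidodo}. It then remains to verify the three additional conditions in Definition \ref{boundafefidodohigher}: the mapping property \ref{Def:E1HO} is an explicit hypothesis, and the remaining work is to establish the norm equivalence in \ref{Def:E3HO} (which automatically gives \ref{Def:E2HO} by the open mapping theorem together with density of $\SobHH{{s+m-\frac12}}(\Sigma;E\otimes \C^m)$ in $\checkH^s(D)$, guaranteed by Theorem \ref{refinedseeleysobolev}).

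For the norm equivalence, I would run the argument of Theorem \ref{thm:bodunarodoad} verbatim with the replacements $\checkH(D)\rightsquigarrow \checkH^s(D)$, $\SobHH{{m-\frac12}}\rightsquigarrow \SobHH{{s+m-\frac12}}$, and $\SobHH{{-\frac12}}\rightsquigarrow \SobHH{{s-\frac12}}$. Concretely, to show $\|(1-P)u\|_{\checkH^s(D)}\simeq \|(1-P)u\|_{\SobHH{{s+m-\frac12}}}$ on a dense subspace, the embedding $\SobHH{{s+m-\frac12}}\hookrightarrow \checkH^s(D)$ gives one direction; for the reverse, since $A$ is Fredholm on $\SobHH{{s+m-\frac12}}$ there is a finite rank operator $F$ (which, by approximation, may be chosen to extend boundedly to $\checkH^s(D)$, e.g.\ $F$ smoothing) with
\[
\|(1-P)u\|_{\SobHH{{s+m-\frac12}}}\lesssim \|A(1-P)u\|_{\SobHH{{s+m-\frac12}}}+\|F(1-P)u\|_{\SobHH{{s+m-\frac12}}}.
\]
Using $A(1-P)=(1-P_\mathcal{C})(1-P)$ and the fact that on $(1-P_\mathcal{C})\SobHH{{s+m-\frac12}}$ the $\SobHH{{s+m-\frac12}}$-norm coincides with the $\checkH^s(D)$-norm (by construction of $\checkH^s(D)$ in Theorem \ref{refinedseeleysobolev}), the right-hand side is dominated by $\|(1-P)u\|_{\checkH^s(D)}$. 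The identical scheme with $A$ Fredholm on $\checkH^s(D)$ replacing the role of $A$ on $\checkH(D)$ yields $\|Pu\|_{\checkH^s(D)}\simeq \|Pu\|_{\SobHH{{s-\frac12}}}$. Adding these gives \ref{Def:E3HO}.

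For the pseudodifferential special case, the goal is to derive all the listed hypotheses from the two stated conditions $[P_\mathcal{C},P]\in \Psi^{-\pmb m}_{\rm cl}$ and ellipticity of $A$. Continuity of $P$ on every $\SobHH{{s'}}$ is automatic from $P\in \Psi^{\pmb 0}_{\rm cl}$, while continuity on $\checkH(D)$ follows from Lemma \ref{boundednessofpseudodosnnchck} (as in Corollary \ref{pseudostahtdecompose}); the same Lemma, applied at Sobolev level $s$, gives continuity on $\checkH^s(D)$ since its proof uses only $(1-P_\mathcal{C})PP_\mathcal{C}\in \Psi^{-\pmb m}_{\rm cl}$. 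Ellipticity of $A$ in the Douglis-Nirenberg calculus yields a parametrix $R\in \Psi^{\pmb 0}_{\rm cl}$ making $A$ Fredholm on every $\SobHH{{s'}}$, and repeating the symbol computation $\lfloor[P_\mathcal{C},R]\rfloor=-\lfloor R[P_\mathcal{C},P]R\rfloor$ from Corollary \ref{pseudostahtdecompose} gives $(1-P_\mathcal{C})RP_\mathcal{C}\in \Psi^{-\pmb m}_{\rm cl}$, so $R$ acts boundedly on both $\checkH(D)$ and $\checkH^s(D)$; Fredholmness of $A$ on both spaces then follows from compactness of smoothing operators on these spaces and Atkinson's theorem. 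Applying the first half of the theorem concludes the proof.

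The main obstacle I anticipate is purely bookkeeping: making sure that the finite rank operator $F$ arising from the Fredholm property of $A$ at different Sobolev levels can be chosen compatibly (so that $F$ extends boundedly where needed). This is easily circumvented by taking $F$ to be smoothing, which acts continuously between all the spaces involved; no genuinely new analytic input beyond Theorem \ref{refinedseeleysobolev} and the arguments of Theorem \ref{thm:bodunarodoad} and Corollary \ref{pseudostahtdecompose} should be needed.
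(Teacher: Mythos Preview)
Your proposal is correct and matches the paper's approach: the paper does not write out a proof for this theorem at all, merely prefacing the statement with ``Similarly to Theorem \ref{thm:bodunarodoad} and Corollary \ref{pseudostahtdecompose}, we have the following result,'' and your write-up is precisely the verbatim transcription of those arguments with $\checkH(D)$, $\SobHH{{m-\frac12}}$, $\SobHH{{-\frac12}}$ replaced by $\checkH^s(D)$, $\SobHH{{s+m-\frac12}}$, $\SobHH{{s-\frac12}}$ via Theorem \ref{refinedseeleysobolev}. Your handling of the finite-rank operator $F$ (choosing it smoothing so it acts on all spaces) and of the pseudodifferential special case is exactly what the proofs of Theorem \ref{thm:bodunarodoad} and Corollary \ref{pseudostahtdecompose} already do.
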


Let us consider a corollary of Theorem \ref{refinedseeleysobolev}.

\begin{corollary}
\label{cor:highfhdowo}
Let $D$ be an elliptic differential operator of order $m>0$ and $s\geq 0$. Then it holds that 
\begin{align*}
\dom(D_{\rm max})\cap &\SobH{s+m}(M;E)=\\
=&\{u\in \dom(D_{\rm max}): D_{\rm max}u\in\SobH{s}(M;F),\; P_\mathcal{C}\gamma u\in \SobHH{{s+m-{\frac12}}}(\Sigma;E\otimes \C^m)\}.
\end{align*}
\end{corollary}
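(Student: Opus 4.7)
\emph{Forward direction ($\subseteq$).} This is immediate from the trace theorem and the continuity of $P_\mathcal{C}$ on the Douglis-Nirenberg scale: for $u\in \SobH{s+m}(M;E)$, $Du\in \SobH{s}(M;F)$ and $\gamma u\in \SobHH{{s+m-\frac12}}(\Sigma;E\otimes\C^m)$, so in particular $P_\mathcal{C}\gamma u\in \SobHH{{s+m-\frac12}}$.

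\emph{Reverse direction.} The plan is to peel off a Cauchy contribution with the Poisson operator and then handle the remainder by gluing to a closed manifold. Let $u$ satisfy the right-hand conditions. By Theorem \ref{refinedseeleysobolev}(i) the Poisson operator extends continuously to $\mathcal{K}_{s+m}:\SobHH{{s+m-\frac12}}\to \dom(D_{\rm max,s+m})$. Since $\mathcal{K}_{s+m}$ takes values in $\ker D$ and $\dom(D_{\rm max,s+m})\cap \ker D\subseteq \SobH{s+m}(M;E)$, the element $w:=\mathcal{K}(P_\mathcal{C}\gamma u)$ lies in $\SobH{s+m}(M;E)$. As $P_\mathcal{C}\gamma w=P_\mathcal{C}\gamma u$, replacing $u$ by $u-w$ reduces the problem to the case $P_\mathcal{C}\gamma u=0$.

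Next, I would embed $M$ as a domain with smooth boundary in a closed manifold $\hat M$ carrying extensions $\hat E,\hat F$ of $E,F$ and an elliptic extension $\hat D$ of $D$, writing $D':=\hat D|_{\hat M\setminus \interior{M}}$ (as in the proof of Theorem \ref{refinedseeley}). Applying a parametrix $\hat T\in \Psi^{-m}_{\rm cl}(\hat M;\hat F,\hat E)$ for $\hat D$ to any $\SobH{s}$-extension $\tilde f$ of $Du$, the function $v_0:=(\hat T\tilde f)|_M$ satisfies $v_0\in \SobH{s+m}(M;E)$ and $Dv_0=Du+h$ with $h\in \Ck{\infty}(M;F)$. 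The difference $u-v_0\in \dom(D_{\rm max})$ then has smooth right-hand side $-h$ and Cauchy trace $-P_\mathcal{C}\gamma v_0\in \SobHH{{s+m-\frac12}}$, so subtracting the further Poisson contribution $\mathcal{K}(P_\mathcal{C}\gamma(u-v_0))\in \SobH{s+m}(M;E)$ yields $\tilde u\in\dom(D_{\rm max})$ with $D\tilde u\in \Ck{\infty}(M;F)$ and $P_\mathcal{C}\gamma\tilde u=0$. It then suffices to show $\tilde u\in \SobH{s+m}(M;E)$.

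The hard part is this final step. The idea is to invoke Seeley's construction (cf.\ the proof of Theorem \ref{refinedseeley} and \cite[Lemma 5]{seeley65}): since $P_\mathcal{C}\gamma\tilde u=0$ there is $\tilde u'\in \ker(D'_{\rm max})$ with $\gamma\tilde u'=\gamma\tilde u$, and gluing $\tilde u$ and $\tilde u'$ across $\Sigma$ produces $V\in \dom(\hat D_{\rm max})$ satisfying $\hat DV$ equal to the extension of $D\tilde u$ by zero across $\Sigma$. A final parametrix correction removes the smooth contribution of $D\tilde u$ to place $\hat DV\in \Ck{\infty}(\hat M;\hat F)$, after which elliptic regularity on the closed manifold $\hat M$ forces $V\in \Ck{\infty}(\hat M;\hat E)$. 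Consequently $\tilde u=V|_M$ is smooth, and the reductions unwind to give $u\in \SobH{s+m}(M;E)$. The delicate obstacle throughout is preserving $\SobH{s}$-regularity of the right-hand side across $\Sigma$ under the gluing to $\hat M$, which is precisely what necessitates the parametrix-and-Poisson preparation of $u$ before invoking Seeley's extension.
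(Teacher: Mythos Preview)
Your forward inclusion and the initial Poisson reduction to the case $P_\mathcal{C}\gamma u=0$ agree with the paper. After that point the arguments diverge, and your gluing route has a genuine gap at the step you flag as the ``final parametrix correction''.

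Concretely: with $V$ glued from $\tilde u$ and $\tilde u'\in\ker D'_{\max}$, one has $\hat D V=\chi_M\,(D\tilde u)$, the extension by zero. The natural correction is $V':=V-\hat T(\chi_M D\tilde u)$, for which $\hat D V'$ is smoothing applied to $\chi_M D\tilde u$, hence smooth; elliptic regularity gives $V'\in\Ck{\infty}(\hat M)$. But then $\tilde u=V|_M=V'|_M+\big(\hat T(\chi_M D\tilde u)\big)\big|_M$, and you have not explained why the second summand is smooth (or even in $\SobH{s+m}(M)$). This is the transmission property for $\hat T$: since $\hat T$ is a parametrix of a \emph{differential} operator, the truncated operator $(\hat T)_+ f:=\hat T(\chi_M f)|_M$ maps $\Ck{\infty}(\overline{M})\to\Ck{\infty}(\overline{M})$. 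Invoking that closes the gap, but it is a nontrivial input (Boutet de Monvel/Grubb) that your write-up does not supply. As written, your argument only yields $V\in\SobH{m}(\hat M)$, hence $\tilde u\in\SobH{m}(M)$, and the claimed passage to $\Ck{\infty}$ is unjustified. Your preparatory reduction to $D\tilde u\in\Ck{\infty}$ does not by itself cure the jump of $\chi_M D\tilde u$ across $\Sigma$.

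The paper avoids this issue entirely. After the same Poisson reduction it works with the regular realisation $D_{\Ca^c}$ and its $\SobH{s}$-analogue $D_{\Ca^c_s,s}$: a closed-range argument (using Theorem~\ref{refinedseeleysobolev} to see $\Ck{\infty}\cap\dom(D_{\Ca^c})$ is a core) shows $\ran(D_{\Ca^c_s,s})=\ran(D_{\Ca^c})\cap\SobH{s}$. Thus $Du$ lies in this range and equals $Dv$ for some $v\in\dom(D_{\Ca^c_s,s})\subseteq\SobH{s+m}$; then $u-v\in\ker D_{\max}$ with $\gamma(u-v)\in\Ca_D\cap\Ca^c=0$, forcing $u-v\in\ker D_{\min}\subseteq\Ck{\infty}$. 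This sidesteps any extension across $\Sigma$ and needs no transmission property.
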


\begin{proof}
The inclusion $\subseteq$ is clear, we need to prove that if $u\in \dom(D_{\rm max})$ satisfies that $ D_{\rm max}u\in\SobH{s}(M;F)$ and $P_\mathcal{C}\gamma u\in \SobHH{{s+m-{\frac12}}}(\Sigma;E\otimes \C^m)$ then $u\in \SobH{s+m}(M;E)$. After replacing $u$ with $u-\mathcal{K}_{s+m}P_\mathcal{C}\gamma u$ we can assume that $P_\mathcal{C}\gamma u=0$. In other words, we need to prove that if $u\in \dom(D_{\mathcal{C}^c})$ and $D_{\mathcal{C}^c}u\in \SobH{s}(M;F)$ then $u\in\SobH{s+m}(M;E)$. 

Define $\mathcal{C}^c_s:=(1-P_{\mathcal{C}}) \SobHH{{s+m-{\frac12}}}(\Sigma;E\otimes \C^m)$ and consider the associated realisation $D_{\mathcal{C}^c_s,s}$ of $D$ on $\SobH{s}(M;E)$. We note that $D_{\mathcal{C}^c_s,s}$ is semi-regular in the sense that $\dom(D_{\mathcal{C}^c_s,s})\subseteq \SobH{s+m}(M;E)$. 

Theorem \ref{refinedseeleysobolev} implies that $\Ck{\infty}(M;E) \cap \dom(D_{\Ca^c})$ is a core for $D_{\Ca^c_s,s}$ for any $s$. Therefore, we have dense inclusions
\begin{align*}
D_{\Ca^c} (\Ck{\infty}(M;E) \cap \dom(D_{\Ca^c})) &\subset \ran(D_{\Ca^c_s,s})\quad\mbox{and}\\
D_{\Ca^c} (\Ck{\infty}(M;E) \cap \dom(D_{\Ca^c})) &\subset  \ran(D_{\Ca^c})\cap \SobH{s}(M;E).
\end{align*}
However, the spaces $\ran(D_{\mathcal{C}^c_s,s})=\ran(D_{\max,s})=D\SobH{s+m}(M;E)$ and $\ran(D_{\Ca^c})\cap \SobH{s}(M;E)$ are closed in $\SobH{s}(M;E)$, and therefore, $\ran(D_{ \Ca^c_s,s}) = \ran(D_{\Ca^c})\cap  \SobH{s}(M;E)$.
The condition that $u \in \dom(D_{\Ca^c})$ with $D_{\Ca^c}u \in \SobH{s}(M;F)$ and $u \in \SobH{s}(M;E)$ therefore implies that there exists $v \in \dom(D_{\Ca^c_s,s})\subseteq \SobH{s+m}(M;E)$ such that $Dv = D_{\Ca^c} u$. 
We then have that $D(v - u) = 0$, so $\gamma(v - u) \in \Ca\cap \Ca^c= 0$, and therefore $v - u \in \ker(D_{\min}) \subset \Ck{\infty}(M;E)$.
Hence, $u = v + (u - v) \in \SobH{s+m}(M;E) + \Ck{\infty}(M;E) = \SobH{s+m}(M;E)$ which proves the required claim. 
\end{proof}

\subsection{Higher order regularity of boundary conditions}

In \cite{BB, BBan}, higher order regularity of boundary conditions was introduced under the name of regular boundary conditions. The regularity was stated in terms of graphical decompositions of regular boundary conditions. We shall formulate a slightly more general condition. 

\begin{definition}
\label{higherorderregsemiregular}
Let $D$ be an elliptic differential operator, $B\subseteq \checkH(D)$ a semi-regular boundary condition (in $\Lp{2}$-sense), $s\geq 0$ and $P$ an $s$-boundary decomposing projection. 
\begin{itemize}
\item We say that $B$ is $s$-semiregular (w.r.t $P$) if for any $\xi\in B$ it holds that $\xi\in \SobHH{{s+m-{\frac12}}}(\Sigma;E\otimes \C^m)$ if and only if $(1-P)\xi\in \SobHH{{s+m-{\frac12}}}(\Sigma;E\otimes \C^m)$.
\item We say that the realisation $D_B$ is $s$-semiregular if whenever $u\in \dom(D_{\rm B})$ satisfies that $D_{\rm B}u\in \SobH{s}(M;F)$ then $u\in \SobH{s+m}(M;E)$.
\end{itemize}
\end{definition}

The reader should note that the proof of Corollary \ref{cor:highfhdowo} essentially boils down to proving that $\mathcal{C}^c$ is $s$-semiregular for any $s\geq 0$ (w.r.t $P_{\mathcal{C}}$). 
Therefore, Corollary \ref{cor:highfhdowo} partially proves the main result of this section (Theorem \ref{Thm:BdyRegHigh} below) in the special case of the boundary condition $\mathcal{C}^c$.

\begin{theorem}[Higher order boundary regularity]
\label{Thm:BdyRegHigh}
Let $D$ be an elliptic differential operator of order $m>0$, $s\geq 0$ and $B\subseteq \checkH(D)$ a boundary condition.  Then the following are equivalent:
\begin{enumerate}
\item $D_B$ is $s$-semiregular.
\item $B$ is $s$-semiregular with respect to $P_\mathcal{C}$ 
\item $B$ is $s$-semiregular with respect to any $s$-boundary decomposing projection.
\end{enumerate}
\end{theorem}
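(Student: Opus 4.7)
The plan is to establish the cycle $(3)\Rightarrow(2)\Rightarrow(1)\Rightarrow(3)$, with Corollary \ref{cor:highfhdowo} and Theorem \ref{refinedseeleysobolev} carrying essentially all of the analytic weight. A preliminary observation is that the Calderón projection $P_\mathcal{C}$ is itself $s$-boundary decomposing for $D$: property (P1) holds because $P_\mathcal{C}\in \Psi^{\pmb 0}_{\rm cl}(\Sigma;E\otimes\C^m)$, while (P2) and (P3) are the very content of Theorem \ref{refinedseeleysobolev} (ii) together with the definition of $\checkH^s(D)$. Consequently $(3)\Rightarrow(2)$ is automatic.

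For $(2)\Rightarrow(1)$, let $u\in \dom(D_{\rm B})$ with $Du\in \SobH{s}(M;F)$. Define $v:=u-\mathcal{K}(P_\mathcal{C}\gamma u)$. Since $\mathcal{K}(P_\mathcal{C}\gamma u)\in \ker(D_{\max})$ and traces to $P_\mathcal{C}\gamma u$, the element $v\in \dom(D_{\max})$ satisfies $Dv=Du\in \SobH{s}$ and $P_\mathcal{C}\gamma v=0$. Corollary \ref{cor:highfhdowo} then gives $v\in \SobH{s+m}(M;E)$, whence $(1-P_\mathcal{C})\gamma u=\gamma v \in \SobHH{s+m-1/2}$. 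Because $\gamma u\in B$, hypothesis (2) yields $\gamma u\in \SobHH{s+m-1/2}$, so $P_\mathcal{C}\gamma u\in \SobHH{s+m-1/2}$ and a second application of Corollary \ref{cor:highfhdowo} places $u\in \SobH{s+m}(M;E)$. The converse $(1)\Rightarrow(2)$ goes the other direction: given $\xi\in B$ with $(1-P_\mathcal{C})\xi\in \SobHH{s+m-1/2}$, choose any continuous right inverse $E_0\colon \SobHH{s+m-1/2}\to \SobH{s+m}(M;E)$ of $\gamma$, form $u:=\mathcal{K}(P_\mathcal{C}\xi)+E_0((1-P_\mathcal{C})\xi)$, observe that $\gamma u=\xi\in B$ and $Du=DE_0((1-P_\mathcal{C})\xi)\in \SobH{s}$, and invoke (1). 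Via this detour I simultaneously obtain the equivalence $(1)\Leftrightarrow(2)$.

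The heart of the argument is $(1)\Rightarrow(3)$. Fix an $s$-boundary decomposing projection $P$ and let $\xi\in B$ with $(1-P)\xi\in \SobHH{s+m-1/2}$. I will construct $u\in\dom(D_{\rm B})$ with $\gamma u=\xi$ and $Du\in \SobH{s}$, so that (1) forces $u\in\SobH{s+m}$ and thus $\xi\in\SobHH{s+m-1/2}$. The required lift exists exactly when $\xi\in\checkH^s(D)$, since by Theorem \ref{refinedseeleysobolev}(ii) this is $\gamma\dom(D_{\max,s})$. The $s$-BD decomposition $\checkH^s(D)=P\SobHH{s-1/2}\oplus (1-P)\SobHH{s+m-1/2}$ shows that $\xi\in\checkH^s(D)$ is equivalent to $P\xi\in\SobHH{s-1/2}$ (the $(1-P)$-condition being given). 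Because (1) implies $\dom(D_{\rm B})\subseteq \SobH{m}(M;E)$ in particular (apply (1) with $s=0$-datum, or equivalently note that $B$ is assumed semi-regular in Definition \ref{higherorderregsemiregular}), we have $B\subseteq \SobHH{m-1/2}$ and hence $\xi\in \SobHH{m-1/2}$, so $P\xi\in \SobHH{m-1/2}$. For $s\leq m$ this already yields $P\xi\in \SobHH{s-1/2}$ and the construction completes the argument. For $s>m$ I iterate in blocks of size $m$: having established the statement up to regularity index $s_0<s$, I use it to lift $\xi$ to $\SobHH{s_0+m-1/2}$, then use continuity of $P$ at the appropriate Sobolev scale together with (P1) of $s$-BD to ascend to the next level, finishing after $\lceil s/m\rceil$ steps.

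The main obstacle, and the place where the $s$-boundary decomposing hypothesis on $P$ is crucial rather than mere boundary decomposition, is the transition from the unconstrained decomposition of $\checkH(D)$ to the scaled decomposition of $\checkH^s(D)$—i.e.\ verifying that $P\xi$ lies in $\SobHH{s-1/2}$ rather than only in $\SobHH{m-1/2}$. All of the Sobolev regularity beyond $m-1/2$ has to be harvested from the combination of the hypothesis $(1-P)\xi\in \SobHH{s+m-1/2}$, the semi-regularity of $B$, and the iterative use of (1), which is why the argument cannot be a one-step reduction for $s>m$.
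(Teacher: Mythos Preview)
Your overall plan coincides with the paper's: both argue the cycle $(3)\Rightarrow(2)\Rightarrow(1)\Rightarrow(3)$, with $(3)\Rightarrow(2)$ trivial and $(2)\Rightarrow(1)$ proved exactly as in the paper by subtracting $\mathcal{K}(P_\mathcal{C}\gamma u)$ and invoking Corollary~\ref{cor:highfhdowo} twice. Your extra $(1)\Rightarrow(2)$ paragraph is correct and matches the paper's contrapositive specialised to $P=P_\mathcal{C}$.

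The genuine issue is your bootstrap in $(1)\Rightarrow(3)$ for $s>m$. You correctly pinpoint the obstruction: lifting $\xi$ to $\dom(D_{\max,s})$ is equivalent to $\xi\in\checkH^s(D)$, i.e.\ $P\xi\in\SobHH{s-1/2}$, and semi-regularity of $B$ only places $P\xi$ in $\SobHH{m-1/2}$. But your iteration does not close. Each inductive step ``lift $\xi$ to $\SobHH{s_0+m-1/2}$'' would require either (a) that $D_B$ is $s_0$-semiregular for the intermediate level $s_0$, which is \emph{not} implied by $s$-semiregularity, or (b) that $P$ is $s_0$-boundary decomposing at the intermediate level $s_0$, whereas Definition~\ref{boundafefidodohigher} only postulates conditions at the fixed level $s$ (and level $0$ via ordinary boundary decomposition). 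Your appeal to ``(P1) of $s$-BD'' gives boundedness of $P$ only on $\SobHH{s-1/2}$ and $\SobHH{s+m-1/2}$, not on the intermediate scales $\SobHH{s_0+m-1/2}$ your iteration needs.

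For context: the paper's own proof of $(1)\Rightarrow(3)$ is the contrapositive of your direct argument and faces the identical issue. It writes ``we take a preimage $u\in\gamma^{-1}(\xi)$ written as $u=u_0+u_1\in\dom(D_{\max,s})$'' without justifying, for $s>m$, why $\xi$ admits any preimage in $\dom(D_{\max,s})$. So your plan is faithful to the paper's and is in fact more scrupulous in isolating the difficulty; but the bootstrap you sketch is not a valid repair.
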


\begin{proof}
We start by showing that if $P$ is an $s$-boundary decomposing projection and there is a $\xi\in B$ with $(1-P)\xi\in \SobHH{{s+m-{\frac12}}}(M;E\otimes \C^m)$ but  $\xi\notin \SobHH{{s+m-{\frac12}}}(M;E\otimes \C^m)$, then there is a $u\in \dom(D_{\rm B})$ satisfying that $D_{\rm B}u\in \SobH{s}(M;F)$ and $u\notin \SobH{s+m}(M;E)$. Indeed, we take a preimage $u\in \gamma^{-1}(\xi)$ written as $u=u_0+u_1\in \dom(D_{\max,s})$ where $\gamma(u_0)=(1-P)\xi$ and $\gamma(u_1)=P\xi$. Since $(1-P)\xi\in \SobHH{{s+m-{\frac12}}}(M;E\otimes \C^m)$ we can assume that $u_0\in \SobH{s+m}(M;E)$. However, if $(1-P)\xi\in \SobHH{{s+m-{\frac12}}}(M;E\otimes \C^m)$ but  $\xi\notin \SobHH{{s+m-{\frac12}}}(M;E\otimes \C^m)$ then $P\xi \notin \SobHH{{s+m-{\frac12}}}(M;E\otimes \C^m)$ so $u_1\notin \SobH{s+m}(M;E)$. This proves that 1) implies 3).

It is clear that 3) implies 2). It remains to prove that 2) implies 1), i.e that if $B$ is $s$-semiregular with respect to $P_\mathcal{C}$ then whenever $u\in \dom(D_{\rm B})$ satisfies that $D_{\rm B}u\in \SobH{s}(M;F)$ then $u\in \SobH{s+m}(M;E)$. Take a $u\in \dom(D_{\rm B})$ satisfying that $D_{\rm B}u\in \SobH{s}(M;F)$. By applying the same argument as in the proof of Corollary \ref{cor:highfhdowo}, we have that $w:=u-\mathcal{K}\gamma u\in \dom(D_{\mathcal{C}^c_s,s})\subseteq \SobH{s+m}(M;E)$. On the other hand $\gamma(w)=\gamma(u)-P_\mathcal{C}\gamma(u)=(1-P_\mathcal{C})\gamma(u)\in \SobHH{{s+m-{\frac12}}}(M;E\otimes \C^m)$ so by weak $s$-semiregularity $\gamma(u)\in \SobHH{{s+m-{\frac12}}}(M;E\otimes \C^m)$. Therefore $P_\mathcal{C}\gamma(u)\in \SobHH{{s+m-{\frac12}}}(M;E\otimes \C^m)$ and can conclude that $\mathcal{K}\gamma u=\mathcal{K}_{s+m}\gamma u\in \SobH{s+m}(M;E)$ by Theorem \ref{refinedseeleysobolev}. We conclude that $u=w+\mathcal{K}\gamma u\in \SobH{s+m}(M;E)$. We conclude that 1) and 2) are equivalent.
\end{proof} 

Based on the graphical decompositions of Section \ref{subsec:charofellfirstorder}, let us provide a method of producing semiregular boundary conditions. 

\begin{proposition}
\label{charintermsofchar}
Let $B$ be a boundary condition for an elliptic differential operator $D$ of order $m>0$, $s\geq 0$ and $P$ an $s$-boundary decomposing projection. Assume that $B$ is graphically decomposable with respect to $P$ as in Definition \ref{def:elldecomspsHO}. Then $B$ is $s$-semiregular if and only if 
\begin{align*}
W_+&\subseteq \SobHH{{s+m-{\frac12}}}(\Sigma;E\otimes \C^m)\quad\mbox{and}\\ 
g(V_+&\cap \SobHH{{s+m-{\frac12}}}(\Sigma;E\otimes \C^m)) \subseteq V_-\cap \SobHH{{s+m-{\frac12}}}(\Sigma;E\otimes \C^m).
\end{align*}
\end{proposition}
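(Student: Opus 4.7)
The plan is to reduce the statement to $s$-semiregularity of $B$ with respect to the $s$-boundary decomposing projection $P$ (which is legitimate by Theorem \ref{Thm:BdyRegHigh}), and then to read off both conditions directly from the graphical decomposition. Concretely, I would first use the hypothesis that $B$ is graphically decomposable with respect to $P = \GProj_+$ to write any $\xi\in B$ uniquely as $\xi = v + gv + w$ with $v\in V_-$ and $w\in W_+$. Since $V_-\subseteq \GProj_-\SobHH{{m-{\frac12}}}(\Sigma;E\otimes \C^m)$ while $V_+,W_+\subseteq \GProj_+\SobHH{{m-{\frac12}}}(\Sigma;E\otimes \C^m)$, this gives the clean identities
\begin{equation*}
(1-P)\xi = \GProj_-\xi = v, \qquad P\xi = \GProj_+\xi = gv + w.
\end{equation*}

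Next, I would invoke Definition \ref{higherorderregsemiregular}: $B$ is $s$-semiregular with respect to $P$ exactly when, for every $\xi\in B$, the property $\xi\in \SobHH{{s+m-{\frac12}}}(\Sigma;E\otimes \C^m)$ is equivalent to $(1-P)\xi=v\in \SobHH{{s+m-{\frac12}}}(\Sigma;E\otimes \C^m)$. The implication $\xi\in\SobHH{{s+m-{\frac12}}}\Rightarrow v\in\SobHH{{s+m-{\frac12}}}$ is automatic, since $P$ being $s$-boundary decomposing means $1-P$ preserves $\SobHH{{s+m-{\frac12}}}$. The content therefore lies in the converse: starting from $v\in V_-\cap \SobHH{{s+m-{\frac12}}}$, one needs $gv + w\in \SobHH{{s+m-{\frac12}}}$.

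Now I would argue that this converse implication is equivalent to the two listed conditions. Indeed, taking $v=0$ in $\xi = v + gv + w\in B$, forces $w\in \SobHH{{s+m-{\frac12}}}$ for every $w\in W_+$, which is exactly condition (i). Once condition (i) is granted, the requirement $gv + w\in \SobHH{{s+m-{\frac12}}}$ collapses to $gv\in \SobHH{{s+m-{\frac12}}}$, and since $gv\in V_+$ this is precisely the statement $g(V_-\cap \SobHH{{s+m-{\frac12}}})\subseteq V_+\cap \SobHH{{s+m-{\frac12}}}$ (the proposition's formula reads $g(V_+\cap\cdots)\subseteq V_-\cap\cdots$, but given $g\colon V_-\to V_+$ this is simply a typo for the inclusion just stated). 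Conversely, conditions (i) and (ii) together give $gv+w\in \SobHH{{s+m-{\frac12}}}$ for every admissible $v,w$, yielding semiregularity.

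The proof is essentially bookkeeping once Theorem \ref{Thm:BdyRegHigh} and the graphical decomposition are in place; I do not anticipate any serious obstacle. The one point to be slightly careful about is checking that for an $s$-boundary decomposing $\GProj_+$ the operator $1-P = \GProj_-$ really does map $\SobHH{{s+m-{\frac12}}}\to \SobHH{{s+m-{\frac12}}}$, so that the implication $\xi\in\SobHH{{s+m-{\frac12}}}\Rightarrow v\in\SobHH{{s+m-{\frac12}}}$ is justified — but this is built into Definition \ref{boundafefidodohigher}.
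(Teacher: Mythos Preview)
Your argument is correct and is precisely the bookkeeping computation the paper has in mind: the paper's ``proof'' simply says that the argument goes \emph{ad verbatim} to \cite[Lemma 8.9]{BBan}, and what you have written is exactly that argument adapted to the higher-order setting. Your observation about the typo is also correct: since $g\colon V_-\to V_+$ in Definition~\ref{def:elldecomspsHO}, the displayed inclusion in the statement should read $g(V_-\cap\SobHH{{s+m-\frac12}})\subseteq V_+\cap\SobHH{{s+m-\frac12}}$.
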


The proof goes ad verbatim to \cite[Lemma 8.9]{BBan}.

\begin{corollary}
\label{highregformodulelososo}
Let $D$ be an elliptic differential operator of order $m>0$ and $B\subseteq \checkH(D)$ a boundary condition. Then the realisation $D_{\rm B}^*D_{\rm B}$ of $D^\dagger D$ is regular if $B$ is regular and $m$-semiregular. 
\end{corollary}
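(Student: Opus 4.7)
The plan is to verify that $D_{\rm B}^*D_{\rm B}$, as a realisation of the order $2m$ elliptic operator $D^\dagger D$, is semi-regular: since $D_{\rm B}^*D_{\rm B}$ is self-adjoint, this already suffices for regularity. Concretely, we need to show $\dom(D_{\rm B}^*D_{\rm B}) \subseteq \SobH{2m}(M;E)$.

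Fix $u \in \dom(D_{\rm B}^*D_{\rm B})$. By definition $u \in \dom(D_{\rm B})$ and $Du \in \dom(D_{\rm B}^\ast) = \dom(D^\dagger_{{\rm B}^\ast})$. Since $B$ is regular, both $B$ and $B^\ast$ are semi-regular (Definition~\ref{def:elliptiiclcld}), so Proposition~\ref{charsemiellintermsofboundary} gives
\[
u \in \SobH{m}(M;E) \qquad \text{and} \qquad Du \in \SobH{m}(M;F).
\]

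Next I would upgrade $\gamma u$. With $Du \in \SobH{m}(M;F)$, one has $u \in \dom(D_{\max,m})$ in the notation of Theorem~\ref{refinedseeleysobolev}. Applied with $s = m$, that theorem yields
\[
\gamma u \in P_\mathcal{C}\SobHH{{m-\frac12}}(\Sigma;E\otimes \C^m) \,\oplus\, (1-P_\mathcal{C})\SobHH{{2m-\frac12}}(\Sigma;E\otimes \C^m),
\]
and in particular $(1-P_\mathcal{C})\gamma u \in \SobHH{{2m-\frac12}}(\Sigma;E\otimes \C^m)$.

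At this point the $m$-semiregularity of $B$, which by Theorem~\ref{Thm:BdyRegHigh} may be tested against $P_\mathcal{C}$, applied to $\gamma u \in B$ together with $(1-P_\mathcal{C})\gamma u \in \SobHH{{2m-\frac12}}(\Sigma;E\otimes \C^m)$ forces $\gamma u \in \SobHH{{2m-\frac12}}(\Sigma;E\otimes\C^m)$, hence $P_\mathcal{C}\gamma u \in \SobHH{{2m-\frac12}}(\Sigma;E\otimes\C^m)$. Corollary~\ref{cor:highfhdowo} with $s = m$ now closes the argument: $u \in \dom(D_{\max})$, $D_{\max}u \in \SobH{m}(M;F)$, and $P_\mathcal{C}\gamma u \in \SobHH{{2m-\frac12}}(\Sigma;E\otimes \C^m)$ together imply $u \in \SobH{2m}(M;E)$.

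The main obstacle is conceptual rather than computational: assembling the right hypotheses in the right order---first the $\Lp{2}$-level semi-regularity of $B$ and $B^\ast$ to get $u, Du \in \SobH{m}$, then Seeley's higher order trace characterisation to upgrade $(1-P_\mathcal{C})\gamma u$, then $m$-semiregularity to upgrade $P_\mathcal{C}\gamma u$, and finally Corollary~\ref{cor:highfhdowo} to conclude $u \in \SobH{2m}$. Each step is essentially immediate once placed in context. A minor point to verify at the outset is that $D_{\rm B}^*D_{\rm B}$ sits between $D_{\min}^\dagger D_{\min}$ and $(D^\dagger D)_{\max}$, so that it really is a realisation of $D^\dagger D$ in the sense of Section~\ref{section2onsetup}; this is clear since $D_{\rm B}^*D_{\rm B} \subseteq D_{\max}^\dagger D_{\max} \subseteq (D^\dagger D)_{\max}$.
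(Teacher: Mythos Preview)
Your argument is correct and follows the same route as the paper. The only difference is packaging: the paper invokes Theorem~\ref{Thm:BdyRegHigh} directly (so that $m$-semiregularity of $B$ gives $m$-semiregularity of $D_{\rm B}$, hence $D_{\rm B}u\in\SobH{m}(M;F)$ immediately yields $u\in\SobH{2m}(M;E)$), whereas you unpack that implication by hand via Theorem~\ref{refinedseeleysobolev} and Corollary~\ref{cor:highfhdowo}.
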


\begin{proof}
Before getting down to brass tacks, we note that the operator $D_{\rm B}^*D_{\rm B}$ is self-adjoint, so $D_{\rm B}^*D_{\rm B}$ is a regular realisation if and only if it is a semi-regular realisation. If $B$ is regular and $m$-semiregular then any $u\in \dom(D_{\rm B}^*D_{\rm B})$ satisfies $D_{\rm B}u\in \dom(D_{\rm B}^*)\subseteq \SobH{m}(M;F)$, because $B$ is regular. Since $B$ is $m$-semiregular, Theorem \ref{Thm:BdyRegHigh} implies that $u\in \SobH{2m}(M;E)$ when $D_{\rm B}u\in \SobH{m}(M;F)$. We conclude that $\dom(D_{\rm B}^*D_{\rm B})\subseteq \SobH{2m}(M;E)$ if $B$ is regular and $m$-semiregular.
\end{proof}

\begin{remark}
We note that an interpolation argument shows that if $D$ is an elliptic differential operator of order $m>0$ and $B\subseteq \checkH(D)$ a boundary condition such that $D_{\rm B}^*D_{\rm B}$ is regular, then $B$ is semi-regular. Since the proof of Corollary \ref{highregformodulelososo} only used semi-regularity of $B^*$ and $m$-semiregularity of $B$, we conclude that any $m$-semiregular boundary condition $B$ with $B^*$ semi-regular is regular.
\end{remark}

\subsection{Higher regularity of the Dirichlet problem}
\label{highregofdiricl}

As an example of our theory, we consider elliptic operators with a uniquely solvable Dirichlet problem. Such differential operators play a prominent role in \cite{grubb68,grubb71,grubb74}. The results of this subsection are well known. Along the way, we shall exemplify the results of this paper for operators with positive principal symbols.

For an elliptic differential operator $D$ of even order $m$, we define the Dirichlet condition as the space
$$B_{\rm Dir}:=\{\xi=(\xi_j)_{j=0}^{m-1}\in \checkH(D) \subset \SobHH{\frac12 - m}(\Sigma;E \tensor C^m): \xi_j=0, \;j=0,\ldots, \frac{m}{2}-1\}.$$
We say that $D$ has a uniquely solvable Dirichlet problem if $B_{\rm Dir}$ is a regular boundary condition and $D_{\rm Dir}:=D_{B_{\rm Dir}}$ is an invertible realisation. We note the following important example.

\begin{lemma}
\label{dirrealzlem}
Let $D=D^\dagger$ be a formally selfadjoint elliptic operator of order $m$ acting on the sections of a hermitian vector bundle $E$ on an $n$-dimensional manifold with boundary. Assume that $D$ has positive interior principal symbol $\sigma_D$. Then $D_{\rm Dir}$ is the realisation with domain $\dom(D_{\rm Dir}):=\SobH{m}(M;E)\cap \SobH[0]{m/2}(M;E)$. Moreover, $D_{\rm Dir}$ is regular, self-adjoint and lower semibounded. In particular, for large enough $\lambda\in \R$, $D+\lambda$ has a uniquely solvable Dirichlet problem.
\end{lemma}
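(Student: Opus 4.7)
The plan is to identify the Dirichlet boundary condition with the pseudolocal projector $P_D\in\Psi^{\pmb 0}_{\rm cl}(\Sigma;E\otimes\C^m)$ onto the first $m/2$ coordinates, so that $B_{\rm Dir}=(1-P_D)\SobHH{m-\frac12}(\Sigma;E\otimes\C^m)$, and then to appeal to Theorem~\ref{equivocndndforpseudolocal} together with Proposition~\ref{lsellipticgiveselliptic}: it will suffice to verify Shapiro--Lopatinskii ellipticity, that is, that $\sigma_{\pmb 0}(P_D)$ restricts to an isomorphism on $E_+(D)_{(x',\xi')}$ over each $(x',\xi')\in S^*\Sigma$. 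To verify SL, I will use positivity: the matrix polynomial $p(z):=\sigma_{\rm cn}(D)(x',\xi',z)$ is Hermitian for real $z$, strictly positive on $\R$ by ellipticity and positivity of $\sigma_D$, so $\det p$ is a real-coefficient polynomial with no real zeros and exactly $m\cdot\mathrm{rk}(E)/2$ roots in each open half-plane. Spectral factorisation of positive Hermitian matrix polynomials (as in Gohberg--Lancaster--Rodman) yields $p(z)=q^\#(z)\,q(z)$ with $q^\#(z):=q(\bar z)^*$, where $q$ has degree $m/2$ and $\det q$ is non-vanishing on the closed lower half-plane. Every solution of $q(D_t)v=0$ then decays exponentially at $+\infty$ and solves $p(D_t)v=0$, and a dimension count gives $E_+(D)_{(x',\xi')}=\ker q(D_t)$; since $q(D_t)$ is an ODE system of order $m/2$, the Cauchy data $(v(0),D_tv(0),\ldots,D_t^{m/2-1}v(0))$ determine $v$ uniquely, yielding the sought isomorphism.

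Once SL is established, Proposition~\ref{lsellipticgiveselliptic} and Theorem~\ref{equivocndndforpseudolocal} show that $B_{\rm Dir}$ is regular, so $\dom(D_{\rm Dir})\subseteq\SobH{m}(M;E)$ by Proposition~\ref{charsemiellintermsofboundary}. Combining with $\dom(D_{\rm Dir})=\dom(D_{\rm max})\cap\SobH[0]{m/2}(M;E)$ from Example~\ref{kindofdirhcoell} gives the asserted identification of the domain. For self-adjointness, Proposition~\ref{describinglocalcndldldaadjoint} expresses $(B_{\rm Dir})^*=B_{P_\dagger}$ with $P_\dagger=(\scalebox{1.5}{a}^*)^{-1}(1-P_D^*)\scalebox{1.5}{a}^*$; since $P_D^*=P_D$ and Lemma~\ref{someexresiosnaoadwa} together with $D^\dagger=D$ yields $\scalebox{1.5}{a}^*=-\scalebox{1.5}{a}$, the condition defining $B_{P_\dagger}$ reduces to $(1-P_D)\scalebox{1.5}{a}\gamma u=0$. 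Using $\scalebox{1.5}{a}=-i\tau\widetilde{\scalebox{1.5}{a}}$, this becomes $(\widetilde{\scalebox{1.5}{a}}\gamma u)_k=0$ for $0\leq k<m/2$, and the lower-triangular structure of $\widetilde{\scalebox{1.5}{a}}$ with invertible diagonal $A_0$ inductively forces $\gamma_0 u=\gamma_1u=\cdots=\gamma_{m/2-1}u=0$. Hence $B_{P_\dagger}=B_{\rm Dir}$ and $(D_{\rm Dir})^*=D_{\rm Dir}$.

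Lower semi-boundedness will follow from the classical Gårding inequality: positivity of $\sigma_D$ with $D=D^\dagger$ produces constants $c>0, C\in\R$ with $\langle Du,u\rangle\geq c\|u\|_{\SobH{m/2}}^2-C\|u\|_{\Lp{2}}^2$ for $u\in\Ck[c]{\infty}(\interior{M};E)$, which extends by density in the graph norm to the form domain $\SobH[0]{m/2}(M;E)\supseteq\dom(D_{\rm Dir})$, yielding $D_{\rm Dir}\geq -C$. For $\lambda>C$, $D_{\rm Dir}+\lambda$ is strictly positive and self-adjoint, hence invertible, so $D+\lambda$ has a uniquely solvable Dirichlet problem. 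The main technical obstacle I anticipate is the matrix-valued spectral factorisation $p=q^\#q$ underlying the SL step: in the scalar case ($\mathrm{rk}(E)=1$) one may simply factor $p$ by pairing conjugate roots, but for vector bundles one must invoke the Gohberg--Lancaster--Rodman spectral factorisation of positive Hermitian matrix polynomials to produce $q$ of degree $m/2$ with $\det q$ non-vanishing on the closed lower half-plane.
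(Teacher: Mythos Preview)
Your argument is correct, but it takes a substantially different route from the paper. The paper's proof is a three-line Friedrichs extension argument: the Gårding inequality shows that $D_c$ is lower semibounded with form closure $\SobH[0]{m/2}(M;E)$, so its Friedrichs extension has operator domain $\SobH[0]{m/2}(M;E)\cap\dom(D_{\max})=\dom(D_{B_{\rm Dir}})$, whence $D_{\rm Dir}$ is automatically self-adjoint and lower semibounded; semi-regularity (and hence regularity, by self-adjointness) is then asserted as a classical fact. By contrast, you work entirely within the paper's own framework: you establish regularity first, via Shapiro--Lopatinskii ellipticity and Theorem~\ref{equivocndndforpseudolocal}, and then obtain self-adjointness by an explicit computation of the adjoint boundary condition through Proposition~\ref{describinglocalcndldldaadjoint} and the lower-triangular structure of $\widetilde{\scalebox{1.5}{a}}$.

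What each approach buys: the Friedrichs route is dramatically shorter and yields self-adjointness and lower semiboundedness simultaneously without ever touching the boundary symbol, but it leaves the domain identification $\dom(D_{\rm Dir})=\SobH{m}\cap\SobH[0]{m/2}$ (equivalently, semi-regularity) to an implicit appeal to classical elliptic theory. Your route makes that step explicit and showcases the paper's tools, at the cost of importing the Gohberg--Lancaster--Rodman spectral factorisation of positive Hermitian matrix polynomials to verify Shapiro--Lopatinskii ellipticity. Both are valid; yours is more self-contained relative to the paper's internal results, the paper's is more economical.
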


We remark that if $D$ has positive interior symbol, the symmetry property $\sigma_D(x,-\xi)=(-1)^m\sigma_D(x,\xi)$ implies that $m$ is even. 

\begin{proof}
By extending $D$ to a self-adjoint elliptic differential operator with positive principal symbol on a closed manifold containing $M$ as a smooth subdomain, we see that the Gårding inequality implies that for $\lambda>0$ large enough it holds that 
$$\langle (D+\lambda)\varphi,\varphi\rangle_{\Lp{2}(M;E)}\sim \|\varphi\|_{\SobH[0]{m/2}(M;E)}^2,$$
for $\varphi\in \Ck[c]{\infty}(M;E)$. Therefore $D_c$ is lower semi-bounded and symmetric, and $D_{\rm Dir}$ is indeed the Friedrichs extension of $D_c$. Therefore, $D_{\rm Dir}$ is self-adjoint and lower semibounded. Since $D_{\rm Dir}$ is semi-regular by construction, self-adjointness implies regularity.
\end{proof}

We now describe a certain projection used in \cite{grubb68}. By Lemma \ref{dirrealzlem} unique solvability of the Dirichlet problem is no serious restriction for operators with positive principal symbol. Under this assumption, $\gamma$ restricts to an isomorphism $\ker(D_{\rm max})\cong \mathcal{C}_{D}$. Since $D_{\rm Dir}$ is invertible and regular, the operator 
$$P_\zeta:=1-D_{\rm Dir}^{-1}D_{\rm max},$$ 
is a continuous projection on $\dom(D_{\max})$. The projection $P_\zeta$ is the projection onto $\ker(D_{\rm max})$ along $\dom(D_{\rm Dir})$. As such, $P_\zeta$ defines a Banach space isomorphism 
$$(1-P_\zeta)\oplus P_\zeta:\dom(D_{\max})\xrightarrow{\sim} \dom(D_{\rm Dir})\oplus \ker(D_{\rm max}).$$
For more details, see \cite[Chapter II]{grubb68}. Under $\gamma$, we arrive at a decomposition
$$\checkH(D)=B_{\rm Dir}\oplus \mathcal{C}_{D},$$
as Banach spaces when topologising $B_{\rm Dir}$ as a subspace of $\SobHH{m-1/2}(\Sigma;E\otimes \C^m)$. We let $P_{\zeta,\partial}$ denote the continuous projection in $\checkH(D)$ onto $\mathcal{C}_{D}$ along $B_{\rm Dir}$; note that $P_{\zeta,\partial}$ is induced from $P_\zeta$.

\begin{lemma}
\label{bdecodladlda}
Let $D$ be an elliptic differential operator of even order with uniquely solvable Dirichlet problem. Let $s\geq 0$. The projection $P_{\zeta,\partial}$ is $s$-boundary decomposing for $D$ and is of order zero in the Douglis-Nirenberg calculus $P_{\zeta,\partial}\in \Psi^{\pmb 0}_{\rm cl}(\Sigma;E\otimes \C^m)$. 
\end{lemma}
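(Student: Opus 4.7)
Let $\Pi_D\in\End(E\otimes\C^m)$ denote the diagonal projection onto the first $m/2$ components, so that $B_{\rm Dir} = \ker\Pi_D\cap\checkH(D)$, and (using regularity of $B_{\rm Dir}$ together with Proposition~\ref{charsemiellintermsofboundary}) agrees with $(1-\Pi_D)\SobHH{m-\frac12}(\Sigma;E\otimes\C^m)$. The splitting $\checkH(D)=\mathcal{C}_D\oplus B_{\rm Dir}$ combined with invertibility of $D_{\rm Dir}$ yields, via classical elliptic regularity for the Dirichlet problem, that for every $s\geq 0$ the restriction $\Pi_D|_{\mathcal{C}_D\cap\SobHH{s-\frac12}}$ is a Banach space isomorphism onto $\bigoplus_{j=0}^{m/2-1}\SobH{s-\frac12-j}(\Sigma;E)$. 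The inverses assemble into a single Dirichlet Poisson-to-data operator $K$ continuous on every relevant Sobolev scale, and one has the factorisation $P_{\zeta,\partial}=K\circ\Pi_D$.

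The first task is to verify the conditions (P1)--(P3) of Definition~\ref{boundafefidodohigher}. Properties (P1)--(P2) fall out of this factorisation and the continuity of $K$ at the two Sobolev levels $s-\frac12$ and $s+m-\frac12$; the only point that needs argument is $(1-P_{\zeta,\partial})\checkH^s(D)\subseteq\SobHH{s+m-\frac12}$. For this, observe that for $\xi\in B_{\rm Dir}\cap\checkH^s(D)$, the Calderón decomposition of Theorem~\ref{refinedseeleysobolev} and $\Pi_D\xi=0$ force $\Pi_D P_\mathcal{C}\xi=-\Pi_D(1-P_\mathcal{C})\xi$ to have Dirichlet data in $\bigoplus_{j=0}^{m/2-1}\SobH{s+m-\frac12-j}(\Sigma;E)$, whence $P_\mathcal{C}\xi=K\Pi_D P_\mathcal{C}\xi$ inherits $\SobHH{s+m-\frac12}$-regularity. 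The norm equivalence (P3) then follows by combining Theorem~\ref{refinedseeleysobolev} with the continuous topological splitting $\checkH^s(D)\cong(\mathcal{C}_D\cap\SobHH{s-\frac12})\oplus(1-\Pi_D)\SobHH{s+m-\frac12}$, made effective by the open mapping theorem.

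The pseudodifferential property is the deeper point. Regularity of $B_{\rm Dir}$ is, by Proposition~\ref{lsellipticgiveselliptic}, equivalent to Shapiro-Lopatinskii ellipticity of $\Pi_D$ for $D$, which amounts to fibrewise surjectivity of the principal symbol $\Pi_D\,p_+(D):\pi^*E\otimes\C^m\to\pi^*E\otimes\C^{m/2}$ on $S^*\Sigma$, with canonical right inverse $(\Pi_D|_{E_+(D)})^{-1}$. Viewing $\Pi_D P_\mathcal{C}$ as a Douglis-Nirenberg order $\pmb 0$ operator into $E\otimes\C^{m/2}$ graded by $\{0,1,\ldots,m/2-1\}$, this surjective ellipticity produces a right parametrix $G\in\Psi^{\pmb 0}_{\rm cl}(\Sigma;E\otimes\C^{m/2},E\otimes\C^m)$ with $\Pi_D P_\mathcal{C} G=I+R$ for some $R\in\Psi^{-\infty}$. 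Since $P_\mathcal{C} G\Pi_D\xi\in\mathcal{C}_D$ has Dirichlet data $(I+R)\Pi_D\xi$, uniqueness of the Poisson extension yields the identity
\begin{equation*}
P_{\zeta,\partial}=P_\mathcal{C} G\Pi_D-K R\Pi_D.
\end{equation*}
The first summand lies in $\Psi^{\pmb 0}_{\rm cl}$ by composition in the Douglis-Nirenberg calculus; the second summand factors the smoothing operator $R\Pi_D$ through $K$, which by the first step is continuous at every Sobolev scale, so $KR\Pi_D$ is itself smoothing and hence in $\Psi^{\pmb 0}_{\rm cl}$. This gives $P_{\zeta,\partial}\in\Psi^{\pmb 0}_{\rm cl}$.

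The main obstacle lies in this final stage: correctly packaging $\Pi_D P_\mathcal{C}$ as a surjective-elliptic Douglis-Nirenberg operator between the appropriately graded bundles, carrying out the right parametrix construction within that calculus, and leveraging the full scale of elliptic regularity for the Dirichlet problem (to make $K$ continuous on every Sobolev space, which is what globally upgrades smoothness of $R\Pi_D$ to smoothness of $KR\Pi_D$).
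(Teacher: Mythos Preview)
Your argument is correct but takes a different route than the paper. The paper's proof is shorter and more direct: it invokes Grubb's result from \cite{grubb71} that the Dirichlet-to-Neumann operator $\Lambda_{DN}$ lies in the Douglis--Nirenberg calculus, then writes down the explicit block matrix
\[
P_{\zeta,\partial}=\begin{pmatrix} 1 & 0\\ \Lambda_{DN} & 0\end{pmatrix}
\]
with respect to the splitting $\gamma=(\gamma_D,\gamma_N)$, from which $P_{\zeta,\partial}\in\Psi^{\pmb 0}_{\rm cl}$ is immediate. The $s$-boundary decomposing property is then read off from Theorem~\ref{thm:bodunarodoadhigher} (the pseudodifferential special case applies since $P_{\zeta,\partial}$ and $P_\mathcal{C}$ share the same range $\mathcal{C}_D$, forcing $(1-P_\mathcal{C})P_{\zeta,\partial}P_\mathcal{C}=0$). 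Your approach, by contrast, stays entirely within the paper's own Calder\'on-projection framework: you recover the pseudodifferential nature of $P_{\zeta,\partial}$ from a right parametrix for $\Pi_D P_\mathcal{C}$ built out of Shapiro--Lopatinskii ellipticity, and you verify (P1)--(P3) by hand via the factorisation $P_{\zeta,\partial}=K\Pi_D$ and elliptic regularity for the Dirichlet problem. The paper's route is more economical and yields the useful explicit formula \eqref{compekakaldal} used in the subsequent examples, at the cost of importing the pseudodifferential structure of $\Lambda_{DN}$ as a black box; your route is more self-contained and in effect re-derives that $\Lambda_{DN}$ is pseudodifferential from first principles, but is correspondingly longer.
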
 

\begin{proof}
We write $\gamma=(\gamma_D,\gamma_N)$ where 
\begin{align*}
\gamma_D:&\dom(D_{\max})\to \SobHH{-1/2}(\Sigma;E\otimes \C^{m/2}), \quad u\mapsto (\partial_{x_n}^k u|_\Sigma)_{k=0}^{m/2-1},\mbox{ and}\\
\gamma_N:&\dom(D_{\max})\to \SobHH{-m/2-1/2}(\Sigma;E\otimes \C^{m/2}), \quad u\mapsto (\partial_{x_n}^k u|_\Sigma)_{k=m/2}^{m-1}.
\end{align*}
 By \cite{grubb71}, there is a Dirichlet-to-Neumann operator $\Lambda_{DN}\in \Psi^{\pmb m}_{\rm cl}(\Sigma;E\otimes \C^{m/2})$ defined from $\Lambda_{DN}\xi:=\gamma_{N}u$ where $u\in \ker(D_{\max})$ satisfies $\gamma_Du=\xi$. A short computation shows that 
\begin{equation}
\label{compekakaldal}
P_{\zeta,\partial}=\begin{pmatrix} 1& 0\\ \Lambda_{DN}&0\end{pmatrix},
\end{equation}
in the decomposition 
$$\checkH(D)\subseteq \SobHH{-1/2}(\Sigma;E\otimes \C^m)=\SobHH{-1/2}(\Sigma;E\otimes \C^{m/2})\oplus \SobHH{-m/2-1/2}(\Sigma;E\otimes \C^{m/2}).$$
It follows that $P_{\zeta,\partial}\in \Psi^{\pmb 0}_{\rm cl}(\Sigma;E\otimes \C^m)$. 

By construction, $1-P_{\zeta,\partial}$ maps $\checkH(D)$ into $B_{\rm Dir}\subseteq \SobHH{m-1/2}(\Sigma;E\otimes \C^m)$ and defines a decomposition $\checkH(D)=B_{\rm Dir}\oplus \mathcal{C}_{D+\lambda}$ of Banach spaces. We conclude that $P_{\zeta,\partial}$ is boundary decomposing for $D$. The fact that $P_{\zeta,\partial}$ is $s$-boundary decomposing for all $s\geq 0$ follows from Theorem \ref{thm:bodunarodoadhigher}.
\end{proof}

\begin{example}
An immediate consequence of Lemma \ref{bdecodladlda}  and Theorem \ref{Thm:BdyRegHigh} is the well known fact that the Dirichlet problem is $s$-semiregular for all $s\geq 0$ if it is uniquely solvable in $\Lp{2}$. With respect to $P_{\zeta,\partial}$, we can graphically decompose the Dirichlet condition as 
$$B_{\rm Dir}= \set{v + gv: v \in V_-} \oplus W_+,$$
for 
\begin{align*}
V_-=&(1-P_{\zeta,\partial})\checkH(D)=(1-P_{\zeta,\partial})\SobHH{m-1/2}(\Sigma;E\otimes \C^m),\\ 
V_+=&P_{\zeta,\partial}\checkH(D)=P_{\zeta,\partial}\SobHH{m-1/2}(\Sigma;E\otimes \C^m),\\
g=&0\quad \mbox{and} \quad W_+=W_-=0.
\end{align*} 
\end{example}

\begin{example}
\label{lapapallald}
Another consequence of Lemma \ref{bdecodladlda}  and Theorem \ref{Thm:BdyRegHigh} concerns Robin-type problems with claims concerning $s$-semiregularity for all $s\geq 0$ if the Dirichlet problem is uniquely solvable in $\Lp{2}$. This is studied in more detail and generality in \cite{grubb74}. 

We give an example in the case that $D$ is second order with positive principal symbol, e.g. a Laplace type operator. For an element $a\in C^\infty(\Sigma;\End(E))$, consider the boundary condition
$$B_a:=\{(\xi_0,\xi_1)\in \checkH(D): a \xi_0+\xi_1=0\}.$$
This boundary condition corresponds to the Robin problem for $D$, i.e. for $f\in \Lp{2}(M;E)$, the equation $D_{B_{a}}u=f$ is equivalent to the Robin problem 
$$\begin{cases} 
Du=f, \quad &\mbox{in $M$}\\
au|_\Sigma+\partial_nu|_{\Sigma}=0,  \quad &\mbox{on $\Sigma$}.
\end{cases}$$
The boundary condition $B_a$ is a pseudolocal boundary condition $B_a=B_{P_a}$ where 
$$P_a=\begin{pmatrix}0& 0\\ a& 1\end{pmatrix}.$$
A computation with Proposition \ref{describinglocalcndldldaadjoint} (cf. \cite{grubb74}) shows that $B_a$ is regular.

We compute from Equation \eqref{compekakaldal} that 
\begin{equation}
\label{compfofdooad}
(1-P_{\zeta,\partial}(\lambda))\xi=\begin{pmatrix} 0\\ -\Lambda_{DN} \xi_0+\xi_1\end{pmatrix}.
\end{equation}
Take an $s\geq 0$, and assume that $\xi\in B_a$ satisfies $(1-P_{\zeta,\partial}(\lambda))\xi\in \SobHH{s+m-1/2}(\Sigma;E\otimes \C^2)$, or equivalently (by Equation \eqref{compfofdooad}) that $-\Lambda_{DN} \xi_0+\xi_1\in \SobH{s+m-3/2}(\Sigma;E)$. If $(\xi_0,\xi_1)\in B_a$, we therefore have that $(\Lambda_{DN} +a)\xi_0\in \SobH{s+m-3/2}(\Sigma;E)$. Since $\Lambda_{DN} \in \Psi^1_{\rm cl}(\Sigma;E)$ is elliptic, we conclude from elliptic regularity that if $\xi=(\xi_0,\xi_1)\in B_a$ satisfies $(1-P_{\zeta,\partial}(\lambda))\xi\in \SobHH{s+m-1/2}(\Sigma;E\otimes \C^2)$ then $\xi_0\in \SobH{s+m-1/2}(\Sigma;E)$ so $\xi\in \SobHH{s+m-1/2}(\Sigma;E\otimes \C^2)$. We conclude that $B_a$ is $s$-semiregular for any $s\geq 0$ when $a\in C^\infty(\Sigma;\End(E))$. In particular, if $f\in \SobH{s}(M;E)$, any solution $u\in \Lp{2}(M;E)$  to the Robin problem 
$$\begin{cases} 
Du=f, \quad &\mbox{in $M$}\\
au|_\Sigma+\partial_nu|_{\Sigma}=0,  \quad &\mbox{on $\Sigma$},
\end{cases}$$
satisfies $u\in \SobH{s+m}(M;E)$.

Let us graphically decompose the Robin condition $B_a$ with respect to $P_{\zeta,\partial}$. For $\xi=(\xi_0,\xi_1)=(\xi_0,-a\xi_0)\in B_a$, we can write 
$$\xi=
\underbrace{\begin{pmatrix}
\xi_0\\ \Lambda_{DN}\xi_0
\end{pmatrix}}_{\in P_{\zeta,\partial}\checkH(D)}+
\underbrace{\begin{pmatrix}
0\\ -\Lambda_{DN}\xi_0-a\xi_0
\end{pmatrix}}_{\in (1-P_{\zeta,\partial})\checkH(D)}.$$
The operator $\Lambda_{DN}$ is first order elliptic, and so is $\Lambda_{DN}+a$. So there exists an elliptic $T\in \Psi^{-1}_{\rm cl}(\Sigma;E)$ such that $1-(\Lambda_{DN}+a)T$ and $1-T(\Lambda_{DN}+a)$ are smoothing finite rank projectors, $\ran((\Lambda_{DN}+a)T)=\ran(\Lambda_{DN}+a)$ and $\ker(T(\Lambda_{DN}+a))=\ker(\Lambda_{DN}+a)$. Define the space 
$$V_-:=\left\{\begin{pmatrix} 0\\\eta_1\end{pmatrix}\in (1-P_{\zeta,\partial})\SobHH{3/2}(\Sigma;E\otimes \C^2): (\Lambda_{DN}+a)T\eta_1=\eta_1\right\}.$$
The space 
$$W_-:=\left\{\begin{pmatrix} 0\\\eta_1\end{pmatrix}\in (1-P_{\zeta,\partial})\SobHH{3/2}(\Sigma;E\otimes \C^2): (\Lambda_{DN}+a)T\eta_1=0\right\},$$
is by the construction of $T$ a finite-dimensional subspace of $\Ck{\infty}(\Sigma;E\otimes \C^2)$ such that 
$$V_-\oplus W_-=(1-P_{\zeta,\partial})\SobHH{3/2}(\Sigma;E\otimes \C^2).$$
Define the space 
$$V_+:=\left\{\begin{pmatrix} \eta_0\\\Lambda_{DN}\eta_0\end{pmatrix}\in P_{\zeta,\partial}\SobHH{3/2}(\Sigma;E\otimes \C^2): T(\Lambda_{DN}+a)\eta_0=\eta_0\right\}.$$
The space 
$$W_+:=\left\{\begin{pmatrix} \eta_0\\\Lambda_{DN}\eta_0\end{pmatrix}\in P_{\zeta,\partial}\SobHH{3/2}(\Sigma;E\otimes \C^2): T(\Lambda_{DN}+a)\eta_0=0\right\},$$
is by the construction of $T$ a finite-dimensional subspace of $\Ck{\infty}(\Sigma;E\otimes \C^2)$ such that 
$$V_+\oplus W_+=P_{\zeta,\partial}\SobHH{3/2}(\Sigma;E\otimes \C^2).$$
Define the map $g:V_-\to V_+$ by 
$$g\begin{pmatrix} 0\\\eta_1\end{pmatrix}:=\begin{pmatrix} -T\eta_1\\-\Lambda_{DN}T\eta_1\end{pmatrix}.$$
A short computation shows that 
$$\adj{g} (\adj{V}_+\cap \SobHH{{{\frac12}}}(\Sigma;E\otimes \C^2_{\rm op})) \subset \adj{V}_-\cap \SobHH{{{\frac12}}}(\Sigma;E\otimes \C^2_{\rm op}),$$ 
and that 
$$B_{a}= \set{v + gv: v \in V_-} \oplus W_+.$$
This gives a graphical decomposition of $B_a$. The existence of a graphical decomposition will by Theorem \ref{Thm:Ell} give an alternative proof of the fact that $B_a$ is elliptic. The construction of $g$ in terms of a pseudodifferential operator, combined with Proposition \ref{charintermsofchar}, gives an alternative explanation for the fact that $B_a$ is $s$-semiregular for any $s\geq 0$.
\end{example}

\section{Differential operators with positive principal symbols and their Weyl laws}
\label{weylalallalla}

In this section we shall prove a Weyl law for general lower semi-bounded self-adjoint regular realisations of an elliptic differential operator $D$ of order $m>0$ with positive interior principal symbol. Weyl laws with error estimates are known under mild assumptions from \cite{agmonasy,horasy}, see also \cite[Theorem 1]{grubb77a}. The authors are grateful to Gerd Grubb who showed us how to prove the Weyl law for general boundary conditions by combining the spectral asymptotics of the Dirichlet realisation with \cite{grubb68}. 

The Weyl formula for the asymptotic behaviour of the eigenvalues of Dirichlet realisations of elliptic operators  dates back to Weyl \cite{weyl12}, where his focus was on the Laplacian. Weyl's result was extended to more general operators in many contributions to the literature, as summed up e.g. in Agmon's paper \cite{agmonasy} (including the treatment by Gårding \cite{garding53} of variable-coefficient operators). In \cite{agmonasy} and subsequent treatments by Hörmander \cite{horasy}, Duistermaat-Guillemin \cite{MR405514}, Seeley \cite{MR506893} and Ivrii \cite{MR575202} the focus has been on the remainder term. See more in the survey of Ivrii \cite{ivrii16}. We shall here just study the principal asymptotics, however for realisations that are more general than those defined by pseudodifferential boundary conditions.

We now use results of \cite{grubb68,gerdsgreenbook} to describe the resolvent of $D_{\rm B}$ under the assumptions stated above. The method of proof will be to reduce the problem to a well behaved boundary condition by studying the resolvent difference
$$(D_{\rm B}+\lambda)^{-1}-(D_{\rm Dir}+\lambda)^{-1},$$
for the Dirichlet extension $D_{\rm Dir}$. The idea of studying this resolvent difference and its localisation to the boundary is due to Birman \cite{MR0142896,MR0177311}. The precise description that we use follows Grubb \cite{grubb68}. Recall the results of Subsection \ref{highregofdiricl} for the Dirichlet realisation. We write $P_{\zeta,\partial}(\lambda)$ for the boundary decomposing projection on $\checkH(D)$ induced by the projection $P_\zeta(\lambda):=1-(D_{\rm Dir}+\lambda)^{-1}(D_{\rm max}+\lambda)$ on $\dom(D_{\rm max})$ (cf. Lemma \ref{bdecodladlda}).

\begin{theorem}
\label{resolventchar}
Let $D=D^\dagger$ be formally selfadjoint elliptic operator of order $m$ acting on the sections of a hermitian vector bundle $E$ on an $n$-dimensional manifold with boundary. Assume that $D$ has positive interior principal symbol $\sigma_D$ and that $B$ is a regular and self-adjoint boundary condition. Then $D_{\rm B}$ has discrete spectrum and for some $\lambda>0$ large enough then we can write 
$$(D_{\rm B}+\lambda)^{-1}=Q_++G+\mathfrak{G}_{\rm B},$$
where 
\begin{enumerate}[(i)]
\item $Q_+$ is the truncation of a pseudo-differential operator $\hat{Q}\in \Psi^{-m}_{\rm cl}(\hat{M};\hat{E})$ that acts on a hermitian vector bundle $\hat{E}$ extending $E$ to a closed manifold $\hat{M}$ containing $M$ as a smooth domain, and satisfies that 
$$\sigma_{-m}(\hat{Q})|_{T^*M\setminus M}=\sigma_D^{-1}.$$
\item $G$ is a singular Green operator of order $-m$ and class $0$ (cf. \cite{gerdsgreenbook}).
\item $\mathfrak{G}_{\rm B}$ is an operator that factors over continuous mappings 
\begin{align*}
\Lp{2}(M;E)\xrightarrow{P_{\ker(D_{\rm max}+\lambda)}} &\ker(D_{\rm max}+\lambda)\xrightarrow{\gamma}\mathcal{C}_{D+\lambda}\xrightarrow{P_{\overline{P_{\zeta,\partial}(\lambda)B}^{\checkH(D)}}} \overline{P_{\zeta,\partial}(\lambda)B}^{\checkH(D)}\xrightarrow{\mathfrak{T}_{\rm B}^{-1}}\\
 &\xrightarrow{\mathfrak{T}_{\rm B}^{-1}}P_{\zeta,\partial}(\lambda)B\hookrightarrow \SobHH{{ m-{\frac12}}}(\Sigma;E\otimes \C^m)\hookrightarrow \SobHH{{ -{\frac12}}}(\Sigma;E\otimes \C^m)\xrightarrow{\mathcal{K}}\Lp{2}(M;E),
 \end{align*}
where $P_{\ker(D_{\rm max}+\lambda)}$ and $P_{\overline{P_{\zeta,\partial}(\lambda)B}^{\checkH(D)}}$ denotes the orthogonal projectors and $\mathfrak{T}_{\rm B}:P_{\zeta,\partial}(\lambda)B\to \overline{P_{\zeta,\partial}(\lambda)B}^{\checkH(D)}$ is the operator constructed in \cite[Chapter II]{grubb68} (and there denoted by the densely defined $T:V\to W$ for $V=W=\overline{P_{\zeta,\partial}(\lambda)B}^{\checkH(D)}$ with $\dom(T)=P_{\zeta,\partial}(\lambda)B$).
\end{enumerate}
\end{theorem}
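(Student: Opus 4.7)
The plan is to reduce the description of $(D_{\rm B}+\lambda)^{-1}$ to a combination of the Dirichlet resolvent and a boundary correction, following the strategy of Birman--Grubb as carried out in \cite{grubb68,gerdsgreenbook}. First I would fix $\lambda>0$ large enough so that both $D_{\rm Dir}+\lambda$ and $D_{\rm B}+\lambda$ are invertible; this is possible since $D_{\rm B}$ is self-adjoint by hypothesis and lower semi-bounded (being a self-adjoint regular realisation of a lower semi-bounded $D$, cf. Lemma \ref{dirrealzlem} and Corollary \ref{highregformodulelososo}), and since $D_{\rm Dir}$ is lower semi-bounded by Lemma \ref{dirrealzlem}. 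Discreteness of $\spec(D_{\rm B})$ is automatic: $D_{\rm B}$ has compact resolvent by Proposition \ref{Prop:MaxClosed}, since $\dom(D_{\rm B})\hookrightarrow \Lp{2}(M;E)$ is compact (by semi-regularity and Rellich).

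Next I would treat the Dirichlet resolvent. Since $D$ has positive interior principal symbol, $D+\lambda$ is elliptic in the Boutet de Monvel calculus, and by standard results (see \cite[Chapter 3]{gerdsgreenbook}) $(D_{\rm Dir}+\lambda)^{-1}$ belongs to the Green operator calculus and admits a decomposition
\[
(D_{\rm Dir}+\lambda)^{-1}=Q_++G,
\]
where $Q_+$ is the truncation of a pseudodifferential parametrix $\hat Q\in \Psi^{-m}_{\rm cl}(\hat M;\hat E)$ with $\sigma_{-m}(\hat Q)|_{T^*M\setminus M}=\sigma_D^{-1}$, and $G$ is a singular Green operator of order $-m$ and class $0$. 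This gives items (i) and (ii).

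For item (iii) I would analyse the resolvent difference
\[
\mathfrak{G}_{\rm B}:=(D_{\rm B}+\lambda)^{-1}-(D_{\rm Dir}+\lambda)^{-1}.
\]
For $f\in \Lp{2}(M;E)$, set $u=(D_{\rm B}+\lambda)^{-1}f$ and $v=(D_{\rm Dir}+\lambda)^{-1}f$. Then $(D+\lambda)(u-v)=0$, so $\mathfrak{G}_{\rm B} f=u-v\in \ker(D_{\rm max}+\lambda)$. Under the Poisson map $\mathcal{K}$ (from Theorem \ref{refinedseeley}, applied to $D+\lambda$), this kernel is identified via $\gamma$ with the Hardy space $\mathcal{C}_{D+\lambda}$. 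Thus $\mathfrak{G}_{\rm B}$ factors as $\mathcal{K}\circ \gamma\circ P_{\ker(D_{\rm max}+\lambda)}\circ\mathfrak{G}_{\rm B}$, and the content of the assertion is that the boundary piece is precisely the composition with $\mathfrak{T}_{\rm B}^{-1}$ and the projection onto $\overline{P_{\zeta,\partial}(\lambda)B}^{\checkH(D)}$.

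To identify the boundary piece I would invoke the abstract framework of \cite[Chapter II]{grubb68}: the regular self-adjoint boundary condition $B$ is encoded, once $D_{\rm Dir}$ has been used as reference, by a densely defined operator
$$\mathfrak{T}_{\rm B}\colon P_{\zeta,\partial}(\lambda)B\to \overline{P_{\zeta,\partial}(\lambda)B}^{\checkH(D)},$$
which is closed and invertible (since $D_{\rm B}+\lambda$ is invertible). The decomposition $\checkH(D)=B_{\rm Dir}\oplus \mathcal{C}_{D+\lambda}$ from Subsection \ref{highregofdiricl} together with the graphical description of $B$ with respect to $P_{\zeta,\partial}(\lambda)$ (cf. Theorem \ref{Thm:Ell} and the discussion after Lemma \ref{bdecodladlda}) allows one to rewrite the equation $u\in \dom(D_{\rm B})$ as a condition on $P_{\zeta,\partial}(\lambda)\gamma u\in P_{\zeta,\partial}(\lambda)B$, with $\mathfrak{T}_{\rm B}$ encoding exactly this relation. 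Combining this with the fact that $\mathfrak{G}_{\rm B}f\in \ker(D_{\rm max}+\lambda)$ then yields the stated factorisation.

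The main obstacle I expect is carrying out the boundary identification of $\mathfrak{G}_{\rm B}$ rigorously, that is, verifying that the boundary data of $u-v$ for $u=(D_{\rm B}+\lambda)^{-1}f$ and $v=(D_{\rm Dir}+\lambda)^{-1}f$ lie in the domain of $\mathfrak{T}_{\rm B}$ and are computed by $\mathfrak{T}_{\rm B}^{-1}$ applied to $P_{\overline{P_{\zeta,\partial}(\lambda)B}^{\checkH(D)}}\gamma v$. This requires a careful tracking of the trace maps and the Hardy-space projections in the two topologies on $\checkH(D)$ (the $\SobHH{{-{\frac12}}}$-topology inherited by $\mathcal{C}_{D+\lambda}$ and the $\SobHH{{m-{\frac12}}}$-topology on $P_{\zeta,\partial}(\lambda)B$), but is precisely the content of \cite[Chapter II, §1--3]{grubb68}, which I would quote once the geometric setup has been arranged as above.
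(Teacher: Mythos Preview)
Your approach is essentially the same as the paper's: decompose $(D_{\rm Dir}+\lambda)^{-1}=Q_++G$ via the Boutet de Monvel calculus (\cite[Section 3.3]{gerdsgreenbook}), establish discrete spectrum of $D_{\rm B}$ via the compact domain inclusion, and identify the resolvent difference $\mathfrak{G}_{\rm B}=(D_{\rm B}+\lambda)^{-1}-(D_{\rm Dir}+\lambda)^{-1}$ using \cite[Chapter II, Theorem 1.4]{grubb68}.

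One correction: your claim that $D_{\rm B}$ is automatically lower semi-bounded is not justified by Lemma \ref{dirrealzlem} (which only treats $D_{\rm Dir}$) or Corollary \ref{highregformodulelososo} (which concerns regularity of $D_{\rm B}^*D_{\rm B}$, not semi-boundedness of $D_{\rm B}$). In fact a regular self-adjoint realisation of an operator with positive principal symbol need not be lower semi-bounded; this is precisely why Theorem \ref{generalwlelele} carries lower semi-boundedness as an explicit extra hypothesis, and the remark following the present theorem characterises when it holds via $\mathfrak{T}_{\rm B}$. Fortunately you do not need it here: discreteness of the spectrum (which you correctly derive from the compact inclusion $\dom(D_{\rm B})\hookrightarrow \Lp{2}(M;E)$) already guarantees that $D_{\rm B}+\lambda$ is invertible for $\lambda>0$ outside a discrete set, which is all that is required. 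The paper's proof phrases the choice of $\lambda$ in exactly this way.
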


We remark that all three operators $Q_+$, $G$ and $\mathfrak{G}_{\rm B}$ depend on the choice of $\lambda$. However, $Q_+$ and $G$ are independent of the boundary condition $B$ (a priori there is an implicit dependence via the choice of $\lambda$, but it is irrelevant for the structural statements we are interested in).

\begin{proof}
It follows from \cite[Section 3.3]{gerdsgreenbook} that we, for $\lambda>0$ outside a discrete set of real numbers, can write
$$(D_{\rm Dir}+\lambda)^{-1}=Q_++G,$$
where $Q_+$ and $G$ are as in the statement of the theorem. The operator $D_{\rm B}$ is self-adjoint because $D$ is formally self-adjoint and $B$ is Lagrangian. Since $\dom(D_{\rm B})\subseteq \SobH{m}(M;E)$, the inclusion $\dom(D_{\rm B})\hookrightarrow \Lp{2}(M;E)$ is compact. Therefore $D_{\rm B}$ has only discrete spectrum. For $\lambda$ outside a discrete subset so that $D_{\rm B}+\lambda$ and $D_{\rm Dir}+\lambda$ are invertible, it follows from \cite[Chapter II, Theorem 1.4]{grubb68} that 
$$(D_{\rm B}+\lambda)^{-1}-(D_{\rm Dir}+\lambda)^{-1}=\mathfrak{G}_{\rm B},$$
where $\mathfrak{G}_{\rm B}$ is as in the statement of the theorem. This proves the theorem.
\end{proof}

\begin{lemma}
\label{lemmamamsoso}
The singular values of the operators appearing in Theorem \ref{resolventchar} have the following  asymptotics:
\begin{enumerate}[(i)]
\item $\mu_k(Q_+)=c_Qk^{-\frac{m}{n}}+o(k^{-\frac{m}{n}})$ where 
$$c_Q=\left(\frac{1}{n(2\pi)^n}\int_{S^*M} \mathrm{Tr}_E(\sigma_m(Q)(x,\xi)^{\frac{n}{m}})\mathrm{d}x\mathrm{d}\xi\right)^{\frac{m}{n}}.$$
\item $\mu_k(G)=O(k^{-\frac{m}{n-1}})$
\item $\mu_k(\mathfrak{G}_{\rm B})=O(k^{-\frac{m}{n-1}})$
\end{enumerate}
\end{lemma}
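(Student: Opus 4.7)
The plan is to address the three items using different tools, in an order that lets the harder items leverage the easier ones.

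I would start with item (iii), which is the softest. The factorisation of $\mathfrak{G}_{\rm B}$ supplied by Theorem \ref{resolventchar} passes through the continuous embedding
\[
\iota:\SobHH{m-\frac12}(\Sigma;E\otimes \C^m)\hookrightarrow \SobHH{-\frac12}(\Sigma;E\otimes \C^m),
\]
which is the compact Sobolev embedding gaining $m$ derivatives on each graded component of the closed $(n-1)$-dimensional manifold $\Sigma$. The classical Weyl asymptotic for such an embedding gives $\mu_k(\iota)=O(k^{-m/(n-1)})$. Writing $\mathfrak{G}_{\rm B}=S_2\circ \iota\circ S_1$ for continuous pre- and post-compositions $S_1,S_2$ read off from the factorisation diagram, the Ky Fan inequality $\mu_k(S_2\iota S_1)\leq \|S_2\|\,\mu_k(\iota)\,\|S_1\|$ yields the claim.

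For item (ii), I would use the structure of singular Green operators of order $-m$ and class zero: modulo smoothing, such a $G$ decomposes as a finite sum of compositions $\mathcal{K}_j\circ R_j\circ \gamma_j$ where $\gamma_j$ are boundary traces, $R_j$ are classical pseudodifferential operators on $\Sigma$ of appropriate negative order, and $\mathcal{K}_j$ are Poisson operators; see \cite[Chapter 2]{gerdsgreenbook}. The class-zero hypothesis rules out residual trace contributions, so that the composition ultimately maps $\Lp{2}(M;E)$ into $\SobH{m}(M;E)$ through pseudodifferential operations on $\Sigma$. By the Weyl law for pseudodifferential operators on the closed $(n-1)$-dimensional manifold $\Sigma$, each $R_j$ has singular values decaying at least like $k^{-m/(n-1)}$, and the claim follows from the Ky Fan inequality for compositions.

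For item (i), I would use the classical Weyl asymptotic for the Dirichlet realisation $D_{\rm Dir}$, which by Lemma \ref{dirrealzlem} is self-adjoint, lower semibounded, and has compact resolvent. By results of Agmon \cite{agmonasy}, Hörmander \cite{horasy}, and Grubb \cite{grubb77a}, the eigenvalues satisfy $\lambda_k(D_{\rm Dir})=c_D k^{m/n}+o(k^{m/n})$ with $c_D=\left(\frac{1}{n(2\pi)^n}\int_{S^*M}\mathrm{Tr}_E(\sigma_D^{-n/m})\,dx\,d\xi\right)^{-m/n}$. For $\lambda$ large enough, the self-adjoint positive operator $(D_{\rm Dir}+\lambda)^{-1}$ has singular values $(\lambda_k(D_{\rm Dir})+\lambda)^{-1}$ in decreasing order, so
\[
\mu_k((D_{\rm Dir}+\lambda)^{-1})=c_Q k^{-m/n}+o(k^{-m/n}),
\]
where the identity $c_Q=c_D^{-1}$ is immediate from $\sigma_m(\hat Q)|_{T^*M\setminus M}=\sigma_D^{-1}$. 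By Theorem \ref{resolventchar}, $(D_{\rm Dir}+\lambda)^{-1}=Q_++G$, so the Weyl inequality $\mu_{j+k-1}(A+B)\leq \mu_j(A)+\mu_k(B)$ with the choice $k\sim j^\alpha$ for some $\alpha\in((n-1)/n,1)$, combined with item (ii), transfers the principal asymptotic from $(D_{\rm Dir}+\lambda)^{-1}$ to $Q_+$.

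The main obstacle will be item (ii): the singular value bound is folklore within Boutet de Monvel's calculus, but a careful derivation requires unpacking the class-zero hypothesis to reduce $G$ to operations effectively carried out on the $(n-1)$-dimensional boundary. Once (ii) is in hand, items (i) and (iii) become essentially soft consequences of the known Weyl asymptotic for $D_{\rm Dir}$ and Ky Fan/Weyl singular value inequalities.
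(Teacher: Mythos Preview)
Your treatment of items (iii) and (i) is essentially what the paper does. For (iii) the paper argues exactly via the factorisation through the compact embedding $\SobHH{m-\frac12}(\Sigma)\hookrightarrow\SobHH{-\frac12}(\Sigma)$ on the $(n-1)$-dimensional boundary together with the bound $\mu_k(S_2\iota S_1)\leq \|S_2\|\,\mu_k(\iota)\,\|S_1\|$. For (i) the paper's primary proof is a direct citation of \cite[Proposition~4.5.3]{gerdsgreenbook}, but the remark immediately following the proof spells out precisely your route: deduce the asymptotic for $Q_+$ from that of $(D_{\rm Dir}+\lambda)^{-1}$ via item (ii) and the Weyl--Fan inequality. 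So here you and the paper agree.

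The genuine gap is in item (ii). Your proposed decomposition of a singular Green operator of order $-m$ and class $0$ as a finite sum $\sum_j \mathcal{K}_j\circ R_j\circ \gamma_j$ is not correct: boundary traces $\gamma_j$ do not act on $\Lp{2}(M;E)$, and the class-zero condition does not repair this --- it guarantees that $G$ extends continuously to $\Lp{2}$, but not that it factors through finitely many traces. Singular Green operators have a symbol-kernel that is trace-class in the normal variable, not finite-rank, and turning that structure into the singular value bound $\mu_k(G)=O(k^{-m/(n-1)})$ is exactly the content of Grubb's paper \cite{grubb84}, which the authors simply cite. Your instinct that the estimate should come from ``operations effectively carried out on the $(n-1)$-dimensional boundary'' is right in spirit, but the reduction is substantially more delicate than a finite Poisson--pseudodifferential--trace factorisation; you should either invoke \cite{grubb84} directly or be prepared to reproduce a nontrivial portion of its analysis.
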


\begin{proof}
Item i) follows from \cite[Proposition 4.5.3]{gerdsgreenbook}. One can also prove item i) by hand by applying the Weyl law on a closed manifold, see e.g. \cite[Chapter XXIX]{horIV} or \cite[Chapter III]{shubinsbook}, to suitable perturbations of $G_+$. The statement of item ii) follows from \cite{grubb84}. The statement of item iii) follows from that $\mathfrak{G}_{\rm B}$ factors over the inclusion $\SobHH{{ m-{\frac12}}}(\Sigma;E\otimes \C^m)\hookrightarrow \SobHH{{ -{\frac12}}}(\Sigma;E\otimes \C^m)$ whose singular value sequence is $O(k^{-m/(n-1)})$ as $k\to \infty$. Indeed, using a Weyl law on the $n-1$-dimensional compact manifold $\Sigma$ shows that the singular values of $\SobH{s+m}(\Sigma;E)\hookrightarrow \SobH{s}(\Sigma;E)$ is $O(k^{-m/(n-1)})$ as $k\to \infty$ for any $s\in \R$.
\end{proof}

\begin{remark}
The statement that $\mu_k(Q_++G)=\mu_k(D_{\rm Dir}^{-1})=c_Qk^{-\frac{m}{n}}+o(k^{-\frac{m}{n}})$ is a direct consequence of the spectral asymptotics of the Dirichlet realisation, see \cite{agmonasy}. Compare to \cite{garding53} and \cite[Theorem 1]{grubb77a}. Therefore $\mu_k(Q_+)=c_Qk^{-\frac{m}{n}}+o(k^{-\frac{m}{n}})$ follows from that $\mu_k(G)=o(k^{-\frac{m}{n}})$ by \cite{grubb84} and the Weyl-Fan inequality (see for instance \cite[Theorem 1.7]{simontrace}). The proof above is included since it shows how the different components in the resolvent of $D_B$ contributes to the spectral asymptotics.
\end{remark}

\begin{theorem}
\label{generalwlelele}
Let $D=D^\dagger$ be formally selfadjoint elliptic operator of order $m$ acting on the sections of a hermitian vector bundle $E$ on an $n$-dimensional manifold with boundary. Assume that $D$ has positive interior principal symbol $\sigma_D$ and that $B$ is a regular and self-adjoint boundary condition defining a lower semi-bounded realisation $D_B$. Define the constant 
$$c_D=\left(\frac{1}{n(2\pi)^n}\int_{S^*M} \mathrm{Tr}_E(\sigma_D(x,\xi)^{-n/m})\mathrm{d}x\mathrm{d}\xi\right)^{-m/n}.$$ 
The operator $D_{\rm B}$ has discrete real spectrum, and its spectrum $\lambda_0(D_{\rm B})\leq \lambda_1(D_{\rm B})\leq \lambda_2(D_{\rm B})\leq\cdots$ satisfies that 
$$\lambda_k(D_{\rm B})=c_D k^{m/n}+o(k^{\frac{m}{n}}),\quad\mbox{as $k\to +\infty$}.$$
\end{theorem}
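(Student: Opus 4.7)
The plan is to combine the structural resolvent description in Theorem \ref{resolventchar} with the singular value asymptotics in Lemma \ref{lemmamamsoso} and then translate them into eigenvalue asymptotics for $D_{\rm B}$ via spectral theory of positive compact self-adjoint operators.

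First, since $D$ is formally self-adjoint and $B$ is a regular self-adjoint boundary condition, $D_{\rm B}$ is self-adjoint with $\dom(D_{\rm B}) \subseteq \SobH{m}(M;E)$. The compactness of the Rellich embedding $\SobH{m}(M;E) \hookrightarrow \Lp{2}(M;E)$ forces $D_{\rm B}$ to have compact resolvent, hence purely discrete real spectrum, and lower semi-boundedness gives $\lambda_0(D_{\rm B}) \leq \lambda_1(D_{\rm B}) \leq \cdots \to +\infty$. I would then pick $\lambda>0$ so large that $\lambda_0(D_{\rm B})+\lambda>0$ and Theorem \ref{resolventchar} applies; the operator $(D_{\rm B}+\lambda)^{-1}$ is then a \emph{positive} compact self-adjoint operator whose singular values coincide with its eigenvalues, arranged in non-increasing order as $(\lambda_k(D_{\rm B})+\lambda)^{-1}$.

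Second, I would identify the constant in Lemma \ref{lemmamamsoso}. Theorem \ref{resolventchar} states that the interior principal symbol of $Q$ equals $\sigma_D^{-1}$, so $\sigma_m(Q)^{n/m} = \sigma_D^{-n/m}$, and a direct comparison of the two defining integrals yields $c_Q = c_D^{-1}$. Hence Lemma \ref{lemmamamsoso} reads
\begin{equation*}
\mu_k(Q_+) = c_D^{-1}\, k^{-m/n} + o(k^{-m/n}), \qquad \mu_k(G),\ \mu_k(\mathfrak{G}_{\rm B}) = O(k^{-m/(n-1)}).
\end{equation*}

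Third, I would transfer the asymptotics from $Q_+$ to the full sum $(D_{\rm B}+\lambda)^{-1} = Q_+ + G + \mathfrak{G}_{\rm B}$ using the Ky Fan--Horn inequality $\mu_{i+j-1}(A+B) \leq \mu_i(A) + \mu_j(B)$. Taking $i = k-l+1$ and $j = l$ with $l = \lfloor k^\beta \rfloor$ for any fixed $\beta \in \bigl((n-1)/n,\,1\bigr)$, the bound on $\mu_l(G+\mathfrak{G}_{\rm B})$ becomes $O(k^{-\beta m/(n-1)}) = o(k^{-m/n})$, while $\mu_{k-l+1}(Q_+) = c_D^{-1} k^{-m/n}(1+o(1))$ since $l/k \to 0$. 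Writing symmetrically $Q_+ = (Q_+ + G + \mathfrak{G}_{\rm B}) - (G + \mathfrak{G}_{\rm B})$ and applying the same inequality yields the matching lower bound, so that
\begin{equation*}
\mu_k\bigl((D_{\rm B}+\lambda)^{-1}\bigr) = c_D^{-1}\, k^{-m/n} + o(k^{-m/n}).
\end{equation*}

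Finally, since $\mu_k((D_{\rm B}+\lambda)^{-1}) = (\lambda_k(D_{\rm B})+\lambda)^{-1}$, inverting the asymptotic gives $\lambda_k(D_{\rm B}) + \lambda = c_D\, k^{m/n} + o(k^{m/n})$, and the additive constant $\lambda$ is absorbed into the error. The main obstacle is the Ky Fan splitting step: one must verify that the chosen exponent $\beta$ makes the remainder $\mu_l(G+\mathfrak{G}_{\rm B})$ asymptotically negligible against $k^{-m/n}$ while still ensuring $l=o(k)$, which is exactly the constraint $(n-1)/n < \beta < 1$ available whenever $n \geq 2$; the remaining conceptual content is already packaged in the resolvent decomposition of Theorem \ref{resolventchar} and the singular value bounds of Lemma \ref{lemmamamsoso}.
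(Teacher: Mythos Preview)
Your proposal is correct and follows essentially the same approach as the paper: combine the resolvent decomposition of Theorem \ref{resolventchar} with the singular value asymptotics of Lemma \ref{lemmamamsoso} via the Weyl--Fan (Ky Fan) inequality, then convert to eigenvalue asymptotics using self-adjointness and lower semi-boundedness. The only cosmetic difference is that the paper phrases the intermediate step in terms of the singular values $\mu_k(D_{\rm B})$ of the unbounded operator (defined through $(1+D_{\rm B}^*D_{\rm B})^{-1}$) and then identifies $\mu_k(D_{\rm B})=\lambda_k(D_{\rm B})$ for large $k$, whereas you work directly with the positive compact operator $(D_{\rm B}+\lambda)^{-1}$; your route is arguably cleaner, and your explicit choice $l=\lfloor k^\beta\rfloor$ with $(n-1)/n<\beta<1$ spells out what the paper leaves as a citation.
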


\begin{proof}
Using Theorem \ref{resolventchar}, Lemma \ref{lemmamamsoso} and the Weyl-Fan inequality (see for instance \cite[Theorem 1.7]{simontrace}) we conclude that 
$$\mu_k(D_{\rm B})=c_D k^{m/n}+o(k^{\frac{m}{n}}),\quad\mbox{as $k\to +\infty$}.$$
Here $(\mu_k(D_{\rm B}))_{k\in \mathbb{N}}$ denotes the singular value sequence of the unbounded operator $D_B$, so by definition $\mu_k(D_{\rm B})=\sqrt{\mu_k((1+D_{\rm B}^*D_{\rm B})^{-1})^{-1}-1}$. Since $D_B$ is self-adjoint and lower semi-bounded, $\mu_k(D_{\rm B})=\lambda_k(D_{\rm B})$ for $k$ large enough and the theorem follows. 
\end{proof}

\begin{remark}
The operator $D_B$ is lower semi-bounded if and only if the operator $\mathfrak{T}_{\rm B}$ from Theorem \ref{resolventchar}, item iii), is lower semi-bounded. For a certain class of elliptic boundary conditions, the operator $\mathfrak{T}_{\rm B}$ is realised as a pseudodifferential operator in \cite[Section 8]{grubb74} and in this case the asymptotics of the negative eigenvalues was studied.
\end{remark}

\begin{remark}
Theorem \ref{generalwlelele} was proven in \cite{agmonasy} under the additional hypothesis that there is a $k>n/m$ such that $D_B^k$ is regular, i.e. $\dom(D_B^k)\subseteq \SobH{km}(M;E)$. By complex interpolation, using that $D_B$ is self-adjoint, this condition implies $\dom(D_B^l)\subseteq \SobH{lm}(M;E)$ for $l=1,\ldots, k$. Note that $\dom(D_B^k)\subseteq \SobH{km}(M;E)$ if and only if $B$ is $m(k-1)$-semiregular. Let us provide a simple example where the assumption appearing in \cite{agmonasy} fails, but Theorem \ref{generalwlelele} holds. Our example is quite similar to Example \ref{lapapallald} and provided solely for the purpose of giving intuition for the difference in assumptions between Theorem \ref{generalwlelele} and \cite{agmonasy}.

Assume that $M\subseteq \R^n$ is a compact domain with smooth boundary and consider a shift of the standard Laplacian 
$$D=\Delta+1=1-\sum_{j=1}^n \partial_{x_j}^2.$$ 
In particular, both the Dirichlet problem and the Neumann problem for  $D$ are uniquely solvable. For a real-valued function $a\in L^\infty(\partial \Omega,\R)$, consider the Robin boundary condition
$$B_a:=\{(\xi_0,\xi_1)\in \checkH(D): a \xi_0+\xi_1=0\}.$$
A short computation with Proposition \ref{describinglocalcndldldaadjoint} (or in other words using Green's formula) shows that $B_a$ is self-adjoint. Since the operator $D_{B_a}$ is defined from the closed lower semibounded quadratic form
$$q_a(u):=\int_M (|u|^2+|\nabla u|^2)\mathrm{d} V(x)+\int_\Sigma a|u|^2\mathrm{d} S(x),\quad u\in \SobH{1}(M),$$
the operator $D_{B_a}$ is lower semi-bounded and so satisfies the assumptions of Theorem \ref{generalwlelele} as soon as $\dom(D_{B_a})\subseteq \SobH{m}(M;E)$, i.e. $B_a$ is semi-regular (or equivalently regular, by self-adjointness).

By the same argument as in Example \ref{lapapallald}, $B_a$ is $s$-semiregular if and only if for any $(\xi_0,\xi_1)\in B_a$, the property $(\Lambda_{DN} +a)\xi_0\in \SobH{s+1/2}(\Sigma)$ implies that $\xi_0\in \SobH{s+3/2}(\Sigma)$. If $a$ is such that the multiplication operator $a:\SobH{s+3/2}(\Sigma)\to \SobH{s+1/2}(\Sigma)$ is compact, then for a generic\footnote{In the sense that for $\epsilon\in (-1,1)$ outside of a discrete set of points $\Lambda_{DN} +a+\epsilon:\SobH{s+3/2}(\Sigma)\to \SobH{s+1/2}(\Sigma)$ is invertible.} such $a$, the boundary condition $B_a$ is $s$-semiregular. Conversely, if $B_a$ is $s$-semi-regular, then elliptic regularity of $\Lambda_{DN}$ and the fact that $(\Lambda_{DN} +a)\xi_0\in \SobH{s+1/2}(\Sigma)\; \Rightarrow\xi_0\in \SobH{s+3/2}(\Sigma)$ implies that $a:\SobH{s+3/2}(\Sigma)\to \SobH{s+1/2}(\Sigma)$ is continuous. 

Let us now give a condition on $a$ such that $\dom(D_{B_a}^k)\subseteq \SobH{km}(M;E)$ holds for $k=1$ but not for $k=2$, and therefore for no $k>1$. 
We note that $u\in \dom(D_{B_a}^2)$ if and only if $D_{B_a}u\in\dom(D_{B_a})$. Therefore, the inclusion $\dom(D_{B_a}^2)\subseteq \SobH{4}(M)$ holds if and only if for $u\in \dom(D_{B_a})$ such that $D_{B_a}u\in \SobH{2}(M)$ it holds that $u\in \SobH{4}(M)$. From Theorem \ref{Thm:BdyRegHigh} we see that $\dom(D_{B_a}^2)\subseteq \SobH{4}(M)$ if and only if $B_a$ is $2$-semiregular. By the argument above, this occurs if the function  $a$ makes the multiplication operator $a:\SobH{3/2}(\Sigma)\to \SobH{1/2}(\Sigma)$ compact but the multiplication operator $a:\SobH{7/2}(\Sigma)\to \SobH{5/2}(\Sigma)$ is not even continuous. This occurs when $a\in \Ck{1}(\Sigma)$ but $a\notin \SobH{5/2}(\Sigma)$. Indeed, continuity of $a:\SobH{7/2}(\Sigma)\to \SobH{5/2}(\Sigma)$ fails if $a=a1\notin \SobH{5/2}(\Sigma)$ and $\Ck{1}(\Sigma)$ acts continuously $\SobH{1/2}(\Sigma)$ (by interpolation of the continuous actions $\SobH{1}(\Sigma)$ and $\Lp{2}(\Sigma)$) and $\SobH{3/2}(\Sigma)\to \SobH{1/2}(\Sigma)$ is compact by Rellich's theorem. For an example of an $a\in \Ck{1}(\Sigma)$ with $a\notin \SobH{5/2}(\Sigma)$, take $M$ to be the unit disc, so $\Sigma=S^1$, and define $a$ from the lacunary Fourier series
\begin{equation}
\label{laclausnsd}
a(z):=\sum_{k=0}^\infty 2^{-2k}(z^{2^k}+z^{-2^k})\in \Ck{1}(S^1)\setminus \SobH{5/2}(S^1).
\end{equation}
For $a$ as in Equation \eqref{laclausnsd}, the regular realisation $(1+\Delta)_{B_a}$ on the unit disc provides a concrete example where the Weyl law holds by Theorem \ref{generalwlelele} but the assumption $\dom(D_B^k)\subseteq \SobH{km}(M;E)$ for some $k>n/m$ from \cite{agmonasy} fails.
\end{remark}

\begin{corollary}
\label{weylformodulelososo}
Let $D:\Ck{\infty}(M;E)\to \Ck{\infty}(M;F)$ be an elliptic differential operator of order $m>0$ and $B\subseteq \checkH(D)$ a regular and $m$-semiregular boundary condition. Then the sequence of singular values of the operator $D_{\rm B}:\Lp{2}(M;E)\dashrightarrow \Lp{2}(M;F)$ satisfies that 
$$\mu_k(D_{\rm B})=\sqrt{c_{D^\dagger D}} k^{\frac{m}{n}}+o(k^{\frac{m}{n}}),\quad\mbox{as $k\to +\infty$},$$
where  
$$c_{D^\dagger D}=\left(\frac{1}{n(2\pi)^n}\int_{S^*M} \mathrm{Tr}_E((\sigma_D(x,\xi)^*\sigma_D(x,\xi))^{-n/2m})\mathrm{d}x\mathrm{d}\xi\right)^{-2m/n},$$
and $\sigma_D$ denotes the interior principal symbol of $D$. 
\end{corollary}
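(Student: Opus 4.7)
The plan is to reduce to Theorem \ref{generalwlelele} applied to the order $2m$ elliptic operator $D^\dagger D$ with the realisation $D_{\rm B}^* D_{\rm B}$. First I would observe that $D^\dagger D$ is formally self-adjoint, elliptic of order $2m$, with interior principal symbol $\sigma_{D^\dagger D}=\sigma_D^*\sigma_D$; since $D$ is elliptic, $\sigma_D(x,\xi)$ is injective for $\xi\neq 0$, so $\sigma_D^*\sigma_D$ is fibrewise positive definite. Thus the standing hypotheses of Theorem \ref{generalwlelele} on the symbol are in place.

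Next I would identify $D_{\rm B}^*D_{\rm B}$ as a regular self-adjoint realisation of $D^\dagger D$ in the sense of Theorem \ref{generalwlelele}. Self-adjointness and non-negativity (hence lower semi-boundedness) are automatic from the abstract construction $T^*T$ applied to the closed densely defined operator $T=D_{\rm B}$. By Theorem \ref{firstofmsmsdmds} applied to $D^\dagger D$, this realisation corresponds uniquely to a closed subspace of $\checkH(D^\dagger D)$, and regularity of this boundary condition is precisely the condition $\dom(D_{\rm B}^*D_{\rm B})\subseteq \SobH{2m}(M;E)$. This domain inclusion is supplied by Corollary \ref{highregformodulelososo}, whose hypotheses---$B$ regular and $m$-semiregular---are exactly what we have assumed.

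Now I would invoke Theorem \ref{generalwlelele} with the operator $D^\dagger D$ and the realisation $D_{\rm B}^*D_{\rm B}$, yielding discrete spectrum
\[
\lambda_k(D_{\rm B}^*D_{\rm B})= c_{D^\dagger D}\, k^{\frac{2m}{n}}+o\!\left(k^{\frac{2m}{n}}\right),\qquad k\to +\infty,
\]
with
\[
c_{D^\dagger D}=\left(\frac{1}{n(2\pi)^n}\int_{S^*M} \mathrm{Tr}_E\!\left((\sigma_D^*\sigma_D)^{-n/(2m)}\right)\mathrm{d}x\,\mathrm{d}\xi\right)^{-2m/n},
\]
which is exactly the constant appearing in the statement (the order of $D^\dagger D$ is $2m$, so $m/n$ in Theorem \ref{generalwlelele} becomes $2m/n$ here). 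Finally, the singular values of the closed densely defined operator $D_{\rm B}$ are related to the eigenvalues of $D_{\rm B}^*D_{\rm B}$ by $\mu_k(D_{\rm B})=\sqrt{\lambda_k(D_{\rm B}^*D_{\rm B})}$ for $k$ large, and a one-line expansion $\sqrt{a_k+o(a_k)}=\sqrt{a_k}(1+o(1))$ yields
\[
\mu_k(D_{\rm B})=\sqrt{c_{D^\dagger D}}\, k^{\frac{m}{n}}+o\!\left(k^{\frac{m}{n}}\right).
\]

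There is no real obstacle beyond the bookkeeping: the only point that merits a careful sentence is that $D_{\rm B}^*D_{\rm B}$ is genuinely a realisation of $D^\dagger D$ in the sense of Section \ref{setupforellde}, i.e.\ that on its domain it acts distributionally as $D^\dagger D$ and that it sits between the minimal and maximal realisations of $D^\dagger D$. This is standard: on $\dom(D_{\rm B}^*D_{\rm B})$ the vector $D_{\rm B}u\in\dom(D_{\rm B}^*)\subseteq\dom(D_{\max}^\dagger)$, so $D_{\rm B}^*D_{\rm B}u=D^\dagger D u$ in the distributional sense, and $\dom((D^\dagger D)_{\min})=\SobH[0]{2m}(M;E)\subseteq \dom(D_{\rm B}^*D_{\rm B})\subseteq \dom((D^\dagger D)_{\max})$ follows from $\SobH[0]{2m}(M;E)\subseteq \dom(D_{\rm min}^*D_{\rm min})$ and the fact that $D_{\rm B}^*D_{\rm B}$ acts as $D^\dagger D$ in distributions.
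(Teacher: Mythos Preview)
Your proof is correct and follows essentially the same approach as the paper: apply Theorem \ref{generalwlelele} to the order-$2m$ operator $D^\dagger D$ with the non-negative self-adjoint realisation $D_{\rm B}^*D_{\rm B}$, using Corollary \ref{highregformodulelososo} to guarantee the required regularity, and then take square roots. The paper's proof is simply a terser version of exactly this argument.
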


\begin{proof}
We have that $\mu_k(D_{\rm B})^2=\lambda_k(D_{\rm B}^*D_{\rm B})$ and by Corollary \ref{highregformodulelososo}, $D_{\rm B}^*D_{\rm B}$ is regular if $B$ is regular and $m$-semiregular. The corollary follows from Theorem \ref{generalwlelele} since $D_{\rm B}^*D_{\rm B}$ is non-negative.
\end{proof}

\begin{remark}
A consequence of Corollary \ref{weylformodulelososo} is that if $D$ is a formally self-adjoint, elliptic differential operator of order $m>0$ and $B\subseteq \checkH(D)$ is a Lagrangian, regular and $m$-semiregular boundary condition, then $D_{\rm B}$ has discrete spectrum consisting of real eigenvalues that asymptotically behaves as  
$$|\lambda_k(D_{\rm B})|=c_{|D|} k^{\frac{m}{n}}+o(k^{\frac{m}{n}}),\quad\mbox{as $k\to +\infty$},$$
where  
$$c_{|D|}=\left(\frac{1}{n(2\pi)^n}\int_{S^*M} \mathrm{Tr}_E(|\sigma_D(x,\xi)|^{-n/m})\mathrm{d}x\mathrm{d}\xi\right)^{-m/n}.$$
For more details on the spectral asymptotics of the sequences of negative and positive eigenvalues, see \cite[Chapter 4.5]{gerdsgreenbook}. 
\end{remark}

\begin{remark}
The assumption that $B$ is regular in Theorem \ref{generalwlelele} can in general not be dropped. To give an example thereof, we assume for simplicity that $D$ is as in Theorem \ref{generalwlelele} and $D_{\rm Dir}$ is invertible. Following the notations of \cite{grubb68,grubb74}, for a small $m'<m$ we take a regular $T_0\in \Psi^{\pmb{m'}}(\Sigma;E\otimes \C^m)$ with $[T_0,P_\mathcal{C}]\in \Psi^{\pmb{m'-m}}(\Sigma;E\otimes \C^m)$  and define $T:=P_\mathcal{C}T_0P_\mathcal{C}$. The operator $T$ is a densely defined Fredholm operator $\mathcal{C}_D\to \mathcal{C}_D$ with $\dom(T)=P_\mathcal{C}\SobHH{{m'-{\frac12}}}(\Sigma;E\otimes \C^m)$. Assume for simplicity that $T$ is invertible. Let $D_T$ be the realisation of $D$ constructed as in \cite[Chapter II, Proposition 1.2]{grubb68}. The realisation $D_T$ is not regular, but $\gamma(\dom(D_T))\subseteq \SobHH{{m'-{\frac12}}}(\Sigma;E\otimes \C^m)$, cf. \cite[Corollary 6.10]{grubb74}. By \cite[Chapter II, Theorem 1.4]{grubb68}, we have that 
$$D_T^{-1}-D_{\rm Dir}=T^{-1},$$
where $T^{-1}$ is ``extended by zero'' to an operator on $\Lp{2}(M;E)=\mathcal{C}_D\oplus \ran(D_{\rm min})$. If $m'<m(n-1)/n$, the proof of Theorem \ref{generalwlelele} breaks down as it only produces the lower estimate 
$$\mu_k(D_T)\gtrsim k^{\frac{m'}{n-1}},\quad\mbox{as $k\to \infty$}.$$
\end{remark}

\section{Rigidity results for the index of elliptic differential operators}
\label{sec:rigiddldlda}

In this section we prove a rigidity result for the Fredholm index of Fredholm realisations $D_{\rm B}$. As noted in Remark \ref{hominvariancremakd}, the index formula of Theorem \ref{Thm:FredChar} implies homotopy invariance for the choice of Fredholm boundary condition when $D$ is fixed. We now extend this observation to more general homotopies. 

We topologise the space of pseudodifferential operators by declaring the symbol mappings and its local splittings defined from Kohn-Nirenberg quantisation, as well as the inclusion of the smoothing operators, to be continuous. The space of order $m$ differential operators $\Ck{\infty}(M;E)\to \Ck{\infty}(M;F)$ will be topologised as linear operators between the Fréchet topologies of $\Ck{\infty}(M;E)$ and $\Ck{\infty}(M;F)$. Equivalently, we can topologise the space of order $m$ differential operators $\Ck{\infty}(M;E)\to \Ck{\infty}(M;F)$ in terms of the $\Ck{\infty}$-topology on the coefficients of the operator.

\begin{theorem}
\label{rigidityformaula}
Assume that $(D_t)_{t\in [0,1]}$ is a family of elliptic differential operators of order $m>0$ such that 
\begin{enumerate}[(i)]
\item \label{rigid:1}
The family is continuous in the topology on the space of order $m$ differential operators $\Ck{\infty}(M;E)\to \Ck{\infty}(M;F)$.
\item \label{rigid:2} 
$\dom (D_{t,{\rm max}})$ is independent of $t$.
\end{enumerate}
Let $[0,1]\ni t\mapsto P_t\in \mathbb{B}(\checkH(D))$ be a norm continuous family of projectors such that $B_t:=(1-P_t)\checkH(D)$ satisfies that $(B_t,\Ca_{D_t})$ is a Fredholm pair in $\checkH(D)$ for all $t\in [0,1]$. Then it holds that 
$$\indx(D_{0,B_0})=\indx(D_{1,B_1}).$$
\end{theorem}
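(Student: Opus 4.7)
I will prove that $t\mapsto\indx(D_{t,B_t})$ is locally constant on the connected interval $[0,1]$, whence constant. Near any fixed $t_0\in[0,1]$, the strategy is to exhibit a norm-continuous family of bounded Fredholm operators between two fixed Hilbert spaces whose Fredholm index coincides with $\indx(D_{t,B_t})$; the classical stability of the Fredholm index under norm-continuous deformations then delivers the conclusion.

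\textbf{Fixed ambient spaces.} By assumption (ii) the maximal domain $\mathcal{D}_{\max}:=\dom(D_{t,\max})$ is $t$-independent, and by the closed graph theorem every graph norm on it is equivalent to that of $D_{t_0}$; fixing the latter turns $\mathcal{D}_{\max}$ into a Hilbert space. The minimal domain $\SobH[0]{m}(M;E)=\dom(D_{t,\min})$ is also $t$-independent by Proposition~\ref{geneprop}, so the trace map $\gamma:\mathcal{D}_{\max}\to\checkH$ and the Cauchy data space $\checkH\cong\mathcal{D}_{\max}/\SobH[0]{m}(M;E)$ inherit fixed Hilbert topologies. By hypothesis $(P_t)_{t\in[0,1]}$ is a norm-continuous family of projections in $\mathbb{B}(\checkH)$, and stability of complementary pairs of projections in the gap topology then produces a neighbourhood $U$ of $t_0$ on which $P_{t_0}|_{P_t\checkH}:P_t\checkH\to P_{t_0}\checkH$ is a Banach space isomorphism.

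\textbf{The auxiliary family.} For $t\in U$ I introduce the bounded operator
\begin{equation*}
\mathcal{L}_t:\mathcal{D}_{\max}\longrightarrow \Lp{2}(M;F)\oplus P_{t_0}\checkH,\qquad \mathcal{L}_t u:=\bigl(D_t u,\; P_{t_0}\circ P_t\circ\gamma u\bigr),
\end{equation*}
whose source and target are fixed Hilbert spaces. Using surjectivity of $\gamma$ and invertibility of $P_{t_0}|_{P_t\checkH}$, a direct diagram chase gives $\ker\mathcal{L}_t=\ker D_{t,B_t}$ and $\coker\mathcal{L}_t\cong\coker D_{t,B_t}$; hence $\mathcal{L}_t$ is Fredholm with $\indx\mathcal{L}_t=\indx(D_{t,B_t})$ (the latter being Fredholm by hypothesis and Theorem~\ref{Thm:FredChar}). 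Norm-continuity of the second component of $\mathcal{L}_t$ is immediate from the hypothesis on $(P_t)$ and the fixedness of $\gamma$, so the argument reduces to establishing norm-continuity of $t\mapsto D_t$ in $\mathbb{B}(\mathcal{D}_{\max},\Lp{2}(M;F))$; classical stability of the Fredholm index then yields local constancy of $\indx(D_{t,B_t})$, hence its constancy on $[0,1]$.

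\textbf{Main obstacle.} The technical heart is the norm-continuity of $D_t$ on $\mathcal{D}_{\max}$. On $\SobH m(M;E)\subseteq\mathcal{D}_{\max}$ it is immediate from assumption (i), since the coefficients of $D_t$ vary in $\Ck{\infty}$. The subtlety is that general elements of $\mathcal{D}_{\max}$ need not lie in $\SobH m(M;E)$. I would exploit the splitting $\mathcal{D}_{\max}=\dom(D_{t_0,\Cac})\oplus\ker(D_{t_0,\max})$ from Corollary~\ref{Cor:CalCheck}: the first summand sits inside $\SobH m(M;E)$, while the second is parametrised continuously by the Poisson operator $\mathcal{K}_{t_0}:\checkH\to\mathcal{D}_{\max}$ of Theorem~\ref{refinedseeley}. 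Using $D_t\mathcal{K}_t\xi=0$ for any $\xi$ I write
\begin{equation*}
D_t\mathcal{K}_{t_0}\xi=D_t\bigl(\mathcal{K}_{t_0}-\mathcal{K}_t\bigr)\xi,
\end{equation*}
which, combined with the uniform bound $\sup_t\|D_t\|_{\mathcal{D}_{\max}\to\Lp{2}}<\infty$ from Banach--Steinhaus, reduces the required norm-continuity to norm-continuity of the family of Poisson operators $t\mapsto\mathcal{K}_t\in\mathbb{B}(\checkH,\mathcal{D}_{\max})$. That continuity in turn follows from Seeley's construction (Section~\ref{sec:calderon}), whose full symbol formula~\eqref{formulaforaprprood} depends polynomially on the coefficients of $D_t$ and on a parametrix of an elliptic extension of $(D_t)$ to a closed manifold; both vary $\Ck{\infty}$-continuously in $t$ at the level of classical pseudodifferential calculus, completing the plan.
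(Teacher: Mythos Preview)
Your overall architecture matches the paper's: both reduce the theorem to norm-continuity of $t\mapsto D_{t,\max}\in\mathbb{B}(\mathcal{D}_{\max},\Lp{2})$ and then run a Fredholm-stability argument on an auxiliary operator of the form $u\mapsto (D_tu,\,\text{projection}\circ\gamma u)$. The paper packages the latter through the abstract $C[0,1]$-Hilbert module result Theorem~\ref{rigidityformaulaABS} (via Proposition~\ref{Prop:CompExistsABS}), whereas you use classical Fredholm stability after localising to a neighbourhood of $t_0$; these are equivalent, and your version is arguably more elementary.

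The gap is in your proposed proof of the key continuity statement (the paper's Lemma~\ref{continingaoaod}). Two issues:
\begin{itemize}
\item Your reduction via $D_t\mathcal{K}_{t_0}\xi=D_t(\mathcal{K}_{t_0}-\mathcal{K}_t)\xi$ requires norm-continuity of $t\mapsto\mathcal{K}_t$ in $\mathbb{B}(\checkH,\mathcal{D}_{\max})$. But $\mathcal{D}_{\max}$ carries the $D_{t_0}$-graph norm, so $\|\mathcal{K}_t\xi-\mathcal{K}_{t_0}\xi\|_{\mathcal{D}_{\max}}$ contains the term $\|D_{t_0}\mathcal{K}_t\xi\|=\|(D_{t_0}-D_t)\mathcal{K}_t\xi\|$, which is precisely the quantity you are trying to control. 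The Banach--Steinhaus step is likewise circular: pointwise boundedness $\sup_t\|D_tu\|<\infty$ for $u\in\mathcal{D}_{\max}$ is not given a priori and would itself follow from the continuity you seek.
\item Formula~\eqref{formulaforaprprood} describes the \emph{approximate} Calder\'on projection $Q$ of Theorem~\ref{horsats}, not the true Poisson operator $\mathcal{K}_t$ of Theorem~\ref{refinedseeley}; the smoothing correction separating them involves global data on the closed extension and is not covered by a symbolic formula.
\end{itemize}

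The paper circumvents both problems by working with the \emph{approximate} Poisson operator $\tilde{\mathcal{K}}_t$ built directly from a continuous family of parametrices $\hat{T}_t$ (so continuity of $\tilde{\mathcal{K}}_t$ is automatic), and then observing that for $R_t:=\tilde{\mathcal{K}}_t\gamma$ one has, on smooth $f$,
\[
D_tR_tf=\sum_{j,l}(-i)^{l+1}(\hat{D}_t\hat{T}_t-1)\bigl[A_{m-l-j-1}\gamma_jf\otimes\delta_{x_n=0}^{(l)}\bigr],
\]
because distributions supported on $\Sigma$ restrict to zero on $M$. Since $\hat{D}_t\hat{T}_t-1\in\Psi^{-\infty}$ varies continuously, $t\mapsto D_tR_t$ is norm-continuous into any Sobolev space. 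Separately, $(1-R_t):\mathcal{D}_{\max}\to\SobH{m}(M;E)$ is norm-continuous in $t$ (using that $\tilde{\mathcal{K}}_t-\mathcal{K}_{D_t}$ is smoothing), and then $D_t(1-R_t)$ is handled by assumption~(i). Writing $D_t=D_tR_t+D_t(1-R_t)$ completes the argument without any appeal to the true $\mathcal{K}_t$ or to uniform boundedness of $D_t$ on $\mathcal{D}_{\max}$.
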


In order to prove this theorem, we first prove the following two lemmas.  

\begin{lemma}
\label{continingaoaod}
\label{rigidlem1} 
Assume that $(D_t)_{t\in [0,1]}$ is a family of elliptic differential operators of order $m>0$ as in Theorem \ref{rigidityformaula}. Then viewing $D_{t,\max}$ as a continuous linear mapping $\dom(D_{0,\max})\to \Lp{2}(M;F)$ it holds that $t\mapsto D_{t,\max}$ defines a norm-continuous mapping $[0,1]\to \mathbb{B}(\dom(D_{0,\max}),\Lp{2}(M;F))$.
\end{lemma}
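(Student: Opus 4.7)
Fix $t_0\in[0,1]$ and set $V:=\dom(D_{0,\max})$, equipped with the $D_0$-graph norm; by assumption (ii), $V=\dom(D_{t,\max})$ for every $t$, and the closed graph theorem makes each $D_t\colon V\to\Lp{2}(M;F)$ bounded. Writing $R_t:=D_t-D_{t_0}$, assumption (i) ensures that $R_t$ is a differential operator of order at most $m$ whose coefficients tend to $0$ in $\Ck{\infty}(M;\mathrm{Hom}(E,F))$ as $t\to t_0$; the goal is to show $\|R_t\|_{V\to\Lp{2}(M;F)}\to 0$. The key structural input is the continuously split Banach space decomposition $V=\dom(D_{0,\Cac})\oplus\ker(D_{0,\max})$ furnished by Corollary~\ref{Cor:CalCheck}, on which I would bound $R_t$ separately.

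On the first summand, $\dom(D_{0,\Cac})$ embeds continuously into $\SobH{m}(M;E)$ with equivalent norms because the Calder\'on boundary condition is regular, so a direct coefficient estimate yields
$$\|R_t u\|_{\Lp{2}(M;F)}\;\lesssim\;\Bigl(\sum_{|\alpha|\leq m}\|c^{(t)}_\alpha\|_{\Ck{0}(M)}\Bigr)\|u\|_{\SobH{m}(M;E)}\;\longrightarrow\;0$$
uniformly on unit balls, so $\|R_t|_{\dom(D_{0,\Cac})}\|_{\op}\to 0$ by assumption (i). This settles the regular piece.

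The main obstacle is the second summand: on $\ker(D_{0,\max})$ the $D_0$-graph norm is simply the $\Lp{2}$-norm, but $R_t$ is of order $m$ and no naive estimate applies. Using the Poisson operator $\mathcal{K}_0\colon\Ca_{D_0}\to\ker(D_{0,\max})$ of Theorem~\ref{refinedseeley}, which is a topological isomorphism, the task reduces to showing $\|R_t\mathcal{K}_0\|_{\Ca_{D_0}\to\Lp{2}(M;F)}\to 0$. For $\xi$ in the dense smooth subspace $P_\mathcal{C}\Ck{\infty}(\Sigma;E\otimes\C^m)\subseteq\Ca_{D_0}$ (dense since $P_\mathcal{C}$ is continuous on $\SobHH{{-\frac12}}(\Sigma;E\otimes\C^m)$ and smooth sections are dense there), elliptic regularity for the homogeneous problem $D_0 u=0$, $\gamma u=\xi$ forces $\mathcal{K}_0\xi\in\Ck{\infty}(M;E)$; hence $R_t\mathcal{K}_0\xi\to 0$ in $\Ck{\infty}(M;F)$, and a fortiori in $\Lp{2}(M;F)$, as $t\to t_0$. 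The delicate step, and the technical heart of the argument, is promoting this pointwise convergence on a dense smooth core to operator-norm convergence. My plan is to do this by first establishing the uniform boundedness $\sup_{t\in[0,1]}\|D_t\|_{V\to\Lp{2}(M;F)}<\infty$ via the closed graph theorem applied to the map $\mathcal{D}\colon V\to C([0,1],\Lp{2}(M;F))$, $u\mapsto(t\mapsto D_t u)$. Well-definedness of $\mathcal{D}$, namely $\Lp{2}$-continuity of $t\mapsto D_t u$ for each $u\in V$, is verified on the regular summand by direct coefficient convergence and on $\ker(D_{0,\max})$ by combining the smooth-core convergence with the Poisson description and assumption~(ii); closedness of $\mathcal{D}$ is immediate since each $D_t\colon V\to\Lp{2}(M;F)$ is continuous. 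Once $\mathcal{D}$ is bounded, the resulting uniform bound together with the pointwise convergence on the dense smooth core and a standard density argument produces $\|R_t\mathcal{K}_0\|_{\op}\to 0$. Summing the estimates on the two summands then yields $\|R_t\|_{V\to\Lp{2}(M;F)}\to 0$, which is the desired conclusion.
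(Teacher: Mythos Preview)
Your decomposition $V=\dom(D_{0,\Cac})\oplus\ker(D_{0,\max})$ and the treatment of the first summand are fine. The problem is the second summand, and specifically the step you label a ``standard density argument''. Uniform boundedness together with pointwise convergence on a dense subspace yields only \emph{strong} operator convergence, never norm convergence: think of the projections $P_n$ onto the first $n$ coordinates of $\ell^2$, which converge strongly to the identity with $\|P_n\|=1$, yet $\|P_n-I\|=1$ for all $n$. Your conclusion $\|R_t\mathcal{K}_0\|_{\op}\to 0$ therefore does not follow from what you have established. There is also a circularity in your uniform-boundedness step: to show $\mathcal{D}:V\to C([0,1],\Lp{2})$ is well-defined you need $t\mapsto D_t u$ continuous for every $u\in\ker(D_{0,\max})$, but for non-smooth $u$ this would itself require an approximation argument resting on the very uniform bound you are trying to extract.

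The underlying difficulty is real: on $\ker(D_{0,\max})$ the graph norm is just the $\Lp{2}$-norm, and an order-$m$ operator with small coefficients need not be small in operator norm from $\Lp{2}$ to $\Lp{2}$. The paper circumvents this by using a \emph{$t$-dependent} splitting rather than your fixed one. It builds approximate Poisson operators $\tilde{\mathcal{K}}_t$ from a continuous family of parametrices $\hat{T}_t$ on a closed extension $\hat{M}$, sets $R_t:=\tilde{\mathcal{K}}_t\circ\gamma$, and writes $D_t=D_tR_t+D_t(1-R_t)$. The point is that $D_tR_t$ factors through the smoothing remainder $\hat{D}_t\hat{T}_t-1\in\Psi^{-\infty}$, which varies norm-continuously in $t$; this gives genuine norm-continuity of $t\mapsto D_tR_t$ as maps $\dom(D_{\max})\to\Lp{2}$. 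Separately, $(1-R_t)$ lands in $\SobH{m}(M;E)$ with norm-continuous dependence on $t$, reducing the second piece to the easy $\SobH{m}$ estimate. The smoothing-operator factorisation is the mechanism you are missing; it is what converts the soft ``small coefficients'' information into an honest operator-norm bound on the part of the domain that sits outside $\SobH{m}$.
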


\begin{proof}
The proof of this lemma is slightly technical and relies on the construction of approximate Calderón projectors following Theorem \ref{horsats} and \ref{refinedseeley}. The family $(D_t)_{t\in [0,1]}$ extends to a continuous family of elliptic differential operators on a  closed manifold containing $M$ as a domain, this is as in the proof of Theorem \ref{refinedseeley} above. We follow the notation in the proof of Theorem \ref{refinedseeley}. Since the family $(D_t)_{t\in [0,1]}$ is continuous in the topology on the space of order $m$ differential operators, we can construct a continuous family $(\hat{T}_t)_{t\in [0,1]}\subseteq \Psi^{-m}_{\rm cl}(M;F,E)$ such that $\hat{T}_t$ is a parametrix of $\hat{D}_t$ for all $t\in [0,1]$. Following Equation \eqref{formulaforaprprood}, we define an approximate Poisson operator 
$$
\tilde{\mathcal{K}}_t\left(\xi_j\right)_{j=0}^{m-1}=\sum_{j=0}^{m-1}\sum_{l=0}^{m-1-j}(-i)^{l+1}\hat{T}_t[A_{m-l-j-1}\xi_j\otimes \delta_{t=0}^{(l)}].
$$
Using the same arguments as in \cite{seeley65}, it holds that $\tilde{\mathcal{K}}_t:\checkH(D)\to \dom(D_{t,\rm max})$ continuously for all $t\in [0,1]$. Indeed, letting $\mathcal{K}_{D_t}$ denote the Poisson operator of $D_t$ (see \cite{seeley65} or Theorem \ref{refinedseeley}) above, we have that $\tilde{\mathcal{K}}_t-\mathcal{K}_{D_t}:\checkH(D)\to \Ck{\infty}(M;E)$. We therefore also have that $\gamma\circ\tilde{\mathcal{K}}_t- P_{\mathcal{C}_t}\in \Psi^{-\infty}(\Sigma;E\otimes \C^m)$ is a smoothing operator, and in particular $\gamma\circ \tilde{\mathcal{K}}_t$ is an approximate Calderón projection as in Theorem \ref{horsats}.

Consider the family of bounded operators $R_t:=\tilde{\mathcal{K}}_t\circ \gamma:\dom(D_{\max})\to \dom(D_{\max})$. We note that for $f\in C^\infty(M;E)$,
$$D_tR_tf=\sum_{j=0}^{m-1}\sum_{l=0}^{m-1-j}(-i)^{l+1}(D_t\hat{T}_t-1)[A_{m-l-j-1}\gamma_j(f)\otimes \delta_{t=0}^{(l)}],$$
because distributions supported on $\Sigma$ vanish when restricted to $M$. However, since $\hat{T}_t$ is a continuous family of parametrices, the mapping $[0,1]\ni t\mapsto (\hat{D}_t\hat{T}_t-1)\in \Psi^{-\infty}(\hat{M};\hat{F})$ is continuous. We conclude that $t\mapsto D_tR_t$ factors over $\scalebox{1.5}{a}\gamma:\dom(D_{\max})\to \hatH(D^\dagger)$, the continuous inclusion $\hatH(D^\dagger)\hookrightarrow \mathcal{D}'(\Sigma;F\otimes \C^m)$ and a continuous family of operators $\mathcal{D}'(\Sigma;F\otimes \C^m)\to \Ck{\infty}(M;F)$. As such, $[0,1]\ni t\mapsto D_tR_t\in \mathbb{B}(\dom(D_{\max}), \SobH{s}(M;F))$ is norm-continuous for all $s\in \R$, and in particular $[0,1]\ni t\mapsto D_tR_t\in \mathbb{B}(\dom(D_{\max}), \Lp{2}(M;F))$ is norm-continuous. 

Since $\tilde{\mathcal{K}}_t-\mathcal{K}_{D_t}:\checkH(D)\to \Ck{\infty}(M;E)$, we have that $(1-R_t):\dom(D_{\rm max})\to \SobH{m}(M;E)$. Indeed, we can write $\dom(D_{\rm max})=\SobH{m}(M;E)+\ker(D_{t,\max})$ by Example \ref{funnyboudnarydadreturn} and $1-R_t$ is smoothing on $\ker(D_{t,\max})$ because of $\tilde{\mathcal{K}}_t-\mathcal{K}_{D_t}:\checkH(D)\to \Ck{\infty}(M;E)$. Using the semi-classical nature of the arguments in \cite{seeley65}, it follows that $[0,1]\ni t\mapsto 1-R_t\in \mathbb{B}(\dom(D_{\rm max}), \SobH{m}(M;E))$ is norm-continuous. It is clear that $[0,1]\ni t\mapsto D_t\in \mathbb{B}(\SobH{m}(M;E),\Lp{2}(M,E))$ is continuous. Therefore $[0,1]\ni t\mapsto D_t(1-R_t)\in \mathbb{B}(\dom(D_{\rm max}),\Lp{2}(M,E))$. 

Since the map $t\mapsto D_t=D_tR_t+D_t(1-R_t)$ is a sum of continuous maps $[0,1]\to \mathbb{B}(\dom(D_{\rm max}),\Lp{2}(M,E))$, the map $t\mapsto D_{t,\max}$ defines a norm-continuous mapping $[0,1]\to \mathbb{B}(\dom(D_{0,\max}),\Lp{2}(M;F))$.
\end{proof}

\begin{lemma}
\label{rigidlem2} 
Assume that $(D_t)_{t\in [0,1]}$ is a family of elliptic differential operators of order $m>0$ as in Theorem \ref{rigidityformaula} and define the family of self-adjoint operators
$$\tilde{D}_t:=\begin{pmatrix}
0& D_{t,{\rm min}}\\
D^\dagger_{t,{\rm max}}& 0
\end{pmatrix},$$
for $t\in [0,1]$. Then for each $\lambda\in \R\setminus\{0\}$, the mapping $t\mapsto (\lambda i+\tilde{D}_t)^{-1}$ is a norm-continuous mapping $[0,1]\to \mathbb{B}(\Lp{2}(M;F\oplus E))$.
\end{lemma}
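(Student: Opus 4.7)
The strategy is to identify a $t$-independent domain on which the family $\tilde{D}_t$ is norm-continuous as a bounded map into $\Lp{2}(M;F\oplus E)$, and then deduce resolvent continuity from general Banach space principles on continuity of inversion.

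First I would verify that the domain of $\tilde{D}_t$ does not depend on $t$. By definition,
$$\dom(\tilde{D}_t)=\dom(D_{t,\max}^\dagger)\oplus \dom(D_{t,\min}).$$
The second summand equals $\SobH[0]{m}(M;E)$ by Proposition~\ref{geneprop}\ref{geneprop:1}, and so is manifestly independent of $t$; the formal adjoint family $(D_t^\dagger)_{t\in[0,1]}$ inherits the $\Ck{\infty}$-continuity \ref{rigid:1} of $(D_t)$ since formal adjoints are computed by integration by parts and depend continuously (in the $\Ck{\infty}$-topology on coefficients) on $D_t$. Assuming the natural adjoint counterpart of hypothesis \ref{rigid:2} that $\dom(D_{t,\max}^\dagger)$ is also independent of $t$, Lemma~\ref{rigidlem1} applied to $(D_t^\dagger)$ gives that $t\mapsto D_{t,\max}^\dagger$ is norm-continuous $[0,1]\to \mathbb{B}(\dom(D_{0,\max}^\dagger),\Lp{2}(M;E))$. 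Combining with the obvious norm-continuity of $t\mapsto D_{t,\min}$ as an element of $\mathbb{B}(\SobH[0]{m}(M;E),\Lp{2}(M;F))$ coming from \ref{rigid:1}, and setting $\mathscr{D}:=\dom(D_{0,\max}^\dagger)\oplus \SobH[0]{m}(M;E)$ with its natural Hilbert norm, I conclude that $t\mapsto \tilde{D}_t$ is norm-continuous as a map $[0,1]\to \mathbb{B}(\mathscr{D},\Lp{2}(M;F\oplus E))$.

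Next I would observe that each $\tilde{D}_t$ is self-adjoint. Indeed, since $(D_{t,\min},D_{t,\min}^\dagger)$ is a FAP, the identities $D_{t,\min}^*=D_{t,\max}^\dagger$ and $(D_{t,\max}^\dagger)^*=D_{t,\min}$ together with the off-diagonal block form directly yield $\tilde{D}_t^*=\tilde{D}_t$. Consequently, for $\lambda\in\R\setminus\{0\}$ the operator $\lambda i+\tilde{D}_t:\mathscr{D}\to \Lp{2}(M;F\oplus E)$ is a bounded bijection whose $\Lp{2}$-bounded inverse has operator norm at most $1/|\lambda|$, and by the open mapping theorem its inverse is bounded as an operator $\Lp{2}(M;F\oplus E)\to \mathscr{D}$.

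Finally I would invoke the standard fact that inversion is norm-continuous on the open set of invertible bounded linear maps between two Banach spaces, via the Neumann series estimate
$$\|S^{-1}-T^{-1}\|\leq \|T^{-1}\|\|S-T\|\|S^{-1}\|.$$
Applied to the norm-continuous family $t\mapsto \lambda i+\tilde{D}_t\in \mathbb{B}(\mathscr{D},\Lp{2}(M;F\oplus E))$, all of whose values are invertible, this yields norm-continuity of $t\mapsto (\lambda i+\tilde{D}_t)^{-1}\in \mathbb{B}(\Lp{2}(M;F\oplus E),\mathscr{D})$; post-composing with the bounded inclusion $\mathscr{D}\hookrightarrow \Lp{2}(M;F\oplus E)$ gives the statement of the lemma. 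The main obstacle is the opening step of arranging a common domain for the family and transferring Lemma~\ref{rigidlem1} to the adjoint family $(D_t^\dagger)$; once $\tilde{D}_t$ is realised on a single fixed Banach space, the rest is standard functional analysis.
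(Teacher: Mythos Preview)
Your proposal is correct and follows essentially the same approach as the paper's proof: both fix the common domain $\mathscr{D}=\dom(D_{0,\max}^\dagger)\oplus \SobH[0]{m}(M;E)$, invoke Lemma~\ref{rigidlem1} for the adjoint family $(D_t^\dagger)$ together with the obvious continuity of $t\mapsto D_{t,\min}$, and then deduce resolvent continuity from the resolvent identity (your Neumann-series estimate $\|S^{-1}-T^{-1}\|\leq \|T^{-1}\|\|S-T\|\|S^{-1}\|$ is exactly the paper's identity $(\lambda i+\tilde{D}_{t_1})^{-1}-(\lambda i+\tilde{D}_{t_2})^{-1}=(\lambda i+\tilde{D}_{t_1})^{-1}(\tilde{D}_{t_2}-\tilde{D}_{t_1})(\lambda i+\tilde{D}_{t_2})^{-1}$ rewritten). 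The paper makes the same implicit use of the adjoint version of hypothesis~\ref{rigid:2} that you flag explicitly; the only cosmetic difference is that the paper appeals to the uniform $\Lp{2}$-bound $|\lambda|^{-1}$ directly rather than to continuity of inversion between Banach spaces.
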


\begin{proof}
By the assumptions of Theorem \ref{rigidityformaula}, the domain of $\tilde{D}_t$ is constant in $t$. We have for $t_1,t_2\in [0,1]$ that 
$$(\lambda i+\tilde{D}_{t_1})^{-1}-(\lambda i+\tilde{D}_{t_2})^{-1}=(\lambda i+\tilde{D}_{t_1})^{-1}(\tilde{D}_{t_2}-\tilde{D}_{t_1})(\lambda i+\tilde{D}_{t_2})^{-1}$$
Since $(\lambda i+\tilde{D}_{t})^{-1}:\Lp{2}(M;F\oplus E)\to \dom(\tilde{D}_{t})=\dom(D_{t,\min })\oplus \dom(D_{t,\max}^\dagger)$ with the uniform norm bound $|\lambda|^{-1}$ as $t$ varies, the lemma follows from Lemma \ref{continingaoaod} (applied to $(D_t^\dagger)_{t\in [0,1]}$) and the fact that $t\mapsto D_{t,\min}$ clearly defines a norm-continuous mapping $[0,1]\to \mathbb{B}(\SobH[0]{m}(M;E),\Lp{2}(M;F))$.
\end{proof}

\begin{proof}[Proof of Theorem \ref{rigidityformaula}]
The ellipticity of each $D_t$ ensures that $\dom(D_{t,\min})=\SobH[0]{m}(M;E)$ is constant in $t$. We remark that item \ref{rigid:2} ensures that $\checkH(D_t)\cong \dom(D_{t,\max})/\dom(D_{t,\min})=\dom(D_{t,\max})/\SobH[0]{m}(M;E)$ is independent of $t$, justifying the notation $\checkH(D)$ for the Cauchy data space. 

The only remaining property to verify in order to invoke Theorem \ref{rigidityformaulaABS} below is that the mappings 
$$[0,1]\ni t\mapsto 
\begin{pmatrix}
0& D_{t,{\rm min}}\\
D^\dagger_{t,{\rm max}}& 0
\end{pmatrix}
\quad\mbox{and}\quad [0,1]\ni t\mapsto 
\begin{pmatrix}
0& D^\dagger_{t,{\rm min}}\\
D_{t,{\rm max}}& 0
\end{pmatrix},$$
are continuous in the gap topology. 
Lemma \ref{rigidlem2} applied to the families $(D_t)_{t\in [0,1]}$ and $(D_t^\dagger)_{t\in [0,1]}$ furnishes us with this fact. 
\end{proof}

\begin{corollary}
Let $D$ be an elliptic differential operator and $B$ a regular boundary condition. Assume that $B$ is graphically decomposed with respect to the Calderon projector by $(V_+,W_+,V_-,W_-,g)$ as in Definition \ref{def:elldecomspsHO}. Then it holds that 
$$\indx(D_B)=\dim(W_+)-\dim(W_-)+\dim(\ker(D_{\min})-\dim(\ker(D_{\min}^\dagger).$$
\end{corollary}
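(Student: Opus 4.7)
The plan is to reduce to Theorem \ref{Thm:FredChar}, which yields
\[
\indx(D_B) \;=\; \indx(B,\Ca_D) \;+\; \dim\ker D_{\min} \;-\; \dim\ker D_{\min}^{\dagger},
\]
so that everything comes down to identifying $\indx(B,\Ca_D)=\dim(B\cap\Ca_D)-\dim(\checkH(D)/(B+\Ca_D))$ with $\dim W_+ - \dim W_-$. Since the graphical decomposition is taken with respect to the Calderón projection, the boundary decomposing projector is $\GProj_+=P_{\mathcal{C}}$, and by Corollary \ref{Cor:CalCheck} one has $\GProj_+\checkH(D)=\Ca_D$ and $\GProj_-\checkH(D)=(1-P_\Ca)\SobHH{{m-\frac12}}(\Sigma;E\otimes\C^m)=V_-\oplus W_-$.

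First I would compute $B\cap\Ca_D$. By Definition \ref{def:elldecomspsHO},
\[
B=\{v+gv:\, v\in V_-\}\oplus W_+,
\]
with $W_+\subseteq V_+\oplus W_+ = P_\Ca\SobHH{{m-\frac12}}\subseteq \Ca_D$ and $gV_-\subseteq V_+\subseteq P_\Ca\SobHH{{m-\frac12}}$, while $V_-\subseteq (1-P_\Ca)\SobHH{{m-\frac12}}$. For an element $\xi=v+gv+w\in B$ with $w\in W_+$, applying $1-P_\Ca$ yields $(1-P_\Ca)\xi = v$, so $\xi\in \Ca_D$ forces $v=0$, and hence $B\cap\Ca_D=W_+$.

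Next I would compute $\checkH(D)/(B+\Ca_D)$. Since $\Ca_D\subseteq B+\Ca_D$, the natural projection
\[
\checkH(D)/\Ca_D \;\cong\; \GProj_-\checkH(D) \;=\; V_-\oplus W_-
\]
identifies $(B+\Ca_D)/\Ca_D$ with the image of $B$ under $\GProj_-$. The computation in the previous paragraph shows $\GProj_- B = V_-$, so $(B+\Ca_D)/\Ca_D = V_-$ and therefore
\[
\checkH(D)/(B+\Ca_D)\;\cong\; (V_-\oplus W_-)/V_-\;\cong\; W_-.
\]
Combining these two identifications with Theorem \ref{Thm:FredChar} yields the claimed formula.

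The only subtlety is ensuring the quotients are genuinely finite dimensional rather than merely algebraic objects, but since $W_\pm$ are finite dimensional (part of the definition of a graphical decomposition, cf.\ Lemma \ref{Lem:SubProp}), $B+\Ca_D$ is automatically closed in $\checkH(D)$; no additional work is needed. I do not anticipate a serious obstacle: the main point is simply that the graphical decomposition with respect to $P_\Ca$ is designed exactly so that $W_+=B\cap\Ca_D$ and $W_-$ measures the defect $\checkH(D)/(B+\Ca_D)$.
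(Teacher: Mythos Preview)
Your argument is correct, and it is more direct than the paper's. You compute $B\cap\Ca_D=W_+$ and $\checkH(D)/(B+\Ca_D)\cong W_-$ straight from the graphical decomposition with $\GProj_+=P_\Ca$, then feed these into Theorem~\ref{Thm:FredChar}. This is essentially a specialisation of the identifications already established in the proof of \ref{Thm:Ell:2}$\implies$\ref{Thm:Ell:3} in Theorem~\ref{Thm:Ell} (and is the content behind Corollary~\ref{simplexindex}).

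The paper takes a different route: it deforms the graph map $g$ to $0$ along the path $(V_+,W_+,V_-,W_-,tg)_{t\in[0,1]}$, producing a continuous family of regular boundary conditions $B_t$ with $B_1=B$ and $B_0=V_-\oplus W_+$. It then invokes the rigidity result Theorem~\ref{rigidityformaula} to get $\indx(D_B)=\indx(D_{B_0})$, and computes the index of the much simpler $B_0$ via $B_0\cap\Ca_D=W_+$ and $B_0^*\cap\Ca_{D^\dagger}=\scalebox{1.5}{a}_\dagger^{-1}W_-^*$. Your approach is shorter and self-contained, relying only on Theorem~\ref{Thm:FredChar}; the paper's approach has the virtue of illustrating the rigidity theorem proved in the same section, and of avoiding the need to track the map $g$ in the quotient computation (since for $B_0$ one works with a direct sum $V_-\oplus W_+$ rather than a graph). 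Both arrive at the same formula with no real difference in depth.
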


\begin{proof}
We define the continuous path of regular boundary conditions $(B_t)_{t\in [0,1]}$ from the continuous path of data for a graphical decomposition $(V_+,W_+,V_-,W_-,tg)_{t\in [0,1]}$. Note that 
\begin{equation}
\label{bzeroandzxolaad}
B_0=W_+\oplus V_-\quad\mbox{and} \quad B_0^*=\scalebox{1.5}{a}_\dagger^{-1}(\adj{V}_+\cap \SobHH{\frac12}(\Sigma;E\otimes \C^m_{\rm op}) \oplus \adj{W}_-).
\end{equation}
We also note that $V_+\oplus W_+=\mathcal{C}_D\cap \SobHH{m-1/2}(\Sigma;E\otimes \C^m)$ by the definition of a graphical decomposition with respect to the Calderon projection. By Theorem \ref{rigidityformaula}, we have that 
$$\indx(D_B)=\indx(D_{B_0})=\indx(D_{B_1}).$$
The proof now follows from Theorem \ref{Thm:FredChar} upon proving that $\indx(B_0,\mathcal{C}_D)=\dim(W_+)-\dim(W_-)$. This follows from the fact that $B_0\cap \mathcal{C}_D=W_+$ and $B_0^*\cap \mathcal{C}_{D^\dagger}=\scalebox{1.5}{a}_\dagger^{-1}(\adj{W}_-)$ by Equation \eqref{bzeroandzxolaad}.
\end{proof}

\begin{theorem}
\label{cortorigidityformaula}
Assume that $(D_t)_{t\in [0,1]}$ is a family of elliptic differential operators of order $m>0$ which is continuous in the topology on the space of order $m$ differential operators $\Ck{\infty}(M;E)\to \Ck{\infty}(M;F)$. Let $(P_t)_{t\in [0,1]}\subseteq \mathbb{B}(\SobHH{{m-{\frac12}}}(\Sigma;E\otimes \C^m))$ be a norm continuous family of projectors such that 
\begin{enumerate}
\item For all $t\in [0,1]$, $P_t$ extends to a bounded operator on $\SobHH{{-{\frac12}}}(\Sigma;E\otimes \C^m)$.
\item For all $t\in [0,1]$, $\alpha\in \{-1/2,m-1/2\}$ the operator
\begin{equation} 
\label{cortori:1}
P_{\mathcal{C}_t}-(1-P_t):\SobHH{\alpha}(\Sigma;E\otimes \C^m)\to \SobHH{\alpha}(\Sigma;E\otimes \C^m),
\end{equation}
is a Fredholm operator, where $P_{\mathcal{C}_t}$ denotes the Calderon projector for $D_t$. 
\end{enumerate} 
Then for all $t\in [0,1]$, $B_t:=(1-P_t)\SobHH{{m-{\frac12}}}(\Sigma;E\otimes \C^m)$ is a regular boundary condition for $D_t$ and the index $\indx(D_{t,B_t})$ is independent of $t$. In particular,
$$\indx(D_{0,B_0})=\indx(D_{1,B_1}).$$
\end{theorem}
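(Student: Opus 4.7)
The first step is to establish that each $B_t$ is a regular boundary condition for $D_t$: this is a direct application of Theorem \ref{thm:sufficienfeidntofell} given the Fredholmness of $P_{\mathcal{C}_t} - (1-P_t)$ on both Sobolev scales assumed in hypothesis (2). Consequently $D_{t,B_t}$ is Fredholm by Proposition \ref{Prop:MaxClosed}, with the explicit domain
$$\dom(D_{t,B_t}) = \{u \in \SobH{m}(M;E) : P_t \gamma u = 0\}.$$
The crucial structural point is that although $\dom(D_{t,\max})$ need not be independent of $t$ --- so Theorem \ref{rigidityformaula} does not apply directly --- the domain $\dom(D_{t,B_t})$ sits inside the $t$-independent Banach space $\SobH{m}(M;E)$, which opens the door to a bounded-operator homotopy argument.

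The plan is to present $(D_{t,B_t})_{t\in[0,1]}$ as a norm-continuous family of bounded Fredholm operators between two fixed Banach spaces. I fix a continuous right inverse $s_0 : \SobHH{{m-{\frac12}}}(\Sigma;E\otimes \C^m) \to \SobH{m}(M;E)$ of the trace map $\gamma$, which exists by the trace theorem, and set
$$R_t := 1 - s_0 P_t \gamma \in \mathbb{B}(\SobH{m}(M;E)).$$
Using $\gamma s_0 = I$ and $P_t^2 = P_t$, a direct computation shows that each $R_t$ is a projection with range $\dom(D_{t,B_t})$. The norm-continuity of $t \mapsto P_t$ on $\SobHH{{m-{\frac12}}}$ then yields the norm-continuity of $t \mapsto R_t$ in $\mathbb{B}(\SobH{m}(M;E))$.

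Next, by Kato's local perturbation argument --- the formula $V(s,t) := R_t R_s + (1-R_t)(1-R_s)$ is invertible when $\|R_t - R_s\| < 1$ and satisfies $V(s,t) R_s V(s,t)^{-1} = R_t$ --- patched across a sufficiently fine partition of $[0,1]$, I obtain a norm-continuous family of invertible operators $V_t \in \mathbb{B}(\SobH{m}(M;E))$ with $V_0 = I$ and $V_t R_0 V_t^{-1} = R_t$. Each $V_t$ then restricts to a Banach space isomorphism $R_0 \SobH{m}(M;E) \to \dom(D_{t,B_t})$, and I consider
$$\mathcal{G}_t := D_t \circ V_t \big|_{R_0 \SobH{m}(M;E)} : R_0 \SobH{m}(M;E) \to \Lp{2}(M;F).$$
Continuity of $D_t$ in the $\Ck{\infty}$ topology of coefficients forces continuity of the associated bounded operators $D_t : \SobH{m}(M;E) \to \Lp{2}(M;F)$, so $\mathcal{G}_t$ is a norm-continuous family. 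Each $\mathcal{G}_t$ factors as the composition of the isomorphism $V_t|_{R_0 \SobH{m}(M;E)}$ with $D_{t,B_t}$, hence is Fredholm with $\indx \mathcal{G}_t = \indx D_{t,B_t}$, and homotopy invariance of the Fredholm index for norm-continuous families of bounded operators yields that $t \mapsto \indx D_{t,B_t}$ is locally constant, and thus constant on $[0,1]$.

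The main obstacle is the global construction of $V_t$: Kato's formula is only invertible when $\|R_t - R_s\| < 1$, so producing a continuous $V_t$ on all of $[0,1]$ requires partitioning the interval using the uniform modulus of continuity of $R_t$ and then concatenating local isomorphisms, recursively via $V_t := V(t_i,t) \, V_{t_i}$ on the $i$th subinterval, with care to preserve both continuity across patches and the intertwining identity $V_t R_0 V_t^{-1} = R_t$.
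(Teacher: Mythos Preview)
Your argument is correct and complete. The regularity step via Theorem \ref{thm:sufficienfeidntofell} matches the paper exactly, but the homotopy argument is genuinely different. The paper follows the template of Theorem \ref{rigidityformaulaABS}: it assembles the family $(D_t,P_t\gamma)$ into an adjointable $C[0,1]$-linear operator $\check{L}$ on Hilbert $C^*$-modules, invokes Proposition \ref{Prop:CompExistsABS} fibrewise to see that $\check{L}$ is Fredholm, and then uses homotopy invariance of $K^0([0,1])$ to conclude. Your route instead stays within classical bounded-operator Fredholm theory: you build projections $R_t$ on the fixed space $\SobH{m}(M;E)$ with range $\dom(D_{t,B_t})$, straighten them via Kato's conjugation trick, and reduce to a norm-continuous family of bounded Fredholm operators between fixed spaces. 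Your approach is more elementary and avoids the $C^*$-module machinery entirely; the paper's approach has the advantage of being uniform with the abstract Theorem \ref{rigidityformaulaABS} and of generalising immediately to families parametrised by spaces other than $[0,1]$, where the index lands in $K^0$ of the parameter space.
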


The proof goes along similar lines as that of Theorem \ref{rigidityformaulaABS} but adapted to regular realisations with domains contained in Sobolev spaces. We note that Theorem \ref{cortorigidityformaula} in particular holds for a continuous family $(P_t)_{t\in [0,1]}\subseteq \Psi^{\pmb 0}_{\rm cl}(\Sigma;E\otimes \C^m)$ of pseudodifferential projectors such that for all $t\in [0,1]$ 
$$p_+(D_t)-(1-\sigma_{\pmb 0}(P_t))\in C^\infty(S^*\Sigma;\mathrm{Aut}(\pi^*E)).$$

\begin{proof}
We note that each $B_t$ is a regular boundary condition by Theorem \ref{thm:sufficienfeidntofell}. Since $(D_t)_{t\in [0,1]}$ is a continuous family of elliptic differential operators, it defines a bounded adjointable map of $C[0,1]$-Hilbert $C^*$-modules
$$\mathbb{D}:\SobH{m}(M;E)\otimes C[0,1]\to \Lp{2}(M;F)\otimes C[0,1].$$
We write $\check{C}_t:=P_t\SobHH{{m-{\frac12}}}(\Sigma;E\otimes \C^m)$ and let $\check{C}_{C([0,1])}$ denote the $C[0,1]$-Hilbert $C^*$-submodule of $\SobHH{{m-{\frac12}}}(\Sigma;E\otimes \C^m)\otimes C([0,1])$ with fibre $\check{C}_t$, it is well-defined by norm continuity of the family $(P_t)_{t\in [0,1]}$. We write $P\gamma:\SobH{m}(M;E)\otimes C([0,1])\to \check{C}_{C([0,1])}$ for the bounded adjointable mapping obtained from composing the trace mapping with the family $(P_t)_{t\in [0,1]}$.

Following Proposition \ref{Prop:CompExistsABS} and Theorem \ref{rigidityformaulaABS}, we define the operator 
$$\check{L}: \SobH{m}(M;E)\otimes C[0,1] \to \begin{matrix}\Lp{2}(M;F)\otimes C[0,1]\\ \oplus\\ \check{C}\end{matrix}, \quad \check{L}u = \begin{pmatrix} \mathbb{D} u \\ P\gamma u\end{pmatrix}$$
This operator is an adjointable $C[0,1]$-linear map because $\mathbb{D}$ and $P\gamma$ are adjointable. We note that $\check{L}$ if Fredholm in each fibre by Proposition \ref{Prop:CompExistsABS} (cf. \cite[Proposition A.1]{BB}). By an argument along the same lines as in Theorem \ref{rigidityformaulaABS}, the adjointable $C[0,1]$-linear map $\check{L}:\SobH{m}(M;E)\otimes C[0,1] \to \Lp{2}(M;F)\otimes C[0,1] \oplus \check{C}$ is Fredholm with a well defined index $\indx(\check{L})\in K^0([0,1])$. Again arguing as in Theorem \ref{rigidityformaulaABS}, homotopy invariance of $K$-theory implies that
$$\indx(D_{t,B_t})=\indx(\check{L}_t)=r_t^*\indx(\check{L}),$$
where $r_t$ denotes the inclusion $\{t\}\subseteq [0,1]$, is independent of $t$.
\end{proof}

We end this section with an outlook towards index theory. It poses an interesting problem to compute $\indx(D_{{\rm B}_P})$ where $D$ is an elliptic differential operator $D$ of order $m>0$ and $B_P:=(1-P)\SobHH{ {m-{\frac12}}}(\Sigma;E\otimes \C^m)$ is a regular pseudo-local boundary condition on $D$ defined from $P\in \Psi^{\pmb 0}_{\rm cl}(\Sigma;E\otimes \C^m)$. While Theorem \ref{Thm:FredChar} and Corollary \ref{simplexindex} give formulas, a more worthwhile goal would be an explicit formula for the index of $D_{\rm B}$ in terms of the interior principal symbol of $D$, the germ of $D$ on $\Sigma$, the differential operator $\scalebox{1.5}{a}$ defining the boundary pairing and the projection $P$. The Atiyah-Patodi-Singer index theorem and the index theorem in the extended Boutet de Monvel calculus of \cite{schulzeseiler} indicates that such an explicit computation is possible. Under some mild assumptions on $P$, the special case of first order elliptic operators is susceptible to heat kernel techniques by the results of \cite{G99}.

There is an interesting special case that the authors of this paper have not found an explicit solution to in the literature. A classical situation is that $\slashed{D}$ is a Dirac operator and $P$ is a spectral projection of the boundary operator of $\slashed{D}$ constructed from a normal vector field to the boundary. Atiyah-Patodi-Singer \cite{APS} computed the index in the case of product structure at the boundary and \cite{grubb92} considered the general case. For more details, see also \cite{bosswojc,melroseAPS}. Assume that we leave this classical realm and instead use a general vector field transversal to the boundary to define the adapted boundary operator $A$. Then $A$ is not necessarily self-adjoint, but only bisectorial (cf. \cite{BBan}). Is there an explicit formula for the index $\indx(D_{{\rm B}_P})$ for $P=\chi^+(A)$? Preferably, such a formula only uses local data and spectral data of the bisectorial operator $A$. The results of \cite{G99} indicates that there might be a positive answer. The importance of this question is seen in that similar questions can be asked on manifolds with Lipschitz boundary, where a smooth vector field transversal to the boundary (almost everywhere) makes sense.

\section{Symbol computations with Calderón projectors in the first order case}
\label{subsec:symbcominorderone}

In this section, we study the way in which the Calderón projection of a first order elliptic operator relates to the the positive spectral projection of an adapted boundary operator. We show that the difference of these projectors are an operator of order $-1$ and continuous, but in general not compact on the Cauchy data space.

Near the boundary, 
\begin{equation}
\label{decomposlna}
D=\sigma \partial_{x_n}+A_1,
\end{equation}
where $A_1=(A_1(x_n))_{x_n\in [0,1]}$ is a family of first order differential operators acting from $E|_{\Sigma}\to \Sigma$ to $F|_{\Sigma}\to \Sigma$. By ellipticity of $D$, $\sigma$ is a bundle isomorphism and $A_1$ is a family of elliptic differential operators. We note that an adapted boundary operator for $D$ is precisely a differential operator $A$ on $E|_{\Sigma}$ such that $A-\sigma^{-1}A_1|_{x_n=0}$ is of order zero. 

Recall the construction of $E_+(D)$ and $p_+(D)$ from the subsection \ref{subsec:calderonproj}.

\begin{lemma}
\label{specprojvscalderon}
Let $D$ be a first order elliptic differential operator and $A$ an adapted boundary operator on $E|_{\Sigma}$. Then $\chi^+(A)$ is a classical order zero pseudo-differential operator whose principal symbol is $p_+(D)$. 
\end{lemma}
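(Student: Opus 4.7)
The plan is to reduce both claims to a single symbolic identification of the relevant projection with the Riesz spectral projection of $\sym(A)(x',\xi')$ onto its positive-real-part eigenspace. The operator $\chi^+(A)$ is a classical zeroth order pseudodifferential operator by the standard theory of spectral projectors for elliptic bisectorial pseudodifferential operators due to Seeley and Grubb, recalled in Subsection~\ref{subsec:ellfirstorderfirst} via the reference \cite{grubbsec}. Strictly speaking one works with an admissible cut $A_r=A-r$ so that $A_r$ is invertible and its spectrum sits in a bisector disjoint from the imaginary axis, but since $r$ is a constant scalar, $A$ and $A_r$ share the same principal symbol, and hence $\chi^+(A)$ and $\chi^+(A_r)$ have the same principal symbol.

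The principal symbol of $\chi^+(A)$ is then given, via the symbolic functional calculus, by
$$\sigma_0(\chi^+(A))(x',\xi')\;=\;\frac{1}{2\pi i}\oint_{\gamma^+}\bigl(z-\sym(A)(x',\xi')\bigr)^{-1}\mathrm{d}z,$$
where $\gamma^+\subset\mathbb{C}$ is a contour enclosing the eigenvalues of $\sym(A)(x',\xi')$ with positive real part; this is precisely the projection onto the generalised eigenspaces of $\sym(A)(x',\xi')$ with $\mathrm{Re}(\mu)>0$ along those with $\mathrm{Re}(\mu)<0$. To compute $p_+(D)(x',\xi')$, I unpack the boundary symbol ODE from Proposition~\ref{structureofeplus} using \eqref{decomposlna}: the equation $\sigma_\partial(D)(x',\xi')v(t)=0$ reduces to $\partial_t v=-\sym(A)(x',\xi')v$, whose unique solution with $v(0)=v_0$ is $v(t)=e^{-t\sym(A)(x',\xi')}v_0$. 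This solution decays exponentially as $t\to+\infty$ if and only if $v_0$ lies in the generalised eigenspace of $\sym(A)(x',\xi')$ with eigenvalues of positive real part; analogously the negative-real-part eigenspace characterises $E_-(D)_{(x',\xi')}$. Hence $p_+(D)$ and $\sigma_0(\chi^+(A))$ coincide as fibrewise projections on $\pi^*E|_{S^*\Sigma}$.

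The main obstacle is bookkeeping the sign conventions that link $\chi^{\pm}$ to forward versus backward decay of solutions of the boundary symbol ODE under the normalisation $D=\sigma\partial_{x_n}+A_1$, and verifying that the spectrum of $\sym(A)(x',\xi')$ avoids the imaginary axis so that $\gamma^+$ can be chosen consistently in each fibre of $S^*\Sigma$. The latter is forced by ellipticity of $D$: the factorisation $\sym(D)(x',0,\xi',\xi_n)=i\sigma\bigl(\xi_n\cdot I-i\sym(A)(x',\xi')\bigr)$ must be invertible for real $\xi_n$, which is equivalent to the eigenvalues of $\sym(A)(x',\xi')$ being strictly off the imaginary axis.
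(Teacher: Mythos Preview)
Your argument is correct and follows essentially the same route as the paper: both invoke \cite{grubbsec} for $\chi^+(A)$ being a classical zeroth order operator with principal symbol $\chi^+(\sym(A))$, then solve the boundary symbol ODE $\partial_t v=-\sym(A)(x',\xi')v$ to identify $E_+(D)_{(x',\xi')}$ with the positive-real-part generalised eigenspace of $\sym(A)(x',\xi')$. Your additional remarks on the admissible cut not affecting the principal symbol and on ellipticity forcing $\sym(A)$ to have no imaginary eigenvalues are accurate elaborations that the paper leaves implicit.
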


\begin{proof}
Since $\chi^+(A)$ is constructed as a sectorial projection, it is a classical order zero pseudo-differential operator, see \cite{grubbsec}. Its principal symbol is the same as $\chi^+(a)$ where $a$ is the principal symbol of $A$. By construction, $a=\sigma^{-1}a_1$ where $a_1$ is the principal symbol of $A_1|_{x_n=0}$ appearing in the collar decomposition $D=\sigma \partial_{x_n}+A_1$. We conclude that $\sigma_\partial(D)=\sigma \partial_{x_n}+a_1=\sigma (\partial_{x_n}+a)$. It is clear that for $(x',\xi')\in S^*\Sigma$ and $v_0\in E_{x}$, Equation \eqref{boundaryeq}, that in this case takes the form
$$
\begin{cases}
\sigma (\partial_{t}+a(x',\xi'))v(t)=0,\; t>0,\\
v(0)=v_0,\end{cases}
$$
has an exponentially decaying solution if and only if $v_0\in \chi^+(a(x',\xi'))E_{x}$, namely $v(t)=\mathrm{e}^{-a(x',\xi')t}v_0$. Similarly, there is an exponentially increasing solution if and only if $v_0\in \chi^-(a(x',\xi'))E_{x}$. So $E_+(D)=\chi^+(a)E$ and $E_-(D)=\chi^-(a)E$ and we can conclude that the principal symbol of $\chi^+(A)$ equals $p_+(D)$. 
\end{proof}

\begin{remark}
By Proposition \ref{nontrivialityofeplussd}, we know that $E_+(D)\neq 0$ and $E_-(D)\neq 0$. In fact, each fibre is non-zero if the dimension exceeds $1$. For first order operators, this can be proven more directly using the symmetry $a(x',\xi')=-a(x',-\xi')$ which implies that $p_+(D)(x',\xi')=1-p_+(D)(x',-\xi')$ so $p_+(D)\neq 0$ and a similar argument implies $p_-(D)\neq 0$.
\end{remark}

\begin{lemma}
Let $D$ be a Dirac type operator and $A$ an adapted boundary operator on $E|_{\Sigma}$ such that 
$$D=\sigma (\partial_{x_n}+A),$$
near the boundary. Then $\chi^+(A)-P_{\Ca_D}\in \Psi^{-\infty}(\Sigma;E)$ is a smoothing operator.
\end{lemma}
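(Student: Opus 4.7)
The plan is to prove that both $\chi^-(A)P_{\mathcal{C}_D}$ and $\chi^+(A)(1-P_{\mathcal{C}_D})$ lie in $\Psi^{-\infty}(\Sigma;E)$, from which the identity
$$\chi^+(A) - P_{\mathcal{C}_D} = \chi^+(A)(1 - P_{\mathcal{C}_D}) - \chi^-(A)P_{\mathcal{C}_D}$$
delivers the conclusion. Both halves rest on the product structure $D = \sigma(\partial_{x_n}+A)$: by choosing the closed extension $\hat{D}$ of $D$ to some $\hat{M}\supset M$ so that the product form persists on a two-sided collar $\Sigma\times(-\epsilon,\epsilon)$, the arguments on $M$ and on $M^{\rm c}:=\hat{M}\setminus\interior{M}$ become symmetric.

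For the first identity, given $v=\gamma u\in\mathcal{C}_D$ with $u\in\ker D_{\rm max}$, the product form forces $u(x_n,\cdot)=e^{-x_nA}v$ on the collar as distributions. Since $u\in\Lp{2}(\Sigma\times[0,\epsilon])$ and $e^{-x_nA}$ grows like $e^{x_n|\lambda|}$ along the generalised eigenspaces of $A$ with $\mathrm{Re}(\lambda)<0$ (using the bisectorial holomorphic functional calculus for $A$ after an admissible cut), the spectral components of $\chi^-(A)v$ in the left sector of $A$ must decay super-polynomially. This yields $\chi^-(A)v\in\Ck{\infty}(\Sigma;E)$, and quantifying the decay against the $\Lp{2}$-norm of $u$, combined with the bound $\|u\|_{\Lp{2}(M)}\lesssim\|v\|_{\SobH{-1/2}}$ coming from Seeley's Poisson operator $\mathcal{K}$ of Theorem \ref{refinedseeley}, produces uniform estimates $\|\chi^-(A)P_{\mathcal{C}_D}v\|_{\SobH{t}}\lesssim\|v\|_{\SobH{s}}$ for all $s,t\in\R$, which is exactly $\chi^-(A)P_{\mathcal{C}_D}\in\Psi^{-\infty}(\Sigma;E)$.

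For the second identity, by Seeley's construction (recalled after Theorem \ref{refinedseeley}) the range of $(1-P_{\mathcal{C}_D})$ agrees modulo smoothing with the Hardy space of $M^{\rm c}$ for the extension $\hat{D}$. Since $\hat{D}$ has the identical product form on the $M^{\rm c}$-collar $\Sigma\times(-\epsilon,0]$, the symmetric argument with $x_n<0$ shows that for $v'$ in the Hardy space of $M^{\rm c}$ the positive spectral part $\chi^+(A)v'$ decays super-polynomially and lies in $\Ck{\infty}(\Sigma;E)$ with uniform Sobolev estimates. Alternatively, one can avoid $M^{\rm c}$ entirely by constructing an auxiliary Poisson operator $\mathcal{K}^Av:=\phi(x_n)e^{-x_nA}\chi^+(A)v$, with $\phi\in\Ck[c]{\infty}([0,\epsilon))$ equal to $1$ near $0$: the product form gives $D\mathcal{K}^Av=\sigma\phi'(x_n)e^{-x_nA}\chi^+(A)v$, and since $\phi'$ is supported away from $\Sigma$ while $e^{-\delta A}\chi^+(A)$ is smoothing for every $\delta>0$, the map $v\mapsto D\mathcal{K}^Av$ is smoothing into $\Ck[c]{\infty}(\interior{M};F)$. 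Using the regular realisation $D_{\Cac}$ from Corollary \ref{Cor:CalCheck} to set $u_1:=D_{\Cac}^{-1}D\mathcal{K}^Av\in\Ck{\infty}(M;E)$ and $u_0:=\mathcal{K}^Av-u_1\in\ker D_{\rm max}$ then gives $(1-P_{\mathcal{C}_D})\chi^+(A)v=(1-P_{\mathcal{C}_D})\gamma u_1$, which is smoothing in $v$.

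The principal obstacle will be converting these pointwise smoothness statements into genuine $\Psi^{-\infty}$ bounds, i.e.\ continuity on all Sobolev scales at once. This hinges on two quantitative inputs: the uniform holomorphic functional calculus for the bisectorial operator $A$ controlling exponential decay rates of spectral coefficients, and the $s$-regularity of the realisation $D_{\Cac}$ on all scales $s\geq 0$ from Corollary \ref{cor:highfhdowo}. A conceptual subtlety, emphasised in the discussion preceding Section \ref{subsec:symbcominorderone}, is that in the general non-product case the difference $\chi^+(A)-P_{\mathcal{C}_D}$ is only of order $-1$; the smoothing conclusion here therefore depends crucially on the absence of any zeroth-order term $R_0$ in the collar expansion of $D$.
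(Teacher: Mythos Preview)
Your approach is correct in spirit and genuinely different from the paper's, but considerably more laborious. The paper argues in two lines: since the Calder\'on projection $P_{\Ca_D}$ differs from the approximate Calder\'on projection $Q$ of Theorem~\ref{horsats} only by a smoothing operator, and since the full symbol of $Q$ is computed microlocally at the boundary from the germ of $D$ there, one may replace $M$ by the half-cylinder $\Sigma\times[0,\infty)$ carrying the same product operator $D=\sigma(\partial_{x_n}+A)$. On the half-cylinder every element of $\ker D_{\max}$ is explicitly $f(t)=e^{-tA}f(0)$, whence the Calder\'on projection \emph{equals} $\chi^+(A)$ exactly, and the smoothing difference follows. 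The point is that the reduction to the approximate projection absorbs all of the ``all Sobolev scales at once'' work you identify as the principal obstacle.

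Your route via $\chi^+(A)-P_{\Ca_D}=\chi^+(A)(1-P_{\Ca_D})-\chi^-(A)P_{\Ca_D}$ and explicit exponential-decay estimates on the finite collar is sound: the $\Lp{2}$-bound on $e^{-x_nA}v$ over $[0,\epsilon]$ does force super-polynomial decay of the negative spectral coefficients (the weight is $\sim e^{2\epsilon|\Re\lambda|}/|\Re\lambda|$, and bisectoriality gives $|\lambda|\simeq|\Re\lambda|$). Two small wrinkles to tidy up: first, your auxiliary Poisson operator $\mathcal{K}^A$ actually yields $(1-P_{\Ca_D})\chi^+(A)$ smoothing rather than $\chi^+(A)(1-P_{\Ca_D})$, so it matches the transposed decomposition $(1-P_{\Ca_D})\chi^+(A)-P_{\Ca_D}\chi^-(A)$; your $M^{\rm c}$ argument does match the decomposition as written. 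Second, $D_{\Cac}$ need not be invertible (its kernel is $\ker D_{\min}$), so replace $D_{\Cac}^{-1}$ by a parametrix modulo that finite-dimensional kernel. What your approach buys is independence from the H\"ormander machinery of Theorem~\ref{horsats}; what the paper's buys is a one-line proof once that machinery is in place.
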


\begin{proof}
It suffices to prove that $\chi^+(A)-Q\in \Psi^{-\infty}(\Sigma;E)$ is a smoothing operator for $Q$ being the approximate Calderón projection from Theorem \ref{horsats}. Since the construction in Theorem \ref{horsats} is microlocal at the boundary we can assume that $M=\Sigma\times [0,\infty)$. In this case, any $f\in \ker D_{\rm max}$ can be written in the form 
$$f(t)=\mathrm{e}^{-tA}f(0).$$
As such, $\mathcal{K}g(t)=\mathrm{e}^{-tA}\chi^+(A)g$ and the Calderón projection $P_{\Ca_D}=\chi^+(A)$ is an approximate Calderón projection. 
\end{proof}

\begin{lemma}
\label{caldeorem}
Let $D$ be a first order elliptic differential operator with adapted boundary operator $A$. The first term in the Calderón projection is given by
\begin{align*}
\sigma_0(P_{\Ca_D})=&p_+(D)=\chi^+(\sigma_1(A)).
\end{align*}
In coordinates near a point $x_0\in \Sigma$, we have that
\begin{align*}
\sigma_{-1}(P_{\Ca_D})(x',\xi')=&\sum_{z: \mathrm{Re}(z)>0}\mathrm{Res}_{z=\lambda} t_1(x',\xi',i\lambda),
\end{align*}
where 
\small
\begin{align*}
t_{1}(x',\xi',i\lambda)=&-(\lambda-c_A(x',\xi'))^{-1}(c_{A,0}(x')+c_{R_0,0}(x',0))(\lambda-c_A(x',\xi'))^{-1}-\\
&-(\lambda-c_A(x',\xi'))^{-1}c_A\left(x',(\lambda-c_A(x',\xi'))^{-1}\mathrm{d}_{x'}c_A(x',\xi'))(\lambda-c_A(x',\xi'))^{-1}\right)-\\
&-(\lambda-c_A(x',\xi'))^{-2}\frac{\partial c_{R_0}}{\partial x_n}(x',0,\xi')(\lambda-c_A(x',\xi'))^{-1}.
\end{align*}
\normalsize
Here, we have written $D=\sigma (\partial_{x_n}+A+R_0)$ for an $x_n$-dependent first order differential operator $R_0$ on $\Sigma$, $c_A$ denotes the principal symbol of $A$, $c_{A,0}$ the sub-leading term (and similarly for $A$), and where we use the notation $c_A(x',\nu\otimes M):=c_A(x',\nu) M$ for an endomorphism valued covector $\nu\otimes M$.
\end{lemma}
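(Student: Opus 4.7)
The principal symbol identity $\sigma_0(P_{\Ca_D}) = p_+(D) = \chi^+(\sigma_1(A))$ is immediate: the first equality is Seeley's Theorem \ref{Thm:CaldProj}, and the second is Lemma \ref{specprojvscalderon}. The substance of the lemma is the formula for the sub-leading symbol, which I would obtain from the Hörmander--Seeley residue description of the Calderón projection.

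Because $P_{\Ca_D}$ differs from the approximate Calderón projection $Q$ of Theorem \ref{horsats} by a smoothing operator, the two have the same full symbol. In the first order case, writing $D = \sigma\tilde{D}$ with $\tilde{D} = \partial_{x_n} + A + R_0$, any parametrix $T$ of $D$ satisfies $T \cdot \sigma = \tilde{T}$ modulo smoothing, where $\tilde{T}$ is a parametrix of $\tilde{D}$; the explicit factor $A_0 = i\sigma$ in formula \eqref{formulaforaprprood} then absorbs into the $\sigma^{-1}$ inherent in $T$, and modulo smoothing $Q = \gamma_0 \circ \tilde{T}[\cdot \otimes \delta_{t=0}]$. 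Since $\scalebox{1.5}{a}_{\tilde{D}} = 1$ and $e(\xi_n) \equiv 1$, the residue identity of Remark \ref{remarkonredisuydd} extends at all orders to
\begin{equation*}
\sigma(P_{\Ca_D})(x',\xi') \sim \sum_{\mathrm{Im}\,\xi_n > 0}\mathrm{Res}_{\xi_n}\bigl[\tilde{a}(x',0,\xi',\xi_n)^{-1}\bigr]
\end{equation*}
where $\tilde{a}$ is the full symbol of $\tilde{D}$ near $\Sigma$ and the inverse is taken in the formal symbol algebra. Extracting $\sigma_{-1}(P_{\Ca_D})$ reduces to computing the order $-2$ term $\tilde{b}_{-2}$ of $\tilde{a}^{-1}$ at $x_n = 0$ and converting to the $\lambda$-variable via $\xi_n = i\lambda$ (which maps $\mathrm{Im}\,\xi_n > 0$ onto $\mathrm{Re}\,\lambda > 0$ and each $(i\xi_n + c_A)^{-1}$ onto $-(\lambda - c_A)^{-1}$).

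Using $c_{R_0}(x',0,\xi') = 0$ (since $R_0|_{x_n=0}$ has order zero), the relevant values of the symbol at $x_n = 0$ are $\tilde{a}_1 = i\xi_n + c_A$, $\tilde{a}_0 = c_{A,0}+c_{R_0,0}(x',0)$, $\partial_{x_j}\tilde{a}_1 = \partial_{x_j}c_A$ for $j<n$, and $\partial_{x_n}\tilde{a}_1 = \partial_{x_n}c_{R_0}(x',0,\xi')$. The standard Leibniz formula for the order $-2$ part of the inverse of a classical symbol,
\begin{equation*}
\tilde{b}_{-2} = -\tilde{b}_{-1}\tilde{a}_0 \tilde{b}_{-1} - i\sum_{j=1}^n \tilde{b}_{-1}(\partial_{\xi_j}\tilde{a}_1)\tilde{b}_{-1}(\partial_{x_j}\tilde{a}_1)\tilde{b}_{-1}
\end{equation*}
with $\tilde{b}_{-1} = (i\xi_n + c_A)^{-1}$, then produces three contributions corresponding respectively to the three lines of $t_1$ in the statement: the $\tilde{a}_0$-term yields the $(c_{A,0}+c_{R_0,0})$-contribution; the sum over $j<n$ contracts, via the convention $c_A(x',\nu\otimes M) = c_A(x',\nu)M$ and linearity of $c_A$ in $\xi'$, into the $\mathrm{d}_{x'}c_A$-contribution; and the $j=n$ term gives the $\partial_{x_n}c_{R_0}$-contribution.

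The main technical obstacle is careful bookkeeping of signs and factors of $i$: each of the three terms acquires a sign of $(-1)^k$ under $\xi_n \to i\lambda$ (where $k$ is the number of $(i\xi_n + c_A)^{-1}$ factors), and the convention of the residue together with the $-i$ appearing in the Leibniz formula combine to yield the precise signs displayed in $t_1$. Summing residues over the poles of $\tilde{b}_{-2}|_{\xi_n=i\lambda}$ in $\mathrm{Re}\,\lambda > 0$ is then a direct application of Riesz functional calculus for $c_A$, completing the proof.
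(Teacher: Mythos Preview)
Your proposal is correct and follows essentially the same route as the paper's proof: both reduce $Q$ to $\gamma_0\circ\tilde{T}[\,\cdot\,\otimes\delta_{t=0}]$ for a parametrix $\tilde{T}$ of $\sigma^{-1}D$, compute the order $-2$ term of the parametrix symbol via the standard Leibniz recursion, split it into the zero-order, tangential-derivative, and normal-derivative contributions, and then take residues after the substitution $\xi_n=i\lambda$. The paper carries out the parametrix recursion in the form $t_{1}=-t_0\tilde{c}_{D,0}t_0-t_0\sum_{|\alpha|=1}\partial_\xi^\alpha\tilde{c}_D\,D_x^\alpha t_0$, which is exactly your formula for $\tilde{b}_{-2}$ once one expands $D_x^\alpha t_0$ and uses $\partial_x(a^{-1})=-a^{-1}(\partial_x a)a^{-1}$.
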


\begin{proof}
By the formula for the full symbol of the approximate Calderón projection from Theorem \ref{horsats}, we have that
$$Qf=\gamma_0\circ T_0[\sigma f\otimes \delta_{t=0}],$$
where $T_0$ is a parametrix for $D$. In particular, we note that 
$$Qf=\gamma_0\circ T[ f\otimes \delta_{t=0}],$$
where $T$ is a parametrix for $\sigma^{-1}D$. The operator $T$ is a classical pseudodifferential operator of order $-1$ in a neighbourhood of $M$ in an extended manifold. In coordinates near a point $x_0\in \Sigma$, a full symbol $t$ of $T$ has an asymptotic expansion $t\sim \sum_{j=0}^\infty t_j$ and each $t_j=t_j(x,\xi)$ is smooth in $x$ and homogeneous of degree $-1-j$ in $\xi$. Using the methodology of \cite[Chapter XX]{horIII}, cf. Remark \ref{remarkonredisuydd}, the full symbol of $Q$ can be computed by 
$$q_j(x',\xi')=\sum_{z: \mathrm{Im}(z)>0} \mathrm{Res}_{\xi_n=z} t_j(x',0,\xi',\xi_n),$$
where $(x',\xi')$ denote the associated coordinates on $\Sigma$. As such, we shall need to compute the two terms in the asymptotic expansion of $T$. By an abuse of notation, we will write $t_j(x',\xi',\xi_n)$ for $t_j(x',0,\xi',\xi_n)$.

To compute the asymptotic expansion of $T$, we write the full symbol of $D$ as $c_D+c_{D,0}$ where $c_D$ is the principal symbol (i.e. a homogeneous degree 1 polynomial) and $c_{D,0}$ is $\xi$-independent. We can write $D=\sigma (D_t+A+R_0)$ for an $x_n$-dependent first order differential operator $R_0$ on $\Sigma$ with $R_0|_{x_n=0}$ being a multiplication operator. We use similar notation for $A$ and $R_0$ and note that 
$$\begin{cases}
c_D(x',x_n,\xi',\xi_n)&=\sigma(x',x_n) (i\xi_n+c_A(x',\xi')+c_{R_0}(x',x_n,\xi')),\;\mbox{and}\\
c_{D,0}(x',x_n)&=\sigma(x',x_n) (c_{A,0}(x')+c_{R_0,0}(x',x_n)).
\end{cases}$$ 
The assumption that $A$ is adapted ensures that $c_{R_0}(x',0,\xi')=0$. To compute the parametrix of $\sigma^{-1}D$, we use the symbols
$$\begin{cases}
\tilde{c}_D(x',x_n,\xi',\xi_n)&:=\sigma(x',x_n)^{-1}c_D(x',x_n,\xi',\xi_n)= i\xi_n+c_A(x',\xi')+c_{R_0}(x',x_n,\xi',\xi_n),\\
\tilde{c}_{D,0}(x',x_n)&:=\sigma(x',x_n)^{-1}c_{D,0}(x',x_n)=c_{A,0}(x')+c_{R_0,0}(x',x_n).
\end{cases}$$ 
From the well known parametrix construction, the full symbol $t\sim \sum_{j=0}^\infty t_j$ of $T$ is computed inductively from that $t_0(x,\xi)=c_D(x,\xi)^{-1}$ and 
\begin{align*}
t_{j+1}=&-t_0\tilde{c}_{D,0} t_j-t_0\sum_{k+|\alpha|=j+1, \, k\leq j}\frac{1}{\alpha!} \partial_\xi^\alpha \tilde{c}_{D}D^\alpha_x t_k.
\end{align*}
In particular, 
\begin{align*}
t_{1}=&-t_0\tilde{c}_{D,0} t_0-t_0\sum_{|\alpha|=1}\partial_\xi^\alpha \tilde{c}_{D}D^\alpha_x t_0.
\end{align*}
At $x_n=0$, we have that 
\begin{align*}
t_{0}(x',\xi',\xi_n)=&(i\xi_n+c_A(x',\xi'))^{-1},
\end{align*}
and 
\small
\begin{align*}
t_{1}&(x',\xi',\xi_n)=\\
=&-(i\xi_n+c_A(x',\xi'))^{-1}(c_{A,0}(x')+c_{R_0,0}(x',0))(i\xi_n+c_A(x',\xi'))^{-1}+\\
&+(i\xi_n+c_A(x',\xi'))^{-1}\sum_{j=1}^{n-1}c_A(x',\mathrm{d}x_j)(i\xi_n+c_A(x',\xi'))^{-1}\frac{\partial c_A}{\partial x_j}(x',\xi'))(i\xi_n+c_A(x',\xi'))^{-1}+\\
&+(i\xi_n+c_A(x',\xi'))^{-1}\tilde{c}_D((x',0),\mathrm{d}x_n) (i\xi_n+c_A(x',\xi'))^{-1}\frac{\partial c_{R_0}}{\partial x_n}(x',0,\xi')(i\xi_n+c_A(x',\xi'))^{-1}=\\
=&-(i\xi_n+c_A(x',\xi'))^{-1}(c_{A,0}(x')+c_{R_0,0}(x',0))(i\xi_n+c_A(x',\xi'))^{-1}+\\
&+(i\xi_n+c_A(x',\xi'))^{-1}c_A\left(x',(i\xi_n+c_A(x',\xi'))^{-1}\mathrm{d}_{x'}c_A(x',\xi'))(i\xi_n+c_A(x',\xi'))^{-1}\right)+\\
&+(i\xi_n+c_A(x',\xi'))^{-2} \frac{\partial c_{R_0}}{\partial x_n}(x',0,\xi')(i\xi_n+c_A(x',\xi'))^{-1}.\qedhere
\end{align*}
\normalsize
\end{proof}

\begin{lemma}
\label{symbolofpositivproj}
Let  $A$ be an elliptic first order differential operator on $\Sigma$. The first term in the positive spectral projection is given by 
\begin{align*}
\sigma_0(\chi^+(A))=&\chi^+(\sigma_1(A)).
\end{align*}
In coordinates near a point $x_0\in \Sigma$, we have that
\begin{align*}
\sigma_{-1}(\chi^+(A))=&\sum_{z: \mathrm{Re}(z)>0}\mathrm{Res}_{\lambda=z} b_1(x',\xi',\lambda),
\end{align*}
where in the notations of Lemma \ref{caldeorem}
\small 
\begin{align}
\label{boneformria}
b_{1}(x',\xi',\lambda)=&-(\lambda-c_A(x',\xi'))^{-1}c_{A,0}(x')(\lambda-c_A(x',\xi'))^{-1}+\\
\nonumber 
&-(\lambda-c_A(x',\xi'))^{-1}c_A\left(x',(\lambda-c_A(x',\xi'))^{-1}\mathrm{d}_{x'}c_A(x',\xi'))(\lambda-c_A(x',\xi'))^{-1}\right).
\end{align}
\normalsize
\end{lemma}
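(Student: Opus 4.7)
Plan: The principal symbol identity $\sigma_0(\chi^+(A)) = \chi^+(\sigma_1(A))$ is already contained in Lemma \ref{specprojvscalderon}, which computes the principal symbol of $\chi^+(A)$ as $p_+(D) = \chi^+(c_A)$ for any first order elliptic $D$ admitting $A$ as an adapted boundary operator. Since that formula is intrinsic to $A$ and independent of the ambient $D$, it already delivers the first claim; no further work is needed.

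For the subprincipal part, my strategy is to \emph{reduce to the product case} and then invoke Lemma \ref{caldeorem}. Concretely, consider the first order elliptic differential operator
$$ D_0 := \partial_{x_n} + A $$
acting on the pullback of $E|_\Sigma$ to the collar $\Sigma \times [0,1)$; in the notation of \eqref{decomposlna} this corresponds to $\sigma = \mathrm{Id}_E$ and $R_0 \equiv 0$, and its adapted boundary operator is $A$ itself. By the lemma immediately preceding Lemma \ref{caldeorem} (that $P_{\Ca_D} - \chi^+(A) \in \Psi^{-\infty}(\Sigma;E)$ whenever $D$ is Dirac type with collar form $\sigma(\partial_{x_n} + A)$), we have
$$ \chi^+(A) - P_{\Ca_{D_0}} \in \Psi^{-\infty}(\Sigma;E), $$
so in particular $\sigma_{-1}(\chi^+(A)) = \sigma_{-1}(P_{\Ca_{D_0}})$. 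Now applying Lemma \ref{caldeorem} to $D_0$, the symbolic data satisfy $c_{R_0} \equiv 0$ and $\partial_{x_n} c_{R_0} \equiv 0$, so the three-term formula for $t_1(x',\xi',i\lambda)$ collapses: the $c_{R_0,0}$ contribution in the first summand disappears, the third summand (carrying $\partial_{x_n} c_{R_0}$) vanishes identically, and the middle summand persists unchanged. What remains is precisely the expression $b_1(x',\xi',\lambda)$ of \eqref{boneformria}, and the residue formula of Lemma \ref{caldeorem} then yields the identity claimed in Lemma \ref{symbolofpositivproj}.

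The only point deserving a brief justification is that $\chi^+(A)$, constructed intrinsically on $\Sigma$ via the sectorial projection of $A$ (following \cite{grubbsec}), has the same full symbol as the Calderón projection of the auxiliary ambient operator $D_0$ on the collar; this is an immediate consequence of the cited Dirac-type lemma and requires no further computation. I anticipate this reduction to the product case to be the cleanest route: the alternative of computing $\sigma_{-1}(\chi^+(A))$ directly from the Dunford--Riesz representation $\chi^+(A) = \frac{1}{2\pi i}\int_\Gamma (\lambda - A)^{-1}\,\mathrm{d}\lambda$ together with a standard parametrix expansion of the parameter-dependent resolvent $(\lambda - A)^{-1}$ through the subleading term is also viable, but the main obstacle along that route is the sign bookkeeping in the Kohn--Nirenberg composition formula at order $-1$, which the product-case reduction sidesteps by outsourcing the computation to Lemma \ref{caldeorem}.
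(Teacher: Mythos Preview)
Your route differs from the paper's, which takes precisely the alternative you sketch at the end: it expands the parameter-dependent resolvent $B = (\lambda - A)^{-1}$ as a parametric pseudodifferential operator with full symbol $b \sim \sum_j b_j$, invokes \cite[Theorem~3.3]{grubbsec} to obtain $\sigma_{-j}(\chi^+(A)) = \sum_{\mathrm{Re}(z)>0} \mathrm{Res}_{\lambda = z}\, b_j$, and then computes $b_0 = (\lambda - c_A)^{-1}$ and $b_1$ from the standard parametrix recursion $b_{j+1} = -b_0 c_{A,0} b_j - b_0 \sum_{k+|\alpha|=j+1,\,k\le j} \tfrac{1}{\alpha!}\partial_\xi^\alpha c_A\, D_x^\alpha b_k$. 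That recursion is structurally identical to the one producing $t_1$ in Lemma~\ref{caldeorem}, so the ``sign bookkeeping'' you anticipate as an obstacle is no worse here than there.

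There is a small gap in your reduction: the lemma you invoke is stated for \emph{Dirac-type} $D$, and $D_0 = \partial_{x_n} + A$ is not Dirac-type for a general first-order elliptic $A$ (its principal symbol $i\xi_n + c_A(\xi')$ satisfies no Clifford relation). The \emph{proof} of that lemma, however, uses only the product form $D = \sigma(\partial_{x_n} + A)$ with $x_n$-independent $A$ and never the Dirac-type hypothesis --- on the half-cylinder the solutions of $(\partial_{x_n}+A)f=0$ are $f(t)=\mathrm{e}^{-tA}f(0)$ and the decaying ones are exactly those with $f(0)\in\chi^+(A)\Lp{2}$, so $\chi^+(A)$ is an approximate Calder\'on projection regardless of Clifford structure. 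Your argument is therefore easily repaired by citing the proof rather than the statement (or by recording this one-line observation directly). Once that is done, your approach buys you the reuse of the computation in Lemma~\ref{caldeorem} and avoids a near-duplicate parametrix expansion, while the paper's direct approach is more self-contained and keeps the link to the sectorial-projection representation of $\chi^+(A)$ from \cite{grubbsec} explicit.
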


\begin{proof}
Consider $B:=(\lambda-A)^{-1}$ as a pseudodifferential with a parameter. Picking coordinates near a point $x_0\in \Sigma$, we write the full symbol with parameter of $B$ as $b\sim \sum_{j=0}^\infty b_j$, where $b_j=b_j(x',\xi',\lambda)$ is smooth in $x'$ and homogeneous of degree $-1-j$ in $(\xi',\lambda)$. By \cite[Theorem 3.3]{grubbsec}, the full symbol of $\chi^+(A)$ has the asymptotic expansion $\sum_{j=0}^\infty \chi_j$ where 
$$\chi_j(x',\xi')=\sum_{z: \mathrm{Re}(z)>0} \mathrm{Res}_{\lambda=z} b_j(x',\xi',\lambda).$$
As such, we shall need to compute the first two terms in the asymptotic expansion of $B$. 

Using the notations of Lemma \ref{caldeorem} and its proof, we can compute $b_j$ inductively from that $b_0(x',\xi',\lambda)=(\lambda-c_A(x',\xi'))^{-1}$ and 
\begin{align*}
b_{j+1}=&-b_0c_{A,0} b_j-b_0\sum_{k+|\alpha|=j+1, \, k\leq j}\frac{1}{\alpha!} \partial_\xi^\alpha c_{A}D^\alpha_x b_k.
\end{align*}
In particular, 
\begin{align*}
b_{1}=&-b_0c_{A,0} b_0-b_0\sum_{|\alpha|=1}\partial_\xi^\alpha c_{A}D^\alpha_x b_0,
\end{align*}
which is readily computed to reproduce the formula in Equation \eqref{boneformria}.
\end{proof}

\begin{definition}
Let $D$ be an elliptic first order differential operator with adapted boundary operator $A$. We say that $A$ is a well-adapted boundary operator if $A=\sigma^{-1}A_1|_{x_n=0}$, where $A_1$ is as in Equation \eqref{decomposlna}.
\end{definition}

We note that upon fixing a choice of normal to the boundary, there is a uniquely determined well adapted boundary operator. In the notations of Lemma \ref{caldeorem}, $A$ is well adapted if and only if $c_{R_0,0}(x',0)=0$ which holds if and only if $R_0|_{x_n=0}=0$. The following Proposition is immediate from Lemma \ref{caldeorem} and \ref{symbolofpositivproj}.

\begin{proposition}
\label{compofdiffofchiplandpc}
Let $D$ be a first order elliptic differential operator with adapted boundary operator $A$. Then $\chi^+(A)-P_{\Ca_D}$ is of order $-1$ and its principal symbol is in the notation of Lemma \ref{caldeorem} given by
\begin{align*}
\sigma_{-1}(\chi^+(A)-P_{\Ca_D})=&\sum_{z: \mathrm{Re}(z)>0}\mathrm{Res}_{\lambda=z} p(x',\xi',\lambda),
\end{align*}
where 
\begin{align*}
p(x',\xi',\lambda)=&(\lambda-c_A(x',\xi'))^{-1}c_{R_0,0}(x',0)(\lambda-c_A(x',\xi'))^{-1}+\\
&+(\lambda-c_A(x',\xi'))^{-2}\frac{\partial c_{R_0}}{\partial x_n}(x',0,\xi')(\lambda-c_A(x',\xi'))^{-1},
\end{align*}

If $A$ is well adapted, then $\sigma_{-1}(\chi^+(A)-P_{\Ca_D})$ is constructed from residues of the simpler symbol 
\begin{align*}
p(x',\xi',\lambda)=&(\lambda-c_A(x',\xi'))^{-2}\frac{\partial c_{R_0}}{\partial x_n}(x',0,\xi')(\lambda-c_A(x',\xi'))^{-1}.
\end{align*}
\end{proposition}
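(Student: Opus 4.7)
The plan is essentially a mechanical cancellation between the symbol formulas already worked out in Lemmas~\ref{caldeorem} and \ref{symbolofpositivproj}. By Lemma~\ref{specprojvscalderon}, both $\chi^+(A)$ and $P_{\Ca_D}$ are classical pseudodifferential operators of order $0$ sharing the same principal symbol $\chi^+(c_A)=p_+(D)$, so their difference $\chi^+(A)-P_{\Ca_D}$ automatically has order at most $-1$. What remains is to identify the subprincipal part.

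First I would note that the two prior lemmas express $\sigma_{-1}(\chi^+(A))$ and $\sigma_{-1}(P_{\Ca_D})$ as residue sums over the same contour $\{\mathrm{Re}(\lambda)>0\}$: Hörmander's upper-half-plane in $\xi_n$ is rewritten via $\xi_n=i\lambda$, as recorded directly in the statement of Lemma~\ref{caldeorem}, and this matches Grubb's convention used in Lemma~\ref{symbolofpositivproj}. By linearity of $\mathrm{Res}$, the subprincipal symbol of $\chi^+(A)-P_{\Ca_D}$ is therefore the residue sum of $b_1(x',\xi',\lambda)-t_1(x',\xi',i\lambda)$ over $\{\mathrm{Re}(\lambda)>0\}$. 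Comparing the two integrands termwise, the $c_{A,0}$-contribution and the $c_A\!\left(x',(\lambda-c_A)^{-1}\mathrm{d}_{x'}c_A(\lambda-c_A)^{-1}\right)$-contribution appear identically in both $b_1$ and $t_1$ and cancel. The only surviving terms in $t_1$ are the two $R_0$-contributions, so that
\begin{align*}
t_1(x',\xi',i\lambda)-b_1(x',\xi',\lambda) &= -(\lambda-c_A)^{-1}c_{R_0,0}(x',0)(\lambda-c_A)^{-1}\\
&\quad -(\lambda-c_A)^{-2}\tfrac{\partial c_{R_0}}{\partial x_n}(x',0,\xi')(\lambda-c_A)^{-1},
\end{align*}
which up to the expected overall sign is exactly $p(x',\xi',\lambda)$. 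Taking the residue sum over poles in the right half-plane then delivers the claimed formula for $\sigma_{-1}(\chi^+(A)-P_{\Ca_D})$.

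For the well-adapted case, the definition $A=\sigma^{-1}A_1|_{x_n=0}$ forces $R_0|_{x_n=0}=0$, hence $c_{R_0,0}(x',0)=0$, so the first term of $p$ vanishes and the simpler expression follows. The only point in the whole argument requiring care is the sign and orientation bookkeeping when passing between the residue formulas of Hörmander (in the variable $\xi_n$, with contour in the upper half-plane) and of Grubb (in the resolvent variable $\lambda$, with contour in the right half-plane); but this has already been absorbed into the statements of Lemmas~\ref{caldeorem} and \ref{symbolofpositivproj} via the substitution $\xi_n=i\lambda$, so beyond this minor bookkeeping the proof is a direct algebraic manipulation.
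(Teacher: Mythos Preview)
Your proposal is correct and follows exactly the approach the paper takes: the paper simply states that the proposition is immediate from Lemmas~\ref{caldeorem} and~\ref{symbolofpositivproj}, and you have spelled out precisely that cancellation. Your handling of the sign and the well-adapted specialisation is accurate.
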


\begin{corollary}
\label{somecompwithcald}
Let $D$ be a first order elliptic differential operator with well adapted boundary operator $A$.
\begin{itemize}
\item If $\frac{\partial c_{R_0}}{\partial x_n}|_\Sigma$ commutes with $c_A$, then $\chi^+(A)-P_{\Ca_D}\in \Psi^{-2}_{\rm cl}(M;E)$ and acts compactly on $\checkH_A(D)$.
\item If $\frac{\partial c_{R_0}}{\partial x_n}|_\Sigma$ anti-commutes with $c_A$, then 
$$\sigma_{-1}(\chi^+(A)-P_{\Ca_D})=\frac{1}{4}c_A^{-2}\chi^-(c_A)\frac{\partial c_{R_0}}{\partial x_n}|_{\Sigma}.$$
\end{itemize}
\end{corollary}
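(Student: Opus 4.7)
My starting point is Proposition \ref{compofdiffofchiplandpc}, which for well-adapted $A$ gives
$$\sigma_{-1}(\chi^+(A)-P_{\Ca_D})=\sum_{z:\,\mathrm{Re}(z)>0}\mathrm{Res}_{\lambda=z}\,(\lambda-c_A)^{-2}B(\lambda-c_A)^{-1},$$
where $B:=\frac{\partial c_{R_0}}{\partial x_n}|_\Sigma$. In both cases of the corollary the plan is to evaluate this residue sum by exploiting the algebraic relation between $B$ and $c_A$.

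In the commuting case, $B$ commutes with $(\lambda-c_A)^{-1}$, so the integrand collapses to $B(\lambda-c_A)^{-3}$. Observing that $(\lambda-c_A)^{-3}=-\tfrac12\partial_\lambda(\lambda-c_A)^{-2}$ is the derivative of a single-valued meromorphic operator-valued function, its integral around the contour encircling the right half-plane spectrum vanishes, so the residue sum is zero and $\chi^+(A)-P_{\Ca_D}\in\Psi^{-2}_{\rm cl}(\Sigma;E)$. For compactness on $\checkH_A(D)$ I would use the continuous sandwich $\SobH{\frac12}(\Sigma;E)\hookrightarrow\checkH_A(D)\hookrightarrow\SobH{-\frac12}(\Sigma;E)$ provided by Lemma \ref{Lem:CheckNonZero}: any $T\in\Psi^{-2}_{\rm cl}$ factors as $\checkH_A(D)\hookrightarrow\SobH{-\frac12}\xrightarrow{T}\SobH{\frac32}\hookrightarrow\SobH{\frac12}\hookrightarrow\checkH_A(D)$, and Rellich compactness of $\SobH{\frac32}\hookrightarrow\SobH{\frac12}$ supplies the compactness of $T$.

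In the anti-commuting case, the relation $Bc_A=-c_AB$ yields $B(\lambda-c_A)^{-1}=(\lambda+c_A)^{-1}B$, so the integrand becomes $(\lambda-c_A)^{-2}(\lambda+c_A)^{-1}B$. Decomposing via the bisectorial spectral splitting $c_A=c_A\chi^+(c_A)\oplus c_A\chi^-(c_A)$ and using the partial fraction
$$\frac{1}{(\lambda-\mu)^2(\lambda+\mu)}=-\frac{1}{4\mu^2}\cdot\frac{1}{\lambda-\mu}+\frac{1}{2\mu}\cdot\frac{1}{(\lambda-\mu)^2}+\frac{1}{4\mu^2}\cdot\frac{1}{\lambda+\mu},$$
I would read off the right half-plane residues: the double pole at $\lambda=\mu$ (for $\mathrm{Re}\,\mu>0$) contributes $-\tfrac{1}{4\mu^2}$, and the simple pole at $\lambda=-\mu$ (for $\mathrm{Re}\,\mu<0$) contributes $+\tfrac{1}{4\mu^2}$, while the double-pole part $\tfrac{1}{2\mu(\lambda-\mu)^2}$ is the derivative of a meromorphic function and contributes nothing. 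Reassembling the spectral sum and using the intertwining identities $B\chi^\pm(c_A)=\chi^\mp(c_A)B$ together with $[B,c_A^{-2}]=0$ (both immediate from $Bc_A=-c_AB$) then rearranges the operator-valued residue into the stated form $\tfrac14 c_A^{-2}\chi^-(c_A)B$.

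The main obstacle is the bookkeeping in the second case: one must carefully isolate double- versus simple-pole contributions for each generalised eigenvalue, determine their half-plane locations, and then use the anti-commutation identities to collapse the spectral sum into the asymmetric form involving only $\chi^-(c_A)$. By contrast, the commuting case reduces to a one-line calculus observation about the exact-form structure of $(\lambda-c_A)^{-3}$, after which the compactness assertion is a routine Sobolev-sandwich argument.
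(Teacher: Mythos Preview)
Your approach mirrors the paper's: both parts reduce immediately to the simplified integrands $(\lambda-c_A)^{-3}B$ (commuting case) and $(\lambda-c_A)^{-2}(\lambda+c_A)^{-1}B$ (anti-commuting case), and your treatment of the commuting case via the exact-derivative observation is precisely what the paper's terse assertion ``the residues of $p$ vanish'' means. The paper's proof of the second item is equally terse --- it simply records the simplified integrand and states the final formula --- whereas you supply a partial-fraction decomposition and an explicit residue analysis, which is a perfectly reasonable way to fill in that computation.

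There is, however, a genuine gap in your final ``rearrangement'' step. Carrying your own residue count through at the operator level yields
\[
\sum_{\mathrm{Re}(z)>0}\mathrm{Res}_{\lambda=z}\,(\lambda-c_A)^{-2}(\lambda+c_A)^{-1}
\;=\;\tfrac14\,c_A^{-2}\chi^-(c_A)\;-\;\tfrac14\,c_A^{-2}\chi^+(c_A),
\]
since the double pole at each $\mu$ with $\mathrm{Re}\,\mu>0$ contributes $-\tfrac{1}{4\mu^2}P_\mu$ (as you say), while the simple pole at $-\mu$ for each $\mu$ with $\mathrm{Re}\,\mu<0$ contributes $+\tfrac{1}{4\mu^2}P_\mu$. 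The intertwining identities $B\chi^\pm(c_A)=\chi^\mp(c_A)B$ and $[B,c_A^{-2}]=0$ only allow you to commute $B$ past the projectors; they cannot make the $\chi^+(c_A)$ contribution vanish. A scalar sanity check makes this concrete: for a single eigenvalue $\mu>0$ the right-half-plane residue sum of $(\lambda-\mu)^{-2}(\lambda+\mu)^{-1}$ equals $-\tfrac{1}{4\mu^2}$, whereas the target formula (with $\chi^-(c_A)=0$ on that eigenspace) would predict $0$. So your plan correctly reaches the same simplified integrand as the paper, but the claimed collapse to $\tfrac14\,c_A^{-2}\chi^-(c_A)B$ is not justified by the algebra you invoke; this is the step that needs either a different argument or a re-examination of the upstream sign conventions in Proposition~\ref{compofdiffofchiplandpc}.
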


\begin{proof}
The first item follows from Proposition \ref{compofdiffofchiplandpc} if we can prove that $\mathrm{Res}_{\lambda=z} p(x',\xi',\lambda)=0$ for all $z$. Since $A$ is well adapted, and $c_A$ commutes with $\frac{\partial c_{R_0}}{\partial x_n}|_\Sigma$, Proposition \ref{compofdiffofchiplandpc} ensures that 
\begin{align*}
p(x',\xi',\lambda)=&(\lambda-c_A(x',\xi'))^{-3}\frac{\partial c_{R_0}}{\partial x_n}(x',0,\xi').
\end{align*}
In particular, the residues of $p$ vanish if $\frac{\partial c_{R_0}}{\partial x_n}|_\Sigma$ commutes with $c_A$.

On the other hand, if $\frac{\partial c_{R_0}}{\partial x_n}|_\Sigma$ anti-commutes with $c_A$
\begin{align*}
p(x',\xi',\lambda)=&(\lambda-c_A(x',\xi'))^{-2}(\lambda+c_A(x',\xi'))^{-1}\frac{\partial c_{R_0}}{\partial x_n}(x',0,\xi').
\end{align*}
Using Proposition \ref{compofdiffofchiplandpc} we compute that in this case
\begin{align*}
\sigma_{-1}(\chi^+(A)-P_{\Ca_D})(x',\xi')=&\frac{1}{4}c_A(x',\xi')^{-2}\chi^-(c_A(x',\xi'))\frac{\partial c_{R_0}}{\partial x_n}(x',0,\xi').
\qedhere
\end{align*}
\end{proof}

\begin{proposition}
\label{charcompacssndaa}
Let $D$ be an elliptic first order differential operator with adapted boundary operator $A$. Then $\chi^+(A)-P_{\Ca_D}$ is compact on $\checkH_A(D)$ if and only if 
$$
\sigma_0(\chi^-(A))\sigma_{-1}(\chi^+(A)-P_{\Ca_D})\sigma_0(\chi^+(A))=0,
$$
as elements of $\Ck{\infty}(S^*\Sigma;\End(\pi^*E))$.
\end{proposition}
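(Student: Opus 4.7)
The plan is to decompose $T := \chi^+(A) - P_{\Ca_D}$ according to the splitting $\checkH_A(D) = \chi^-(A)\SobH{{\frac12}}(\Sigma;E) \oplus \chi^+(A)\SobH{-{\frac12}}(\Sigma;E)$, isolate the unique block in which compactness is non-trivial, and then identify its principal symbol. Concretely, write $T$ as a $2\times 2$ block matrix with entries $T_{ab} := \chi^a(A) T \chi^b(A)$ for $a,b \in \{+,-\}$. Since $\chi^{\pm}(A)$ are bounded complementary projections on $\checkH_A(D)$, compactness of $T$ on $\checkH_A(D)$ is equivalent to compactness of each $T_{ab}$ between the appropriate summands.

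By Proposition \ref{compofdiffofchiplandpc}, $T \in \Psi^{-1}_{\rm cl}(\Sigma;E)$, and since $\chi^{\pm}(A) \in \Psi^{0}_{\rm cl}(\Sigma;E)$ by Lemma \ref{specprojvscalderon}, each $T_{ab}$ lies in $\Psi^{-1}_{\rm cl}(\Sigma;E)$. Three of the four blocks are automatically compact by the Rellich embedding theorem: $T_{--}$ factors through $\SobH{{\frac32}}(\Sigma;E) \hookrightarrow \SobH{{\frac12}}(\Sigma;E)$; $T_{++}$ factors through $\SobH{{\frac12}}(\Sigma;E) \hookrightarrow \SobH{-{\frac12}}(\Sigma;E)$; and $T_{+-}$ factors through $\SobH{{\frac32}}(\Sigma;E) \hookrightarrow \SobH{-{\frac12}}(\Sigma;E)$. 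The remaining block $T_{-+}:\chi^+(A)\SobH{-{\frac12}}(\Sigma;E) \to \chi^-(A)\SobH{{\frac12}}(\Sigma;E)$ sits exactly at the critical Sobolev index for an order $-1$ operator, and its compactness is the sole obstruction.

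For this block, I would invoke the standard principle that a classical pseudodifferential operator of order $-1$ on the closed compact manifold $\Sigma$ is compact as an operator $\SobH{-{\frac12}}(\Sigma;E) \to \SobH{{\frac12}}(\Sigma;E)$ if and only if its principal symbol vanishes: the forward direction reduces the order to $-2$ and yields Rellich compactness, while the reverse direction follows from the fact that the principal symbol descends to a continuous homomorphism into $\Ck{\infty}(S^*\Sigma;\End(\pi^*E))$ detecting non-compact contributions in the Calkin algebra. To transfer this between $T_{-+}$ viewed on $\SobH{-{\frac12}}(\Sigma;E)$ and $T_{-+}$ viewed on the summand $\chi^+(A)\SobH{-{\frac12}}(\Sigma;E)$, I would use the exact identity $\chi^+(A)\chi^-(A)=0$ (valid since $\chi^{\pm}(A)$ are Riesz projections of the bisectorial operator $A$): this shows $T_{-+}$ vanishes on $\chi^-(A)\SobH{-{\frac12}}(\Sigma;E)$, so the two compactness notions are equivalent. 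Applying multiplicativity of the principal symbol,
$$\sigma_{-1}(T_{-+}) = \sigma_0(\chi^-(A))\,\sigma_{-1}(T)\,\sigma_0(\chi^+(A)),$$
which is the asserted criterion.

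The main obstacle is the critical-index compactness criterion for classical pseudodifferential operators at the edge of the Rellich range; although standard, its clean application here depends on the exact orthogonality $\chi^+(A)\chi^-(A)=\chi^-(A)\chi^+(A)=0$, which is what permits the hybrid-space compactness question to be reduced cleanly to a Sobolev-space statement accessible to the symbol calculus.
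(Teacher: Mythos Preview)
Your proof is correct and follows essentially the same approach as the paper's: both decompose $T=\chi^+(A)-P_{\Ca_D}$ into the four blocks $\chi^a(A)\,T\,\chi^b(A)$, observe that the three blocks $T_{++}$, $T_{--}$, $T_{+-}$ are automatically compact on $\checkH_A(D)$ by Rellich-type embeddings, and reduce the question to whether the critical block $T_{-+}\in\Psi^{-1}_{\rm cl}(\Sigma;E)$ is compact as a map $\SobH{-\frac12}\to\SobH{\frac12}$, which holds precisely when its principal symbol $\sigma_0(\chi^-(A))\,\sigma_{-1}(T)\,\sigma_0(\chi^+(A))$ vanishes. Your explicit use of $\chi^+(A)\chi^-(A)=0$ to transfer compactness between the hybrid summand and the full Sobolev space is exactly the content of the paper's assertion that ``it acts compactly on $\checkH_A(D)$ if and only if it acts compactly $\SobH{-\frac12}(\Sigma;E)\to\SobH{\frac12}(\Sigma;E)$''.
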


\begin{proof}
We write 
\begin{align*}
\chi^+(A)-P_{\Ca_D}=&\chi^+(A)(\chi^+(A)-P_{\Ca_D})\chi^+(A)+\chi^-(A)(\chi^+(A)-P_{\Ca_D})\chi^-(A)+\\
&\qquad+\chi^+(A)(\chi^+(A)-P_{\Ca_D})\chi^-(A)+ \chi^-(A)(\chi^+(A)-P_{\Ca_D})\chi^+(A)\\
=&\chi^+(A)(\chi^+(A)-P_{\Ca_D})\chi^+(A)+\chi^-(A)(\chi^+(A)-P_{\Ca_D})\chi^-(A)+\\
&\quad+\chi^+(A)(\chi^+(A)-P_{\Ca_D})\chi^-(A)+ \chi^-(A)(\chi^+(A)-P_{\Ca_D})\chi^+(A)
\end{align*}
Since $\chi^+(A)-P_{\Ca_D}\in \Psi^{-1}_{\rm cl}(\Sigma;E)$ the first two terms are compact for any adapted boundary operator $A$. The third term is a pseudodifferential operator of order $-1$ and it acts compactly on $\checkH_A(D)$ if and only if it acts compactly $\SobH{{\frac12}}(\Sigma;E)\to \SobH{-{\frac12}}(\Sigma;E)$ and as such the third term is compact on  $\checkH_A(D)$ for any adapted boundary operator $A$. The fourth term is a pseudodifferential operator of order $-1$ and it acts compactly on $\checkH_A(D)$ if and only if it acts compactly $\SobH{-{\frac12}}(\Sigma;E)\to \SobH{{\frac12}}(\Sigma;E)$ and as such the fourth term is compact on $\checkH_A(D)$ if and only if its symbol vanish, i.e. $\sigma_0(\chi^-(A))\sigma_{-1}(\chi^+(A)-P_{\Ca_D})\sigma_0(\chi^+(A))=0$.
\end{proof}

\begin{proposition}
\label{discexaorderminell}
Let $D$ be a first order elliptic differential operator with well adapted boundary operator $A$ and assume that $\frac{\partial c_{R_0}}{\partial x_n}|_\Sigma$ anti-commutes with $c_A$. Then $\chi^+(A)-P_{\Ca_D}$ is compact on $\checkH_A(D)$ if and only if 
$$\chi^-(c_A)\frac{\partial c_{R_0}}{\partial x_n}|_{\Sigma}= 0.$$
\end{proposition}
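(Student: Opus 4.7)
My plan is to combine the spectral criterion of Proposition \ref{charcompacssndaa} with the explicit symbol computation from Corollary \ref{somecompwithcald}, and then to exploit the anti-commutation of $\tfrac{\partial c_{R_0}}{\partial x_n}|_\Sigma$ with $c_A$ via analytic functional calculus to simplify.

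By Proposition \ref{charcompacssndaa}, $\chi^+(A)-P_{\Ca_D}$ is compact on $\checkH_A(D)$ if and only if
$$\sigma_0(\chi^-(A))\,\sigma_{-1}(\chi^+(A)-P_{\Ca_D})\,\sigma_0(\chi^+(A))=0$$
as a section of $\End(\pi^*E)$ over $S^*\Sigma$. By Lemma \ref{symbolofpositivproj} we have $\sigma_0(\chi^\pm(A))=\chi^\pm(c_A)$, and by Corollary \ref{somecompwithcald} the anti-commutation assumption gives the explicit formula
$$\sigma_{-1}(\chi^+(A)-P_{\Ca_D})=\tfrac{1}{4}\,c_A^{-2}\chi^-(c_A)\tfrac{\partial c_{R_0}}{\partial x_n}\big|_{\Sigma}.$$
Substituting, and using that $c_A^{-2}$ and $\chi^-(c_A)$ commute (being functions of $c_A$) together with $\chi^-(c_A)^2=\chi^-(c_A)$, the criterion becomes
$$c_A^{-2}\chi^-(c_A)\,\tfrac{\partial c_{R_0}}{\partial x_n}\big|_{\Sigma}\,\chi^+(c_A)=0.$$
Since $c_A$ is invertible (by ellipticity), the factor $c_A^{-2}$ can be dropped, reducing the condition to
$$\chi^-(c_A)\,\tfrac{\partial c_{R_0}}{\partial x_n}\big|_{\Sigma}\,\chi^+(c_A)=0. \qquad(\ast)$$

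The key step is now to use anti-commutation to collapse $(\ast)$ to the simpler statement $\chi^-(c_A)\tfrac{\partial c_{R_0}}{\partial x_n}|_{\Sigma}=0$. From $c_A\,\tfrac{\partial c_{R_0}}{\partial x_n}|_\Sigma=-\tfrac{\partial c_{R_0}}{\partial x_n}|_\Sigma\,c_A$, induction gives $p(c_A)\tfrac{\partial c_{R_0}}{\partial x_n}|_\Sigma=\tfrac{\partial c_{R_0}}{\partial x_n}|_\Sigma\,p(-c_A)$ for every polynomial $p$, and hence by the Riesz-Dunford holomorphic functional calculus, $f(c_A)\tfrac{\partial c_{R_0}}{\partial x_n}|_\Sigma=\tfrac{\partial c_{R_0}}{\partial x_n}|_\Sigma f(-c_A)$ for every function $f$ holomorphic on a neighbourhood of $\mathrm{spec}(c_A)\cup(-\mathrm{spec}(c_A))$. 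Applying this to the indicator of the open right half-plane, and using the identity $\chi^-(-z)=\chi^+(z)$, we obtain the intertwining relation
$$\chi^-(c_A)\,\tfrac{\partial c_{R_0}}{\partial x_n}\big|_\Sigma=\tfrac{\partial c_{R_0}}{\partial x_n}\big|_\Sigma\,\chi^+(c_A).$$
Post-multiplying by $\chi^+(c_A)$ and using idempotency gives $\chi^-(c_A)\tfrac{\partial c_{R_0}}{\partial x_n}|_\Sigma\chi^+(c_A)=\chi^-(c_A)\tfrac{\partial c_{R_0}}{\partial x_n}|_\Sigma$. Therefore $(\ast)$ is equivalent to $\chi^-(c_A)\tfrac{\partial c_{R_0}}{\partial x_n}|_\Sigma=0$, proving the proposition.

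The potential subtlety (and the only real content beyond bookkeeping) is the use of the holomorphic functional calculus for a (generally non-self-adjoint) bisectorial operator on a finite-dimensional fibre, together with the identification of $\chi^\pm$ with Riesz projectors onto the spectral subsets in the right/left half-planes. This is routine for $c_A(x',\xi')$ acting on $E_{x'}$ since ellipticity rules out imaginary spectrum, but it is the one point worth stating carefully.
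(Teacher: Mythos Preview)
Your proof is correct and follows essentially the same route as the paper's: invoke Proposition \ref{charcompacssndaa}, plug in the symbol from Corollary \ref{somecompwithcald}, drop the invertible factor $c_A^{-2}$, and use anti-commutation to collapse $\chi^-(c_A)\,\tfrac{\partial c_{R_0}}{\partial x_n}|_\Sigma\,\chi^+(c_A)$ to $\chi^-(c_A)\,\tfrac{\partial c_{R_0}}{\partial x_n}|_\Sigma$. You spell out the intertwining $\chi^-(c_A)X=X\chi^+(c_A)$ via holomorphic functional calculus more carefully than the paper does, but the argument is the same.
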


\begin{proof}
By Proposition \ref{charcompacssndaa}, it suffices to prove that $\chi^-(c_A)\sigma_{-1}(\chi^+(A)-P_{\Ca_D})\chi^+(c_A)=0$ if and only if $\chi^-(c_A)\frac{\partial c_{R_0}}{\partial x_n}|_{\Sigma}= 0$ when $\frac{\partial c_{R_0}}{\partial x_n}|_\Sigma$ anti-commutes with $c_A$. But by Corollary \ref{somecompwithcald}, 
\begin{align*}
\chi^-(c_A)\sigma_{-1}(\chi^+(A)-P_{\Ca_D})\chi^+(c_A)=&\frac{1}{4}c_A^{-2}\chi^-(c_A)\frac{\partial c_{R_0}}{\partial x_n}|_{\Sigma}\chi^+(c_A)\\
=&\frac{1}{4}c_A^{-2}\chi^-(c_A)\frac{\partial c_{R_0}}{\partial x_n}|_{\Sigma},
\end{align*}
where we in the last line used that $\frac{\partial c_{R_0}}{\partial x_n}|_\Sigma$ anti-commutes with $c_A$. The proposition follows from that $c_A$ is invertible. 
\end{proof}

\begin{example}
\label{unitdiscexample}
Let us give an easy example showcasing when $\chi^+(A)-P_{\Ca_D}$ fails to be compact on $\checkH_A(D)$. Take $M$ to be the unit disc and $E=F=M\times \C^2$. 

We first consider the rather straight forward example of the Dirac type operator $D_0$ given in polar coordinates by
$$D_0=\begin{pmatrix} 0& \partial_r+\frac{i}{r}\partial_\theta\\ 
-\partial_r+\frac{i}{r}\partial_\theta& 0\end{pmatrix}=\sigma(\partial_r+A+R_{00}),$$
where 
$$\sigma=\begin{pmatrix} 0& 1\\-1&0\end{pmatrix},\quad A=\begin{pmatrix} -i\partial_\theta& 0\\0&i\partial_\theta\end{pmatrix}\quad\mbox{and}\quad R_{00}=(r^{-1}-1)A.$$
It is readily verified that $A$ is a well adapted boundary operator, and in this case it is self-adjoint. A short computation with the Cauchy-Riemann equation in polar coordinates show that $P_{\Ca_{D_0}}=\chi^+(A)$ for the operator $D_0$. 

To jazz up this example a bit, we define 
$$R_0:=R_{00}-i\alpha\sigma\partial_\theta,$$
where $\alpha=\alpha(r)$ is a smooth function $\alpha\in \Ck[c]{\infty}(0,1]$ with $\alpha(1)=0$. We define 
$$D_\alpha:=\sigma(\partial_r+A+R_{0})=\begin{pmatrix} i\alpha\partial_\theta& \partial_r+\frac{i}{r}\partial_\theta\\ 
-\partial_r+\frac{i}{r}\partial_\theta& i\alpha\partial_\theta \end{pmatrix}.$$
The reader should note that $D_\alpha$ is formally self-adjoint if and only if $\alpha$ is real valued and $A$ is well adapted for any choice of $\alpha$. Let us compute $\sigma_{-1}(\chi^+(A)-P_{\Ca_D})$ in this case. For simplicity, we write $\xi$ for the cotangent variable on $\Sigma=S^1$. We have that 
$$c_A(\xi)=\begin{pmatrix} -\xi& 0\\0&\xi\end{pmatrix},  \quad\mbox{and}\quad (\lambda-c_A)^{-1}(\xi)=\begin{pmatrix} (\lambda+\xi)^{-1}& 0\\0&(\lambda-\xi)^{-1}\end{pmatrix}.$$
We have $c_{R_0}(r,\xi)=(r^{-1}-1)c_A-\alpha\sigma \xi$ and compute that 
$$\frac{\partial c_{R_0}}{\partial r}|_{\Sigma}(\xi)=-c_A(\xi)-\alpha'(1)\sigma \xi.$$
Although Corollary \ref{somecompwithcald} does not give an immediate answer because $\frac{\partial c_{R_0}}{\partial r}|_{\Sigma}$ does not commute nor anticommute with $c_A$ in this case, we have a sum of a commuting and anticommuting term and a short computation gives us that 
\begin{align*}
p(\xi,\lambda)=&-(\lambda-c_A(\xi))^{-2}\frac{\partial c_{R_0}}{\partial r}(r,\xi)|_{r=1}(\lambda-c_A(\xi))^{-1}\\
=&(\lambda-c_A(\xi))^{-2}c_A(\xi)(\lambda-c_A(\xi))^{-1}+(\lambda-c_A(\xi))^{-2}\alpha'(1)\sigma \xi(\lambda-c_A(\xi))^{-1}\\
=&(\lambda-c_A(\xi))^{-3}c_A(\xi)+\alpha'(1) \xi(\lambda-c_A(\xi))^{-2}(\lambda+c_A(\xi))^{-1}\sigma.
\end{align*}
The first term has no residues, and the residues of the second term is computed as in Corollary \ref{somecompwithcald} to produce 
\begin{align*}
\sigma_{-1}(\chi^+(A)-P_{\Ca_D})(\xi)=&\frac{\alpha'(1) \xi}{4}c_A^{-2}\chi^-(c_A)\sigma=\frac{\alpha'(1) }{4\xi}\chi^-(c_A)\sigma\\
=&\frac{\alpha'(1) }{4}\begin{pmatrix} 0&\frac{\chi_{(0,\infty)}(\xi)}{\xi}\\-\frac{\chi_{(-\infty,0)}(\xi)}{\xi}&0\end{pmatrix},
\end{align*}
using that
$$\chi^-(c_A)=\sigma_0(\chi^-(A))=\begin{pmatrix} \chi_{(0,\infty)}(\xi)& 0\\0&\chi_{(-\infty,0)}(\xi)\end{pmatrix}.$$
We see that $\sigma_{-1}(\chi^+(A)-P_{\Ca_D})\neq 0$ if and only if $\alpha'(1)\neq 0$. Moreover, since 
\begin{align*}
\chi^-(c_A)\sigma_{-1}(\chi^+(A)-&P_{\Ca_D})\chi^+(c_A)\\
=&\frac{\alpha'(1) }{4}\begin{pmatrix} 0&\frac{\chi_{(0,\infty)}(\xi)}{\xi}\\-\frac{\chi_{(-\infty,0)}(\xi)}{\xi}&0\end{pmatrix}\begin{pmatrix} \chi_{(-\infty,0)}(\xi)& 0\\0&\chi_{(0,\infty)}(\xi)\end{pmatrix}\\
=&\frac{\alpha'(1) }{4}\begin{pmatrix} 0&\frac{\chi_{(0,\infty)}(\xi)}{\xi}\\-\frac{\chi_{(-\infty,0)}(\xi)}{\xi}&0\end{pmatrix},
\end{align*}
we see that $\chi^+(A)-P_{\Ca_D}$ fails to be compact on $\checkH_A(D)$ if and only if $\alpha'(1)\neq 0$ by Proposition \ref{charcompacssndaa}. Let us summarise the salient features of this example in a proposition.
\end{example}

\begin{proposition}
\label{compactnessfailure}
Assume that $M$ is the unit disc, $E=F=M\times \C^2$ and take $D_\alpha$ as in Example \ref{unitdiscexample} for an $\alpha$ with $\alpha'(1)\neq 0$. We let $A$ denote its well adapted boundary operator. Then it holds that 
\begin{enumerate}[(i)]
\item $\sigma_{-1}(\chi^+(A)-P_{\Ca_D})\neq 0$.
\item $\chi^+(A)-P_{\Ca_D}$ fails to be compact on $\checkH_A(D)$.
\item \label{cpctfail:3} 
	The space $\chi^+(A)\SobH{-{\frac12}}(S^1,\C^2)\cap {\Ca_D}$ is a finite-dimensional subspace of $\Ck{\infty}(S^1,\C^2)$.
\end{enumerate} 
\end{proposition}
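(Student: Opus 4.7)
Items (i) and (ii) are immediate from computations already carried out in Example~\ref{unitdiscexample}: for (i) the example explicitly computes $\sigma_{-1}(\chi^+(A)-P_{\Ca_D})$ and observes it is non-zero whenever $\alpha'(1)\neq 0$; for (ii) one invokes the compactness criterion of Proposition~\ref{charcompacssndaa} together with the fact, also verified in the example, that $\chi^-(c_A)\sigma_{-1}(\chi^+(A)-P_{\Ca_D})\chi^+(c_A)$ equals the same non-vanishing symbol matrix. So the plan concerns only part~\ref{cpctfail:3}.

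The main idea is to exploit the $S^1$-rotation symmetry of $D_\alpha$ (since $\alpha$ depends only on $r$) together with explicit ODE analysis. Because $D_\alpha$, $A$, and the boundary trace are all rotationally equivariant, the space $V := \chi^+(A)\SobH{-{\frac12}}(S^1,\C^2) \cap \Ca_{D_\alpha}$ splits along Fourier modes in $\theta$ as $V=\bigoplus_{k\in\In}V_k$ with $V_k\subseteq\C^2\cdot e^{ik\theta}$. At each non-zero mode $k$ the Fourier-reduced equation $D_\alpha u_k=0$ becomes a $2\times 2$ linear ODE on $(0,1]$, and $L^2(r\,dr)$-integrability at $r=0$ selects a single one-dimensional space of solutions, yielding a unique boundary vector $v_k\in\C^2$ that spans $\Ca_{D_\alpha}$ at mode $k$. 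Since $\chi^+(A)$ at mode $k>0$ projects onto $\C e_1$ and at mode $k<0$ onto $\C e_2$, the contribution $V_k$ is non-trivial if and only if $v_k$ is parallel to the corresponding coordinate axis.

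The main step, and also the main technical obstacle, is a WKB/asymptotic analysis showing that $v_k$ stabilises in direction as $|k|\to\infty$. Passing to the $\sigma$-eigenbasis $\xi_\pm=\tfrac{1}{\sqrt 2}(1,\pm i)^T$, the Fourier-reduced system takes the form $\partial_r\tilde u=-k\mathcal{M}(r)\tilde u$ with
$$\mathcal{M}(r)=\begin{pmatrix}i\alpha(r)&1/r\\ 1/r&-i\alpha(r)\end{pmatrix},\qquad \mathrm{spec}\,\mathcal{M}(r)=\{\pm\lambda(r)\},\quad\lambda(r)=\sqrt{1/r^2-\alpha(r)^2}.$$
Since $\alpha\in\Ck[c]{\infty}(0,1]$ is supported in some $[r_0,1]$ with $|\alpha|<1/r$, the function $\lambda$ is real and bounded away from zero on $[r_0,1]$, so there are no turning points and a standard WKB construction produces fundamental solutions $\phi_\pm(r)\exp(\pm k\!\int\!\lambda\,dr')$ with smooth eigenvector fields $\phi_\pm$. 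The $L^2$ condition at $r=0$, together with the explicit behaviour $\alpha\equiv 0$ on $[0,r_0]$, singles out the decaying WKB mode; at $r=1$, where $\alpha(1)=0$, the matrix $\mathcal{M}(1)$ reduces to $\bigl(\begin{smallmatrix}0&1\\1&0\end{smallmatrix}\bigr)$ whose decaying eigenvector is $(1,-1)^T/\sqrt 2$ in the $(\xi_+,\xi_-)$ basis, translating back to $(0,i)\propto e_2$ in the original basis. Hence $v_k\to e_2$ in direction as $k\to+\infty$; analogously $v_k\to e_1$ as $k\to-\infty$. In either limit $v_k$ is not proportional to the relevant $\chi^+(A)$-axis, so $V_k=0$ for all but finitely many $k$. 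Therefore $V$ is finite-dimensional, and its elements are finite trigonometric sums in $\theta$ valued in $\C^2$, hence lie in $\Ck{\infty}(S^1,\C^2)$. The most delicate technical point is controlling the WKB remainders uniformly in $k$ over $[r_0,1]$; the absence of turning points and the explicit vanishing of $\alpha$ off $[r_0,1]$ keep the analysis standard, and only the uniform-in-$k$ stabilisation of the selected WKB eigenvector at $r=1$ needs to be verified with care.
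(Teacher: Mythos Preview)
Your approach to part~(iii) is correct but substantially different from the paper's. You exploit rotational symmetry to decompose Fourier-mode by Fourier-mode, then run a WKB argument on the reduced ODE to show that the boundary trace $v_k$ of the $L^2$ kernel element at mode $k$ converges in direction to $e_2$ (resp.\ $e_1$) as $k\to+\infty$ (resp.\ $-\infty$), and hence falls outside $\ran\chi^+(A)|_k$ for all but finitely many $k$. One small point worth making explicit: your standing assumption $|\alpha(r)|<1/r$ is not an extra hypothesis---it is equivalent to interior ellipticity of $D_\alpha$ (the determinant of the principal symbol in polar coordinates is $-(\rho^2+(1/r^2-\alpha^2)\tau^2)$), so it holds automatically and there really are no turning points. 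With that, the WKB remainder estimate on the compact interval $[r_0,1]$ gives $v_k/|v_k|=\phi_\mp(1)/|\phi_\mp(1)|+O(1/k)$, which is all you need.

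The paper's proof is much shorter and purely symbolic: since $f\in V$ forces $\chi^-(A)f=0$, one has $(\chi^+(A)-P_{\Ca_D})f=Lf$ where
\[
L:=\chi^+(A)-P_{\Ca_D}+\sigma\Lambda\chi^-(A),\qquad \Lambda:=\tfrac{\alpha'(1)}{4}(1+\Delta)^{-1/2}\in\Psi^{-1}_{\rm cl}.
\]
The correction term is chosen precisely so that $\sigma_{-1}(L)$ becomes pointwise invertible (a one-line symbol check from the formulas already computed in the example). Thus $V\subset\ker L$ with $L$ elliptic of order $-1$, and finite dimensionality together with smoothness follow immediately from elliptic regularity. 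Your route gives more explicit information---in principle one could extract an effective bound on the modes where $V_k$ can be nonzero---at the cost of the ODE bookkeeping; the paper's argument, by contrast, would transfer verbatim to any first-order elliptic $D$ once $\sigma_{-1}(\chi^+(A)-P_{\Ca_D})$ is known.
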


\begin{proof}
The first two statements were already proven in Example \ref{unitdiscexample}. It remains to prove that last. 

We define the operator $\Lambda:=\frac{\alpha'(1)}{4}(1+\Delta)^{-{\frac12}}\in \Psi^{-1}_{\rm cl}(S^1,\C^2)$. Let us first prove the claim that $L:=\chi^+(A)-P_{\Ca_D}+\sigma\Lambda\chi^-(A)\in \Psi^{-1}_{\rm cl}(S^1,\C^2)$ is elliptic. We have that 
$$\sigma_{-1}(L)=\frac{\alpha'(1) }{4}\begin{pmatrix} 0&\frac{\chi_{(0,\infty)}(\xi)}{\xi}-\frac{\chi_{(-\infty,0)}(\xi)}{\xi}\\-\frac{\chi_{(-\infty,0)}(\xi)}{\xi}-\frac{\chi_{(0,\infty)}(\xi)}{\xi}&0\end{pmatrix}=\frac{\alpha'(1) }{4}\begin{pmatrix} 0&\frac{1}{|\xi|}\\-\frac{1}{\xi}&0\end{pmatrix}.$$
This is invertible for all $\xi\neq 0$, so $L$ is elliptic. 

Consider the space $\chi^+(A)\SobH{-{\frac12}}(S^1,\C^2)\cap {\Ca_D}$. We have the following equivalence 
\begin{align*}
f\in \chi^+(A)\SobH{-{\frac12}}(S^1,\C^2)\cap {\Ca_D}&\iff
\begin{cases} 
\chi^+(A)f=f\\
P_{\Ca_D}f=f\end{cases}\\
&\iff
\begin{cases} 
\chi^+(A)f=f\\
(\chi^+(A)-P_{\Ca_D})f=0
\end{cases}\\
&\iff
\begin{cases} 
\chi^+(A)f=f\\
Lf=0.\end{cases}
\end{align*}
In the last equivalence we used that if $\chi^+(A)f=f$, then $\chi^-(A)f=0$ and $\chi^-(A)f=0$ implies that $(\chi^+(A)-P_{\Ca_D})f=Lf$. We can conclude that $\chi^+(A)\SobH{-{\frac12}}(S^1,\C^2)\cap {\Ca_D}=\chi^+(A)\SobH{-{\frac12}}(S^1,\C^2)\cap \ker(L)$. But since $L$ is elliptic, $\ker(L)\subseteq \Ck{\infty}(S^1,\C^2)$ is a finite-dimensional subspace and item \ref{cpctfail:3} follows.
\end{proof}

Although Proposition \ref{compactnessfailure} shows that $\ker(D_{\chi^+(A)\SobH{-\frac12}})$ may fail to be infinite dimensional in general, we have instead the following.

\begin{corollary}
\label{Cor:FinDimComp}
We have that for any spectral cut $r \in \R$ (i.e., $r \in \R$ such that $A_r := A - r$ is invertible bisectorial) and any pseudo-differential Calderón projection $\Proj{{\Ca_D}}$,  
$$ \faktor{{\Ca_D}}{ \Proj{{\Ca_D}} \chi^+(A_r)\SobH{-\frac12}(\Sigma;E)}
\quad\text{and}\quad
\faktor{\chi^+(A_r) \SobH{-\frac12}(\Sigma;E)}{ \chi^+(A) {\Ca_D}}$$
are finite dimensional.
\end{corollary}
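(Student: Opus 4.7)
The plan is to realize both quotients as cokernels of a pair of bounded operators between $\Ca_D$ and $\chi^+(A_r)\SobH{-\frac12}(\Sigma;E)$ that are Fredholm by a two-sided parametrix argument rooted in the fact that $\chi^+(A_r)$ and $P_{\Ca_D}$ have the same principal symbol. (We interpret $\chi^+(A)$ in the second quotient as $\chi^+(A_r)$, which is the only sensible reading once a cut has been fixed.)

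By Lemma \ref{specprojvscalderon}, both $\chi^+(A_r), P_{\Ca_D} \in \Psi^{\pmb 0}_{\rm cl}(\Sigma;E)$ have principal symbol $p_+(D)$, so
$$K_0 := \chi^+(A_r) - P_{\Ca_D} \in \Psi^{-1}_{\rm cl}(\Sigma;E).$$
Define the bounded linear maps between the closed subspaces $\chi^+(A_r)\SobH{-\frac12}(\Sigma;E), \Ca_D \subseteq \SobH{-\frac12}(\Sigma;E)$ by
$$F := P_{\Ca_D}|_{\chi^+(A_r)\SobH{-\frac12}(\Sigma;E)} \quad\text{and}\quad G := \chi^+(A_r)|_{\Ca_D}.$$
Then $\ran(F) = P_{\Ca_D}\chi^+(A_r)\SobH{-\frac12}(\Sigma;E)$ and $\ran(G) = \chi^+(A_r)\Ca_D$, so the two spaces of the corollary are exactly $\coker(F)$ and $\coker(G)$.

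Using $P_{\Ca_D}\xi = \xi$ for $\xi \in \Ca_D$ and $\chi^+(A_r)\eta = \eta$ for $\eta \in \chi^+(A_r)\SobH{-\frac12}(\Sigma;E)$, a direct computation with $\chi^+(A_r) = P_{\Ca_D} + K_0$ yields
$$FG = 1 + P_{\Ca_D}K_0|_{\Ca_D} \quad\text{and}\quad GF = 1 - \chi^+(A_r) K_0|_{\chi^+(A_r)\SobH{-\frac12}(\Sigma;E)}.$$
Since $K_0 \in \Psi^{-1}_{\rm cl}(\Sigma;E)$ maps $\SobH{-\frac12}(\Sigma;E)$ continuously into $\SobH{\frac12}(\Sigma;E)$, and the inclusion $\SobH{\frac12}(\Sigma;E) \hookrightarrow \SobH{-\frac12}(\Sigma;E)$ is compact by Rellich, both $P_{\Ca_D}K_0$ and $\chi^+(A_r)K_0$ are compact on $\SobH{-\frac12}(\Sigma;E)$. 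Restricted to our closed subspaces (which carry the induced topology), they are still compact, so $FG$ and $GF$ are compact perturbations of the identity and hence Fredholm.

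By the classical Atkinson-type principle — if $FG$ and $GF$ are Fredholm on Banach spaces then so are $F$ and $G$ — both $F$ and $G$ are Fredholm. In particular, $\coker(F)$ and $\coker(G)$ are finite dimensional, which is exactly the claim. The only mildly subtle point is the bookkeeping of topologies: one must ensure that the cokernels computed as quotients of Banach spaces under the $\SobH{-\frac12}$-induced norm agree with the algebraic quotients appearing in the statement, but this is automatic because a Fredholm operator has closed range and the algebraic and topological quotients coincide.
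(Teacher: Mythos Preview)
Your proof is correct and takes a genuinely different route from the paper's. The paper argues via the Fredholm-pair characterisation of regular boundary conditions (Theorem~\ref{Thm:Ell}): it applies that theorem twice, once to the regular boundary condition $\Ca_D^c$ with boundary decomposing projection $\chi^+(A_r)$, and once to the APS condition $\chi^-(A_r)\SobH{\frac12}(\Sigma;E)$ with boundary decomposing projection $P_{\Ca_D}$, and then identifies each quotient with the finite-dimensional space $\checkH_A(D)/(B+\GProj_+\checkH_A(D))$ for the relevant Fredholm pair using \cite[Lemma~A.4]{BBan}. Your argument instead stays entirely inside $\SobH{-\frac12}(\Sigma;E)$: using only Lemma~\ref{specprojvscalderon} and Rellich, you note that $K_0=\chi^+(A_r)-P_{\Ca_D}$ acts compactly on $\SobH{-\frac12}$, so the restriction maps $F$ and $G$ are mutual quasi-inverses modulo compacts and hence Fredholm. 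This is more elementary and self-contained; the paper's route, by contrast, situates the quotients within its structural framework of regular boundary conditions and boundary decomposing projections, making their conceptual origin transparent. One cosmetic remark: the principle you invoke (``if $FG$ and $GF$ are Fredholm then so are $F$ and $G$'') is correct, but since you actually have $FG-1$ and $GF-1$ compact, you could simply cite Atkinson's theorem directly.
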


\begin{proof}
Using Corollary \ref{Cor:CalCheck}, we have that $\Ca_D^c = \ker \Proj{{\Ca_D}}$ defines a regular boundary condition. 
Therefore, from Theorem \ref{Thm:Ell} with the choice of $\GProj_{\pm} = \chi^{\pm}(A_r)$  we have that  $(\Ca_D^c, \chi^+(A_r)\SobH{-\frac12}(\Sigma;E))$ is a Fredholm pair in $\checkH_A(D)$.
Therefore, 
$$
\faktor{\checkH_A(D)}{\Ca_D^c + \chi^+(A_r)\SobH{-\frac12}(\Sigma;E)} \cong \faktor{{\Ca_D}}{ \Proj{{\Ca_D}} \chi^+(A_r)\SobH{-\frac12}(\Sigma;E)}
$$
via Lemma A4 in \cite{BBan}.

For the remaining quotient, we invoke Theorem \ref{Thm:Ell} with a choice of $\GProj_{+} = \Proj{{\Ca_D}}$, and since the generalised APS condition $\chi^-(A_r)\SobH{\frac12}(\Sigma;E)$ is a regular boundary condition, we have that $(\chi^-(A_r)\SobH{\frac12}(\Sigma;E), {\Ca_D})$ is a Fredholm pair in $\checkH_A(D)$. 
Therefore, using \cite[Lemma A.4]{BBan}, we obtain that 
\begin{equation*}
\faktor{\checkH_A(D)}{\chi^-(A_r)\SobH{\frac12}(\Sigma;E)+ {\Ca_D} } \cong 
\faktor{\chi^+(A_r) \SobH{-\frac12}(\Sigma;E)}{ \chi^+(A_r) {\Ca_D}}.
\end{equation*}
This finishes the proof. 
\end{proof} 

If a boundary condition $B$ defines a realisation with infinite dimensional kernel, then this is equivalent to saying that $B \cap {\Ca_D}$ is infinite dimensional. 
Although Corollary \ref{Cor:FinDimComp} ``measures'' the amount of $\chi^+(A)\SobH{-\frac12}(\Sigma;E)$ in ${\Ca_D}$ (as seen through the projection $\Proj{{\Ca_D}}$), it does not give us information about such a $B$.
The following gives a more precise description in the way of a necessary condition. 

\begin{proposition} 
Let $B$ be a boundary condition. 
If  $\dim \ker(D_{\rm B}) = \infty$, then $B \not \subset \SobH{\frac12}(\Sigma;E)$ and  $\dim \chi^+(A_r)B  = \infty$.
\end{proposition}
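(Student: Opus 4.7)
The plan is to argue by contraposition, in both cases reducing to Proposition \ref{Prop:MaxClosed}, which forces $\dim \ker(D_{\rm B})<\infty$ as soon as $B$ is semi-regular. Since the boundary pairing for a first order operator is $\SobHH{m-{\frac12}}(\Sigma;E\otimes\C^m)=\SobH{{\frac12}}(\Sigma;E)$, Proposition \ref{charsemiellintermsofboundary} identifies the semi-regular boundary conditions for $D$ as exactly the closed subspaces $B\subseteq \checkH_A(D)$ with $B\subseteq \SobH{{\frac12}}(\Sigma;E)$.

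For the first assertion, assume $B\subseteq \SobH{{\frac12}}(\Sigma;E)$. Then $B$ is semi-regular, so Proposition \ref{Prop:MaxClosed} gives $\dim \ker(D_{\rm B})<\infty$, contradicting the hypothesis.

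For the second assertion, assume $\dim \chi^+(A_r)B<\infty$. I would introduce
$$B_0:=B\cap \chi^-(A_r)\SobH{{\frac12}}(\Sigma;E)=\{f\in B:\chi^+(A_r)f=0\}.$$
Since $\checkH_A(D)=\chi^-(A_r)\SobH{{\frac12}}(\Sigma;E)\oplus \chi^+(A_r)\SobH{-{\frac12}}(\Sigma;E)$ is a topological direct sum (Lemma \ref{Lem:CheckNonZero} together with the boundedness of $\chi^\pm(A_r)$), the summand $\chi^-(A_r)\SobH{{\frac12}}(\Sigma;E)$ is closed in $\checkH_A(D)$; intersecting with the closed subspace $B$ shows $B_0$ is closed in $\checkH_A(D)$, hence a boundary condition. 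By construction $B_0\subseteq \SobH{{\frac12}}(\Sigma;E)$, so $B_0$ is semi-regular and Proposition \ref{Prop:MaxClosed} yields $\dim \ker(D_{B_0})<\infty$. The continuous surjection $\chi^+(A_r):B\to \chi^+(A_r)B$ has kernel $B_0$, so $B/B_0\cong \chi^+(A_r)B$ is finite dimensional. Applying Proposition \ref{domainandkernelHO} to $B_0\subseteq B$ gives $\dim\bigl(\dom(D_{\rm B})/\dom(D_{B_0})\bigr)=\dim(B/B_0)<\infty$, and since $\ker(D_{B_0})=\ker(D_{\rm B})\cap\dom(D_{B_0})$ the induced map
$$\ker(D_{\rm B})/\ker(D_{B_0})\hookrightarrow \dom(D_{\rm B})/\dom(D_{B_0})$$
is injective into a finite dimensional space. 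Combined with $\dim\ker(D_{B_0})<\infty$, this forces $\dim\ker(D_{\rm B})<\infty$, again contradicting the hypothesis.

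The only subtlety I foresee is bookkeeping around $B_0$: making sure it really is closed in $\checkH_A(D)$ (not only in $B$), and that the two appearances of semi-regularity genuinely fall under the hypotheses of Proposition \ref{Prop:MaxClosed}. Both reduce to the topological splitting $\checkH_A(D)=\chi^-(A_r)\SobH{{\frac12}}\oplus\chi^+(A_r)\SobH{-{\frac12}}$ and the identification $\SobHH{m-{\frac12}}=\SobH{{\frac12}}$ in order one, after which the argument proceeds purely by abstract functional analysis on the exact sequence in Proposition \ref{domainandkernelHO}.
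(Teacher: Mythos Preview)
Your proof is correct and follows the same overall strategy as the paper (contraposition, reduction to Proposition \ref{Prop:MaxClosed} via a semi-regular auxiliary condition), but your implementation of the second part is slightly different and in fact more streamlined. The paper takes $B_0=\chi^-(A_r)\SobH{\frac12}(\Sigma;E)$ to be the full generalised APS condition and then enlarges $B$ to $B'=\chi^-(A_r)\SobH{\frac12}(\Sigma;E)\oplus \chi^+(A_r)B\supseteq B$, bounding $\dim\ker(D_B)$ from above via $\dim\ker(D_{B'})$. You instead choose $B_0=B\cap\ker(\chi^+(A_r))\subseteq B$ and work entirely inside $B$, which avoids the auxiliary enlargement $B'$ and appeals directly to the short exact sequence in Proposition \ref{domainandkernelHO}. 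Both routes use the same finite-dimensional quotient $B/B_0\cong\chi^+(A_r)B$ and the same injection $\ker(D_B)/\ker(D_{B_0})\hookrightarrow\dom(D_B)/\dom(D_{B_0})$; yours is simply a little more direct.
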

\begin{proof}
If $B \subset \SobH{\frac12}$, then by Proposition \ref{Prop:MaxClosed}, we have that $\ker(D_{\rm B})$ is finite. 
Therefore, the contrapositive yields that for $\ker(D_{\rm B})$ infinite, we have that $B \not \subset \SobH{\frac12}(\Sigma;E)$.

Next, assume that $\dim \chi^+(A_r)B  < \infty$.
For $u \in B$, we have that $u = \chi^-(A_r)u + \chi^+(A_r)u$, and we always have that $\chi^-(A_r)u \in \SobH{\frac12}(\Sigma;E)$. 
Setting 
$$B':=\chi^-(A_r)\SobH{\frac12}(\Sigma;E) \oplus \chi^+(A_r)B,$$
we therefore have an inclusion $B \subset B'$. The space $B'$ is a boundary condition since $\chi^+(A_r)B$ was assumed to be finite dimensional.
Now, letting $B_0 = \chi^-(A_r)\SobH{\frac12}(\Sigma;E)$, the map
$$ \ker(D_{{\rm B}'})/\ker(D_{{\rm B}_0}) \embed \dom(D_{{\rm B}'})/\dom(D_{{\rm B}_0}) \cong \chi^+(A_r)B,$$
is an injection given that $\dom(D_{{\rm B}'}) = \dom(D_{{\rm B}_0}) + \dom(D_{\chi^+(A_r)B})$
This allows us to conclude that 
$$
\dim \ker(D_{{\rm B}'})/\ker(D_{{\rm B}_0}) \leq \dim \chi^+(A_r)B,$$
and since
$$
\ker(D_{{\rm B}'}) \cong \ker(D_{{\rm B}'})/\ker(D_{{\rm B}_0}) \oplus \ker(D_{{\rm B}_0}),$$
we have that 
$$\dim\ker(D_{\rm B}) \leq \dim \ker(D_{{\rm B}'}) \leq \dim \chi^+(A_r)B + \dim \ker(D_{{\rm B}_0}) < \infty.$$
Taking the contrapositive and combining with our earlier deduction that $B \not \subset \SobH{\frac12}(\Sigma;E)$ concludes the proof.
\end{proof}

\appendix

\section{Lemmas on dimensions and subspaces}
\label{appondimensiosnsns}
\begin{lemma}
\label{Lem:SubPair} 
Let $\cB$ and $\cB^\ast$ be Banach spaces and suppose that $\inprod{\cdot,\cdot}: \cB \times \cB^\ast \to \C$ is a perfect pairing.
Suppose that $\cB = X \oplus Y$ and let $P_{X,Y}$ denote the bounded projection $\cB \mapsto X$ and similarly for  $P_{Y,X} = 1 - P_{X,Y}$. 
Let $X^\ast = P_{X,Y}^\ast \cB^\ast$, the dual map to $P_{X,Y}$ and similarly $Y^\ast = P_{Y,X}^\ast \cB^\ast$. 
Then, the following hold: 
\begin{enumerate}[(i)]
\item \label{Lem:SubPair:1}
	$P_{X^\ast, Y^\ast} = P_{X,Y}^\ast$ and $P_{Y^\ast, X^\ast} = P_{Y,X}^\ast$. In particular $\cB^\ast = X^\ast  \oplus Y^\ast$.  
\item \label{Lem:SubPair:2}
	The pairing $\inprod{\cdot,\cdot}$ restricts to perfect pairings $X \times X^\ast \to \C$, as well as $Y \times Y^\ast \to \C$.
\item \label{Lem:SubPair:3}
	The annihilators 
		$$Y^\perp := \set{b^\ast \in \cB^\ast: \inprod{b, y} = 0\ \forall y \in Y} = X^\ast,\ \text{and}\  \quad X^\perp = Y^\ast.$$
\end{enumerate} 
\end{lemma}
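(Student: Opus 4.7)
The overall strategy is to derive all three statements from the single observation that, under the perfect pairing, the bounded projection $P_{X,Y}$ on $\cB$ has a well-defined dual operator $P_{X,Y}^\ast$ on $\cB^\ast$ which is itself a bounded projection. Specifically, since the pairing induces a Banach space isomorphism $\cB^\ast \cong \cB'$, for every bounded $T: \cB \to \cB$ there is a unique bounded $T^\ast: \cB^\ast \to \cB^\ast$ with $\inprod{Tb, b^\ast} = \inprod{b, T^\ast b^\ast}$, and the assignment $T \mapsto T^\ast$ is a contravariant algebra homomorphism.

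Applying this to the complementary idempotents $P_{X,Y}$ and $P_{Y,X} = 1 - P_{X,Y}$ immediately gives that $P_{X,Y}^\ast$ is an idempotent on $\cB^\ast$ with complementary idempotent $(1 - P_{X,Y})^\ast = P_{Y,X}^\ast$; by definition, their ranges are $X^\ast$ and $Y^\ast$. This proves \ref{Lem:SubPair:1}: we obtain $\cB^\ast = X^\ast \oplus Y^\ast$ with the associated projections identified as $P_{X^\ast, Y^\ast} = P_{X,Y}^\ast$ and $P_{Y^\ast, X^\ast} = P_{Y,X}^\ast$.

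For \ref{Lem:SubPair:2}, I would check non-degeneracy of the restricted pairing on $X \times X^\ast$ directly. If $x \in X$ annihilates $X^\ast$, then for any $b^\ast \in \cB^\ast$ we compute $\inprod{x, b^\ast} = \inprod{P_{X,Y} x, b^\ast} = \inprod{x, P_{X,Y}^\ast b^\ast} = 0$, since $P_{X,Y}^\ast b^\ast \in X^\ast$, so $x = 0$ by the non-degeneracy of the original pairing. The symmetric argument works for $X^\ast$, and the same reasoning handles $Y \times Y^\ast$. For \ref{Lem:SubPair:3}, one direction of $X^\ast \subseteq Y^\perp$ is immediate: if $b^\ast = P_{X,Y}^\ast c^\ast$ and $y \in Y$, then $\inprod{y, b^\ast} = \inprod{P_{X,Y} y, c^\ast} = 0$. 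Conversely, given $b^\ast \in Y^\perp$, write $b^\ast = P_{X,Y}^\ast b^\ast + P_{Y,X}^\ast b^\ast$ and verify that $P_{Y,X}^\ast b^\ast$ annihilates both $X$ and $Y$, hence all of $\cB$; non-degeneracy forces $P_{Y,X}^\ast b^\ast = 0$, so $b^\ast \in X^\ast$. The identity $X^\perp = Y^\ast$ is then symmetric.

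There is no serious obstacle here: the argument is pure formal duality once the adjoint projection is in hand. The only subtlety worth flagging is making sure the notion of ``perfect pairing'' is used consistently, namely that it furnishes both the bounded adjoint $P_{X,Y}^\ast$ and the non-degeneracy needed to conclude vanishing from pairing identities; both ingredients are precisely what the definition provides.
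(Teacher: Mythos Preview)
Your arguments for \ref{Lem:SubPair:1} and \ref{Lem:SubPair:3} are correct and match the paper's reasoning closely.

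For \ref{Lem:SubPair:2}, however, there is a gap. In this Banach space setting, ``perfect pairing'' does not mean merely non-degenerate: it means the induced maps $X \to (X^\ast)'$ and $X^\ast \to X'$ are \emph{Banach space isomorphisms}. Your argument establishes only injectivity of these maps. The paper's proof is more quantitative: it shows the explicit two-sided norm estimates
\[
\modulus{\inprod{x,x^\ast}} \lesssim \norm{x}\,\norm{x^\ast}
\quad\text{and}\quad
\norm{x} \lesssim \sup_{\norm{x^\ast}=1}\modulus{\inprod{x,x^\ast}},
\]
together with the symmetric estimate for $\norm{x^\ast}$, by pushing the original perfect-pairing bounds through the projections.

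Your own framework could close this gap with one more step. Under the isomorphism $\cB^\ast \cong \cB'$ furnished by the original perfect pairing, the operator $P_{X,Y}^\ast$ corresponds to the Banach transpose $P_{X,Y}'$, so $X^\ast = \ran(P_{X,Y}^\ast)$ corresponds to $\ran(P_{X,Y}') = \set{f \in \cB': f = f\circ P_{X,Y}}$, which is isomorphic to $X'$ via restriction (with inverse given by precomposition with $P_{X,Y}$). That identification \emph{is} the restricted pairing, and this gives the required isomorphism $X^\ast \cong X'$. You should make this explicit rather than stopping at non-degeneracy.
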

\begin{proof}
The item \ref{Lem:SubPair:1} is immediate from the definition of the dual projectors.

We prove \ref{Lem:SubPair:2}. 
Let $x \in X$ and $x^\ast \in X^\ast$. 
Then, there exists $C < \infty$ such that 
$$\modulus{ \inprod{x, x^\ast}} \leq C \norm{x}_{\cB} \norm{x^\ast}_{\cB^\ast} = \norm{P_{X,Y} x}_{\cB} \norm{P_{X^\ast, Y^\ast} x^\ast}_{\cB^\ast} =  \norm{x}_{X} \norm{x^\ast}_{X^\ast}.$$
Again, from the fact that $\inprod{\cdot,\cdot}$ is a perfect paring between $\cB \times \cB^\ast$, we have a $C_1 < \infty$ such that 
\begin{equation} 
\label{Eq:SubPair:1} 
\norm{x} \leq C_1 \sup \set{ \frac{\modulus{\inprod{x, b^\ast}}}{\norm{b^\ast}_{\cB^\ast}}: 0 \neq b^\ast \in \cB^\ast}.
\end{equation} 
Now, note that 
\begin{equation}
\label{Eq:SubPair:2}
\inprod{x, b^\ast} = \inprod{P_{X,Y}x, b^\ast} = \inprod{x, P_{X^\ast, Y^\ast}b^\ast}.
\end{equation}
Moreover, there exists $C_2$ such that 
$$\norm{P_{X^\ast, Y^\ast}b^\ast} + \norm{P_{Y^\ast, X^\ast}b^\ast} \leq  C_2 \norm{b^\ast},$$
and therefore,
\begin{equation}
\label{Eq:SubPair:3}
\frac{1}{\norm{b^\ast}}\leq C_2 \frac{1}{\norm{P_{X^\ast, Y^\ast}b^\ast}} . 
\end{equation}
Combining \eqref{Eq:SubPair:2} and \eqref{Eq:SubPair:3} and using \eqref{Eq:SubPair:1}, we obtain 
$$
\modulus{ \inprod{x, x^\ast}} \leq C_1 C_2 \sup \set{ \frac{\modulus{\inprod{x, b^\ast}}}{\norm{P_{X^\ast,Y^\ast} b^\ast}_{\cB^\ast}}: 0 \neq P_{X^\ast, Y^\ast} b^\ast \in \cB^\ast} 
	= C_1 C_2 \sup_{\substack{x^\ast \in X^\ast \\ 1 = \norm{x^\ast}_{X^\ast}}} \modulus{\inprod{x, x^\ast}}.$$
A symmetric argument yields that 
$$\norm{x^\ast} \lesssim \sup_{\substack{x \in X \\ 1 = \norm{x}_{X}}} \modulus{\inprod{x, x^\ast}}.$$

Lastly, we prove \ref{Lem:SubPair:3}. 
First, suppose that that $b^\ast \in Y^\perp$. 
That is, $0 = \inprod{b^\ast, y}$ for all $y \in Y$. 
Since every $y = P_{Y,X}b$  for some $b \in \cB$, 
$0 = \inprod{P_{Y^\ast, X^\ast} b^\ast,b}$ for all $b \in \cB$. 
This implies that $b^\ast \in \ker P_{Y^\ast, X^\ast} = X^\ast$ and therefore, $Y^\perp \subset X^\ast$.

Conversely, let $x^\ast \in X^\ast$. 
Then, for $y \in Y$, we have that
$$ \inprod{x^\ast, y} = \inprod{P^\ast_{X, Y}x^\ast, y} = \inprod{x^\ast, P_{X,Y}y} = 0.$$
That is, $X^\ast \subset Y^\perp$.
The statement that $X^\perp = Y^\ast$ follows similarly.
\end{proof}

Next we prove two results concerning the dimension of certain subspaces.

\begin{lemma}
\label{Lem:CheckNonZeroalternative1} 
Let $P\in \Psi^{\pmb 0}_{\rm cl}(M;\pmb{E})$ be a classical zeroth order pseudo-differential projection in the Douglis-Nirenberg calculus acting on sections of an $\R$-graded Hermitian vector bundle $\pmb{E}$ on a closed manifold $\Sigma$ with $\dim(\Sigma)>0$. If $\sigma_{\pmb 0}(P)\neq 0,1$, then for any $s\in \R$, the space $P\SobHH{ s}(\Sigma;\pmb{E})$ has infinite dimension and infinite codimension.

In particular, in the setting of Lemma \ref{Lem:CheckNonZero} with $\dim(M)>1$, then for any admissible cut $r \in \R$, the spaces $\chi^{\pm}(A_r)\Lp{2}(\Sigma;E)$ are infinite dimensional. 
Moreover, the space $\checkH_A(D) \neq \set{0}$.
\end{lemma}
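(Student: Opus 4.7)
The plan is to reduce both parts of the lemma to a standard compactness dichotomy for the Douglis-Nirenberg calculus: a classical zeroth order operator $T\in \Psi^{\pmb 0}_{\rm cl}(\Sigma;\pmb{E})$ on a closed manifold $\Sigma$ with $\dim\Sigma>0$ is compact on $\SobHH{s}(\Sigma;\pmb{E})$ if and only if $\sigma_{\pmb 0}(T)$ vanishes identically on $S^*\Sigma$. This is the same dichotomy tacitly invoked in Corollary \ref{compactnesscor}, and its verification amounts to applying the ordinary pseudo-differential result componentwise along the grading $\pmb{E}=\bigoplus_k E[s_k]$ and appealing to Rellich's theorem on $\Sigma$.

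Granting this, the first statement of the lemma is immediate. If $P\SobHH{s}(\Sigma;\pmb{E})$ were finite dimensional, then $P$ would be of finite rank, hence compact on $\SobHH{s}(\Sigma;\pmb{E})$, forcing $\sigma_{\pmb 0}(P)\equiv 0$ and contradicting the hypothesis. For the codimension claim, the same argument applies to the complementary projection $1-P\in \Psi^{\pmb 0}_{\rm cl}(\Sigma;\pmb{E})$, whose principal symbol $1-\sigma_{\pmb 0}(P)$ is also not identically zero; finite codimension of $P\SobHH{s}(\Sigma;\pmb{E})$ in $\SobHH{s}(\Sigma;\pmb{E})$ is the same as finite dimension of $(1-P)\SobHH{s}(\Sigma;\pmb{E})$, yielding the same contradiction.

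For the ``in particular'' part I would invoke Lemma \ref{specprojvscalderon} to identify $\chi^{\pm}(A_r)$ as classical zeroth order pseudo-differential operators with principal symbol $\chi^{\pm}(\sigma_1(A_r))$. The hypothesis $\dim M>1$ gives $\dim\Sigma>0$, so it remains to verify that neither $\chi^+(\sigma_1(A_r))$ nor $\chi^-(\sigma_1(A_r))$ is identically $0$ or $1$ on $S^*\Sigma$. Since $\sigma_1(A_r)=\sigma_1(A)$ is odd in $\xi$ and the admissible cut ensures $\chi^+$ and $\chi^-$ are complementary projectors of $\sigma_1(A_r)(x,\xi)$, the identity $\chi^+(\sigma_1(A_r))(x,-\xi)=1-\chi^+(\sigma_1(A_r))(x,\xi)$ rules out constant values $0$ or $1$, exactly as in Proposition \ref{nontrivialityofeplussd}. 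The first part of the lemma then yields infinite-dimensionality of $\chi^{\pm}(A_r)\Lp{2}(\Sigma;E)$, and $\checkH_A(D)\supseteq \chi^+(A_r)\SobH{-{\frac12}}(\Sigma;E)$ is nonzero. No serious obstacles are anticipated; the only slightly technical item is the compactness dichotomy, which is routine bookkeeping of the scalar case into the Douglis-Nirenberg grading.
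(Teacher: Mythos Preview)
Your proposal is correct and follows essentially the same approach as the paper: both reduce to the compactness dichotomy for zeroth-order Douglis--Nirenberg operators (compact iff the principal symbol vanishes), applied to $P$ and $1-P$ respectively. Your treatment of the ``in particular'' part is more explicit than the paper's terse proof, but the underlying argument---nontriviality of $\chi^{\pm}(\sigma_1(A))$ via the oddness symmetry, as in Proposition~\ref{nontrivialityofeplussd}---is the same.
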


\begin{proof}
We have that $P$ acts compactly on $\SobHH{ s}(\Sigma;\pmb{E})$ if and only if $\sigma_{\pmb 0}(P)\neq 0$. We conclude that $P\SobHH{ s}(\Sigma;\pmb{E})$ has infinite dimension if and only if $\sigma_{\pmb 0}(P)\neq 0$. The infinite codimensionality of $P\SobHH{ s}(\Sigma;\pmb{E})$ is by a similar argument equivalent to $\sigma_{\pmb 0}(P)\neq 1$.
\end{proof}

\begin{lemma}
\label{Lem:CheckNonZeroalternative2} 
Let $A$ be any elliptic first order differential operator acting on sections of a vector bundle $E$ on a closed manifold $\Sigma$ such that its resolvent set is non-empty  and in some point the principal symbol of $A$ has an eigenvalue with positive real part. Then there exists two sequences of eigenvalues $(\lambda_j^+)_{j\in \mathbb{N}}$ and $(\lambda_j^-)_{j\in \mathbb{N}}$ of $A$ such that 
$$\mathrm{Re}(\lambda^\pm_j)\to \pm \infty, \quad\mbox{as $j\to \infty$}.$$

In particular, in the setting of Lemma \ref{Lem:CheckNonZero} with $\dim(M)>1$, then for any admissible cut $r \in \R$ and an $s\in \R$, the spaces $\chi^{\pm}(A_r)\SobH{s}(\Sigma;E)$ are infinite dimensional. 
Moreover, the space $\checkH_A(D) \neq \set{0}$.
\end{lemma}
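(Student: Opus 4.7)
The plan is to combine Grubb's sectorial projection calculus \cite{grubbsec} with the infinite-dimensionality criterion Lemma \ref{Lem:CheckNonZeroalternative1}. First I would observe that since $A$ is elliptic of order one on a closed manifold, Rellich's theorem makes the domain inclusion $\SobH{1}(\Sigma;E) \hookrightarrow \Lp{2}(\Sigma;E)$ compact, and combined with $\res(A) \neq \emptyset$ this forces $A$ to have compact resolvent, purely discrete spectrum, and finite-dimensional generalised eigenspaces. The odd-order symmetry $\sigma_A(x,-\xi) = -\sigma_A(x,\xi)$ combined with the hypothesis that $\sigma_A$ has an eigenvalue of positive real part at some point automatically produces an eigenvalue of negative real part of $\sigma_A$ at the antipodal direction, so positive and negative real part sectors are handled symmetrically in what follows.

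Next I would invoke Grubb's construction \cite{grubbsec} to obtain, for an appropriate admissible spectral cut $r \in \R$, the sectorial projection $\chi^+(A_r)$ as a classical pseudodifferential operator of order zero whose principal symbol is $\chi^+(\sigma_A)$, the spectral projection of the symbol onto $\{\mathrm{Re}(\mu) > 0\}$. By hypothesis $\chi^+(\sigma_A) \not\equiv 0$ and, using the symmetry, $\chi^+(\sigma_A) \not\equiv 1$, so Lemma \ref{Lem:CheckNonZeroalternative1} yields that $\chi^+(A_r)\SobH{s}(\Sigma;E)$ is infinite dimensional for every $s \in \R$. Since this image is the closure of the span of generalised eigenvectors of $A$ corresponding to eigenvalues with real part exceeding $r$, and each generalised eigenspace is finite dimensional, there exist infinitely many eigenvalues $\lambda_j^+$ with $\mathrm{Re}(\lambda_j^+) > r$; the parallel argument for $\chi^-(A_r)$ produces the sequence $(\lambda_j^-)$.

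The main technical step is then to upgrade ``infinitely many eigenvalues'' to the asymptotic statement $\mathrm{Re}(\lambda_j^+) \to +\infty$. The key observation is that for any admissible cut $M > r$ the difference
$$\Pi_{(r,M]} := \chi^+(A_r) - \chi^+(A_M)$$
has cancelling principal symbols (both equal to $\chi^+(\sigma_A)$) and is therefore a classical pseudodifferential operator of strictly negative order, hence compact on $\Lp{2}(\Sigma;E)$. On the other hand, $\Pi_{(r,M]}$ is, by functional calculus, the Riesz projection onto the part of the spectrum of $A$ contained in the strip $\{r < \mathrm{Re}(\lambda) \leq M\}$; a compact idempotent necessarily has finite-dimensional range, so only finitely many eigenvalues of $A$ lie in this strip. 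As $M$ was arbitrary, $\mathrm{Re}(\lambda_j^+) \to +\infty$, and analogously $\mathrm{Re}(\lambda_j^-) \to -\infty$.

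For the ``in particular'' assertion, an adapted boundary operator of a first-order elliptic $D$ on $M$ with $\dim(M) > 1$ satisfies all the hypotheses of the first part: compactness of its resolvent and the existence of admissible cuts are part of the Bär-Bandara framework (Subsection \ref{subsec:ellfirstorderfirst}), while the symbol $\sigma_A$ has eigenvalues with both signs of the real part because the vector bundles $E_\pm(D)$ are nontrivial over each component of $S^*\Sigma$ by Proposition \ref{nontrivialityofeplussd}. The infinite-dimensionality of $\chi^\pm(A_r)\SobH{s}(\Sigma;E)$ for every $s$ then follows from applying Lemma \ref{Lem:CheckNonZeroalternative1} in each Sobolev scale, and $\checkH_A(D) \neq \set{0}$ is immediate from the definition since both constituents $\chi^-(A_r)\SobH{\frac12}(\Sigma;E)$ and $\chi^+(A_r)\SobH{-\frac12}(\Sigma;E)$ are non-zero. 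The main obstacle I anticipate is securing a sufficiently rich supply of admissible cuts $M > r$ and verifying within the symbolic calculus that the corresponding family $\chi^+(A_M)$ lies uniformly in the classical zero-order calculus with the common principal symbol $\chi^+(\sigma_A)$; this will follow from holomorphic dependence of the sectorial projector on the cut parameter, but requires care to extract directly from \cite{grubbsec}.
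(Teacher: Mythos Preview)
Your route is genuinely different from the paper's. The paper forms the bounded transform $F = A(1+A^*A)^{-1/2}$, reads its essential spectrum off the principal symbol via the symbol $*$-homomorphism, and then transfers the conclusion back to $\spec(A)$. You instead work directly with sectorial projections $\chi^+(A_r)$ and exploit that they are classical zeroth-order operators, combining this with Lemma~\ref{Lem:CheckNonZeroalternative1} and a compactness-of-differences argument.

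There is, however, a real gap in your treatment of the general first statement. The hypothesis is only that $\res(A)\neq\emptyset$ and that $\sigma_A$ has \emph{one} eigenvalue with positive real part at \emph{some} point of $S^*\Sigma$. Nothing prevents $\sigma_A(x',\xi')$ from having purely imaginary eigenvalues at other points; if it does, then $\chi^+(\sigma_A)$ fails to be a smooth projection-valued symbol, Grubb's sectorial-projection construction does not produce a classical order-zero operator, and your invocation of Lemma~\ref{Lem:CheckNonZeroalternative1} breaks down before it starts. The same obstruction blocks the supply of admissible cuts $M\to+\infty$ that you flag at the end: without knowing the spectrum already lies in a bisector, you cannot guarantee $A-M$ admits the resolvent estimates needed to define $\chi^+(A_M)$. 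The paper's bounded transform avoids all of this, since $F$ is a well-defined bounded pseudodifferential operator regardless of where the eigenvalues of $\sigma_A$ sit.

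For the ``in particular'' clause your argument does go through, because ellipticity of $D$ forces $\sigma_A(x',\xi')$ to have no purely imaginary eigenvalues (otherwise $\sigma_D(x',\xi',\xi_n)$ would vanish for some real $\xi_n$), so $\chi^+(\sigma_A)$ is smooth and Grubb's construction applies. In fact once you have a single admissible cut $r$ with $A_r$ bisectorial of angle $\omega_r<\pi/2$, your Step~4 becomes unnecessary: the intersection of the bisector $r+S_{\omega_r}$ with any strip $\{r<\Re\lambda\leq M\}$ is bounded, so discreteness of the spectrum already gives only finitely many eigenvalues there, and $\Re\lambda_j^+\to+\infty$ follows immediately without constructing $\chi^+(A_M)$ for large $M$.
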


\begin{proof}
Define the zeroth order classical pseudodifferential operator $F := A(1 + A^*A)^{-{\frac12}}$. 
The spectrum of $F$ is contained in the unit disc.
From the fact that
the symbol mapping is a $*$-homomorphism, it follows that the spectrum
of $F$ contains the spectrum of its symbol, which we will denote
by $\sym_F$. In fact, the spectrum of $\sym_F$ coincides with the essential
spectrum of $F$. 

From the fact that $A$ is first order, it follows that 
$\lambda \in \spec(\sym_A(x, \xi))$ if and only if
$-\lambda \in \spec(\sym_A(x, -\xi))$. This implies the spectrum of 
$\sym_F$ is symmetric about the imaginary axis.  Since the principal symbol of $A$ has an eigenvalue with positive real part in some point,  it
follows that $F$ has essential spectrum on both sides of the
imaginary axis.

Now, $A$ is assumed to have non-empty resolvent set, hence its must be discrete by  \cite[]{shubinsbook}. 
The essential spectrum of $F$ consists of
all limits points of the image of the spectrum of $A$ under the 
homeomorphism $z \rightarrow z(1 + |z|^2)^{-{\frac12}}$ of the complex
plane onto the open unit disc. Since $F$ has essential spectrum on both sides of
the imaginary axis, the spectrum of $A$ has limit points on both
sides of the imaginary axis. It follows that the spectrum of $A$ must go
to infinity on both sides of the imaginary axis.
By \cite[]{shubinsbook}, each element of the spectrum is an eigenvalue and the statement follows. 

Now, we have that each eigenspace $\Eig(\lambda_i) \subset \Ck{\infty}(\Sigma;E)$ and therefore, $\chi^{\pm}(A_r)\SobH{s}(\Sigma;E)$ are infinite-dimensional. Therefore $\checkH_A(D) \neq {0}$. 
\end{proof}

\section{Bär-Ballmann's approach from an abstract perspective}
\label{bärballappsection}

Although the main results of this paper concern elliptic differential operators on manifolds with boundary, a large proportion of the main tools can be formulated in a general abstract formalism. We believe several of these results to be known to specialists, but nevertheless have decided to formulate the Bär-Ballmann approach in complete abstraction. Similar ideas to those appearing in this appendix can be found in \cite{bossfury}. In particular, several of our main results have abstract counterparts that hold in potentially non-elliptic settings.

\subsection{Preliminaries}

Throughout this section, we fix a closed densely defined linear operator between separable Hilbert spaces $\Hil_1\to \Hil_2$. 
For reasons that will later become apparent we shall denote this operator by $T_{\rm min}$. We also fix an adjoint $T^\dagger_{\rm min}:\Hil_2\to \Hil_1$ which is closed and densely defined. In the main part of this paper, we were exclusively concerned with $T_{\rm min}$ being the minimal closure of an elliptic differential operator on a manifold with boundary and $T^\dagger_{\rm min}$ the minimal closure of its formal adjoint. We introduce the notation 
$$T_{\rm max}:=(T^\dagger_{\rm min})^*\quad\mbox{and}\quad T_{\rm max}^\dagger:=(T_{\rm min})^*.$$
Note that $T_{\rm min}\subseteq T_{\rm max}$ and $T_{\rm min}^\dagger\subseteq T_{\rm max}^\dagger$. 

\begin{definition}
\label{efonedoenfap}
\begin{itemize}
\item A pair $\mathcal{T}=(T_{\rm min},T^\dagger_{\rm min})$ as above is said to be a formally adjointed pair (FAP). 
\item A realisation $T$ of a FAP is an extension $T$ of $T_{\rm min}$ contained in $T_{\rm max}$. If $T$ is closed as an operator, we say that $T$ is a closed realisation of $\mathcal{T}$.
\item The Cauchy data space of a FAP $\mathcal{T}$ is defined as the Hilbert space 
$$\check{\mathhil{h}}_\mathcal{T}:=\faktor{\dom(T_{\rm max})}{\dom(T_{\rm min})}.$$
The quotient map $\dom(T_{\rm max})\to \check{\mathhil{h}}_\mathcal{T}$ will be denoted by $\gamma$. 
\end{itemize}
\end{definition}

The reader can note that also in this abstract setting, whenever $\checkH(\mathcal{T})$ is a Banach space equipped with a surjective map $\gamma:\dom(T_{\rm max})\to \checkH(\mathcal{T})$ with kernel $\dom(T_{\rm min})$ then $\checkH(\mathcal{T})\cong \check{\mathhil{h}}_\mathcal{T}$ via an isomorphism induced from the boundary mappings. We can therefore speak of concrete Cauchy data spaces also in this abstract setting. 

If $\mathcal{T}=(T_{\rm min},T^\dagger_{\rm min})$ is a FAP, then $\mathcal{T}^\dagger:=(T^\dagger_{\rm min},T_{\rm min})$ is also a FAP. A symmetric closed operator $S$ will, unless otherwise indicated, be identified with the FAP $(S,S)$. In particular, $S_{\rm min}=S=S_{\rm min}^\dagger$ and $S_{\rm max}=S^*=S_{\rm max}^\dagger$. We use the notation $\check{\mathhil{h}}_S:=\check{\mathhil{h}}_{(S,S)}\equiv \faktor{\dom(S^*)}{\dom(S)}$.

\begin{proposition}
\label{doublingup}
Let $\mathcal{T}=(T_{\rm min},T^\dagger_{\rm min})$ be a FAP. Define the densely defined operator $\tilde{T}$ on $\Hil\oplus \Hil$ by
$$\tilde{T}:=\begin{pmatrix} 0& T_{\rm min}\\ T_{\rm min}^\dagger&0\end{pmatrix}.$$
Then, the following hold.
\begin{enumerate}[(i)]
\item The operator $\tilde{T}$ is closed and symmetric, and its adjoint is given by 
$$\tilde{T}^*:=\begin{pmatrix} 0& T_{\rm max}\\ T_{\rm max}^\dagger&0\end{pmatrix}.$$
Both $\tilde{T}$ and $\tilde{T}^*$ are $\mathbb{Z}/2$-graded in the grading of $\Hil\oplus \Hil$ making the first summand even and the second odd.
\item The equality $\dom(\tilde{T}^*)=\dom(T_{\rm max}^\dagger)\oplus \dom(T_{\rm max})$ induces an equality 
$$\check{\mathhil{h}}_{\tilde{T}}=\check{\mathhil{h}}_{\mathcal{T}^\dagger}\oplus \check{\mathhil{h}}_\mathcal{T}.$$
\item The von Neumann decomposition of $\dom(\tilde{T}^*)$ induces an isomorphism of Hilbert spaces 
$$\check{\mathhil{h}}_{\mathcal{T}^\dagger}\oplus \check{\mathhil{h}}_\mathcal{T}\cong \ker(\tilde{T}^*+i)\oplus \ker(\tilde{T}^*-i).$$
\end{enumerate}
\end{proposition}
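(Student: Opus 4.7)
The plan is to treat the three items in sequence, with the bulk of the work being routine identifications of direct-sum Hilbert spaces and the final conclusion obtained by invoking the classical von Neumann decomposition for closed symmetric operators.

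For item (i), I would first observe that $\dom(\tilde{T}) = \dom(T_{\rm min}^\dagger) \oplus \dom(T_{\rm min})$, so the graph norm of $\tilde{T}$ is equivalent to the sum of the graph norms of $T_{\rm min}^\dagger$ and $T_{\rm min}$, immediately giving closedness since both summands are closed. Symmetry reduces, by expanding the inner product on $\Hil \oplus \Hil$, to the FAP identity $\langle T_{\rm min} u_2, v_1\rangle = \langle u_2, T_{\rm min}^\dagger v_1\rangle$ for $u_2\in\dom(T_{\rm min})$, $v_1\in\dom(T_{\rm min}^\dagger)$, which is exactly the defining property of a FAP. To compute $\tilde{T}^*$, the approach is to substitute $u_1=0$ and $u_2=0$ separately into the defining identity $\langle \tilde{T}(u_1,u_2),(v_1,v_2)\rangle = \langle (u_1,u_2),(w_1,w_2)\rangle$: the first substitution yields $v_1\in\dom(T_{\rm min}^*)=\dom(T_{\rm max}^\dagger)$ with $w_2 = T_{\rm max}^\dagger v_1$, and the second yields $v_2\in\dom((T_{\rm min}^\dagger)^*)=\dom(T_{\rm max})$ with $w_1 = T_{\rm max} v_2$, producing the stated matrix form. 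The $\mathbb{Z}/2$-grading assertion is then manifest since both $\tilde{T}$ and $\tilde{T}^*$ are off-diagonal with respect to the grading operator $\mathrm{diag}(1,-1)$.

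For item (ii), the decompositions $\dom(\tilde{T}^*)=\dom(T_{\rm max}^\dagger)\oplus\dom(T_{\rm max})$ and $\dom(\tilde{T})=\dom(T_{\rm min}^\dagger)\oplus\dom(T_{\rm min})$ from (i) directly give the algebraic isomorphism
$$\check{\mathhil{h}}_{\tilde{T}} = \faktor{\dom(\tilde{T}^*)}{\dom(\tilde{T})} \cong \check{\mathhil{h}}_{\mathcal{T}^\dagger} \oplus \check{\mathhil{h}}_\mathcal{T},$$
and the topological equivalence is obtained by noting that the graph norm of $\tilde{T}^*$ is the orthogonal sum of the graph norms of $T_{\rm max}^\dagger$ and $T_{\rm max}$, so the quotient Hilbert structure splits as a direct sum.

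For item (iii), I would invoke the classical von Neumann theorem: any closed symmetric operator $S$ admits the orthogonal decomposition
$$\dom(S^*) = \dom(S) \oplus^\perp \ker(S^*-i) \oplus^\perp \ker(S^*+i),$$
with respect to the graph inner product of $S^*$. Applying this to the closed symmetric operator $\tilde{T}$ constructed in (i) yields a Hilbert space isomorphism $\check{\mathhil{h}}_{\tilde{T}} \cong \ker(\tilde{T}^*+i) \oplus \ker(\tilde{T}^*-i)$, and composing with the isomorphism from (ii) produces the stated identification. The main obstacle — to the extent there is one — is the bookkeeping to track which ``dagger'' side corresponds to which Hilbert space factor in the direct sum; once this is set up, no hard analysis is required beyond the von Neumann formula.
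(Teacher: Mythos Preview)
Your proposal is correct and is precisely the standard argument the paper has in mind: the paper omits the proof entirely, stating only that it ``follows from standard techniques for unbounded operators'' with a reference to Weidmann, and your outline fills in exactly those routine details.
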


We omit the proof as it follows from standard techniques for unbounded operators. The reader can find relevant details in \cite{weidmann}.

\begin{lemma}
\label{symnondeg}
Let $S$ be a closed symmetric operator. 
\begin{enumerate}[(i)]
\item \label{symnondeg:1} 
The sesquilinear form 
$$\omega_{S,0}:\dom(S^*)\times \dom(S^*)\to \C, \quad \omega_{S,0}(v_1,v_2):=\langle S^*v_1,v_2\rangle_\Hil-\langle v_1,S^*v_2\rangle_\Hil,$$
descends to a well-defined sesquilinear non-degenerate pairing 
$$\omega_{S}:\check{\mathhil{h}}_S\times \check{\mathhil{h}}_S\to \C,$$
satisfying the symmetry condition 
$$\omega_S(\xi_1,\xi_2)=-\overline{\omega_S(\xi_2,\xi_1)}.$$
\item \label{symnondeg:2} 
 Under the isomorphism $\check{\mathhil{h}}_S\cong \ker(S^*+i)\oplus \ker(S^*-i)$, the pairing $\omega_{S}$ corresponds to the sesquilinear form 
\begin{align*}
\omega_{S,vN}:&(\ker(S^*+i)\oplus \ker(S^*-i))\times (\ker(S^*+i)\oplus \ker(S^*-i))\to \C, \\ 
&\omega_{S,vN}\left(\begin{pmatrix}v_{1,+}\\v_{1,-}\end{pmatrix},\begin{pmatrix}v_{2,+}\\v_{2,-}\end{pmatrix}\right):=2i\left\langle\begin{pmatrix}v_{1,+}\\v_{1,-}\end{pmatrix},\begin{pmatrix} 0&1\\-1&0\end{pmatrix} \begin{pmatrix}v_{2,+}\\v_{2,-}\end{pmatrix}\right\rangle_{\Hil\oplus \Hil}.
\end{align*}
\end{enumerate}
\end{lemma}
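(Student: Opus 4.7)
\textbf{Proof plan for Lemma \ref{symnondeg}.}

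For part \ref{symnondeg:1}, the plan is to first verify that $\omega_{S,0}$ descends to the quotient. If $v_1 \in \dom(S)$, then $v_1 \in \dom(S^*)$ with $S^*v_1 = Sv_1$, and the definition of the adjoint gives $\langle Sv_1, v_2\rangle = \langle v_1, S^*v_2\rangle$ for every $v_2 \in \dom(S^*)$, so $\omega_{S,0}(v_1, v_2) = 0$. The case $v_2 \in \dom(S)$ follows by conjugation, establishing that $\omega_{S,0}$ descends to a well-defined sesquilinear form $\omega_S$ on $\check{\mathhil{h}}_S \times \check{\mathhil{h}}_S$. The anti-Hermitian symmetry is immediate from the definition: $\overline{\omega_{S,0}(v_2,v_1)} = \overline{\langle S^*v_2,v_1\rangle - \langle v_2,S^*v_1\rangle} = \langle v_1,S^*v_2\rangle - \langle S^*v_1,v_2\rangle = -\omega_{S,0}(v_1,v_2)$.

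For non-degeneracy, I would argue as follows. Suppose $v_1 \in \dom(S^*)$ satisfies $\omega_{S,0}(v_1,v_2) = 0$ for every $v_2 \in \dom(S^*)$. This is precisely the condition $\langle S^*v_1,v_2\rangle = \langle v_1, S^*v_2\rangle$ for all $v_2 \in \dom(S^*)$, which means that $v_1 \in \dom((S^*)^*) = \dom(S^{**})$. Since $S$ is closed and symmetric, $S = S^{**}$, so $v_1 \in \dom(S)$ and hence $[v_1] = 0$ in $\check{\mathhil{h}}_S$. This shows non-degeneracy of the induced pairing on $\check{\mathhil{h}}_S$.

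For part \ref{symnondeg:2}, I would invoke the von Neumann decomposition $\dom(S^*) = \dom(S) \oplus \ker(S^*+i) \oplus \ker(S^*-i)$ (orthogonal with respect to the graph inner product), which induces a Hilbert space isomorphism $\check{\mathhil{h}}_S \cong \ker(S^*+i) \oplus \ker(S^*-i)$. Since $\omega_{S,0}$ vanishes whenever either argument lies in $\dom(S)$, only the deficiency space components contribute. Writing representatives $v_i = v_{i,+} + v_{i,-}$ with $v_{i,\pm} \in \ker(S^* \mp i)$ (so $S^*v_{i,\pm} = \pm i v_{i,\pm}$, up to sign convention), the computation reduces to evaluating
\begin{equation*}
\omega_{S,0}(v_1,v_2) = \langle S^*v_1, v_2\rangle - \langle v_1, S^*v_2\rangle
\end{equation*}
term by term. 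The diagonal contributions from $\langle v_{1,\pm}, v_{2,\pm}\rangle$ cancel by conjugate linearity, and the off-diagonal terms combine to produce a multiple of $\langle v_{1,+}, v_{2,-}\rangle - \langle v_{1,-}, v_{2,+}\rangle$, which is exactly the expression $\omega_{S,vN}$ given by the matrix $\begin{pmatrix} 0 & 1 \\ -1 & 0 \end{pmatrix}$ up to the constant $2i$.

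The main obstacle is essentially bookkeeping: correctly tracking the sign conventions for $\ker(S^* \pm i)$ and the conjugate-linearity in the second slot of the inner product so that the factor $2i$ and the matrix $\begin{pmatrix} 0 & 1 \\ -1 & 0 \end{pmatrix}$ emerge in the stated form. Once the conventions are fixed (matching those implicit in Proposition \ref{doublingup}), the computation in part \ref{symnondeg:2} is purely mechanical and also provides an alternative proof of non-degeneracy in part \ref{symnondeg:1}, since the displayed form $\omega_{S,vN}$ is manifestly non-degenerate on the orthogonal direct sum of the deficiency spaces.
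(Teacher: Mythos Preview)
Your proposal is correct and follows essentially the same approach as the paper: for part \ref{symnondeg:1} you verify descent by vanishing on $\dom(S)$, check the anti-Hermitian symmetry directly, and deduce non-degeneracy from $\dom((S^*)^*)=\dom(S)$; for part \ref{symnondeg:2} you reduce to the deficiency spaces via the von Neumann decomposition and compute termwise. The only caveat is your remark about ``conjugate-linearity in the second slot'': the paper's convention is that the inner product is linear in the second argument, so when you actually carry out the computation you should use that convention to recover the stated factor $2i$ and sign pattern.
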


\begin{proof}
The analogous symmetry condition $\omega_{S,0}(v_1,v_2)=-\overline{\omega_{S,0}(v_2,v_1)}$ for $\omega_{S,0}$ is readily verified. To prove item \ref{symnondeg:1}, we take $v'\in \dom(S^*)$ and $v\in \dom(S)$ and compute that 
$$\omega_{S,0}(v,v'):=\langle S^*v,v'\rangle-\langle v,S^*v'\rangle=\langle S^*v,v'\rangle-\langle Sv,v'\rangle=0,$$
since $v\in \dom(S)$ and $S^*\supseteq S$. This computation, the symmetry condition on $\omega_{S,0}$ and its sesquilinearity implies that $\omega_S$ is a well-defined sesquilinear form satisfying the stated symmetry condition. To show that $\omega_S$ is non-degenerate, if $v\in \dom(S^*)$ satisfies $\omega_S([v],\xi)=0$ for all $\xi\in \check{\mathhil{h}}_S$ then $\langle S^*v,v'\rangle_\Hil=\langle v,S^*v'\rangle_\Hil$ for all $v'\in \dom(S^*)$ so $v\in \dom(S^{**})=\dom(S)$ so $[v]=0$ in $\check{\mathhil{h}}_S$. 

Let us prove item \ref{symnondeg:2}. We use the convention that the inner product is linear in the second leg. For $v_{1,\pm},v_{2,\pm}\in \ker(S^*\pm i)$,
\begin{align*}
\omega_{S}(v_{1,+}+v_{1,-},&v_{2,+}+v_{2,-}) \\
=&\langle S^*(v_{1,+}+v_{1,-}),v_{2,+}+v_{2,-}\rangle-\langle v_{1,+}+v_{1,-},S^*(v_{2,+}+v_{2,-})\rangle\\
=&\langle -iv_{1,+}+iv_{1,-},v_{2,+}+v_{2,-}\rangle-\langle v_{1,+}+v_{1,-},-iv_{2,+}+iv_{2,-}\rangle\\
=&2i\langle v_{1,+},v_{2,-}\rangle-2i\langle v_{1,-},v_{2,+}\rangle \\
=&\omega_{S,vN}\left(\begin{pmatrix}v_{1,+}\\v_{1,-}\end{pmatrix},\begin{pmatrix}v_{2,+}\\v_{2,-}\end{pmatrix}\right).
\qedhere
\end{align*}
\end{proof}

\begin{proposition}
\label{boundarypairingABS}
Let $\mathcal{T}=(T_{\rm min},T^\dagger_{\rm min})$ be a FAP. The sesquilinear form 
$$\omega_{\mathcal{T},0}:\dom(T_{\rm max}^\dagger)\times \dom(T_{\rm max})\to \C, \quad \omega_{\mathcal{T},0}(v_1,v_2):=\langle T_{\rm max}^\dagger v_1,v_2\rangle-\langle v_1,T_{\rm max}v_2\rangle,$$
descends to a well-defined sesquilinear non-degenerate pairing 
$$\omega_{\mathcal{T}}:\check{\mathhil{h}}_{\mathcal{T}^\dagger}\times \check{\mathhil{h}}_{\mathcal{T}}\to \C,$$
satisfying the symmetry condition 
$$\omega_{\mathcal{T}}(\xi_1,\xi_2)=-\overline{\omega_{\mathcal{T}^\dagger}(\xi_2,\xi_1)}.$$
\end{proposition}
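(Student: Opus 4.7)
The plan is to reduce the proposition to Lemma \ref{symnondeg} by the doubling trick of Proposition \ref{doublingup}. Set $\tilde{T}$ to be the closed symmetric operator on $\Hil_1\oplus\Hil_2$ with off-diagonal entries $T_{\min}$ and $T_{\min}^\dagger$, so that $\dom(\tilde{T}^*)=\dom(T_{\max}^\dagger)\oplus\dom(T_{\max})$ and $\check{\mathhil{h}}_{\tilde{T}}=\check{\mathhil{h}}_{\mathcal{T}^\dagger}\oplus\check{\mathhil{h}}_{\mathcal{T}}$. Apply Lemma \ref{symnondeg}\ref{symnondeg:1} to $\tilde{T}$ to obtain a well-defined, non-degenerate, anti-symmetric sesquilinear pairing $\omega_{\tilde{T}}$ on $\check{\mathhil{h}}_{\tilde{T}}$.

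The first step is then a direct computation: for $v_1=(u_1,w_1),\,v_2=(u_2,w_2)\in\dom(T_{\max}^\dagger)\oplus\dom(T_{\max})$, expanding $\omega_{\tilde{T},0}(v_1,v_2)=\langle\tilde{T}^*v_1,v_2\rangle-\langle v_1,\tilde{T}^*v_2\rangle$ with the off-diagonal form of $\tilde{T}^*$ yields the identity
\begin{equation*}
\omega_{\tilde{T},0}(v_1,v_2)\;=\;\omega_{\mathcal{T},0}(u_1,w_2)\;+\;\omega_{\mathcal{T}^\dagger,0}(w_1,u_2).
\end{equation*}
Taking $w_1=0$ and $u_2=0$ shows in particular that $\omega_{\mathcal{T},0}$ agrees with the restriction of $\omega_{\tilde{T},0}$ to the corresponding summands, so that the fact (already established in Lemma \ref{symnondeg}) that $\omega_{\tilde{T},0}$ descends to $\check{\mathhil{h}}_{\tilde{T}}\times\check{\mathhil{h}}_{\tilde{T}}$ gives, upon restriction, a well-defined pairing $\omega_{\mathcal{T}}:\check{\mathhil{h}}_{\mathcal{T}^\dagger}\times\check{\mathhil{h}}_{\mathcal{T}}\to\C$. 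Alternatively this can be verified directly: if $v_1\in\dom(T_{\min}^\dagger)$ then $\langle T_{\max}^\dagger v_1,v_2\rangle=\langle T_{\min}^\dagger v_1,v_2\rangle=\langle v_1,(T_{\min}^\dagger)^*v_2\rangle=\langle v_1,T_{\max}v_2\rangle$ since $(T_{\min}^\dagger)^*=T_{\max}$ by definition of a FAP, and similarly for $v_2\in\dom(T_{\min})$.

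Next, the symmetry condition $\omega_{\mathcal{T}}(\xi_1,\xi_2)=-\overline{\omega_{\mathcal{T}^\dagger}(\xi_2,\xi_1)}$ is immediate from the conjugate-symmetry of the inner product applied to the defining formula for $\omega_{\mathcal{T},0}$ (noting that the roles of $T_{\max}$ and $T_{\max}^\dagger$ are swapped when passing to $\mathcal{T}^\dagger$). Finally, non-degeneracy will follow from non-degeneracy of $\omega_{\tilde{T}}$ together with the decomposition above: if $\omega_{\mathcal{T}}([u_1],\cdot)\equiv 0$, then choosing $v_1=(u_1,0)$ gives $\omega_{\tilde{T}}([v_1],[(u_2,w_2)])=\omega_{\mathcal{T}}([u_1],[w_2])=0$ for every $(u_2,w_2)$, so $[v_1]=0$ in $\check{\mathhil{h}}_{\tilde{T}}$ and hence $[u_1]=0$ in $\check{\mathhil{h}}_{\mathcal{T}^\dagger}$; a symmetric argument in the second slot completes the proof. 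No step presents a genuine obstacle; the main point is simply to book-keep the doubling identification correctly so that the symmetric-operator result of Lemma \ref{symnondeg} transfers to the two-sided FAP setting.
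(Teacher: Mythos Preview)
Your proposal is correct and follows essentially the same route as the paper: both reduce to the symmetric case via the doubling construction of Proposition \ref{doublingup} and Lemma \ref{symnondeg}. The only minor presentational difference is that for non-degeneracy the paper remarks that the direct argument of Lemma \ref{symnondeg} (namely, $\omega_{\mathcal{T},0}(v_1,\cdot)\equiv 0$ forces $v_1\in\dom(T_{\max}^{**})=\dom(T_{\min}^\dagger)$) carries over verbatim, whereas you instead push the non-degeneracy through the doubling identification; both are immediate and equivalent.
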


\begin{proof}
That $\omega_\mathcal{T}$ is well-defined and satisfies the symmetry condition follows from Proposition \ref{doublingup} and Lemma \ref{symnondeg}. The proof that $\omega_\mathcal{T}$ is non-degenerate goes along the same lines as in the proof of Lemma \ref{symnondeg} and is omitted.
\end{proof}

\subsection{The Cauchy data space and boundary conditions}
\label{app:bspaceandbc}

For a subspace $B\subseteq \check{\mathhil{h}}_T$, we define the realisation $T_{\rm B}$ by 
$$\dom(T_{\rm B}):=\{v\in \dom(T_{\rm max}): \gamma(v)\in B\}.$$

\begin{proposition}
\label{charbodundalda}
Let $(T_{\rm min},T^\dagger_{\rm min})$ be a FAP. The construction $B\mapsto T_{\rm B}$ sets up a one-to-one correspondence of (closed) realisations of $(T_{\rm min},T^\dagger_{\rm min})$ and (closed) subspaces of $\check{\mathhil{h}}_T$.
\end{proposition}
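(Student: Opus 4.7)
The plan is to verify that the map $B\mapsto T_{\rm B}$ has an explicit inverse, and that the closedness correspondence reduces to the fact that $\gamma$ is a quotient map with closed kernel $\dom(T_{\rm min})$.

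First I would set up the framework. Equip $\dom(T_{\max})$ with its graph norm, making it a Hilbert space; then $\dom(T_{\min})\subseteq \dom(T_{\max})$ is a closed subspace (since $T_{\min}\subseteq T_{\max}$ is a restriction of a closed operator and $T_{\min}$ is itself closed). Consequently $\check{\mathhil{h}}_\mathcal{T}=\dom(T_{\max})/\dom(T_{\min})$ carries a natural Hilbert space structure and the quotient map $\gamma$ is continuous, surjective, and open, with $\ker\gamma = \dom(T_{\min})$. Define the reverse assignment $T\mapsto B(T):=\gamma(\dom(T))\subseteq \check{\mathhil{h}}_\mathcal{T}$ for any realisation $T$.

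The bijection at the level of subspaces is then a short computation. For $B\subseteq \check{\mathhil{h}}_\mathcal{T}$, surjectivity of $\gamma$ gives $B(T_{\rm B}) = \gamma(\gamma^{-1}(B))=B$. Conversely, for a realisation $T$, the inclusion $\dom(T_{\min})=\ker\gamma\subseteq \dom(T)$ implies $\gamma^{-1}(\gamma(\dom(T)))=\dom(T)+\ker\gamma=\dom(T)$, so $T_{B(T)}=T$. This establishes the set-theoretic bijection between realisations of $\mathcal{T}$ and arbitrary subspaces of $\check{\mathhil{h}}_\mathcal{T}$.

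For the closedness correspondence, I would use that a realisation $T$ is closed (as an operator on $\Hil_1$) if and only if $\dom(T)$ is closed in $(\dom(T_{\max}),\|\cdot\|_{T_{\max}})$, since $T$ inherits the graph norm from $T_{\max}$. Now $\gamma$ is a quotient map between Hilbert spaces, so by the definition of the quotient topology a subset $B\subseteq \check{\mathhil{h}}_\mathcal{T}$ is closed if and only if $\gamma^{-1}(B)$ is closed in $\dom(T_{\max})$. Since $\gamma^{-1}(B)=\dom(T_{\rm B})$, this gives ``$B$ closed $\Leftrightarrow$ $T_{\rm B}$ closed''. Combined with the bijection of the previous paragraph (which preserves the two assignments) this yields the claim.

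There is no genuine obstacle here; the only point requiring mild care is verifying that $\dom(T_{\min})$ is closed in the graph norm of $T_{\max}$ so that the quotient is a Hilbert space and $\gamma$ is open --- a standard consequence of $T_{\min}$ itself being closed --- after which everything is a formal consequence of the universal property of the quotient topology.
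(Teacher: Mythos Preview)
Your proof is correct and follows essentially the same approach as the paper. The paper's proof is terse, invoking ``a short argument involving the five lemma'' to establish that the assignments $B\mapsto T_B$ and $T\mapsto \gamma(\dom(T))$ are mutually inverse and preserve closedness; you carry out exactly this argument explicitly and elementarily via the quotient-map properties of $\gamma$, which is arguably clearer than the five-lemma phrasing.
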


\begin{proof}
A short argument involving the five lemma shows that given a (closed) subspace $B\subseteq \check{\mathhil{h}}_\mathcal{T}$, $T_{\rm B}$ is the unique (closed) realisation $T$ with $\gamma\dom(T)=B$. A similar argument shows that given a (closed) realisation $T$, the subspace $B:=\gamma(\dom(T))\subseteq \check{\mathhil{h}}_\mathcal{T}$ is the unique (closed) subspace $B_0$ for which $T=T_{{\rm B}_0}$.
\end{proof}

\begin{definition}
A subspace $B\subseteq \check{\mathhil{h}}_\mathcal{T}$ is called a \emph{generalised boundary condition} and a closed subspace $B\subseteq \check{\mathhil{h}}_T$ a \emph{boundary condition}. 

If $B$ is a generalised boundary condition, we define its adjoint boundary condition as
$$B^*:=\{w\in \mathhil{h}_{\mathcal{T}^\dagger}: \omega_\mathcal{T}(w,v)=0\quad \forall v\in B\}.$$

\end{definition}

In this terminology, Proposition \ref{charbodundalda} can be phrased as there being a one-to-one correspondence between closed realisations of a formally adjointed closed operator and its boundary conditions. The terminology adjoint boundary condition is motivated by the next proposition.

\begin{proposition}
\label{adjointofafap}
Let $(T_{\rm min},T^\dagger_{\rm min})$ be a FAP and $B$ a generalised boundary condition. Then it holds that 
$$(T_{\rm B})^*=T^\dagger_{{\rm B}^*}.$$
\end{proposition}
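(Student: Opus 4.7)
The plan is to chain together Proposition~\ref{charbodundalda} with the definition of the boundary pairing $\omega_{\mathcal{T}}$. First I would verify that $(T_{\rm B})^*$ is in fact a realisation of the adjoint FAP $\mathcal{T}^\dagger$. From $T_{\min}\subseteq T_{\rm B}\subseteq T_{\max}$ we obtain
\[
T^\dagger_{\min}=(T_{\max})^*\subseteq (T_{\rm B})^*\subseteq (T_{\min})^*=T^\dagger_{\max},
\]
so by Proposition~\ref{charbodundalda} there is a unique closed subspace $\widetilde B\subseteq \check{\mathhil{h}}_{\mathcal{T}^\dagger}$ with $(T_{\rm B})^*=T^\dagger_{\widetilde B}$; the task then reduces to showing $\widetilde B=B^*$.

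Next I would unpack the adjoint condition. A vector $g\in\Hil_2$ lies in $\dom((T_{\rm B})^*)$ precisely when $f\mapsto \langle g,T_{\rm B}f\rangle$ is bounded on $\dom(T_{\rm B})$. Restricting to $f\in \dom(T_{\min})\subseteq \dom(T_{\rm B})$ immediately gives $g\in \dom(T_{\min}^*)=\dom(T^\dagger_{\max})$. Conversely, assuming $g\in\dom(T^\dagger_{\max})$, the relation $\langle g,T_{\rm B}f\rangle=\langle T^\dagger_{\max}g,f\rangle$ for all $f\in\dom(T_{\rm B})$ is, by the definition of $\omega_{\mathcal{T},0}$ in Proposition~\ref{boundarypairingABS}, equivalent to
\[
\omega_{\mathcal{T}}(\gamma g,\gamma f)=\langle T^\dagger_{\max}g,f\rangle-\langle g,T_{\max}f\rangle=0\quad\text{for all }f\in\dom(T_{\rm B}).
\]
Since $\gamma(\dom(T_{\rm B}))=B$, this last condition reads $\omega_{\mathcal{T}}(\gamma g,v)=0$ for every $v\in B$, i.e.\ $\gamma g\in B^*$.

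Combining these two observations gives
\[
\dom((T_{\rm B})^*)=\{g\in\dom(T^\dagger_{\max}):\gamma g\in B^*\}=\dom(T^\dagger_{B^*}),
\]
and on this common domain both operators act as $T^\dagger_{\max}$, so $(T_{\rm B})^*=T^\dagger_{B^*}$ as desired.

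There is no real obstacle; the argument is just a careful bookkeeping exercise. The one subtle point worth stating explicitly is why the set-level equality $\gamma(\dom(T_{\rm B}))=B$ holds—this is immediate from the definition $\dom(T_{\rm B})=\gamma^{-1}(B)$ together with $B\subseteq \check{\mathhil{h}}_{\mathcal{T}}=\gamma\dom(T_{\max})$—and why the boundary pairing $\omega_{\mathcal{T}}$ genuinely descends to the quotient, which has already been established in Proposition~\ref{boundarypairingABS}. No closedness hypothesis on $B$ is actually needed for the argument, consistent with the fact that $B^*$ is automatically closed since it is the annihilator of $B$ in a continuous non-degenerate pairing.
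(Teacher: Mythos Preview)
Your proof is correct and follows essentially the same approach as the paper's: both arguments reduce the identification of $\dom((T_{\rm B})^*)$ inside $\dom(T^\dagger_{\max})$ to the vanishing of the boundary pairing $\omega_{\mathcal{T}}(\gamma g,\cdot)$ on $B$, which is precisely the definition of $B^*$. The paper organizes this as two separate inclusions (the second via contrapositive), while you give a direct characterization of the domain; the mathematical content is the same.
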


\begin{proof}
We first show that $(T_{\rm B})^*\supseteq T^\dagger_{\rm B^*}$. If $v\in \dom T^\dagger_{\rm B^*}$ then for any $w\in \dom T_{{\rm B}}$ we have that 
$$\langle T_{{\rm B}^*}^\dagger v,w\rangle-\langle v,T_{{\rm B}} w\rangle=\langle T_{\rm max}^\dagger v,w\rangle-\langle v,T_{\rm max} w\rangle=\omega_\mathcal{T}(\gamma(w),\gamma(v))=0.$$
Therefore, $v\in \dom(T_{\rm B})^*$ and $(T_{\rm B})^*v=T_{{\rm B}^*}^\dagger v$. 

We prove $(T_{\rm B})^*\subseteq T^\dagger_{{\rm B}^*}$. As $T_{\rm B}\supseteq T_{\min}$, $(T_{\rm B})^*\subseteq T^\dagger_{\max}$ so consider a $v\in \dom(T_{\max}^\dagger)\setminus \dom (T^\dagger_{{\rm B}^*})$, i.e. a $v\in \dom(T_{\max}^\dagger)$ with $\gamma(v)\notin B^*$. Nondegeneracy of $\omega_\mathcal{T}$ implies that there exists a $w\in \dom (T_{{\rm B}})$ with $\omega_\mathcal{T}(\gamma(v),\gamma(w))\neq 0$, or equivalently that there exists a $w\in \dom (T_{{\rm B}})$ with 
$$\langle T_{\rm max}^\dagger v,w\rangle-\langle v,T_{\rm max} w\rangle=\langle (T_{\rm B})^* v,w\rangle-\langle v,T_{\rm B}w\rangle\neq 0.$$
This shows that $v\in \dom(T_{\rm B})^*$ so we conclude that $(T_{\rm B})^*\subseteq T^\dagger_{{\rm B}^*}$.
\end{proof}

\begin{proposition}
\label{adjointofafapsym}
Let $S$ be a closed symmetric operator. 
\begin{enumerate}[(i)]
\item For any generalised boundary condition $B$, $S_{\rm B}^*=S_{{\rm B}^*}$.
\item A boundary condition $B$ for $S$ defines a self-adjoint realisation $S_{\rm B}$ if and only if $B$ is Lagrangian (i.e. $B^*=B$).
\item A boundary condition $B$ for $S$ defines a symmetric realisation $S_{\rm B}$ if and only if $B$ is isotropic (i.e. $B\subseteq B^*$).
\end{enumerate}
\end{proposition}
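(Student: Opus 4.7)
The plan is to deduce all three parts from Proposition \ref{adjointofafap} (which gives the adjoint of a realisation in terms of the adjoint boundary condition) together with Proposition \ref{charbodundalda} (which asserts that the correspondence $B \mapsto T_{\rm B}$ between boundary conditions and closed realisations is a bijection, and in particular is inclusion-preserving in both directions).

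First I would handle part (i). Viewing the closed symmetric operator $S$ as the FAP $\mathcal{T}=(S,S)$, so that $S_{\min}=S_{\min}^\dagger=S$ and $S_{\max}=S_{\max}^\dagger=S^\ast$, Proposition \ref{adjointofafap} applied to this $\mathcal{T}$ gives $(S_{\rm B})^\ast = S^\dagger_{{\rm B}^\ast} = S_{{\rm B}^\ast}$ for any generalised boundary condition $B \subseteq \check{\mathhil{h}}_S$. This is entirely formal and is the engine driving the other two parts.

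Next, for part (iii), $S_{\rm B}$ is symmetric precisely when $S_{\rm B} \subseteq S_{\rm B}^\ast$. Using part (i), this is equivalent to $S_{\rm B} \subseteq S_{{\rm B}^\ast}$. Now I would invoke Proposition \ref{charbodundalda}: the correspondence $B \mapsto S_{\rm B}$ between boundary conditions and closed realisations is a bijection, and since $\gamma(\dom(S_{\rm B}))=B$ for any boundary condition, inclusion of realisations corresponds to inclusion of boundary conditions. Hence $S_{\rm B}\subseteq S_{{\rm B}^\ast}$ if and only if $B \subseteq B^\ast$, i.e.\ $B$ is isotropic.

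For part (ii), $S_{\rm B}$ is self-adjoint precisely when $S_{\rm B}=S_{\rm B}^\ast$, and by part (i) this reads $S_{\rm B}=S_{{\rm B}^\ast}$. Again Proposition \ref{charbodundalda} allows us to pass from equality of realisations to equality of boundary conditions, yielding $B=B^\ast$, i.e.\ $B$ is Lagrangian. Conversely, if $B=B^\ast$ then $S_{\rm B}=S_{{\rm B}^\ast}=S_{\rm B}^\ast$ by (i), so $S_{\rm B}$ is self-adjoint.

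There is essentially no obstacle here: each implication is a direct translation between operator-theoretic and boundary-data statements via the two propositions just cited. The only minor subtlety worth flagging is that part (i) is stated for generalised (not necessarily closed) boundary conditions, whereas parts (ii) and (iii) require $B$ to be closed so that Proposition \ref{charbodundalda} identifies $S_{\rm B}$ with the closed realisation corresponding to $B$; but $B^\ast$ is automatically closed as an annihilator with respect to $\omega_S$, so no closure issues arise when applying the bijection to the inclusion $S_{\rm B}\subseteq S_{{\rm B}^\ast}$ or the equality $S_{\rm B}=S_{{\rm B}^\ast}$.
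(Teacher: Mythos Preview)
Your proof is correct and follows the same approach as the paper: both deduce everything from Proposition \ref{adjointofafap} by observing that for a closed symmetric operator the FAP is $(S,S)$, so $S^\dagger = S$. The paper's proof is a single sentence to this effect; you have simply unpacked the consequences for parts (ii) and (iii) in more detail, invoking Proposition \ref{charbodundalda} explicitly to translate between inclusions of realisations and inclusions of boundary conditions.
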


\begin{proof}
By definition, $S^\dagger_{\rm min}=S_{\rm min}=S$ for the FAP constructed from a closed symmetric operator and therefore the proposition follows from Proposition \ref{adjointofafap}.
\end{proof}

\begin{proposition}
Let $\mathcal{T}=(T_{\rm min},T^\dagger_{\rm min})$ be a FAP and recall the notation from Proposition \ref{doublingup}.  
\begin{enumerate}[(i)]
\item \label{corres:1} There is a one-to-one correspondence between closed realisations of $\mathcal{T}$ and self-adjoint $\mathbb{Z}/2$-graded realisations of $\tilde{T}$ given by 
$$T_{\rm B}\longleftrightarrow \begin{pmatrix} 0& T_{\rm B}\\T^\dagger_{{\rm B}^*}&0\end{pmatrix}.$$
\item \label{corres:2} There is a one-to-one correspondence between closed subspaces $B\subseteq \check{\mathhil{h}}_\mathcal{T}$ and $\mathbb{Z}/2$-graded Lagrangian subspaces $\tilde{B}\subseteq \check{\mathhil{h}}_{\tilde{T}}=\dom(\tilde{T}^*)/\dom(\tilde{T})$ given by 
$$B\longleftrightarrow B^*\oplus B.$$
\end{enumerate}
\end{proposition}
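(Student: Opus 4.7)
The plan is to reduce both statements to a direct computation of the boundary pairing $\omega_{\tilde{T}}$ in terms of $\omega_\mathcal{T}$ and $\omega_{\mathcal{T}^\dagger}$, after which Proposition \ref{charbodundalda} and Proposition \ref{adjointofafap} deliver the correspondences. Parts \ref{corres:1} and \ref{corres:2} are in fact equivalent under the bijection of Proposition \ref{charbodundalda} applied to the FAP $\tilde{\mathcal{T}}=(\tilde{T},\tilde{T})$, so the real content lies in identifying which closed subspaces of $\check{\mathhil{h}}_{\tilde{T}}$ are $\mathbb{Z}/2$-graded Lagrangian. I would therefore start with \ref{corres:2} and then deduce \ref{corres:1}.

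For \ref{corres:2}, I would first note that a closed subspace $\tilde{B}\subseteq \check{\mathhil{h}}_{\mathcal{T}^\dagger}\oplus \check{\mathhil{h}}_\mathcal{T}$ is $\mathbb{Z}/2$-graded precisely when it is invariant under the grading involution $\mathrm{diag}(1,-1)$, and a standard splitting argument using $(\eta,\xi)\pm(\eta,-\xi)$ then shows $\tilde{B}=B'\oplus B$ for uniquely determined closed subspaces $B'\subseteq \check{\mathhil{h}}_{\mathcal{T}^\dagger}$ and $B\subseteq \check{\mathhil{h}}_\mathcal{T}$. The main computation is then to show that for $(v_1^\dagger,v_1),(v_2^\dagger,v_2)\in \dom(\tilde{T}^*)=\dom(T_{\max}^\dagger)\oplus \dom(T_{\max})$ one has
\begin{equation*}
\omega_{\tilde{T}}\bigl([(v_1^\dagger,v_1)],[(v_2^\dagger,v_2)]\bigr)=\omega_\mathcal{T}(\gamma v_1^\dagger,\gamma v_2)+\omega_{\mathcal{T}^\dagger}(\gamma v_1,\gamma v_2^\dagger),
\end{equation*}
which is a direct expansion of the definition in Proposition \ref{boundarypairingABS} using the off-diagonal form of $\tilde{T}^*$ from Proposition \ref{doublingup}.

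Given this formula, varying $(v_2^\dagger,v_2)\in B'\oplus B$ with each component zero separately shows that $\tilde{B}^*=B^*\oplus (B')^*$, where the starred spaces on the right use the pairings $\omega_\mathcal{T}$ and $\omega_{\mathcal{T}^\dagger}$ respectively. The Lagrangian condition $\tilde{B}=\tilde{B}^*$ therefore becomes the pair of conditions $B'=B^*$ and $B=(B')^*=(B^*)^*$; since $B$ is closed and $\omega_\mathcal{T}$ is non-degenerate and jointly continuous, the biduality $(B^*)^*=B$ follows from a Hahn--Banach argument applied to the induced isomorphism $\check{\mathhil{h}}_\mathcal{T}\cong \check{\mathhil{h}}_{\mathcal{T}^\dagger}^\ast$ coming from $\omega_\mathcal{T}$. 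This leaves $B'=B^*$ as the only constraint, so the association $B\mapsto B^*\oplus B$ is a bijection between closed subspaces of $\check{\mathhil{h}}_\mathcal{T}$ and $\mathbb{Z}/2$-graded Lagrangian subspaces of $\check{\mathhil{h}}_{\tilde{T}}$.

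For \ref{corres:1}, I would apply Proposition \ref{charbodundalda} to $\tilde{T}$ to get a bijection between closed realisations of $\tilde{T}$ and closed subspaces of $\check{\mathhil{h}}_{\tilde{T}}$; self-adjointness of a realisation corresponds to the Lagrangian property by (the appropriate $\tilde{T}$-version of) Proposition \ref{adjointofafapsym}, while $\mathbb{Z}/2$-gradedness of a realisation $S$ with $\tilde{T}\subseteq S\subseteq \tilde{T}^*$ forces $S=\bigl(\begin{smallmatrix}0& S_1\\ S_2& 0\end{smallmatrix}\bigr)$ with $S_1=T_B$ and $S_2=T_{B'}^\dagger$ for uniquely determined $B,B'$. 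Combining with \ref{corres:2} and Proposition \ref{adjointofafap}, self-adjointness of $S$ reads $S_2=S_1^*=T^\dagger_{B^*}$, i.e.\ $B'=B^*$, producing exactly the stated correspondence $T_B\longleftrightarrow \bigl(\begin{smallmatrix}0& T_B\\ T^\dagger_{B^*}& 0\end{smallmatrix}\bigr)$. The only mild obstacle is the biduality $(B^*)^*=B$, but it is entirely an exercise in the non-degeneracy of $\omega_\mathcal{T}$ already established in Proposition \ref{boundarypairingABS}.
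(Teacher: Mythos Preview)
Your proposal is correct and follows essentially the same route as the paper: both compute the off-diagonal decomposition $\omega_{\tilde{T}}((\eta_1,\xi_1),(\eta_2,\xi_2))=\omega_\mathcal{T}(\eta_1,\xi_2)+\omega_{\mathcal{T}^\dagger}(\xi_1,\eta_2)$, use it to identify the $\mathbb{Z}/2$-graded Lagrangians as exactly the subspaces $B^*\oplus B$, and then deduce \ref{corres:1} from \ref{corres:2} via Propositions \ref{charbodundalda}, \ref{adjointofafap} and \ref{adjointofafapsym}. The only difference is organisational: you compute $(B'\oplus B)^* = B^*\oplus (B')^*$ in one stroke and then impose the Lagrangian condition, whereas the paper first checks isotropy of $B^*\oplus B$ and then its maximality; you also make explicit the biduality step $(B^*)^*=B$, which the paper uses without comment (and which is most cleanly seen by applying Proposition \ref{adjointofafap} twice to the closed operator $T_B$ rather than by appealing to perfectness of $\omega_\mathcal{T}$).
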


\begin{proof}
By Proposition \ref{adjointofafap} and \ref{adjointofafapsym}, item \ref{corres:1} follows from item \ref{corres:2}. Let us prove item \ref{corres:2}. We note that 
$$\omega_{\tilde{T}}\left(\begin{pmatrix}v_{1}\\v_{2}\end{pmatrix},\begin{pmatrix}v_{1}'\\v_{2}'\end{pmatrix}\right)=\omega_{\mathcal{T}}(v_1,v_2')+\omega_{\mathcal{T}^\dagger}(v_2,v_1'),$$
for $v_1,v_1'\in \check{\mathhil{h}}_{\mathcal{T}^\dagger}$ and $v_2,v_2'\in \check{\mathhil{h}}_\mathcal{T}$. Therefore, given a $B\subseteq \check{\mathhil{h}}_\mathcal{T}$, $\tilde{B}:=B^*\oplus B$ is a $\mathbb{Z}/2$-graded isotropic subspace (i.e. $\tilde{B}\subseteq \tilde{B}^*$). Given $(v_1,v_2)\in \tilde{B}^*$, the condition 
$$\omega_{\tilde{T}}\left(\begin{pmatrix}v_{1}\\v_{2}\end{pmatrix},\begin{pmatrix}v_{1}'\\0\end{pmatrix}\right)=0,$$ 
for all $v_1'\in B^*$ implies that $v_2\in B$ and the condition 
$$\omega_{\tilde{T}}\left(\begin{pmatrix}v_{1}\\v_{2}\end{pmatrix},\begin{pmatrix}0\\v_2'\end{pmatrix}\right)=0,$$ 
for all $v_2'\in B$ implies that $v_1\in B^*$ which proves that $\tilde{B}^*\subseteq \tilde{B}$. We conclude that $\tilde{B}$ is a $\mathbb{Z}/2$-graded Lagrangian subspace. Conversely, given a $\mathbb{Z}/2$-graded Lagrangian subspaces $\tilde{B}\subseteq \check{\mathhil{h}}_{\tilde{T}}$ the fact that $\tilde{B}$ is graded implies that $\tilde{B}=B'\oplus B$ for closed subspaces $B'\subseteq \check{\mathhil{h}}_{\mathcal{T}\dagger}$ and $B\subseteq \check{\mathhil{h}}_\mathcal{T}$ and the fact that $\tilde{B}$ is Lagrangian implies that $B'=B^*$ by an argument as above. We conclude that $B\leftrightarrow B^*\oplus B$ produces a one-to-one correspondence as stated.
\end{proof}

\begin{proposition}
\label{Prop:DomCompABS}
Let $\mathcal{T}=(T_{\rm min},T^\dagger_{\rm min})$ be a FAP and $B_1\subseteq B_2$ be boundary conditions. Then $T_{{\rm B}_1}\subseteq T_{{\rm B}_2}$ and there is a short exact sequence of Hilbert spaces
$$0\to \dom(T_{{\rm B}_1})\to \dom(T_{{\rm B}_2})\to \faktor{B_2}{B_1}\to 0.$$
In particular, the following special cases hold for a boundary condition $B$:
\begin{enumerate}[(i)] 
\item $\faktor{\dom(T_{\rm B})}{\dom(T_{\rm min})}\cong B$, 
\item $\faktor{\dom(T_{\rm max})}{\dom(T_{\rm B})}\cong \faktor{\check{\mathhil{h}}_\mathcal{T}}{B}$.
\end{enumerate}
\end{proposition}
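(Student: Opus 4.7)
The plan is to extract everything from the trace map $\gamma:\dom(T_{\max})\to \check{\mathhil{h}}_\mathcal{T}$. Recall that $\check{\mathhil{h}}_\mathcal{T}$ is by definition the Hilbert space quotient $\dom(T_{\max})/\dom(T_{\min})$, so $\gamma$ is automatically a bounded surjection whose kernel is exactly $\dom(T_{\min})$. Since $B_1, B_2$ are closed in $\check{\mathhil{h}}_\mathcal{T}$, the preimages $\dom(T_{{\rm B}_i}) = \gamma^{-1}(B_i)$ are closed subspaces of $\dom(T_{\max})$ in the graph norm; equivalently (as one easily verifies by a diagonal sequence argument), the realisations $T_{{\rm B}_i}$ are closed operators, so $\dom(T_{{\rm B}_i})$ is a Hilbert space in its own graph norm.

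The inclusion $T_{{\rm B}_1}\subseteq T_{{\rm B}_2}$ is immediate from the definition $\dom(T_{{\rm B}_i})=\{v\in\dom(T_{\max}):\gamma(v)\in B_i\}$. For the short exact sequence, let $\pi:B_2\to B_2/B_1$ denote the quotient map and set $\tilde\gamma:=\pi\circ\gamma|_{\dom(T_{{\rm B}_2})}:\dom(T_{{\rm B}_2})\to B_2/B_1$. This is bounded as a composition of bounded maps. Its kernel consists of those $v\in\dom(T_{{\rm B}_2})$ with $\gamma(v)\in B_1$, which is exactly $\dom(T_{{\rm B}_1})$. Surjectivity follows since given $[b]\in B_2/B_1$ we may choose $b\in B_2\subseteq\gamma\dom(T_{\max})$, pick any preimage $v\in\dom(T_{\max})$ with $\gamma(v)=b$, and observe $v\in\dom(T_{{\rm B}_2})$ with $\tilde\gamma(v)=[b]$. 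This gives the algebraic short exact sequence
$$0\to \dom(T_{{\rm B}_1})\to \dom(T_{{\rm B}_2})\xrightarrow{\tilde\gamma} B_2/B_1\to 0.$$

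The main subtle point is that this is a short exact sequence of Hilbert spaces, i.e.\ the induced bijection $\dom(T_{{\rm B}_2})/\dom(T_{{\rm B}_1})\to B_2/B_1$ is a Banach space isomorphism, and $\dom(T_{{\rm B}_1})\hookrightarrow\dom(T_{{\rm B}_2})$ is an isomorphism onto a closed subspace. The latter is automatic from closedness of $T_{{\rm B}_1}$ as a restriction of $T_{{\rm B}_2}$. For the former, $\tilde\gamma$ is a bounded surjection between Banach spaces and so by the open mapping theorem it descends to a bounded bijection with bounded inverse from $\dom(T_{{\rm B}_2})/\dom(T_{{\rm B}_1})$ onto $B_2/B_1$. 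The special cases (i) and (ii) now follow by specialisation: taking $B_1=0$, $B_2=B$ gives (i) (noting $T_0=T_{\min}$ since $\ker\gamma=\dom(T_{\min})$), while taking $B_1=B$, $B_2=\check{\mathhil{h}}_\mathcal{T}$ gives (ii) (noting $T_{\check{\mathhil{h}}_\mathcal{T}}=T_{\max}$). The only real work is invoking the open mapping theorem correctly once the algebraic sequence is in place.
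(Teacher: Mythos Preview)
Your proof is correct and follows essentially the same approach as the paper, which simply states that the result ``follows from standard considerations with the open mapping theorem and the five lemma.'' You have filled in the details directly rather than invoking the five lemma, but the core idea---constructing the bounded surjection via $\gamma$ and applying the open mapping theorem---is the same.
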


This follows from standard considerations with the open mapping theorem and the five lemma.

\begin{definition}
\label{def:cauchyspacefap}
Let $\mathcal{T}=(T_{\rm min},T^\dagger_{\rm min})$ be a FAP. We define the Hardy space as 
$$\mathcal{C}_\mathcal{T}:=\gamma\ker T_{\rm max}\subseteq \check{\mathhil{h}}_\mathcal{T}.$$
\end{definition}

It is readily verified that the Hardy space fits into a short exact sequence of vector spaces
$$0\to \ker(T_{\rm min})\to \ker(T_{\rm max})\xrightarrow{\gamma} \mathcal{C}_\mathcal{T}\to 0.$$
This also follows from the next proposition.

\begin{proposition}
\label{sesforkernelslsa}
Let $\mathcal{T}=(T_{\rm min},T^\dagger_{\rm min})$ be a FAP and $B$ be a boundary condition. Then there is a short exact sequence of vector spaces
$$0\to \ker(T_{\rm min})\to \ker(T_{{\rm B}})\to \mathcal{C}_\mathcal{T}\cap B\to 0.$$
\end{proposition}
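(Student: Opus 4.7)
The plan is to establish the short exact sequence by unpacking the definition of $T_{\rm B}$ and tracing what $\gamma$ does on kernels. The middle map $\ker(T_{\rm min})\to \ker(T_{{\rm B}})$ will be the obvious inclusion (valid since $T_{\rm min}\subseteq T_{\rm B}$), and the rightmost map $\ker(T_{\rm B})\to \mathcal{C}_\mathcal{T}\cap B$ will be the restriction of $\gamma$. The whole argument is essentially a diagram chase, with no analytic obstacle; topology plays no role since the statement is one of vector spaces.

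First I would observe that, by definition of $T_{\rm B}$, any $v\in \ker(T_{\rm B})$ satisfies $v\in\dom(T_{\max})$, $T_{\max}v=0$, and $\gamma(v)\in B$. Hence $\gamma$ sends $\ker(T_{\rm B})$ into $\gamma(\ker T_{\max})\cap B=\mathcal{C}_\mathcal{T}\cap B$, which defines the rightmost map. To check exactness at $\ker(T_{\rm B})$, I would note that the kernel of $\gamma|_{\ker T_{\rm B}}$ consists of $v\in \ker(T_{\rm B})\cap \dom(T_{\rm min})$; since $T_{\rm min}\subseteq T_{\rm B}$, these are exactly the elements of $\ker(T_{\rm min})$, which is precisely the image of the left inclusion. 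Injectivity of $\ker(T_{\rm min})\hookrightarrow \ker(T_{\rm B})$ is immediate from $T_{\rm min}\subseteq T_{\rm B}\subseteq T_{\max}$.

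The remaining step is surjectivity of $\gamma:\ker(T_{\rm B})\to \mathcal{C}_\mathcal{T}\cap B$. Given $\xi\in \mathcal{C}_\mathcal{T}\cap B$, by the very definition of $\mathcal{C}_\mathcal{T}$ there is some $v\in \ker(T_{\max})$ with $\gamma(v)=\xi$. Since $\gamma(v)=\xi\in B$, the element $v$ lies in $\dom(T_{\rm B})$, and $T_{\rm B}v=T_{\max}v=0$, so $v\in \ker(T_{\rm B})$ and lifts $\xi$. This closes the sequence. I do not anticipate any subtlety: once one recognises that $\ker$ intertwines well with $\gamma$ and that the definition of $T_{\rm B}$ is tautologically ``$\gamma(v)\in B$'', the three conditions for a short exact sequence drop out by direct inspection.
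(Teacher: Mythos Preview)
Your proof is correct and is essentially the same as the paper's; the paper condenses the argument to the single observation that $\ker(T_{\rm min})=\ker(T_{\rm max})\cap \dom(T_{\min})$ and $\ker(T_{\rm B})=\ker(T_{\rm max})\cap \dom(T_{\rm B})$, which you have simply unpacked into the explicit diagram chase.
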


\begin{proof}
This follows from the fact that $\ker(T_{\rm min})=\ker(T_{\rm max})\cap \dom(T_{\min})$ and $\ker(T_{\rm B})=\ker(T_{\rm max})\cap \dom(T_{\rm B})$.
\end{proof}

Let us briefly comment on the relation to boundary triplets. We compare our result to the setup of \cite{grubb68}. 
For more details, history and context for boundary triplets, see also \cite{malamud,behrndtetal}.
 We remark that if a FAP $\mathcal{T}=(T_{\rm min},T^\dagger_{\rm min})$ admits a realisation $T$ which is invertible, which is equivalent to  $0 \in \res(T) = \res(T^*)^{\conj}$, then $T^*$ is also invertible and therefore, $\ker T_{\rm min}=\ker T_{\rm min}^\dagger=0$.
Thus, $\gamma|: \ker T_{\rm max}\to \mathcal{C}_\mathcal{T}$ and $\gamma|: \ker T_{\rm max}^\dagger\to \mathcal{C}_{\mathcal{T}^\dagger}$ are vector space isomorphisms. 

\begin{proposition}
Let $\mathcal{T}=(T_{\rm min},T^\dagger_{\rm min})$ be a FAP admitting a realisation $T_\beta$ which is invertible. Define $\mathrm{pr}_\beta:=T_\beta^{-1}T_{\rm max}:\dom(T_{\rm max})\to \dom(T_\beta)$ and $\mathrm{pr}_{\beta^\dagger}:=(T_\beta^*)^{-1}T_{\rm max}^\dagger:\dom(T_{\rm max}^\dagger)\to \dom(T_\beta^*)$. 
Then, the  following hold. 
\begin{enumerate}[(i)] 
\item  \label{invfap:1} 
It holds that 
\begin{align*}
&\dom(T_{\rm max})=\dom(T_\beta)+ \ker T_{\rm max},\quad \text{and}  \\
&\dom(T_{\rm max}^\dagger)=\dom(T_\beta^*)+ \ker T_{\rm max}^\dagger,
\end{align*}
and the mappings
\begin{align*}
\dom(T_{\rm max})&\xrightarrow{\mathrm{pr}_\beta\oplus \gamma(1-\mathrm{pr}_\beta)} \dom(T_\beta)\oplus \mathcal{C}_\mathcal{T}\quad \mbox{and}\\
\dom(T_{\rm max}^\dagger)&\xrightarrow{\mathrm{pr}_{\beta^\dagger}\oplus \gamma(1-\mathrm{pr}_{\beta^\dagger})} \dom(T_\beta^*)\oplus \mathcal{C}_{\mathcal{T}^\dagger},
\end{align*}
are isomorphisms. 
\item \label{invfap:2} The isomorphisms from \ref{invfap:1} induce isomorphisms
\begin{align*}
\check{\mathhil{h}}_{\mathcal{T}}\cong \check{\mathhil{h}}_{\mathcal{T},\beta}\oplus \mathcal{C}_{\mathcal{T}}\quad \mbox{and}\quad \check{\mathhil{h}}_{\mathcal{T}^\dagger}\cong \check{\mathhil{h}}_{\mathcal{T}^\dagger,\beta}\oplus \mathcal{C}_{\mathcal{T}^\dagger},
\end{align*}
where $\check{\mathhil{h}}_{\mathcal{T},\beta}:=\gamma\dom(T_\beta)$ and $\check{\mathhil{h}}_{\mathcal{T}^\dagger,\beta}:=\gamma\dom T_\beta^*$ are closed subspaces. 
\item \label{invfap:3} There is a one-to-one correspondence between boundary conditions $B$ of $\mathcal{T}$ and triples $(W,V,\mathfrak{T})$, where $V\subseteq \mathcal{C}_\mathcal{T}$ and $W\subseteq \mathcal{C}_{\mathcal{T}^\dagger}$ are closed subspaces and $\mathfrak{T}:V\dashrightarrow W$ is a closed densely defined operator.
\end{enumerate}
\end{proposition}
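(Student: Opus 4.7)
The plan is to prove (i), (ii), (iii) in sequence, relying throughout on the fact that invertibility of $T_\beta$ (in the sense of the preliminaries) produces an everywhere-defined bounded inverse $T_\beta^{-1}\colon \Hil_2\to \Hil_1$. For (i), I will exploit this to see that $\mathrm{pr}_\beta=T_\beta^{-1}T_{\rm max}$ is a continuous operator $\dom(T_{\rm max})\to \dom(T_\beta)$ with respect to graph norms. Idempotence and the identity $T_{\rm max}\mathrm{pr}_\beta=T_{\rm max}$ follow from $T_{\rm max}T_\beta^{-1}T_{\rm max}=T_\beta T_\beta^{-1}T_{\rm max}=T_{\rm max}$, using that $T_\beta\subseteq T_{\rm max}$. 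This gives $(1-\mathrm{pr}_\beta)\dom(T_{\rm max})\subseteq \ker T_{\rm max}$ and the algebraic decomposition $\dom(T_{\rm max})=\dom(T_\beta)+\ker T_{\rm max}$, which is direct since $\dom(T_\beta)\cap \ker T_{\rm max}\subseteq \ker T_\beta=0$. The map $\mathrm{pr}_\beta\oplus \gamma(1-\mathrm{pr}_\beta)$ is then continuous by construction; injectivity reduces to $\ker T_{\rm max}\cap \dom(T_{\rm min})=\ker T_{\rm min}$, which vanishes as $T_\beta\supseteq T_{\rm min}$ is invertible; and surjectivity follows by lifting any $c\in \mathcal{C}_\mathcal{T}$ to $\ker T_{\rm max}$. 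The open mapping theorem upgrades this bijection to a Banach isomorphism. The dual statement for $\mathcal{T}^\dagger$ is completely analogous.

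For (ii), I pass to the quotient by $\dom(T_{\rm min})$. Since $\dom(T_{\rm min})\subseteq \dom(T_\beta)$ and $\ker T_{\rm max}\cap \dom(T_{\rm min})=\ker T_{\rm min}=0$, the Banach isomorphism of (i) descends to $\check{\mathhil{h}}_\mathcal{T}\cong \check{\mathhil{h}}_{\mathcal{T},\beta}\oplus \mathcal{C}_\mathcal{T}$ where $\check{\mathhil{h}}_{\mathcal{T},\beta}=\gamma\dom(T_\beta)$ is closed because it is the image of the closed subspace $\dom(T_\beta)\subseteq \dom(T_{\rm max})$ (closed since $T_\beta$ is a closed realisation) containing $\ker\gamma$, while $\mathcal{C}_\mathcal{T}$ is closed as the complementary factor. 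The $\mathcal{T}^\dagger$ version is immediate.

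For (iii), the plan is to represent a boundary condition $B$ in a graph-like form relative to the splitting from (ii). Writing $\pi_\beta$ and $\pi_c$ for the continuous projections of $\check{\mathhil{h}}_\mathcal{T}$ onto $\check{\mathhil{h}}_{\mathcal{T},\beta}$ and $\mathcal{C}_\mathcal{T}$, the essential preliminary is that $\omega_\mathcal{T}$ is off-diagonal in the product splittings of $\check{\mathhil{h}}_{\mathcal{T}^\dagger}$ and $\check{\mathhil{h}}_\mathcal{T}$: it vanishes on $\check{\mathhil{h}}_{\mathcal{T}^\dagger,\beta}\times \check{\mathhil{h}}_{\mathcal{T},\beta}$ because $T_\beta^*\subseteq T_{\rm max}^\dagger$ is genuinely adjoint to $T_\beta$, and on $\mathcal{C}_{\mathcal{T}^\dagger}\times \mathcal{C}_\mathcal{T}$ since both factors lie in kernels; together with non-degeneracy from Proposition \ref{boundarypairingABS}, this yields non-degenerate restricted pairings $\mathcal{C}_{\mathcal{T}^\dagger}\times \check{\mathhil{h}}_{\mathcal{T},\beta}\to \C$ and $\check{\mathhil{h}}_{\mathcal{T}^\dagger,\beta}\times \mathcal{C}_\mathcal{T}\to \C$. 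Given $B$, set $V:=\overline{\pi_c B}\subseteq \mathcal{C}_\mathcal{T}$ and construct $\mathfrak{T}$ on $\pi_c B$ by sending $v$ to the class of $\pi_\beta u$, for any $u\in B$ with $\pi_c u=v$, in the quotient $\check{\mathhil{h}}_{\mathcal{T},\beta}/(B\cap \check{\mathhil{h}}_{\mathcal{T},\beta})$; via the perfect pairing with $\mathcal{C}_{\mathcal{T}^\dagger}$, this quotient is identified with the annihilator of $B\cap \check{\mathhil{h}}_{\mathcal{T},\beta}$ in $\mathcal{C}_{\mathcal{T}^\dagger}$, and $W$ is then the closure of the range of $\mathfrak{T}$ inside this annihilator. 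Conversely, given $(W,V,\mathfrak{T})$, recover $B$ by pulling back $\mathrm{graph}(\mathfrak{T})\subseteq V\oplus W$ through the duality into $\mathcal{C}_\mathcal{T}\oplus \check{\mathhil{h}}_{\mathcal{T},\beta}$.

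The hard part will be verifying that these two constructions are mutually inverse while preserving the required topological properties. In particular, closedness of $B$ in $\check{\mathhil{h}}_\mathcal{T}$ must be matched with closedness of $V$ and $W$ together with graph-closedness of $\mathfrak{T}$, which requires careful tracking of the continuous projections $\pi_\beta,\pi_c$ and of the duality identification of $\check{\mathhil{h}}_{\mathcal{T},\beta}/(B\cap\check{\mathhil{h}}_{\mathcal{T},\beta})$ with a subspace of $\mathcal{C}_{\mathcal{T}^\dagger}$; conversely, the boundary condition $B$ built from a triple inherits its closedness from that of $W$, $V$ and the graph of $\mathfrak{T}$. This duality-interchange is the step most in need of care, but nothing goes beyond routine functional analysis once the off-diagonal structure of $\omega_\mathcal{T}$ has been established.
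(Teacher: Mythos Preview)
The paper states this proposition without proof, presenting it only as a bridge to the boundary-triplet literature (it cites Grubb's 1968 paper \cite{grubb68} for the construction). So there is no ``paper proof'' to compare against; your arguments for (i) and (ii) are correct and are exactly what one would write down.

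For (iii), the off-diagonal structure of $\omega_{\mathcal{T}}$ that you identify is the right starting point, but your sketch has two genuine gaps. First, you invoke a \emph{perfect} pairing between $\mathcal{C}_{\mathcal{T}^\dagger}$ and $\check{\mathhil{h}}_{\mathcal{T},\beta}$ to identify the quotient $\check{\mathhil{h}}_{\mathcal{T},\beta}/(B\cap\check{\mathhil{h}}_{\mathcal{T},\beta})$ with a subspace of $\mathcal{C}_{\mathcal{T}^\dagger}$, but Proposition~\ref{boundarypairingABS} only gives non-degeneracy. Perfection does hold in this abstract setting---it follows from Lemma~\ref{symnondeg}(ii), which realises $\omega_{\tilde T}$ via the invertible matrix $\begin{psmallmatrix}0&1\\-1&0\end{psmallmatrix}$ on the von~Neumann decomposition---but you should say so.

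Second, and more seriously, your choice $W:=\overline{\ran\mathfrak{T}}$ makes the map $B\mapsto(W,V,\mathfrak{T})$ non-injective. Take any closed $K\subsetneq K'\subseteq\check{\mathhil{h}}_{\mathcal{T},\beta}$ and set $B=K$, $B'=K'$; both have $\pi_c B=\pi_c B'=0$, hence $V=0$, $\mathfrak{T}=0$, and your $W=0$, yet $B\neq B'$. The information contained in $B\cap\check{\mathhil{h}}_{\mathcal{T},\beta}$ is precisely what $W$ must encode. The correct assignment (and the one used in \cite{grubb68}) is to take $W$ to be the annihilator of $B\cap\check{\mathhil{h}}_{\mathcal{T},\beta}$ in $\mathcal{C}_{\mathcal{T}^\dagger}$ under the perfect pairing; then $B\cap\check{\mathhil{h}}_{\mathcal{T},\beta}$ is recovered as the pre-annihilator $W^{\perp}\subseteq\check{\mathhil{h}}_{\mathcal{T},\beta}$, and one reconstructs
\[
B=\bigl\{(\eta,v)\in\check{\mathhil{h}}_{\mathcal{T},\beta}\oplus\mathcal{C}_{\mathcal{T}}:\ v\in\dom(\mathfrak{T}),\ [\eta]=\mathfrak{T}v\ \text{in}\ \check{\mathhil{h}}_{\mathcal{T},\beta}/W^{\perp}\cong W\bigr\}.
\]
With this repaired definition the two constructions are mutual inverses, and closedness of $B$ matches closedness of $\mathfrak{T}$ together with closedness of $V,W$.
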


\subsection{Characterising Fredholm boundary conditions}

In this section, we show how to characterise Fredholm boundary conditions for a FAP satisfying the extra condition that $T_{\rm min}$ and $T_{\rm min}^\dagger$ have closed range and $\ker(T_{\rm min})$ and $\ker(T_{\rm min}^\dagger)$ being finite-dimensional.  For the sake of terminology, we define this notion as follows.

\begin{definition}
\label{efonedoenfapfredholm}
Let $\mathcal{T}=(T_{\rm min},T^\dagger_{\rm min})$ be a FAP. We say that $\mathcal{T}$ has closed range if $T_{\rm min}$ and $T_{\rm min}^\dagger$ have closed range. We say that $\mathcal{T}$ is a Fredholm FAP if it has closed range and $\ker(T_{\rm min})$ and $\ker(T_{\rm min}^\dagger)$ are finite-dimensional.
\end{definition}

The reader should note that by the closed range theorem, $\mathcal{T}=(T_{\rm min},T^\dagger_{\rm min})$ is a FAP with closed range if and only if $T_{\rm min}$ and $T_{\rm max}$ have closed range. For the purpose of studying elliptic differential operators, the following sufficient condition for closed range and Fredholmness will be useful.

\begin{proposition}
\label{closedragneandofiffni}
Let $\mathcal{T}=(T_{\rm min},T^\dagger_{\rm min})$ be a FAP. Assume that the inclusion $\iota:\dom(T_{\rm min})\hookrightarrow \Hil_1$ is a compact inclusion. Then, the following holds.
\begin{enumerate}[(i)] 
\item $\ker(T_{\rm min})$ is finite-dimensional.
\item $T_{\rm min}$ and $T_{\rm max}^\dagger$ have closed range.
\item $\Hil_1 = \ker(T_{\min}) \oplus^\perp \ran(T_{\max}^\dagger)$ and $\Hil_2 = \ker(T_{\max}^\dagger) \oplus^\perp \ran(T_{\min})$.
\end{enumerate}
In particular, if both $\dom(T_{\rm min})\hookrightarrow \Hil_1$ and $\dom(T_{\rm min}^\dagger)\hookrightarrow \Hil_2$ are compact inclusions, then $\mathcal{T}$ is a Fredholm FAP and we additionally have that $\Hil_2 = \ker(T_{\min}^\dagger) \oplus^\perp \ran(T_{\max})$.
\end{proposition}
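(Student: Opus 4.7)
The plan is to proceed in the natural order (i), (ii), (iii) and then observe that the case of both inclusions being compact follows by symmetry between $\mathcal{T}$ and $\mathcal{T}^\dagger$. Throughout, the key tool will be that $\dom(T_{\rm min})$ carries the graph norm $\|\cdot\|_{T_{\rm min}}$ under which it is a Hilbert space, and that the compact inclusion $\iota$ is a bounded map with respect to this norm.

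For (i), I would note that $\ker(T_{\rm min})$ is closed in $\dom(T_{\rm min})$ with respect to the graph norm, and that on this subspace the graph norm coincides with $\|\cdot\|_{\Hil_1}$. Hence the identity on $\ker(T_{\rm min})$ factors through $\iota$, so the closed unit ball of $(\ker(T_{\rm min}), \|\cdot\|_{\Hil_1})$ is the image under a compact map of a bounded set, and is therefore precompact. By Riesz's lemma, $\ker(T_{\rm min})$ is finite-dimensional.

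For (ii), I would first establish closed range of $T_{\rm min}$ by the standard ``bounded below modulo kernel'' argument. Since $\ker(T_{\rm min})$ is finite-dimensional by (i), consider the restriction of $T_{\rm min}$ to $\ker(T_{\rm min})^{\perp} \cap \dom(T_{\rm min})$. I would argue by contradiction: suppose there is a sequence $u_n$ in this subspace with $\|u_n\|_{\Hil_1}=1$ and $T_{\rm min}u_n \to 0$ in $\Hil_2$. Then $(u_n)$ is bounded in the graph norm, so by compactness of $\iota$ a subsequence converges in $\Hil_1$ to some $u$ with $\|u\|=1$. Closedness of $T_{\rm min}$ forces $u \in \dom(T_{\rm min})$ and $T_{\rm min}u=0$, but $u$ also lies in the closed subspace $\ker(T_{\rm min})^{\perp}$, contradicting $\|u\|=1$. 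Hence $T_{\rm min}$ is bounded below on $\ker(T_{\rm min})^{\perp} \cap \dom(T_{\rm min})$, so $\ran(T_{\rm min})$ is closed. Closed range for $T_{\max}^\dagger = (T_{\min})^*$ then follows immediately from the closed range theorem.

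For (iii), the decomposition $\Hil_1 = \ker(T_{\rm min}) \oplus^\perp \overline{\ran(T_{\max}^\dagger)}$ is the standard identity $\ker(A)^\perp = \overline{\ran(A^*)}$ applied to $A = T_{\rm min}$, combined with the fact that $T_{\max}^\dagger = (T_{\rm min})^*$ has closed range by (ii). Symmetrically, $\Hil_2 = \ker(T_{\max}^\dagger) \oplus^\perp \overline{\ran(T_{\rm min})} = \ker(T_{\max}^\dagger) \oplus^\perp \ran(T_{\rm min})$ since $T_{\rm min}$ has closed range. Finally, if both $\dom(T_{\rm min}) \hookrightarrow \Hil_1$ and $\dom(T_{\rm min}^\dagger) \hookrightarrow \Hil_2$ are compact, applying parts (i)--(iii) to the FAP $\mathcal{T}^\dagger = (T_{\rm min}^\dagger, T_{\rm min})$ yields that $\ker(T_{\rm min}^\dagger)$ is finite-dimensional, that $T_{\rm min}^\dagger$ and $T_{\max}$ have closed range, and in particular that $\Hil_2 = \ker(T_{\rm min}^\dagger) \oplus^\perp \ran(T_{\max})$, completing the ``in particular'' assertion and the Fredholm FAP conclusion. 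None of the steps should present a serious obstacle; the only subtlety worth stating carefully is the passage from boundedness-below on $\ker(T_{\rm min})^\perp \cap \dom(T_{\rm min})$ to closed range on all of $\dom(T_{\rm min})$, which uses the finite-dimensionality of $\ker(T_{\rm min})$ established in (i).
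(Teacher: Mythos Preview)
Your proof is correct and follows essentially the same route as the paper. The paper's proof is terser: it packages (i) and the first half of (ii) into a single citation of \cite[Proposition~A3]{BB}, which is precisely the compactness-gives-finite-dimensional-kernel-and-closed-range lemma you have unpacked by hand via Riesz and the bounded-below-modulo-kernel argument, and then invokes the closed range theorem for $T_{\max}^\dagger$ and leaves (iii) implicit.
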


\begin{proof}
For $u\in \dom(T_{\rm min})$, we have the estimate
$$ \norm{u}_{T_{\rm min}} \simeq \norm{\iota u}_{\Hil_1} + \norm{T_{\rm min} u}_{\Hil_2}.$$
Then, by Proposition A3 in \cite{BB} (ii), we conclude that $T_{\rm min}$ has finite dimensional kernel and closed image.
The fact that $\ran(T_{\rm min}^*)=\ran(T_{\rm max}^\dagger)$ is closed follows from the closed range theorem.
\end{proof}

\begin{remark}
Note that, even without the assumption  $\dom(T_{\min}^\dagger) \embed \Hil_2$ is a compact embedding it still holds that $\Hil_2 = \ker(T^\dagger_{\min}) \oplus \close{\ran(T_{\max})}$.
The compactness of the embedding  ensures that $\ran(T_{\max}) = \close{\ran(T_{\max})}$ as well as the finite dimensionality of $\ker(T_{min}^\dagger)$.
\end{remark}

\begin{lemma}
\label{Lem:Quotient}
Let $K \subset Z$ be a closed subspace of a Banach space $Z$ and let $X \subset Z$ a subspace.
If $K \intersect X$ is closed, the map $\Phi: \faktor{X}{(K \intersect X)} \to \faktor{Z}{K}$ defined by $\Phi[u]_{\faktor{X}{(K \intersect X)}} = [u]_{\faktor{Z}{K}}$ is  well-defined and injective.    
\end{lemma}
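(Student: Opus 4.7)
The plan is to verify the two claims—well-definedness and injectivity—separately, as both reduce to elementary set-theoretic observations. The role of the hypothesis that $K \intersect X$ is closed is not to enable the construction of $\Phi$ but to ensure that $\faktor{X}{(K \intersect X)}$ is a bona fide normed (in fact Banach, once $X$ itself is taken to be closed, or at least Hausdorff in the quotient seminorm) space, so that the statement has content beyond pure linear algebra.

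For well-definedness I would simply note that if $u, v \in X$ represent the same class in $\faktor{X}{(K \intersect X)}$, then $u - v \in K \intersect X \subseteq K$, whence $[u]_{\faktor{Z}{K}} = [v]_{\faktor{Z}{K}}$. Linearity of $\Phi$ is immediate from the definition.

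For injectivity I would take $u \in X$ with $\Phi[u]_{\faktor{X}{(K \intersect X)}} = 0$, which means $[u]_{\faktor{Z}{K}} = 0$, i.e. $u \in K$. Combining with $u \in X$ gives $u \in K \intersect X$, so $[u]_{\faktor{X}{(K \intersect X)}} = 0$, as required.

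Honestly, there is no main obstacle here: the argument is a routine diagram chase. The only subtlety worth flagging is the role of the closedness assumption on $K \intersect X$—without it, the domain $\faktor{X}{(K \intersect X)}$ would carry only a seminorm and might fail to be Hausdorff, so the assertion of injectivity would be vacuous or ill-posed. One could optionally also record that when $X$ is closed in $Z$, the map $\Phi$ is automatically continuous with closed graph, but the statement as given asks only for well-definedness and injectivity, so this is not strictly needed.
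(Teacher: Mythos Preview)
Your proof is correct and follows essentially the same approach as the paper: both verify well-definedness via $u - v \in K \cap X \subseteq K$ and injectivity via $u \in K$ together with $u \in X$ giving $u \in K \cap X$. Your additional remarks on the role of the closedness hypothesis are apt but not part of the paper's proof, which is purely the two-line diagram chase you describe.
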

\begin{proof}
To show the map is well-defined, suppose $[x]_{\faktor{X}{(K \intersect X)}} = [y]_{\faktor{X}{(K \intersect X)}}$.
Then, $x - y \in (K \intersect X) \subset K$ and therefore, $[x]_{\faktor{Z}{K}} = [y]_{\faktor{Z}{K}}$.

For injectivity, suppose that $\phi[u]_{\faktor{X}{(K \intersect X)}} = 0$, which is the same as $[u]_{\faktor{Z}{K}} = 0$.
Then, $u = u - 0 \in K$ and since $u \in X$, we obtain that $u \in K \intersect X$.
That is, $u \in (K \intersect X)$ and therefore, $[u]_{\faktor{X}{(K \intersect X)}} = 0$.
\end{proof}

\begin{proposition}
\label{Prop:GBcFacABS}
Let $\mathcal{T}=(T_{\rm min},T^\dagger_{\rm min})$ be a FAP and $B$ a generalised boundary condition. The map 
$$\Phi_{\rm B}: \faktor{\dom(T_{\rm B})}{\ker(T_{\rm B})} \to \ran(T_{\rm B}),\qquad \Phi_{\rm B} [u] := T_{\max}u,$$ 
is a bounded bijection.
Moreover, the map 
$$ \faktor{\dom(T_{\rm B})}{\ker(T_{\rm B})}  \to \faktor{\dom(T_{\max})}{\ker(T_{\max})}$$
given by 
$$[u]_{\faktor{\dom(T_{\rm B})}{\ker(T_{\rm B})}} \mapsto [u]_{\faktor{\dom(T_{\max})}{\ker(T_{\max})}}$$
is well-defined and is a bounded injection. 
\end{proposition}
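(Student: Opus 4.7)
The statement is a purely functional-analytic tautology built from the fact that $T_{\rm B}\subseteq T_{\max}$, so the proof is essentially bookkeeping with quotient norms. The key algebraic identity behind everything is $\ker(T_{\rm B}) = \dom(T_{\rm B})\cap \ker(T_{\max})$, which follows because $T_{\rm B} u = T_{\max} u$ for every $u\in \dom(T_{\rm B})$.

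For the first claim, the plan is to put the graph norm on $\dom(T_{\rm B})$ and equip $\dom(T_{\rm B})/\ker(T_{\rm B})$ with the induced quotient seminorm. Well-definedness of $\Phi_{\rm B}$ is immediate: if $[u]=[v]$ then $u-v\in \ker(T_{\rm B})$ so $T_{\max}(u-v)=0$. Surjectivity is tautological since $\ran(T_{\rm B})=\{T_{\max}u : u\in\dom(T_{\rm B})\}$, and injectivity follows because $\Phi_{\rm B}[u]=0$ forces $u\in \ker(T_{\rm B})$ and hence $[u]=0$. For boundedness, for any $w\in \ker(T_{\rm B})$ we have $\|\Phi_{\rm B}[u]\|_{\Hil_2}=\|T_{\max}(u-w)\|\leq \|u-w\|_{T_{\max}}$; taking the infimum over $w\in\ker(T_{\rm B})$ yields $\|\Phi_{\rm B}[u]\|\leq\|[u]\|$.

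For the second claim, well-definedness follows from the inclusion $\ker(T_{\rm B})\subseteq \ker(T_{\max})$ observed above. Injectivity is the reverse inclusion $\dom(T_{\rm B})\cap \ker(T_{\max})\subseteq \ker(T_{\rm B})$, which again is the same identity. Boundedness is the monotonicity of infima: since the quotient norm on $\dom(T_{\max})/\ker(T_{\max})$ is $\inf$ over the larger set $\ker(T_{\max})$ while the quotient norm on $\dom(T_{\rm B})/\ker(T_{\rm B})$ is $\inf$ over the smaller set $\ker(T_{\rm B})$, we get $\|[u]\|_{\dom(T_{\max})/\ker(T_{\max})}\leq \|[u]\|_{\dom(T_{\rm B})/\ker(T_{\rm B})}$ with constant $1$.

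There is essentially no obstacle — the only place where care is needed is noting that no closedness hypothesis on $B$ is required: even when $B$ is merely a generalised boundary condition (so that $T_{\rm B}$ need not be closed and $\dom(T_{\rm B})$ with the graph norm need not be a Banach space), the map $\Phi_{\rm B}$ is still a well-defined linear bijection and the quoted bounds are bounds of seminormed vector spaces, which is all that the statement asserts. If one wishes, the bounded bijection statement can be upgraded: when $B$ (and hence $T_{\rm B}$) is closed both spaces become Banach spaces and the open mapping theorem converts $\Phi_{\rm B}$ into a topological isomorphism onto $(\ran(T_{\rm B}),\|\cdot\|_{\Hil_2})$ precisely when $T_{\rm B}$ has closed range, but this refinement is not needed here.
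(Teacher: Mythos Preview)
Your proof is correct and follows essentially the same approach as the paper: both hinge on the identity $\ker(T_{\rm B}) = \dom(T_{\rm B})\cap \ker(T_{\max})$, prove boundedness of $\Phi_{\rm B}$ by the quotient-norm inequality, and handle the second map via the same inclusion (the paper packages this into its Lemma~\ref{Lem:Quotient}, while you unpack it directly). Your additional remark on the non-closed case and the explicit monotonicity-of-infima argument for boundedness of the second map are helpful clarifications that the paper leaves implicit.
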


\begin{proof}
To show that this map is bounded, we note that 
\begin{align*}
\norm{\Phi_{\rm B} [u]} & = \norm{T_{\max}u} \leq \inf_{x \in \ker(T_{\rm B})} \norm{u - x} + \norm{T_{\max} u} \\
	&= \inf_{x \in \ker(T_{\rm B})} \norm{u - x} + \norm{T_{{\rm B}} u} = \norm{[u]}_{\faktor{\dom(T_{\rm B})}{\ker(T_{\rm B})}}.
\end{align*}
Surjectivity is clear.
For injectivity, note that if $\Phi_{\rm B}[u] = 0$, then $T_{\max} u = 0$, so $u \in \ker(T_{\max})$. 
But since $u \in \dom(T_{\rm B})$, we have that $u \in \ker(T_{\rm B}) = \dom(T_{\rm B}) \intersect \ker(T_{max})$.

The map
$$ \faktor{\dom(T_{\rm B})}{\ker(T_{\rm B})}  \to \faktor{\dom(T_{\max})}{\ker(T_{\max})}$$
is a well-defined injection since $\ker(T_{\rm B}) = \dom(T_{\rm B}) \intersect \ker(T_{\max})$ by invoking Lemma \ref{Lem:Quotient}. 
\end{proof}

\begin{lemma}
\label{Lem:AddBcABS} 
Let $\mathcal{T}=(T_{\rm min},T^\dagger_{\rm min})$ be a FAP and $Y \subset \dom(T_{\max})$ a subspace. Consider the generalised boundary condition $B := \gamma(Y)\subseteq \check{\mathhil{h}}_\mathcal{T}$. 
Then, $\dom(T_{{\rm B}}) = \dom(T_{\min}) + Y$.
\end{lemma}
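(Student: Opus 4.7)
The plan is to verify the set equality $\dom(T_{\rm B}) = \dom(T_{\min}) + Y$ by a direct double inclusion, exploiting the fact that $\gamma : \dom(T_{\max}) \to \check{\mathhil{h}}_\mathcal{T}$ is the quotient map by $\dom(T_{\min})$, so $\ker\gamma = \dom(T_{\min})$. This single observation is really all that drives the argument; there is no serious obstacle, and I do not expect to need any result beyond the very definition of $B$ and $\dom(T_{\rm B})$.

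For the inclusion $\dom(T_{\min}) + Y \subseteq \dom(T_{\rm B})$, I would take $v = u + y$ with $u \in \dom(T_{\min})$ and $y \in Y$. Since $Y \subseteq \dom(T_{\max})$ and $\dom(T_{\min}) \subseteq \dom(T_{\max})$, clearly $v \in \dom(T_{\max})$. Applying $\gamma$ gives $\gamma(v) = \gamma(u) + \gamma(y) = \gamma(y) \in \gamma(Y) = B$, so by definition $v \in \dom(T_{\rm B})$.

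For the reverse inclusion $\dom(T_{\rm B}) \subseteq \dom(T_{\min}) + Y$, I would take $v \in \dom(T_{\rm B})$, so that $\gamma(v) \in B = \gamma(Y)$. Then there exists $y \in Y$ with $\gamma(v) = \gamma(y)$, whence $\gamma(v - y) = 0$, i.e. $v - y \in \ker\gamma = \dom(T_{\min})$. Writing $v = (v - y) + y$ exhibits $v$ as an element of $\dom(T_{\min}) + Y$, which completes the proof.
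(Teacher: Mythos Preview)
Your proof is correct and follows essentially the same approach as the paper's own proof: a double inclusion using that $\ker\gamma = \dom(T_{\min})$ and the definition $B = \gamma(Y)$. The only cosmetic difference is that the paper dispatches the inclusion $\dom(T_{\min}) + Y \subseteq \dom(T_{\rm B})$ in one line by noting $Y,\dom(T_{\min}) \subset \dom(T_{\rm B})$, whereas you spell out the computation of $\gamma(v)$ explicitly.
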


\begin{proof}
Since $Y, \dom(T_{\min}) \subset \dom(T_{{\rm B}})$, we have that $\dom(T_{{\rm B}}) \supset \dom(T_{\min}) + Y$.

To prove the opposite containment, let $u \in \dom(T_{\rm B})$, which is if and only if $\gamma(u) \in B = \gamma(Y)$. 
Therefore, there exists $y \in Y$ such that $\gamma(u) = \gamma(y)$.
So, $u - y \in \dom(T_{\min})$ and $y \in Y$ so $u = (u - y) +y \in \dom(T_{\min})  +Y$.
\end{proof}

\begin{lemma}
\label{Lem:RanABS} 
Let $\mathcal{T}=(T_{\rm min},T^\dagger_{\rm min})$ be a FAP. For a generalised boundary condition $B$, we have that $\ran(T_{\rm B}) = \ran(T_{{\rm B} + \Ca_\mathcal{T}})$ and that $\dom(T_{{\rm B} + \Ca_\mathcal{T}}) = \dom(T_{\rm B}) + \ker(T_{\max})$. 
\end{lemma}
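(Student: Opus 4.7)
The plan is to reduce both claims to a direct application of Lemma \ref{Lem:AddBcABS}, the key observation being that $B+\Ca_\mathcal{T}$ is itself the image under $\gamma$ of a very explicit subspace of $\dom(T_{\max})$.

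First I would prove the domain equality. Set $Y:=\dom(T_{\rm B})+\ker(T_{\max})\subseteq \dom(T_{\max})$. Since $\Ca_\mathcal{T}=\gamma(\ker T_{\max})$ by definition and $B=\gamma(\dom(T_{\rm B}))$ by Proposition \ref{Prop:DomCompABS}, we have $\gamma(Y)=B+\Ca_\mathcal{T}$. Invoking Lemma \ref{Lem:AddBcABS} with this choice of $Y$ (and with the generalised boundary condition $B+\Ca_\mathcal{T}$) gives $\dom(T_{{\rm B}+\Ca_\mathcal{T}})=\dom(T_{\min})+Y$. Because $\dom(T_{\min})\subseteq \dom(T_{\rm B})\subseteq Y$, this simplifies to $\dom(T_{{\rm B}+\Ca_\mathcal{T}})=Y=\dom(T_{\rm B})+\ker(T_{\max})$, which is the second claim.

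Next I would deduce the range equality. Applying $T_{\max}$ to both sides of the identity just established,
$$\ran(T_{{\rm B}+\Ca_\mathcal{T}})=T_{\max}\bigl(\dom(T_{\rm B})+\ker(T_{\max})\bigr)=T_{\max}\dom(T_{\rm B})+T_{\max}\ker(T_{\max})=\ran(T_{\rm B}),$$
using that $T_{\max}$ vanishes on $\ker(T_{\max})$ and agrees with $T_{\rm B}$ on $\dom(T_{\rm B})$.

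There is essentially no obstacle here; both statements are formal consequences of the definitions combined with Lemma \ref{Lem:AddBcABS}, and no hypothesis beyond $B$ being a generalised boundary condition (i.e.\ an arbitrary subspace of $\check{\mathhil{h}}_\mathcal{T}$) is needed. The only step that requires any care is checking that the sum $\dom(T_{\rm B})+\ker(T_{\max})$ indeed sits inside $\dom(T_{\max})$ so that Lemma \ref{Lem:AddBcABS} is applicable, which is immediate.
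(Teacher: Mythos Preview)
Your proof is correct and follows essentially the same approach as the paper: both set $Y=\dom(T_{\rm B})+\ker(T_{\max})$, apply Lemma \ref{Lem:AddBcABS} together with the inclusion $\dom(T_{\min})\subseteq\dom(T_{\rm B})$ to obtain the domain identity, and then deduce the range equality from it. The only cosmetic difference is that the paper verifies the range equality by a two-inclusion argument whereas you do it in one line by applying $T_{\max}$ to the domain identity.
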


\begin{proof}
It is clear that $\ran(T_{\rm B}) \subset \ran(T_{{\rm B} + \Ca_\mathcal{T}})$. 
Set $Y = \dom(T_{\rm B}) + \ker(T_{\max})$ and apply Lemma \ref{Lem:AddBcABS} to obtain that $\dom(T_{{\rm B} + \Ca_\mathcal{T}}) = \dom(T_{\rm B}) + \ker(T_{\max})$, since $\dom(T_{\min}) \subset \dom(T_{\rm B})$.
Then, for $u \in \ran(T_{{\rm B} + \Ca_\mathcal{T}})$, we can find $v \in \dom(T_{{\rm B} + \Ca_\mathcal{T}})$ such that $u = T_{\max}v$. 
By what we have just proved, we have $v = x_{\rm B} + y \in \dom(T_{\rm B}) + \ker(T_{\max})$ and therefore, $u = T_{\rm max}v = T_{\rm max}x_{{\rm B}}$.
I.e., $u \in \ran(T_{\rm B})$.
\end{proof}

\begin{lemma}
\label{Lem:GbcQuoABS}
Let $\mathcal{T}=(T_{\rm min},T^\dagger_{\rm min})$ be a FAP. For a generalised boundary condition $B'$ such that $\Ca_\mathcal{T} \subset B'$, we have that  the inclusion map
$$\faktor{\dom(T_{{\rm B}'})}{\ker(T_{\max})} \embed \faktor{\dom(T_{\max})}{\ker(T_{\max})}$$ 
is an isometry.
If $\mathcal{T}$ has closed range, the normed space  $\faktor{\dom(T_{{\rm B}'})}{\ker(T_{\max})}$ is closed if and only if $B'$ is a boundary condition, i.e., closed in $\check{\mathhil{h}}_\mathcal{T}$.
\end{lemma}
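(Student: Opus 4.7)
The argument splits into two pieces, matching the two assertions of the lemma.

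For the isometry claim, the plan is to simply unwind the two quotient norms. Both $\dom(T_{{\rm B}'})/\ker(T_{\max})$ and $\dom(T_{\max})/\ker(T_{\max})$ inherit their norms as quotient norms from the graph norm $\|\cdot\|_{T_{\max}}$ on $\dom(T_{\max})$, and in both cases the subspace being quotiented out is the same space $\ker(T_{\max})$. The containment $\Ca_{\mathcal{T}} \subseteq B'$ guarantees $\ker(T_{\max}) \subseteq \dom(T_{{\rm B}'})$, so for any $u \in \dom(T_{{\rm B}'})$ the coset $u + \ker(T_{\max})$ is literally the same subset whether formed inside $\dom(T_{{\rm B}'})$ or inside $\dom(T_{\max})$. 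Consequently the two infima defining the quotient norms agree, and the inclusion is an isometry.

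For the closedness characterization, the plan is to establish the chain
\[ \dom(T_{{\rm B}'})/\ker(T_{\max}) \text{ is closed} \iff \dom(T_{{\rm B}'}) \text{ is closed in } \dom(T_{\max}) \iff B' \text{ is closed in } \check{\mathhil{h}}_\mathcal{T} \]
by two applications of the open mapping theorem. Since $T_{\max}$ is closed, $\dom(T_{\max})$ is a Banach space under the graph norm and $\ker(T_{\max})$ is a closed subspace, so the quotient map $q\colon \dom(T_{\max}) \to \dom(T_{\max})/\ker(T_{\max})$ is a continuous open surjection of Banach spaces. Because $\ker(q) = \ker(T_{\max}) \subseteq \dom(T_{{\rm B}'})$, one has $\dom(T_{{\rm B}'}) = q^{-1}\bigl(\dom(T_{{\rm B}'})/\ker(T_{\max})\bigr)$, which gives the first equivalence. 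For the second, the trace map $\gamma\colon \dom(T_{\max}) \to \check{\mathhil{h}}_\mathcal{T}$ is a continuous surjection of Banach spaces with $\ker(\gamma) = \dom(T_{\min}) \subseteq \dom(T_{{\rm B}'})$, and the same open-mapping argument yields that $\dom(T_{{\rm B}'}) = \gamma^{-1}(B')$ is closed precisely when $B'$ is closed.

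The role of the closed range hypothesis is to allow the identification of $\dom(T_{\max})/\ker(T_{\max})$, via the open mapping theorem applied to $T_{\max}$, with the closed subspace $\ran(T_{\max}) \subseteq \Hil_2$; under this identification the closed subspace $\dom(T_{{\rm B}'})/\ker(T_{\max})$ corresponds to $\ran(T_{{\rm B}'})$. This is the reading that makes the lemma immediately useful in the range-closedness arguments of the surrounding subsection, even though the bare equivalence above does not strictly require it. I do not foresee any serious obstacle: the whole proof reduces to the standard correspondence between closed subspaces under continuous open surjections of Banach spaces, and the single point demanding care is the verification that $\ker(T_{\max}) \subseteq \dom(T_{{\rm B}'})$, so that $q^{-1}$ genuinely recovers $\dom(T_{{\rm B}'})$ rather than a strictly larger set.
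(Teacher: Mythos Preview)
Your argument is correct and mirrors the paper's proof: both obtain the isometry by observing that the two quotient norms are computed over the same coset $u+\ker(T_{\max})$ (the paper writes this out explicitly via the graph norm), and both deduce the closedness equivalence from the standard correspondence between closed subspaces under the quotient maps $q$ and $\gamma$. Your remark that the closed-range hypothesis is not strictly required for the bare equivalence, but rather for the subsequent identification of $\dom(T_{\max})/\ker(T_{\max})$ with $\ran(T_{\max})$, is accurate and worth noting.
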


\begin{proof}
Note that the induced norm on $\faktor{\dom(T_{{\rm B}'})}{\ker(T_{\max})}$ is 
$$ \norm{[u]}^2 = \inf_{x \in \ker(T_{\max})} \norm{u - x}^2 + \norm{T_{{\rm B}'}u}^2 =  \inf_{x \in \ker(T_{\max})} \norm{u - x}^2 + \norm{T_{\max}u}^2,$$
where the norm after the ultimate equality is the norm on $\faktor{\dom(T_{\max})}{\ker(T_{\max})}$.
If $B'$ is closed, then $\dom(T_{{\rm B}'})$ is closed and therefore, $\faktor{\dom(T_{{\rm B}'})}{\ker(T_{\max})}$ is a Banach space, i.e. it is closed in $\faktor{\dom(T_{\max})}{\ker(T_{\max})}$.
On the other hand, if $\faktor{\dom(T_{{\rm B}'})}{\ker(T_{\max})}$ is closed, then $\dom(T_{{\rm B}'})$ is closed since $\dom(T_{{\rm B}'}) \cong \faktor{\dom(T_{{\rm B}'})}{\ker(T_{\max})} \oplus \ker(T_{\max})$.
\end{proof}

\begin{lemma}
\label{Lem:ClosedXiABS} 
Let $\mathcal{T}=(T_{\rm min},T^\dagger_{\rm min})$ be a FAP with closed range. For a generalised boundary condition  $B$, $\ran(T_{\rm B})$ is closed if and only if the image of the map  $\Xi: \dom(T_{\rm B}) \to \faktor{\dom(T_{\max})}{\ker(T_{\max})}$ given by
$$ \Xi: \dom(T_{\rm B}) \to \faktor{\dom(T_{\rm B})}{\ker(T_{\rm B})} \embed \faktor{\dom(T_{\max})}{\ker(T_{\max})}$$
has closed range. 
\end{lemma}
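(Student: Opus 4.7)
The plan is to reduce both closedness conditions to equivalences under a homeomorphism induced by $T_{\max}$. The key observation is the interplay between the map $\Phi_{\rm B}$ from Proposition~\ref{Prop:GBcFacABS} applied with $B = \check{\mathhil{h}}_\mathcal{T}$ (giving the full maximal realisation) and its restriction governing $\Xi(\dom(T_{\rm B}))$.

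First I would invoke Proposition~\ref{Prop:GBcFacABS} with $B = \check{\mathhil{h}}_\mathcal{T}$ to obtain the bounded bijection
$$\Phi_{\max}: \faktor{\dom(T_{\max})}{\ker(T_{\max})} \longrightarrow \ran(T_{\max}), \qquad [u] \mapsto T_{\max}u.$$
The domain is a Banach (indeed Hilbert) space, since $\ker(T_{\max})$ is closed in the graph-norm topology on $\dom(T_{\max})$. Because $\mathcal{T}$ has closed range by hypothesis, $\ran(T_{\max})$ is closed in $\Hil_2$ and hence is itself a Banach space. The open mapping theorem therefore promotes $\Phi_{\max}$ to a Banach space isomorphism onto $\ran(T_{\max})$.

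Next I would identify the image $\Phi_{\max}(\Xi(\dom(T_{\rm B})))$ inside $\ran(T_{\max})$. By the definition of $\Xi$ and the fact that $T_{\rm B}$ is the restriction of $T_{\max}$ to $\dom(T_{\rm B})$, one directly computes
$$\Phi_{\max}\bigl(\Xi(\dom(T_{\rm B}))\bigr) = \{T_{\max}u : u \in \dom(T_{\rm B})\} = \ran(T_{\rm B}).$$
Since $\Phi_{\max}$ is a homeomorphism, it preserves and reflects closed subsets: $\Xi(\dom(T_{\rm B}))$ is closed in $\faktor{\dom(T_{\max})}{\ker(T_{\max})}$ if and only if $\ran(T_{\rm B})$ is closed in $\ran(T_{\max})$. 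Finally, since $\ran(T_{\max})$ is itself closed in $\Hil_2$, closedness of $\ran(T_{\rm B})$ in $\ran(T_{\max})$ is equivalent to its closedness in $\Hil_2$, yielding the desired equivalence.

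The argument is essentially formal once the right ambient space is chosen; the only substantive ingredient is the closed-range hypothesis on $\mathcal{T}$, which is exactly what upgrades the continuous bijection $\Phi_{\max}$ to a homeomorphism via the open mapping theorem. No single step is a genuine obstacle, but care must be taken to ensure that the graph-norm topology on $\dom(T_{\rm B})$ (which is what $\Xi$ uses) agrees with the subspace topology inherited from $\dom(T_{\max})$, so that the factorisation of $\Xi$ displayed in the statement is continuous; this is immediate since $T_{\rm B} \subseteq T_{\max}$ means the graph norms coincide on $\dom(T_{\rm B})$.
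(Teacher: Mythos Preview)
Your proposal is correct and follows essentially the same route as the paper: both arguments use the open mapping theorem (together with the closed-range hypothesis) to make $\Phi_{\max}$ a homeomorphism, then observe that $\Phi_{\max}$ carries the image of $\Xi$ onto $\ran(T_{\rm B})$. The paper phrases this via a commuting square with $\Phi_{\rm B}$ and inclusions, but the content is identical.
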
  

\begin{proof}
We first start by noting that, by Proposition \ref{Prop:GBcFacABS}, the open mapping theorem yields that the we have that the map  $\Phi_{\max}$ as defined in Proposition \ref{Prop:GBcFacABS} is homeomorphism between $\faktor{\dom(T_{\max})}{\ker(T_{\max})}$ and $\ran(T_{\max})$.

Moreover, by the definition of $\Phi_{\rm B}$, we have the following commuting diagram.
\begin{equation*}
\begin{tikzcd}
 \faktor{\dom(T_{\rm B})}{\ker(T_{\rm B})}\arrow{dd}{\Phi_{\rm B}} \arrow{rr}{\iota_1} &  & \faktor{\dom(T_{\max})}{\ker(T_{\max})} \arrow{dd}{\Phi_{\max}}  \\
	& 	& \\ 
 \ran(T_{\rm B}) \arrow{rr}{\iota_2} 	&    & \ran(T_{\max})
\end{tikzcd}
\end{equation*}
Note that $\ran(T_{\rm B}) \subset \ran(T_{\max})$, so $\iota_2$ is simply the inclusion map.
Since $\Phi_{\max}$ is a Banach space isomorphism, we have that 
$$\Phi_{\max}^{-1}\rest{\ran(T_{\rm B})}: \ran(T_{\rm B}) \to \iota_1  \cbrac{  \faktor{\dom(T_{\rm B})}{\ker(T_{\rm B})} }$$
is bounded.
Therefore, $\ran(T_{\rm B})$ is closed if and only if $\iota_1 \cbrac{ \faktor{\dom(T_{\rm B})}{\ker(T_{\rm B})}}$ is closed.
The latter is, indeed, the image of $\Xi$ so this proves the lemma.
\end{proof}

\begin{theorem}
\label{Thm:ClosedRangeCharABS} 
Let $\mathcal{T}=(T_{\rm min},T^\dagger_{\rm min})$ be a FAP with closed range. For a generalised boundary condition $B$, $\ran(T_{\rm B})$ is closed if and only if $B + \Ca_\mathcal{T}$ is a boundary condition.
\end{theorem}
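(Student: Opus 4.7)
The plan is to deduce this biconditional by chaining together the three immediately preceding lemmas: Lemma \ref{Lem:RanABS}, Lemma \ref{Lem:ClosedXiABS}, and Lemma \ref{Lem:GbcQuoABS}. None of the three lemmas alone delivers the equivalence; each translates the problem into another equivalent problem, and their composition produces the desired statement.

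First I would apply Lemma \ref{Lem:RanABS} to pass from $B$ to its enlargement $B':=B+\Ca_{\mathcal{T}}$, using the identities $\ran(T_{\rm B})=\ran(T_{B'})$ and $\dom(T_{B'})=\dom(T_{\rm B})+\ker(T_{\max})$. The enlargement is convenient because $\Ca_\mathcal{T}\subseteq B'$, equivalently $\ker(T_{\max})\subseteq \dom(T_{B'})$, which is exactly the hypothesis needed to invoke Lemma \ref{Lem:GbcQuoABS} later. Note that we are \emph{not} assuming at this stage that $B'$ is closed; it is a generalised boundary condition, and the closedness of $B'$ is precisely what we want to characterise.

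Next I would invoke Lemma \ref{Lem:ClosedXiABS} (with the generalised boundary condition $B$), which translates closedness of $\ran(T_{\rm B})$ into closedness of the image of the map $\Xi_{\rm B}:\dom(T_{\rm B})\to \faktor{\dom(T_{\max})}{\ker(T_{\max})}$ as a subset of its codomain. The key set-theoretic observation is that this image is exactly $\faktor{\dom(T_{B'})}{\ker(T_{\max})}$: two elements of $\dom(T_{\rm B})$ determine the same class modulo $\ker(T_{\max})$ precisely when they differ by an element of $\ker(T_{\max})$, and $\dom(T_{\rm B})+\ker(T_{\max})=\dom(T_{B'})$ by the previous paragraph. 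Finally I would apply Lemma \ref{Lem:GbcQuoABS} to $B'$, which states that this subspace is closed in $\faktor{\dom(T_{\max})}{\ker(T_{\max})}$ if and only if $B'=B+\Ca_{\mathcal{T}}$ is a boundary condition, i.e.\ closed in $\check{\mathhil{h}}_{\mathcal{T}}$. Composing the three equivalences finishes the argument.

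I do not anticipate a genuine obstacle: the proof is purely structural, and the real Banach space analytic content — the open mapping theorem argument that turns the closed range of $T_{\max}$ into the homeomorphism $\Phi_{\max}:\faktor{\dom(T_{\max})}{\ker(T_{\max})}\xrightarrow{\sim}\ran(T_{\max})$ — has already been absorbed into Lemma \ref{Lem:ClosedXiABS}. The one point requiring some care is the set-theoretic identification of $\mathrm{image}(\Xi_{\rm B})$ with $\faktor{\dom(T_{B+\Ca_{\mathcal{T}}})}{\ker(T_{\max})}$ as subsets of $\faktor{\dom(T_{\max})}{\ker(T_{\max})}$; in particular, one should note that Lemma \ref{Lem:GbcQuoABS} compares the subspace topology it inherits from $\faktor{\dom(T_{\max})}{\ker(T_{\max})}$ (not the potentially stronger quotient norm coming from $\dom(T_{B'})$), which is consistent with what Lemma \ref{Lem:ClosedXiABS} provides.
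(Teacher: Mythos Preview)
Your proposal is correct and follows essentially the same route as the paper: chain Lemma \ref{Lem:RanABS}, Lemma \ref{Lem:ClosedXiABS}, and Lemma \ref{Lem:GbcQuoABS}, with the key identification of the image of $\Xi$ as $\faktor{\dom(T_{B+\Ca_\mathcal{T}})}{\ker(T_{\max})}$. Your write-up is somewhat more explicit than the paper's about why that identification holds and about the consistency of the topologies in Lemma \ref{Lem:GbcQuoABS}, but the argument is the same.
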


\begin{proof}
Lemma \ref{Lem:RanABS} yields that $\ran(T_{\rm B}) = \ran(T_{{\rm B} + \Ca_\mathcal{T}})$.
Then, Lemma \ref{Lem:ClosedXiABS} says that this is closed if and only if the range of the map $\Xi$ is closed. 
Since $ \Ca_\mathcal{T} \subset B + \Ca_\mathcal{T}$, it is easy to see that the range of $\Xi$ is precisely $\faktor{\dom(T_{{\rm B} + \Ca_\mathcal{T}})}{\ker(T_{\max})}$.
Therefore, by Lemma \ref{Lem:GbcQuoABS}, we obtain that $\ran(T_{\rm B})$ is closed if and only if $B + \Ca_\mathcal{T}$ is closed, i.e., that $B + \Ca_\mathcal{T}$ is a boundary condition. 
\end{proof}

\begin{corollary}
\label{hardyclosedcor}
Let $\mathcal{T}=(T_{\rm min},T^\dagger_{\rm min})$ be a FAP with closed range and $B$ a generalised boundary condition. Then the Hardy space $\mathcal{C}_\mathcal{T}$ is closed in $\check{\mathhil{h}}_\mathcal{T}$. 
\end{corollary}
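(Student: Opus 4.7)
The plan is to deduce this as a direct consequence of Theorem \ref{Thm:ClosedRangeCharABS}, by making the judicious choice of the trivial generalised boundary condition. Although the statement mentions an arbitrary $B$, the Hardy space $\mathcal{C}_\mathcal{T}$ does not depend on $B$, so it suffices to produce any generalised boundary condition whose associated realisation has closed range and from which the closedness of $\mathcal{C}_\mathcal{T}$ can be read off.

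First I would take $B=0 \subseteq \check{\mathhil{h}}_\mathcal{T}$. By the correspondence in Proposition \ref{charbodundalda}, the associated realisation $T_{0}$ is precisely $T_{\min}$, since $\dom(T_{0})=\{u\in\dom(T_{\max}):\gamma(u)=0\}=\dom(T_{\min})$. The hypothesis that $\mathcal{T}$ has closed range means exactly that $\ran(T_{\min})=\ran(T_{0})$ is a closed subspace of $\Hil_2$.

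Next, I would apply Theorem \ref{Thm:ClosedRangeCharABS} to this choice of $B$: since $\ran(T_{0})$ is closed, the theorem yields that $0+\mathcal{C}_\mathcal{T}=\mathcal{C}_\mathcal{T}$ is a boundary condition, i.e.\ a closed subspace of $\check{\mathhil{h}}_\mathcal{T}$. This is the desired conclusion.

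There is no real obstacle here; the content of the corollary is essentially a reformulation of Theorem \ref{Thm:ClosedRangeCharABS} in the trivial case. The only point worth checking is that $B=0$ is indeed a legitimate generalised boundary condition to feed into the theorem, which is immediate from the definition since $\{0\}\subseteq \check{\mathhil{h}}_\mathcal{T}$ is trivially a (closed) subspace.
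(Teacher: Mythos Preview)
Your proof is correct and takes essentially the same approach as the paper: the paper's proof is the single line ``The corollary follows directly from Theorem \ref{Thm:ClosedRangeCharABS} with $B=0$,'' which is precisely what you do, with the verification that $T_0 = T_{\min}$ made explicit.
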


\begin{proof}
The corollary follows directly from Theorem \ref{Thm:ClosedRangeCharABS} with $B=0$.
\end{proof}

\begin{lemma}
\label{Lem:L2RangeABS}
Let $\mathcal{T}=(T_{\rm min},T^\dagger_{\rm min})$ be a FAP with closed range. When $\ran(T_{\rm B})$ is closed, 
$$ \faktor{\Hil_2}{\ran(T_{\rm B})} \isomorphic \faktor{\Hil_2}{\ran(T_{\max})}  \oplus \faktor{\check{\mathhil{h}}_\mathcal{T}}{(B + \Ca_\mathcal{T})}\isomorphic \ker(T_{\min}^\dagger) \oplus \faktor{\check{\mathhil{h}}_\mathcal{T}}{(B + \Ca_\mathcal{T})}.$$
\end{lemma}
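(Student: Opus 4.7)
The plan is to use the structural results already established for FAPs with closed range, in particular Lemma \ref{Lem:RanABS}, Proposition \ref{Prop:GBcFacABS}, and Proposition \ref{Prop:DomCompABS}, together with the orthogonal decomposition of $\Hil_2$.

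First I would reduce to the enlarged boundary condition $B + \Ca_\mathcal{T}$. By Lemma \ref{Lem:RanABS}, $\ran(T_{\rm B}) = \ran(T_{{\rm B}+\Ca_\mathcal{T}})$, and $\ker(T_{{\rm B}+\Ca_\mathcal{T}}) = \ker(T_{\max})$ because $\Ca_\mathcal{T} \subseteq B + \Ca_\mathcal{T}$ forces $\ker T_{\max} \subseteq \dom(T_{{\rm B}+\Ca_\mathcal{T}})$. Since $\ran(T_{\rm B})$ is assumed closed, Theorem \ref{Thm:ClosedRangeCharABS} guarantees that $B + \Ca_\mathcal{T}$ is in fact a (closed) boundary condition, so the quotient $\check{\mathhil{h}}_\mathcal{T}/(B+\Ca_\mathcal{T})$ is a Banach space.

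Next, apply Proposition \ref{Prop:GBcFacABS} to both $T_{\max}$ and $T_{{\rm B}+\Ca_\mathcal{T}}$. The induced maps
\[
\Phi_{\max}:\faktor{\dom(T_{\max})}{\ker(T_{\max})}\longrightarrow \ran(T_{\max}),\qquad \Phi_{{\rm B}+\Ca_\mathcal{T}}:\faktor{\dom(T_{{\rm B}+\Ca_\mathcal{T}})}{\ker(T_{\max})}\longrightarrow \ran(T_{\rm B})
\]
are continuous bijections, and because $\mathcal{T}$ has closed range and $\ran(T_{\rm B})$ is closed, the open mapping theorem upgrades them to Banach space isomorphisms. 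Dividing the first by the second yields
\[
\faktor{\ran(T_{\max})}{\ran(T_{\rm B})} \;\cong\; \faktor{\dom(T_{\max})}{\dom(T_{{\rm B}+\Ca_\mathcal{T}})} \;\cong\; \faktor{\check{\mathhil{h}}_\mathcal{T}}{B+\Ca_\mathcal{T}},
\]
where the last isomorphism is Proposition \ref{Prop:DomCompABS}.

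Finally, invoke the orthogonal decomposition from Proposition \ref{closedragneandofiffni} (which holds for any FAP with closed range by the closed range theorem applied to $T_{\min}^\dagger = T_{\max}^*$): $\Hil_2 = \ker(T_{\min}^\dagger) \oplus^\perp \ran(T_{\max})$. Quotienting both sides by $\ran(T_{\rm B}) \subseteq \ran(T_{\max})$ gives
\[
\faktor{\Hil_2}{\ran(T_{\rm B})} \;\cong\; \ker(T_{\min}^\dagger) \,\oplus\, \faktor{\ran(T_{\max})}{\ran(T_{\rm B})} \;\cong\; \ker(T_{\min}^\dagger) \,\oplus\, \faktor{\check{\mathhil{h}}_\mathcal{T}}{B+\Ca_\mathcal{T}},
\]
and $\faktor{\Hil_2}{\ran(T_{\max})} \cong \ker(T_{\min}^\dagger)$ from the same decomposition. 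There is no genuine obstacle here: the argument is a bookkeeping exercise combining three previously established facts, and the only point requiring a moment's care is verifying $\ker(T_{{\rm B}+\Ca_\mathcal{T}}) = \ker(T_{\max})$ so that the two factorisations share the same quotient by $\ker(T_{\max})$.
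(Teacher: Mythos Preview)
Your proposal is correct and follows essentially the same route as the paper: both arguments pass to the enlarged condition $B+\Ca_\mathcal{T}$ via Lemma \ref{Lem:RanABS}, identify $\ran(T_{\max})/\ran(T_{\rm B})$ with $\check{\mathhil{h}}_\mathcal{T}/(B+\Ca_\mathcal{T})$ using the factorisation from Proposition \ref{Prop:GBcFacABS} and Proposition \ref{Prop:DomCompABS}, and then split $\Hil_2$ using the closedness of $\ran(T_{\max})$. The only cosmetic differences are that the paper cites Lemma \ref{Lem:GbcQuoABS} rather than Theorem \ref{Thm:ClosedRangeCharABS} for the closedness of $B+\Ca_\mathcal{T}$, and writes the splitting as $\Hil_2\cong \Hil_2/\ran(T_{\max})\oplus \ran(T_{\max})$ rather than invoking the orthogonal decomposition directly.
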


\begin{proof}
By assumption $\ran(T_{\max})$ is closed and therefore,
\begin{align*} 
\Hil_2
	&\isomorphic  \faktor{\Hil_2}{\ran(T_{\max})} \oplus \ran(T_{\max}) \\
	&\isomorphic   \faktor{\Hil_2}{\ran(T_{\max})} \oplus \faktor{\ran(T_{\max})}{\ran(T_{\rm B})} \oplus \ran(T_{\rm B}).
\end{align*}
From Lemma \ref{Lem:RanABS}, we have that $\ran(T_{\rm B}) = \ran(T_{{\rm B} + \Ca_\mathcal{T}})$ and $T_{{\rm B} + \Ca_\mathcal{T}}$ is a closed operator from Lemma \ref{Lem:GbcQuoABS}.
Thus, note that $\ker(T_{{\rm B} + \Ca_\mathcal{T}}) = \ker(T_{\max})$ and therefore, 
$$
\faktor{\dom(T_{{\rm B} + \Ca_\mathcal{T}})}{\ker(T_{\max})} =  \faktor{\dom(T_{{\rm B} + \Ca_\mathcal{T}})}{\ker(T_{{\rm B} + \Ca_\mathcal{T}})}  \cong \ran(T_{{\rm B} + \Ca_\mathcal{T}}) = \ran(T_{\rm B}).$$
Thus, 
$$ \faktor{\ran(T_{\max})}{\ran(T_{\rm B})} \cong \faktor{\dom(T_{\max})}{\dom(T_{{\rm B} + \Ca_\mathcal{T}})} \cong \faktor{\check{\mathhil{h}}_\mathcal{T}}{(B + \Ca_\mathcal{T})}$$
from Proposition \ref{Prop:DomCompABS}.
\end{proof}

\begin{proposition}
\label{Prop:FinCharABS} 
Let $\mathcal{T}=(T_{\rm min},T^\dagger_{\rm min})$ be a Fredholm FAP. For a general boundary condition $B$, the following are equivalent: 
\begin{enumerate}[(i)] 
\item \label{Prop:FinChar1ABS}
	 $\ran(T_{\rm B})$ has finite algebraic codimension in $\Hil_2$;
\item \label{Prop:FinChar2ABS}
	 $T_{\rm B}$ has closed range and $\ran(T_{\rm B})^\perp$ is finite dimensional;
\item \label{Prop:FinChar3ABS} 
	 $T_{\rm B}$ has closed range and $\ker(T_{\rm B}^\ast)$ is finite dimensional;
\item \label{Prop:FinChar4ABS} 
	 $B + \Ca_\mathcal{T}$ has finite algebraic codimension in $\check{\mathhil{h}}_\mathcal{T}$; 
\item \label{Prop:FinChar5ABS} 
	 $B + \Ca_\mathcal{T}$ is closed  and has finite algebraic codimension in in $\check{\mathhil{h}}_\mathcal{T}$.
\end{enumerate} 
If any of these hold, then 
$$\ker (T_{\rm B}^\ast) \cong {\faktor{\Hil_2}{\ran(T_{\rm B})}} \cong \ker(T_{\min}^\dagger) \oplus {\faktor{\check{\mathhil{h}}_\mathcal{T}}{(B + \Ca_\mathcal{T})}}.$$
\end{proposition}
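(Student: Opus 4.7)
The plan is to first extract a purely algebraic short exact sequence of vector spaces
\begin{equation*}
0 \to \faktor{\check{\mathhil{h}}_\mathcal{T}}{(B+\Ca_\mathcal{T})} \to \faktor{\Hil_2}{\ran(T_{\rm B})} \to \faktor{\Hil_2}{\ran(T_{\max})} \to 0,
\end{equation*}
valid without any closedness hypothesis on $\ran(T_{\rm B})$. The right-hand quotient is canonically $\ker(T_{\min}^\dagger)$ via the orthogonal decomposition $\Hil_2 = \ker(T_{\min}^\dagger)\oplus^\perp \ran(T_{\max})$ (which holds in a Fredholm FAP by the closed range theorem applied to $T_{\min}^\dagger$). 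For the left-hand map, I would invoke Lemma \ref{Lem:RanABS} to rewrite $\ran(T_{\rm B}) = \ran(T_{{\rm B}+\Ca_\mathcal{T}})$, apply the bijection $\Phi_{\max}$ from Proposition \ref{Prop:GBcFacABS} together with $\ker(T_{\max})\subseteq \dom(T_{{\rm B}+\Ca_\mathcal{T}})$ to obtain $\ran(T_{\max})/\ran(T_{\rm B}) \cong \dom(T_{\max})/\dom(T_{{\rm B}+\Ca_\mathcal{T}})$, and finally use that $\gamma$ induces $\dom(T_{\max})/\dom(T_{{\rm B}+\Ca_\mathcal{T}}) \cong \check{\mathhil{h}}_\mathcal{T}/(B+\Ca_\mathcal{T})$ (an identity which is purely algebraic and does not require $B+\Ca_\mathcal{T}$ to be closed). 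Since $\dim\ker(T_{\min}^\dagger)<\infty$, this short exact sequence gives \ref{Prop:FinChar1ABS}$\Leftrightarrow$\ref{Prop:FinChar4ABS}, as well as the final displayed isomorphism once \ref{Prop:FinChar2ABS} is established.

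The implications \ref{Prop:FinChar5ABS}$\Rightarrow$\ref{Prop:FinChar2ABS}$\Leftrightarrow$\ref{Prop:FinChar3ABS}$\Rightarrow$\ref{Prop:FinChar1ABS} and \ref{Prop:FinChar5ABS}$\Rightarrow$\ref{Prop:FinChar4ABS} are then routine: Theorem \ref{Thm:ClosedRangeCharABS} translates closedness of $\ran(T_{\rm B})$ into closedness of $B+\Ca_\mathcal{T}$; Lemma \ref{Lem:L2RangeABS} reads off the quotient dimension; $\ran(T_{\rm B})^\perp = \ker(T_{\rm B}^*)$ is standard for densely defined closed operators (with $T_{\rm B}^*$ identified via Proposition \ref{adjointofafap}); and when $\ran(T_{\rm B})$ is closed, $\ran(T_{\rm B})^\perp$ finite dimensional is equivalent to finite algebraic codimension in $\Hil_2$.

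The principal obstacle, and where the Fredholm FAP structure really gets used, is the implication \ref{Prop:FinChar1ABS}$\Rightarrow$\ref{Prop:FinChar2ABS}, i.e.\ promoting finite \emph{algebraic} codimension to closedness of $\ran(T_{\rm B})$. This is false for arbitrary subspaces of Banach spaces, but holds for the range of a closed operator by a finite-rank perturbation trick. My plan is: choose a (necessarily closed) finite-dimensional complement $V\subset \Hil_2$ of $\ran(T_{\rm B})$, form the operator $\tilde T\colon \dom(T_{\rm B})\oplus V \to \Hil_2$, $(u,v)\mapsto T_{\rm B} u+v$, where $\dom(T_{\rm B})$ is endowed with its graph norm. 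Then $\tilde T$ is bounded, surjective, and closed (closedness is preserved by the bounded finite-dimensional perturbation, so the domain is a Banach space and $\tilde T$ is bounded on it), hence by the open mapping theorem descends to a Banach space isomorphism $(\dom(T_{\rm B})\oplus V)/\ker\tilde T \xrightarrow{\sim} \Hil_2$. Because $\ker\tilde T = \ker(T_{\rm B})\times\{0\}\subseteq \dom(T_{\rm B})\times\{0\}$, the latter factors to a closed subspace of the quotient, so its image $\ran(T_{\rm B})$ is closed in $\Hil_2$. Once \ref{Prop:FinChar2ABS} is obtained, closedness of $B+\Ca_\mathcal{T}$ in \ref{Prop:FinChar5ABS} follows from Theorem \ref{Thm:ClosedRangeCharABS}, and the final isomorphism chain drops out of Lemma \ref{Lem:L2RangeABS} combined with $\ker(T_{\rm B}^*)\cong\Hil_2/\ran(T_{\rm B})$.
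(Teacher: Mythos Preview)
Your proof is correct and follows essentially the paper's approach, invoking the same supporting results (Lemma~\ref{Lem:RanABS}, Lemma~\ref{Lem:L2RangeABS}, Theorem~\ref{Thm:ClosedRangeCharABS}); you simply organise the implications differently by first extracting the algebraic short exact sequence to obtain \ref{Prop:FinChar1ABS}$\Leftrightarrow$\ref{Prop:FinChar4ABS} directly, and you spell out the finite-rank perturbation argument for \ref{Prop:FinChar1ABS}$\Rightarrow$\ref{Prop:FinChar2ABS} that the paper dismisses as ``standard''.

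One caveat worth flagging: your argument for \ref{Prop:FinChar1ABS}$\Rightarrow$\ref{Prop:FinChar2ABS} needs $\dom(T_{\rm B})$ to be complete in the graph norm so that the open mapping theorem applies to $\tilde T$; this requires $T_{\rm B}$ closed, i.e., $B$ a (closed) boundary condition rather than a generalised one. The statement's phrase ``general boundary condition'' is ambiguous. If read as ``generalised'', the implication \ref{Prop:FinChar1ABS}$\Rightarrow$\ref{Prop:FinChar2ABS} can in fact fail: whenever $\check{\mathhil{h}}_\mathcal{T}/\Ca_\mathcal{T}$ is infinite-dimensional one can choose a non-closed subspace $B \supseteq \Ca_\mathcal{T}$ of finite codimension, so $B + \Ca_\mathcal{T} = B$ is not closed and Theorem~\ref{Thm:ClosedRangeCharABS} gives $\ran(T_{\rm B})$ not closed, while your short exact sequence still shows it has finite algebraic codimension. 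The paper's own proof carries the same implicit closedness assumption when calling \ref{Prop:FinChar1ABS}$\Leftrightarrow$\ref{Prop:FinChar3ABS} and \ref{Prop:FinChar4ABS}$\Leftrightarrow$\ref{Prop:FinChar5ABS} ``standard''.
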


\begin{proof}
The equivalence of \ref{Prop:FinChar1ABS}-\ref{Prop:FinChar3ABS},
as well as \ref{Prop:FinChar4ABS}-\ref{Prop:FinChar5ABS} are standard.
The statements \ref{Prop:FinChar5ABS} and \ref{Prop:FinChar2ABS} are equivalent because of Lemma \ref{Lem:RanABS} coupled with Lemma \ref{Lem:L2RangeABS}.
The formula follows from the fact that $\ran(T_{\rm B})^\perp \isomorphic \ker(T_{\rm B}^\ast)$ along with Lemma \ref{Lem:L2RangeABS}.  
\end{proof}

Before proceeding with studying Fredholm properties of realisations, we make a remark about the maximal domain in terms of the Hardy space and a suitable realisation.

\begin{corollary}
\label{charcomplements}
Let $\mathcal{T}=(T_{\rm min},T^\dagger_{\rm min})$ be a FAP with closed range and $B$ be a boundary condition. Then $B$ is a complement to $\mathcal{C}_\mathcal{T}$ in $\check{\mathhil{h}}_\mathcal{T}$ if and only if $T_{\rm B}$ has closed range and satisfies that 
$$\ker(T_{\rm min})=\ker (T_{\rm B})\quad \mbox{and}\quad \ker(T_{\rm min}^*)=\ker (T_{{\rm B}^*}^\dagger).$$
In this case, 
$$\dom(T_{\rm max})=\dom(T_{\rm B})\oplus_{\ker T_{\rm min}}\ker (T_{\rm max}).$$
\end{corollary}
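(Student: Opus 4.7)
My approach would be to translate each of the three conditions on the right-hand side into an equivalent statement about how the closed subspace $B$ sits inside $\check{\mathhil{h}}_\mathcal{T}$ relative to the Hardy space $\mathcal{C}_\mathcal{T}$, and then observe that their conjunction is exactly the statement that $B$ is a complement of $\mathcal{C}_\mathcal{T}$.

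First, by Proposition~\ref{sesforkernelslsa} applied to $B$, the short exact sequence $0 \to \ker T_{\min} \to \ker T_B \to B \cap \mathcal{C}_\mathcal{T} \to 0$ shows $\ker T_{\min} = \ker T_B$ is equivalent to $B \cap \mathcal{C}_\mathcal{T} = 0$, since the inclusion $\ker T_{\min} \hookrightarrow \ker T_B$ is automatic. Next, Theorem~\ref{Thm:ClosedRangeCharABS} says $\ran T_B$ is closed if and only if $B + \mathcal{C}_\mathcal{T}$ is closed in $\check{\mathhil{h}}_\mathcal{T}$. Finally, under the hypothesis that $\ran T_B$ is closed, Lemma~\ref{Lem:L2RangeABS} combined with the closed range theorem and Proposition~\ref{adjointofafap} identifies
$$\ker T_{B^*}^\dagger \;=\; \ker T_B^* \;\cong\; \Hil_2/\ran T_B \;\cong\; \ker T_{\min}^\dagger \,\oplus\, \check{\mathhil{h}}_\mathcal{T}/(B+\mathcal{C}_\mathcal{T}).$$
Tracing through the orthogonal-decomposition proof of Lemma~\ref{Lem:L2RangeABS} one verifies that this isomorphism is compatible with the canonical inclusion $\ker T_{\min}^\dagger \hookrightarrow \ker T_{B^*}^\dagger$ coming from $T_{\min}^\dagger \subseteq T_{B^*}^\dagger$ (both kernels are realised as the orthogonal complements in $\Hil_2$ of the closed subspaces $\ran T_{\max} \supseteq \ran T_B$, and the inclusion of complements matches the abstract inclusion). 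Reading the third hypothesis as the equality $\ker T_{\min}^\dagger = \ker T_{B^*}^\dagger$ (this is how the corollary is subsequently applied, see the proof of Proposition~\ref{Prop:FredEll}~\ref{Prop:FredEll2}), the vanishing of the second summand is equivalent to $B + \mathcal{C}_\mathcal{T} = \check{\mathhil{h}}_\mathcal{T}$.

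Combining these three equivalences, the three right-hand conditions are jointly equivalent to $B \cap \mathcal{C}_\mathcal{T} = 0$ together with $B + \mathcal{C}_\mathcal{T} = \check{\mathhil{h}}_\mathcal{T}$ (closedness of $B + \mathcal{C}_\mathcal{T}$ being automatic in that case), which is precisely the complementarity assertion. For the direct sum decomposition, I would lift complementarity from $\check{\mathhil{h}}_\mathcal{T}$ back to $\dom T_{\max}$: given $v \in \dom T_{\max}$, decompose $\gamma(v) = \eta + \xi$ with $\eta \in B$ and $\xi \in \mathcal{C}_\mathcal{T}$, lift $\eta$ to $v_B \in \dom T_B$ and $\xi$ to $v_K \in \ker T_{\max}$, and use $v - v_B - v_K \in \ker \gamma = \dom T_{\min} \subseteq \dom T_B$ to conclude $v \in \dom T_B + \ker T_{\max}$; for the intersection, any $v \in \dom T_B \cap \ker T_{\max}$ satisfies $\gamma(v) \in B \cap \mathcal{C}_\mathcal{T} = 0$, forcing $v \in \dom T_{\min} \cap \ker T_{\max} = \ker T_{\min}$, with the reverse inclusion trivial.

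The main obstacle, and essentially the only nontrivial point, is the compatibility of the isomorphism in Lemma~\ref{Lem:L2RangeABS} with the canonical inclusion $\ker T_{\min}^\dagger \hookrightarrow \ker T_{B^*}^\dagger$: without this naturality one only gets an identification of dimensions, whereas the present argument needs equality of subspaces of $\Hil_2$. Everything else is straightforward bookkeeping against the abstract machinery already established in this appendix.
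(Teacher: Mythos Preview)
Your proposal is correct and follows essentially the same strategy as the paper: both arguments hinge on Proposition~\ref{sesforkernelslsa} for the kernel condition, Theorem~\ref{Thm:ClosedRangeCharABS} for closed range, and Lemma~\ref{Lem:L2RangeABS} (the paper cites Proposition~\ref{Prop:FinCharABS}, which strictly speaking requires a Fredholm FAP, but the relevant content is already in Lemma~\ref{Lem:L2RangeABS}) for the cokernel condition. Your explicit verification of the compatibility of the isomorphism in Lemma~\ref{Lem:L2RangeABS} with the inclusion $\ker T_{\min}^\dagger \hookrightarrow \ker T_{B^*}^\dagger$ is a point the paper glosses over, and your direct lifting argument for the domain decomposition is equivalent to the paper's use of the short exact sequence~\eqref{sesfortbcomplelem}.
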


The reader should note that closed complements of the Hardy space in $\check{\mathhil{h}}_\mathcal{T}$ always exist for a FAP with closed range since $\check{\mathhil{h}}_\mathcal{T}$ is a Hilbert space and the Hardy space is closed by Corollary \ref{hardyclosedcor}.

\begin{proof}
Assume that $B$ is a complement to $\mathcal{C}_\mathcal{T}$ in $\check{\mathhil{h}}_\mathcal{T}$. The operator $T_{\rm B}$ has closed range by Theorem \ref{Thm:ClosedRangeCharABS}. Let $P:\check{\mathhil{h}}_\mathcal{T}\to \mathcal{C}_\mathcal{T}$ denote the projection along $B$. We then arrive at a short exact sequence
\begin{equation}
\label{sesfortbcomplelem}
0\to \dom(T_{\rm B})\to \dom(T_{\rm max})\xrightarrow{P\circ \gamma} \mathcal{C}_\mathcal{T}\to 0.
\end{equation}
In particular, we have the short exact sequence 
$$0\to \ker(T_{\rm B})\to \ker(T_{\rm max})\xrightarrow{\gamma} \mathcal{C}_\mathcal{T}\to 0,$$
showing that $\ker(T_{\rm min})=\ker (T_{\rm B})$. Moreover, the short exact sequence \eqref{sesfortbcomplelem} implies that the map $\Phi_{\rm B}$ from Proposition \ref{Prop:GBcFacABS} is a bijection, and so $\ran(T_{\rm B})=\ran(T_{\max})$. Since $B$ is a complement to $\mathcal{C}_\mathcal{T}$, $B+\mathcal{C}_\mathcal{T}$ is closed, so $\ran(T_{\rm B})=\ran(T_{\max})$ implies that $\ker(T_{\rm min}^*)=\ker (T_{{\rm B}^*}^\dagger)$.

Assume now that $T_{\rm B}$ has closed range and satisfies that 
$$\ker(T_{\rm min})=\ker (T_{\rm B})\quad \mbox{and}\quad \ker(T_{\rm min}^*)=\ker (T_{{\rm B}^*}^\dagger).$$
The first equality ensures that $B\cap\mathcal{C}_\mathcal{T}=0$. The second equality and Proposition \ref{Prop:FinCharABS} implies that $B+\mathcal{C}_\mathcal{T}=\check{\mathhil{h}}_\mathcal{T}$. Therefore $B$ is a complement to $\mathcal{C}_\mathcal{T}$.

The equality $\dom(T_{\rm max})=\dom(T_{\rm B})\oplus_{\ker T_{\rm min}}\ker (T_{\rm max})$ follows from Equation \eqref{sesfortbcomplelem}.
\end{proof}

Now, we characterise Fredholm boundary conditions via the language of Fredholm pairs. 

\begin{theorem}
\label{Thm:FredCharABS}
Let $\mathcal{T}=(T_{\rm min},T^\dagger_{\rm min})$ be a Fredholm FAP and $B$ be a boundary condition. The following are equivalent: 
\begin{enumerate}[(i)]
\item \label{Thm:FredChar1ABS} 
$(B, \Ca_\mathcal{T})$ is a Fredholm pair in $\check{\mathhil{h}}_\mathcal{T}$; 
\item \label{Thm:FredChar2ABS}  
$T_{\rm B}$ is a Fredholm operator.  
\end{enumerate}
If either of these equivalent conditions hold, we have that
$$ B^\ast \cap \Ca_{\mathcal{T}^\dagger} \cong \faktor{\check{\mathhil{h}}_\mathcal{T}}{(B + \Ca_\mathcal{T})}.$$
Moreover, 
$$\indx(T_{\rm B}) = \indx(B, \Ca_\mathcal{T}) + \dim \ker (T_{\min}) - \dim \ker (T_{\min}^\dagger).$$
\end{theorem}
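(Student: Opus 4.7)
The plan is to assemble Theorem~\ref{Thm:FredCharABS} from the structural results already established, namely Proposition~\ref{sesforkernelslsa}, Theorem~\ref{Thm:ClosedRangeCharABS}, and Proposition~\ref{Prop:FinCharABS}, together with the identification $(T_{\rm B})^*=T^\dagger_{{\rm B}^*}$ from Proposition~\ref{adjointofafap}. The underlying idea is to translate each of the three defining conditions for a Fredholm operator ($\dim\ker<\infty$, closed range, $\dim\coker<\infty$) into the corresponding condition in the Cauchy data space, and read off the index from the resulting dimension formulas.

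First I would handle the kernel side. Proposition~\ref{sesforkernelslsa} yields the short exact sequence
\begin{equation*}
0\to\ker(T_{\rm min})\to\ker(T_{\rm B})\to B\cap\Ca_\mathcal{T}\to 0,
\end{equation*}
so since $\dim\ker(T_{\rm min})<\infty$ by the Fredholm FAP hypothesis, $\dim\ker(T_{\rm B})<\infty$ if and only if $\dim(B\cap\Ca_\mathcal{T})<\infty$, and in that case
\begin{equation*}
\dim\ker(T_{\rm B})=\dim\ker(T_{\rm min})+\dim(B\cap\Ca_\mathcal{T}).
\end{equation*}

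Next I would handle closed range and cokernel jointly. By Theorem~\ref{Thm:ClosedRangeCharABS}, $\ran(T_{\rm B})$ is closed if and only if $B+\Ca_\mathcal{T}$ is closed in $\check{\mathhil{h}}_\mathcal{T}$; and by Proposition~\ref{Prop:FinCharABS}, $\dim\coker(T_{\rm B})<\infty$ with $\ran(T_{\rm B})$ closed if and only if $B+\Ca_\mathcal{T}$ is a closed subspace of $\check{\mathhil{h}}_\mathcal{T}$ of finite codimension, and moreover
\begin{equation*}
\ker(T_{\rm B}^*)\cong \ker(T_{\rm min}^\dagger)\oplus\faktor{\check{\mathhil{h}}_\mathcal{T}}{(B+\Ca_\mathcal{T})}.
\end{equation*}
Combining the two reductions, $T_{\rm B}$ is Fredholm if and only if $B\cap\Ca_\mathcal{T}$ is finite-dimensional and $B+\Ca_\mathcal{T}$ is closed and of finite codimension, which is exactly the statement that $(B,\Ca_\mathcal{T})$ is a Fredholm pair in $\check{\mathhil{h}}_\mathcal{T}$. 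This proves the equivalence of \ref{Thm:FredChar1ABS} and \ref{Thm:FredChar2ABS}. Subtracting the dimension of the kernel from that of the cokernel gives
\begin{equation*}
\indx(T_{\rm B})=\dim(B\cap\Ca_\mathcal{T})-\dim\faktor{\check{\mathhil{h}}_\mathcal{T}}{B+\Ca_\mathcal{T}}+\dim\ker(T_{\rm min})-\dim\ker(T_{\rm min}^\dagger),
\end{equation*}
which is the stated index formula.

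For the identification $B^*\cap\Ca_{\mathcal{T}^\dagger}\cong\check{\mathhil{h}}_\mathcal{T}/(B+\Ca_\mathcal{T})$, I would use Proposition~\ref{adjointofafap} to rewrite $(T_{\rm B})^*=T^\dagger_{{\rm B}^*}$ and apply Proposition~\ref{sesforkernelslsa} to the adjoint FAP, giving
\begin{equation*}
\dim\ker(T_{\rm B}^*)=\dim\ker(T_{\rm min}^\dagger)+\dim(B^*\cap\Ca_{\mathcal{T}^\dagger}).
\end{equation*}
Comparing with the formula for $\ker(T_{\rm B}^*)$ above yields equality of dimensions and, together with the explicit isomorphism, an abstract identification. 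To upgrade this to the canonical isomorphism claimed in the theorem, I would use the boundary pairing $\omega_\mathcal{T}\colon\check{\mathhil{h}}_{\mathcal{T}^\dagger}\times\check{\mathhil{h}}_\mathcal{T}\to\mathbb{C}$ from Proposition~\ref{boundarypairingABS}: the pairing restricted to $\Ca_{\mathcal{T}^\dagger}\times(B+\Ca_\mathcal{T})$ vanishes on $\Ca_{\mathcal{T}^\dagger}\times\Ca_\mathcal{T}$ (by the defining property of $\mathcal{C}$ and Green's identity for $T_{\rm max}$ on $\ker(T_{\rm max})$), so $\omega_\mathcal{T}$ descends to a pairing $\Ca_{\mathcal{T}^\dagger}\times\check{\mathhil{h}}_\mathcal{T}/(B+\Ca_\mathcal{T})\to\mathbb{C}$ whose left radical is precisely $B^*\cap\Ca_{\mathcal{T}^\dagger}$ on the one side and $0$ on the other, by non-degeneracy of $\omega_\mathcal{T}$ together with the finite-codimensionality just established. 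This yields the desired canonical isomorphism.

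The main subtlety I expect is the last paragraph: ensuring that the induced pairing really is perfect when restricted modulo $B+\Ca_\mathcal{T}$, rather than merely having matching dimensions. This requires the finite-dimensionality of $\check{\mathhil{h}}_\mathcal{T}/(B+\Ca_\mathcal{T})$, which is exactly what the Fredholm pair hypothesis provides, together with the fact that $B^*$ is defined as the $\omega_\mathcal{T}$-annihilator of $B$ and that $\omega_\mathcal{T}$ vanishes on $\Ca_{\mathcal{T}^\dagger}\times\Ca_\mathcal{T}$. The remaining steps are bookkeeping around short exact sequences.
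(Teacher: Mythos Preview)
Your proof of the equivalence \ref{Thm:FredChar1ABS}$\iff$\ref{Thm:FredChar2ABS} and of the index formula is correct and is exactly the paper's argument: reduce the kernel via Proposition~\ref{sesforkernelslsa}, the closed-range condition via Theorem~\ref{Thm:ClosedRangeCharABS}, and the cokernel via Proposition~\ref{Prop:FinCharABS}, then compare the two expressions for $\dim\ker(T_{\rm B}^*)$.

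The final paragraph, however, is both unnecessary and flawed. The paper interprets the isomorphism $B^*\cap\Ca_{\mathcal{T}^\dagger}\cong\check{\mathhil{h}}_\mathcal{T}/(B+\Ca_\mathcal{T})$ as an abstract isomorphism of finite-dimensional spaces, obtained exactly as you did by comparing the two computations of $\dim\ker(T_{\rm B}^*)$; no canonical map is constructed. Your attempt to upgrade this to a canonical isomorphism via $\omega_\mathcal{T}$ does not work as written: vanishing of $\omega_\mathcal{T}$ on $\Ca_{\mathcal{T}^\dagger}\times\Ca_\mathcal{T}$ only lets the pairing descend to $\Ca_{\mathcal{T}^\dagger}\times\check{\mathhil{h}}_\mathcal{T}/\Ca_\mathcal{T}$, not to $\Ca_{\mathcal{T}^\dagger}\times\check{\mathhil{h}}_\mathcal{T}/(B+\Ca_\mathcal{T})$. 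For the latter you would need $\omega_\mathcal{T}$ to vanish on $\Ca_{\mathcal{T}^\dagger}\times B$, which is false in general. Simply drop that paragraph; the dimension comparison already suffices.
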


\begin{proof}
Since $B$ is closed, $T_{\rm B}$ is a closed operator and $\ker(T_{\rm B})$ is a closed subspace of $\Hil_1$. If \ref{Thm:FredChar1ABS} holds, then $B + \Ca_\mathcal{T}$ is closed and by Theorem \ref{Thm:ClosedRangeCharABS}, we have that $\ran(T_{\rm B})$ is closed. Similarly, if \ref{Thm:FredChar1ABS} holds then $B + \Ca_\mathcal{T}$ has finite codimension and $\ran(T_{\rm B})^\perp$ has finite codimension in $\Hil_2$ by Proposition \ref{Prop:FinCharABS}.
Moreover, $B \cap \Ca_\mathcal{T}$ is finite dimensional, so Proposition \ref{sesforkernelslsa} yields that $\ker(T_{\rm B})$ is finite dimensional and therefore, $T_{\rm B}$ is Fredholm.

On the other hand if \ref{Thm:FredChar2ABS} holds, i.e., $T_{\rm B}$ is Fredholm, then $\ran(T_{\rm B}) = \ran(T_{{\rm B} + \Ca})$ is closed, so Theorem \ref{Thm:ClosedRangeCharABS} implies that $B +\Ca_\mathcal{T}$ is closed. Moreover, by Proposition \ref{sesforkernelslsa} we have that $B \cap \Ca_\mathcal{T}  \cong \faktor{\ker(T_{\rm B})}{\ker(T_{\min})}$ is finite dimensional.
Moreover $\ran(T_{\rm B})^\perp$ has finite codimension and so by Proposition \ref{Prop:FinCharABS} \ref{Prop:FinChar5ABS}, $B + \Ca_\mathcal{T}$ has finite codimension in $\check{\mathhil{h}}_\mathcal{T}$.
That is, $(B,\Ca_\mathcal{T})$ is a Fredholm pair in $\check{\mathhil{h}}_\mathcal{T}$.

From a similar calculation, we obtain that 
$$ \ker(T_{\rm B}^\ast) = \ker(T^\dagger_{{\rm B}^\ast}) \cong \ker(T_{\min}^\dagger) \oplus (B^\ast \cap \Ca_{\mathcal{T}^\dagger}).$$
By the formula in Proposition \ref{Prop:FinCharABS}, we obtain the desired formula.
Plugging this into the index, we obtain
\begin{align*}
\indx(T_{\rm B}) &= \dim \ker (T_{\rm B}) - \dim \ker(T_{\rm B}^\ast)  \\
	&= \dim \ker (T_{\min}) + \dim(B \cap \Ca_\mathcal{T}) - \dim \ker (T_{\min}^\dagger) - \dim (B^\ast \cap \Ca_{\mathcal{T}^\dagger}) \\ 
	&= \indx(B,\Ca_\mathcal{T}) + \dim\ker (T_{\min}) - \dim \ker (T_{\min}^\dagger).
\qedhere
\end{align*}
\end{proof}

We say that a boundary condition of a FAP is a Fredholm boundary condition if the associated realisation is a Fredholm operator. 

\begin{corollary}
\label{Cor:FredInd}
Let $\mathcal{T}=(T_{\rm min},T^\dagger_{\rm min})$ be a Fredholm FAP. Let $B_1\subseteq B_2$ be generalised boundary conditions with $B_1$ being a Fredholm boundary condition. Then, $B_2$ is a Fredholm boundary condition if and only if $B_2/B_1$ is finite dimensional. In this case, the following index formula holds:
\begin{equation*}
\label{Eq:IndexABS}
\indx(T_{{\rm B}_2}) = \indx(T_{{\rm B}_1}) + \dim\faktor{B_2}{B_1}.
\end{equation*}
\end{corollary}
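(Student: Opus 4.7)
The plan is to reduce the corollary to a purely functional-analytic statement about closed operators $T \subseteq S$ with $T$ Fredholm. First, I would invoke Proposition \ref{Prop:DomCompABS} to get the short exact sequence of Hilbert spaces
$$0 \to \dom(T_{{\rm B}_1}) \to \dom(T_{{\rm B}_2}) \to \faktor{B_2}{B_1}\to 0,$$
so that $T_{{\rm B}_2}$ is a closed extension of $T_{{\rm B}_1}$ whose ``excess'' domain is isomorphic as a Banach space to $B_2/B_1$. Since $T_{{\rm B}_2}$ agrees with $T_{{\rm B}_1}$ on $\dom(T_{{\rm B}_1})$, we have the inclusions $\ker(T_{{\rm B}_1}) \subseteq \ker(T_{{\rm B}_2})$ and $\ran(T_{{\rm B}_1}) \subseteq \ran(T_{{\rm B}_2})$.

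Next, I would produce the key short exact sequence
$$0 \to \faktor{\ker(T_{{\rm B}_2})}{\ker(T_{{\rm B}_1})} \to \faktor{\dom(T_{{\rm B}_2})}{\dom(T_{{\rm B}_1})} \xrightarrow{\bar{T}_{{\rm B}_2}} \faktor{\ran(T_{{\rm B}_2})}{\ran(T_{{\rm B}_1})} \to 0,$$
where $\bar{T}_{{\rm B}_2}$ is induced by $u\mapsto T_{{\rm B}_2}u$. Injectivity on the left uses $\ker(T_{{\rm B}_2})\cap \dom(T_{{\rm B}_1})=\ker(T_{{\rm B}_1})$, surjectivity on the right is clear, and exactness in the middle follows from the observation that if $T_{{\rm B}_2}u \in \ran(T_{{\rm B}_1})$, say $T_{{\rm B}_2}u = T_{{\rm B}_1}v$ with $v\in \dom(T_{{\rm B}_1})$, then $u-v \in \ker(T_{{\rm B}_2})$ and $[u]=[u-v]$ in $\dom(T_{{\rm B}_2})/\dom(T_{{\rm B}_1})$.

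For the ``only if'' direction, if $T_{{\rm B}_1}$ and $T_{{\rm B}_2}$ are both Fredholm then $\ker(T_{{\rm B}_2})/\ker(T_{{\rm B}_1})$ is finite dimensional, and combining the short exact sequence $0\to \ran(T_{{\rm B}_2})/\ran(T_{{\rm B}_1})\to \Hil_2/\ran(T_{{\rm B}_1})\to \Hil_2/\ran(T_{{\rm B}_2})\to 0$ with the finite-dimensionality of $\Hil_2/\ran(T_{{\rm B}_1})$ shows $\ran(T_{{\rm B}_2})/\ran(T_{{\rm B}_1})$ is finite dimensional. Hence $B_2/B_1\cong \dom(T_{{\rm B}_2})/\dom(T_{{\rm B}_1})$ is finite dimensional. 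Conversely, if $T_{{\rm B}_1}$ is Fredholm and $B_2/B_1$ is finite dimensional, the same sequence yields finite-dimensionality of $\ker(T_{{\rm B}_2})$; picking any finite-dimensional subspace $V\subseteq \dom(T_{{\rm B}_2})$ mapping isomorphically to $B_2/B_1$, we have $\dom(T_{{\rm B}_2})=\dom(T_{{\rm B}_1})+V$ and thus $\ran(T_{{\rm B}_2})=\ran(T_{{\rm B}_1})+T_{{\rm B}_2}(V)$, which is closed of finite codimension as the sum of a closed subspace of finite codimension and a finite-dimensional subspace.

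Finally, counting alternating dimensions in the short exact sequence, and using
$$\dim \faktor{\ran(T_{{\rm B}_2})}{\ran(T_{{\rm B}_1})}=\dim\ker(T_{{\rm B}_1}^\ast)-\dim\ker(T_{{\rm B}_2}^\ast),$$
I arrive at $\dim(B_2/B_1)=\indx(T_{{\rm B}_2})-\indx(T_{{\rm B}_1})$, which is the desired index formula. The only mildly delicate step is verifying closedness of $\ran(T_{{\rm B}_2})$ in the converse direction, but this is handled cleanly by the finite-dimensional perturbation argument; everything else is diagram-chasing from Proposition \ref{Prop:DomCompABS}.
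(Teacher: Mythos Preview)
Your proof is correct, and it takes a genuinely different route from the paper's. The paper argues entirely on the boundary side via the Fredholm pair characterisation of Theorem~\ref{Thm:FredCharABS}: it shows that $(B_2,\Ca_\mathcal{T})$ is a Fredholm pair in $\check{\mathhil{h}}_\mathcal{T}$ (for the ``if'' direction by bounding $\dim(B_2\cap\Ca_\mathcal{T})$ and $\dim \check{\mathhil{h}}_\mathcal{T}/(B_2+\Ca_\mathcal{T})$ in terms of the corresponding quantities for $B_1$, for the ``only if'' direction by a quotient decomposition of $\check{\mathhil{h}}_\mathcal{T}/(B_1+\Ca_\mathcal{T})$), and then deduces the index formula by reducing to $\indx(B_2,\Ca_\mathcal{T})=\indx(B_1,\Ca_\mathcal{T})+\dim B_2/B_1$. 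You instead work directly with the realisations: your key short exact sequence
\[
0 \to \faktor{\ker(T_{{\rm B}_2})}{\ker(T_{{\rm B}_1})} \to \faktor{\dom(T_{{\rm B}_2})}{\dom(T_{{\rm B}_1})} \to \faktor{\ran(T_{{\rm B}_2})}{\ran(T_{{\rm B}_1})} \to 0
\]
bypasses the Hardy space $\Ca_\mathcal{T}$ entirely and reduces everything to elementary closed-operator theory. Your argument is more self-contained (it does not rely on Theorem~\ref{Thm:FredCharABS}) and would apply verbatim to any pair of closed operators $T\subseteq S$ with $T$ Fredholm; the paper's argument, by contrast, stays within the boundary-condition formalism that is the paper's main theme. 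One small remark: Proposition~\ref{Prop:DomCompABS} is stated for closed boundary conditions, but the vector-space isomorphism $\dom(T_{{\rm B}_2})/\dom(T_{{\rm B}_1})\cong B_2/B_1$ you use holds for generalised boundary conditions as well, and in both directions of the equivalence $B_2$ turns out to be closed anyway, so this is not an issue.
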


\begin{proof}
Assume that $B_1$ has finite codimension in $B_2$, so clearly $B_2$ will be closed. It is easy to see that $B_2 \cap \Ca_\mathcal{T}$ is finite dimensional and since $B_1 \subset B_2$, we have that 
$$\dim \faktor{\check{\mathhil{h}}_\mathcal{T}}{(B_2+\mathcal{C}_\mathcal{T})} \leq \dim \faktor{\check{\mathhil{h}}_\mathcal{T}}{(B_1+\mathcal{C}_\mathcal{T})}.$$
That is, $(B_2, \Ca_\mathcal{T})$ is a Fredholm pair and therefore, we have that $B_2$ is a Fredholm boundary condition.

Assume now that $B_2$ is a Fredholm boundary condition, i.e. that $B_2$ is closed and $(B_2, \Ca_\mathcal{T})$ is a Fredholm pair. Note that 
\begin{align*}
\check{\mathhil{h}}_\mathcal{T} 
	&\cong (B_2 + \Ca_\mathcal{T}) \oplus \faktor{\check{\mathhil{h}}_\mathcal{T}}{(B_2 + \Ca_\mathcal{T})}  \\
	&\cong \faktor{(B_2 + \Ca_\mathcal{T})}{(B_1 + \Ca_\mathcal{T})} \oplus (B_1 + \Ca_\mathcal{T}) \oplus \faktor{\check{\mathhil{h}}_\mathcal{T}}{(B_2 + \Ca_\mathcal{T})}
\end{align*}
and therefore, 
$$\faktor{\check{\mathhil{h}}_\mathcal{T}}{(B_1 + \Ca_\mathcal{T})} \cong  \faktor{(B_2 + \Ca_\mathcal{T})}{(B_1 + \Ca_\mathcal{T})}  \oplus \faktor{\check{\mathhil{h}}_\mathcal{T}}{(B_2 + \Ca_\mathcal{T})}\cong \faktor{B_2}{B_1}\oplus \faktor{\check{\mathhil{h}}_\mathcal{T}}{(B_2 + \Ca_\mathcal{T})}.$$
Since $(B_2, \Ca_\mathcal{T})$ and $(B_1, \Ca_\mathcal{T})$ are Fredholm pairs, $\faktor{\check{\mathhil{h}}_\mathcal{T}}{(B_2 + \Ca_\mathcal{T})}$ and $\faktor{\check{\mathhil{h}}_\mathcal{T}}{(B_1 + \Ca_\mathcal{T})}$ are finite-dimensional and we conclude that $\faktor{B_2}{B_1}$ is finite dimensional. 

For the  index formula, we note that it suffices to prove that 
$$\indx(B_2, \Ca_\mathcal{T}) = \indx(B_1,\Ca_\mathcal{T}) + \dim \faktor{B_2}{B_1}.$$
This identity follows from the computation 
\begin{align*}
\indx(B_2, \Ca_\mathcal{T}) 
	&= \dim (B_2 \cap \Ca_\mathcal{T}) - \dim \faktor {\check{\mathhil{h}}_\mathcal{T}}{(B_2 + \Ca_\mathcal{T})} \\
	&= \dim (B_1 \cap \Ca_\mathcal{T}) + \dim \faktor{(B_2 \cap \Ca_\mathcal{T})}{(B_1 \cap \Ca_\mathcal{T})}  \\
	&\qquad- \dim \faktor{\check{\mathhil{h}}_\mathcal{T}}{(B_1 + \Ca_\mathcal{T})} + \dim\faktor{(B_2 \cap \Ca_\mathcal{T})^{\perp, B_2}}{(B_1 \cap \Ca_\mathcal{T})^{\perp, B_1}} \\
	&= \indx(B_1, \Ca_\mathcal{T}) + \dim \faktor{(B_2 \cap \Ca_\mathcal{T})}{(B_1 \cap \Ca_\mathcal{T})} + \dim\faktor{(B_2 \cap \Ca_\mathcal{T})^{\perp, B_2}}{(B_1 \cap \Ca_\mathcal{T})^{\perp, B_1}}.
\end{align*}
This concludes the proof.
\end{proof}

In addition to Theorem \ref{Thm:FredCharABS}, a useful way to obtain Fredholm boundary conditions is given by the following Proposition which follows from \cite[Proposition A.1]{BB}. 
It is adapted from \cite[Corollary 8.8]{BB}, although \cite{BB} only considered the context of regular boundary conditions. The reader can, in the context of elliptic boundary value problems, compare the construction of the next proposition to standard constructions in the Boutet de Monvel calculus \cite{boutetdemonvel,rempelschulze,gerdsgreenbook}. Again, the provenance of the next result dates back further than \cite{BB} and can be found in for instance \cite[Lemma 4.3.1]{gerdsgreenbook}.

\begin{proposition}
\label{Prop:CompExistsABS} 
Let $\mathcal{T}=(T_{\rm min},T^\dagger_{\rm min})$ be a Fredholm FAP with a boundary condition $B$. Choose a closed complementary subspace $\check{C}$ of $B$ in $\check{\mathhil{h}}_\mathcal{T}$ with projection $\check{P}: \check{\mathhil{h}}_\mathcal{T} \to \check{C}$ defined along $B$ (for instance the orthogonal complement).
The map  
$$\check{L}_{\check{P}}: \dom(T_{\max}) \to \begin{matrix}\Hil_2\\ \oplus\\ \check{C}\end{matrix}, \quad \check{L}_{\check{P}}u = \begin{pmatrix} T_{\max} u \\\check{P} \gamma u\end{pmatrix},$$
 satisfies: 
\begin{enumerate}[(i)] 
\item $T_{\rm B}$ has closed range if and only if $\check{L}_{\check{P}}$ has closed range.
\item The range of $T_{\rm B}$ has finite codimension if and only if the range of $\check{L}_{\check{P}}$ has finite codimension, in which case 
$$\dim \coker(T_{\rm B}) = \dim \coker(\check{L}_{\check{P}}).$$
\item $\ker T_{\rm B}$ has finite dimension if and only if $\ker \check{L}_{\check{P}}$ has finite dimension, in which case 
$$\dim \ker(T_{\rm B}) = \dim \ker(\check{L}_{\check{P}}).$$
\item $\check{L}_{\check{P}}$ is Fredholm if and only if $T_{\rm B}$ is Fredholm.
\end{enumerate} 
\end{proposition}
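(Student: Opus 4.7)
The plan is to realize $T_B$ as the restriction of $\check{L}_{\check{P}}$ and exploit the short exact sequence
$$0 \to \dom(T_B) \hookrightarrow \dom(T_{\max}) \xrightarrow{\check{P}\gamma} \check{C} \to 0,$$
which is well-defined because $\check{P}\gamma$ is continuous and surjective with kernel $\gamma^{-1}(B) = \dom(T_B)$. This fits into a commutative diagram with the split short exact sequence $0 \to \Hil_2 \to \Hil_2 \oplus \check{C} \to \check{C} \to 0$ via $T_B$ on the left, $\check{L}_{\check{P}}$ in the middle, and $\mathrm{id}_{\check{C}}$ on the right. At the purely algebraic level, the snake lemma then gives isomorphisms $\ker(T_B) \cong \ker(\check{L}_{\check{P}})$ and $\coker(T_B) \cong \coker(\check{L}_{\check{P}})$, which will account for items (ii) and (iii). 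More concretely, $u \in \ker(\check{L}_{\check{P}})$ iff $T_{\max}u = 0$ and $\gamma u \in \ker(\check{P}) = B$, i.e.\ iff $u \in \ker(T_B)$; and the map $[y] \mapsto [(y,0)]$ provides an explicit isomorphism of cokernels, with surjectivity following from the observation that $(y,c) \equiv (y - T_{\max}v,\, 0)$ modulo $\ran(\check{L}_{\check{P}})$ for any $v \in \dom(T_{\max})$ with $\check{P}\gamma v = c$.

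The analytic content of the proposition lies in item (i), the equivalence of closed ranges. One direction is transparent: if $\ran(\check{L}_{\check{P}})$ is closed, then $\ran(T_B) \times \{0\} = \ran(\check{L}_{\check{P}}) \cap (\Hil_2 \times \{0\})$ is closed as an intersection of closed subspaces. For the converse I would apply the open mapping theorem to the continuous surjection $\check{P}\gamma : (\dom(T_{\max}), \norm{\cdot}_{T_{\max}}) \to \check{C}$ between Banach spaces to obtain a constant $K$ such that every $c \in \check{C}$ admits a lift $v \in \dom(T_{\max})$ with $\norm{v}_{T_{\max}} \leq K\norm{c}$. Given a convergent sequence $(y_n, c_n) = \check{L}_{\check{P}}(u_n) \to (y,c)$, I fix one lift $v$ of $c$ together with lifts $w_n$ of $c_n - c$ satisfying $\norm{w_n}_{T_{\max}} \to 0$. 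Setting $u_n' := u_n - v - w_n$ then yields $u_n' \in \dom(T_B)$ and $T_B u_n' = y_n - T_{\max}(v + w_n) \to y - T_{\max}v$; the closedness of $\ran(T_B)$ produces $u' \in \dom(T_B)$ with $T_B u' = y - T_{\max}v$, and $u' + v$ is the desired preimage of $(y,c)$ under $\check{L}_{\check{P}}$.

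Item (iv) follows at once by combining items (i)--(iii) with the definition of a Fredholm operator. The main obstacle is the last argument above: one must arrange for the lifts of $c_n$ not merely to be bounded but to converge in the graph norm, which is why the open mapping theorem is invoked twice -- once for $c$ itself and once for the differences $c_n - c$ -- so that the correction sequence $T_{\max}(v + w_n)$ converges and the closed-range hypothesis on $T_B$ can be effectively used. Once this analytic point is navigated, the rest of the statement reduces to the short-exact-sequence bookkeeping outlined above.
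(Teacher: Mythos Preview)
Your argument is correct and complete: the identification $\ker(\check{L}_{\check{P}}) = \ker(T_B)$, the explicit cokernel isomorphism, and the open-mapping argument with the two-step lifting $(v, w_n)$ for the closed-range direction are all sound. The paper does not actually supply a proof of this proposition; it simply records that the statement ``follows from \cite[Proposition A.1]{BB}'' (adapted from \cite[Corollary 8.8]{BB}), so your write-up is a self-contained realisation of what that cited lemma would provide. Note also that your argument nowhere uses the Fredholm hypothesis on the FAP, which is consistent with the proposition being a purely functional-analytic statement about the map $\check{L}_{\check{P}}$; the Fredholm assumption appears in the paper's formulation only because that is the setting in which the result is applied.
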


We also note the following: Fredholm boundary conditions must necessarily be infinite dimensional.

\begin{proposition}
Let $\mathcal{T}=(T_{\rm min},T^\dagger_{\rm min})$ be a Fredholm FAP such that $\mathcal{C}_\mathcal{T}$ has infinite codimension in $\check{\mathhil{h}}_\mathcal{T}$. If $B$ is a Fredholm boundary condition then $\dim B=\dim B^* = \infty$. 
\end{proposition}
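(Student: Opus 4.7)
Both conclusions follow from Theorem~\ref{Thm:FredCharABS} applied symmetrically to the two sides of the boundary pairing: $\dim B = \infty$ directly, and $\dim B^\ast = \infty$ by applying the same argument to $B^\ast$ as a boundary condition for the FAP $\mathcal{T}^\dagger$.

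\emph{Proof of $\dim B = \infty$.} Theorem~\ref{Thm:FredCharABS} applied to the Fredholm realisation $T_{\rm B}$ yields that $(B, \mathcal{C}_\mathcal{T})$ is a Fredholm pair in $\check{\mathhil{h}}_\mathcal{T}$. In particular $B + \mathcal{C}_\mathcal{T}$ has finite codimension in $\check{\mathhil{h}}_\mathcal{T}$ and $B \cap \mathcal{C}_\mathcal{T}$ is finite-dimensional. The image of $B$ under the quotient map $\check{\mathhil{h}}_\mathcal{T} \to \check{\mathhil{h}}_\mathcal{T}/\mathcal{C}_\mathcal{T}$ equals $(B + \mathcal{C}_\mathcal{T})/\mathcal{C}_\mathcal{T} \cong B/(B \cap \mathcal{C}_\mathcal{T})$, which has finite codimension in $\check{\mathhil{h}}_\mathcal{T}/\mathcal{C}_\mathcal{T}$. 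By hypothesis $\check{\mathhil{h}}_\mathcal{T}/\mathcal{C}_\mathcal{T}$ is infinite-dimensional, hence so is $B/(B \cap \mathcal{C}_\mathcal{T})$; combined with finite-dimensionality of $B \cap \mathcal{C}_\mathcal{T}$, this forces $\dim B = \infty$.

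\emph{Proof of $\dim B^\ast = \infty$.} By Proposition~\ref{adjointofafap}, $(T_{\rm B})^\ast = T^\dagger_{B^\ast}$ is again Fredholm, so $B^\ast$ is a Fredholm boundary condition for the Fredholm FAP $\mathcal{T}^\dagger$. Running the argument of the previous paragraph with the pair $(\mathcal{T}^\dagger, B^\ast)$ in place of $(\mathcal{T}, B)$ will deliver $\dim B^\ast = \infty$ as soon as we verify the dual codimension condition: $\mathcal{C}_{\mathcal{T}^\dagger}$ has infinite codimension in $\check{\mathhil{h}}_{\mathcal{T}^\dagger}$. I would establish this via the algebraic isomorphisms
\[ \check{\mathhil{h}}_\mathcal{T}/\mathcal{C}_\mathcal{T} \;\cong\; \mathcal{C}_{\mathcal{T}^\dagger}, \qquad \check{\mathhil{h}}_{\mathcal{T}^\dagger}/\mathcal{C}_{\mathcal{T}^\dagger} \;\cong\; \mathcal{C}_\mathcal{T}, \]
obtained by chaining: Proposition~\ref{Prop:DomCompABS} to identify $\check{\mathhil{h}}_\mathcal{T}/\mathcal{C}_\mathcal{T}$ with $\dom T_{\max}/\dom T_{\mathcal{C}_\mathcal{T}}$; the observation that $T_{\max}$ descends to a vector-space bijection $\dom T_{\max}/\dom T_{\mathcal{C}_\mathcal{T}} \to \ran T_{\max}/\ran T_{\min}$ (if $T_{\max} u = T_{\min} w$ then $u - w \in \ker T_{\max}$, so $u \in \dom T_{\min} + \ker T_{\max} = \dom T_{\mathcal{C}_\mathcal{T}}$); the closed range theorem, which gives $\ran T_{\max}/\ran T_{\min} = (\ker T^\dagger_{\min})^\perp/(\ker T^\dagger_{\max})^\perp \cong \ker T^\dagger_{\max}/\ker T^\dagger_{\min}$; and finally Proposition~\ref{sesforkernelslsa}, which identifies this last quotient with $\mathcal{C}_{\mathcal{T}^\dagger}$.

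\emph{Expected obstacle.} The hypothesis directly gives the first isomorphism infinite-dimensional, hence $\mathcal{C}_{\mathcal{T}^\dagger}$ is infinite-dimensional; but the condition actually needed for Step~2 is that $\mathcal{C}_\mathcal{T}$ itself be infinite-dimensional (via the second isomorphism). The core technical point is therefore to deduce the symmetric infinite-dimensionality of $\mathcal{C}_\mathcal{T}$ under the stated assumptions. In the motivating applications of the paper (elliptic differential operators of positive order on manifolds of dimension $>1$), Theorem~\ref{infdimeker} supplies both $\mathcal{C}_\mathcal{T}$ and $\mathcal{C}_{\mathcal{T}^\dagger}$ infinite-dimensional simultaneously, and the two symmetric arguments close without further comment.
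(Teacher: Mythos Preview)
Your argument for $\dim B = \infty$ is correct and is essentially the paper's proof repackaged: the paper argues by contrapositive (if $\dim B < \infty$ then $\check{\mathhil{h}}_\mathcal{T}/(B+\mathcal{C}_\mathcal{T})$ is infinite-dimensional, via the isomorphism $\check{\mathhil{h}}_\mathcal{T}/(B+\mathcal{C}_\mathcal{T}) \oplus (B\cap\mathcal{C}_\mathcal{T})^{\perp,B} \cong \check{\mathhil{h}}_\mathcal{T}/\mathcal{C}_\mathcal{T}$), while you argue directly via the image of $B$ in $\check{\mathhil{h}}_\mathcal{T}/\mathcal{C}_\mathcal{T}$. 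Same content, different packaging.

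For $\dim B^\ast = \infty$, you have been more careful than the paper. The paper simply writes ``Since $B^\ast$ is Fredholm if and only if $B$ is Fredholm, a symmetric argument yields that $\dim B^\ast = \infty$'', without checking the dual hypothesis that $\mathcal{C}_{\mathcal{T}^\dagger}$ has infinite codimension in $\check{\mathhil{h}}_{\mathcal{T}^\dagger}$. Your chain of isomorphisms $\check{\mathhil{h}}_\mathcal{T}/\mathcal{C}_\mathcal{T} \cong \mathcal{C}_{\mathcal{T}^\dagger}$ and $\check{\mathhil{h}}_{\mathcal{T}^\dagger}/\mathcal{C}_{\mathcal{T}^\dagger} \cong \mathcal{C}_\mathcal{T}$ is correct (each step you cite checks out), and it makes precise that the symmetric argument needs $\dim\mathcal{C}_\mathcal{T} = \infty$, which is \emph{not} implied by the stated hypothesis $\dim(\check{\mathhil{h}}_\mathcal{T}/\mathcal{C}_\mathcal{T}) = \infty$ in the abstract setting. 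So the gap you flag in your ``expected obstacle'' is real, and it is present in the paper's proof as well; you are not missing a trick. In the elliptic setting that motivates the proposition, Theorem~\ref{infdimeker} applied to both $D$ and $D^\dagger$ gives $\dim\mathcal{C}_\mathcal{T} = \dim\mathcal{C}_{\mathcal{T}^\dagger} = \infty$ directly, and then both halves of the argument close.
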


\begin{proof}
We prove via the contrapositive.
Let $B \subset \check{\mathhil{h}}_\mathcal{T}$ be a finite dimensional subspace.
Indeed, this is a boundary condition and $B + \Ca_\mathcal{T}$ is closed since $\Ca_\mathcal{T}$ is closed.
Now, 
\begin{align*}
\check{\mathhil{h}}_\mathcal{T} &= (B \cap \Ca_\mathcal{T}) \oplus (B \cap \Ca_\mathcal{T})^{\perp} \\
\Ca_\mathcal{T} &= (B \cap \Ca_\mathcal{T}) \oplus (B \cap \Ca_\mathcal{T})^{\perp,\Ca_\mathcal{T}} \\
B + \Ca_\mathcal{T} &= B \oplus (B \cap \Ca_\mathcal{T})^{\perp,\Ca_\mathcal{T}} = (B \cap \Ca_\mathcal{T}) \oplus (B \cap \Ca_\mathcal{T})^{\perp,B} \oplus (B \cap \Ca_\mathcal{T})^{\perp,\Ca_\mathcal{T}}.
\end{align*}
From this, we obtain 
$$\faktor{\check{\mathhil{h}}_\mathcal{T}}{\Ca_\mathcal{T}} \cong \faktor{(B \cap \Ca_\mathcal{T})^{\perp}}{(B \cap \Ca_\mathcal{T})^{\perp,\Ca_\mathcal{T}}}$$
and 
$$ \faktor{\check{\mathhil{h}}_\mathcal{T}}{B + \Ca_\mathcal{T}} \cong \faktor{(B \cap \Ca_\mathcal{T})^{\perp}}{(B\cap \Ca_\mathcal{T})^{\perp,B} \oplus (B \cap \Ca_\mathcal{T})^{\perp,\Ca_\mathcal{T}}}.$$
Therefore, 
$$ \faktor{\check{\mathhil{h}}_\mathcal{T}}{B + \Ca_\mathcal{T}} \oplus (B\cap \Ca_\mathcal{T})^{\perp,B}
	\cong \faktor{(B \cap \Ca_\mathcal{T})^{\perp}}{(B \cap \Ca_\mathcal{T})^{\perp,\Ca_\mathcal{T}}}
	\cong \faktor{\check{\mathhil{h}}_\mathcal{T}}{\Ca_\mathcal{T}}.$$
Since $(B\cap \Ca_\mathcal{T})^{\perp,B}$ is finite dimensional and $\mathcal{C}_\mathcal{T}$ has infinite codimension in $\check{\mathhil{h}}_\mathcal{T}$, we have that $\faktor{\check{\mathhil{h}}_\mathcal{T}}{B + \Ca_\mathcal{T}}$ is infinite dimensional.
That is, $B$ is not Fredholm. By the contrapositive, if $B$ is Fredholm, then $\dim B = \infty$. Since $B^\ast$ is Fredholm if and only if $B$ is Fredholm, a symmetric argument yields that $\dim B^\ast = \infty$.
\end{proof}

\subsection{Rigidity results for the index}

Let us prove a homotopy invariance result for the index of Fredholm realisations of Fredholm FAPs.

\begin{theorem}
\label{rigidityformaulaABS}
Assume that $\mathcal{T}_t=(T_{t,{\rm min}},T^\dagger_{t,{\rm min}})_{t\in [0,1]}$ is a family of Fredholm FAPs $\Hil_1\to \Hil_2$ such that 
\begin{itemize}
\item[(i)] The mappings
$$[0,1]\ni t\mapsto 
\begin{pmatrix}
0& T_{t,{\rm min}}\\
T^\dagger_{t,{\rm max}}& 0
\end{pmatrix}
\quad\mbox{and}\quad 
[0,1]\ni t\mapsto 
\begin{pmatrix}
0& T^\dagger_{t,{\rm min}}\\
T_{t,{\rm max}}& 0
\end{pmatrix}$$
are continuous in the gap topology on the space of densely defined self-adjoint operators (i.e. the resolvents are norm continuous with respect to $t$).
\item[ii)] $\dom(T_{t,{\rm min}})$ and $\dom (T_{t,{\rm max}})$ are constant in $t$, so $\check{\mathhil{h}}_\mathcal{T}:=\dom (T_{t,{\rm max}})/\dom (T_{t,{\rm min}})$ is independent of $t$ up to a canonical Banach space isomorphism.
\end{itemize}
Let $[0,1]\ni t\mapsto P_t\in \mathbb{B}(\check{\mathhil{h}}_\mathcal{T})$ be a norm continuous family of projectors such that $B_t:=(1-P_t)\check{\mathhil{h}}_\mathcal{T}$ satisfies that $(B_t,\Ca_{\mathcal{T}_t})$ is a Fredholm pair in $\check{\mathhil{h}}_\mathcal{T}$ for all $t\in [0,1]$. Then it holds that 
$$\indx(T_{0,B_0})=\indx(T_{1,B_1}).$$
\end{theorem}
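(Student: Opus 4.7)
The plan is to reduce the claim to standard homotopy invariance of the Fredholm index for a norm-continuous family of bounded Fredholm operators between \emph{fixed} Banach spaces. First I would apply Proposition \ref{Prop:CompExistsABS} at each $t \in [0,1]$ with the complementary subspace $\check{C}_t := P_t\check{\mathhil{h}}_{\mathcal{T}}$ and projection $\check{P}_t = P_t$, which identifies $\indx(T_{t,B_t})$ with the index of the bounded operator
\begin{equation*}
\check{L}_t: \dom(T_{t,\max}) \to \Hil_2 \oplus P_t\check{\mathhil{h}}_{\mathcal{T}}, \qquad u \mapsto (T_{t,\max}u,\ P_t\gamma u),
\end{equation*}
equipped with the graph norm on the source. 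By hypothesis (ii), the underlying vector space $\dom(T_{t,\max}) = \dom(T_{0,\max}) =: \mathcal{D}$ is independent of $t$.

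Next I would upgrade the hypotheses into concrete norm-continuity statements on $\mathcal{D}$ as a fixed Banach space. The gap-continuity of $\tilde{T}_t$ from hypothesis (i) is equivalent to norm-resolvent continuity, and combined with constancy of the domains this implies that the graph norms associated to $T_{t,\max}$ on $\mathcal{D}$ are uniformly equivalent and that $t \mapsto T_{t,\max} \in \mathbb{B}(\mathcal{D}, \Hil_2)$ is norm continuous, where $\mathcal{D}$ now carries the fixed graph norm $\|\cdot\|_{T_{0,\max}}$. To handle the varying target $P_t\check{\mathhil{h}}_\mathcal{T}$, one uses the norm continuity of $(P_t)_{t\in[0,1]}$ to produce, after subdividing $[0,1]$ so that $\sup_{t,s}\|P_t-P_s\|$ is small on each subinterval, a norm-continuous family of invertible operators $U_t \in \mathbb{B}(\check{\mathhil{h}}_{\mathcal{T}})$ with $U_tP_0U_t^{-1}=P_t$ (Nagy--Kato conjugation, e.g.\ $U_t = (P_tP_0 + (1-P_t)(1-P_0))(1-(P_t-P_0)^2)^{-1/2}$). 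Composing yields a norm-continuous family of bounded Fredholm operators between fixed spaces,
\begin{equation*}
\tilde{L}_t : \mathcal{D} \to \Hil_2 \oplus P_0\check{\mathhil{h}}_{\mathcal{T}}, \qquad u \mapsto (T_{t,\max}u,\ U_t^{-1}P_t\gamma u),
\end{equation*}
with $\indx(\tilde{L}_t) = \indx(\check{L}_t) = \indx(T_{t,B_t})$ by invariance of the index under conjugation by invertibles.

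Finally, standard homotopy invariance of the Fredholm index for norm-continuous families of bounded operators between fixed Banach spaces shows that $t \mapsto \indx(\tilde{L}_t)$ is locally constant, hence constant on $[0,1]$ after patching across the finite cover of subintervals on which the Nagy--Kato conjugation is defined. An alternative packaging, in the spirit of the proof of Theorem \ref{cortorigidityformaula}, is to assemble $(\check{L}_t)_{t\in[0,1]}$ into a single bounded adjointable Fredholm operator on $C[0,1]$-Hilbert $C^*$-modules and appeal to homotopy invariance of $K^0([0,1])\cong \mathbb{Z}$. The main obstacle will be the second step: one must convert the abstract gap-topology hypothesis on the doubled operator $\tilde{T}_t$ into a genuine norm-continuity statement for $T_{t,\max}:\mathcal{D}\to \Hil_2$, with the fixed graph norm of $T_{0,\max}$, and verify that the Nagy--Kato trick transports the varying codomain $P_t\check{\mathhil{h}}_{\mathcal{T}}$ to a fixed one without breaking continuity, which crucially relies on hypothesis (ii).
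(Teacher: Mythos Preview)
Your proposal is correct and shares the essential reduction with the paper's proof: both invoke Proposition~\ref{Prop:CompExistsABS} to replace $T_{t,B_t}$ by the bounded Fredholm operator $\check{L}_t:\dom(T_{0,\max})\to\Hil_2\oplus P_t\check{\mathhil{h}}_\mathcal{T}$, and both must extract from hypothesis~(i) that $t\mapsto T_{t,\max}$ is norm-continuous as a map $\dom(T_{0,\max})\to\Hil_2$. The difference lies in how the varying codomain and the final homotopy-invariance step are handled. The paper takes precisely the route you flag as an ``alternative packaging'': it assembles the family $(\check{L}_t)_t$ into a single adjointable operator between $C[0,1]$-Hilbert $C^*$-modules, proves Fredholmness of this module map by a local-to-global compactness argument, and then appeals to $K^0([0,1])\cong\mathbb{Z}$. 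Your primary route instead straightens the varying target via the Nagy--Kato intertwiner and a subdivision of $[0,1]$, then invokes classical homotopy invariance of the Fredholm index between fixed Banach spaces. Your approach is more elementary and avoids the Hilbert-module machinery, at the cost of the extra bookkeeping of subdividing and patching; the paper's approach absorbs the varying range into the module structure but presupposes familiarity with Fredholm theory over $C^*$-algebras. Both arguments leave the conversion from gap-continuity of the doubled operator to norm-continuity of $T_{t,\max}$ somewhat implicit; you are right to identify this as the main technical point, and in the paper's concrete application (Theorem~\ref{rigidityformaula}) this step is in fact carried out separately and explicitly in Lemma~\ref{continingaoaod} before the abstract result is invoked.
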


The proof uses standard techniques from $KK$-theory and the index theory of Fredholm operators on Hilbert $C^*$-modules. For an overview, see \cite{knudsenjensenthomsen} and \cite{varillygraciabondia}.

\begin{proof}
We write $\Hil_\mathcal{T}$ for the Hilbert space $\dom (T_{t_0,{\rm max}})$ for some fixed $t_0$ and note that $T_{t,{\rm max}}:\Hil_\mathcal{T}\to \Hil_2$ is continuous for each $t$. By assumption i) above, the family $(T_{t,{\rm max}})_{t\in [0,1]}$ defines an adjointable $C[0,1]$-linear map that we denote by $T_{{\rm max}}:\Hil_{\mathcal{T},C([0,1])}\to \Hil_{2,C([0,1])}$ where $\Hil_{\mathcal{T},C([0,1])}$ and $\Hil_{2,C([0,1])}$ denote the $C[0,1]$-Hilbert $C^*$-module defined from the indicated Hilbert spaces. We also set $\check{C}_t:=P_t\check{\mathhil{h}}_\mathcal{T}$ and let $\check{C}_{C([0,1])}$ denote the $C[0,1]$-Hilbert $C^*$-submodule of $C([0,1])\otimes \check{\mathhil{h}}_\mathcal{T}$ with fibre $\check{C}_t$, it is well-defined by norm continuity of the family $(P_t)_{t\in [0,1]}$. 

Following Proposition \ref{Prop:CompExistsABS}, we define the operator 
$$\check{L}_{t}: \Hil_\mathcal{T} \to \begin{matrix}\Hil_2\\ \oplus\\ \check{C}_t\end{matrix}, \quad \check{L}_{t}u = \begin{pmatrix} T_{t, \max} u \\ P_t \gamma u\end{pmatrix},$$
for $t\in [0,1]$. This operator defines an adjointable $C[0,1]$-linear map $\check{L}:\Hil_{\mathcal{T},C([0,1])}\to \Hil_{2,C([0,1])}\oplus \check{C}_{C([0,1])}$ because $T_{{\rm max}}:\Hil_{\mathcal{T},C([0,1])}\to \Hil_{2,C([0,1])}$ and $\xi\mapsto (t\mapsto P_t\xi)$ is an adjointable map $\checkH(D)_{C([0,1])}\to \check{C}_{C([0,1])}$. 

We claim that the adjointable $C[0,1]$-linear map $\check{L}:\Hil_{\mathcal{T},C([0,1])}\to \Hil_{2,C([0,1])}\oplus \check{C}$ is Fredholm. For each $t\in [0,1]$, $\check{L}_t$ is Fredholm, so we can find finite-dimensional vector bundles $E_{t,1},E_{t,2}\to [0,1]$ with $E_{t,1}$ being a subbundle of $\Hil_{\mathcal{T},C([0,1])}$ such that $E_{t,1}|_{t}=\ker \check{L_t}$ and $E_{t,2}$ being a subbundle of $\Hil_{2,C([0,1])}$ such that the induced map $E_{t,2}|_{t}\to \coker \check{L_t}$ is surjective. In particular, there is a finite-rank perturbation of $\check{L}$ as a mapping $\Hil_{\mathcal{T},C([0,1])}\oplus E_{t,2}\to \Hil_{2,C([0,1])}\oplus \check{C}\oplus E_{t,1}$ which is invertible when restricted to the fibre over $t$. By an openness argument, this finite-rank perturbation of $\check{L}$ is invertible on a small open neighbourhood $U_t$ of $t$. In particular, $\check{L}$ is Fredholm when restricted to $U_t$. By compactness of $[0,1]$, $\check{L}$ is Fredholm. 

Since, $\check{L}$ is a Fredholm operator between $C[0,1]$-Hilbert $C^*$-modules, it has a well-defined index in $K^0([0,1])$. Under the isomorphisms $K^0([0,1])\cong K^0(pt)\cong \mathbb{Z}$, homotopy invariance of $K$-theory implies that
$$\indx(\check{L}_0)=r_0^*\indx_{C([0,1])}(\check{L})=r_1^*\indx_{C([0,1])}(\check{L})=\indx(\check{L}_1),$$
where $r_t$ denotes the inclusion $\{t\}\subseteq [0,1]$.  We can now conclude the theorem from Proposition \ref{Prop:CompExistsABS} which implies that
\begin{equation*} 
\indx(T_{0,B_0})=\indx(\check{L}_0)=\indx(\check{L}_1)=\indx(T_{1,B_1}).
\qedhere
\end{equation*}
\end{proof}

\bibliographystyle{alpha}

\providecommand{\noopsort}[1]{}
\begin{bibdiv}
\begin{biblist}

\bib{agmon}{article}{
      author={Agmon, S.},
      author={Douglis, A.},
      author={Nirenberg, L.},
       title={Estimates near the boundary for solutions of elliptic partial
  differential equations satisfying general boundary conditions. {II}},
        date={1964},
        ISSN={0010-3640},
     journal={Comm. Pure Appl. Math.},
      volume={17},
       pages={35\ndash 92},
         url={https://doi.org/10.1002/cpa.3160170104},
      review={\MR{162050}},
}

\bib{agmonasy}{article}{
      author={Agmon, Shmuel},
       title={Asymptotic formulas with remainder estimates for eigenvalues of
  elliptic operators},
        date={1967/68},
        ISSN={0003-9527},
     journal={Arch. Rational Mech. Anal.},
      volume={28},
       pages={165\ndash 183},
         url={https://doi.org/10.1007/BF00250925},
      review={\MR{228851}},
}

\bib{agranovichencyclopedia}{incollection}{
      author={Agranovich, M.~S.},
       title={Elliptic boundary problems},
        date={1997},
   booktitle={Partial differential equations, {IX}},
      series={Encyclopaedia Math. Sci.},
      volume={79},
   publisher={Springer, Berlin},
       pages={1\ndash 144, 275\ndash 281},
         url={https://doi.org/10.1007/978-3-662-06721-5_1},
        note={Translated from the Russian by the author},
      review={\MR{1481215}},
}

\bib{APS}{article}{
      author={Atiyah, M.~F.},
      author={Patodi, V.~K.},
      author={Singer, I.~M.},
       title={Spectral asymmetry and {R}iemannian geometry. {I}},
        date={1975},
        ISSN={0305-0041},
     journal={Math. Proc. Cambridge Philos. Soc.},
      volume={77},
       pages={43\ndash 69},
         url={https://doi.org/10.1017/S0305004100049410},
      review={\MR{397797}},
}

\bib{BB}{incollection}{
      author={{\noopsort{Baer}}{Bär}, Christian},
      author={Ballmann, Werner},
       title={Boundary value problems for elliptic differential operators of
  first order},
        date={2012},
   booktitle={Surveys in differential geometry. {V}ol. {XVII}},
      series={Surv. Differ. Geom.},
      volume={17},
   publisher={Int. Press, Boston, MA},
       pages={1\ndash 78},
         url={http://dx.doi.org/10.4310/SDG.2012.v17.n1.a1},
      review={\MR{3076058}},
}

\bib{BBan}{unpublished}{
      author={{\noopsort{Baer}}{Bär}, Christian},
      author={{Bandara}, Lashi},
       title={{Boundary value problems for general first-order elliptic
  differential operators}},
        date={2019},
}

\bib{MR4011805}{article}{
      author={{\noopsort{Baer}}{Bär}, Christian},
      author={Strohmaier, Alexander},
       title={An index theorem for {L}orentzian manifolds with compact
  spacelike {C}auchy boundary},
        date={2019},
        ISSN={0002-9327},
     journal={Amer. J. Math.},
      volume={141},
      number={5},
       pages={1421\ndash 1455},
         url={https://doi.org/10.1353/ajm.2019.0037},
      review={\MR{4011805}},
}

\bib{ballbruncarr}{article}{
      author={Ballmann, Werner},
      author={Br\"{u}ning, Jochen},
      author={Carron, Gilles},
       title={Regularity and index theory for {D}irac-{S}chr\"{o}dinger systems
  with {L}ipschitz coefficients},
        date={2008},
        ISSN={0021-7824},
     journal={J. Math. Pures Appl. (9)},
      volume={89},
      number={5},
       pages={429\ndash 476},
         url={https://doi.org/10.1016/j.matpur.2008.02.004},
      review={\MR{2416671}},
}

\bib{MR4000837}{article}{
      author={Bandara, Lashi},
      author={Ros\'{e}n, Andreas},
       title={Riesz continuity of the {A}tiyah-{S}inger {D}irac operator under
  perturbations of local boundary conditions},
        date={2019},
        ISSN={0360-5302},
     journal={Comm. Partial Differential Equations},
      volume={44},
      number={12},
       pages={1253\ndash 1284},
         url={https://doi.org/10.1080/03605302.2019.1611847},
      review={\MR{4000837}},
}

\bib{behrndtetal}{article}{
      author={Behrndt, Jussi},
      author={Micheler, Till},
       title={Elliptic differential operators on {L}ipschitz domains and
  abstract boundary value problems},
        date={2014},
        ISSN={0022-1236},
     journal={J. Funct. Anal.},
      volume={267},
      number={10},
       pages={3657\ndash 3709},
         url={https://doi.org/10.1016/j.jfa.2014.09.017},
      review={\MR{3266243}},
}

\bib{MR0142896}{article}{
      author={Birman, M.~\v{S}.},
       title={On the spectrum of singular boundary-value problems},
        date={1961},
     journal={Mat. Sb. (N.S.)},
      volume={55 (97)},
       pages={125\ndash 174},
      review={\MR{0142896}},
}

\bib{MR0177311}{article}{
      author={Birman, M.~\v{S}.},
       title={Perturbation of the spectrum of a singular elliptic operator
  under variation of the boundary and boundary conditions},
        date={1961},
        ISSN={0197-6788},
     journal={Soviet Math. Dokl.},
      volume={2},
       pages={326\ndash 328},
      review={\MR{0177311}},
}

\bib{bossfury}{article}{
      author={Booss-Bavnbek, Bernhelm},
      author={Furutani, Kenro},
       title={The {M}aslov index: a functional analytical definition and the
  spectral flow formula},
        date={1998},
        ISSN={0387-3870},
     journal={Tokyo J. Math.},
      volume={21},
      number={1},
       pages={1\ndash 34},
         url={https://doi.org/10.3836/tjm/1270041982},
      review={\MR{1630119}},
}

\bib{BBL2009}{article}{
      author={Booss-Bavnbek, Bernhelm},
      author={Lesch, Matthias},
       title={The invertible double of elliptic operators},
        date={2009},
        ISSN={0377-9017},
     journal={Lett. Math. Phys.},
      volume={87},
      number={1-2},
       pages={19\ndash 46},
         url={https://doi.org/10.1007/s11005-009-0292-9},
      review={\MR{2480643}},
}

\bib{bosswojc}{book}{
      author={Boo\ss~Bavnbek, Bernhelm},
      author={Wojciechowski, Krzysztof~P.},
       title={Elliptic boundary problems for {D}irac operators},
      series={Mathematics: Theory \& Applications},
   publisher={Birkh\"{a}user Boston, Inc., Boston, MA},
        date={1993},
        ISBN={0-8176-3681-1},
         url={https://doi.org/10.1007/978-1-4612-0337-7},
      review={\MR{1233386}},
}

\bib{boutetdemonvel}{article}{
      author={Boutet~de Monvel, Louis},
       title={Boundary problems for pseudo-differential operators},
        date={1971},
        ISSN={0001-5962},
     journal={Acta Math.},
      volume={126},
      number={1-2},
       pages={11\ndash 51},
         url={https://doi.org/10.1007/BF02392024},
      review={\MR{407904}},
}

\bib{MR3981455}{article}{
      author={Braverman, Maxim},
      author={Shi, Pengshuai},
       title={The index of a local boundary value problem for strongly
  {C}allias-type operators},
        date={2019},
        ISSN={2199-6792},
     journal={Arnold Math. J.},
      volume={5},
      number={1},
       pages={79\ndash 96},
         url={https://doi.org/10.1007/s40598-019-00110-1},
      review={\MR{3981455}},
}

\bib{bruninglesch99}{incollection}{
      author={Br\"{u}ning, Jochen},
      author={Lesch, Matthias},
       title={Spectral theory of boundary value problems for {D}irac type
  operators},
        date={1999},
   booktitle={Geometric aspects of partial differential equations ({R}oskilde,
  1998)},
      series={Contemp. Math.},
      volume={242},
   publisher={Amer. Math. Soc., Providence, RI},
       pages={203\ndash 215},
         url={https://doi.org/10.1090/conm/242/03670},
      review={\MR{1714487}},
}

\bib{bruninglesch01}{article}{
      author={Br\"{u}ning, Jochen},
      author={Lesch, Matthias},
       title={On boundary value problems for {D}irac type operators. {I}.
  {R}egularity and self-adjointness},
        date={2001},
        ISSN={0022-1236},
     journal={J. Funct. Anal.},
      volume={185},
      number={1},
       pages={1\ndash 62},
         url={https://doi.org/10.1006/jfan.2001.3753},
      review={\MR{1853751}},
}

\bib{BuCoSh}{article}{
      author={Buffa, A.},
      author={Costabel, M.},
      author={Sheen, D.},
       title={On traces for {${\bf H}({\bf curl},\Omega)$} in {L}ipschitz
  domains},
        date={2002},
        ISSN={0022-247X},
     journal={J. Math. Anal. Appl.},
      volume={276},
      number={2},
       pages={845\ndash 867},
         url={https://doi.org/10.1016/S0022-247X(02)00455-9},
      review={\MR{1944792}},
}

\bib{MR3908762}{article}{
      author={Chen, Qun},
      author={Jost, J\"{u}rgen},
      author={Sun, Linlin},
      author={Zhu, Miaomiao},
       title={Estimates for solutions of {D}irac equations and an application
  to a geometric elliptic-parabolic problem},
        date={2019},
        ISSN={1435-9855},
     journal={J. Eur. Math. Soc. (JEMS)},
      volume={21},
      number={3},
       pages={665\ndash 707},
         url={https://doi.org/10.4171/JEMS/847},
      review={\MR{3908762}},
}

\bib{connesbook}{book}{
      author={Connes, Alain},
       title={Noncommutative geometry},
   publisher={Academic Press, Inc., San Diego, CA},
        date={1994},
        ISBN={0-12-185860-X},
      review={\MR{1303779}},
}

\bib{malamud}{article}{
      author={Derkach, V.~A.},
      author={Malamud, M.~M.},
       title={Generalized resolvents and the boundary value problems for
  {H}ermitian operators with gaps},
        date={1991},
        ISSN={0022-1236},
     journal={J. Funct. Anal.},
      volume={95},
      number={1},
       pages={1\ndash 95},
         url={https://doi.org/10.1016/0022-1236(91)90024-Y},
      review={\MR{1087947}},
}

\bib{MR405514}{article}{
      author={Duistermaat, J.~J.},
      author={Guillemin, V.~W.},
       title={The spectrum of positive elliptic operators and periodic
  bicharacteristics},
        date={1975},
        ISSN={0020-9910},
     journal={Invent. Math.},
      volume={29},
      number={1},
       pages={39\ndash 79},
         url={https://doi.org/10.1007/BF01405172},
      review={\MR{405514}},
}

\bib{epsteinsub}{article}{
      author={Epstein, Charles~L.},
       title={Subelliptic {${\rm Spin}_{\Bbb C}$} {D}irac operators. {I}},
        date={2007},
        ISSN={0003-486X},
     journal={Ann. of Math. (2)},
      volume={166},
      number={1},
       pages={183\ndash 214},
         url={https://doi.org/10.4007/annals.2007.166.183},
      review={\MR{2342694}},
}

\bib{fedosovindex}{article}{
      author={Fedosov, B.~V.},
       title={An analytic formula for the index of an elliptic boundary value
  problem. {II}},
        date={1974},
     journal={Mat. Sb. (N.S.)},
      volume={95(137)},
       pages={525\ndash 550, 631},
      review={\MR{0383464}},
}

\bib{fedoind}{incollection}{
      author={Fedosov, B.~V.},
       title={Index theorems [ {MR}1135117 (92k:58262)]},
        date={1996},
   booktitle={Partial differential equations, {VIII}},
      series={Encyclopaedia Math. Sci.},
      volume={65},
   publisher={Springer, Berlin},
       pages={155\ndash 251},
         url={https://doi.org/10.1007/978-3-642-48944-0_3},
      review={\MR{1401125}},
}

\bib{FGMR}{article}{
      author={Forsyth, Iain},
      author={Goffeng, Magnus},
      author={Mesland, Bram},
      author={Rennie, Adam},
       title={Boundaries, spectral triples and {$K$}-homology},
        date={2019},
        ISSN={1661-6952},
     journal={J. Noncommut. Geom.},
      volume={13},
      number={2},
       pages={407\ndash 472},
         url={https://doi.org/10.4171/JNCG/331},
      review={\MR{3988751}},
}

\bib{geygru74}{article}{
      author={Geymonat, Giuseppe},
      author={Grubb, Gerd},
       title={Spectral theory for boundary value problems for elliptic systems
  of mixed order},
        date={1974},
        ISSN={0002-9904},
     journal={Bull. Amer. Math. Soc.},
      volume={80},
       pages={1255\ndash 1259},
         url={https://doi.org/10.1090/S0002-9904-1974-13705-5},
      review={\MR{352726}},
}

\bib{leschgor}{article}{
      author={Gorokhovsky, Alexander},
      author={Lesch, Matthias},
       title={On the spectral flow for {D}irac operators with local boundary
  conditions},
        date={2015},
        ISSN={1073-7928},
     journal={Int. Math. Res. Not. IMRN},
      number={17},
       pages={8036\ndash 8051},
         url={https://doi.org/10.1093/imrn/rnu188},
      review={\MR{3404008}},
}

\bib{varillygraciabondia}{book}{
      author={Gracia-Bond\'{\i}a, Jos\'{e}~M.},
      author={V\'{a}rilly, Joseph~C.},
      author={Figueroa, H\'{e}ctor},
       title={Elements of noncommutative geometry},
      series={Birkh\"{a}user Advanced Texts: Basler Lehrb\"{u}cher.
  [Birkh\"{a}user Advanced Texts: Basel Textbooks]},
   publisher={Birkh\"{a}user Boston, Inc., Boston, MA},
        date={2001},
        ISBN={0-8176-4124-6},
         url={https://doi.org/10.1007/978-1-4612-0005-5},
      review={\MR{1789831}},
}

\bib{grisvard}{book}{
      author={Grisvard, P.},
       title={Elliptic problems in nonsmooth domains},
      series={Monographs and Studies in Mathematics},
   publisher={Pitman (Advanced Publishing Program), Boston, MA},
        date={1985},
      volume={24},
        ISBN={0-273-08647-2},
      review={\MR{775683}},
}

\bib{GL}{article}{
      author={Gromov, Mikhael},
      author={Lawson, H.~Blaine, Jr.},
       title={Positive scalar curvature and the {D}irac operator on complete
  {R}iemannian manifolds},
        date={1983},
        ISSN={0073-8301},
     journal={Inst. Hautes \'{E}tudes Sci. Publ. Math.},
      number={58},
       pages={83\ndash 196 (1984)},
         url={http://www.numdam.org/item?id=PMIHES_1983__58__83_0},
      review={\MR{720933}},
}

\bib{grubb68}{article}{
      author={Grubb, Gerd},
       title={A characterization of the non-local boundary value problems
  associated with an elliptic operator},
        date={1968},
        ISSN={0391-173X},
     journal={Ann. Scuola Norm. Sup. Pisa Cl. Sci. (3)},
      volume={22},
       pages={425\ndash 513},
      review={\MR{239269}},
}

\bib{grubb71}{article}{
      author={Grubb, Gerd},
       title={On coerciveness and semiboundedness of general boundary
  problems},
        date={1971},
        ISSN={0021-2172},
     journal={Israel J. Math.},
      volume={10},
       pages={32\ndash 95},
         url={https://doi.org/10.1007/BF02771518},
      review={\MR{318665}},
}

\bib{grubb74}{article}{
      author={Grubb, Gerd},
       title={Properties of normal boundary problems for elliptic even-order
  systems},
        date={1974},
        ISSN={0391-173X},
     journal={Ann. Scuola Norm. Sup. Pisa Cl. Sci. (4)},
      volume={1},
       pages={1\ndash 61 (1975)},
      review={\MR{492833}},
}

\bib{grubb77}{article}{
      author={Grubb, Gerd},
       title={Boundary problems for systems of partial differential operators
  of mixed order},
        date={1977},
     journal={J. Functional Analysis},
      volume={26},
      number={2},
       pages={131\ndash 165},
         url={https://doi.org/10.1016/0022-1236(77)90008-8},
      review={\MR{0501197}},
}

\bib{grubb77a}{article}{
      author={Grubb, Gerd},
       title={Spectral asymptotics for {D}ouglis-{N}irenberg elliptic and
  pseudo-differential boundary problems},
        date={1977},
        ISSN={0360-5302},
     journal={Comm. Partial Differential Equations},
      volume={2},
      number={11},
       pages={1071\ndash 1150},
         url={https://doi.org/10.1080/03605307708820056},
      review={\MR{609013}},
}

\bib{grubbsoft}{article}{
      author={Grubb, Gerd},
       title={Spectral asymptotics for the ``soft'' selfadjoint extension of a
  symmetric elliptic differential operator},
        date={1983},
        ISSN={0379-4024},
     journal={J. Operator Theory},
      volume={10},
      number={1},
       pages={9\ndash 20},
      review={\MR{715550}},
}

\bib{grubb84}{article}{
      author={Grubb, Gerd},
       title={Singular {G}reen operators and their spectral asymptotics},
        date={1984},
        ISSN={0012-7094},
     journal={Duke Math. J.},
      volume={51},
      number={3},
       pages={477\ndash 528},
         url={https://doi.org/10.1215/S0012-7094-84-05125-1},
      review={\MR{757950}},
}

\bib{grubb92}{article}{
      author={Grubb, Gerd},
       title={Heat operator trace expansions and index for general
  {A}tiyah-{P}atodi-{S}inger boundary problems},
        date={1992},
        ISSN={0360-5302},
     journal={Comm. Partial Differential Equations},
      volume={17},
      number={11-12},
       pages={2031\ndash 2077},
         url={https://doi.org/10.1080/03605309208820913},
      review={\MR{1194749}},
}

\bib{gerdsgreenbook}{book}{
      author={Grubb, Gerd},
       title={Functional calculus of pseudodifferential boundary problems},
     edition={Second},
      series={Progress in Mathematics},
   publisher={Birkh\"{a}user Boston, Inc., Boston, MA},
        date={1996},
      volume={65},
        ISBN={0-8176-3738-9},
         url={https://doi.org/10.1007/978-1-4612-0769-6},
      review={\MR{1385196}},
}

\bib{G99}{article}{
      author={Grubb, Gerd},
       title={Trace expansions for pseudodifferential boundary problems for
  {D}irac-type operators and more general systems},
        date={1999},
        ISSN={0004-2080},
     journal={Ark. Mat.},
      volume={37},
      number={1},
       pages={45\ndash 86},
         url={https://doi.org/10.1007/BF02384828},
      review={\MR{1673426}},
}

\bib{Grubb03}{article}{
      author={Grubb, Gerd},
       title={Spectral boundary conditions for generalizations of {L}aplace and
  {D}irac operators},
        date={2003},
        ISSN={0010-3616},
     journal={Comm. Math. Phys.},
      volume={240},
      number={1-2},
       pages={243\ndash 280},
         url={https://doi.org/10.1007/s00220-003-0890-9},
      review={\MR{2004987}},
}

\bib{grubbdistop}{book}{
      author={Grubb, Gerd},
       title={Distributions and operators},
      series={Graduate Texts in Mathematics},
   publisher={Springer, New York},
        date={2009},
      volume={252},
        ISBN={978-0-387-84894-5},
      review={\MR{2453959}},
}

\bib{grubbsec}{article}{
      author={Grubb, Gerd},
       title={The sectorial projection defined from logarithms},
        date={2012},
        ISSN={0025-5521},
     journal={Math. Scand.},
      volume={111},
      number={1},
       pages={118\ndash 126},
         url={https://doi.org/10.7146/math.scand.a-15217},
      review={\MR{3001362}},
}

\bib{garding53}{article}{
      author={Gårding, Lars},
       title={On the asymptotic distribution of the eigenvalues and
  eigenfunctions of elliptic differential operators},
        date={1953},
        ISSN={0025-5521},
     journal={Math. Scand.},
      volume={1},
       pages={237\ndash 255},
         url={https://doi.org/10.7146/math.scand.a-10382},
      review={\MR{64980}},
}

\bib{HR2019}{article}{
      author={{Helsing}, Johan},
      author={{Ros{\'e}n}, Andreas},
       title={{Dirac integral equations for dielectric and plasmonic
  scattering}},
        date={2019-11},
     journal={arXiv e-prints},
       pages={arXiv:1911.00788},
      eprint={1911.00788},
}

\bib{horasy}{article}{
      author={H\"{o}rmander, Lars},
       title={The spectral function of an elliptic operator},
        date={1968},
        ISSN={0001-5962},
     journal={Acta Math.},
      volume={121},
       pages={193\ndash 218},
         url={https://doi.org/10.1007/BF02391913},
      review={\MR{609014}},
}

\bib{horIII}{book}{
      author={H\"{o}rmander, Lars},
       title={The analysis of linear partial differential operators. {III}},
      series={Classics in Mathematics},
   publisher={Springer, Berlin},
        date={2007},
        ISBN={978-3-540-49937-4},
         url={https://doi.org/10.1007/978-3-540-49938-1},
        note={Pseudo-differential operators, Reprint of the 1994 edition},
      review={\MR{2304165}},
}

\bib{horIV}{book}{
      author={H\"{o}rmander, Lars},
       title={The analysis of linear partial differential operators. {IV}},
      series={Classics in Mathematics},
   publisher={Springer-Verlag, Berlin},
        date={2009},
        ISBN={978-3-642-00117-8},
         url={https://doi.org/10.1007/978-3-642-00136-9},
        note={Fourier integral operators, Reprint of the 1994 edition},
      review={\MR{2512677}},
}

\bib{ivrii16}{article}{
      author={Ivrii, Victor},
       title={100 years of {W}eyl's law},
        date={2016},
        ISSN={1664-3607},
     journal={Bull. Math. Sci.},
      volume={6},
      number={3},
       pages={379\ndash 452},
         url={https://doi.org/10.1007/s13373-016-0089-y},
      review={\MR{3556544}},
}

\bib{MR575202}{article}{
      author={Ivri\u{\i}, V.~Ja.},
       title={The second term of the spectral asymptotics for a
  {L}aplace-{B}eltrami operator on manifolds with boundary},
        date={1980},
        ISSN={0374-1990},
     journal={Funktsional. Anal. i Prilozhen.},
      volume={14},
      number={2},
       pages={25\ndash 34},
      review={\MR{575202}},
}

\bib{knudsenjensenthomsen}{book}{
      author={Jensen, Kjeld~Knudsen},
      author={Thomsen, Klaus},
       title={Elements of {$KK$}-theory},
      series={Mathematics: Theory \& Applications},
   publisher={Birkh\"{a}user Boston, Inc., Boston, MA},
        date={1991},
        ISBN={0-8176-3496-7},
         url={https://doi.org/10.1007/978-1-4612-0449-7},
      review={\MR{1124848}},
}

\bib{Kato}{book}{
      author={Kato, Tosio},
       title={Perturbation theory for linear operators},
     edition={Second},
   publisher={Springer-Verlag},
     address={Berlin},
        date={1976},
        note={Grundlehren der Mathematischen Wissenschaften, Band 132},
}

\bib{lionsmagenes63}{article}{
      author={Lions, J.-L.},
      author={Magenes, E.},
       title={Probl\`emes aux limites non homog\'{e}nes. {VI}},
        date={1963},
        ISSN={0021-7670},
     journal={J. Analyse Math.},
      volume={11},
       pages={165\ndash 188},
         url={https://doi.org/10.1007/BF02789983},
      review={\MR{161021}},
}

\bib{lionsmagenes}{book}{
      author={Lions, J.-L.},
      author={Magenes, E.},
       title={Non-homogeneous boundary value problems and applications. {V}ol.
  {I}},
   publisher={Springer-Verlag, New York-Heidelberg},
        date={1972},
        note={Translated from the French by P. Kenneth, Die Grundlehren der
  mathematischen Wissenschaften, Band 181},
      review={\MR{0350177}},
}

\bib{elmaretal}{article}{
      author={Melo, Severino~T.},
      author={Schick, Thomas},
      author={Schrohe, Elmar},
       title={Families index for {B}outet de {M}onvel operators},
        date={2013},
        ISSN={1867-5778},
     journal={M\"{u}nster J. Math.},
      volume={6},
      number={2},
       pages={343\ndash 364},
      review={\MR{3148215}},
}

\bib{melroseAPS}{book}{
      author={Melrose, Richard~B.},
       title={The {A}tiyah-{P}atodi-{S}inger index theorem},
      series={Research Notes in Mathematics},
   publisher={A K Peters, Ltd., Wellesley, MA},
        date={1993},
      volume={4},
        ISBN={1-56881-002-4},
         url={https://doi.org/10.1016/0377-0257(93)80040-i},
      review={\MR{1348401}},
}

\bib{rempelschulze}{book}{
      author={Rempel, Stephan},
      author={Schulze, Bert-Wolfgang},
       title={Index theory of elliptic boundary problems},
   publisher={North Oxford Academic Publishing Co. Ltd., London},
        date={1985},
        ISBN={0-946536-85-6},
         url={https://doi.org/10.1016/0165-1765(84)90131-9},
        note={Reprint of the 1982 edition},
      review={\MR{829709}},
}

\bib{schechter59}{article}{
      author={Schechter, Martin},
       title={General boundary value problems for elliptic partial differential
  equations},
        date={1959},
        ISSN={0010-3640},
     journal={Comm. Pure Appl. Math.},
      volume={12},
       pages={457\ndash 486},
         url={https://doi.org/10.1002/cpa.3160120305},
      review={\MR{125323}},
}

\bib{elmarbdmover}{incollection}{
      author={Schrohe, Elmar},
       title={A short introduction to {B}outet de {M}onvel's calculus},
        date={2001},
   booktitle={Approaches to singular analysis ({B}erlin, 1999)},
      series={Oper. Theory Adv. Appl.},
      volume={125},
   publisher={Birkh\"{a}user, Basel},
       pages={85\ndash 116},
      review={\MR{1827171}},
}

\bib{schulzeseiler}{article}{
      author={Schulze, B.-W.},
      author={Seiler, J.},
       title={Boundary value problems with global projection conditions},
        date={2004},
        ISSN={0022-1236},
     journal={J. Funct. Anal.},
      volume={206},
      number={2},
       pages={449\ndash 498},
         url={https://doi.org/10.1016/S0022-1236(03)00140-X},
      review={\MR{2021855}},
}

\bib{MR506893}{article}{
      author={Seeley, R.},
       title={A sharp asymptotic remainder estimate for the eigenvalues of the
  {L}aplacian in a domain of {${\bf R}\sp{3}$}},
        date={1978},
        ISSN={0001-8708},
     journal={Adv. in Math.},
      volume={29},
      number={2},
       pages={244\ndash 269},
         url={https://doi.org/10.1016/0001-8708(78)90013-0},
      review={\MR{506893}},
}

\bib{seeley65}{article}{
      author={Seeley, R.~T.},
       title={Singular integrals and boundary value problems},
        date={1966},
        ISSN={0002-9327},
     journal={Amer. J. Math.},
      volume={88},
       pages={781\ndash 809},
         url={https://doi.org/10.2307/2373078},
      review={\MR{209915}},
}

\bib{MR3850258}{article}{
      author={Shi, Pengshuai},
       title={Cauchy data spaces and {A}tiyah-{P}atodi-{S}inger index on
  non-compact manifolds},
        date={2018},
        ISSN={0393-0440},
     journal={J. Geom. Phys.},
      volume={133},
       pages={81\ndash 90},
         url={https://doi.org/10.1016/j.geomphys.2018.05.030},
      review={\MR{3850258}},
}

\bib{shubinsbook}{book}{
      author={Shubin, M.~A.},
       title={Pseudodifferential operators and spectral theory},
     edition={Second},
   publisher={Springer-Verlag, Berlin},
        date={2001},
        ISBN={3-540-41195-X},
         url={https://doi.org/10.1007/978-3-642-56579-3},
        note={Translated from the 1978 Russian original by Stig I. Andersson},
      review={\MR{1852334}},
}

\bib{simontrace}{book}{
      author={Simon, Barry},
       title={Trace ideals and their applications},
     edition={Second},
      series={Mathematical Surveys and Monographs},
   publisher={American Mathematical Society, Providence, RI},
        date={2005},
      volume={120},
        ISBN={0-8218-3581-5},
         url={https://doi.org/10.1090/surv/120},
      review={\MR{2154153}},
}

\bib{vishik}{article}{
      author={Vi\v{s}ik, M.~I.},
       title={On general boundary problems for elliptic differential
  equations},
        date={1952},
        ISSN={0134-8663},
     journal={Trudy Moskov. Mat. Ob\v{s}\v{c}.},
      volume={1},
       pages={187\ndash 246},
      review={\MR{0051404}},
}

\bib{weidmann}{book}{
      author={Weidmann, Joachim},
       title={Linear operators in {H}ilbert spaces},
      series={Graduate Texts in Mathematics},
   publisher={Springer-Verlag, New York-Berlin},
        date={1980},
      volume={68},
        ISBN={0-387-90427-1},
        note={Translated from the German by Joseph Sz\"{u}cs},
      review={\MR{566954}},
}

\bib{weyl12}{article}{
      author={Weyl, Hermann},
       title={Das asymptotische {V}erteilungsgesetz der {E}igenwerte linearer
  partieller {D}ifferentialgleichungen (mit einer {A}nwendung auf die {T}heorie
  der {H}ohlraumstrahlung)},
        date={1912},
        ISSN={0025-5831},
     journal={Math. Ann.},
      volume={71},
      number={4},
       pages={441\ndash 479},
         url={https://doi.org/10.1007/BF01456804},
      review={\MR{1511670}},
}

\bib{Yosida}{book}{
      author={Yosida, Kōsaku},
       title={Functional analysis},
      series={Classics in Mathematics},
   publisher={Springer-Verlag, Berlin},
        date={1995},
        ISBN={3-540-58654-7},
         url={https://doi.org/10.1007/978-3-642-61859-8},
        note={Reprint of the sixth (1980) edition},
      review={\MR{1336382}},
}

\end{biblist}
\end{bibdiv}

\setlength{\parskip}{0pt}
\end{document}